%
%


\documentclass[10pt]{amsart}

\pagestyle{plain}



\usepackage{bmpsize}
\usepackage{feynmp}
\usepackage{latexsym}
\usepackage{amsfonts}
\usepackage{amssymb}
\usepackage{amscd}
\usepackage{amsbsy}
\usepackage{amsmath,tabu}
\usepackage[dvipdfmx]{graphicx}
\usepackage[all]{xy}
\usepackage{hyperref}
\usepackage{latexsym}
\usepackage{amsfonts}
\usepackage{amssymb}
\usepackage{amscd}
\usepackage{amsbsy}
\usepackage{amsmath}
\usepackage{bbm}
\usepackage{euscript}
\usepackage{mathrsfs}
\usepackage{yhmath}

\usepackage{tikz}

\usepackage{hyperref}

\usetikzlibrary{shapes,fit,intersections}
\usetikzlibrary{decorations.markings}
\usetikzlibrary{decorations.pathreplacing}
\usetikzlibrary{patterns}
\usetikzlibrary{matrix,arrows}
\usepgflibrary{decorations.pathmorphing}

\usepackage{tikz-cd}


\usepackage{mathrsfs}
\DeclareMathAlphabet{\mathscrbf}{OMS}{mdugm}{b}{n}

\usepackage{stackengine}
\stackMath


\usepackage{imakeidx}

\newrobustcmd{\SymbolPrint}[1]{\ensuremath{#1}\index[notationlist]{\ensuremath{#1}}}%

\makeindex[intoc=true,name=notationlist,columns=3,title={List of Symbols}]




\def\rhs{{\it r.h.s.\ }}

\def\End{\mathop{{\rm End}}\nolimits}

\def\Hom{\mathop{{\rm Hom}}\nolimits}

\def\Ob{\mathop{{\rm Ob}}\nolimits}

\def\im{\mathop{{\rm im}}\nolimits}
\def\Tr{\mathop{{\rm Tr}}\nolimits}

\def\deg{ \mathop{{\rm deg}}\nolimits }

\def\End{\mathop{{\rm End}}\nolimits}

\def\Ad{ {\rm Ad} }
\def\Adv#1{ \Ad_{#1} }



\def\xG{ \mfG }




\topmargin=0in                          
\headheight=0in                         
\headsep=0in                            
\textheight=9in                         
\footskip=4ex                           
\textwidth=6in                          
\parskip=\medskipamount                 
\abovedisplayskip=1em plus.3em minus.5em        
\belowdisplayskip=1em plus.3em minus.5em        
\abovedisplayshortskip=.5em plus.2em minus.4em  
\belowdisplayshortskip=.5em plus.2em minus.4em  
\thicklines                     
\hsize=6in                      
\lineskip=0pt                   

%
%



%

\def\inbar{\vrule height1.5ex width.4pt depth0pt}
\def\IC{\relax\,\hbox{$\inbar\kern-.3em{\rm C}$}}
\def\IN{\relax{\rm I\kern-.18em N}}
\def\IQ{\relax\,\hbox{$\inbar\kern-.3em{\rm Q}$}}
\def\IR{\relax{\rm I\kern-.18em R}}
\def\ZZ{\relax{\sf Z\kern-.4em Z}}
\def\a{\alpha}

 \def\cB{{\cal B}}  

\def\cF{{\cal F}}   
   \def\cM{{\cal M}}
\def\cN{{\cal N}}   
   



\newtheorem{theorem}{Theorem}[section]
\newtheorem{proposition}[theorem]{Proposition}
\newtheorem{corollary}[theorem]{Corollary}
\newtheorem{conjecture}[theorem]{Conjecture}
\newtheorem{lemma}[theorem]{Lemma}

\newtheorem{remark}[theorem]{Remark}

\marginparwidth=24pt \headsep=2em \headheight=1em
\textheight=8.7in \hoffset=-.7in \textwidth=6.5in


\catcode`\@=11

\newif\if@fewtab\@fewtabtrue


\catcode`\@=11

\newif\if@fewtab\@fewtabtrue

{\count255=\time\divide\count255 by 60
\xdef\hourmin{\number\count255} \multiply\count255
by-60\advance\count255 by\time
\xdef\hourmin{\hourmin:\ifnum\count255<10 0\fi\the\count255}}
\def\ps@draft{\let\@mkboth\@gobbletwo
    \def\@oddhead{}
    \def\@oddfoot
      {\hbox to 7 cm{\footnotesize {\em Draft of \jobname:} \draftdate
       \hfil}\hskip -7cm\hfil\rm\thepage \hfil}
    \def\@evenhead{}\let\@evenfoot\@oddfoot}


\def\ceqno{\global\@fewtabfalse
    \ifcase\@eqcnt \def\@tempa{& & &}\or \def\@tempa{& &}
      \or \def\@tempa{&}
      \or\def\@tempa{}\fi\@tempa
{\rm(\theequation)}}

\def\aeqno#1{\global\@fewtabfalse
    \ifcase\@eqcnt \def\@tempa{& & &}\or \def\@tempa{& &}
      \or \def\@tempa{&}
      \or\def\@tempa{}\fi\@tempa
{\rm(\theequation,#1)}}

\def\label#1{\ifnum\draftcontrol=1
 \global\def\draftnote{$\scriptstyle #1$}\fi
 \@bsphack\if@filesw {\let\thepage\relax
   \def\protect{\noexpand\noexpand\noexpand}%
\xdef\@gtempa{\write\@auxout{\string
      \newlabel{#1}{{\@currentlabel}{\thepage}}}}}\@gtempa
   \if@nobreak \ifvmode\nobreak\fi\fi\fi
  \@esphack}

\def\alabel#1#2{\label{#1}\global\@fewtabfalse
    \ifcase\@eqcnt \def\@tempa{& & &}\or \def\@tempa{& &}
      \or \def\@tempa{&}
      \or\def\@tempa{}\fi\@tempa
{\hbox to 3cm{\phantom{\rm(\theequation,#2)} \draftnote
\hfil}\hskip -3cm {\rm(\theequation,#2)}}}

\def\clabel#1{\label{#1}\global\@fewtabfalse
    \ifcase\@eqcnt \def\@tempa{& & &}\or \def\@tempa{& &}
      \or \def\@tempa{&}
      \or\def\@tempa{}\fi\@tempa
{\hbox to 3cm{\phantom{\rm(\theequation)} \draftnote \hfil}\hskip
-3cm{\rm(\theequation)}}}

\def\eqnarray{\def\draftnote{{}}\global\@fewtabtrue
\stepcounter{equation}\let\@currentlabel=\theequation
\global\@eqnswtrue
\global\@eqcnt\z@\tabskip\@centering\let\\=\@eqncr
$$\halign to \displaywidth\bgroup\@eqnsel\hskip\@centering\@eqcnt\z@
  $\displaystyle\tabskip\z@{##}$&\global\@eqcnt\@ne
  \hskip 1\arraycolsep \hfil$\displaystyle{##}$\hfil
  &\global\@eqcnt\tw@ \hskip 1\arraycolsep
$\displaystyle\tabskip\z@{##}$ \hfil
\tabskip\@centering&\global\@eqcnt\thr@@\llap{##}\tabskip\z@ \cr}

\def\endeqnarray{\@@eqncr\egroup
      \global\advance\c@equation\m@ne$$\global\@ignoretrue}

\def\@eqnnum{\hbox to 3cm{\phantom{\rm(\theequation)} \draftnote
                         \hfil}\hskip -3cm {\rm(\theequation)}}

\def\@@eqncr{\let\@tempa\relax
    \ifcase\@eqcnt \def\@tempa{& & &}\or \def\@tempa{& &}
      \or \def\@tempa{&}
      \or\def\@tempa{}
\fi\@tempa \if@eqnsw \if@fewtab\@eqnnum\fi
\stepcounter{equation}\fi\global
\@eqnswtrue\global\@eqcnt\z@\global\@fewtabtrue\cr}


\def\draftcite#1{\ifnum\draftcontrol=1#1\else{}\fi}

\def\@lbibitem[#1]#2{\item{}\hskip -3cm \hbox to 2cm
{\hfil$\scriptstyle\draftcite{#2}$}\hskip
1cm[\@biblabel{#1}]\if@filesw
     {\def\protect##1{\string ##1\space}\immediate
      \write\@auxout{\string\bibcite{#2}{#1}}}\fi\ignorespaces}

\def\@bibitem#1{\item\hskip -3cm \hbox to 2cm
{\hfil $\scriptstyle\draftcite{#1}$}\hskip 1cm \if@filesw
\immediate\write\@auxout
       {\string\bibcite{#1}{\the\value{\@listctr}}}\fi\ignorespaces}







\def\draftdate{\number\month/\number\day/\number\year\ \ \ \hourmin }
 \global\def\draftcontrol{0}
\catcode`\@=12

\def\theequation{{\thesection.\arabic{equation}}}



%

\def\qq{\begin{eqnarray}}
\def\qqq{\end{eqnarray}}
\def\ee{\begin{eqnarray}}
\def\eee{\end{eqnarray}}

\def\xlee#1{ \begin{eqnarray} \label{#1} }
\def\xeee{ \end{eqnarray} }
\def\ylee#1{ \begin{eqnarray}\nonumber }
\def\yeee{ \end{eqnarray} }
\def\zlee#1{ \begin{displaymath} }
\def\zleee{ \end{displaymath} }
\def\wlee#1{ $ }

\hyphenation{Rei-de-mei-ster} \hyphenation{Re-she-ti-khin}
\hyphenation{com-po-nent}

\newlength{\shiftwidth}
\addtolength{\shiftwidth}{\textwidth}
\addtolength{\shiftwidth}{0.7cm}
\def\shift#1{&&\hbox to \shiftwidth{\hfill $\displaystyle#1$}}
\newlength{\sshiftwidth}
\addtolength{\sshiftwidth}{\textwidth}
\addtolength{\sshiftwidth}{-0.7cm}
\def\sshift#1{\lefteqn{\hbox to
\sshiftwidth{\hfill$\displaystyle#1$}}}



%
%
%
%
\def\qbezier{\bezier{120}}
\setlength{\unitlength}{20pt}
\def\DottedCircle{
\bezier{4}(0.966,-0.259)(1.04,0)(0.966,0.259)
\bezier{4}(0.966,0.259)(0.897,0.518)(0.707,0.707)
\bezier{4}(0.707,0.707)(0.518,0.897)(0.259,0.966)
\bezier{4}(0.259,0.966)(0,1.04)(-0.259,0.966)
\bezier{4}(-0.259,0.966)(-0.518,0.897)(-0.707,0.707)
\bezier{4}(-0.707,0.707)(-0.897,0.518)(-0.966,0.259)
\bezier{4}(-0.966,0.259)(-1.04,0)(-0.966,-0.259)
\bezier{4}(-0.966,-0.259)(-0.897,-0.518)(-0.707,-0.707)
\bezier{4}(-0.707,-0.707)(-0.518,-0.897)(-0.259,-0.966)
\bezier{4}(-0.259,-0.966)(0,-1.04)(0.259,-0.966)
\bezier{4}(0.259,-0.966)(0.518,-0.897)(0.707,-0.707)
\bezier{4}(0.707,-0.707)(0.897,-0.518)(0.966,-0.259) }
%

%
%
\def\Endpoint[#1]{
\ifcase#1 \put(1,0){\circle*{0.15}}
\or\put(0.866,0.5){\circle*{0.15}}
\or\put(0.5,0.866){\circle*{0.15}} \or\put(0,1){\circle*{0.15}}
\or\put(-0.5,0.866){\circle*{0.15}}
\or\put(-0.866,0.5){\circle*{0.15}} \or\put(-1,0){\circle*{0.15}}
\or\put(-0.866,-0.5){\circle*{0.15}}
\or\put(-0.5,-0.866){\circle*{0.15}} \or\put(0,-1){\circle*{0.15}}
\or\put(0.5,-0.866){\circle*{0.15}}
\or\put(0.866,-0.5){\circle*{0.15}} \fi}
%
%
\def\Arc[#1]{
\thicklines         
\ifcase#1 \bezier{25}(0.966,-0.259)(1.04,0)(0.966,0.259) \or
\bezier{25}(0.966,0.259)(0.897,0.518)(0.707,0.707) \or
\bezier{25}(0.707,0.707)(0.518,0.897)(0.259,0.966) \or
\bezier{25}(0.259,0.966)(0,1.04)(-0.259,0.966) \or
\bezier{25}(-0.259,0.966)(-0.518,0.897)(-0.707,0.707) \or
\bezier{25}(-0.707,0.707)(-0.897,0.518)(-0.966,0.259) \or
\bezier{25}(-0.966,0.259)(-1.04,0)(-0.966,-0.259) \or
\bezier{25}(-0.966,-0.259)(-0.897,-0.518)(-0.707,-0.707) \or
\bezier{25}(-0.707,-0.707)(-0.518,-0.897)(-0.259,-0.966) \or
\bezier{25}(-0.259,-0.966)(0,-1.04)(0.259,-0.966) \or
\bezier{25}(0.259,-0.966)(0.518,-0.897)(0.707,-0.707) \or
\bezier{25}(0.707,-0.707)(0.897,-0.518)(0.966,-0.259) \fi}
%
%
\def\DottedArc[#1]{
\ifcase#1 \bezier{4}(0.966,-0.259)(1.04,0)(0.966,0.259) \or
\bezier{4}(0.966,0.259)(0.897,0.518)(0.707,0.707) \or
\bezier{4}(0.707,0.707)(0.518,0.897)(0.259,0.966) \or
\bezier{4}(0.259,0.966)(0,1.04)(-0.259,0.966) \or
\bezier{4}(-0.259,0.966)(-0.518,0.897)(-0.707,0.707) \or
\bezier{4}(-0.707,0.707)(-0.897,0.518)(-0.966,0.259) \or
\bezier{4}(-0.966,0.259)(-1.04,0)(-0.966,-0.259) \or
\bezier{4}(-0.966,-0.259)(-0.897,-0.518)(-0.707,-0.707) \or
\bezier{4}(-0.707,-0.707)(-0.518,-0.897)(-0.259,-0.966) \or
\bezier{4}(-0.259,-0.966)(0,-1.04)(0.259,-0.966) \or
\bezier{4}(0.259,-0.966)(0.518,-0.897)(0.707,-0.707) \or
\bezier{4}(0.707,-0.707)(0.897,-0.518)(0.966,-0.259) \fi}
%
%
\def\Chord[#1,#2]{
\thinlines \ifnum#1>#2\Chord[#2,#1] \else\ifnum#1<#2 \ifcase#1
\ifcase#2 \or\qbezier(1,0)(0.516,0.138)(0.866,0.5)
\or\qbezier(1,0)(0.45,0.26)(0.5,0.866)
\or\qbezier(1,0)(0.327,0.327)(0,1)
\or\qbezier(1,0)(0.179,0.311)(-0.5,0.866)
\or\qbezier(1,0)(0.0536,0.2)(-0.866,0.5) \or\put(1, 0){\line(-2,
0){2}} \or\qbezier(1,0)(0.0536,-0.2)(-0.866,-0.5)
\or\qbezier(1,0)(0.179,-0.311)(-0.5,-0.866)
\or\qbezier(1,0)(0.327,-0.327)(0,-1)
\or\qbezier(1,0)(0.45,-0.26)(0.5,-0.866)
\or\qbezier(1,0)(0.516,-0.138)(0.866,-0.5) \fi \or\ifcase#2\or
\or\qbezier(0.866,0.5)(0.378,0.378)(0.5,0.866)
\or\qbezier(0.866,0.5)(0.26,0.45)(0,1)
\or\qbezier(0.866,0.5)(0.12,0.446)(-0.5,0.866)
\or\qbezier(0.866,0.5)(0,0.359)(-0.866,0.5)
\or\qbezier(0.866,0.5)(-0.0536,0.2)(-1,0) \or\put(0.866,
0.5){\line(-5, -3){1.73}}
\or\qbezier(0.866,0.5)(0.146,-0.146)(-0.5,-0.866)
\or\qbezier(0.866,0.5)(0.311,-0.179)(0,-1)
\or\qbezier(0.866,0.5)(0.446,-0.12)(0.5,-0.866)
\or\qbezier(0.866,0.5)(0.52,0)(0.866,-0.5) \fi \or\ifcase#2\or\or
\or\qbezier(0.5,0.866)(0.138,0.516)(0,1)
\or\qbezier(0.5,0.866)(0,0.52)(-0.5,0.866)
\or\qbezier(0.5,0.866)(-0.12,0.446)(-0.866,0.5)
\or\qbezier(0.5,0.866)(-0.179,0.311)(-1,0)
\or\qbezier(0.5,0.866)(-0.146,0.146)(-0.866,-0.5) \or\put(0.5,
0.866){\line(-3, -5){1}} \or\qbezier(0.5,0.866)(0.2,-0.0536)(0,-1)
\or\qbezier(0.5,0.866)(0.359,0)(0.5,-0.866)
\or\qbezier(0.5,0.866)(0.446,0.12)(0.866,-0.5) \fi
\or\ifcase#2\or\or\or \or\qbezier(0,1.)(-0.138,0.516)(-0.5,0.866)
\or\qbezier(0,1.)(-0.26,0.45)(-0.866,0.5)
\or\qbezier(0,1.)(-0.327,0.327)(-1,0)
\or\qbezier(0,1.)(-0.311,0.179)(-0.866,-0.5)
\or\qbezier(0,1.)(-0.2,0.0536)(-0.5,-0.866) \or\put(0, 1){\line(0,
-2){2}} \or\qbezier(0,1.)(0.2,0.0536)(0.5,-0.866)
\or\qbezier(0,1.)(0.311,0.179)(0.866,-0.5) \fi
\or\ifcase#2\or\or\or\or
\or\qbezier(-0.5,0.866)(-0.378,0.378)(-0.866,0.5)
\or\qbezier(-0.5,0.866)(-0.45,0.26)(-1,0)
\or\qbezier(-0.5,0.866)(-0.446,0.12)(-0.866,-0.5)
\or\qbezier(-0.5,0.866)(-0.359,0)(-0.5,-0.866)
\or\qbezier(-0.5,0.866)(-0.2,-0.0536)(0,-1) \or\put(-0.5,
0.866){\line(3, -5){1}}
\or\qbezier(-0.5,0.866)(0.146,0.146)(0.866,-0.5) \fi
\or\ifcase#2\or\or\or\or\or
\or\qbezier(-0.866,0.5)(-0.516,0.138)(-1,0)
\or\qbezier(-0.866,0.5)(-0.52,0)(-0.866,-0.5)
\or\qbezier(-0.866,0.5)(-0.446,-0.12)(-0.5,-0.866)
\or\qbezier(-0.866,0.5)(-0.311,-0.179)(0,-1)
\or\qbezier(-0.866,0.5)(-0.146,-0.146)(0.5,-0.866) \or\put(-0.866,
0.5){\line(5, -3){1.73}} \fi \or\ifcase#2\or\or\or\or\or\or
\or\qbezier(-1,0)(-0.516,-0.138)(-0.866,-0.5)
\or\qbezier(-1,0)(-0.45,-0.26)(-0.5,-0.866)
\or\qbezier(-1,0)(-0.327,-0.327)(0,-1)
\or\qbezier(-1,0)(-0.179,-0.311)(0.5,-0.866)
\or\qbezier(-1,0)(-0.0536,-0.2)(0.866,-0.5) \fi
\or\ifcase#2\or\or\or\or\or\or\or
\or\qbezier(-0.866,-0.5)(-0.378,-0.378)(-0.5,-0.866)
\or\qbezier(-0.866,-0.5)(-0.26,-0.45)(0,-1)
\or\qbezier(-0.866,-0.5)(-0.12,-0.446)(0.5,-0.866)
\or\qbezier(-0.866,-0.5)(0,-0.359)(0.866,-0.5) \fi
\or\ifcase#2\or\or\or\or\or\or\or\or
\or\qbezier(-0.5,-0.866)(-0.138,-0.516)(0,-1)
\or\qbezier(-0.5,-0.866)(0,-0.52)(0.5,-0.866)
\or\qbezier(-0.5,-0.866)(0.12,-0.446)(0.866,-0.5) \fi
\or\ifcase#2\or\or\or\or\or\or\or\or\or
\or\qbezier(0,-1.)(0.138,-0.516)(0.5,-0.866)
\or\qbezier(0,-1.)(0.26,-0.45)(0.866,-0.5) \fi
\or\ifcase#2\or\or\or\or\or\or\or\or\or\or
\or\qbezier(0.5,-0.866)(0.378,-0.378)(0.866,-0.5) \fi\fi\fi\fi}
%
%
\def\FullChord[#1,#2]{
\Endpoint[#1] \Endpoint[#2] \Arc[#1] \Arc[#2] \Chord[#1,#2] }
%
%
\def\EndChord[#1,#2]{
\Endpoint[#1] \Endpoint[#2] \Chord[#1,#2] }
%
%
%
\def\Picture#1{
\begin{picture}(2,1)(-1,-0.167)
#1
\end{picture}
}
%
%
\def\DottedChordDiagram[#1,#2]{
\Picture{\DottedCircle \FullChord[#1,#2]} }
%

%
%
%



\def\ZZ{ \mathbb{Z} }
\def\IQ{ \mathbb{Q} }
\def\IC{ \mathbb{C} }
\def\IR{ \mathbb{R} }

\def\xo{ o }

\def\tx{ \tilde{x} }

\def\mfg{ \mathfrak{g} }

\def\mfG{ \mathfrak{G} }

\def\cB{ {\cal B} }

\def\cB{ \mathcal{B} }

\def\cF{ \mathcal{F} }

\def\IR{ \mathbb{R} }
\def\IC{ \mathbb{C} }

\def\a{ \alpha }

\def\k{ \kappa }

\def\tho{ \theta_1 }


\def\xa{ a }
\def\xb{ b }

\def\rmT{ \mathrm{T} }

\def\ZZ{\mathbb{Z} }

\def\dgTsc{ \deg_{\Tsc}}
\def\shdg#1{ \mathsf{#1}}
\def\shq{ \shdg{q}}
\def\sht{ \shdg{t}}
\def\sha{}




\usepackage{todonotes}

\begin{document}
\today  

\newlength{\gap}
\setlength{\gap}{0.4cm}

\setlength{\unitlength}{3947sp}
\setlength{\unitlength}{1mm}

\def\xG{ G }
\def\xGC{ \xG_{\IC} }
\def\xal{\alpha}
\def\xbet{\beta}
\def\xgm{\gamma}
\def\xLv{ \mathbb{L} }
\def\xft{ \mathfrak{h} }
\def\xftC{ \xft_{\IC} }
\def\xfg{ \mathfrak{g}}
\def\xfgC{ \xfg_{\IC} }

\def\xTr{ \mathbb{T} }

\def\xalc{ \xal_{\mathrm{c}} }
\def\lY{ Y }
\def\cY{ \mathcal{\lY} }
\def\oO{ O }
\def\oOcv#1{ \oO_{#1} }
\def\oOca{ \oOcv{\xalc} }
\def\oOs{ \oO^{\ast} }
\def\oOscv#1{ \oO_{\xLv;#1} }
\def\oOsca{ \oOscv{\xalc} }

\def\xa{ a }
\def\xb{ b }
\def\xc{ c }
\def\xaz{ \xa_0 }
\def\xbz{ \xb_0 }
\def\xcz{ \xc_0 }

\def\xain{ \xa_\infty }
\def\xbin{ \xb_\infty }
\def\xcin{ \xc_\infty }

\def\rT{ \mathrm{T} }
\def\rTs{ \rT^\ast }

\def\xg{ g }
\def\xgc{ \xg_{\mathrm{c}} }

\def\xV{ V }
\def\xcV{ \mathcal{V} }
\def\lYv#1{\lY_{\xLv;#1} }
\def\lYalc{ \lYv{\xalc} }
\def\cYv#1{\cY_{\xLv;#1} }
\def\cYalc{ \cYv{\xalc} }

\def\mnN{ N }
\def\mnNv#1{ N_{#1} }
\def\mnNo{ \mnNv{1}}
\def\mnNt{ \mnNv{2}}
\def\mnNopt{ \mnNo+\mnNt }

\def\cM{ \mathcal{M} }
\def\cMv#1{ \cM_{\xLv;#1} }
\def\cMalc{ \cMv{\xlac} }

\def\SUv#1{ \mathrm{SU}(#1) }
\def\SUN{ \SUv{\mnN} }

\def\SLv#1{ \mathrm{SL}(#1) }
\def\SLN{ \SLv{\mnN} }

\def\GLv#1{ \mathrm{GL}(#1) }
\def\GLN{ \GLv{\mnN} }

\def\glv#1{ \mathfrak{gl}(#1) }
\def\glN{ \glv{\mnN}}

\def\slv#1{ \mathfrak{sl}(#1) }
\def\slt{ \slv{2}}

\def\rmT{ \mathrm{T}}
\def\rmTs{ \rmT^*}
\def\rmTsB{ \rmTs\cB }
\def\mfgs{ \mfg^* }

\def\bmtr#1{ \begin{bmatrix} #1 \end{bmatrix} }
\def\spob{ O }

\def\flF{ F }

\def\cM{ \mathcal{M} }
\def\cMS{ \cM_{\mathrm{S}} }
\def\cF{ \mathcal{F} }
\def\cB{ \mathcal{B}}
\def\cN{ \mathcal{N}}
\def\xF{ F }
\def\SLNC{ \mathrm{SL}(N,\IC)}
\def\SLtC{ \mathrm{SL}(2,\IC)}
\def\slNC{ \mathfrak{sl}(N,\IC)}
\def\xB{ B }
\def\mfb{\mathfrak{b}}
\def\mfn{\mathfrak{n}}
\def\mfg{\mathfrak{g}}

\def\Wz{ W_0 }
\def\spW{ W }

\def\pidv#1{ \pi^*_{#1}}
\def\pido{ \pidv{1}}
\def\pidt{ \pidv{2}}
\def\pidot{ \pidv{12}}
\def\pidth{ \pidv{23}}
\def\pidoh{ \pidv{13}}

\def\piuv#1{ \pi_{*,#1}}
\def\piuoh{ \piuv{13}}

\def\piv#1{ \pi_{#1}}
\def\pio{ \piv{1}}
\def\pit{ \piv{2}}
\def\piot{ \piv{12}}
\def\pith{ \piv{23}}
\def\pioh{ \piv{13}}
\def\piij{ \piv{ij}}

\def\mfG{ \xG }


\def\MF{ \mathrm{MF}^{sc} }
\def\MFcrit{ \mathrm{MF}^{\mathrm{sc,crit}}}
\def\MFs{\mathrm{MF}}
\def\MFG{ \MF_{\mfG} }
\def\MFBpt{ \MF_{\mBp\times\mBp}}
\def\MFGvv#1#2{ \MFG(#1;#2) }
\def\MFBptvv#1#2{ \MFBpt(#1;#2) }
\def\MFGXgW{ \MFGvv{\Xg}{\mfW} }
\def\MFGtW{ \MFGvv{\mfg\times\rmTsB\times\rmTsB}{\spW}}
\def\MFBptW{ \MFBptvv{\xmvr}{\spW} }


\def\xmvr{ \mfg\times\mfnp\times\mfnp \times\xG }

\def\MFvvv#1#2#3{ \MF_{#1} (#2;#3) }

\def\mfcE{ E }
\def\mfcF{ F }

\def\brgr{ \mathfrak{Br}}
\def\brgrv#1{ \brgr_{#1}}
\def\brgrN{ \brgrv{\mnN} }
\def\brgrNo{ \brgrv{\mnNo} }
\def\brgrNt{ \brgrv{\mnNt} }
\def\brgrNopt{ \brgrv{\mnNopt}}

\def\Ob{ \mathrm{Ob}}

\def\ctb#1{ [\![#1]\!] }
\def\dmm{ \cdot }

\def\brb{ \beta }
\def\brbv#1{ \brb_{#1}}
\def\brbo{ \brbv{1}}
\def\brbt{ \brbv{2}}

\def\xId{ \mathbbm{1}}
\def\xIdv#1{ \xId_{#1}}

\def\mB{ B }
\def\mBp{ \mB_+ }
\def\mfnp{ \mfn_+ }
\def\mfbp{ \mfb_+ }

\def\xX{ X }
\def\xY{ Y }
\def\xYv#1{ \xY_{#1} }
\def\xYz{ \xYv{0} }
\def\xYo{ \xYv{1} }
\def\xYt{ \xYv{2} }
\def\xYh{ \xYv{3} }
\def\xYi{ \xYv{i} }

\def\gxv#1{ g_{#1}}
\def\gxo{ \gxv{1}}
\def\gxt{ \gxv{2}}
\def\gxh{ \gxv{3}}
\def\gxot{ \gxv{12}}
\def\gxth{ \gxv{23}}
\def\gxoh{ \gxv{13}}

\def\xmctt{ \End(\spob) }

\def\xmtr#1{ \begin{bmatrix} #1 \end{bmatrix} }
\def\kmtr#1{ \begin{bmatrix} #1 \end{bmatrix} }

\def\dltv#1{ \delta_{#1} }
\def\dlto{ \dltv{1} }
\def\dltt{ \dltv{2} }

\def\xv#1{ x_{#1} }
\def\xo{ \xv{1} }
\def\xz{ \xv{0} }
\def\xmo{ \xv{-1}}

\def\yv#1{ y_{#1} }
\def\yo{ \yv{1} }
\def\yt{ \yv{2} }
\def\yi{ \yv{i} }

\def\exa{ a }
\def\exav#1{ \exa_{#1} }
\def\exaoo{ \exav{11} }
\def\exaot{ \exav{12} }
\def\exato{ \exav{21} }
\def\exatt{ \exav{22} }

\def\txz{ \tilde{x}_0 }
\def\ttxz{ \tilde{\tilde{x}}_0 }

\def\xxA{ A }
\def\Zt{ \ZZ_2 }
\def\xD{ D }

\def\mBpdb{\mBp\!\times\!\mBp}
\def\xxM{ M }
\def\xxV{ V }
\def\xdl{ d_{\mathrm{l}} }
\def\xdr{ d_{\mathrm{r}} }

\def\Ksz{ \mathrm{K} }
\def\Kszvvv#1#2#3{ \Ksz(#1;#2,#3) }
\def\ktht{\theta}


\def\crX{\mathcal{C}}
\def\crXv#1{ \crX_{#1}}
\def\crXpr{\crXv{\parallel}}
\def\crXbl{\crXv{\bullet}}
\def\crXp{\crXv{+}}
\def\crXn{\crXv{-}}

\def\brwsh#1#2{ { \langle #1,#2 \rangle } }
\def\brhsh#1#2#3{{ \langle #1,#2,#3 \rangle }}
\def\ztsh#1{ [#1] }
\def\ztsho{ \ztsh{1} }

\def\rgR{ R }
\def\rgRshv#1#2{ \rgR_{\scriptscriptstyle \brwsh{#1}{#2 } } }
\def\rgRmomo{ \rgRshv{-1}{-1}}

\def\mcnv{ * }

\def\xchv#1{ \chi_{#1} }
\def\xchp{ \xchv{+} }
\def\xchm{ \xchv{-} }

\def\exav#1{a_{#1}}
\def\exaoo{\exav{11}}
\def\exaot{\exav{12}}
\def\exato{\exav{21}}
\def\exatt{\exav{22}}
\def\exbv#1{b_{#1}}
\def\exboo{\exbv{11}}
\def\exbot{\exbv{12}}
\def\exbto{\exbv{21}}
\def\exbtt{\exbv{22}}
\def\excv#1{c_{#1}}
\def\excoo{\excv{11}}
\def\excot{\excv{12}}
\def\excto{\excv{21}}
\def\exctt{\excv{22}}

\def\xXtl{ \tilde{\xX} }
\def\dgBs{ \deg_{\mflg^2}}
\def\dltv#1{ \delta_{#1}}
\def\dlto{ \dltv{1} }
\def\dltt{ \dltv{2}}
\def\dlth{ \dltv{3}}


\def\yh{y_3}


\def\xYo{ \xY_1 }
\def\xYt{ \xY_2 }
\def\xYpo{ \xYp_1}
\def\xYpt{ \xYp_2}
\def\yo{ y_1 }
\def\yt{ y_2 }
\def\xgo{ g_1 }
\def\xgt{ g_2 }
\def\xg{ g }
\def\xgot{ g_{12} }
\def\xgth{ g_{23} }
\def\xgoh{ g_{13} }
\def\xgof{ g_{14} }
\def\xghf{ g_{34} }
\def\xigot{ g^{-1}_{12} }
\def\xigth{ g^{-1}_{23} }
\def\xigoh{ g^{-1}_{13} }
\def\xigof{ g^{-1}_{14} }
\def\xighf{g^{-1}_{34} }
\def\xh{ h }
\def\xho{ h_1 }
\def\xht{ h_2 }
\def\xXtl{ \tilde{\xX} }
\def\dgBs{ \deg_{\mflg^2}}
\def\dltv#1{ \delta_{#1}}
\def\dlto{ \dltv{1} }
\def\dltt{ \dltv{2}}

\def\xbmtr#1{\begin{bmatrix} #1 \end{bmatrix}}



\def\exav#1{a_{#1}}
\def\exaoo{\exav{11}}
\def\exaot{\exav{12}}
\def\exato{\exav{21}}
\def\exatt{\exav{22}}
\def\exaoh{\exav{13}}
\def\exath{\exav{23}}
\def\exaho{\exav{31}}
\def\exaht{\exav{32}}
\def\exahh{\exav{33}}
\def\exbv#1{b_{#1}}
\def\exboo{\exbv{11}}
\def\exbot{\exbv{12}}
\def\exbto{\exbv{21}}
\def\exbtt{\exbv{22}}
\def\exboh{ \exbv{13}}
\def\exbth{ \exbv{23} }
\def\exbhh{ \exbv{33}}
\def\exbho{ \exbv{31}}
\def\exbht{ \exbv{32} }
\def\excv#1{c_{#1}}
\def\excoo{\excv{11}}
\def\excot{\excv{12}}
\def\excto{\excv{21}}
\def\exctt{\excv{22}}
\def\excoh{ \excv{13}}
\def\excth{ \excv{23} }
\def\exchh{ \excv{33}}
\def\excho{ \excv{31}}
\def\excht{ \excv{32} }


\def\xoo{x_{11}}
\def\xot{x_{12}}
\def\xoh{x_{13}}
\def\xto{x_{21}}
\def\xtt{x_{22}}
\def\xth{x_{23}}
\def\xho{x_{31}}
\def\xht{x_{32}}
\def\xhh{x_{33}}
\def\txoo{\tx_{11}}
\def\txot{\tx_{12}}
\def\txoh{\tx_{13}}
\def\txto{\tx_{21}}
\def\txtt{\tx_{22}}
\def\txth{\tx_{23}}
\def\txho{\tx_{31}}
\def\txht{\tx_{32}}
\def\txhh{\tx_{33}}


\def\brh{ h }
\def\brht{ \brh_2 }
\def\briht{ \brh^{-1}_2 }
\def\ybet{ \beta }
\def\ybetth{ \ybet_{23} }
\def\ybettt{ \ybet_{22} }
\def\ybethh{ \ybet_{33} }


\newcommand{\bpi}{\bar{\pi}}
\newcommand{\bC}{\bar{C}}
\newcommand{\bR}{\bar{R}}
\newcommand{\bcalX}{\bar{\calX}}
\newcommand{\brpi}{\bar{\pi}}
\newcommand{\bcalC}{\bar{\calC}}


\newcommand{\scX}{\mathscr{X}}


\newcommand{\bfG}{\mathbf{G}}
\newcommand{\bfB}{\mathbf{B}}
\newcommand{\bfQ}{\mathbf{Q}}


\newcommand{\calB}{\mathcal{B}}
\newcommand{\calC}{\mathcal{C} }
\newcommand{\calF}{\mathcal{F}}
\newcommand{\calG}{\mathcal{G}}
\newcommand{\calE}{\mathcal{E}}
\newcommand{\calI}{\mathcal{I}}
\newcommand{\calH}{\mathcal{H}}
\newcommand{\calO}{\mathcal{O}}
\newcommand{\calP}{\mathcal{P}}
\newcommand{\calS}{\mathcal{S}}
\newcommand{\calK}{\mathcal{K}}
\newcommand{\calT}{\mathcal{T}}
\newcommand{\calZ}{\mathcal{Z}}


\newcommand{\bbS}{\mathbb{S}}

\newcommand{\cont}{\mathrm{Cont}}
\newcommand{\forg}{\mathrm{fgt}}
\newcommand{\ir}{\bar{i}}
\newcommand{\cl}{\mathrm{cl}}

\newcommand{\cohog}[2]{\mathrm{H}^{#1}(#2)}
\newcommand{\CE}{\mathrm{CE}}
\newcommand{\bCE}{\overline{\mathrm{CE}}}
\newcommand{\Com}{\mathrm{Com}}
\newcommand{\Mod}{\mathrm{Mod}}
\newcommand{\odel}{\stackon{\otimes}{\scriptstyle\Delta}}

\newcommand{\CC}{\mathbb{C}}

\newcommand{\Hilb}{\mathrm{Hilb}}
\newcommand{\Fl}{\mathrm{Fl}}
\newcommand{\calX}{\scX}
\newcommand{\calXr}{\overline{\scX}}
\newcommand{\calXsm}{\overline{\mathcal{X}}}
\newcommand{\calXsmu}{\underline{\calXsm}}
\newcommand{\calXu}{\underline{\calX}}
\newcommand{\St}{\mathrm{St}}
\newcommand{\scXr}{\bar{\mathscrbf{X}}}
\newcommand{\calCV}{\mathcal{CV}}

\newcommand{\supp}{\mathrm{supp}}
\newcommand{\Ker}{\mathrm{Ker}}
\newcommand{\adj}{\mathrm{adj}}
\newcommand{\ind}{\mathrm{ind}}
\newcommand{\indb}{\overline{\mathrm{ind}}}
\newcommand{\Ind}{\mathrm{Ind}}
\newcommand{\Indb}{\overline{\Ind}}
\newcommand{\Lie}{\mathrm{Lie}}
\newcommand{\xwr}{\mathrm{wr}}
\newcommand{\Trg}{\mathcal{T}r}

\newcommand{\frb}{\mathfrak{b}}
\newcommand{\frh}{\mathfrak{h}}
\newcommand{\frn}{\mathfrak{n}}
\newcommand{\frt}{\mathfrak{t}}
\newcommand{\gl}{\mathfrak{gl}}
\newcommand{\frg}{\mathfrak{g}}
\newcommand{\frq}{\mathfrak{q}}

\newcommand{\Brgr}{\mathfrak{Br}}

\newcommand{\Icr}{\mathcal{I}^{crit}}
\newcommand{\Spec}{\mathrm{Spec}}
\newcommand{\Ann}{\mathrm{Ann}}
\newcommand{\Brgrn}{\Brgr_n}



\newcommand{\Wr}{\overline{W}}


\newcommand{\Tsc}{T_{sc}}
\newcommand{\GL}{\mathrm{GL}}


\thanks{The work of A.O. was supported in part by the Sloan Foundation the NSF CAREER grant DMS-1352398}
\thanks{The work of L.R. was supported in part by  the NSF grant DMS-1108727}

\title{Knot Homology and sheaves on the Hilbert scheme of points on the plane}
\author[A.~Oblomkov]{Alexei Oblomkov}
\address{
A.~Oblomkov\\
Department of Mathematics and Statistics\\
University of Massachusetts at Amherst\\
Lederle Graduate Research Tower\\
710 N. Pleasant Street\\
Amherst, MA 01003 USA
}
\email{oblomkov@math.umass.edu}

\author[L.~Rozansky]{Lev Rozansky}
\address{
L.~Rozansky\\
Department of Mathematics\\
University of North Carolina at Chapel Hill\\
CB \# 3250, Phillips Hall\\
Chapel Hill, NC 27599 USA
}
\email{rozansky@math.unc.edu}

\begin{abstract} For each braid $\beta\in \Brgrn$ we construct a $2$-periodic complex $\bbS_\beta$ of  quasi-coherent $\CC^*\times \CC^*$-equivariant sheaves  on the non-commutative nested Hilbert
scheme $\Hilb_{1,n}^{free}$.
We show that
the triply graded vector space of the hypercohomology $ \mathbb{H}( \bbS_{\beta}\otimes \wedge^\bullet (\calB))$ with $\calB$ being tautological vector bundle, is an isotopy invariant of the  knot obtained by the
closure of $\beta$. We also show that the support of cohomology of the complex $\bbS_\beta$ is supported on the ordinary nested Hilbert
scheme $\Hilb_{1,n}\subset\Hilb_{1,n}^{free}$, that allows us to relate the triply graded knot homology to the sheaves on $\Hilb_{1,n}$ \end{abstract}


\maketitle
\tableofcontents

\section{Introduction}\label{sec:intro}

\subsection{Motivation}
In recent years there was a lot of interest in establishing a connection between  knot homology, Macdonald polynomials and Hilbert schemes of points on a plane
\cite{AS},\cite{Ch},\cite{GN},\cite{GORS},\cite{ORS}.
In particular, there are many conjectural
formulas expressing the knot homology of torus knots in terms of Macdonald polynomials or in terms of spaces of sections of special sheaves on the Hilbert schemes of points \cite{ORS},\cite{GORS},\cite{GN}. An agreement between these formulas is an interesting result at the intersection of algebra and geometry.
%
%

When this paper was posted and its results were announced, the  attempts to attack these conjectures directly, without a new computational method, were not succeeding. More recently, a version of a categorical localization was developed and applied to some of these conjectures in  \cite{Ho,EHo,Me}.


The current mathematical construction of triply graded homology based on Soergel bimodules is not related to Hilbert schemes of points. Hence the approach of  \cite{Ho,EHo,Me} is to compute the Soergel-based knot homology explicitly
and compare the answer with the homology of sheaves on the Hilbert scheme where the answer is known. One would still like to find a more direct link between the knot homology and sheaves on the Hilbert scheme of points on the plane.
Around the time of posting of this paper the authors of
\cite{GNR} proposed a conjectural connection between the Soergel bimodules and the category of coherent sheaves on the Hilbert scheme of points.

Our approach is different from that of \cite{GNR}. Roughly speaking, to a braid $\beta\in\Brgrn$ we associate a sheaf $\calS_{\beta}$ on the Hilbert scheme $\SymbolPrint{\Hilb_n}$ of $n$ points on $\CC^2$. The space of global sections of $\calS_\beta$ is the homology of the link $L(\beta)$ which is the closure of $\beta$.

The construction of $\calS_\beta$ is quite involved and relies on a homomorphism from the braid group $\Brgrn$ to the convolution algebra of a category of matrix factorizations on an auxiliary space. We expect this homomorphism to be injective, but we neither prove nor use this fact in the paper.
%

Compared to Soergel bimodule construction, our approach is more complicated conceptually, but it is more geometric and, surprisingly, in some important cases it is less computationally challenging.
For example, in a subsequent paper \cite{OR} we  show that adding a full twist $\SymbolPrint{Tw}$ to the braid results in the twisting of $\calS_{\beta}$ by a special line bundle:
$\calS_{\beta\cdot Tw}=\calS_{\beta}\otimes L$.  Existence of a knot theory with such property was predicted in \cite{ORS}.


\subsection{Main result}
 We consider a `non-commutative analog' of the nested Hilbert scheme defined as a quotient
  $\SymbolPrint{\Hilb_{1,n}^{free}}=\SymbolPrint{\widetilde{\Hilb}_{1,n}^{free}}/\SymbolPrint{B}$, where
 \[
 \widetilde{\Hilb}_{1,n}^{free} = \{
 (X,Y,v)\in \frb\times\frn\times V\,|\,\CC\langle X,Y\rangle v=\SymbolPrint{V}
 \}
 \]
 is an open subset of the product $\frn\times\frb\times V$ ($\frn,\frb\subset\mathfrak{gl}_n(\SymbolPrint{V})$ being the nilpotent radial and Borel subalgebra), the Borel subgroup $B\subset\mathrm{GL}_n$ acts on the first two factors by conjugation and  $V$ is its fundamental representation.

Denote a flag $ V_\bullet=(V_1\subset\cdots \subset V_{n-1}\subset V_n=V)\in \mathrm{Fl}$ and denote a point in the cotangent bundle $T^*\mathrm{Fl}$ as a pair \( (Y,V_\bullet )\), where $Y$ is a nilpotent matrix preserving the flag: $Y V_i\subset V_{i-1}$.

In order to  avoid a quotient by a non-reductive group \(B\) we can define \(\Hilb^{free}_{1,n}\) equivalently as a GIT \(\mathrm{GL}_n\)-quotient of
%
%
 the space  of quadruples \((X,Y,V_\bullet,v)\in \frg\times T^*\mathrm{Fl}\times V\) such that \(\CC\langle X,Y\rangle v=V\) and \(X V_\bullet\subset V_\bullet\).
 The fact that these definitions match follows from covering
  \(\widetilde{\Hilb}_{1,n}\) by affine charts with free \(B\)-action, for details see proposition~\ref{prop: aff cover}. The affine cover also provides a "hands on" description of the algebraic structure of the quotient manifold.

Let us denote by  $\SymbolPrint{\calB^\vee}$  the trivial vector bundle over $\widetilde{\Hilb}_{1,n}^{free}$ with the fiber $V$, we use the same notation for
 its   descent to the $B$-quotient. Respectively, $\SymbolPrint{\calB}$ is the vector bundle dual to $\calB^\vee$.


The two-dimensional torus
$\SymbolPrint{\Tsc}=\CC^*\times\CC^*$ acts on  $\Hilb_{1,n}^{free}$ by scaling $X,Y$: an element $(\lambda,\mu)\in T_{sc}$ turns a pair of matrices $(X,Y)$ into $(\lambda^{-2}X,\lambda^2 \mu^2Y)$.
We refer to the weights of this action as $q$-degree and $t$-degree.
The torus \(T_{sc}\) acts on the space \(V\) by \((\lambda,\mu)\cdot v=\lambda v\), that is the vectors of \(V\) have degree \(t\).
We use notation \(\SymbolPrint{\shq^k\sht^l}\) for the shift of the \(T_{sc}\)-action: \[(\lambda,\mu)(\shq^k\sht^l\cdot x)=\lambda^k\mu^l (\lambda,\mu)\cdot x.\]

We denote by $\SymbolPrint{D^{per}_{T_{sc}}(\Hilb_{1,n}^{free})}$ the derived category of two-periodic
 complexes on $\Hilb_{1,n}^{free}$ which are $T_{sc}$-equivariant. Let $\Brgrn$ denote a braid group with $n$ strands and let $\SymbolPrint{\xwr(\beta)}$ denote the writhe of a braid $\beta$
defined as the difference between the positive and negative crossings.
Our main result is the construction of the object
$$ \SymbolPrint{\bbS_\beta}\in D^{per}_{T_{sc}}(\Hilb_{1,n}^{free}),$$
such that the hypercohomology of the complex
$$ \SymbolPrint{\mathbb{H}^k(\beta)}:=\mathbb{H}(\bbS_\beta\otimes \wedge^k\calB)$$
defines an isotopy invariant of $L(\beta)$. To simplify notations we assume  \(\mathbb{H}^k(\dots)=0\) for \(k\notin \{0,1,\dots,n\}\).

\begin{theorem}\label{thm: intro main} For any $\beta\in \Brgr_n$ the triply graded space:
\[\SymbolPrint{\mathcal{H}(\beta)}:=\oplus_{k\in\ZZ}\SymbolPrint{\mathcal{H}^k(\beta)},\]
with \(\mathcal{H}^k(\beta)\)  doubly graded  defined by
$$\calH^k(\beta):=\shq^{-\xwr(\beta)+n}\cdot\mathbb{H}^{(k-\xwr(\beta)+n-1)/2}(\beta),$$
is an isotopy invariant of the braid closure $L(\beta)$.
\end{theorem}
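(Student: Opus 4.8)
The plan is to invoke Markov's theorem: two braids $\beta\in\Brgr_n$ and $\beta'\in\Brgr_{n'}$ have isotopic closures $L(\beta)\simeq L(\beta')$ if and only if $\beta'$ is obtained from $\beta$ by a finite chain of (MI) conjugations $\beta\rightsquigarrow\gamma\beta\gamma^{-1}$ inside $\Brgr_n$ and (MII) positive and negative stabilizations $\beta\rightsquigarrow\beta\,\sigma_n^{\pm1}\in\Brgr_{n+1}$. By the construction carried out in the body of the paper, $\beta\mapsto\bbS_\beta$ depends only on the group element $\beta$: it is the image of $\beta$ under the braid-group homomorphism into the convolution category of matrix factorizations on the auxiliary space, followed by the closure functor, so the braid relations are respected automatically. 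It therefore remains to show that the normalized space $\mathcal{H}(\beta)$ is invariant under (MI) and (MII).

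\emph{Markov move (MI).} The closure functor $\mathcal{E}\mapsto\mathbb{H}(\mathcal{E}\otimes\wedge^\bullet\calB)$ is a cyclic trace on the convolution category, and the images of $\gamma$ and $\gamma^{-1}$ are mutually inverse there; cycling the outer conjugation through the trace gives $\mathbb{H}^j(\gamma\beta\gamma^{-1})\cong\mathbb{H}^j(\beta)$ as doubly graded $\Tsc$-modules for every $j$. Since the writhe $\xwr$ and the strand number $n$ are conjugation invariants, the prefactor $\shq^{-\xwr(\beta)+n}$ and the reindexing $(k-\xwr(\beta)+n-1)/2$ in the definition of $\mathcal{H}^k$ are unaffected, whence $\mathcal{H}(\gamma\beta\gamma^{-1})=\mathcal{H}(\beta)$. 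Once the cyclicity of the closure is in place this step is essentially formal.

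\emph{Markov move (MII).} This is the heart of the argument. Write $\calB_n$ and $\calB_{n+1}$ for the tautological bundles on $\Hilb_{1,n}^{free}$ and $\Hilb_{1,n+1}^{free}$. The first step is to build an explicit correspondence modeling the insertion of an $(n+1)$-st strand in generic (disjoint) position — a locally closed locus of $\Hilb_{1,n+1}^{free}$ fibered over $\Hilb_{1,n}^{free}$, visible in the affine charts of Proposition~\ref{prop: aff cover} — and to identify the elementary sheaves $\bbS_{\sigma_n^{\pm1}}$ in that chart, where I expect them to be Koszul-type matrix factorizations supported on this locus. Pushing $\bbS_{\beta\sigma_n}$ forward along the correspondence, using the splitting $\wedge^\bullet\calB_{n+1}\simeq\wedge^\bullet\calB_n\otimes(\calO\oplus\mathcal{L})$ for the new line $\mathcal{L}$ together with a projection formula, should yield a reduction isomorphism
\[
\mathbb{H}\!\left(\bbS_{\beta\sigma_n}\otimes\wedge^{j}\calB_{n+1}\right)\;\cong\;\mathbb{H}\!\left(\bbS_\beta\otimes\wedge^{j}\calB_n\right),
\]
while for the negative crossing the same computation should give
\[
\mathbb{H}\!\left(\bbS_{\beta\sigma_n^{-1}}\otimes\wedge^{j+1}\calB_{n+1}\right)\;\cong\;\shq^{-2}\,\mathbb{H}\!\left(\bbS_\beta\otimes\wedge^{j}\calB_n\right)
\]
up to an overall homological (parity) shift. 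One then checks that these are precisely the shifts demanded by the changes $(\xwr,n)\mapsto(\xwr+1,n+1)$ and $(\xwr,n)\mapsto(\xwr-1,n+1)$: in the positive case $-\xwr+n$ and $(k-\xwr+n-1)/2$ are both left fixed, so no correction is needed, whereas in the negative case the normalization already supplies the compensating $\shq^2$, the unit shift of the $\wedge$-grading, and the homological shift. Combining (MI) and (MII) with Markov's theorem then proves that $\mathcal{H}(\beta)$ depends only on $L(\beta)$.

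\emph{Main obstacle.} The hard part is (MII). Unlike conjugation, stabilization genuinely enlarges the ambient geometry — the dimension of $\Hilb_{1,n}^{free}$ jumps — so there is no way around an explicit description of $\bbS_{\sigma_n^{\pm1}}$ and of its pushforward along the $n\rightsquigarrow n+1$ correspondence. The delicate point is to keep track of the $q$-, $t$- and $\wedge$-gradings simultaneously and to confirm that the left/right asymmetry between $\sigma_n$ and $\sigma_n^{-1}$ matches exactly the asymmetry built into the normalization of $\mathcal{H}^k$, i.e. that the framing factor $\shq^{-\xwr(\beta)+n}$ together with the reindexing is forced by stabilization invariance rather than merely a convenient choice.
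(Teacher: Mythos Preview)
Your strategy matches the paper's: Markov's theorem, (MI) by cyclicity of the closure (the paper uses the $\ZZ_\ell$-symmetry of $\calX_{\ell,fc}$ together with the isomorphisms $\pi_i^*\calB\cong\pi_{i+1}^*\calB$ furnished by $g_{i,i+1}$), and (MII) by a pushforward from $n$ strands to $n-1$. The outline and the identification of (MII) as the hard step are correct.

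For (MII), however, your mechanism is incomplete. You propose to split $\wedge^\bullet\calB_{n+1}\simeq\wedge^\bullet\calB_n\otimes(\calO\oplus\mathcal{L})$ and apply a projection formula, but that alone does not explain why $\bbS_{\beta\sigma_1^{\pm1}}$ factors as a pullback of $\bbS_{\beta'}$ tensored with something whose pushforward is controllable. The paper works in the convolution category \emph{before} closure: since $\beta=1\boxtimes\beta'$ one has $\bcalC_\beta=\indb_{n-1}(\mathbf{1}\boxtimes\bcalC_{\beta'})$, and Proposition~\ref{prop: small prod 1} reduces $\bcalC_\beta\bar\star\bcalC_{\sigma_1^{\pm1}}$ to a rank-$2$ convolution. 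After closure the crossing restricts to the elementary Koszul complex $[x_{11}-x_{22},0]^{\pm}$, and --- this is the ingredient your sketch does not anticipate --- the induction functor itself leaves behind an additional Koszul factor $\Lambda^\bullet V_{12}$ with $\dim V_{12}=n-2$, which descends on $\Hilb^{free}_{1,n}$ to $\Lambda^\bullet\calO_n(-1)^{\oplus(n-2)}$. It is this spread of $\calO_n(-1)$-twists, combined with the extra $\calO_n(-1)$ coming from $\crX_-$, that lands exactly in the vanishing range of Proposition~\ref{prop: push forward} for the $\mathbb{P}^{n-1}$-bundle $p\colon\Hilb^{free}_{1,n}\to\Hilb^{free}_{1,n-1}$: all terms with $-n+2\le m\le -1$ die, leaving only the $m=0$ piece ($\wedge^k\calB_{n-1}$) in the positive case and the $m=-(n-1)$ piece ($\wedge^{k-1}\calB_{n-1}$, up to a homological shift) in the negative case. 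Without the Koszul factor from induction and the full vanishing lemma, your splitting-plus-projection step would not go through; the naive filtration of $\wedge^\bullet\calB_n$ by itself neither isolates the pulled-back factor $p^*(j_e^*\bcalC_{\beta'})$ needed for the projection formula nor generates the range of twists that the $\mathbb{P}^{n-1}$-cohomology kills.
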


Let us also introduce a notation for the graded pieces of $\calH^k$: $\SymbolPrint{\calH^{i,j,k}}\subset \calH^k$ is the subspace of vectors that are of weight
$q^it^j$.
We show that our invariant categorifies HOMFLY-PT polynomial.

\def\Hvv#1{ \calH^{#1}}
\def\Hijk{ \Hvv{i,j,k}}

\begin{theorem}\label{thm:HOMFLY}
The bi-graded Euler characteristic of $\calH^k(\beta)$ is equal to the HOMFLY-PT polynomial of the link $L(\beta)$:
\def\Phmf{P}
\[
\Phmf\bigl(L(\beta)\bigr) =\sum_{i,j,k\in\ZZ} (-1)^j q^i a^k\dim\Hijk(\beta).
\]
\end{theorem}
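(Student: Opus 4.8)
The plan is to reduce Theorem~\ref{thm:HOMFLY} to the case of a single crossing (the skein relation) together with the value on the unknot, and then invoke the fact that the HOMFLY-PT polynomial is uniquely determined by these two pieces of data. More precisely, since Theorem~\ref{thm: intro main} already guarantees that the right-hand side $\sum_{i,j,k}(-1)^j q^i a^k\dim\Hijk(\beta)$ is an isotopy invariant of $L(\beta)$, it suffices to verify that this Laurent polynomial satisfies the normalized HOMFLY-PT skein relation and takes the correct value on the $n$-component unlink. First I would compute $\calH(\beta)$ explicitly when $\beta$ is the trivial braid on $n$ strands: here $\bbS_\beta$ should be (a shift of) the structure sheaf or a Koszul-type complex, and the hypercohomology $\mathbb{H}(\bbS_\beta\otimes\wedge^\bullet\calB)$ collapses to the cohomology of $\wedge^\bullet$ of the tautological bundle over the relevant fixed locus, reproducing $\bigl((a-a^{-1})/(q-q^{-1})\cdot\text{(something)}\bigr)$ appropriately; tracking the normalization factor $\shq^{-\xwr(\beta)+n}$ and the index shift $(k-\xwr(\beta)+n-1)/2$ is exactly what makes the unknot come out to $(a-a^{-1})/(q-q^{-1})$ (or $1$, depending on convention) and the unlink to its $n$-fold product.

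Next I would establish the skein relation at the level of complexes. The braid generators $\sigma_i$ and $\sigma_i^{-1}$ act on $\bbS_\beta$ through the braid-group homomorphism into the convolution algebra of matrix factorizations described in the body of the paper; the key structural input is that there is a distinguished (cone) triangle relating the three objects obtained from a braid word by replacing a crossing with $\sigma_i$, $\sigma_i^{-1}$, and the identity (the ``oriented resolution''). Applying $\mathbb{H}(-\otimes\wedge^\bullet\calB)$ to this triangle and passing to Euler characteristics — which is exact on two-periodic complexes and kills the connecting maps — yields a linear relation among the three bi-graded Euler characteristics. Then I would carefully match the $q,t,a$-weights of the maps in the triangle against the shifts in the definition of $\calH$: the writhe $\xwr(\beta)$ changes by $\pm 1$ under $\sigma_i^{\pm 1}\mapsto \mathbbm{1}$, so the normalization $\shq^{-\xwr(\beta)+n}$ contributes exactly the monomial prefactors $q^{\mp 1}$ (or $a^{\pm1}$, per convention) that convert the bare cone relation into the genuine HOMFLY-PT skein relation $a\,P(L_+) - a^{-1}P(L_-) = (q-q^{-1})P(L_0)$.

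The main obstacle I anticipate is bookkeeping, not conceptual: pinning down the precise internal degree shifts ($q$-degree, $t$-degree, and the $\wedge^k\calB$/$a$-degree) carried by each morphism in the crossing triangle, and reconciling them with the somewhat elaborate regrading $\calH^k(\beta)=\shq^{-\xwr(\beta)+n}\cdot\mathbb{H}^{(k-\xwr(\beta)+n-1)/2}(\beta)$. In particular one must check that the parity $(-1)^j$ correctly encodes the homological sign coming from the connecting homomorphism of the triangle (so that the alternating sum over $j$ genuinely computes an Euler characteristic of a cone), and that the half-integer-looking index $(k-\xwr(\beta)+n-1)/2$ is always an integer on the nose for every braid word — this follows from a parity argument relating $\xwr(\beta)$, $n$, and the two-periodicity, but it needs to be spelled out. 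A secondary technical point is to make sure the hypercohomology groups are finite-dimensional in each tri-degree so that the Euler characteristic is well-defined as a Laurent polynomial; this should follow from the support statement (cohomology of $\bbS_\beta$ living on $\Hilb_{1,n}\subset\Hilb_{1,n}^{free}$, which is of finite type) together with the $T_{sc}$-equivariance forcing finite-dimensionality of weight spaces. Once these compatibilities are in hand, uniqueness of the HOMFLY-PT polynomial from the skein relation and the unknot value finishes the proof.
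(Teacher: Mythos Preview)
Your overall strategy --- verify a skein relation and the unknot value, then appeal to uniqueness --- is exactly what the paper does (Theorem~\ref{thm:HOMFLYPT}). But there is a structural inaccuracy in your key step: there is \emph{no} single distinguished triangle relating the three objects $\sigma_i$, $\sigma_i^{-1}$, and the identity resolution. What the paper actually has (Proposition~\ref{prop: cones}) is a pair of cone decompositions
\[
\crX_+ \sim (\shq\sht)^{-1}\bigl(\crX_\parallel \xrightarrow{\chi_+} \crX_\bullet\langle -1,-1\rangle\bigr),\qquad
\crX_- \sim (\shq\sht)^{-1}\bigl(\crX_\bullet\langle -1,-1\rangle \xrightarrow{\chi_-} (\shq\sht)^2\,\crX_\parallel\langle -1,1\rangle\bigr),
\]
each involving a \emph{fourth} object, the ``blob'' matrix factorization $\crX_\bullet$ (the MOY singular crossing). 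Passing to K-theory (i.e.\ setting $t=-1$) gives two linear relations among $[\crX_+]$, $[\crX_-]$, $[\crX_\parallel]$, $[\crX_\bullet]$; eliminating $[\crX_\bullet]$ between them is what produces the HOMFLY-PT skein relation $a\Tr_n(\tau\sigma_i)-a^{-1}\Tr_n(\tau\sigma_i^{-1})=(q-q^{-1})\Tr_n(\tau)$. So your ``apply Euler characteristic to the triangle'' step needs to be replaced by this two-triangle-plus-cancellation argument.

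A second, more minor difference: rather than invoking uniqueness of HOMFLY-PT from skein plus unknot directly, the paper rewrites the normalized Euler characteristic as a trace on the tower of Hecke algebras $H_n$ and checks the Ocneanu--Jones axioms (trace property, normalization, Markov condition $\textup{tr}_n(\tau g_{n-1})=z\,\textup{tr}_{n-1}(\tau)$), then cites Jones's uniqueness theorem. This packages the unlink computation more cleanly: one only needs the single-strand unknot value (computed in \S\ref{sec:unknot}) together with the already-proven Markov move, and $\Tr_n(1)=A^n$ follows by induction from the skein relation itself rather than by a separate direct calculation on $\Hilb_{1,n}^{free}$.
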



The first categorification of HOMFLY-PT polynomial was discovered by Khovanov and Rozansky \cite{KhR} and it is natural to expect that the homology
theory from this paper coincides with one from \cite{KhR}. We do not have a proof for a match of these theories.

\subsection{Sheaves on Hilbert schemes}
The usual nested Hilbert scheme $\SymbolPrint{\Hilb^L_{1,n}}$
is a subvariety of $\Hilb_{1,n}^{free}$ defined by the commutativity constraint on $X,Y$:
\[ [X,Y]=0.\]
It turns out that the support of the homology of the complex $\bbS_\beta$ is contained in $\Hilb_{1,n}^L$. Hence the sheaf homology of the complex is the  sheaf
\[\SymbolPrint{\calS_\beta}=\SymbolPrint{\calS_\beta^{odd}}\oplus\SymbolPrint{\calS_\beta^{even}}:=\mathcal{H}^*(\Hilb_{1,n}^{free},\mathbb{S}_\beta)\] on $\Hilb_{1,n}$ and we immediately have the following:
\begin{theorem}\label{thm: spec seq}
There is a spectral sequence with $E_2$ term being
$$ (\textup{H}^*(\Hilb^L_{1,n},\calS_\beta\otimes\wedge^k\calB),d)$$
$$
d: \textup{H}^k(\Hilb^L_{1,n},\calS^{odd/even}_\beta\otimes\wedge^k\calB)\to
\textup{H}^{k-1}(\Hilb^L_{1,n},\calS^{even/odd}_\beta\otimes\wedge^k\calB),$$
that converges to $\mathbb{H}^k(\beta)$.
\end{theorem}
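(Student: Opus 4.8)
The plan is to obtain the spectral sequence as the second hypercohomology spectral sequence attached to the two-periodic complex of quasi-coherent $\Tsc$-equivariant sheaves $\bbS_\beta\otimes\wedge^k\calB$ on $\Hilb_{1,n}^{free}$, and then to rewrite its $E_2$-page using exactness of tensoring with $\wedge^k\calB$ together with the support statement recalled just before the theorem. Concretely, I would pick a complex of quasi-coherent $\Tsc$-equivariant sheaves representing $\bbS_\beta$, tensor it degreewise with the equivariant bundle $\wedge^k\calB$, choose a termwise resolution computing $\Tsc$-equivariant cohomology, and pass to the totalization of the resulting double complex, which computes $\mathbb{H}(\bbS_\beta\otimes\wedge^k\calB)=\mathbb{H}^k(\beta)$. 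Filtering by the $\bbS_\beta$-degree produces a spectral sequence with $E_2^{p,q}=\textup{H}^p\bigl(\Hilb_{1,n}^{free},\,\calH^q(\bbS_\beta\otimes\wedge^k\calB)\bigr)$ and differentials $d_r$ of bidegree $(r,1-r)$; both the $E_2$-page and the abutment depend only on the class of $\bbS_\beta$ in $D^{per}_{\Tsc}(\Hilb_{1,n}^{free})$, not on the chosen representative.

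To identify this with the statement: since $\wedge^k\calB$ is locally free, tensoring with it is exact and commutes with formation of cohomology sheaves, so $\calH^q(\bbS_\beta\otimes\wedge^k\calB)=\calH^q(\bbS_\beta)\otimes\wedge^k\calB$, which by two-periodicity is $\calS_\beta^{even}\otimes\wedge^k\calB$ or $\calS_\beta^{odd}\otimes\wedge^k\calB$ according to the parity of $q$. By the recalled statement, the cohomology sheaves of $\bbS_\beta$ are the pushforward $\iota_*\calS_\beta^{even}$, $\iota_*\calS_\beta^{odd}$ along the closed immersion $\iota\colon\Hilb^L_{1,n}\hookrightarrow\Hilb_{1,n}^{free}$, the target being the zero locus of $[X,Y]$; as $\iota$ is affine, $R\iota_*$ is exact and $\textup{H}^p(\Hilb_{1,n}^{free},\iota_*(-))=\textup{H}^p(\Hilb^L_{1,n},-)$, so the $E_2$-page becomes the bigraded object $\textup{H}^*(\Hilb^L_{1,n},\calS_\beta\otimes\wedge^k\calB)$. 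The differential $d$ in the statement is the differential of this page; since $d_2$ shifts $q$ by $-1$ it interchanges the two parities of $\calS_\beta$, which accounts for the odd/even exchange displayed there.

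The step that needs real care, and the main obstacle, is convergence: a two-periodic complex is unbounded below, so the classical hypercohomology spectral sequence does not apply verbatim. I would handle this by observing that $\Hilb_{1,n}^{free}$ is a finite-dimensional (quasi-projective) scheme and therefore has finite cohomological dimension for quasi-coherent sheaves; hence $E_2^{p,q}=0$ for $p$ outside a fixed bounded range, the double complex lies in a horizontal strip of finite width, its totalization is a finite direct sum in each total degree, and the induced filtration on $\mathbb{H}(\bbS_\beta\otimes\wedge^k\calB)$ is finite, exhaustive and separated. Standard convergence criteria then apply, and the bound on $p$ also forces $d_r=0$ for $r$ large, so the spectral sequence degenerates at a finite page. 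Since all of this is carried out $\Tsc$-equivariantly, the resulting spectral sequence is one of $q$- and $t$-graded vector spaces, as required.
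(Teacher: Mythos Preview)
Your proposal is correct and follows essentially the same approach as the paper: the paper treats this theorem as an immediate consequence of the hypercohomology spectral sequence together with the support statement for $\calH^*(\bbS_\beta)$, restating it later as a corollary without further argument. Your write-up is in fact more detailed than the paper's, particularly in handling convergence for the two-periodic (unbounded) complex via the finite cohomological dimension of $\Hilb_{1,n}^{free}$.
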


We conjecture that one can extract the link invariant for $L(\beta)$ directly from the sheaf $\calS_\beta$ if $\beta$ is sufficiently positive

\begin{conjecture} For any \(\beta\in\Brgr_n\) and sufficiently positive \(k\) we have  \[\textup{H}^{>0}(\Hilb^L_{1,n},\calS_{\beta\cdot Tw^k})=0.\]
\end{conjecture}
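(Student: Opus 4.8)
The plan is to deduce the conjecture from a Serre-type vanishing theorem. By the main result of \cite{OR} quoted in the introduction, adding a full twist twists the sheaf by a fixed line bundle, $\calS_{\beta\cdot Tw}\cong\calS_\beta\otimes L$, so that $\calS_{\beta\cdot Tw^k}\cong\calS_\beta\otimes L^{\otimes k}$. Thus the conjecture is equivalent to the statement that for every $\beta$ there is a $k_0$ with $\textup{H}^{>0}(\Hilb^L_{1,n},\calS_\beta\otimes L^{\otimes k})=0$ for all $k\ge k_0$, which is precisely the conclusion of Serre vanishing provided $\calS_\beta$ is coherent and $L$ is ample relative to a projective morphism from $\Hilb^L_{1,n}$ onto an affine base. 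The first step is to record that $\calS_\beta$ is a coherent $\Tsc$-equivariant sheaf on $\Hilb^L_{1,n}$: it is the cohomology sheaf of the two-periodic complex $\bbS_\beta$, which by construction is a matrix factorization assembled from a bounded complex of equivariant vector bundles whose potential (essentially $\mathrm{tr}([X,Y]\cdot(-))$) cuts out $\Hilb^L_{1,n}\subset\Hilb^{free}_{1,n}$; the cohomology sheaf of such an object is coherent and set-theoretically supported on the zero locus of the potential, which is exactly the content of Theorem~\ref{thm: spec seq} and the surrounding discussion.

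Next I would exhibit the projective morphism to an affine base and identify $L$. Realizing $\Hilb^{free}_{1,n}$, hence $\Hilb^L_{1,n}$, as a GIT quotient of a locally closed subscheme of $\frg\times T^*\mathrm{Fl}\times V$ by $\mathrm{GL}_n$ linearized by a power of the determinant character, one obtains a projective morphism $\pi\colon\Hilb^L_{1,n}\to Z$, where $Z=\Spec$ of the ring of $\mathrm{GL}_n$-invariants (equivalently, the composite of the Hilbert--Chow-type maps $\Hilb^L_{1,n}\to\Hilb_n\to\mathrm{Sym}^n(\CC^2)$), and the line bundle $\calO(1)$ descending from the linearization is $\pi$-ample. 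The crucial geometric input is to match the full-twist line bundle $L$ of \cite{OR} with a positive power of this $\calO(1)$; I would carry this out by comparing $\Tsc$-weights and using the explicit description of $L$ as a power of $\det\calB$, together with the standard fact that the determinant of the tautological bundle is a relatively ample generator on (nested) Hilbert schemes. Granting this, $L$ is $\pi$-ample.

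Finally I would run Serre vanishing equivariantly. Since $\pi$ is projective, $L$ is $\pi$-ample, and $\calS_\beta$ is coherent, $R^i\pi_*(\calS_\beta\otimes L^{\otimes k})=0$ for $i>0$ once $k\gg 0$; as $Z$ is affine the Leray spectral sequence collapses and $\textup{H}^{>0}(\Hilb^L_{1,n},\calS_\beta\otimes L^{\otimes k})=0$. To make the bound $k_0$ uniform in the internal $\Tsc$-grading I would use that $Z$ carries a contracting $\mathbb{G}_m\subset\Tsc$ (simultaneous scaling of $X$ and $Y$) with a unique fixed point, so $\calO(Z)$ is non-negatively graded with $\calO(Z)_0=\CC$; the graded Nakayama lemma together with a Castelnuovo--Mumford regularity estimate for $\calS_\beta$ relative to $L$ then yields a single $k_0$.

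The step I expect to be the main obstacle is the $\pi$-ampleness of $L$: one must pin down the full-twist line bundle of \cite{OR} precisely and verify that it is genuinely relatively ample, rather than merely relatively semiample or nef on some stratum of the (possibly non-reduced and reducible) scheme $\Hilb^L_{1,n}$. This is presumably why the statement is phrased for ``sufficiently positive'' $k$ and may force $k_0$ to respect a parity or divisibility constraint coming from the matching $L\cong\calO(m)$. A secondary difficulty is the uniformity of $k_0$ in the internal grading: if the $\Tsc$-weight support of $\calS_\beta$ were unbounded in an unfavourable direction, Serre vanishing would hold only degree by degree; controlling this requires the boundedness of the underlying matrix-factorization complex together with properness of $\pi$, and is where an honest estimate, rather than a formal argument, will be needed.
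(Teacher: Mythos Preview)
The statement you are trying to prove is labeled a \emph{Conjecture} in the paper and is not proved there; the paper offers no argument for it, so there is no ``paper's own proof'' to compare against. Your proposal is therefore an attempt to settle an open problem rather than a reconstruction of an existing proof.

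Your strategy---reduce to Serre vanishing via the full-twist identity $\calS_{\beta\cdot Tw}\cong\calS_\beta\otimes L$ announced for \cite{OR}---is exactly the heuristic that motivates the conjecture, and you have correctly isolated the one genuine gap: relative ampleness of $L$ on $\Hilb^L_{1,n}$. This is not a formality. The identification $L\cong\det\calB$ is reasonable (it is what Theorem~\ref{thm:Tnk} suggests), and $\det\calB$ is indeed relatively ample on the ordinary Hilbert scheme $\Hilb_n\to\mathrm{Sym}^n(\CC^2)$; but $\Hilb^L_{1,n}$ is the \emph{nested} Hilbert scheme with the nilpotency condition on $Y$, which is known to be badly singular, reducible, and non-reduced for $n\geq 5$. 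On such a scheme, the pullback of a relatively ample bundle along the forgetful map $\Hilb^L_{1,n}\to\Hilb_n$ need not itself be relatively ample (it is only relatively semiample, since the map can contract positive-dimensional loci), so the step ``$\det\calB$ is a relatively ample generator on nested Hilbert schemes'' is precisely where your argument stops being a proof and becomes the conjecture itself. Your secondary concern about uniformity of $k_0$ in the $\Tsc$-grading is also legitimate but less serious: once you have coherence of $\calS_\beta$ (which does follow from the matrix-factorization construction, as you note) and genuine relative ampleness, ordinary Serre vanishing gives a single $k_0$ without any graded-Nakayama refinement.
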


There is a natural projection from the nested Hilbert scheme to the usual Hilbert scheme $p: \Hilb_{1,n}\to\Hilb_n$ and  it turns out that the push-forward $\SymbolPrint{p_*}(\mathcal{S}_\beta)\in
D_{T_{sc}}^{per}(\Hilb_n)$  has slightly better properties than
$\calS_\beta$:

\begin{theorem} The element of \(D_{T_{sc}}^{per}(\Hilb_n)\)
$$\SymbolPrint{\calS_{[\beta]}}:=p_*(\calS_\beta)$$
depends only on the conjugacy class of $\beta$.
\end{theorem}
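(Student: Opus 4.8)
The plan is to reduce the conjugacy-invariance of $\calS_{[\beta]}=p_*(\calS_\beta)$ to the invariance of $\bbS_\beta$ under the two Markov-type moves already available for $\calS_\beta$, and to the behaviour of $\calS_\beta$ under conjugation inside a \emph{fixed} strand number. Conjugacy in $\Brgr_n$ is generated by the relations $\gamma\beta\gamma^{-1}\sim\beta$ with $\gamma\in\Brgr_n$; it suffices to treat $\gamma$ a single generator $\sigma_i^{\pm 1}$, so the core statement to prove is that for each $i$ and each $\beta$ there is an isomorphism $p_*(\calS_{\sigma_i\beta\sigma_i^{-1}})\cong p_*(\calS_\beta)$ in $D_{T_{sc}}^{per}(\Hilb_n)$, equivariantly and with no degree shift. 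First I would recall from the construction that $\bbS_\beta$ is built as a convolution of the elementary matrix-factorization bimodules associated to the generators, so that $\bbS_{\gamma\beta\gamma^{-1}}$ is the convolution $\bbS_\gamma\star\bbS_\beta\star\bbS_{\gamma^{-1}}$, and that $\bbS_{\gamma^{-1}}\star\bbS_\gamma\simeq\bbS_{\mathrm{id}}$ (the identity object of the convolution category) because the generator bimodules are invertible up to homotopy — this is the same ingredient that makes $\bbS_\beta$ an invariant of $L(\beta)$ in Theorem~\ref{thm: intro main}.

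The key point is then a \emph{trace property}: after taking hypercohomology one has a cyclicity $\mathbb{H}(\bbS_{AB})\cong\mathbb{H}(\bbS_{BA})$ for convolution factors $A,B$, but we need the stronger statement that the \emph{sheaf} $p_*\calS_\beta$ on $\Hilb_n$ — not just its cohomology — is cyclic. The natural mechanism is that $p_*$ followed by the tautological identifications realises $\calS_{[\beta]}$ as a relative Hochschild-homology-type object: the closure operation on braids corresponds geometrically to a fibre product / diagonal restriction, and $p_*$ is exactly the push-forward along the map that forgets the flag and remembers only the commuting pair $(X,Y)$ on $\Hilb_n$. Under this description, conjugating $\beta$ by $\gamma$ changes the integrand but not the integral: concretely I would produce an explicit isomorphism of matrix factorizations $\Phi_\gamma:\calS_{\gamma\beta\gamma^{-1}}\to \gamma\cdot\calS_\beta$ intertwining the $B$- (or $\mathrm{GL}_n$-)action via the chart structure of Proposition~\ref{prop: aff cover}, and observe that $p_*$ kills the $\gamma$-twist because $p$ is $\mathrm{GL}_n$-invariant (its target $\Hilb_n$ only sees the conjugation-invariant data). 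The $T_{sc}$-weights match because the writhe — the only source of the $\shq,\sht$ shifts in passing from $\bbS$ to $\calH$ — is unchanged under conjugation, $\xwr(\gamma\beta\gamma^{-1})=\xwr(\beta)$, so no bookkeeping shift is needed at the level of $\calS_\beta$ itself.

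The main obstacle I anticipate is making the "trace/cyclicity" step honest at the level of objects in $D_{T_{sc}}^{per}(\Hilb_n)$ rather than merely on hypercohomology: one must check that the homotopy equivalence $\bbS_{\gamma^{-1}}\star\bbS_\gamma\simeq\bbS_{\mathrm{id}}$ is compatible with the closure/push-forward functor, i.e. that $p_*$ applied to the closure of $\gamma\beta\gamma^{-1}$ genuinely produces the same complex of sheaves, not just one with the same cohomology sheaves up to a spectral sequence. This requires (i) identifying the closure-then-$p_*$ operation with a well-defined functor on the convolution category — a relative tensor over the appropriate diagonal — and (ii) verifying that this functor is symmetric monoidal enough to swap $\gamma$ past $\beta\gamma^{-1}$. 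I would handle (i) by the affine-chart presentation, where everything becomes an explicit finite-rank matrix factorization over a polynomial ring and the push-forward $p_*$ is a concrete complex of modules; and (ii) by checking cyclicity on each elementary generator bimodule, which reduces to a finite, chart-local computation of the kind already performed to establish the Markov II (stabilisation) invariance of $\bbS_\beta$. Once the generator case is in hand, a straightforward induction on the word length of $\gamma$ finishes the proof.
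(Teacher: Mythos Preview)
Your proposal has a genuine gap at the step where you ``produce an explicit isomorphism $\Phi_\gamma:\calS_{\gamma\beta\gamma^{-1}}\to \gamma\cdot\calS_\beta$'' and then argue that ``$p_*$ kills the $\gamma$-twist because $p$ is $\mathrm{GL}_n$-invariant''. The object $\gamma\cdot\calS_\beta$ is not defined: $\gamma$ is a braid, not an element of $\mathrm{GL}_n$ or an automorphism of $\Hilb_{1,n}^L$, so there is no action of $\gamma$ on sheaves over the flag Hilbert scheme to invoke, and nothing for $p_*$ to ``kill''. The obstacle you correctly flag---making cyclicity honest at the level of complexes of sheaves rather than hypercohomology---is not resolved by the generator-by-generator chart computation you sketch, because on $\Hilb_{1,n}^L$ there is no residual cyclic symmetry to exploit once the closure has been taken.

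The paper avoids this entirely by working one level up. For a word $\sigma_{i_1}^{\epsilon_1}\cdots\sigma_{i_\ell}^{\epsilon_\ell}$ it builds the sheaf as $pr^1_*$ of an object on the $\ell$-fold space $\calX_{\ell,fc}$, which carries a manifest $\ZZ_\ell$ cyclic symmetry; rotating the word corresponds exactly to replacing $pr^1$ by $pr^2$. The key geometric input---absent from your proposal---is that the equations of the critical ideal force $\Ad_{g_1}(Y_1)=\Ad_{g_2}(Y_2)=\cdots=\Ad_{g_\ell}(Y_\ell)$, so the maps $pr^i$ to $\Hilb_n$ literally agree on $\calX^{crit}_{\ell,fc}$, where the sheaf homology is supported. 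Cyclic invariance is then an equality of push-forward functors, not a homotopy to be constructed. The remaining relations (invertibility of generators, braid relations, removing trivial letters) are handled uniformly by the identity $\calS_{\beta_1,\beta_2,\ldots,\beta_m}=\calS_{\beta_1\cdot\beta_2,\ldots,\beta_m}$, obtained by observing that $pr^1$ factors through the projection $\hat{\pi}_2:\calX_{m,fc}\to\calX_{m-1,fc}$ collapsing one slot, together with the convolution algebra structure already established. No generator-by-generator verification of a trace property is needed.
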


In the sequel to this paper we  compute the sheaf $\bbS_\beta$ for many elements of the braid group. That allows us to give a  geometric description of the homology of a torus knot $T_{n,nk+1}$. More precisely,
we study the positive braid $\beta_k\in \Brgrn$ such that  $L(\beta_k)=T_{n,nk+1}$. We compute explicitly the sheaf corresponding to this braid. It is supported on the punctual Hilbert scheme:
$\SymbolPrint{\Hilb_{1,n}^{0}}=\widetilde{\Hilb}^{0}_{1,n}/B$,
$\widetilde{\Hilb}_{1,n}^{0}:=\widetilde{\Hilb}_{1,n}\cap \frn^2\times V$ and we show

\begin{theorem}\cite{OR}\label{thm:Tnk} For any \(k\in \ZZ\) we have
  $$\calS_{\beta_k}=[\calO_{\Hilb_{1,n}^{0}}]^{vir}\otimes \det(\mathcal{B})^k,$$
  where \([\calO_{\Hilb_{1,n}^0}]^{vir}\) is the Koszul complex defining the punctual Hilbert scheme
  \(\widetilde{\Hilb}^0_{1,n}\subset \widetilde{\Hilb}_{1,n}^{free}\).
\end{theorem}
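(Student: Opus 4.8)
The plan is to reduce to the case $k=0$, compute the object attached to the Coxeter braid $\beta_0=\sigma_1\cdots\sigma_{n-1}$, and then propagate by the full-twist formula. Since $(\sigma_1\cdots\sigma_{n-1})^n=Tw$ is central in $\Brgr_n$, I take $\beta_k=\beta_0\cdot Tw^{k}$, so that $L(\beta_k)=T_{n,nk+1}$. The full-twist formula $\calS_{\beta\cdot Tw}=\calS_\beta\otimes L$ of \cite{OR} holds with $Tw^{-1}\mapsto L^{-1}$ as well (the braid group, not only the monoid, acts), whence $\calS_{\beta_k}=\calS_{\beta_0}\otimes L^{\otimes k}$ for every $k\in\ZZ$. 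Two things then remain: to identify $L\cong\det(\calB)$, which one reads off from the matrix-factorization potential defining the convolution, using that a full twist acts on the framing space $V$ — hence on $\calB^\vee$ — through the determinant character of $\GL_n$; and to prove the single identity $\calS_{\beta_0}=[\calO_{\Hilb_{1,n}^0}]^{vir}$.

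For the latter, recall that $\bbS_{\beta_0}$ is by construction the convolution of the $n-1$ elementary matrix factorizations $\bbS_{\sigma_1},\dots,\bbS_{\sigma_{n-1}}$, realized as the pushforward to $\Hilb_{1,n}^{free}$ of the Koszul matrix factorization of an explicit potential $W_{\beta_0}$ on an auxiliary space fibered over $\widetilde{\Hilb}_{1,n}^{free}$. The core step is to show that the fiberwise critical locus of $W_{\beta_0}$ is exactly $\widetilde{\Hilb}_{1,n}^0$: composing the $n-1$ positive crossings and then closing up imposes the commutation relations $[X,Y]=0$ together with the vanishing of the diagonal entries of $X$ (equivalently $X\in\frn$), and nothing more. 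That $\supp\calH^*(\bbS_{\beta_0})$ already lies in the $[X,Y]=0$ locus is guaranteed by the support statement underlying Theorem~\ref{thm: spec seq}; the genuinely new content is the nilpotency of $X$, which should fall out of the $n-1$ crossings once the two ends of the braid are glued by the trace.

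Granting the critical-locus identification, a Kn\"orrer-periodicity argument — integrating out the auxiliary fiber variables against the nondegenerate transverse directions of $W_{\beta_0}$ — turns the pushforward of the Koszul matrix factorization into the Koszul complex on $\widetilde{\Hilb}_{1,n}^{free}$ attached to the ideal of $\widetilde{\Hilb}_{1,n}^0$, which is by definition $[\calO_{\Hilb_{1,n}^0}]^{vir}$ after $B$-descent (equivalently, after $\GL_n$-GIT descent). If one wishes to know that $\calS_{\beta_0}$ is an honest sheaf rather than a genuine complex, one checks in addition that the defining section is regular, i.e.\ that $\widetilde{\Hilb}_{1,n}^0$ has the expected codimension; this is visible chart by chart from the affine cover of Proposition~\ref{prop: aff cover}, where $\widetilde{\Hilb}_{1,n}^0$ becomes a complete intersection. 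Combining with the reduction step proves the theorem.

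The delicate point is the critical-locus computation for $W_{\beta_0}$: the convolution of $n-1$ matrix factorizations is a priori a very large complex, and one must show that after cancelling all contractible summands it collapses precisely onto the Koszul complex of $\widetilde{\Hilb}_{1,n}^0$ with no residual differential. I expect this to be organized as an induction on $n$, matching a stabilization relation that expresses $\bbS_{\beta_0}$ on $\widetilde{\Hilb}_{1,n}^{free}$ in terms of the corresponding object on $\widetilde{\Hilb}_{1,n-1}^{free}$ against the recursive description of $\widetilde{\Hilb}_{1,n}^0$ obtained by adjoining one step of the flag; keeping the $\Tsc$-weights (the $q,t$-degrees) correct through this induction, so that they match the normalization of Theorem~\ref{thm: intro main}, is where the bulk of the bookkeeping lies.
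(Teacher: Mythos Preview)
This theorem is not proved in the present paper: it is stated in the introduction with attribution \cite{OR}, a sequel listed in the bibliography as ``Paper in preparation.'' There is therefore no proof here to compare your proposal against.

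As for the proposal itself, it is a reasonable strategy outline but not a proof. Two structural issues stand out. First, you reduce to $k=0$ via the full-twist formula $\calS_{\beta\cdot Tw}=\calS_\beta\otimes L$, but that formula is \emph{also} a result of \cite{OR}; absent knowledge of how that sequel is organized, you may be assuming something logically downstream of what you are trying to prove. Second, the heart of the matter --- showing that the convolution $\bcalC_{\sigma_1}\bar\star\cdots\bar\star\bcalC_{\sigma_{n-1}}$ pulled back by $j_e^*$ collapses, after all contractions, to precisely the Koszul complex of $\widetilde{\Hilb}^0_{1,n}\subset\widetilde{\Hilb}^{free}_{1,n}$ with no residual differentials and the correct $T_{sc}$-weights --- is acknowledged but not carried out. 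The inductive scheme you sketch (relating the rank-$n$ Coxeter word to the rank-$(n-1)$ one) is plausible, and something like it is presumably what \cite{OR} does, but the actual row operations, homotopy contractions, and equivariant-structure bookkeeping are where all the content lies; see Sections~\ref{sec: two str}--\ref{sec: three str 3} of this paper for how laborious even the two- and three-strand cases are.
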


The sheaf $\calO_{\Hilb_n^0(\CC^2)}\otimes \det(\mathcal{B})^k$
attracted a lot of attention in connection to combinatorics \cite{Hai}, representation theory \cite{GS} and knot theory
\cite{ORS,GORS}. In particular, in \cite{ORS} it was conjectured that the global sections of this sheaf is a particular double-graded subspace  of the
space of Khovanov-Rozansky homology of the torus knot $T_{n,1+nk}$. We expect that the push forward of the complex \(p_*([\calO_{\Hilb_{1,n}^0}]^{vir})\) is
\(\calO_{\Hilb_n^0(\CC^2)}\) and thus
our theorem is consistent with this conjecture and its generalization in \cite{GORS}.
In a forthcoming paper we explore this regular behavior to gain more insight into the behavior of our knot invariant
on large classes of knots.

\subsection{Braids and matrix factorizations}

Our construction  of the complex of sheaves $\bbS_\beta\in D^{per}_{T_{sc}}(\Hilb_{1,n}^{free})$ is based on a homomorphism from the braid group
$\Brgr_n$ to the convolution algebra of \(T_{sc}\)-equivariant and (weakly)  $B^2$-equivariant matrix  factorizations \(\MF_{B^2}(\calXr_2(G_n),\Wr)\) on the space $\SymbolPrint{\calXr_2(G_n)}:=\frb\times G_n\times \frn$ with the potential
$$\SymbolPrint{\Wr}=\Tr(X\Ad_g(Y)),$$
where $X\in \frb$ in upper triangular, $Y\in \frn$ is strictly upper triangular and \(\SymbolPrint{G_n}=GL_n\)\footnote{The space \(\calXr_2(G_n)\) appears naturally in the
  context on the Kapustin-Saulina-Rozansky topological field theory, we describe this physical theory in the forthcoming paper.}. The potential $\Wr$ has degree $t^2$ with respect to $T_{sc}$ so we  also assume  that the differentials of the matrix factorizations from
$\MF_{B^2}(\calXr_2( G_n),\Wr)$ have degree $t$.

  In the section~\ref{sec: inc fun gens} we prove the following

\begin{theorem}\label{thm: braid realization} The homotopic category $\MF_{B^2}(\calXr_2(G_n),\Wr)$ has a natural monoidal structure and there is a homomorphism
$$  \Brgr_n\to  \MF_{B^2}(\calXr_2(G_n),\Wr),\quad \beta\mapsto \SymbolPrint{\bcalC_\beta}$$
defined by an explicit choice of images of braid generators.
\end{theorem}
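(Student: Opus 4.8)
Here is how I would prove Theorem~\ref{thm: braid realization}. The argument splits into three pieces: building the convolution monoidal structure on $\MF_{B^2}(\calXr_2(G_n),\Wr)$, writing down the matrix factorizations attached to the Artin generators, and checking the braid relations.

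\emph{The monoidal structure.} I would introduce the space $\calXr_3(G_n)$ of ``composable pairs'', carrying three $B^3$-equivariant face maps whose pullbacks are $\pidot,\pidth$ (the two outer maps, remembering two thirds of the data) and $\pidoh$ (the composition map), and define
\[
\calC_1\star\calC_2:=\piuoh\bigl(\pidot\calC_1\odel\pidth\calC_2\bigr).
\]
The key computation is the additivity of the potential: $\pidot\Wr+\pidth\Wr$ equals $\pidoh\Wr$ plus a function $Q$ on $\calXr_3(G_n)$ which, fiberwise along the composition map, is a nondegenerate quadratic form — this is exactly where the particular shape $\Wr=\Tr(X\Ad_g(Y))$ is used. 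Hence $\pidot\calC_1\odel\pidth\calC_2$ is a matrix factorization of $\pidoh\Wr+Q$, and Kn\"orrer periodicity lets one integrate out $Q$ so that $\piuoh$ lands back in $\MF_{B^2}(\calXr_2(G_n),\Wr)$. Making $\piuoh$ well defined requires care since the face maps are not proper: one checks that the critical locus of the relevant potential is proper over the target (or passes to a suitable completion) and then proves base change and a projection formula. Associativity follows by comparing both iterated products to a single convolution over a fourth space $\calXr_4(G_n)$ via the simplicial identities, and the unit is the Koszul-type matrix factorization $\bcalC_1$ supported on the ``diagonal'' of $\calXr_2(G_n)$, for which $\bcalC_1\star(-)\simeq(-)\simeq(-)\star\bcalC_1$ is a direct calculation.

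\emph{The generators.} For each $1\le i\le n-1$ I would define $\bcalC_{\sigma_i}$ and $\bcalC_{\sigma_i^{-1}}$ as small Koszul-type matrix factorizations — the geometric incarnations of the Rouquier complexes of the simple reflection $s_i$ — built from a single Koszul pair whose product is the component of $\Wr$ attached to the $i$-th simple root, twisted by a line bundle chosen so that the $q,t$-degrees track the writhe. One then verifies that the twisted differential squares to $\Wr\cdot\mathrm{id}$ and that the object is $\Tsc$- and weakly $B^2$-equivariant, so indeed $\bcalC_{\sigma_i^{\pm1}}\in\MF_{B^2}(\calXr_2(G_n),\Wr)$.

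\emph{The relations.} There are three families. First, $\bcalC_{\sigma_i}\star\bcalC_{\sigma_i^{-1}}\simeq\bcalC_1\simeq\bcalC_{\sigma_i^{-1}}\star\bcalC_{\sigma_i}$: one computes the convolution and uses Gaussian elimination to collapse the resulting complex onto $\bcalC_1$; this is what upgrades the evident homomorphism from the positive braid monoid $\Brgr_n^+$ to one from $\Brgr_n$. Second, for $|i-j|\ge 2$, $\bcalC_{\sigma_i}\star\bcalC_{\sigma_j}\simeq\bcalC_{\sigma_j}\star\bcalC_{\sigma_i}$: the two Koszul pairs involve disjoint blocks of the matrices, so after base change they commute on the nose (Thom--Sebastiani). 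Third, the braid relation $\bcalC_{\sigma_i}\star\bcalC_{\sigma_{i+1}}\star\bcalC_{\sigma_i}\simeq\bcalC_{\sigma_{i+1}}\star\bcalC_{\sigma_i}\star\bcalC_{\sigma_{i+1}}$: by the same locality this reduces to the case $G_n=\GL_3$, and I would establish it by exhibiting an explicit Gaussian-elimination reduction of each triple convolution to one and the same matrix factorization $\bcalC_{w_0}$ attached to the longest element $w_0=s_1s_2s_1=s_2s_1s_2$ of $S_3$ — a Koszul complex supported on the closure of the corresponding $B$-orbit in $\calXr_2(\GL_3)$.

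\emph{Main obstacle.} The two genuinely delicate points are, on the foundational side, the well-definedness and associativity of the convolution — that is, setting up enough of a functorial calculus for $\Tsc$-equivariant, two-periodic, weakly $B^2$-equivariant matrix factorizations along the non-proper face maps, with the pushforward controlled through properness of critical loci — and, on the computational side, the explicit $\GL_3$ identification underlying the braid relation. Distant commutativity and the $\sigma_i\sigma_i^{-1}$ relation are, by contrast, essentially bookkeeping once the gradings and the diagonal unit are pinned down.
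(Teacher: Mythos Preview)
Your overall architecture matches the paper's: define a convolution product via a three-space, produce rank-one Koszul generators for $n=2$, propagate them by an induction functor, and verify the three relation families by reduction to $\GL_2$ and $\GL_3$. The braid-relation endgame in particular is exactly the paper's: both triple products are identified with a single matrix factorization $\bcalC^{(121)}$ built from a resolution of an ideal attached to the longest element of $S_3$.

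There are, however, two concrete technical misstatements that would derail the argument as written. First, your additivity claim is wrong for the reduced space: on $\calXr_3(G_n)=\frb\times G_n^2\times\frn$ one has $\bar\pi_{12}^*\Wr+\bar\pi_{23}^*\Wr=\bar\pi_{13}^*\Wr$ \emph{exactly}, with no quadratic correction $Q$ to integrate out. The Kn\"orrer step in the paper serves a different purpose --- it relates $\MF_{B^2}(\calXr_2,\Wr)$ to the larger category $\MF_{B^2}(\calX_2,W)$, and associativity of $\bar\star$ is inherited from associativity of $\star$ on the larger space through that functor, not from a fiberwise Gaussian integration. Second, and more seriously, the face maps $\bar\pi_{12},\bar\pi_{23}$ are \emph{not} $B^3$-equivariant, so the naive pullback $\bar\pi_{12}^*\calC_1\otimes\bar\pi_{23}^*\calC_2$ does not carry a $B^3$-structure; the paper spends real effort equipping it with one (via the weakly equivariant formalism and the correction differentials $\partial$). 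Your ``base change plus projection formula, with properness of critical loci'' sketch does not touch this, and it is precisely here that the paper's mechanism --- pushforward via the Chevalley--Eilenberg complex $\CE_{\frn^{(2)}}(-)^{T^{(2)}}$ rather than a geometric proper pushforward --- is essential. Without that, the convolution is not even defined as a $B^2$-equivariant object.
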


The construction of the homomorphism from the theorem is similar in spirit to the construction of Bezrukavnikov and Riche \cite{BR},
though we do not rely on the results of \cite{BR}
and
our methods differ from their approach, mainly because we try to make our construction as explicit as it possible. Let us also remark that recently Arkhipov and Kanstrup 
\cite{AK}
provided
a construction of braid group   action on  categories matrix factorizations. We discovered our construction  independently and  our methods differ from theirs.

There is a similar homomorphism $\Brgr_n\to \MF_{B^2}(\calXr_2(G_n),-\Wr)$, $ \beta\mapsto \bcalC'_\beta$  and, in particular, $\bcalC'_\parallel$ is the complex of the identity braid.
Informally, the complex $\bbS_\beta$ should be thought as sheaf homology of the $2$-periodic complex $\bcalC_\beta\otimes \bcalC'_\parallel\in \MF_{B^2}(\calXr_2(G_n),0)$ followed by the
restriction to the stable part of $\calXr_2(G_n)$ and extracting $B$ invariant part of the product ($B$ is embedded diagonally inside $B^2$).
To turn this informal definition into a rigorous mathematical construction, in section~\ref{sec: link inv} we introduce an auxiliary space $\calX_\ell(G_n)$ and
work with matrix factorizations on this space.

\subsection{Structure of the paper}
In the section~\ref{sec:notations} we collect most of notations used in the paper, we also state and prove some key
properties of the matrix factorizations. In section~\ref{sec: eq push} we  state
and prove the results about matrix factorizations that are needed for the main body of the paper.
In section~\ref{sec:convolution} we introduce our main tool: the convolution algebra on the space of
matrix factorizations with the potential. In this section we show that the convolution is associative and discuss generalizations of the construction.
The section \ref{sec: Knor} is devoted to the study of Knorrer functor which intertwines the convolution algebras on the smaller space $\calXr_2(G_n)$ with
the convolution algebra on the bigger space $\calX_2(G_n)$.

The sections \ref{sec: inc} and  \ref{sec: inc fun gens} contain the technical results about the convolution algebra that are used in the  rest of the paper, to be more precise we discuss various induction functors between the categories of matrix factorizations associated to the braid groups of different rank and
their relation with the convolution product.
 At the
end of section \ref{sec: inc} we introduce the matrix factorizations that generate the braid group. In section~\ref{sec: aux sec} we prove several auxiliary results that
we use later, in particular we show how the convolution with the matrix factorization for the elementary braid could be computed with the use of rank $1$ Chevalley-Eilenberg complex.

Our proof of the braid relations for the matrix factorizations introduced in the section~\ref{sec: inc} is spread between sections~\ref{sec: two str},\ref{sec: three str 2} and~\ref{sec: three str 3}.
In the section~\ref{sec: link inv} we construct the sheaf $\calS_{[\beta]}$ and show that it only depends on the conjugacy class of $\beta$.
Finally, in the section~\ref{sec: link inv 2} we prove our main theorem about the link invariant. Finally in the very last section we prove the theorem~\ref{thm:HOMFLY} and define the differential
$d_{m|n}$ that allows us to state a conjecture relating our cohomology to the $gl(m|n)$ type quantum invariants.

Finally, at the very end of the paper we compose the list of notations used in the paper. We tabulate the first appearance of a symbol and the place of more in depth
discussion of the corresponding object.

{\bf Acknowledgments:} Authors would like to thank Dima Arinkin, Roman Bezrukavnikov, Tudor Dimofte, Ben Elias,  Tina Kanstrup, Davesh Maulik, Peter Samuleson, Jenia Tevelev for useful discussions. L.R. is especially thankful to Dima Arinkin for illuminating explanations and to Eugene Gorsky for stimulating discussions and sharing the results of his ongoing research.  A.O. is especially thankful to Eugene Gorsky, Matt Hogancamp, Andrei Negu\c{t} and
Jake Rasmussen for extensive discussions about  the subject and for sharing their insight and intuition.
Authors are very thankful to an anonymous referee for his/her Herculean effort to improve the paper.
A.O. would like to thank Simons Center for Geometry and Physics for support during month-long stay in May 2015, the large portion of this work
was written during this stay and the conference at the end of the program presented us with opportunity to report on this work.
Work of A.O. was partially supported by Sloan Foundation and NSF CAREER grant DMS-1352398. The work of L.R. is supported by the NSF grant DMS-1108727.

\section{Basic space and Matrix factorizations}\label{sec:notations}
\subsection{Notations, conventions}
\subsubsection{}Let us fix the following notations for the groups that appear in our work:
$$ \SymbolPrint{G_n}=\mathrm{GL}_n,\quad \SymbolPrint{T_n}\subset B_n\subset G_n.$$

 We use gothic script for the Lie algebras of the respective groups. Whenever the rank of the group is obvious from the context of the paper
we omit the subscripts.
Let \(U_n=[B_n,B_n]\) and \(\frn_n=\textup{Lie}(U_n)\). We also denote the Lie algebra of
$T_n$ as $\SymbolPrint{\mathfrak{h}_n}$.

Let us also fix the notations for the projections on the upper and the lower triangular parts of the matrix: for a given square matrix $X$ we denote by $\SymbolPrint{X_+}$  and
$\SymbolPrint{X_{++}}$ the
upper-triangular and the strictly upper-triangular part of $X$:
\[(X_{++})_{ij}=\begin{cases} X_{ij},& i<j\\
                            0,&i \ge j
                          \end{cases},\quad (X_{+})_{ij}=\begin{cases} X_{ij},&i\le j\\
                            0,&i>j
                          \end{cases}
                        \]
                        respectively $X_-:=X-X_{++}$ and $X_{--}:=X-X_+$ are the lower-triangular and strictly lower-triangular parts of $X$.

The main geometric object in our study is the ($G_n\times B_n^\ell$)-space $\calX_\ell=\frg_n\times (G_n\times \frn_n)^\ell$ with the following action of the group:
$$ (b_1,\dots,b_\ell)\cdot (X,g_1,Y_1,\dots,g_\ell, Y_\ell)=(X,g_1\cdot b_1^{-1},\Ad_{b_1}(Y_1),\dots,g_\ell\cdot b_\ell^{-1},\Ad_{b_\ell}(Y_\ell)),$$
$$ h\cdot (X,h\cdot g_1,Y_1,\dots,g_\ell, Y_\ell)=(\Ad_h(X),h\cdot g_1,Y_1,\dots,h\cdot g_\ell,Y_\ell).$$

The space \(\calX_2\) is the enlarged version of the space \(\calXr_2(G_n)\), that was discussed in the introduction. Sometimes we use notation \(\calX_2(G_n)\) for
\(\calX_2\).

\def\yHom{ \Hom }

\subsection{Matrix factorizations: three lemmas} In this subsection we fix notations for matrix factorizations and remind some basic facts about them.
In particular we prove three lemmas that we will use in the main body of the paper. These lemmas essentially contain all facts about matrix factorizations
that are used here. For more thorough review of the theory of matrix factorization the reader could consult the beautiful original paper \cite{E} or later surveys in \cite{D}.

Given an affine variety $\mathcal{Z}$ and a function $F$ on it we define \cite{E} the homotopy category $\SymbolPrint{\MFs}(\mathcal{Z},F)$ of matrix factorizations whose objects are complexes of $\CC[\calZ]$-modules
$M=M_{odd}\oplus M_{even}$ equipped with the differential \[D\in \yHom_{\CC[\calZ]}(M_{odd},M_{even})\oplus  \yHom_{\CC[\calZ]}(M_{even},M_{odd})\] such that $D^2=F$.
Thus $\MFs(\mathcal{Z},F)$ is a triangulated category as explained in subsection 3.1 of \cite{Or}.

If $\calZ$ is a variety with $G$-action  and $F$ is a $G$-invariant function then
we define the category of strongly  equivariant matrix factorizations $\SymbolPrint{\MFs^{str}_G}(\mathcal{Z},F)$ as the category of matrix factorizations with extra condition  that $D$ is a $G$-equivariant matrix.
Unfortunately, this definition is too restrictive and we use a slightly more sophisticated definition for the category of  equivariant matrix factorization. We postpone the details of our definition till the next section
where we define matrix factorizations that are equivariant up to homotopy.
However, the naive equivariant matrix factorization as above are objects in our category.

Let us discuss some specific types of matrix factorizations that will be used in our paper. Suppose $C_\bullet=[C_0\to C_1\to\dots \to C_n]$ is the complex of $\CC[\calZ]$-modules.
Then we construct a matrix factorization $\SymbolPrint{[C_\bullet]_{per}}\in \MFs(\calZ,0)$ by setting $M=C_{odd}\oplus C_{even}$,
$C_{even}:=\oplus_{i\in 2\ZZ} C_i, C_{odd}= \oplus_{i\in 1+2\ZZ} C_i$ and the differential is induced from the complex $C_\bullet$.

Another type of matrix factorization appearing in our work is obtained by "doubling" the differential $d^+$ of an ordinary complex $(C_\bullet,d^+)$. Indeed, let us denote by $\MFs^\ell(\calZ,F)$ the set of the following
complexes:
$$
C_\bullet=\oplus_{i=0}^\ell C_i,\quad
d^+_i: C_i\to C_{i-1},\quad
d^-_i:C_i\to C_{>i},\quad (\sum_i d_i^-+d_i^+)^2=F\cdot \mathrm{Id}.$$

\def\xHom{ \Hom }
\def\xExt{ \Hom }
\def\xExtdp{ \xExt_{d_+}}
\def\xExtdpv#1{ \xExtdp^{#1}}
\def\xExtdpk{ \xExtdpv{k}}
The complex $C_\bullet$ with only  the positive differentials $d^+_i$ is called the positive part of the complex.
There is a folding map $\SymbolPrint{\MFs^{\ell}}(\calZ,F)\to \MFs(\calZ,F)$ that associates to $(C_\bullet,d^\pm_\bullet)$ a two-periodic complex $[C_\bullet,d^\pm_\bullet]_{per}$.
The matrix factorizations of the above type occur very naturally in a geometric setting as explained in the lemma below. The notations for the morphisms are as follows:
$\xHom^{k}(C_\bullet,C_\bullet)=\oplus_{m} \yHom(C_m,C_{m-k})$ is notations for the maps of $\CC[\calZ]$-modules and
$\xExtdpk(C_\bullet,C_\bullet)$ is the subspace of maps from $\yHom^k(C_\bullet,C_\bullet)$ that commute with the differential $d^+$.

\begin{lemma}\label{lem: ext MF} Let $(C_\bullet,d_\bullet^+)$ be the complex
$$C_0\xleftarrow{d_1^+}C_1\xleftarrow{d_2^+}\dots\xleftarrow{d_\ell^+} C_\ell$$
of $\CC[\calZ]$ modules and $F\in \CC[\calZ]$ such that
\begin{enumerate}
\item Elements of $\xExtdpv{<0}(C_\bullet,C_\bullet)$ are homotopic to $0$
\item The element $F\in \xExtdpv{0}(C_\bullet,C_\bullet)$ is homotopic to $0$.
\end{enumerate}
Then the complex $(C_\bullet,d_\bullet^+)$ can be extended to $(C_\bullet,d^\pm_\bullet)\in \MFs^{\ell}(\calZ,F)$.
\end{lemma}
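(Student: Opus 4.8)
The plan is to construct the negative differentials $d^-_\bullet$ inductively by degree, solving the equation $D^2 = F\cdot\mathrm{Id}$ one "anti-diagonal" at a time. Write $D = d^+ + d^-$ where $d^- = \sum_{k\ge 1} d^-_{(k)}$ with $d^-_{(k)}\colon C_i\to C_{i+k}$ of degree $+k$ in the complex grading (so $d^+$ has degree $-1$). Expanding $D^2 = F$ and collecting terms by their degree shift, the degree $-2$ part gives $(d^+)^2 = 0$, which holds since $(C_\bullet, d^+)$ is a complex. The degree $-1$ part gives $d^+ d^-_{(1)} + d^-_{(1)} d^+ = 0$, i.e. $d^-_{(1)}$ is a chain map of degree $+1$; and in general the degree $k$ part ($k\ge 0$) reads
\[
d^+ d^-_{(k+1)} + d^-_{(k+1)} d^+ = F\cdot\delta_{k,0} - \sum_{\substack{i+j = k+2\\ i,j\ge 1}} d^-_{(i)} d^-_{(j)} =: R_{(k)},
\]
where $R_{(k)}$ depends only on the previously constructed $d^-_{(1)},\dots,d^-_{(k)}$. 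So at each stage we must solve $[d^+, d^-_{(k+1)}] = R_{(k)}$ for $d^-_{(k+1)} \in \xHom^{k+1}(C_\bullet,C_\bullet)$.

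The key algebraic observation is that the left-hand side is precisely the differential of the Hom-complex: equip $\bigoplus_m \xHom(C_\bullet,C_\bullet)$ (graded by degree shift) with the differential $\phi\mapsto [d^+,\phi] = d^+\circ\phi \pm \phi\circ d^+$. Then $\xExtdpk(C_\bullet,C_\bullet)$ is the space of cocycles in degree $k$, and hypothesis (1) says the cohomology of this Hom-complex vanishes in all negative degrees while hypothesis (2) says $F$ (viewed as the degree-$0$ cocycle $F\cdot\mathrm{Id}$) is a coboundary. The plan is: first verify that each $R_{(k)}$ is a cocycle of degree $k$ in the Hom-complex, i.e. $[d^+, R_{(k)}] = 0$; this is a formal consequence of the Jacobi/Leibniz identity for the bracket together with the already-solved lower-degree equations (the standard "the obstruction is closed" computation). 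Second, for $k\ge 1$ the cocycle $R_{(k)}$ has degree $\ge 1 > 0$, but I need it to be a coboundary to continue — here one uses that $R_{(k)}$ actually lands in degrees that eventually exceed $\ell$, so the construction terminates: since $C_i = 0$ for $i<0$ and $i>\ell$, any map of degree $k > \ell$ is automatically zero, so the process stabilizes after finitely many steps and we only ever need to solve finitely many equations.

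The genuinely substantive step is handling $k = 0$ and (a priori) $k<0$. For $k=0$: $R_{(0)} = F\cdot\mathrm{Id} - \sum_{i+j=2} d^-_{(i)}d^-_{(j)} = F\cdot\mathrm{Id} - (d^-_{(1)})^2$. We need this to be a coboundary. By hypothesis (2), $F\cdot\mathrm{Id}$ is a coboundary; so it suffices that $(d^-_{(1)})^2$ is a coboundary. But $d^-_{(1)}$ is a chain map of degree $+1$, hence $(d^-_{(1)})^2$ is a chain map of degree $+2$ — and if $\ell \le 1$ this is zero, while in general we are not quite done, which is why one must be slightly careful about the order of the induction. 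The cleanest route: first choose $d^-_{(1)} = 0$ if that is consistent (it is a legitimate choice of a degree-$+1$ cocycle, namely the zero one), which forces all higher $d^-_{(i)}$ obtained from products involving only $d^-_{(1)}$ to simplify; then $R_{(0)} = F\cdot\mathrm{Id}$, which is a coboundary by (2), giving $d^-_{(2)}$; then $R_{(1)} = -d^-_{(2)}d^-_{(1)} - d^-_{(1)}d^-_{(2)} = 0$ when $d^-_{(1)}=0$... — wait, that kills everything, so instead one should NOT set $d^-_{(1)}=0$ in general. The honest statement: the recursion produces cocycles $R_{(k)}$; for $k\ge 1$, $R_{(k)}$ has degree $\ge 1$, and such cocycles need not be coboundaries, so hypothesis (1) (vanishing in \emph{negative} degrees) does not directly apply.

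I therefore expect the main obstacle to be exactly this: reconciling the degree bookkeeping so that every obstruction one needs to kill sits in a degree where it is known to be a coboundary — degree $0$ via hypothesis (2), degree $<0$ via hypothesis (1) — and showing the positive-degree obstructions either vanish by the finiteness of the complex or can be absorbed. The resolution, which I would carry out carefully, is to run the induction \emph{downward on the length}: reduce to the case where $C_\bullet$ has length $\ell$ and argue that modulo the subcomplex $C_{\ell}$ (placed in the top degree) the statement holds by induction on $\ell$, then lift; the obstruction to lifting a degree-$k$ piece across the last step lives in $\xExtdpv{k-\ell-1}$ or similar negative-degree Ext group once $k$ is small, and in the top-degree complications the map simply has nowhere to go. Assembling these pieces, folding the resulting $(C_\bullet, d^+_\bullet, d^-_\bullet)\in\MFs^\ell(\calZ,F)$ via the folding map gives the desired two-periodic matrix factorization, completing the proof.
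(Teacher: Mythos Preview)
Your inductive framework is exactly right and matches the paper's approach, but you have derailed yourself with a sign-convention error. Recall the paper's definition: $\xHom^{k}(C_\bullet,C_\bullet)=\bigoplus_{m}\Hom(C_m,C_{m-k})$. Thus $\xExtdpv{<0}$ consists of chain maps that \emph{raise} the homological index --- precisely what you are calling ``positive degree''. So when your obstruction $R_{(k)}$ (for $k\ge 1$) is a chain map raising degree by $k$, it lies in $\xExtdpv{-k}$ in the paper's notation, and hypothesis~(1) kills it directly. There is no gap to fill; you simply misread which direction ``negative'' points.

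With this corrected, the argument is clean and essentially what the paper does: hypothesis~(2) furnishes $d^-_{(1)}$ (the homotopy witnessing $F\sim 0$, so $[d^+,d^-_{(1)}]=F$), and then for each $k\ge 1$ the obstruction $R_{(k)}=-\sum_{i+j=k}d^-_{(i)}d^-_{(j)}$ is a $d^+$-closed element of $\xExtdpv{-k}$, hence null-homotopic by hypothesis~(1), yielding $d^-_{(k+1)}$. (Note also that your recursion has an index slip: the sum should be over $i+j=k$, not $i+j=k+2$, since $d^-_{(i)}d^-_{(j)}$ raises degree by $i+j$ while $[d^+,d^-_{(k+1)}]$ raises degree by $k$.) All of your contortions --- setting $d^-_{(1)}=0$, downward induction on $\ell$, lifting across top-degree subcomplexes --- are unnecessary detours caused by the convention mix-up. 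The process terminates because $C_\bullet$ is bounded, exactly as you noted.
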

\begin{proof}
By assumption of the lemma, the  endomorphism $F\cdot \mathrm{Id}$  of $(C_\bullet,d_\bullet^+)$ is homotopic to  $0$. This means that there exist morphisms of $\CC[\calZ]$ modules
$h^{(-1)}_i: C_i\to C_{i+1}$ such that $F=h^{(-1)}_i\circ d_{i+1}^++d_i^+\circ h^{(-1)}_{i-1}$. Then a new differential $D^{(-1)}:=d^++h^{(-1)}$ satisfies the relation
$$ \left(D^{(-1)}\right)^2-F =(h^{(-1)})^2\in \yHom^{\leq-2}(C_\bullet,C_\bullet).$$
Thus $D^{(-1)}$ provides a base
of induction for the proof of existence $h^{(-2i-1)}\in \yHom^{-2i-1}(C_\bullet,C_\bullet)$, $i\ge 0$ such that
\begin{enumerate}
\item $[h^{(-1)},d^+]_+=F$
\item $[h^{(-2i-1)},d^+]_+=\sum_{j=0}^{i-1} h^{(-2j-1)} h^{(-2i+2j+1)}$, $i>0$
\end{enumerate}
were we used the notation $[a,b]_+=ab+ba$.

Indeed, suppose we constructed $h^{(-2i-1)}$ for all $i<j$. Define $D^{(-2j+1)}=d^++\sum_{k=0}^{j-1} h^{(-1-2k)}$, then the inductive
assumption implies:
\begin{equation}\label{eq: D2}
\left( D^{(-2j+1)}\right)^2-F-R^{(-2j)}\in \yHom^{<-2j}(C_\bullet,C_\bullet),\quad R^{(-2j)}=\sum_{k=0}^{j-1} h^{(-2k-1)} h^{(-2j+1+2k)}
\end{equation}
On the other hand, a direct computation using the inductive assumptions and the fact that $F$ commutes with all $h^{(2i-1)}$ and $d^+$ implies
\begin{equation}\label{eq: com d+}
 [d^+,R^{(-2j)}]=0.
\end{equation}
In more details, let us introduce notation $R^{(<-2j)}:=(D^{(-2j-1)})^2-F-R^{(-2j)}$ then
\begin{multline*}
0=[D^{(-2j-1)}, (D^{(2j-1)})^2]=[d^+ +\sum_i h^{(-2i-1)}, F+R^{(-2j)}+R^{(<-2j)}]\\=[d^+,R^{(-2j)}]+[d_++\sum_i h^{(-2i-1)},R^{(<-2j)}]+[\sum_i h^{(-2j-1)},R^{(-2j)}+R^{(-2j-1)}].
\end{multline*}
The first term in the last sum is from $\xHom^{-2j+1}(C_\bullet,C_\bullet)$, on the other hand the last two   are from
$\xHom^{<-2j+1}(C_\bullet,C_\bullet)$ thus (\ref{eq: com d+}) follows.

Thus $R^{(-2j)}$ is the morphism of the complex $(C_\bullet,d^+)$ of degree $-2j$, that is, an element of $\xExtdpv{-2j}(C_\bullet,C_\bullet)$. The assumption of our lemma implies that such
morphism is homotopic to zero and the homotopy morphism $h^{(-2j-1)}\in \xHom^{-2j-1}(C_\bullet,C_\bullet)$ provides the induction step
since $[h^{(-2j-1)},d^+]_+=R^{(-2j)}$.

Thus by setting $d_i^-=\sum_i h^{(-2i-1)}$ we obtain the desired extension because of (\ref{eq: D2}).
\end{proof}

\def\xHomdpm{\xHom_{d^{\pm}}}

The extension described in Lemma~\ref{lem: ext MF} is unique up to a natural equivalence relation. First, let us fix notations: the space of morphisms
 $\xHomdpm(A,B)$, $A,B\in \MFs^{\ell}(\calZ,F)$ consists of $\CC[\calZ]$-module homomorphism that (super-)commute with the differentials $d_\bullet^\pm$.
 This space is $\ZZ$-graded with $\xHomdpm^i(A,B)$ consisting of morphisms $h_k: A_k\to A_{k+i}$.

\begin{lemma}\label{lem: ext uniq} Let $A=(C_\bullet,d^\pm_\bullet), B=(C_\bullet,\tilde{d}^\pm_\bullet)\in \MFs^{\ell}(\calZ,F)$ such that
\begin{enumerate}
\item $(\tilde{C},\tilde{d}^+_\bullet)=(C_\bullet,d^+_\bullet)$
\item Elements of $\xExtdpv{<0}(C_\bullet,C_\bullet)$ are homotopic to $0$
\item The element $F\in \xExtdpv{0}(C_\bullet,C_\bullet)$ is homotopic to $0$.
\end{enumerate}
Then there is $\Psi=\sum_{i\le 0}\Psi_i$, $\Psi_i\in \Hom^{i}(A,B)$ such that $\Psi_0=\mathrm{Id}$ and
$$ \Psi\circ (d^++d^-)\circ \Psi^{-1}=(\tilde{d}^++\tilde{d}^-).$$
\end{lemma}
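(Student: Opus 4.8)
The plan is to build $\Psi$ by the same inductive, degree-lowering procedure used in the proof of Lemma~\ref{lem: ext MF}, only now matching two \emph{prescribed} extensions of the same positive complex rather than constructing one from scratch. Write $d^{\pm}=d^++d^-$ with $d^-=\sum_{i\ge 0}h^{(-2i-1)}$ and $\tilde d^{\pm}=d^++\tilde d^-$ with $\tilde d^-=\sum_{i\ge 0}\tilde h^{(-2i-1)}$; both square to $F\cdot\mathrm{Id}$. We seek $\Psi=\mathrm{Id}+\sum_{i>0}\Psi_{-i}$ (so $\Psi$ is automatically invertible, its inverse again being a finite sum of negative-degree maps since $\Hom^{<-\ell}=0$), intertwining $d^{\pm}$ with $\tilde d^{\pm}$. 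The intertwining equation $\Psi\circ d^{\pm}=\tilde d^{\pm}\circ\Psi$, expanded by degree, reads in top degree (degree $0$ part) as the identity $d^+=d^+$, and in degree $-n$ ($n\ge 1$) as
\begin{equation}\label{eq: psi ind}
[\Psi_{-n},d^+]=\tilde d^-\circ\Psi - \Psi\circ d^- \ \big|_{\deg=-n}\ -\ [\,\text{terms involving only }\Psi_{-j},\ j<n\,],
\end{equation}
where on the right-hand side every occurrence of $\Psi_{-n}$ that is \emph{not} paired with $d^+$ has been moved to the other bracket; the key point is that the right-hand side of \eqref{eq: psi ind} involves only the already-constructed $\Psi_{-j}$ with $j<n$ together with the known maps $h^{(\cdot)},\tilde h^{(\cdot)}$.

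First I would set up the base case: in degree $-1$ the equation is $[\Psi_{-1},d^+]=\tilde h^{(-1)}-h^{(-1)}$, and the right-hand side is precisely the difference of two null-homotopies for $F\cdot\mathrm{Id}$ on $(C_\bullet,d^+)$, hence a $d^+$-closed morphism of degree $-1$, i.e.\ an element of $\xExtdpv{-1}(C_\bullet,C_\bullet)$; by hypothesis~(2) it is null-homotopic, which furnishes $\Psi_{-1}$. Then I would run the induction: assuming $\Psi_{-1},\dots,\Psi_{-n+1}$ constructed so that $\Psi\circ d^{\pm}-\tilde d^{\pm}\circ\Psi$ vanishes in degrees $>-n$, the degree $-n$ component $R_{-n}$ of $\Psi\circ d^{\pm}-\tilde d^{\pm}\circ\Psi$ is a fixed morphism built from the known data. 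The two things to check are: (a) $R_{-n}$ is $d^+$-closed, so that it defines a class in $\xExtdpv{-n}(C_\bullet,C_\bullet)$, killed by hypothesis~(2) (or hypothesis~(3) when $n=0$, which is already handled), yielding $\Psi_{-n}$ with $[\Psi_{-n},d^+]=R_{-n}$; and (b) the recursion terminates, which it does because $\Psi_{-n}=0$ forcibly for $n>\ell$, all modules being concentrated in degrees $0,\dots,\ell$. For (a), the standard trick applies: consider the identity $0=\Psi\circ (d^{\pm})^2 - (\tilde d^{\pm})^2\circ\Psi = (\Psi d^{\pm}-\tilde d^{\pm}\Psi)d^{\pm} + \tilde d^{\pm}(\Psi d^{\pm}-\tilde d^{\pm}\Psi)$ (using $(d^{\pm})^2=(\tilde d^{\pm})^2=F\cdot\mathrm{Id}$ and that $F$ is central), extract the appropriate degree, and separate the leading term $[d^+,R_{-n}]$ from strictly-lower-degree contributions exactly as in equations~\eqref{eq: D2}–\eqref{eq: com d+}; this forces $[d^+,R_{-n}]=0$.

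The main obstacle — really the only delicate point — is the bookkeeping in step~(a): isolating the leading-degree piece $[d^+,R_{-n}]$ of the "square identity" and verifying that the remaining terms genuinely lie in $\xHom^{<-n+1}$, so that $d^+$-closedness of $R_{-n}$ follows rather than merely closedness modulo lower order. This is the exact analogue of the computation displayed after \eqref{eq: D2} in Lemma~\ref{lem: ext MF}, and I would reproduce that argument essentially verbatim, with $D^{(-2j-1)}$ replaced by the partial sum $\mathrm{Id}+\sum_{j<n}\Psi_{-j}$ and the two differentials playing the roles of the single differential there. Everything else — invertibility of $\Psi$, termination, and the reduction of each obstruction to an element of $\xExtdpv{<0}$ — is formal given hypotheses~(1)–(3).
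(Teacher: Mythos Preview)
Your approach is essentially the paper's: build $\Psi$ by a degree-lowering induction, show each obstruction is $d^+$-closed using the identity $(\Psi d^{\pm}-\tilde d^{\pm}\Psi)d^{\pm}+\tilde d^{\pm}(\Psi d^{\pm}-\tilde d^{\pm}\Psi)=0$ (which follows from $(d^{\pm})^2=(\tilde d^{\pm})^2=F$), and kill it via hypothesis~(2); the paper merely packages the induction as iterated conjugation $\Psi^{(k)}=(1+h)\circ\Psi^{(k-1)}$ rather than solving directly for the graded pieces $\Psi_{-n}$. One bookkeeping slip: since $[\Psi_{-n},d^+]\in\Hom^{-n+1}$, the equation you label ``degree $-1$'' with right-hand side $\tilde h^{(-1)}-h^{(-1)}$ actually determines $\Psi_{-2}$, while the genuine first equation $[\Psi_{-1},d^+]=0$ just lets you take $\Psi_{-1}=0$ (consistently with $d^-,\tilde d^-$ having only odd-degree components).
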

\begin{proof}

For brevity let us denote by $C^+$ the complex $(C_\bullet,d^+_\bullet)$, respectively we use notation
$\xExtdp(C^+,C^+)$ for the morphisms that respect the differential $d^+$ and $\Hom(C,C)$ for the morphisms
of complexes simply as modules. In particular in these notations we have
$$ D=d^++d^-,\quad  \tilde{D}=d^++\tilde{d}^-\in \Hom^{1}(C,C)\oplus \Hom^{\le -1}(C,C).$$

We claim that there is sequence of morphisms $\Psi^{(k)}\in \Hom^{\le 0}(C,C)$, $k=0,1,2,\dots$ such that
\begin{enumerate}
\item $\Psi^{(k)}_0=\mathrm{Id}$,
\item $D^{(k)}-\tilde{D}\in \Hom^{-k-1}(C,C)$,
\end{enumerate}
where $D^{(k)}=\Psi^{(k)}\circ D\circ (\Psi^{(k)})^{-1}$.

Indeed, for $k=0$ we can set $\Psi^{(0)}=\mathrm{Id}$. For the step of induction we assume the existence of $\Psi^{(k-1)}$, then
the leading term $\Delta=(D^{(k-1)}-\tilde{D})_{-k}\in \xHom^{-k}(C,C)$ is actually an element of $\xExtdp^{-k}(C^+,C^+)$ as could be seen from
degree $-k+1$ part of the equation
$$ (D^{(k-1)})^2-D^2=0.$$

On the other hand the second assumption of the Lemma implies that all elements of $\xExtdp^{-k}(C^+,C^+)$ are homotopic to $0$ and
hence there is $h\in Hom^{-k-1}(C,C)$ such that $[h,d^+]=\Delta$. Hence the element $\Psi^{(k)}:=(1+h)\circ \Psi^{(k-1)}$ satisfies the condition of
our induction statement.

Finally, since the complex $C$ is bounded see that the morphism $\Psi^{(\ell)}$ has the desired properties and our proof is complete.
\end{proof}

The last lemma is very elementary but it is responsible for appearance of the Hilbert scheme of points on the plane in our construction of the knot homology.

\begin{lemma}\label{lem: crit} Let \(Z\) be smooth and affine. The elements of the ideal $\calI_{crit}$ generated by the partial derivatives of the potential
$F$ act by zero homotopies on the elements of $\MFs(\calZ,F)$
\end{lemma}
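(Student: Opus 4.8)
The plan is to exhibit, for every object $(M,D)\in\MFs(\calZ,F)$ and every generator $g$ of $\calI_{crit}$, an odd $\CC[\calZ]$-linear endomorphism $h$ of $M$ with $[D,h]_+=g\cdot\mathrm{Id}_M$; this is precisely the statement that multiplication by $g$ is a zero homotopy.

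First I would reduce to the generators. Set $J(M):=\{\,g\in\CC[\calZ]\mid g\cdot\mathrm{Id}_M \text{ is null-homotopic}\,\}$. This is an ideal of $\CC[\calZ]$: if $g=[D,h]_+$ and $f\in\CC[\calZ]$, then $f$ is central in $\mathrm{End}_{\CC[\calZ]}(M)$ and commutes with $D$ (which is $\CC[\calZ]$-linear), so $fg=[D,fh]_+$; closure under addition is immediate. Since $\calI_{crit}$ is generated by the derivatives $\xi(F)$ with $\xi$ ranging over a generating set of the $\CC[\calZ]$-module $\mathrm{Der}_\CC(\CC[\calZ])$ of vector fields — in coordinates these are the usual $\partial_iF$ — it suffices to prove that $\xi(F)\in J(M)$ for each such $\xi$.

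Next, fix $\xi$ and choose an algebraic connection along $\xi$ on $M$, i.e. an additive, $\ZZ/2$-degree-preserving map $\nabla_\xi\colon M\to M$ with $\nabla_\xi(fm)=\xi(f)m+f\nabla_\xi(m)$. Such a $\nabla_\xi$ exists because $\calZ$ is smooth and affine and the module underlying a matrix factorization is projective: when $M$ is free it is the entrywise action of $\xi$ in a chosen basis, and in general one realizes $M$ as a direct summand of a free module and composes the free connection with the projection onto $M$. Put $h:=[\nabla_\xi,D]=\nabla_\xi D-D\nabla_\xi$. A one-line check from the Leibniz rule shows $h$ is $\CC[\calZ]$-linear, and it is odd since $\nabla_\xi$ is even and $D$ is odd.

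Finally I would compute $[D,h]_+$. The (super-)Leibniz rule for the even operator $\nabla_\xi$ applied to $D\cdot D$ gives $[\nabla_\xi,D^2]=[\nabla_\xi,D]\,D+D\,[\nabla_\xi,D]=hD+Dh$, while on the other hand $D^2=F\cdot\mathrm{Id}$ and $[\nabla_\xi,F\cdot\mathrm{Id}]=\xi(F)\cdot\mathrm{Id}$ directly from the defining property of a connection. Hence
\[
Dh+hD=\xi(F)\cdot\mathrm{Id}_M,
\]
so $\xi(F)\in J(M)$, and by the first step $\calI_{crit}\subset J(M)$, which is the assertion. The argument is short and essentially free of obstacles; the only point that needs a word of justification is the existence of $\nabla_\xi$ for a non-free underlying module, handled by the standard summand-of-a-free-module construction over a smooth affine base — and not even needed in the concrete situations of the paper, where the relevant modules are free, so that $h$ can be taken to be the entrywise derivative of the matrix of $D$.
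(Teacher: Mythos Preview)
Your proof is correct and follows essentially the same approach as the paper: the key identity is obtained by differentiating $D^2=F$, yielding $h=[\nabla_\xi,D]$ as the null-homotopy for $\xi(F)$. The paper's argument is the free-module special case you note at the end (take $h=\partial D/\partial z$ entrywise); your version adds the routine ideal reduction and the connection argument for projective modules, which is more careful but not a different idea.
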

\begin{proof}
Let $z$ be a local coordinate on $\calZ$ and $D$ is the differential from some matrix factorization with
the potential $F$. Then we have
$$ \frac{\partial F}{\partial z}=\frac{\partial D}{\partial z}\circ D+ D\circ \frac{\partial D}{\partial z}.$$
Hence $\frac{\partial D}{\partial z}$ is the map that provides a homotopy of $\frac{\partial F}{\partial z}$ to $0$.
\end{proof}

\subsection{Koszul matrix factorizations}\label{ssec: str eq Kosz} In this subsection we set notations for a particular type of matrix factorization that is ubiquitous in our work and in the previous work that relates
matrix factorizations to knot invariants \cite{KR}. We need slightly more general setting than the one from \cite{KR} because we work with equivariant matrix factorizations.
We state some simple properties of these factorizations that are used later in the paper.

Suppose $\calZ$ is a variety with the action of a group $G$ and $F$ is a $G$-invariant potential.
An object of the category $\MFs_{B^2}^{str}(\calZ,F)$ is a free $B^2$-equivariant $\Zt$-graded $\CC[\calZ]$-module $\xxM$ with the odd $G$-invariant differential $\xD$ such that $\xD^2 = F\xIdv{M}$. In particular, a free $G$-equivariant $\CC[\calZ]$-module $\xxV$ with two elements $\xdl\in\xxV$, $\xdr\in\xxV^*$ such that $(d_l,d_r)=F$, determines a Koszul matrix factorization $\SymbolPrint{\Kszvvv{\xxV}{\xdl}{\xdr}} = \bigwedge^{\bullet}\xxV$ with the differential $Dv = \xdl\wedge v + \xdr\cdot v$ for $v\in \bigwedge^{\bullet}\xxV$. We use a more detailed notation by choosing a basis $\ktht_1,\ldots,\ktht_n\in \xxV$ and presenting $\xdl$ and $\xdr$ in terms of components: $\xdl=a_1\ktht_1 +\cdots a_n\ktht_n$, $\xdr=b_1\ktht^*_1+\cdots + b_n\ktht^*_n$:
\begin{equation}
\label{eq:kszm}
\Kszvvv{\xxV}{\xdl}{\xdr} = \kmtr{a_1 & b_1 &\ktht_1\\ \vdots & \vdots & \vdots \\
a_n & b_n & \ktht_n}
\end{equation}
The structure of $G$-module is described by specifying the action of $G$ on the basis $\ktht_1,\ldots,\ktht_n$.
In some cases when $G$-equivariant structure of the module $\xxM$ is clear from the context we omit the last columns from the notations. We call a matrix presenting Koszul matrix factorization Koszul matrix.
For example, %
%
%
if  we change the basis $\theta_1,\dots, \theta_n$ to the basis $\theta_1,\dots,\theta_i+c\theta_j,\dots, \theta_j,\dots,\theta_n$ the $i$-th and
$j$-th rows of the Koszul matrix will change:
$$ \begin{bmatrix}
a_i& b_i&\theta_i\\
a_j& b_j&\theta_j
\end{bmatrix}
\mapsto
\begin{bmatrix}
a_i+ca_j& b_i&\theta_i+c\theta_j\\
a_j& b_j-cb_i&\theta_j
\end{bmatrix}
$$

Suppose $a_1,\dots,a_n\in \CC[\calZ]$ is a regular sequence and $F\in (a_1,\dots,a_n)$. We can choose $b_i$ such that $F=\sum_i a_i b_i$ and
$\xdl$ and $\xdr$ are as above: $\xdl=a_1\ktht_1 +\cdots a_n\ktht_n$, $\xdr=b_1\ktht^*_1+\cdots + b_n\ktht^*_n$.
In general, there is no unique choice for $b_i$ but all choices lead to homotopy equivalent
Koszul matrix factorizations (in the non-equivariant case they would be simply isomorphic).
In other words, if $b'_i$ is a another collection of elements such that $F=\sum a_i b'_i$  and $\xdr'=b'_1\ktht^*_1+\cdots + b'_n\ktht^*_n$
then Lemma~\ref{lem: ext uniq} implies that the complexes $\Kszvvv{\xxV}{\xdl}{\xdr}$ and $\Kszvvv{\xxV}{\xdl}{\xdr'}$ are homotopy equivalent.

Indeed, we need to apply Lemma~\ref{lem: ext uniq} for \((C_\bullet,d^+_\bullet)=(\tilde{C}_\bullet,\tilde{d}_\bullet^+)=(\Lambda^\bullet V,\xdl)\).
The elements \(a_i\) form a regular sequence hence \(\Lambda^\bullet V,\xdl\) is homotopic to \(\mathcal{O}=\CC[Z]/I\), \(I=(a_1,\dots,a_n)\) and
we conclude that \(\Hom_{d_+}^{<0}(C_\bullet,C_\bullet)=\Hom_{d_+}^{<0}(\mathcal{O},C_\bullet)=0\). Moreover, the differentials \(\xdr\) and \(\xdr'\)
are homotopies between \(F\in Hom_{d_+}^0(C_\bullet,C_\bullet)\) and \(0\).
Thus there is a homotopy between $\Kszvvv{\xxV}{\xdl}{\xdr}$ and $\Kszvvv{\xxV}{\xdl}{\xdr'}$ and  from now on we use notation $\SymbolPrint{\mathrm{K}^F}(a_1,\dots,a_n)$ for such matrix factorization.

\section{Equivariant structure of matrix factorizations and their push-forwards}\label{sec: eq push}

In this section we discuss  the properties of equivariant matrix factorizations and construct the push forward of matrix factorizations
along the embedding of affine varieties $j: Spec(R)\to Spec(S)$. There is a more general, more sophisticated
construction for push forward of matrix factorization that could be found in \cite{DM} but the functor from \cite{DM} is of different nature then ours.
Our construction of the push-forward relies on the generalizations of the lemmas from the previous section and we discuss these extensions first.

\subsection{Technical lemmas: non-equivariant version}

We keep notations consistent with the previous section. Let $\SymbolPrint{\Mod_{per}}(\calZ)$ be a category  of $\ZZ_2$-graded free $\CC[\calZ]$ modules with  endomorphisms.
An object $\calF\in \Mod_{per}(\calZ)$ is a pair $(M,D)$, $M=M^0\oplus M^1$, $M^i=\CC[\calZ]^{d_i}$ and $D\in \Hom_{\CC[\calZ]}(M^1,M^0)\oplus \Hom_{\CC[\calZ]}(M^0,M^1)$.
The space of morphisms between two elements of $\Mod_{per}(\calZ)$ consists of  morphisms of the corresponding   $\CC[\calZ]$-modules that intertwine the endomorphisms.

We denote by $\SymbolPrint{\Com_{per}}(\calZ)$ be the category of complexes of $\ZZ_2$-graded modules with endomorphisms.
We denote by \(\Mod_{per}(\calZ)\) the abelian category with objects \((M,D)\) where \(M=M^0\oplus M^1\) is a \(\ZZ_2\)-graded \(\CC[\calZ]\)-module
and \(D\in \Hom_{\CC[\calZ]}(M^i,M^{i+1})\). The morphisms in \(\Mod_{per}(\calZ)\) preserve \(\ZZ_2\)-grading and intertwine the endomorphisms.
We would like to emphasize that the elements of \(\Mod_{per}(\calZ)\) are not matrix factorizations in general since we do not
impose constraint \(D^2=W\) and do not require the modules \(M_i\) to be free.

An object of
$\Com_{per}(\calZ)$ is a collection of objects $\calF_i\in \Mod_{per}(\calZ)$, $\calF_i=(M_i,D_i)$, $M_i=\CC[\calZ]^{r_i}$ and morphisms
$d^+_i: M_i\to M_{i-1}$ which are morphisms of elements of $\Mod_{per}(\calZ)$ and $(d^+)^2=0$ as a  linear map $M_i\to M_{i-2}$.
Since \(d^+_i\) intertwines the endomorphisms \(D_{i-1}\) and \(D_i\) the kernel \(\ker d^+_i\subset M_i \) and image \( \im d^+_i\subset M_{i-1}\) are
elements of \(\Mod_{per}(\calZ)\) and we use notation $H^i(\calF)$ for
the quotient \(\ker d^+_i/\im d^+_{i+1}\). For subset of $\Com_{per}(\calZ)$ consisting of complexes $\calF$ such that $M_i=0$ for $i\notin [0,\ell]$
we use notation $\SymbolPrint{\Com^\ell_{per}}(\calZ)$.

Analogously to the previous case we introduce notation $\Hom^k(\calF_\bullet,\calF_\bullet)=\oplus_i \Hom(\calF_i,\calF_{i-k})$ respectively we use notation
$\xExt^k_{d^+}(\calF_\bullet,\calF_\bullet)$ for the elements of $\Hom^k(\calF_\bullet,\calF_\bullet)$ that commute with the differential $d^+$. The homotopy relations
between the elements of $\xExt^k_{d^+}(\calF_\bullet,\calF_\bullet)$ is defined in the natural way:
$$ f\sim g,\mbox{ iff there is } h\in \Hom^{k-1}(\calF_\bullet,\calF_\bullet), \quad f-g=d^+\circ h-h\circ d^+.$$

The technical lemma that we need later is a generalization of lemmas~\ref{lem: ext MF} and \ref{lem: ext uniq}, the proof is essentially identical to the proof of the above mentioned lemmas.
\begin{lemma}
  \label{lem: ext MF+} Let $(\calF_\bullet,d_\bullet^+)$, $\calF_i=(M_i,D_i)$ be an element of $\Com^\ell_{per}(\calZ)$ and $F\in \CC[\calZ]$ such that \( M_i\) are free \(\CC[\calZ]\)-modules
  and
\begin{enumerate}
\item Elements of $\xExtdpv{<0}(\calF_\bullet,\calF_\bullet)$ are homotopic to $0$
\item Element $F-D^2\in \xExtdpv{0}(\calF_\bullet,\calF_\bullet)$ is homotopic to $0$
\end{enumerate}
Then there are maps  $d_{ij}^-\in \Hom( \calF_i, \calF_j)$, $j>i$ such that
$$ [(\calF_\bullet, D+d^++d^-)]_{per}\in \MFs(\calZ,F).$$
\end{lemma}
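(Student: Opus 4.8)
The plan is to imitate closely the proof of Lemma~\ref{lem: ext MF}, since the statement is its generalization from the case $D=0$, $M_i$ free over $\CC[\calZ]$, to the case of arbitrary endomorphism $D$ preserving the $\ZZ_2$-grading. The extra feature here is that the pieces $\calF_i$ come equipped with a built-in endomorphism $D_i$ which already contributes $D^2$ to the square, so the obstruction we need to kill is not $F\cdot\mathrm{Id}$ but $F-D^2$, and all the ``correction'' differentials $h^{(-2i-1)}$ must be taken among morphisms in $\Mod_{per}(\calZ)$, i.e. morphisms of $\CC[\calZ]$-modules that commute with the $D_i$'s. So throughout one works in the graded algebra $\bigoplus_k\Hom^k(\calF_\bullet,\calF_\bullet)$ with total differential built from $d^+$ and the internal $D$; everything is $D$-equivariant by fiat.

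First I would set $D_{\mathrm{tot}}=D+d^+$ and observe that $(D_{\mathrm{tot}})^2 = D^2 + [D,d^+]_+ + (d^+)^2 = D^2$, because $d^+$ commutes with $D$ (it is a morphism in $\Mod_{per}$) and $(d^+)^2=0$. By hypothesis (2), $F-D^2$, viewed as an element of $\xExtdpv{0}(\calF_\bullet,\calF_\bullet)$, is null-homotopic, so there is $h^{(-1)}\in\Hom^{-1}(\calF_\bullet,\calF_\bullet)$ with $[h^{(-1)},d^+]_+ = F-D^2$; moreover $h^{(-1)}$ may be chosen to commute with $D$ (the homotopy happens inside the category $\Mod_{per}$, where all maps are $D$-equivariant). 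Then $D^{(-1)} := D + d^+ + h^{(-1)}$ satisfies $(D^{(-1)})^2 - F = (h^{(-1)})^2 \in \Hom^{\le -2}(\calF_\bullet,\calF_\bullet)$, since $[D,h^{(-1)}]_+=0$. This is the base of the induction, exactly as in Lemma~\ref{lem: ext MF}.

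The inductive step is verbatim the argument already given. Assume $h^{(-2i-1)}$ constructed for $i<j$ and set $D^{(-2j+1)} = D + d^+ + \sum_{k=0}^{j-1} h^{(-2k-1)}$. One computes $(D^{(-2j+1)})^2 - F - R^{(-2j)}\in\Hom^{<-2j}(\calF_\bullet,\calF_\bullet)$ with $R^{(-2j)} = \sum_{k=0}^{j-1} h^{(-2k-1)}h^{(-2j+1+2k)}$, using that $F$ and $D$ commute with all the $h$'s and with $d^+$. The identity $[D^{(-2j-1)},(D^{(-2j-1)})^2]=0$ then forces $[d^+,R^{(-2j)}]=0$ in the leading degree, so $R^{(-2j)}$ represents an element of $\xExtdpv{-2j}(\calF_\bullet,\calF_\bullet)$; it also automatically commutes with $D$, being built from $D$-equivariant pieces. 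By hypothesis (1) this element is null-homotopic through a $D$-equivariant homotopy, giving $h^{(-2j-1)}$ with $[h^{(-2j-1)},d^+]_+ = R^{(-2j)}$, which completes the step. Since each $\calF_\bullet$ is concentrated in degrees $[0,\ell]$, the process terminates: $\Hom^{<-\ell}(\calF_\bullet,\calF_\bullet)=0$, so $D^{(-2\ell-1)}$ squares exactly to $F$. Setting $d^- = \sum_i h^{(-2i-1)}$ and folding gives $[(\calF_\bullet, D+d^++d^-)]_{per}\in\MFs(\calZ,F)$; freeness of the $M_i$ guarantees that the folded object is a genuine matrix factorization (a free $\ZZ_2$-graded $\CC[\calZ]$-module with a square-$F$ differential). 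I do not expect a genuine obstacle here; the one point requiring a little care is the bookkeeping that every homotopy can be chosen inside $\Mod_{per}(\calZ)$, i.e. $D$-equivariantly — but this is immediate because $\xExt_{d^+}$ and all the homotopies are, by definition of $\Com_{per}(\calZ)$ and $\Mod_{per}(\calZ)$, taken among maps commuting with the endomorphisms, so the algebra of the proof of Lemma~\ref{lem: ext MF} goes through with $D$ an inert extra summand in the differential.
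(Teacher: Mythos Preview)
Your proposal is correct and matches the paper's proof essentially verbatim: both adapt the inductive construction of Lemma~\ref{lem: ext MF}, replacing the obstruction $F$ by $F-D^2$ and noting that all maps and homotopies live in $\Mod_{per}(\calZ)$ so that $D$ is carried along as an inert summand. One small slip: you write that $d^+$ \emph{commutes} with $D$ and conclude $[D,d^+]_+=0$, but literal commutation gives $[D,d^+]_+=2Dd^+$; what is actually needed (and what the paper also leaves implicit) is the standard Koszul sign $D|_{M_i}\mapsto(-1)^iD_i$ in the folding, so that $D$ anti-commutes with $d^+$ and with each $h^{(-2i-1)}$.
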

\begin{proof}
By assumption of the lemma there exist  morphisms of $\CC[\calZ]$ modules
$h^{(-1)}_i\in\Hom( \calF_i, \calF_{i+1})$ such that $F-D^2=h^{(-1)}_i\circ d_{i+1}^++d_i^+\circ h^{(-1)}_{i-1}$. Then a new differential $D^{(-1)}:=D+d^++h^{(-1)}$ satisfies the relation
$$ \left(D^{(-1)}\right)^2-F =(h^{(-1)})^2\in \yHom^{\leq-2}(C_\bullet,C_\bullet).$$
Thus $D^{(-1)}$ provides a base 
of induction for the proof of existence $h^{(-2i-1)}\in \yHom^{-2i-1}(\calF_\bullet,\calF_\bullet)$, $i\ge 0$ such that
\begin{enumerate}
\item $[h^{(-1)},d^+]_+=F-D^2$
\item $[h^{(-2i-1)},d^+]_+=\sum_{j=0}^{i-1} h^{(-2j-1)} h^{(-2i+2j+1)}$, $i>0$
\end{enumerate}
were we used the notation $[a,b]_+=ab+ba$.

Indeed, suppose we constructed $h^{(-2i-1)}$ for all $i<j$. Define $D^{(-2j+1)}=D+d^++\sum_{k=0}^{j-1} h^{(-1-2k)}$, then the inductive
assumption implies:
\begin{equation}\label{eq: D2 ext}
  \begin{aligned}
    \left( D^{(-2j+1)}\right)^2-F+D^2-R^{(-2j)}\in \yHom^{<-2j}(\calF_\bullet,\calF_\bullet),\\
    R^{(-2j)}=\sum_{k=0}^{j-1} h^{(-2k-1)} h^{(-2j+1+2k)}
    \end{aligned}
\end{equation}
On the other hand, a direct computation using the inductive assumptions and the fact that $F$ commutes with all $h^{(2i-1)}$ and $d^+$ implies
\begin{equation}\label{eq: com d+ ext}
 [d^+,R^{(-2j)}]=0.
\end{equation}
In more details, let us introduce notation $R^{(<-2j)}:=(D^{(-2j-1)})^2-F-R^{(-2j)}$ then
\begin{multline*}
0=[D^{(-2j-1)}, (D^{(2j-1)})^2]=[D+d^+ +\sum_i h^{(-2i-1)}, F-D^2+R^{(-2j)}+R^{(<-2j)}]\\=[d^+,R^{(-2j)}]+[d_++\sum_i h^{(-2i-1)},R^{(<-2j)}]+[\sum_i h^{(-2j-1)},R^{(-2j)}+R^{(-2j-1)}].
\end{multline*}
The first term in the last sum is from $\xHom^{-2j+1}(\calF_\bullet,\calF_\bullet)$, on the other hand the last two   are from
$\xHom^{<-2j+1}(\calF_\bullet,\calF_\bullet)$ thus (\ref{eq: com d+ ext}) follows.

Thus $R^{(-2j)}$ is the morphism of the complex $(\calF_\bullet,d^+)$ of degree $-2j$, that is, an element of $\Hom^{-2j}(\calF_\bullet,\calF_\bullet)$. The assumption of our lemma implies that such
morphism is homotopic to zero and the homotopy morphism $h^{(-2j-1)}\in \xHom^{-2j-1}(\calF_\bullet,\calF_\bullet)$ provides the induction step
since $[h^{(-2j-1)},d^+]_+=R^{(-2j)}$.

Thus by setting $d_i^-=\sum_i h^{(-2i-1)}$ we obtain the desired extension because of (\ref{eq: D2 ext}).
\end{proof}

\begin{lemma}\label{lem: uniq MF+} Let $(\calF_\bullet,d_\bullet^+)$ be an element of $\Com^\ell(\calZ)$ and $F\in \CC[\calZ]$ that satisfy conditions
of the previous lemma. Suppose there two sets of differentials  $d^-_{ij},\hat{d}^-_{ij}: \calF_i\to \calF_j$, $j>i$ such that
$$ [(\calF_\bullet, D+d^++d^-)]_{per}\in \MFs(\calZ,F),\quad [(\calF_\bullet, D+d^++\hat{d}^-)]_{per}\in \MFs(\calZ,F)$$
then there is $\Psi=\sum_{i\le 0}\Psi_i$, $\Psi_i\in \Hom^{-i}(\calF_\bullet,\calF_\bullet)$, $\Psi=\mathrm{Id}$ such
$$ \Psi\circ (D+d^++d^-)\circ \Psi^{-1}= D+d^++\hat{d}^-.$$
\end{lemma}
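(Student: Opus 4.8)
The plan is to mimic the proof of Lemma~\ref{lem: ext uniq}, replacing the ordinary complex $C^+=(C_\bullet,d^+_\bullet)$ by the complex $(\calF_\bullet,d^+_\bullet)$ of objects in $\Mod_{per}(\calZ)$, i.e. carrying along the auxiliary endomorphism $D$ at each stage. First I would set $\mathbb{D}=D+d^++d^-$ and $\hat{\mathbb{D}}=D+d^++\hat{d}^-$, both squaring to $F\cdot\mathrm{Id}$ by hypothesis, and observe that with respect to the $\ZZ$-grading by homological degree these decompose as $\mathbb{D}=d^++(D+d^-)$ with $d^+$ of degree $-1$ (lowering) and $D+d^-$ of degree $\ge 0$ in the other direction — here the crucial point is that $D$ itself has homological degree $0$, so it sits together with the $d^-$-part rather than with $d^+$. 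The target equation $\Psi\circ\mathbb{D}\circ\Psi^{-1}=\hat{\mathbb{D}}$ with $\Psi=\mathrm{Id}+\sum_{i<0}\Psi_i$, $\Psi_i\in\Hom^{-i}$, is then solved degree by degree.

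The inductive construction is: build $\Psi^{(k)}\in\Hom^{\le 0}(\calF_\bullet,\calF_\bullet)$ with $\Psi^{(k)}_0=\mathrm{Id}$ and $\mathbb{D}^{(k)}-\hat{\mathbb{D}}\in\Hom^{-k-1}$, where $\mathbb{D}^{(k)}=\Psi^{(k)}\mathbb{D}(\Psi^{(k)})^{-1}$. For $k=0$ take $\Psi^{(0)}=\mathrm{Id}$; note $\mathbb{D}^{(0)}-\hat{\mathbb{D}}=d^--\hat{d}^-\in\Hom^{\le -1}$ since the degree-$(-1)$ part $d^+$ and the degree-$0$ part $D$ agree. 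For the inductive step, write $\Delta=(\mathbb{D}^{(k-1)}-\hat{\mathbb{D}})_{-k}\in\Hom^{-k}(\calF_\bullet,\calF_\bullet)$. Expanding the degree-$(-k+1)$ component of $(\mathbb{D}^{(k-1)})^2-\hat{\mathbb{D}}^2=(F-F)\cdot\mathrm{Id}=0$, all lower-degree terms cancel by the inductive hypothesis, and the surviving relation reads $[d^+,\Delta]=0$ (the bracket with $D$ also contributes but is absorbed because $D$ commutes with the $d^+$-differential by the definition of $\Com_{per}(\calZ)$, so $\Delta$ in fact lies in $\xExt^{-k}_{d^+}(\calF_\bullet,\calF_\bullet)$). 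By hypothesis~(1) of Lemma~\ref{lem: ext MF+}, elements of $\xExtdpv{<0}(\calF_\bullet,\calF_\bullet)$ are null-homotopic, so there is $h\in\Hom^{-k-1}(\calF_\bullet,\calF_\bullet)$ with $[h,d^+]=\Delta$; set $\Psi^{(k)}=(\mathrm{Id}+h)\Psi^{(k-1)}$. A short check shows conjugation by $\mathrm{Id}+h$ removes the degree-$(-k)$ term from $\mathbb{D}^{(k-1)}-\hat{\mathbb{D}}$ without disturbing the higher-degree agreement, giving $\mathbb{D}^{(k)}-\hat{\mathbb{D}}\in\Hom^{-k-1}$. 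Since $\calF_\bullet$ is concentrated in degrees $[0,\ell]$, after $k=\ell$ steps all of $\Hom^{<-\ell}$ vanishes and $\Psi:=\Psi^{(\ell)}$ satisfies $\Psi\circ\mathbb{D}\circ\Psi^{-1}=\hat{\mathbb{D}}$ exactly.

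The main obstacle — really the only place where care beyond Lemma~\ref{lem: ext uniq} is needed — is bookkeeping the extra endomorphism $D$ in the degree count: one must confirm that $D$ always travels with the ``obstruction'' part and never interferes with the homological-degree-$(-1)$ differential $d^+$ used to kill obstructions, which is guaranteed precisely because morphisms in $\Com_{per}(\calZ)$ intertwine the endomorphisms, i.e. $[d^+,D]=0$. Granting that, every computation is word-for-word the computation in Lemma~\ref{lem: ext uniq} (and parallel to the existence statement Lemma~\ref{lem: ext MF+} already proved), so I would state the proof tersely, referring back to those two lemmas for the identical algebra and only spelling out the degree-$0$ base case and the role of $[d^+,D]=0$.
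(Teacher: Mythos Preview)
Your proposal is correct and matches the paper's approach exactly: the paper omits the proof, stating only that it is identical to the proof of Lemma~\ref{lem: ext uniq}, and your adaptation --- carrying $D$ along as a degree-$0$ piece --- is precisely that argument. One small wording issue: $[D,\Delta]$ does not actually contribute at the same degree as $[d^+,\Delta]$ since $D\in\Hom^0$; the reason $\Delta\in\xExtdpv{-k}(\calF_\bullet,\calF_\bullet)$ (i.e.\ intertwines $D$) is rather that every ingredient $d^+$, $d^-$, $\hat d^-$ and each homotopy $h$ is by hypothesis or construction a morphism in $\Mod_{per}(\calZ)$, so the conjugated differential and hence $\Delta$ inherit this property.
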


Proof of the this lemma is identical to the proof of lemma~\ref{lem: ext uniq} and we omit it.

\subsection{Push forward}\label{ssec: noneq push f}
Let us assume that $R=S/I$ where $I=(f_1,\dots,f_n)$ and functions $f_i$ form a regular sequence in $S$. In other words, the Koszul complex $K(I)=(\Lambda^\bullet W\otimes \CC[\calZ],d_K)$, $W=\CC^n$ has homologies only in degree $0$.
For $W\in S$ let $w$ be its image under the quotient map $j\colon S\rightarrow R$.
For a matrix factorization $\calF=(M,D)\in \MFs(R,w)$, $M=R^k$, we can construct
a matrix factorization
$j_*(\calF)\in \MFs(S,W)$
in two steps.

{\it Step 1:} Let us choose a lift $\tilde{D}\in \Hom (\tilde{M},\tilde{M})$, $\tilde{M}:=
S^k$ for the matrix coefficients of $D$.

{\it Step 2 :} The difference $D^2- (\tilde{D})^2$ has entries in the ideal $I$. The data $((\tilde{M}\otimes\Lambda^\bullet W,\tilde{D}),d_K)$ defines an element of $\Com_{per}^n(\calZ)$.
By our first remark this element satisfies the second condition of lemma~\ref{lem:  ext MF} and the regularity of the sequence $f_i$ implies the first condition.Thus we can apply the lemma to
obtain differentials $d^-_{ij}: \tilde{M}\otimes \Lambda^i W\to \tilde{M}\otimes \Lambda^j W$ such that
$$(\tilde{M}\otimes \Lambda^\bullet W, d_K+D+d^-)_{per}\in \MFs(\calZ,F),$$
we denote this element $j_*(\calF)$.

The first step of our construction involves a choice of $\tilde{D}$,  but the homotopy class of  $((\tilde{M}\otimes\Lambda^\bullet W,\tilde{D}),d_K)\in \Com_{per}^n(\calZ)$ is independent of this choice since
any two choices of lift $D$ differ by the  endomorphism that is homotopic to $0$. The second step produces an element that is unique up to automorphism according to lemma~\ref{lem: uniq MF+}.

To show that  the element $j_*(\calF)\in \MFs(\calZ,F)$ is well-defined,  we also need to show that we can extend $j_*$ to the space of morphisms between the objects and that
homotopy equivalent morphisms get mapped into the homotopy equivalent maps. We prove this later for the equivariant version of the push-forward and since there is a natural forgetful functor from the
equivariant to non-equivariant category the well defines of $j_*(\calF)$ follows.

\begin{remark} \label{rem: section} In the case when $j$ is a section of a  map \(\pi\) the first step of our construction could be made canonical and  we encounter this situation in our work. In more details: suppose there is map
 $\pi:\calZ\to \calZ_0$ such that $\pi\circ j=\mathrm{id}$ and $\calF\in \MFs(\calZ_0,w)$. Then the pull back $\pi^*(\calF)\in \MFs(\calZ,\pi^*(w))$ provides an extension of $\calF$ on $\calZ$ required by the
 first step of our construction.
\end{remark}

 \subsection{Chevalley-Eilenberg complex} Let us remind the general setting of Che\-val\-ley-Eilenberg homology. Suppose that $\frh$ is a Lie algebra. Chevalley-Eilenberg complex
 $\SymbolPrint{\CE_\frh}$ is the complex $(V_\bullet(\frh),d)$ with $V_p(\frh)=U(\frh)\otimes_\CC\Lambda^p \frh$ and differential $\SymbolPrint{d_{ce}}=d_1+d_2$ where:
 \def\dtheta{d}
 $$ d_1(u\otimes x_1\wedge\dots \wedge x_p)=\sum_{i=1}^p (-1)^{i+1} ux_i\otimes x_1\wedge\dots \wedge \hat{x}_i\wedge\dots\wedge x_p,$$
 $$ d_2(u\otimes x_1\wedge\dots \wedge x_p)=\sum_{i<j} (-1)^{i+j} u\otimes [x_i,x_j]\wedge x_1\wedge\dots \wedge \hat{x}_i\wedge\dots\wedge \hat{x}_j\wedge\dots \wedge x_p,$$

 For $\frh$ module $V$ we define $\SymbolPrint{\CE_\frh(V)}:=\Hom_{U(\frh)}(\CE_\frh,V)$. The homology of this complex is called Chevalley-Eilenberg cohomology of $V$: $\mathrm{H}^*_{\Lie}(\frh,V)$.
  Consider a group $H$ such that $\mathrm{Lie}(H)=\frh$, $\calZ$ is an affine variety with $H$-action and $H$-invariant function $F$.
 Suppose we  are given $\calF\in \MFs_\frh(\calZ,F)$, that is, we have a $\ZZ_2$-graded $\CC[\calZ]$ module $M$ with differential $D$ such that $D^2=F\cdot \mathrm{Id}$ and
 $D(h\cdot m)=h\cdot D(m)$ for any $h\in H$. The complex $\Hom_{U(\frh)}(\CE_\frh,M)$ is a $\ZZ\times \ZZ_2$-graded module and we collapse the first
 $\ZZ$-grading to $\ZZ_2$-grading.

 The map $D+d_{ce}$ respects $\ZZ_2$-grading and the direct computation shows that $D$ and $d_{ce}$ anti-commute.
 Hence $(D+d_{ce})^2=F$ and we define $\CE_\frh(\calF):=(\Hom_{U\frh}(\CE_\frh,M),D+d_{ce})$. The differential $d_{ce}$ commutes with the elements of the ring invariants
 $\CC[\calZ]^\frh$, thus the complex $\CE_\frh(\calF)\in \MFs(\calZ/H,F)$ where $\calZ/H=Spec(\CC[\calZ]^\frh)$.

 Later on we will need an analog for the lemma~\ref{lem: ext MF} in the case of equivariant matrix factorizations. Our proof the lemma~\ref{lem: ext MF} relies on the fact that
 every map from $\Hom^{<0}(C^+,C^+)$ is homotopically trivial and hence we can  construct a homotopy that connects this map to $0$. It appears to us that the analog of this
 statement does not hold in the equivariant setting: the homotopy connecting the map to $0$ in general is equivariant up to homotopy.

 Thus to make our proof
 of lemma~\ref{lem: ext MF} work in equivariant setting one would have to enlarge our category of equivariant complexes to the category of $L_\infty$-complexes.
 That would make our theory extremely technical and we postpone  this development for subsequent publications. Below we explain more elementary way to
 get around this problem: we incorporate Eilenberg-Chevalley  complex in our definition of equivariant matrix factorizations. We give details below and we start with some
 preliminary material.

Let us define by $\Delta$ the standard map $\frh\to \frh\otimes \frh$ defined by $x\mapsto x\otimes 1+1\otimes x$.
 Suppose $V$ and $W$ are modules over Lie algebra $\frh$ then we use notation $\SymbolPrint{V\stackon{\otimes}{\scriptstyle\Delta} W}$ for $\frh$-module which is isomorphic to $V\otimes W$ as vector space, the $\frh$-module structure being defined by  $\Delta$. Respectively, for a given $\frh$-equivariant matrix factorization $\calF=(M,D)$ we denote by $\SymbolPrint{\CE_{\frh}\stackon{\otimes}{\scriptstyle\Delta} \calF}$
 the $\frh$-equivariant matrix factorization $(CE_\frh \stackon{\otimes}{\scriptstyle\Delta}\calF, D+d_{ce})$. The $\frh$-equivariant structure on $\CE_{\frh}\stackon{\otimes}{\scriptstyle\Delta} \calF$ originates from the
 left action of $U(\frh)$ that commutes with right action on $U(\frh)$ used in the construction of $\CE_\frh$.

 In more details, for a given $\frh$-module $M$ the complex $\CE_\frh\stackon{\otimes}{\scriptstyle\Delta} M$ has terms $U(\frh)\otimes \Lambda^i(\frh)\otimes M$ with $\frh$-module structure
 $$ x\cdot(u\otimes \omega\otimes m)=x\cdot u\otimes \omega\otimes m,$$
 and the differential of the complex is   $d_{ce}=\dtheta_1+\dtheta_2$ where:
 \begin{multline*}
  \dtheta_1(u\otimes x_1\wedge\dots \wedge x_p\otimes m)=\sum_{i=1}^p (-1)^{i+1} \left(ux_i\otimes m x_1\wedge\dots \wedge \hat{x}_i\wedge\dots\wedge x_p\otimes m+\right.\\
\left. x_1\wedge\dots \wedge \hat{x}_i\wedge\dots\wedge x_p\otimes x_i\cdot m\right) ,
\end{multline*}
 $$ \dtheta_2(u\otimes x_1\wedge\dots \wedge x_p)=\sum_{i<j} (-1)^{i+j} u\otimes [x_i,x_j]\wedge x_1\wedge\dots \wedge \hat{x}_i\wedge\dots\wedge \hat{x}_j\wedge\dots \wedge x_p.$$

A slight modification of the standard fact that $\CE_\frh$ is the resolution of the trivial module implies that $\CE_\frh\stackon{\otimes}{\scriptstyle\Delta} M$ is a free resolution of the
$\frh$-module $M$.

\subsection{Equivariant matrix factorizations}
We keep previous notations: $\calZ$ is an affine variety with $H$-action and $F\in \CC[\calZ]^\frh$. Define a category $\Mod_{\frh}$ whose objects are finite-dimensional $H$-representations
$V$.
Now define the category $\SymbolPrint{\MFs_\frh}(\calZ,F)$ whose objects are triples $(M,D,\partial)$ where $M=M^0\oplus M^1$ and $M^i=\CC[\calZ]\otimes V^i$, $V^i \in \Mod_{H}$,
$\SymbolPrint{\partial}\in \oplus_{i>j} \Hom_{\CC[\calZ]}(\Lambda^i\frh\otimes M, \Lambda^j\frh\otimes M)$ and $D$ is an odd endomorphism
$D\in \Hom_{\CC[\calZ]}(M,M)$ such that
$$D^2=F,\quad \SymbolPrint{ D_{tot}}^2=F,\quad D_{tot}=D+d_{ce}+\partial,$$
where the total differential $D_{tot}$ is an endomorphism of $\CE_\frh\odel M$, that commutes with the $U(\frh)$-action.


Note that we do not impose the equivariance condition on the differential $D$ in our definition of matrix factorizations. On the other hand, if $\calF=(M,D)\in \MFs(\calZ,F)$ is a matrix factorization with
$D$ that commutes with $\frh$-action on $M$ then $(M,D,0)\in \MFs_\frh(\calZ,F)$.

There is a natural forgetful functor $\MFs_\frh(\calZ,F)\to \MFs(\calZ,F)$ that forgets about the correction differentials:
$$\calF=(M,D,\partial)\mapsto \SymbolPrint{\calF^\sharp}:=(M,D).$$

Given two $\frh$-equivariant matrix factorizations $\calF=(M,D,\partial)$, $\tilde{\calF}=(\tilde{M},\tilde{D},\tilde{\partial})$ the space of morphisms $\Hom(\calF,\tilde{\calF})$ consists of
homotopy equivalence classes of elements $\Psi\in \Hom_{\CC[\calZ]}(\CE_\frh\odel M, \CE_\frh\odel \tilde{M})$ such that $\Psi\circ D_{tot}=\tilde{D}_{tot}\circ \Psi$ and $\Psi$ commutes with
$U(\frh)$-action on $\CE_\frh\odel M$. Two map $\Psi,\Psi'\in \Hom(\calF,\tilde{\calF})$ are homotopy equivalent if
there is $h\in  \Hom_{\CC[\calZ]}(\CE_\frh\odel M,\CE\frh\odel\tilde{M})$ such that $\Psi-\Psi'=\tilde{D}_{tot}\circ h+ h\circ D_{tot}$ and $h$ commutes with $U(h)$-action on  $\CE_\frh\odel M$.

 Given two $\frh$-equivariant matrix factorizations $\calF=(M,D,\partial)\in \MFs_\frh(\calZ,F)$ and $\tilde{\calF}=(\tilde{M},\tilde{D},\tilde{\partial})\in \MFs_\frh(\calZ,\tilde{F})$
 $\calF\otimes\tilde{\calF}\in \MFs_\frh(\calZ,F+\tilde{F})$ as the equivariant matrix factorization $(M\otimes \tilde{M},D+\tilde{D},\partial+\tilde{\partial})$.


 \subsection{Extension lemmas} We have the following analog of Lemma~\ref{lem: ext MF} and respective uniqueness result.

\begin{lemma}\label{lem: ext MF eq} Let $(C_\bullet,d_\bullet^+)$ be a $\frh$-equivariant  complex
$$C_0\xleftarrow{d_1^+}C_1\xleftarrow{d_2^+}\dots\xleftarrow{d_\ell^+} C_\ell$$
of $\CC[\calZ]$ modules and $F\in \CC[\calZ]^\frh$ such that
\begin{enumerate}
\item Elements of $\xExtdpv{<0}(C_\bullet,C_\bullet)$ are homotopic to $0$
\item The element $F\in \xExtdpv{0}(C_\bullet,C_\bullet)$ is homotopic to $0$.
\end{enumerate}
Then there are homomophisms $d^-_{ij}: C_i\to C_j$, $i>j$ of $\CC[\calZ]$-modules such that
$$[(C_\bullet,d^++d^-)]_{per}\in \MFs(\calZ,F)$$
and there are homomorphisms of $\CC[\calZ]$, $\partial_{ij}^-: \Lambda^\bullet\frh\otimes C_i\to \Lambda^{<\bullet}\frh\otimes C_j$, $j>i$ such that
$$ [(C_\bullet, d^-+d^+,\partial^-)]_{per}\in \MFs_\frh(\calZ,F).$$

\end{lemma}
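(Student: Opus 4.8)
The first assertion is Lemma~\ref{lem: ext MF} followed by the folding map $\MFs^\ell(\calZ,F)\to\MFs(\calZ,F)$, so the content is the second assertion. The plan is to run the proof of Lemma~\ref{lem: ext MF} once more, but now on the Chevalley--Eilenberg resolution of $(C_\bullet,d^+)$ and with all morphisms and homotopies taken in the category of $U(\frh)$-equivariant $\CC[\calZ]$-modules. As the authors indicate just before the statement, a \emph{naive} equivariant version of Lemma~\ref{lem: ext MF} fails because the homotopies it produces need not be equivariant; passing to the $\CE_\frh$-resolution is exactly the device that repairs this, since an equivariant null-homotopy on $\CE_\frh\odel C_\bullet$ is precisely a non-equivariant null-homotopy together with the coherent tower of higher corrections recording its failure to commute with $\frh$.

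Concretely, form $\CE_\frh\odel C_\bullet$ (resolve each term $C_q$ by its Chevalley--Eilenberg resolution) with total differential $\delta:=d_{ce}+d^+$. Since $(C_\bullet,d^+)$ is $\frh$-equivariant one has $\{d^+,d_{ce}\}=0$, hence $\delta^2=0$; the complex is bounded (the $\Lambda\frh$-direction contributing length $\dim\frh$), and its terms are free over $U(\frh)$, so the proof of Lemma~\ref{lem: ext MF} makes sense in the equivariant category. The key point is that hypotheses (1) and (2) upgrade to their $U(\frh)$-equivariant analogues for $(\CE_\frh\odel C_\bullet,\delta)$: because each $\CE_\frh\odel C_q$ is a $U(\frh)$-free resolution of $C_q$, the complex of $U(\frh)$-equivariant $\CC[\calZ]$-module endomorphisms of $(\CE_\frh\odel C_\bullet,\delta)$, with differential $\mathrm{ad}(\delta)$, is quasi-isomorphic to the Chevalley--Eilenberg cochain complex of $\frh$ with coefficients in $(\Hom^\bullet(C_\bullet,C_\bullet),\mathrm{ad}(d^+))$ --- the standard fact that $U(\frh)$-equivariant derived morphism complexes are computed by Chevalley--Eilenberg cochains. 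Passing to $\frh$-cochains does not lower cohomological degree, so hypothesis (1) forces every $U(\frh)$-equivariant $\delta$-chain endomorphism of $\CE_\frh\odel C_\bullet$ of negative degree to be $\delta$-null-homotopic equivariantly, and $F$ --- which is $\frh$-invariant, lying in $\CC[\calZ]^\frh$, and is a $d^+$-coboundary by hypothesis (2) --- remains a coboundary in the equivariant complex.

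Granting these, repeat the induction of the proof of Lemma~\ref{lem: ext MF} with $(C_\bullet,d^+)$ replaced by $(\CE_\frh\odel C_\bullet,\delta)$: produce $U(\frh)$-equivariant operators $h^{(-1)},h^{(-3)},\dots$ of strictly negative degree with $[h^{(-1)},\delta]_+=F$ and $[h^{(-2i-1)},\delta]_+=\sum_{j=0}^{i-1}h^{(-2j-1)}h^{(-2i+2j+1)}$ ($i\ge 1$), so that $\widehat\delta:=\delta+\sum_i h^{(-2i-1)}$ is an odd $U(\frh)$-equivariant operator with $\widehat\delta^2=F$; at each stage the obstruction $R^{(-2j)}$ is a negative-degree $\delta$-chain endomorphism, hence equivariantly $\delta$-null-homotopic by the previous paragraph, and the induction terminates since $\CE_\frh\odel C_\bullet$ is bounded. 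Having arranged (as explained below) that each $h^{(-2i-1)}$ is supported in $\Lambda\frh$-degrees $\le 0$, split $\widehat\delta$ by $\Lambda\frh$-degree: its $\Lambda\frh$-degree $0$ part is an endomorphism $D=d^++d^-$ of $M=C_{\mathrm{odd}}\oplus C_{\mathrm{even}}$ with $D^2=F$ (re-proving the first assertion with a compatible $d^-$), and its strictly-negative part is the desired $\partial^-$. Since everything was built equivariantly, $D_{tot}=D+d_{ce}+\partial^-$ commutes with the $U(\frh)$-action and $D_{tot}^2=F$, i.e. $[(C_\bullet,d^++d^-,\partial^-)]_{per}\in\MFs_\frh(\calZ,F)$.

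The conceptual input --- the upgrade of hypotheses (1), (2) --- is standard homological algebra; the step I expect to demand the most care is the last one, making the decomposition $\widehat\delta=D+d_{ce}+\partial^-$ precise. Concretely one must choose, at each inductive step, a homotopy $h^{(-2j-1)}$ in the $\Lambda\frh$-grading dictated by its Chevalley--Eilenberg class (the $k$-th cochain component corresponding to $\Lambda\frh$-degree $-k$), so that $\widehat\delta$ genuinely never acquires a component of positive $\Lambda\frh$-degree and its $\Lambda\frh$-degree $0$ piece is literally an endomorphism of $M$; this is possible because the obstructions lie in strictly negative total degree (where the equivariant $\mathrm{ad}(\delta)$-cohomology vanishes) and because $d_{ce}$ has $\Lambda\frh$-degree $-1$, so solving the $\Lambda\frh$-degree $-k$ component of $\widehat\delta^2=F$ for a correction of $\Lambda\frh$-degree $-k$ never forces one of positive $\Lambda\frh$-degree. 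The remaining points --- extending $\CE_\frh\odel(-)$ over morphisms and homotopies, and the sign conventions making $\delta^2=0$ and $D_{tot}^2=F$ --- are routine and parallel Lemmas~\ref{lem: ext MF} and~\ref{lem: ext uniq}.
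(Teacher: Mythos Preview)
Your approach is genuinely different from the paper's and contains a gap in the final extraction step.

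The paper does not prove this lemma directly; it declares it a special case of Lemma~\ref{lem: ext MF+eq} (set $D_i=0$, $\partial_i=0$, so that conditions (2) and (3) there both reduce to your hypothesis (2)). The proof of Lemma~\ref{lem: ext MF+eq} first builds $d^-$ non-equivariantly via Lemma~\ref{lem: ext MF+}, and then builds $\partial^-=\sum_k h^{(-k)}$ by a second induction in which every correction lives in $\Hom(\Lambda^\bullet\frh,\Lambda^{<\bullet}\frh)\otimes\Hom^{-k}(C_\bullet,C_\bullet)$. The crucial point is that the obstruction at each stage is shown to be $d^+$-closed (via $[D^{(k)},(D^{(k)})^2]=0$ projected to the relevant $\Hom^{-k}$-piece), so hypothesis (1) can be applied \emph{componentwise over} $\Hom(\Lambda^\bullet\frh,\Lambda^{<\bullet}\frh)$ to produce a non-equivariant homotopy in exactly the required form. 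No $U(\frh)$-equivariant homotopies are ever needed, and the corrections never leave $\Hom_{\CC[\calZ]}(\Lambda^\bullet\frh\otimes M,\Lambda^{<\bullet}\frh\otimes M)$.

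Your route --- running Lemma~\ref{lem: ext MF} on $(\CE_\frh\odel C_\bullet,\delta)$ inside the $U(\frh)$-equivariant category --- is conceptually clean, and your spectral-sequence upgrade of hypotheses (1) and (2) is correct. But the output $\widehat\delta$ is a $U(\frh)$-equivariant endomorphism of $\CE_\frh\odel M=U(\frh)\otimes\Lambda^\bullet\frh\otimes M$, and such a map is determined by a $\CC[\calZ]$-linear map $\Lambda^\bullet\frh\otimes M\to U(\frh)\otimes\Lambda^\bullet\frh\otimes M$: it can land in arbitrary PBW-degree of $U(\frh)$. The paper's definition of $\MFs_\frh(\calZ,F)$ requires $D_{tot}=D+d_{ce}+\partial$ with $D\in\Hom_{\CC[\calZ]}(M,M)$ and $\partial\in\Hom_{\CC[\calZ]}(\Lambda^\bullet\frh\otimes M,\Lambda^{<\bullet}\frh\otimes M)$, i.e.\ the only piece of $D_{tot}$ touching $U(\frh)^{>0}$ is the fixed contribution from $d_{ce}$. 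Your last paragraph controls the $\Lambda\frh$-degree of the $h^{(-2j-1)}$ but not their $U(\frh)$-PBW degree, so ``its $\Lambda\frh$-degree $0$ piece is literally an endomorphism of $M$'' is asserted rather than proved. Rectifying a general equivariant $\widehat\delta$ to this restricted form is an additional (non-obvious) step; the paper's componentwise-homotopy approach sidesteps it entirely.
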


 We do not provide a proof of this lemma since it is a particular case of a more general statement that is proven below. The more general statement is an equivariant version of lemmas~\ref{lem: ext MF+} and \ref{lem: uniq MF+}.

Let $\Mod^{\frh}_{per}(\calZ)$ be a category  of $\ZZ_2$-graded free  $\CC[\calZ]$ modules with endomorphisms equipped with $\frh$-equivariant structure.
That is an object $\calF\in \Mod^\frh_{per}(\calZ)$ is the triple $(M,D,\partial)$, $M=M^0\oplus M^1$, $M^i=\CC[\calZ]\otimes W^i$
where $W^i\in \Mod_\frh$, $D\in \Hom_{\CC[\calZ]}(M^1,M^0)\oplus \Hom_{\CC[\calZ]}(M^0,M^1)$, $\partial=\sum_{i> j} \partial_{ij}$,
$\partial_{ij}\in \Hom_{\CC[\calZ]}(\Lambda^i\frh\otimes M, \Lambda^j\frh\otimes M)$.

The $\frh$-action on $M_i$ is defined by the Hopf formula and using this action one define differential $d_{ce}$ on $\CE_\frh\odel M$.
The space of morphism between two objects $(M,D,\partial),  (\tilde{M},\tilde{D},\tilde{\partial})\in \Mod^{\frh}_{per}(\calZ)$ consists of morphisms $\Psi\in \Hom_{\CC[\calZ]}(M,\tilde{M})$  that intertwine the endomorphisms:
$$ \Psi\circ(D+d_{ce}+\partial)=(\tilde{D}+\tilde{d}_{ce}+\tilde{\partial})\circ \Psi.$$
We do not require $\frh$-equivariance for the morphisms in the category and we do not require $\frh$-equivariance of $D$.


Similarly we define $\Com^{\frh}_{per}(\calZ)$ to be the category of (not necessarily $\frh$-equivariant) complexes of objects of $\Mod_{per}^\frh(\calZ)$, we do not impose $\frh$-equivariance conditions
on the morphisms and homotopies in $\Com^{\frh}_{per}(\calZ)$.
In this setting we have the following equivariant version lemma~\ref{lem: ext MF+},\ref{lem: uniq MF+}:

\begin{lemma}
\label{lem: ext MF+eq} Let $(\calF_\bullet,d_\bullet^+)$, $\calF_i=(M_i,D_i,\partial_i)$ be an element of $\Com^{\frh,\ell}_{per}(\calZ)$ and $F\in \CC[\calZ]^\frh$ such that
\begin{enumerate}
\item Elements of $\xExtdpv{<0}(\calF_\bullet,\calF_\bullet)$ are homotopic to $0$
\item Element $F-D^2\in \xExtdpv{0}(\calF_\bullet,\calF_\bullet)$ is homotopic to $0$
\item Element $F-(D+\partial+d_{ce})^2\in \xExtdpv{0}(\CE_\frh\odel \calF_\bullet,\CE_\frh\odel \calF_\bullet)$ is homotopic to $0$.
\end{enumerate}
Then there are maps $d^-_{ij}: \calF_i\to \calF_j$, $i>j$ of elements of $\Mod_{per}(\calZ)$ such that
$$[(\calF_\bullet,d^++D+d^-)]_{per}\in \MFs(\calZ,F)$$
and there are maps   of elements of $\Mod_{per}(\calZ)$, $\partial_{ij}^-: \Lambda^\bullet\frh\otimes\calF_i\to \Lambda^{<\bullet}\frh\otimes \calF_j$, $j>i$ such that
$$ [(\calF_\bullet, D+d^-+d^+,\partial+\partial^-)]_{per}\in \MFs_\frh(\calZ,F).$$
\end{lemma}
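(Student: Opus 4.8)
The plan is to mimic, essentially verbatim, the two-stage induction used in the proofs of Lemma~\ref{lem: ext MF} and Lemma~\ref{lem: ext MF+}, but now carrying along the Chevalley--Eilenberg correction term $\partial$ in parallel with $d^-$. The first stage is literally Lemma~\ref{lem: ext MF+}: conditions (1) and (2) of the present lemma are exactly conditions (1) and (2) there, so we obtain maps $d^-_{ij}\colon\calF_i\to\calF_j$, $i>j$, of objects of $\Mod_{per}(\calZ)$ with $[(\calF_\bullet,d^++D+d^-)]_{per}\in\MFs(\calZ,F)$. Nothing new is needed here; one only records that the $h^{(-2i-1)}$ produced by that induction are morphisms in $\Mod_{per}(\calZ)$, i.e.\ they intertwine the endomorphisms $D_i$, which is automatic since the whole inductive computation in Lemma~\ref{lem: ext MF+} takes place inside $\Hom_{d^+}^{\bullet}(\calF_\bullet,\calF_\bullet)$, the morphisms commuting with $d^+$ \emph{and} with the module endomorphisms.

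For the second stage I would run the same descending induction one more time, but on $\CE_\frh\odel\calF_\bullet$ instead of $\calF_\bullet$, with differential $D+d^++d^-+d_{ce}$ in place of $D+d^+$. Set $D^{(0)}=D+d^++d^-+d_{ce}$; by the first stage $(D^{(0)})^2$ equals $F$ modulo terms that drop the $\Lambda^\bullet\frh$-degree (the $d_{ce}$ part of $D^{(0)}$ strictly lowers the $\Lambda^\bullet\frh$-filtration, while $D+d^++d^-$ preserves it and squares to $F$), and condition (3) says precisely that the leading such obstruction $F-(D+\partial+d_{ce})^2$ — equivalently the first $\partial$-degree correction — is $d^+$-homotopic to $0$. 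So one produces inductively $\partial^{(-2i-1)}\in\Hom_{\CC[\calZ]}(\Lambda^\bullet\frh\otimes\calF_\bullet,\Lambda^{<\bullet}\frh\otimes\calF_\bullet)$, strictly decreasing in $\Lambda^\bullet\frh$-degree, such that $[\partial^{(-1)},d^+]_+$ cancels the first obstruction and $[\partial^{(-2i-1)},d^+]_+=\sum_{j<i}\partial^{(-2j-1)}\partial^{(-2i+2j+1)}$ for $i>0$, exactly as in \eqref{eq: D2 ext}--\eqref{eq: com d+ ext}. The key points to check, copied from the earlier proofs, are: (a) the partial sums $D^{(-2j+1)}=D+d^++d^-+d_{ce}+\sum_{k<j}\partial^{(-1-2k)}$ satisfy the analogue of \eqref{eq: D2 ext}, with remainder $R^{(-2j)}=\sum_{k<j}\partial^{(-2k-1)}\partial^{(-2j+1+2k)}$; (b) $[d^+,R^{(-2j)}]=0$ by the same bracket computation as in \eqref{eq: com d+ ext}, using that $F$ is central and that the earlier-constructed maps commute with $d^+$; (c) hence $R^{(-2j)}\in\xExt_{d^+}^{\bullet}$ and condition (1) makes it $d^+$-null-homotopic, giving $\partial^{(-2j-1)}$. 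Because $\Lambda^\bullet\frh$-degrees are bounded (the complex is finite, $\calF_\bullet\in\Com^{\frh,\ell}_{per}$ and $\dim\frh<\infty$), the induction terminates and $\partial^-:=\sum_i\partial^{(-2i-1)}$ gives the desired element of $\MFs_\frh(\calZ,F)$; its positive part is by construction $(\calF_\bullet,d^+)$ with the $\Mod_{per}$-structure and one checks $D_{tot}=D+d^++d^-+d_{ce}+\partial^-$ commutes with the $U(\frh)$-action since every piece does.

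One technical subtlety to address explicitly — and the place I expect the bookkeeping to be most delicate rather than conceptually hard — is that $d_{ce}$ is not nilpotent after collapsing the $\ZZ$-grading to $\ZZ_2$-grading, so "strictly decreasing the $\Lambda^\bullet\frh$-degree" must be phrased as a filtration argument on $\CE_\frh\odel\calF_\bullet=\bigoplus U(\frh)\otimes\Lambda^p\frh\otimes(\cdots)$ rather than a nilpotency-of-degree argument; the relevant filtration is by $p+(\text{homological degree in }\calF_\bullet)$, which is bounded above since $\calF_i=0$ for $i\notin[0,\ell]$ and $p\le\dim\frh$, and each $\partial^{(-2i-1)}$ strictly decreases it. I would therefore state the induction as being on this combined filtration degree and note that the obstructions $R^{(-2j)}$ live in a bounded range of it, so finitely many steps suffice. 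Granting this, the statement of Lemma~\ref{lem: ext MF+eq} follows, and as the remark after it indicates, Lemma~\ref{lem: ext MF eq} is the special case $\calF_i=(C_i,0,0)$ with $D=0$, condition (3) then being implied by condition (2) together with the standard fact that $\CE_\frh\odel M$ is a free resolution of $M$.
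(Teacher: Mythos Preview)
Your overall strategy matches the paper's: first invoke Lemma~\ref{lem: ext MF+} to get $d^-$, then run a second descending induction on $\CE_\frh\odel\calF_\bullet$ to produce the equivariant corrections. The first stage is fine. The second stage has a genuine gap.

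The gap is in how you seed the second induction and how you use condition~(3). You set $D^{(0)}=D+d^++d^-+d_{ce}$, omitting the given $\partial$, and then assert that condition~(3) handles ``the leading such obstruction $F-(D+\partial+d_{ce})^2$''. But that expression involves $\partial$, which is absent from your $D^{(0)}$, so it is not the obstruction your induction actually faces; your leading obstruction is $[D+d^++d^-,d_{ce}]_+$, and nothing in the hypotheses makes that $d^+$-null-homotopic on its own. The paper instead takes $D^{(0)}=d^++D+\partial+d_{ce}+d^-+h^{(-1)}$, i.e.\ it \emph{includes} $\partial$ from the start. The single degree-zero obstruction is then $(\partial+d_{ce})^2$, and the paper kills it by combining conditions~(2) and~(3): since $(\partial+d_{ce})^2=(D+\partial+d_{ce})^2-D^2-[D,\partial+d_{ce}]_+$ and the paper records the normalization $[d^++D,\partial+d_{ce}]_+=0$, one gets $(\partial+d_{ce})^2\sim -(F-(D+\partial+d_{ce})^2)+(F-D^2)\sim 0$. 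After this seed, every subsequent obstruction lives in $\Hom_{d^+}^{<0}$ and is killed by condition~(1) alone.

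Two smaller points. First, your odd-only indexing $\partial^{(-2i-1)}$ and the recursion $[\partial^{(-2i-1)},d^+]_+=\sum\partial^{(-2j-1)}\partial^{(-2i+2j+1)}$ are lifted verbatim from Lemmas~\ref{lem: ext MF}/\ref{lem: ext MF+}, where the parity arises because only $d^+$ and the $h$'s are present. Here $d^-$, $\partial$, $d_{ce}$ already occupy many degrees, so the obstructions appear in \emph{every} $\Hom^{-k}$; the paper accordingly builds $h^{(-1)},h^{(-2)},h^{(-3)},\ldots$, one per step, with $(D^{(k)})^2-F\in\Hom^{<-k}$. Second, your proposed ``combined filtration'' by $p+(\text{homological degree})$ does not straightforwardly work: an obstruction with $\Hom$-degree $0$ but $\Lambda^\bullet\frh$-drop $\geq 2$ has negative combined degree yet is not covered by condition~(1), which only controls $\Hom_{d^+}^{<0}$. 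The paper's filtration by $\Hom^k$ alone avoids this: after the one $\Hom^0$ step handled by (2)+(3), everything else is strictly negative in $\Hom$-degree. Fix the seed and the grading and your argument goes through.
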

\begin{proof}
In the proof we would use short hand notations $\Hom^{i}$, $\xExtdpv{i}$ for space morphisms $\xHom^i(\calF_\bullet,\calF_\bullet)$ and
$\xExtdpv{i}(\calF_\bullet,\calF_\bullet)$.
We also assume that we have chosen the signs in our construction of $d_+$ in such that the equivariance condition is equivalent to
$[d_++D,d_{ce}+\partial]_+=0$.
Given linear map $A\in \xHom(\CE_\frh\odel \calF_\bullet,\CE_\frh\odel \calF_\bullet)$ we denote by $[A]_{k}$ the part of $A$ that is in
$\xHom(\CE_\frh,\CE_\frh)\otimes \xHom^i$.

As first step of our construction we apply the lemma~\ref{lem: ext MF+} to construct differentials $d^-_{ij}: \calF_i\to \calF_j$ such that
$$[(\calF_\bullet,D+d^++d^-)]_{per}\in \MFs(\calZ,F).$$

By the first step and the assumption of the lemma we have
\begin{multline*} (d^++d^-+D+\partial+d_{ce})^2-F=[d^++d^-+D,\partial +d_{ce}]_+\\+(\partial+d_{ce})^2\in \Hom(\Lambda^*\frh,\Lambda^*\frh)\otimes \Hom^{\le 0}.\end{multline*}
Since $\partial+d_{ce}\in \Hom(\Lambda^\bullet \frh,\Lambda^{<\bullet}\frh)\otimes \Hom^0$, we conclude that there is
$h^{(-1)}\in \Hom(\Lambda^\bullet \frh,\Lambda^{<\bullet}\frh)\otimes \Hom^{-1}$ such that $[h^{(-1)},d_+]_+=(\partial+d_{ce})^2$
and hence
\begin{gather*}
  d^{(-1)}:= (d^++d^-+D+\partial+d_{ce})^2-F-[h^{(-1)},d_+]_+\\
  =[d^-,\partial+d_{ce}]\in \Hom(\Lambda^*\frh,\Lambda^*\frh)\otimes \Hom^{<0}.
  \end{gather*}

 Let us define $D^{(0)}:=d^++D+\partial+d_{ce}+d^-+h^{(-1)}$ then we see that
 $$ (D^{(0)})^2-F=d^{(-1)}+[h^{(-1)},d^-+D+\partial+d_{ce}]_++(h^{(-1)})^2\in \Hom(\Lambda^\bullet\frh,\Lambda^{<\bullet})\otimes \Hom^{<0}.$$


We have proved the seed of induction for the following statement. For any $k$ there is $h^{(k)}\in \Hom(\Lambda^{\bullet}\frh,\Lambda^{<\bullet}\frh)\otimes \Hom^{-k}$ such that
$$ (D^{(k)})^2-F\in \Hom( \Lambda^\bullet\frh,\Lambda^{<\bullet}\frh) \otimes \Hom^{<-k},\quad  D^{(k)}=D^{(k-1)}+h^{(-k-1)}.$$

Suppose we have constructed $D^{(k)}$ and $h^{(-i)}$, $i\le k+1$. By induction assumption we have $ (D^{(k)})^2=
F+[(D^{(k)})^2]_{<-k}$ and we see that

\begin{multline*}
0=[D^{(k)}, (D^{(k)})^2]=[D^{(k)},[(D^{(k)})^2]_{<-k}]=[d_+, [(D^{(k)})^2)]_{-k-1}]\\+[d_+,[(D^{(k)})^2]_{<-k-1}]+[D+\partial+d_{ce}+d^-+\sum_i h^{(-i)},[(D^{(k)})^2]_{<-k}].
\end{multline*}
the first summand of the last expression is from $\Hom(\Lambda^\bullet\frh,\Lambda^{<\bullet}\frh)\otimes \Hom^{-k}$ and the last two summands are from
$\Hom(\Lambda^\bullet\frh,\Lambda^{<\bullet}\frh)\otimes \Hom^{<-k}$. Hence $[(D^{(k)})^2]_{-k-1}\in \Hom_{d_+}^{-k-1}$ and hence it is homotopic to $0$ and thus there
is \[h^{(-k-2)}\in \Hom(\Lambda^\bullet\frh,\Lambda^{<\bullet}\frh)\otimes \Hom^{-k-2}\] such that $[(D^{(k)})^2]_{-k-1}=[d_+,h^{(-k-2)}]_+$.
The differential $D^{(k+1)}=D^{(k)}+h^{(-k-2)}$ provides the induction step.

By setting $\partial^-=\sum_{i} h^{(-i)}$ we complete our proof.
\end{proof}

\begin{lemma}\label{lem: uniq MF+eq} Let $(\calF_\bullet,d_\bullet^+)$ be an element of $\Com^{\frh,\ell}(\calZ)$ and $F\in \CC[\calZ]^H$ that satisfy conditions
of the previous lemma. Suppose there two sets of differentials
  $d^-,\hat{d}^-: \calF_i\to \calF_j$,
  $\partial_{ij}^-,\hat{\partial}_{ij}: \Lambda^\bullet\frh\otimes\calF_i\to \Lambda^{<\bullet}\frh\otimes \calF_j$, $j>i$ such that
$$ [(\calF_\bullet, D+\partial+d^++d^-+d_{ce}+\partial^-)]_{per}, [(\calF_\bullet, D+\partial+d^++d_{ce}+\hat{d}^-+\hat{\partial}^-)]_{per}\in \MFs(\calZ,F)$$
then there is $\Psi=\sum_{i\le 0}\Psi_i$, $\Psi_{-i}\in \Hom^{-i}(\CE_\frh\odel \calF_\bullet,\CE_\frh\odel\calF_\bullet)$, $\Psi_0=\mathrm{Id}$, $\Psi_{-1}=0$,
such
$$ \Psi\circ (D+\partial+d^++d^-+d_{ce}+\partial^-)\circ \Psi^{-1}= D+\partial+d^++\hat{d}^-+d_{ce}+\hat{\partial}^-.$$
\end{lemma}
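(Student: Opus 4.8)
The plan is to follow the same inductive scheme used in the proof of Lemma~\ref{lem: ext MF+eq} (and its non-equivariant ancestor Lemma~\ref{lem: uniq MF+}), but now comparing two total differentials on the same underlying complex rather than constructing one from scratch. Write $D = D+\partial+d^++d^-+d_{ce}+\partial^-$ and $\hat D = D+\partial+d^++\hat d^-+d_{ce}+\hat\partial^-$ for the two given total differentials on $\CE_\frh\odel\calF_\bullet$; both square to $F$ and both have the same leading ($d^+$) part and the same degree-$0$ part $D+\partial+d_{ce}$. I would construct $\Psi$ as an infinite product, i.e.\ build a sequence $\Psi^{(k)} = (1+h^{(-k)})\circ\cdots\circ(1+h^{(-2)})$ with $h^{(-i)}\in\Hom^{-i}(\CE_\frh\odel\calF_\bullet,\CE_\frh\odel\calF_\bullet)$ commuting with the $U(\frh)$-action and lowering the $\Lambda^\bullet\frh$-degree, such that $D^{(k)} := \Psi^{(k)}\circ D\circ(\Psi^{(k)})^{-1}$ agrees with $\hat D$ modulo $\Hom(\Lambda^\bullet\frh,\Lambda^{<\bullet}\frh)\otimes\Hom^{\le -k-1}$. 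Since $\calF_\bullet$ is bounded in length $\ell$, the product stabilizes and $\Psi := \Psi^{(\ell+1)}$ does the job; the conditions $\Psi_0=\mathrm{Id}$, $\Psi_{-1}=0$ come out of the construction because there is no degree-$(-1)$ correction needed at the first step (the degree-$0$ parts already agree).

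The induction step is the heart of the argument. Assuming $\Delta := (D^{(k-1)}-\hat D)$ lies in $\Hom(\Lambda^\bullet\frh,\Lambda^{<\bullet}\frh)\otimes\Hom^{\le -k}$, one isolates its leading piece $\Delta_{-k}\in\Hom(\Lambda^\bullet\frh,\Lambda^{<\bullet}\frh)\otimes\Hom^{-k}$ and shows it is a cycle for the $d^+$-differential, i.e.\ $[d^+,\Delta_{-k}]_+=0$. This is extracted from the degree-$(-k+1)$ component of the identity $(D^{(k-1)})^2 - \hat D^2 = 0$ (both square to $F$), exactly as the vanishing~\eqref{eq: com d+ ext} was obtained in Lemma~\ref{lem: ext MF+eq}: the top-degree part of the bracket of $D^{(k-1)}$ with $(D^{(k-1)})^2-F$ forces $[d^+,\Delta_{-k}]_+=0$, while the lower-degree terms contribute below degree $-k+1$. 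Hence $\Delta_{-k}\in\xExtdpv{-k}(\calF_\bullet,\calF_\bullet)$, which by hypothesis~(1) of Lemma~\ref{lem: ext MF+eq} is homotopic to $0$; choose $h^{(-k-1)}$ with $[d^+,h^{(-k-1)}]_+ = \Delta_{-k}$, arranged to commute with $U(\frh)$ and to lower the $\Lambda^\bullet\frh$-grading (the homotopy can be taken of this shape because $\Delta_{-k}$ is, and the whole calculus takes place inside $\CE_\frh\odel(-)$). Conjugating $D^{(k-1)}$ by $1+h^{(-k-1)}$ cancels $\Delta_{-k}$ and pushes the discrepancy to degree $\le -k-1$, completing the step.

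The main obstacle — and the only genuinely new point relative to Lemma~\ref{lem: ext MF+eq} — is bookkeeping the \emph{two} gradings simultaneously: the homological $\Hom^\bullet(\calF_\bullet,\calF_\bullet)$ grading and the $\Lambda^\bullet\frh$-grading coming from the Chevalley--Eilenberg factor, together with the requirement that every $h^{(-i)}$ strictly lowers the $\Lambda^\bullet\frh$-degree so that $\partial$-type corrections never feed back into genuine equivariant-endomorphism corrections of the wrong type. One must check that the bracket $[\,d_{ce}+\partial+\partial^-,\,-\,]$ interacts correctly with the degree count — concretely, that $d_{ce}+\partial+\partial^-$ sits in $\Hom(\Lambda^\bullet\frh,\Lambda^{<\bullet}\frh)\otimes\Hom^{\le 0}$ so it cannot raise the homological degree — and that the sign conventions (fixed as in the proof of Lemma~\ref{lem: ext MF+eq} so that equivariance is $[d^++D,d_{ce}+\partial]_+=0$) make the square-zero/square-$F$ identities hold on the nose. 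Once this is set up, every line is a degree-by-degree mirror of the non-equivariant Lemma~\ref{lem: uniq MF+}, so, as with Lemma~\ref{lem: ext MF eq}, I would present it as a routine adaptation and omit the fully spelled-out recursion.
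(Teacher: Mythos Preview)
Your proposal is correct and follows essentially the same inductive scheme as the paper's proof: isolate the leading-degree piece of the discrepancy $\Delta = D^{(k)}_{tot} - \hat D_{tot}$, show it is a $d^+$-cycle by expanding $(\hat D_{tot}+\Delta)^2 - (\hat D_{tot})^2 = 0$ and reading off the top-degree component, invoke hypothesis~(1) to get a homotopy, and absorb it into $\Psi$. The only cosmetic difference is that you write $\Psi$ as a product $(1+h^{(-k)})\circ\cdots$ whereas the paper writes it additively as $\Psi^{(k+1)} = \Psi^{(k)} + \Psi_{-k}$; your extended discussion of the $\Lambda^\bullet\frh$-grading and $U(\frh)$-equivariance bookkeeping is more explicit than the paper's terse treatment but not a different argument.
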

\begin{proof} Let $d_0=D+\partial+d_{ce}$, $d_-=d^-+\partial^-$ and $D_{tot}=d^++d_0+d_-$, $\hat{d}_-=\hat{\partial}^-+\hat{d}^-$, $\hat{D}_{tot}=d^++d_0+\hat{d}_-$.
Suppose we constructed  $\Psi_{-i}\in \Hom^{-i}(\CE_\frh\odel \calF_\bullet,\CE_\frh\odel\calF_\bullet)$, $k>i\ge 0$ such that  $\Psi^{(k)}:=\sum_{0\le i<k}\Psi_{-i}$ satisfies
$$ \hat{D}_{tot}-D^{(k)}_{tot}\in \Hom^{\ge -k+1},\quad  D^{(k)}_{tot}=\Psi^{(k)}\circ D_{tot}\circ (\Psi^{(k)})^{-1}.$$

Let us use notation $\Delta=-\hat{D}_{tot}+D^{(k)}_{tot}$ then we have
$$ 0=(\hat{D}_{tot}+\Delta)^2-(\hat{D}_{tot})^2=[[\Delta]_{-k+1},d_+]_+  + [[\Delta]_{\le -k},d_+]_++[\Delta,d_0+\hat{d}_-]_++\Delta^2.$$
Since the first summand is from $\Hom^{-k+2}$ and the rest of summands are from $\Hom^{\le -k+1}$ we get that $[[\Delta]_{-k+1},d_+]_+=0$.
Thus there is $\Psi_{-k}\in \Hom^{-k}$ such that $[\Delta]_{-k+1}=[d^+,\Psi_{-k}]$. The element $\Psi^{(k+1)}:=\Psi^{(k)}+\Psi_{-k}$ prove our inductive statement.

\end{proof}

\subsection{Lifting lemma for morphisms} To define equivariant push-forward below we need to state few extra facts about our lifting procedure.

\begin{lemma}\label{lem: ext mor} Suppose we are
are given $(\calF_\bullet,d_\bullet^+)$, $\calF_i=(M_i,D_i)$  and $(\tilde{\calF}_\bullet,\tilde{d}_\bullet^+)$, $\tilde{\calF}_i=(\tilde{M}_i,\tilde{D}_i)$ be an element of $\Com^{\frh,\ell}_{per}(\calZ)$ and $F\in \CC[\calZ]^\frh$ such that
they satisfy conditions of lemma~\ref{lem: ext MF+eq} and $\partial,d^-$, $\tilde{\partial},\tilde{d}^-$ are as in conclusion of lemma~\ref{lem: ext MF+eq}. Let us also assume that
 elements of $\mathrm{\xExt}^{<0}_{d^+,\tilde{d}^+}(\calF_\bullet,\tilde{\calF}_\bullet)$ are homotopic to $0$

Suppose also that there is a 
 map $\Psi\in \Hom^{k}(\CE_\frh\odel \calF_\bullet,\CE_\frh\odel\tilde{\calF}_\bullet)$, $k\le  0$ such that
 $\tilde{d}^+\circ \Psi=-\Psi\circ d^+$ and
$(\tilde{D}+\tilde{d}_{ce}+\partial)\circ \Psi-\Psi\circ (D+d_{ce}+\tilde{\partial})$ is $d^+,\tilde{d}^+$ homotopic to $0$.
Then there are $\frh$-equivariant elements $\Psi_i  \in \Hom^{k-i}(\CE_\frh\odel \calF_\bullet,\CE_\frh\odel\tilde{\calF}_\bullet)$, $i>0$ such that
$$\tilde{D}_{tot}\circ(\Psi+\sum_i \Psi_i)=(\Psi+\sum_i \Psi_i)\circ D_{tot},$$ $$ \tilde{D}_{tot}:=\tilde{d}^++\tilde{D}+\partial+\tilde{d}^-+\tilde{d}_{ce}+\tilde{\partial}^-,\quad
D_{tot}:= d^++D+\partial+d_{ce}+\partial^-,$$

The elements $\Psi_i$ are unique up-to the homotopy: if \[\Psi'_i \in \Hom^{k-i}(\CE_\frh\odel \calF_\bullet,\CE_\frh\odel\tilde{\calF}_\bullet)\] is another collection of elements
satisfying previous equation then there is $h$ such that
$$\sum_i \Psi_i-\Psi'_i=\tilde{D}_{tot}\circ h- h\circ D_{tot}$$
\end{lemma}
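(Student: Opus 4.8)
The plan is to prove the statement by the same order‑by‑order correction scheme that underlies Lemmas~\ref{lem: ext MF+eq} and \ref{lem: uniq MF+eq}, the essential simplification being that here the equation to be solved, $\tilde D_{tot}\circ\Psi_{tot}=\Psi_{tot}\circ D_{tot}$ with $\Psi_{tot}=\Psi+\sum_{i\ge1}\Psi_i$, is \emph{linear} in the unknown. As in those lemmas I track two gradings: the Hom‑degree (the shift of homological degree of $\calF_\bullet,\tilde\calF_\bullet$) and the $\Lambda^\bullet\frh$‑degree. Writing $\tilde D_{tot}=\tilde d^+ + d_0 + \tilde d_-$ and $D_{tot}=d^+ + d_0 + d_-$, where $d_0=D+\partial+d_{ce}$ (suitably tilded) preserves Hom‑degree, $d^+$ (resp. $\tilde d^+$) raises it by one, and $d_-=d^-+\partial^-$ strictly lowers it, the hypotheses say precisely that $\Psi_0:=\Psi$ has Hom‑degree $k$, that $[\tilde d^+,\Psi_0]=0$, and that the Hom‑degree‑$k$ part of the defect $\tilde D_{tot}\circ\Psi_0-\Psi_0\circ D_{tot}$, namely $\tilde d_0\circ\Psi_0-\Psi_0\circ d_0$, is $(d^+,\tilde d^+)$‑homotopic to zero by a $U(\frh)$‑linear homotopy.

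First I would run the induction. I claim there are $U(\frh)$‑linear maps $\Psi_i\in\Hom^{k-i}(\CE_\frh\odel\calF_\bullet,\CE_\frh\odel\tilde\calF_\bullet)$, $i\ge1$, such that $\Psi^{(m)}=\sum_{i=0}^m\Psi_i$ makes the defect $\Delta_m:=\tilde D_{tot}\circ\Psi^{(m)}-\Psi^{(m)}\circ D_{tot}$ vanish in Hom‑degrees $\ge k-m$. The base case $m=0$ is the vanishing of $[\tilde d^+,\Psi_0]$, and the step $m=0\to1$ uses the second hypothesis to choose $\Psi_1$ cancelling $\tilde d_0\circ\Psi_0-\Psi_0\circ d_0$. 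For the general step I invoke the Bianchi‑type identity
\[
[\tilde D_{tot},\Delta_m]=(\tilde D_{tot})^2\circ\Psi^{(m)}-\Psi^{(m)}\circ(D_{tot})^2=F\Psi^{(m)}-\Psi^{(m)}F=0,
\]
whose component of top Hom‑degree gives $[\tilde d^+,[\Delta_m]_{k-m}]=0$; since $k\le0$ and $m\ge1$, this top component lies in $\Hom^{<0}_{d^+,\tilde d^+}(\calF_\bullet,\tilde\calF_\bullet)$ (tensored with the $\Lambda^\bullet\frh$‑factors), hence is null‑homotopic by hypothesis, and a $U(\frh)$‑linear null‑homotopy $\Psi_{m+1}$ with $[\tilde d^+,\Psi_{m+1}]=-[\Delta_m]_{k-m}$ pushes the defect into Hom‑degrees $<k-m$. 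Because $\calF_\bullet,\tilde\calF_\bullet$ are supported in degrees $[0,\ell]$ and $\Lambda^\bullet\frh$ is finite‑dimensional, $\Hom^j$ vanishes for $j\ll0$, so the procedure terminates and $\Psi_{tot}=\sum_{i\ge0}\Psi_i$ satisfies $\tilde D_{tot}\circ\Psi_{tot}=\Psi_{tot}\circ D_{tot}$.

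The point requiring care — and the main obstacle — is keeping every correction $U(\frh)$‑linear while having only the \emph{non}‑equivariant homotopy‑triviality of $\Hom^{<0}_{d^+,\tilde d^+}(\calF_\bullet,\tilde\calF_\bullet)$ at our disposal. This is handled exactly as in the proof of Lemma~\ref{lem: ext MF+eq}: the obstruction $[\Delta_m]_{k-m}$ decomposes along the $\Lambda^\bullet\frh$‑filtration, each summand being — after the $U(\frh)$‑dependence is absorbed into $d_{ce}$ — a negative‑Hom‑degree $d^+$‑morphism $\calF_\bullet\to\tilde\calF_\bullet$, which is null‑homotopic by hypothesis; since $\CE_\frh\odel(-)$ is a functorial free $\frh$‑resolution, the componentwise homotopies assemble into a genuine $U(\frh)$‑linear $\Psi_{m+1}$ of strictly smaller $\Lambda$‑degree. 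So the induction is really a double induction, the outer one on Hom‑degree and the inner one on $\Lambda^\bullet\frh$‑degree, mirroring the construction of $\partial^-$ in Lemma~\ref{lem: ext MF+eq}.

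Finally, for uniqueness up to homotopy I would compare two extensions $\{\Psi_i\}$ and $\{\Psi_i'\}$: their difference $\Theta=\sum_{i\ge1}(\Psi_i-\Psi_i')$ lies in $\Hom^{<k}$ and is closed, $\tilde D_{tot}\circ\Theta-\Theta\circ D_{tot}=0$. One kills it order by order: at each stage the top Hom‑degree component is $\tilde d^+$‑closed of negative degree, hence equals $[\tilde d^+,h']$ for a $U(\frh)$‑linear $h'$, and replacing $\Theta$ by $\Theta-(\tilde D_{tot}\circ h'-h'\circ D_{tot})$ lowers the top degree while changing $\Theta$ only by a $(D_{tot},\tilde D_{tot})$‑homotopy. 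Termination is again by boundedness, and the accumulated $h=\sum h'$ yields the asserted homotopy $\sum_i(\Psi_i-\Psi_i')=\tilde D_{tot}\circ h-h\circ D_{tot}$. The only other routine‑but‑necessary check is that signs can be fixed consistently so the graded commutators behave as written, exactly as in the sign bookkeeping of Lemmas~\ref{lem: ext MF+eq} and \ref{lem: uniq MF+eq}.
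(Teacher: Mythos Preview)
Your proposal is correct and follows essentially the same order-by-order correction scheme as the paper: decompose $D_{tot}=d^++d_0+d_-$, use the Bianchi-type identity $[\tilde D_{tot},\Delta_m]=0$ coming from $D_{tot}^2=\tilde D_{tot}^2=F$ to show the top-degree piece of the defect is $d^+$-closed, invoke the null-homotopy hypothesis on negative-degree $\Hom_{d^+,\tilde d^+}$ to kill it, and iterate; the uniqueness argument is likewise the same. The one place where you are more explicit than the paper is the $U(\frh)$-linearity of the corrections, which the paper simply absorbs into its notation $\Hom^i$ for maps of $\CE_\frh\odel$-complexes without comment; your observation that this reduces, via the freeness of $\CE_\frh\odel(-)$, to the assumed non-equivariant null-homotopy on each $\Lambda^\bullet\frh$-component is exactly the point, and your framing as an inner induction on $\Lambda$-degree mirroring Lemma~\ref{lem: ext MF+eq} is a legitimate way to spell it out.
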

\begin{proof}
Let us introduce short-hand notations $d_0=D+\partial+d_{ce}$ and $\tilde{d}_0=\tilde{D}+\tilde{\partial}+\tilde{d}_{ce}$, $d_-=d^-+\partial^-$, $\tilde{d}^-=\tilde{d}^-+\tilde{\partial}^-$. We also use notations
$\Hom^i$, $\xExtdpv{i}$ as in the previous lemmas.

First let us notice that
$$ \tilde{D}_{tot}\circ \Psi-\Psi\circ D_{tot}=\left( \tilde{d}_0\circ \Psi-\Psi\circ d_0\right)+\left(\tilde{d}_-\circ \Psi-\Psi\circ d_-\right).$$
The first summand is homotopic to $0$ by the theorem assumption and the second term is from $\Hom^{<k}$ and
$$ [d^+,\tilde{d}_-\circ \Psi-\Psi\circ d_-]= [d^+,\tilde{d}_-]_+\circ \Psi-\Psi\circ [d^+,d_-]_+=(F-\tilde{d}_0^2)\circ\Psi-\Psi\circ (F-d_0^2)=0.$$

Thus there are maps $\Psi_1\in \Hom^{k-1}, \Psi_2\in \Hom^{k-2}$ such that $[d^+,\Psi_2]_+=[\tilde{d}_-\circ \Psi-\Psi\circ d_-]_{k-1}$ and
$[d^+,\Psi_1]_+= \tilde{d}_0\circ \Psi-\Psi\circ d_0$. Hence $\Psi^{(2)}=\Psi+\Psi_1+\Psi_2$ such that
$$ \tilde{D}_{tot}\circ \Psi^{(2)}-\Psi^{(2)}\circ D_{tot}\in \Hom^{\le k-2}.$$
That is the seed to the inductive construction of $\Psi^{(m)}=\Psi+\sum_{i=1}^m \Psi_m$ such that
$$ \tilde{D}_{tot}\circ \Psi^{(m)}-\Psi^{(m)}\circ D_{tot}\in \Hom^{\le k-m}.$$
To prove the step of induction we observe that
$$\tilde{D}_{tot}\circ\left(  \tilde{D}_{tot}\circ \Psi^{(m)}-\Psi^{(m)}\circ D_{tot}\right)-\left(  \tilde{D}_{tot}\circ \Psi^{(m)}-\Psi^{(m)}\circ D_{tot}\right)\circ D_{tot}=0 $$
because $D_{tot}^2=F$ and $\tilde{D}_{tot}=F$. If we denote by $R$ the expression $\tilde{D}_{tot}\circ \Psi^{(m)}-\Psi^{(m)}\circ D_{tot}$ then we get:
\begin{multline*}
  0=\tilde{D}_{tot}\circ R-R\circ D_{tot}=[d^+, [R]_{k-m}]+\left((\tilde{d}_0+\tilde{d}_-)\circ [R]_{k-m}\right.\\
  \left.-
[R]_{k-m}\circ(d_0+d_-)\right)+\left(\tilde{D}_{tot}\circ [R]_{<k-m}-[R]_{<k-m}\circ D_{tot}\right).
\end{multline*}
Since the first summand of the last expression is from $\Hom^{k-m+1}$ and the last two terms are from $\Hom^{\le k+m}$ we conclude that
$[d^+,[R]_{k-m}]=0$. Thus $[R]_{k-m}\in \xExtdpv{k-m}$ and there is $\Psi_{m+1}\in \Hom^{k-m-1}$ such that $[R]_{k-m}=[\Psi_{m+1},d^+]_+$.
Hence $\Psi^{(m+1)}=\Psi^{(m)}+\Psi_{m+1}$ proves the induction step.

The last part of the lemma follows from the following statement. Suppose $\Psi\in \Hom^{<k}$, $k\le 0$ is such that $\tilde{D}_{tot}\circ \Psi=\Psi\circ D_{tot}$ then there
is $h\in \Hom^{<k+1}$ such that $\Psi'=\Psi-(\tilde{D}_{tot}\circ h+ h\circ D_{tot})\in \Hom^{<k-1} $. Indeed, since
$$\tilde{D}_{tot}\Psi'-\Psi'\circ D_{tot}=(\tilde{D}_{tot})^2\circ h-h\circ (D_{tot})^2=0,$$
we can run inductive argument and construct $g$ such that $\Psi=\tilde{D}_{tot}\circ g+ g\circ D_{tot}$.

The statement itself follows from the computation below:
\begin{multline*}
  0=\tilde{D}_{tot}\circ \Psi-\Psi\circ D_{tot}=[d^+,[\Psi]_{k-1}]+\left( (\tilde{d}_0+\tilde{d}_-)\circ[\Psi]_{k-1}]\right.
  \\\left.-[\Psi]_{<k-1}\circ (d_0+d_-)\right)
+\left(\tilde{D}_{tot}\circ [\Psi]_{<k-1}-[\Psi]_{<k-1}\circ D_{tot}\right)
\end{multline*}
since the first summand is from $\Hom^k$ and the other summands are from $\Hom^{<k}$ and thus $[d^+,[\Psi]_{k-1}]=0$ and
$[\Psi]_{k-1}$ is homotopically trivial with respect to $d_+$. It is easy to check that $h$, such that $[d^+,h]_-=[\Psi]_{k-1}$ provides a proof of the statement.
\end{proof}

\subsection{Equivariant push-forward} \label{ssec: eq push f} Let $S=\CC[\calZ]$ and Lie algebra $\frh$ acts on $\calZ$.
Let us assume that $R=S/I$ where $I=(f_1,\dots,f_n)$ and functions $f_i$ form a regular sequence in $S$ and $I$ is $\frh$-invariant. Let \(\calZ_0=\mathrm{Spec}(R)\).
The Koszul complex $K(I)=(\Lambda^\bullet W\otimes \CC[\calZ],d_K)$, $W=\CC^n$ is $\frh$-equivariant and has homologies only in degree $0$.
Let $w\in R^\frh$ and $W\in S^\frh$ such that $W-w\in I $. Let us choose $\calF=(M,D,\partial)\in \MFs_\frh(\calZ_0,w)$, $M=S^k$ then we can construct
a matrix factorization
${}^\frh j_*(\calF)\in \MFs_\frh(\calZ,W)$
in two steps.

{\it Step 1:}  We apply method of subsection~\ref{ssec: noneq push f} to construct $j_*(\calF)=(\tilde{M},\tilde{D}+d_K+d^-)\in \MFs(\calZ,W)$.

{\it Step 2 :} Use lemma~\ref{lem: ext MF+eq} to construct $\partial^-$ such that $\tilde{\calF}:=(\tilde{M}, \tilde{D}+d_K+d^-,\partial^-+\partial)\in \MFs_\frh(\calZ,W)$.
 The element $\calF$ is defined up to isomorphism according to lemma~\ref{lem: uniq MF+eq} and we set $ j^*(\calF):=\tilde{\calF}$. To complete our construction
 of the functor:
 \begin{equation}\label{eq: push f eq}
   j_*: \MFs_\frh(\calZ_0,w)\to \MFs_\frh(\calZ,W).
 \end{equation}
 we still need to discuss the action of the functor on the space of morphisms and homotopies between the morphisms and that is done below.

 Let $\calF=(M,D,\partial)$, $\tilde{\calF}=(\tilde{M},\tilde{D},\partial)\in \MFs_\frh (\calZ_0,w)$ and $\psi$ is the morphism between $\calF$ and $\tilde{\calF}$ in the category $\MFs_\frh(\calZ_0,w)$.
 Let us constrict $\calF'=(M',D',\partial')=j_*(\calF)$ and $\tilde{\calF}'=(\tilde{M}',\tilde{M}',\tilde{\partial}')=  j_*(\tilde{\calF})$ as above.
 The lemma~\ref{lem: ext mor} implies that we extend it to the morphism $\Psi$ in $\MFs_\frh(\calZ,W)$. Indeed if we choose a extension $\tilde{\psi}\in \Hom_{\CC[\calZ]}(M,\tilde{M})$,
  of $\psi$ then $\tilde{\psi}$ satisfies the assumptions of the lemma~\ref{lem: ext mor} and the lift $\Psi=\tilde{\psi}+\sum_{i>0}\tilde{\psi}_i$ exists. Moreover the second part of the lemma implies that
  the lift is unique up to the  homotopy thus we have the map:
  $$  j_*:  \Hom_{\MFs_\frh(\calZ_0,w)}(\calF,\tilde{\calF})\to    \Hom_{\MFs_\frh(\calZ_0,w)}(\calF',\tilde{\calF}').$$

  Thus completed our construction of the functor of homotopic categories (\ref{eq: push f eq}).




\section{Convolution algebra: definition}\label{sec:convolution}
In this section we define convolution product on  categories $\MFs_{B^2}(\calX_2,W)$ and $\MFs_{B^2}(\calXr_2,\Wr)$ and explain how
these two convolutions are related. Thus we construct the functor $$\Phi: \MFs_{B^2}(\calXr_2,\Wr)\to \MFs_{B^2}(\calX_2,W)$$ which is an equivariant
version of Knorrer periodicity functor \cite{Kn}.

\subsection{Category of $B^2$-equivariant matrix factorizations}
The Borel subgroup $B$ is the semi-direct product $B=T\ltimes U$ where $U$ is the unipotent group with $\Lie(U)=\frn$.
Suppose we are given a variety $\calZ$ with $B^2$-action and $F\in \CC[\calZ]^{B^2}$.
Then we define $\MFs_{B^2}(\calZ,F)$ as a subcategory of $\MFs_{\frn^2}(\calZ,F)$ that has objects and morphisms
which are strongly $T^2$-equivariant. In more details, an object $\calF\in \SymbolPrint{\MFs_{B^2}}(\calZ,F)$ is the following collection of data:
$$ \calF=(M,D,\partial_l,\partial_r),$$
where $M=\CC[\calZ]\otimes W$, $W\in \Mod_{\frn^2}^{filt}$ is a $\ZZ_2$-graded $B^2$-module with the strict filtration; $D$ is $T^2$-equivariant odd $\CC[\calZ]$-equivariant endomorphism of $M$;
$\partial_l,\partial_r\in \Hom(\Lambda^\bullet (\frn),\Lambda^{<\bullet}\frn)\otimes \Hom_{\CC[\calZ]}(M_\bullet,M_{>\bullet})$ are $T^2$-equivariant maps such that
$$D^2=F,\quad (D_{tot})^2=F,  \quad D_{tot}=D+d_{ce}+\partial_l+\partial_r,$$ where $D_{tot}$ is $\frn^2$-equivariant endomorphism of $\CE_{\frn^2}\odel M$.
Here we assume that
$$ (\partial_l+\partial_r)(\omega_l\otimes \omega_r\otimes m)=\mathrm{sw}(\omega_r\otimes\partial_l(\omega_l\otimes m))+\omega_l\otimes\partial_r(\omega_r\otimes m),$$
where $\frn^2=\frn_l\oplus \frn_r$ and $\omega_l\in \frn_l$, $\omega_r\in \frn_r$ and $\mathrm{sw}: \Lambda^\bullet\frn\otimes\Lambda^\bullet\frn\otimes M\to \Lambda^\bullet\frn \otimes \Lambda^\bullet \frn \otimes M$ is
the linear map that switches the first two factors.

For a matrix factorization $\tilde{\calF}=(\tilde{M},\tilde{D},\partial)$, the  morphism space $\Hom_{\MFs_{B^2}}(\calF,\tilde{F})$ consists of homotopy equivalence classes of
$T^2$-equivariant maps \[\Psi\in \Hom_{\CC[\calZ]}(\CE_{\frh^2}\odel M,\CE_{\frh^2}\odel\tilde{M})\] such that  $\Psi\circ D_{tot}=\tilde{D}_{tot}\circ \Psi$ and $\Psi$-commutes with
$\frh^2$ action on $\CE_{\frh^2}\odel M$ and $\CE_{\frh^2}\odel \tilde{M}$

More generally, for a variety $\calZ$ with $B^\ell$-action and $F\in \CC[\calZ]^{B^\ell}$ there is analogously defined category $\MFs_{B^\ell}(\calZ,F)$ with morphism and objects being $T^\ell$-equivariant in the
strong sense. Details of the definition of $\MFs_{B^\ell}(\calZ,F)$ are basically identical to the previously given definition.

\def\MFGBv#1{ \MF_{G\times B^{#1}}}
\def\MFGBt{ \MFGBv{2}}
\def\MFGBh{ \MFGBv{3}}
\def\MFBv#1{ \MF_{B^{#1}}}
\def\MFBt{ \MFBv{2}}
\def\MFBh{ \MFBv{3}}
\def\cXv#1{ \calX_{#1}}
\def\cXt{ \cXv{2}}
\def\cXh{ \cXv{3}}

\subsection{Potential and main category of matrix factorizations} In this section we introduce an associative convolution operation in the category of matrix factorizations on the space $\calX_2$. In subsequent sections we construct a homomorphism from the braid group into this
convolution algebra. This allows us to associate a particular matrix factorization to a braid and in the section~\ref{sec: link inv 2} we explain how one can extract knot homology out of this matrix factorization.

We choose coordinates on  $\calX_2=\frg\times (G\times \frb)^2$ as $(X,g_1,Y_1,g_2,Y_2)$ and consider a $G\times B^2$-invariant potential on $\calX_2$:
\begin{equation}\label{eq: potential}
 \SymbolPrint{W}(X,g_1,Y_1,g_2,Y_2)=\Tr(X(\Ad_{g_1}(Y_1)-\Ad_{g_2}(Y_2))).
\end{equation}

The torus $T_{sc}$ acts on $\calX_2$ and $\dgTsc W = t^2$.
Consider the category of 
$\SymbolPrint{\MF_{B^2}(\calX_2,W)}$  
of $\Tsc$-graded $ B^2$-equivariant matrix factorizations whose differentials have degree $\dgTsc D = t$.
The morphisms  and homotopies in the
category are assumed to be $T_{sc}$-invariant. 

Our main category is the subcategory $\MF_{ B^2}(\calX_2,W)^G$ of
$\MF_{B^2}(\calX_2,W)$ that consists of the complexes that are $G$-equivariant in the strong sense. The $G$-action on $\calX_2$ is
free and as we show in subsection \ref{ssec: Gslice} the category $\MF_{ B^2}(\calX_2,W)^G$ is equivalent to 
the category of $B^2$-equivariant matrix factorizations on $\calX_2/G$. To simplify notations, from now on we use notation 
$\MF_{B^2}(\calX_2,W)$ for $\MF_{B^2}(\calX_2,W)^G$.


We also use notation $\SymbolPrint{\mathbf{q}^k\mathbf{t}^m}$ for shift functor for the weights of the action of $T_{sc}$:  if $V$ is a vector space with $\CC^*_t\times\CC^*_q$ action then $\mathbf{q}^k\mathbf{t}^m\cdot V$
is the same vector space with $q$- and $t$-degrees shifted by $k$ and $m$: for $v\in V$ and $(\lambda,\mu)\in\CC^*_t\times\CC^*_q$ we define $(\lambda,\mu)\cdot(\mathbf{q}^k\mathbf{t}^m v) = \lambda^k\mu^m \mathbf{q}^k\mathbf{t}^m((\lambda,\mu)\cdot v)$.
Respectively we use notation $\mathbf{q}^k\mathbf{t}^m\cdot \calF\in \MF_{B^2}(\calX_2,W)$ for the matrix factorization with the same differentials as $\calF\in \MF_{B^2}(\calX_2,W)$ but the the twisted $T_{sc}$-equivariant structure of the underlying module.

Let us make a small clarification about the  $t$-grading in our categories. Let $F$ be 
a potential on the space $\calZ$ and $\calZ$ has $T_{sc}=\CC^*_t\times \CC^*_q$ action such that $F$ has  the weight $t^2$. Then we define $\MF(\calZ,F)$ to the 
homotopic category whose objects are  $2$-quasi-periodic complexes:
$$\dots \xrightarrow{D_{-1}}M_{0}\xrightarrow{D_0}M_1\xrightarrow{D_1}M_2\xrightarrow{D_2}\dots,$$
such that $D_{i+2}=D_i$ and $M_{i+2}=\mathbf{t}^2 \cdot M_i$ and $D_i$ are of 
degree $t$. In particular, if $F=0$ then for 
$\calF=(M_\bullet,D_\bullet)\in \MF(\calZ,F)$ we have well defined $T_{sc}$-equivariant modules:
$$ \SymbolPrint{\textup{H}^{even}}(\calF):=\textup{H}^0(M_\bullet,D_\bullet),\quad 
\SymbolPrint{\textup{H}^{odd}}(\calF):=\textup{H}^1(M_\bullet,D_\bullet).$$


 \subsection{Convolution}
There are three natural projection maps $\SymbolPrint{\pi_{12},\pi_{23},\pi_{13}}:\calX_3\to\calX_2$ and $\pi_{12}^*(W)+\pi_{23}^*(W)=\pi_{13}^*(W)$. If we extend the action of $G\times B^2$ on $\calX_2$ to the action
of $G\times B^3$ by natural projectors $\pi_{ij}:G\times B^3\to G\times B^2$ then
the maps $\pi_{ij}$ become $G\times B^3$-equivariant and hence
 for any $\calF,\calG\in \MF_{B^2}(\calX_2,W)$ we can define an object
 $$\pi_{12}^*(\calF)\otimes\pi_{23}^*(\calG)\in \MF_{B^3}(\calX_3,\pi_{13}^*(W)).$$

 Naively we could define the convolution between $\calF$ and $\calG$ as a quotient of the tensor product by the action of the second copy $B^{(2)}$ of $B$ inside $G\times B^3$.
 We define a derived quotient with the help of the Chevalley-Eilenberg complex, we also explain how to define the convolution in a bigger category $\MF_{B^2}(\calZ,W)$.


 Let $\calF,\calG\in \MF_{B^2}(\calX_2,W)$, $\calF=(M,D,\partial_l,\partial_r)$, $\calG=(\tilde{M},\tilde{D},\tilde{\partial}_l,\tilde{\partial}_r)$.
 The tensor product $M\otimes\tilde{M}$ is naturally a module over $B^3$ with the action $(b_1,b_2,b_3)(m\otimes\tilde{m})=(b_1,b_2)\cdot m\otimes (b_2,b_3)\cdot \tilde{m}$, hence
 we can define a tensor product of pull backs $\pi_{12}^*(\calF)$ and $\pi_{23}^*(\calG)$ as an object of $\MF_{B^3}(\calX_3,\pi_{13}^*(W))$:
 $$ \pi_{12}^*(\calF)\otimes \pi_{23}^*(\calG)=(\pi_{12}^*(M)\otimes\pi_{23}^*(\tilde{M}),D+\tilde{D},\partial_l,\partial_r+\tilde{\partial}_l,\tilde{\partial}_r).$$

  From here on  we use notation $\SymbolPrint{\frn^{(1)},\frn^{(2)},\frn^{(3)}}$ for the first, second and third copy of $\frn$ on $\frn^3$; respectively for given
  $\frn^3$ module $M$ we denote by $d^{(i)}_{ce}$ the  Chevalley-Eilenberg differential on $\CE_{\frn^3}\odel M$ corresponding $i$-th copy of $\frn$.

  The complex $\CE_\frn$ is
 $T$-equivariant  hence $T^{(2)}$-invariant part  $\CE_{\frn^{(2)}}(\pi_{12}^*(\calF)\otimes\pi_{23}^*(\calG))^{T^{(2)}}$ is the matrix factorization
 from $\MF_{B^2}(\calX_3/B_{(2)},\pi_{13}^*(W))$. In more details:
 $\CE_{\frn^{(2)}}(\pi_{12}^*(\calF)\otimes_B\pi_{23}^*(\calG))^{T^{(2)}}$ is the matrix factorization with underlying module
 $$\CE_{\frn^{(2)}}(\pi_{12}^*(M)\otimes\pi_{23}^*(\tilde{M}))^{T^{(2)}}:=\Hom_{\frn^{(2)}}(\CE_{\frn^{(2)}},\CE_{\frn^{(2)}}\odel \left( \pi_{12}^*(M)\otimes\pi_{23}^*(\tilde{M}) \right))^{T^{(2)}},$$
 the matrix factorization differential $D^{(2)}:=d^{(2)}_{ce}+\partial_r+\tilde{\partial}_l+D+\tilde{D}$ and $\frn^{(1)}\times\frn^{(2)}$ equivariant structure differentials are
 $\partial_l,\tilde{\partial}_r.$

  In particular   $(\CE_{\frn^{(2)}}(\pi_{12}^*(M)\otimes\pi_{23}^*(\tilde{M}))^{T^{(2)}}, D^{(2)})$ is
 a complex over the ring $\pi_{13}^*(\CC[\calX_2])\subset\CC[\calX_3]^{\frb_{(2)}}$ and as such it is an object of
 $\MF_{B^2}(\calX,W)$ which we denote $\pi_{13*}(\CE_{\frn_{(2)}}(\pi_{12}^*(\calF)\otimes\pi_{23}^*(\calG))^T_{(2)}$. The last complex
 provides us a bilinear operation on $\MF_{B^2}(\calX_2,W)$:
 \begin{multline}\label{eq: convolution}
\calF\SymbolPrint{\star}\calG:=\pi_{13*}(\CE_{\frn^{(2)}}(\pi_{12}^*(\calF)\otimes\pi_{23}^*(\calG))^{T^{(2)}})=\\(\CE_{\frn^{(2)}}(\pi_{12}^*(M)\otimes\pi_{23}^*(\tilde{M}))^{T^{(2)}},D^{(2)},\partial_l,\tilde{\partial}_r).
 \end{multline}
 The proof of the following proposition is standard: it literally repeats the proof of the associativity of the convolution from the book \cite{CG}:
 \begin{proposition} The binary operation $\star$ defines a strictures of associative algebra on $\MF_{B^2}(\calX_2,W)$.
 \end{proposition}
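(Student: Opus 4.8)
The plan is to exhibit, for all $\calF,\calG,\calH\in\MF_{B^2}(\calX_2,W)$, a canonical homotopy equivalence $(\calF\star\calG)\star\calH\simeq\calF\star(\calG\star\calH)$ which is natural in the three arguments; as for convolution algebras in \cite{CG}, both sides will be identified with a single ``triple convolution'' computed over a fourfold space, the two parenthesizations corresponding to performing a double derived quotient in the two possible orders. The existence of a two-sided unit, the matrix factorization attached to the trivial braid, is addressed separately, and the pentagon coherence, if required, follows formally once the associator is in hand; so I will concentrate on associativity.

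First I would introduce $\calX_4=\frg\times(G\times\frb)^4$ with its $G\times B^4$-action, the projections $\pi_{ijk}\colon\calX_4\to\calX_3$ forgetting one of the four $(G,\frb)$-blocks, and the projections $\pi_{ij}\colon\calX_4\to\calX_2$, so that $\pi_{jk}\circ\pi_{ijk}=\pi_{ij}$ and, after extending the $G\times B^4$-action along these maps, all of them are equivariant. The potential remains additive, $\pi_{14}^*(W)=\pi_{12}^*(W)+\pi_{23}^*(W)+\pi_{34}^*(W)$ on $\calX_4$, so for $\calF=(M,D,\partial_l,\partial_r)$, $\calG=(\tilde M,\tilde D,\tilde\partial_l,\tilde\partial_r)$, $\calH=(\hat M,\hat D,\hat\partial_l,\hat\partial_r)$ the object $\pi_{12}^*(\calF)\otimes\pi_{23}^*(\calG)\otimes\pi_{34}^*(\calH)$ lies in $\MF_{B^4}(\calX_4,\pi_{14}^*(W))$, exactly as in (\ref{eq: convolution}). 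I would then set $\calF\star\calG\star\calH$ to be the pushforward along $\pi_{14}$ of its Chevalley--Eilenberg derived quotient by the two middle Borels $B^{(2)}\times B^{(3)}$: the $T^{(2)}\times T^{(3)}$-invariants of its product with $\CE_{\frn^{(2)}}\odel\CE_{\frn^{(3)}}$, with matrix factorization differential $d^{(2)}_{ce}+d^{(3)}_{ce}+D+\tilde D+\hat D+\partial_r+\tilde\partial_l+\tilde\partial_r+\hat\partial_l$ and residual $\frn^{(1)}\times\frn^{(4)}$-structure differentials $\partial_l,\hat\partial_r$.

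The argument then rests on three compatibilities, each immediate or a consequence of the extension and uniqueness lemmas of Section~\ref{sec: eq push}. (i) The derived quotients by $B^{(2)}$ and by $B^{(3)}$ commute: $d^{(2)}_{ce}$ and $d^{(3)}_{ce}$ act through the two separate Chevalley--Eilenberg tensor factors, the correction differentials anticommute after inserting the switch maps $\mathrm{sw}$, and at each stage one lands in a matrix factorization over the appropriate intermediate space; hence the double quotient may be taken in either order. (ii) For the evident Cartesian squares among $\calX_4$, $\calX_3$ and $\calX_2$, for instance
\[
\begin{array}{ccc}
\calX_4 & \xrightarrow{\pi_{124}} & \calX_3 \\
\pi_{134}\big\downarrow & & \big\downarrow\pi_{13} \\
\calX_3 & \xrightarrow{\pi_{13}} & \calX_2
\end{array}
\]
pullback and pushforward base-change; here this reduces to the relevant coordinate rings being polynomial --- hence flat --- extensions of one another, together with the uniqueness statements of Lemmas~\ref{lem: ext MF+eq} and~\ref{lem: uniq MF+eq}. (iii) $\CE$ commutes with pullback along these projections and, by Lemma~\ref{lem: ext mor}, with the pushforwards. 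Granting (i)--(iii), evaluating $\calF\star\calG\star\calH$ by first quotienting by $B^{(2)}$ (pushing to $\calX_3$ via $\pi_{134}$, then to $\calX_2$ via $\pi_{13}$) reproduces $(\calF\star\calG)\star\calH$ by (\ref{eq: convolution}), whereas quotienting by $B^{(3)}$ first reproduces $\calF\star(\calG\star\calH)$; this is the associator, and naturality in $\calF,\calG,\calH$ follows from the same uniqueness clauses.

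The step I expect to be the real obstacle is the homological bookkeeping behind (i)--(iii). Both the Chevalley--Eilenberg derived quotient and the push-forward functor of Section~\ref{sec: eq push} are not honest geometric operations but are assembled, via Koszul resolutions, out of correction differentials whose existence is only guaranteed up to homotopy by Lemma~\ref{lem: ext MF+eq}; so ``the square commutes'' must be upgraded to ``commutes up to a canonical homotopy equivalence'', and one must verify that the many differentials $\partial_l,\partial_r,d^{(i)}_{ce}$ and the switch maps $\mathrm{sw}$ assemble on $\calX_4$ with globally consistent signs. This is precisely where the transversality used in \cite{CG} is replaced by the regularity of the defining sequences in the push-forward construction, and it is where essentially all of the work lies; once it is done, the pentagon over $\calX_5$ and the unit axioms are formal diagram chases using the very same base-change compatibilities.
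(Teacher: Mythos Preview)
Your approach is the one the paper intends: it simply defers to \cite{CG}, and you have correctly unpacked what that means---the fourfold space $\calX_4$, the triple convolution, and the identification of the two parenthesizations by performing the two middle derived quotients in either order together with base change along the evident projection squares.

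One point where you overcomplicate: the push-forward $\pi_{13*}$ appearing in the definition~(\ref{eq: convolution}) is \emph{not} the equivariant push-forward of Section~\ref{sec: eq push}. That functor is for closed embeddings and is assembled from Koszul resolutions and the extension Lemma~\ref{lem: ext MF+eq}. Here $\pi_{13}\colon\calX_3\to\calX_2$ is a smooth projection, and $\pi_{13*}$ simply means regarding the $T^{(2)}$-invariant Chevalley--Eilenberg complex, which is already a module over $\pi_{13}^*(\CC[\calX_2])\subset\CC[\calX_3]^{\frb^{(2)}}$, as an object over $\calX_2$---pure restriction of scalars. Likewise, the $B^3$-equivariant structure on $\pi_{12}^*(\calF)\otimes\pi_{23}^*(\calG)$ and the Chevalley--Eilenberg differential $d^{(2)}_{ce}$ are written down directly from the data $(\partial_l,\partial_r,\tilde\partial_l,\tilde\partial_r)$, not produced by any extension lemma. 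So the ``real obstacle'' you anticipate---correction differentials only defined up to homotopy, regularity replacing transversality---does not arise in the associativity proof itself; the argument really is as formal as in \cite{CG}. Those homotopical subtleties enter later, in the Kn\"orrer functor (Section~\ref{sec: Knor}) and the induction functors (Section~\ref{sec: inc}), not here.
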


\subsection{Convolution of the $G$-slice}\label{ssec: Gslice}
As we mentioned earlier,  $\MF_{B^2}(\calX_2,W)^G$ signifies $ B^2$-equivariant matrix factorizations from $\MFs_{B^2}(\calX_2,W)$ that \(G\)-invariant.
This description becomes simpler when
we introduce a smaller space $\SymbolPrint{\calX_2^\circ}:=\frg\times G\times \frb^2$ and the map
$\SymbolPrint{p_\circ}: \calX_2\to \calX_2^\circ$ defined by \[p_\circ(X,g_1,Y_1,g_2,Y_2)=(\Ad_{g_1}^{-1}(X),g_{12},Y_1,\Ad_{g_{12}}(Y_2)_{++}).\] This map is $G$-equivariant with the
trivial $G$-action on the target. The space $\calX_2^\circ$ is a slice to $G$-action on $\calX_2$.

If we define the function $W$ on $\calX^\circ_2$ by
$$ W(X,g,Y_1,Y_2)=\Tr(X(Y_1-\Ad_g(Y_2))) $$
Then the pull back $(p_\circ)^*:\MF_{B^2}(\calX_2^\circ,W)\to \MFs_{B^2}(\calX_2,W)^G$ provides an equivalence of categories.

Indeed, we have an  \(G\times B^2\)-equivariant isomorphism \(\SymbolPrint{\Psi_G}:\calX_2\to G\times \calX_2^\circ\):
\[\Psi_G(X,g_1,Y_1,g_2,Y_2)=(g_1,p_\circ(X,g_1,Y_1,g_2,Y_2)).\]
The group \(G\)-acts on the first factor of \(G\times\calX_2^\circ\) by left multiplication and acts trivially on the
second factor hence the pull-back functor \(pr_2^*: \MF_{B^2}(G\times\calX_2^\circ,W)^G\to \MF_{B^2}(\calX_2^\circ,W)\) is
an isomorphism. Thus the equivalence follows because \(p_\circ=\Psi_G\circ pr_2\).
Thus we can work with the category
$\MF_{B^2}(\calX_2^\circ,W)$ instead of $\MF_{B^2}(\calX_2,W)^{G}$.


Define $\calX_3^\circ:=\frg\times G^2\times \frn^3$.
The convolution for the objects of $\MFBt(\calX^\circ_2,W)$ is defined with modified maps $\SymbolPrint{\pi_{ij}^\circ}:\calX_3^\circ\to\calX_2^\circ$:
$$\pi^\circ_{12}(X,g_{12},g_{23},Y_1,Y_2,Y_3)=(X,g_{12},Y_1,Y_2),$$
$$\pi^\circ_{23}(X,g_{12},g_{23},Y_1,Y_2,Y_3)=(\Ad_{g_{12}}^{-1}(X),g_{23},Y_2,Y_3),$$
$$\pi^\circ_{13}(X,g_{12},g_{23},Y_1,Y_2,Y_3)=(\Ad_{g_{13}}^{-1}(X),g_{13},Y_1,Y_3),$$
 where $g_{13}=g_{12} g_{23}$.
 The convolution is then defined by
 $$ \calG\star \calF=\pi^\circ_{13*}(\CE_{\frn^{(2)}}( \pi^{\circ,*}_{12}(\calF)\otimes_B\pi^{\circ,*}_{23}(\calG)))^{T^{(2)}}.$$
Obviously, the pull-back map $p_\circ^*$ is an isomorphism of the convolution algebras.

\section{Knorrer reduction}\label{sec: Knor}
\subsection{Construction of the Knorrer functor} In this section we introduce a slightly smaller space $\calXr_2$ which could be used to model our convolution algebra. The advantage of this perspective is that the generators of the
braid group have a simpler description.

Let $\calXr_2:=\frb\times G\times \frn$ and let us define the following potential on it
\begin{equation}\label{eq: red potential}
\SymbolPrint{\Wr}(X,g,Y)=\Tr(X\Ad_g(Y)).
\end{equation}

This potential is $B\times B$ equivariant with respect to the following action
$$(b_1,b_2)\cdot (X,g,Y)=(\Ad_{b_1}(X) , b_1 g b_2^{-1},\Ad_{b_2}(Y)).$$
The space $\calX_2$ projects  to the reduced space $\calXr_2$ by the projection $\bar{\pi}$:
$$\bar{\pi}(X,g_1,Y_1,g_2,Y_2)=(\Ad_{g_1}^{-1}(X)_+,g_1^{-1}g_2,Y_2).$$
The map is $B$-equivariant with respect to the second copy of $B$ in $B^2$.

The category of matrix factorizations $\MF_{B^2}(\calXr_2,\Wr)$ is equivalent to our main category
\begin{proposition}\label{prop: Knorrer} There is an embedding of categories
$$\SymbolPrint{\Phi}: \MF_{B^2}(\calXr_2,\Wr)\to \MF_{B^2}(\calX_2,W).$$
\end{proposition}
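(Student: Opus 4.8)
The plan is to construct $\Phi$ as an equivariant Knörrer periodicity functor, following the classical pattern but keeping track of the $B^2$-equivariant structure (and the Chevalley--Eilenberg corrections $\partial_l,\partial_r$) carefully. First I would work with the slice description from subsection~\ref{ssec: Gslice}: instead of $\calX_2$ it suffices to produce a functor $\MF_{B^2}(\calXr_2,\Wr)\to \MF_{B^2}(\calX_2^\circ,W)$, since $p_\circ^*$ is an equivalence. Recall $\calX_2^\circ=\frg\times G\times\frb^2$ with coordinates $(X,g,Y_1,Y_2)$ and $W=\Tr(X(Y_1-\Ad_g(Y_2)))$, whereas $\calXr_2=\frb\times G\times\frn$ with $\Wr=\Tr(X'\Ad_g(Y'))$. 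The idea is to split the variable $X\in\frg$ into its upper-triangular part $X_+\in\frb$ and its strictly-lower-triangular part $X_{--}\in\frn_-$, and to split $Y_1\in\frb$ similarly using $Y_1=(Y_1)_{++}+\text{(diagonal)}$; the pairing $\Tr(X\cdot(\ldots))$ decomposes according to these complementary subspaces, and the "extra" variables $X_{--}$ together with the components of $Y_1$ paired against them will be the Knörrer pair contributing a contractible quadratic factor.

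Concretely, I would first rewrite $W$ on $\calX_2^\circ$. Using $\Tr(XZ)=\Tr(X_+ Z_-)+\Tr(X_{--}Z_{\ge})$-type decompositions (choosing the splitting so that the pairing of $\frb$ with $\frn_-$ is perfect and the pairing of $\frn_-$ with $\frb$ is perfect), one sees that $W$ contains a "linear in the auxiliary variables" piece $\Tr\bigl(X_{--}\cdot(\text{something involving }Y_1,\Ad_g(Y_2))\bigr)$ plus a piece $\Tr\bigl((X_+)\cdot(\text{residual})\bigr)$. After an invertible (triangular, hence $B^2$-equivariant) change of coordinates absorbing the relevant matrix entries, $W$ takes the form $\Wr(\bar X,g,\bar Y) + \sum_i u_i v_i$, where $\bar X\in\frb$, $\bar Y\in\frn$ are the retained coordinates and $(u_i,v_i)$ are dual linear coordinates on a trivial affine factor, on which $B^2$ acts (up to the triangular twist) in dual representations. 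The second step is then the equivariant Knörrer statement: tensoring with the rank-one Koszul matrix factorization $\mathrm{K}^{\sum u_iv_i}(u_1,\dots,u_m)$ of the quadratic form, i.e. the Koszul-type object $\Kszvvv{\xxV}{\sum u_i\ktht_i}{\sum v_i\ktht^*_i}$ with the correct $B^2$-equivariant structure on $\xxV$, gives a functor
\[
\calF \longmapsto \Phi(\calF):=\pi^*(\calF)\otimes \Kszvvv{\xxV}{u}{v},
\]
where $\pi$ is the projection forgetting the $(u,v)$-factor. One checks $D_{tot}^2 = \Wr + \sum u_iv_i = W$ after the coordinate change, so $\Phi(\calF)\in\MF_{B^2}(\calX_2^\circ,W)$, and fully faithfulness is the usual Knörrer argument: the auxiliary Koszul factor is "self-dual up to shift," so $\Hom$ spaces are preserved, because contracting the Koszul complex in the $(u,v)$-variables (the $u_i,v_i$ form a regular sequence, so $\calI_{crit}$ acts nullhomotopically by Lemma~\ref{lem: crit}) identifies $\Hom_{\MF_{B^2}(\calX_2^\circ,W)}(\Phi\calF,\Phi\calG)$ with $\Hom_{\MF_{B^2}(\calXr_2,\Wr)}(\calF,\calG)$.

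The third, and I expect most delicate, step is the equivariance bookkeeping. The definition of $\MF_{B^2}$ here is not the naive one: objects carry the data $(M,D,\partial_l,\partial_r)$ with $\partial_{l},\partial_r$ raising powers in $\Lambda^\bullet\frn$, and morphisms are $T^2$-equivariant maps commuting with the total differential on $\CE_{\frn^2}\odel M$. So one must show (i) that the coordinate change is genuinely $B^2$-equivariant — this requires that the splitting $\frg=\frb\oplus\frn_-$ and the induced dual pairings are $B$-stable, which holds only because $B$ preserves the flag, so the triangular truncations $(\,\cdot\,)_+$, $(\,\cdot\,)_{++}$ are $B$-equivariant operations (already recorded in subsection~\ref{ssec: str eq Kosz}); (ii) that $\Phi(\calF)$ inherits correction differentials $\partial_l,\partial_r$, which follows from Lemma~\ref{lem: ext MF+eq} / Lemma~\ref{lem: uniq MF+eq} applied to the Koszul-extended module, because $\pi^*\calF\otimes\Lambda^\bullet\xxV$ with the positive differential $u\wedge(-)$ already has $\Hom_{d_+}^{<0}$ homotopically trivial (the $u_i$ are a regular sequence) and $W-D^2$ homotopic to zero; uniqueness up to homotopy of the extension makes $\Phi$ well defined on morphisms via Lemma~\ref{lem: ext mor}. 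The essential image and the behavior with respect to the torus grading $T_{sc}$ (the potential $W$, resp.\ $\Wr$, has degree $\sht^2$, the quadratic piece $\sum u_iv_i$ must be arranged to have degree $\sht^2$ as well, which dictates the $T_{sc}$-weights one assigns to $u_i,v_i$) need to be checked, but these are direct computations once the coordinate change is pinned down. I would also note that $\bar\pi$ from the text, $\bar\pi(X,g_1,Y_1,g_2,Y_2)=(\Ad_{g_1}^{-1}(X)_+,g_1^{-1}g_2,Y_2)$, is precisely the projection $\pi$ above read through the slice identification, so $\Phi=\bar\pi^*$ composed with tensoring by the Knörrer Koszul factor, which makes $B^2$-equivariance (w.r.t.\ the second $B$) manifest. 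The main obstacle is therefore purely organizational: carrying the $\partial_l,\partial_r$ data and the $\CE_{\frn^2}$-level total differential through the Knörrer reduction, which is why the lemmas of Sections~\ref{sec:notations}--\ref{sec: eq push} were set up in this generality; the underlying geometry is the standard linear-algebra splitting of $\frg$ and the standard Knörrer periodicity.
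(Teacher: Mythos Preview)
Your step (i) contains a genuine error that is precisely the obstacle the paper identifies and works around. You assert that ``the splitting $\frg=\frb\oplus\frn_-$ \ldots is $B$-stable'' and that ``the triangular truncations $(\,\cdot\,)_+$, $(\,\cdot\,)_{++}$ are $B$-equivariant operations.'' This is false: $B$ preserves $\frb$ under the adjoint action, but it does \emph{not} preserve $\frn_-$. For $n=2$, take $b=I+E_{12}$ and $X=E_{21}$; then $(\Ad_b X)_{--}=E_{21}$ while $\Ad_b(X_{--})=\Ad_b X$ has nonzero upper-triangular part. Consequently the coordinate change you describe is not $B^{(1)}$-equivariant, the ``Knörrer pair'' $(u_i,v_i)$ cannot be given a strict $B^{(1)}$-equivariant structure, and your functor $\calF\mapsto \pi^*(\calF)\otimes\mathrm{K}(u;v)$ is not a priori a functor between the $B^2$-equivariant categories. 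The paper says this explicitly: ``the map $\bar\pi$ is not $B^2$-equivariant hence we can not use the pull back along this map to define an equivariant version of the functor $\Phi$.''

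The paper's fix is to factor $\Phi=j^x_*\circ\pi_y^*$ through the intermediate space $\widetilde{\calX}_2=\frb\times G\times\frn\times G\times\frn$, identified with the closed subvariety $\{\tilde X_{1,--}=0\}\subset\calX_2$. The point is that although the individual entries of $\tilde X_{1,--}$ are not $B^{(1)}$-invariant, the \emph{ideal} $I_{--}$ they generate is $B^2$-invariant (the paper checks the infinitesimal action directly), so $j^x$ is a $B^2$-equivariant closed embedding and the projection $\pi_y:\widetilde{\calX}_2\to\calXr_2$ is genuinely $B^2$-equivariant. The correction differentials $\partial_l$ are then produced by the equivariant push-forward machinery of Section~\ref{sec: eq push} applied to $j^x_*$, not by your step (ii) as a cosmetic afterthought. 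Your invocation of Lemmas~\ref{lem: ext MF+eq}--\ref{lem: ext mor} is in the right spirit, but it only makes sense once you have a $B^2$-invariant ideal to push forward along; that is the content you are missing. Finally, for the embedding claim the paper constructs an explicit left inverse $\Psi=\pi_{y*}\circ\mathrm{Rest}$ (restriction modulo $I_{--}$ followed by push-forward along $\pi_y$), which is more direct than the Hom-space argument you sketch.
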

\begin{proof} Using  the elementary properties of the trace we obtain:
$$\Tr(\Ad^{-1}_{g_1}(X)Y_1-\Ad^{-1}_{g_{2}}(X)Y_{2})=\Tr(\tilde{X}_{1+}+\Ad_{g_{12}}(Y_2))-\Tr(\tilde{X}_{1--}(Y_{1}-\Ad_{g_{12}}(Y_2)_{++})),$$
where $\SymbolPrint{ g_{12}}=g_1^{-1}g_2,\quad \SymbolPrint{\tilde{X}_{i}}=\Ad^{-1}_{g_{i}}(X).$ Let us denote the first summand in the last formula as $\Wr$.

The last summand in the last formula is quadratic hence the non-equivariant version of the statement is exactly theorem 3.1 of \cite{Kn}.
The above mentioned theorem from \cite{Kn} provides the functor \(H: \textup{MF}(\CC^n,f)\to \textup{MF}(\CC^{n}\times\CC^2,f+vu)\). It is shown in \cite{Or} that
\(\CC^n\) in the theorem can be replaced by any affine variety and we use the result of \cite{Or} here since
\begin{equation}\label{eq:prod X}
  \calX_2=\calXr_2\times \CC^{2N}
  \end{equation}
with \(\tilde{X}_{1--}\) and \(Y_1-\Ad_{g_{12}}(Y_2)_{++}\) providing coordinates along \(\CC^{2N}\).
In particular, rewriting the functor
\(H\) in our notations  we obtain the functor $\Phi$:
\begin{equation}\label{eq: Knorrer functor}
 \Phi'(\mathcal{F})=\bar{\pi}^*(\mathcal{F})\otimes \begin{bmatrix} (\tilde{X}_{1--} )& (Y_{1}-\Ad_{g_{12}}(Y_2)_{++})\end{bmatrix},
 \end{equation}
where the last term is the Koszul matrix factorization and $(M)$ stands for the column composed of the non-trivial entries of $M$.

The equation (\ref{eq: Knorrer functor}) defines a functor $\Phi': \MF(\calXr_2,\Wr)\to \MF(\calX_2,W)$ which gives a non-equivariant version of our embedding.
Unfortunately, the map $\bar{\pi}$ is not $B^2$-equivariant hence we can not use the pull back along this map to define
an equivariant version of the functor $\Phi$. To get around this problem we give another interpretation of the functor $\Phi$ that extends to the equivariant setting.

In our construction we use the space $\SymbolPrint{\widetilde{\calX}_2}=\frb\times G\times \frn\times G\times \frn$ that projects on $\calXr_2$:
$$\SymbolPrint{\pi_y}: \widetilde{\calX}_2\to \calXr_2,\quad \pi_y(\Ad_{g_1}^{-1}(X)_+,g_1,Y_1,g_2,Y_2)=(X,g_1^{-1}g_2,Y_2),$$
and is isomorphic to the sub variety of  $\calX_2$  defined by the ideal $I_{--}$ generated by the entries of the matrix $\tilde{X}_{1--}$:
$$ \SymbolPrint{j^x}: \widetilde{\calX}_2\to \calX_2.$$

Since we have the product decomposition (\ref{eq:prod X}) and the entries of \(\tilde{X}_{1--}\)  are coordinates along \(\CC^{2N}\),
the generators of the ideal $I_{--}$ form a regular sequence and $\pi_y^*(\Wr)|_{\widetilde{\calX}_2}=W|_{\widetilde{\calX_2}}$.
Hence, according to the subsection~\ref{ssec: eq push f}, the functor \[j^x_*: \MF_{B^2}(\widetilde{\calX}_2,\pi_y^*(\Wr))\to \MF_{B^2}(\calX_2,W)\] is
well defined.
Now let us observe that $\Phi'(\calF)=j^x_*\circ \pi_y^*(\calF)^\sharp$. Indeed, $\bar{\pi}^*(\calF)\otimes [(\tilde{X}_{1,--}),0]$ is an element of 
$\textup{Com}_{per}^\ell(\calX_2)$ and the elements $\tilde{X}_{1,--}$ form a regular sequence, hence $\Phi'(\calF)$ is a unique extension of
$\bar{\pi}^*(\calF)\otimes [(\tilde{X}_{1,--}),0]$ to an element of $\MF(\calX_2,W)$ from the lemma~\ref{lem: ext MF+eq}.
On the other hand since $\bar{\pi}|_{\widetilde{\calX_2}}=\pi_y$, thus $\bar{\pi}^*(\calF)$ is an extension of $\pi_y^*(\calF)$ on $\calX_2$ (as required by 
the first step of our construction of $j_*^x$ from the  section~\ref{ssec: noneq push f}). Thus $j_*^x\circ\pi_y^*(\calF)$ is also an extension of
$\bar{\pi}^*(\calF)\otimes [\tilde{X}_{1,--},0]$ to an element of $\MF(\calX_2,W)$. Hence by the uniqueness result we have $\Phi'(\calF)=j_*^x\circ\pi_y^*(\calF)^\sharp$.



There is a unique $B^2$ equivariant structure on $\widetilde{\calX}_2$ that makes  the map $\pi_y$  $B^2$-equivariant. Let's fix this $B^2$-equivariant structure on $\widetilde{\calX}_2$.
Moreover, the ideal $I_{--}$ is also $B^2$-invariant. Let $\hat{Y}:=Y_{1}-\Ad_{g_{12}}(Y_2)$.
Then the variables $\hat{Y}_{++},  Y_2, \tilde{X}_1, g_{12}, g_1$ form a coordinate system on $\calX_2$ and are $B^{(2)}$-equivariant for trivial reasons.
On the other hand, the action of the Lie algebra of the unipotent part of $B^{(1)}$ is given
by
\[ E_{ij}\cdot \tilde{X}_{1,kl}=\delta_{jl} \tilde{X}_{1,ki}-\delta_{ki} \tilde{X}_{1,jl},\quad  E_{ij}\cdot \hat{Y}_{kl}=\delta_{jl} \Ad_{g_{12}}(Y_2)_{ki}-\delta_{ki} \Ad_{g_{12}}(Y_2)_{jl},\]
\[E_{ij}\cdot (g_{12})_{kl}=
\delta_{ki}(g_{12})_{jl},\quad E_{ij}\cdot (g_1)_{kl}=\delta_{ki}(g_1)_{kl}
\]
 and the action on the rest of coordinates is trivial. Thus $I_{--}$ is $\frn^2$ invariant.
  Moreover, the above mentioned $B^2$-action on $\widetilde{\calX}_2$ coincides with the standard $B^2$-action on $\calX_2$ restricted to
 on $B^2$-invariant subvariety $\widetilde{\calX}_2$.

 Thus we defined a functor:
 $$\SymbolPrint{\Phi}:\MF_{B^2}(\calXr_2,\Wr)\to \MF_{B^2}(\calX_2,W),\quad \Phi(\calF)=j^x_*\circ \pi_y^*(\calF).$$


To show that the functor $\Phi$ is an embedding we define the functor $\Psi : \MF_{B^2}(\calX_2,W)\to \MF_{B^2}(\calXr_2,\Wr)$ such that $\Psi\circ\Phi= 1$.
Since the ideal $I_{--}$ generated by the matrix elements of $\tilde{X}_{1,--}$ is $B^2$ invariant we have well defined functor
$\SymbolPrint{Rest}: \MF_{B^2}(\calX_2,W)\to \MF_{B^2}(\widetilde{\calX_2}, \pi_y^*(\Wr))$ which is given by taking a quotient by the ideal $I_{--}$.
The push-forward along the projection $\pi_y: \calX_2\times \frn\to \calX_2$ provides the last step of the construction for
$\Psi:=\pi_{y*}\circ Rest$.
\end{proof}

\subsection{Properties of the Knorrer functor} As in the previous proof, $CC$ denotes the correction complex $$CC:= \begin{bmatrix} (\tilde{X}_{1--} )& (Y_{1}-\Ad_{g_{12}}(Y_2)_{++})\end{bmatrix}$$
Let us explain the equivariant structure of this matrix factorization. Since $B_{(2)}$-action does not change entries of the complex $\SymbolPrint{CC}$, the $B_{(2)}$ equivariant structure of $B_{(2)}$ is trivial.

To describe $B_{(1)}$ equivariant structure, let us write the differential of $CC$ as
\[ \SymbolPrint{D^{kn}}=D^{kn}_++D^{kn}_-, \quad D^{kn}_+=\sum_{k>l}(\tilde{X}_{1,--})_{kl}\theta_{kl},\]
\[D^{kn}_-=\sum_{k>l}
 (Y_{1}-\Ad_{g_{12}}(Y_2)_{++})_{lk}
\frac{\partial}{\partial \theta_{kl}},\]
where  $\theta_{ij}$ are odd variables. The equivariant structure is defined by the action on $\theta_{kl}$:
$$ E_{ij}\cdot \theta_{kl}=\sum_{k>i}\theta_{ki}-\sum_{j>l}\theta_{jl}$$

The differential $D^{kn}$ is not $\frn_{(2)}$ invariant:
$$ E_{ij}\cdot D^{kn}_+=0,\quad E_{ij}\cdot D^{kn}_-=\sum_{j>l} [\Ad_{g_{12}}(Y_2)_-]_{il}\frac{\partial}{\partial\theta_{jl}}-\sum_{k>i} [\Ad_{g_{12}}(Y_2)_-]_{ik}\frac{\partial}{\partial\theta_{ki}}.$$
Thus the correction differentials are non-trivial in $\Phi(\calF)$ but the correction differentials are not arbitrary and below we emphasize properties of the correction
differentials that are needed for the proofs that follow.

\def\mltV{V_{\mathrm{m}}}
\begin{proposition} Let $\calF=(M,D,\partial_l,\partial_r)\in \MF_{B^2}(\calXr_2,\Wr)$, $M=\mltV\otimes \CC[\calXr_2]$, $\mltV\in \Mod^{filt}_{\frn^2}$
The matrix factorization $\Phi(\calF)\in \MF_{B^2}(\calX_2,W)$ is the equivariant matrix factorization
$$ (\mltV\otimes \CC[\calX_2], D+D^{kn}, \tilde{\partial}_l,\tilde{\partial}_r)$$
where $\tilde{\partial}_r=\partial_r$.
\end{proposition}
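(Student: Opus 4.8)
The plan is to unwind the description $\Phi=j^x_*\circ\pi_y^*$ established in the proof of Proposition~\ref{prop: Knorrer} and to follow what each step does to the underlying module, to the matrix-factorization differential, and to the two $\frn$-correction differentials. Since $\pi_y\colon\widetilde{\calX}_2\to\calXr_2$ is $B^2$-equivariant, the pullback is simply $\pi_y^*(\calF)=(\mltV\otimes\CC[\widetilde{\calX}_2],D,\partial_l,\partial_r)$, with the equivariant data transported verbatim; so everything reduces to analysing the equivariant push-forward $j^x_*$ of Subsection~\ref{ssec: eq push f} along the closed embedding $j^x\colon\widetilde{\calX}_2\hookrightarrow\calX_2$, whose ideal $I_{--}$ is generated by the entries of $\tilde{X}_{1--}$ --- a $B^2$-invariant regular sequence, by the product decomposition~(\ref{eq:prod X}) --- with $\pi_y^*(\Wr)|_{\widetilde{\calX}_2}=W|_{\widetilde{\calX}_2}$.

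For the module and the differential there is essentially nothing new to do: the proof of Proposition~\ref{prop: Knorrer} already identifies the underlying complex of $\Phi(\calF)$ with that of $\bpi^*(\calF)^\sharp\otimes CC$, that is, $\mltV\otimes\CC[\calX_2]$ tensored with the Koszul exterior algebra of $CC$ (which is absorbed into the notation $\mltV$), carrying the differential $D+D^{kn}$. Here $D$ stands for the canonical lift $\bpi^*(D)$ used there, and the lower part $D^{kn}_-$ is precisely the homotopy produced by Step~2 of the push-forward for the potential difference $W-\bpi^*(\Wr)=\pm\Tr\bigl(\tilde{X}_{1--}(Y_1-\Ad_{g_{12}}(Y_2)_{++})\bigr)$ from the trace identity of that proof.

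The real content is that the $\frn^2$-correction has the asserted shape with $\tilde{\partial}_r=\partial_r$ (with the same abuse as for $D$, meaning $\bpi^*(\partial_r)$). The point is that $\bpi$ is equivariant for the second copy $B_{(2)}$ of $B$ (as noted where $\bpi$ is defined) and that the Koszul datum $CC$ is $B_{(2)}$-invariant, since its entries $\tilde{X}_{1--}$ and $Y_1-\Ad_{g_{12}}(Y_2)_{++}$, and the odd generators $\theta_{kl}$, are all fixed by $B_{(2)}$. Consequently $D+D^{kn}$ is honestly $\frn_{(2)}$-equivariant with correction $\partial_r$: the only check, $[\partial_r,D^{kn}]_+=0$, holds because $\partial_r$ is $\CC[\calXr_2]$-linear and hence commutes with multiplication by the $\CC^{2N}$-coordinates ($\tilde{X}_{1--}$, $\hat{Y}_{++}$) that occur in $D^{kn}$. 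Running the equivariant extension of Lemma~\ref{lem: ext MF+eq} with $\frh=\frn^2=\frn_{(1)}\oplus\frn_{(2)}$, every homotopy $h^{(-i)}$ in that construction can then be chosen $\frn_{(2)}$-equivariant, so the resulting $\partial^-$ has trivial $\frn_{(2)}$-component; this gives $\tilde{\partial}_r=\partial_r$, and its $\frn_{(1)}$-component is the generally nonzero $\tilde{\partial}_l$. That $\tilde{\partial}_l\neq\partial_l$ in general is forced by the $\frn_{(1)}$-noninvariance of $D^{kn}$, visible in the computation of $E_{ij}\cdot D^{kn}_-$ recorded just before the proposition. Uniqueness of the whole package up to the equivalence in the statement is Lemma~\ref{lem: uniq MF+eq}.

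I expect the main obstacle to be exactly this last claim --- that the $\frn_{(2)}$-component of $\partial^-$ can be taken to vanish, i.e.\ that the induction in the proof of Lemma~\ref{lem: ext MF+eq} can be run keeping $\frn_{(2)}$-equivariance at each stage. This works because $d^+=D^{kn}_+$ is $\frn_{(2)}$-equivariant and the negative-degree spaces $\xExtdpv{<0}$ of $\frn_{(2)}$-equivariant endomorphisms of the Koszul complex still vanish, the complex being a $B_{(2)}$-equivariant resolution of $\calO_{\widetilde{\calX}_2}$; everything else --- the module, the differential $D+D^{kn}$, and the $B_{(2)}$-invariance of $\bpi$ and $CC$ --- is a direct computation.
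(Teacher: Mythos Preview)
Your argument is correct and follows the same route as the paper: unwind $\Phi=j^x_*\circ\pi_y^*$, use the product decomposition~(\ref{eq:prod X}) and the section $\pi_x$ of $j^x$ (Remark~\ref{rem: section}) to fix the canonical lift $\tilde D=\bar\pi^*(D)$, and invoke uniqueness to identify $d^-=D^{kn}_-$. The paper's proof is in fact terser than yours --- it explicitly treats only the non-equivariant differential $D+D^{kn}$ and leaves the claim $\tilde\partial_r=\partial_r$ to the discussion preceding the proposition (namely that $B_{(2)}$ acts trivially on the entries of $CC$); your paragraph running the extension lemma $\frn_{(2)}$-equivariantly makes that step honest, and the obstacle you flag is exactly the right one to check.
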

\begin{proof}
The only part that requires a discussion is the fact that the differential $\Phi(\calF)^{\sharp}$ has differential $D+D^{kn}$: a priori our construction of the push forward
$j^x_*(\pi_y^*(\calF))$ relies on the extension lemma~\ref{lem: ext MF+} and involves choice of an extension $\tilde{D}$ of $D$ from variety $\widetilde{\calX}_2$ to $\calX_2$
and completion of $\tilde{D}+D^{kn}_+$  to the differential $\tilde{D}+D^{kn}_++d^-$ such that $(\tilde{D}+D^{kn}_++d^-)^2=W$.

However, in our case map $j^x$ is actually a section
of the (non-equivariant) map: $$\pi_x:\calX_2\to \widetilde{\calX}_2,\quad \pi_x(X,g_1,Y_1,g_2,Y_2)=(\Ad_{g_1}^{-1}(X)_+,g_1^{-1}g_2,Y_2).$$
 Thus according to the remark~\ref{rem: section}
we can choose $\tilde{D}$ to be $D=\bar{\pi}^*(D)=\pi_x^*(\pi_y^*(D))$. Moreover, $d^-=D_-^{kn}$ because $(D+D^{kn})^2=W$ and the completion of  the differential is unique up to an isomorphism.
\end{proof}



\subsection{Convolution on the reduced space}
In order to define the convolution on the smaller space $\calXr_2$, consider
the space $\SymbolPrint{\calXr_3}:=\frb\times G^2\times\frn$ with the following $B^3$-action
$$(b_1,b_2,b_3)\cdot (X,g_{12},g_{23},Y)=(\Ad_{b_1}(X),b_1 g_{12} b_2^{-1}, b_2 g_{23} b_3^{-1}, \Ad_{b_3}(Y)).$$
 There are the following
maps $\SymbolPrint{\bar{\pi}_{ij}}:\calXr_3\to\calXr_2$:
\[ \bar{\pi}_{12}(X,g_{12},g_{13},Y)=(X,g_{12},\Ad_{g_{23}}(Y)_{++}),\] \[\bar{\pi}_{23}(X,g_{12},g_{13},Y)=(\Ad_{g_{12}}^{-1}(X)_+,g_{23},Y),\]
\[\bar{\pi}_{13}(X,g_{12},g_{13},Y)=(X,g_{12}g_{23},Y).\]

\begin{proposition}
$\bar{\pi}_{12}^*(\Wr)+\bar{\pi}_{23}^*(\Wr)=\bar{\pi}_{13}^*(\Wr)$.
\end{proposition}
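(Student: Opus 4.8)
The plan is to compute the three pullbacks explicitly and then reduce the claimed identity to two trivial vanishing statements about traces of triangular matrices. Set $Z := \Ad_{g_{23}}(Y)$ and $\tilde{X} := \Ad_{g_{12}}^{-1}(X)$. Unwinding the definitions of the maps $\bar{\pi}_{ij}$ and of $\Wr$, one gets
\[
\bar{\pi}_{13}^*(\Wr) = \Tr\bigl(X\,\Ad_{g_{12}g_{23}}(Y)\bigr) = \Tr\bigl(X\,\Ad_{g_{12}}(Z)\bigr),
\]
\[
\bar{\pi}_{12}^*(\Wr) = \Tr\bigl(X\,\Ad_{g_{12}}(Z_{++})\bigr), \qquad
\bar{\pi}_{23}^*(\Wr) = \Tr\bigl(\tilde{X}_+\,Z\bigr).
\]
Before doing this one should check that each $\bar{\pi}_{ij}$ actually lands in $\calXr_2 = \frb\times G\times\frn$, i.e.\ that $\Ad_{g_{23}}(Y)_{++}\in\frn$ and $\Ad_{g_{12}}^{-1}(X)_+\in\frb$, so that the substitution into $\Wr$ is legitimate term by term.

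Since $Z = Z_{++} + Z_-$ and the trace is $\Ad$-invariant, we have $\Tr\bigl(X\,\Ad_{g_{12}}(Z_-)\bigr) = \Tr(\tilde{X}\,Z_-)$, so the identity $\bar{\pi}_{12}^*(\Wr)+\bar{\pi}_{23}^*(\Wr) = \bar{\pi}_{13}^*(\Wr)$ is equivalent to
\[
\Tr(\tilde{X}_+\,Z) = \Tr(\tilde{X}\,Z_-).
\]
To prove this I would decompose $\tilde{X} = \tilde{X}_+ + \tilde{X}_{--}$ and $Z = Z_{++} + Z_-$ and expand both sides. On the left the cross term $\Tr(\tilde{X}_+ Z_{++})$ is the trace of a product of an upper-triangular matrix with a strictly upper-triangular matrix, hence $0$; on the right the cross term $\Tr(\tilde{X}_{--} Z_-)$ is the trace of a product of a strictly lower-triangular matrix with a lower-triangular matrix, hence $0$; the surviving term $\Tr(\tilde{X}_+ Z_-)$ is common to both, so the two sides agree.

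I do not expect a genuine obstacle here: the content is entirely the two elementary trace vanishings above together with careful bookkeeping of the conventions $X_+ = X - X_{--}$ and $X_- = X - X_{++}$. The one point worth recording explicitly, for later use in the construction of the convolution on $\calXr_2$, is the refined pointwise identity $\bar{\pi}_{23}^*(\Wr) = \Tr\bigl(X\,\Ad_{g_{12}}(\Ad_{g_{23}}(Y)_-)\bigr)$, since it is in this form that the $\bar{\pi}_{23}$-term pairs with the $\bar{\pi}_{12}$-term to reassemble $\bar{\pi}_{13}^*(\Wr)$.
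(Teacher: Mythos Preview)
Your proof is correct and is essentially the same as the paper's: with $A=\tilde{X}=\Ad_{g_{12}}^{-1}(X)$ and $B=Z=\Ad_{g_{23}}(Y)$, both arguments reduce the statement to the two elementary vanishings $\Tr(A_+B_{++})=0$ and $\Tr(A_{--}B_-)=0$. The only cosmetic difference is that you first isolate the equivalent identity $\Tr(\tilde{X}_+ Z)=\Tr(\tilde{X} Z_-)$ before expanding, whereas the paper expands $\Tr(AB_{++})+\Tr(A_+B)$ directly; the content is identical.
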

\begin{proof}
Indeed we have
\begin{multline*}
\bar{\pi}_{12}^*(\Wr)+\bar{\pi}_{23}^*(\Wr)=\Tr(X\Ad_{g_{12}}(\Ad_{g_{23}}(Y)_{++}))+\Tr(\Ad_{g_{12}}^{-1}(X)_+\Ad_{g_{23}}(Y))=\\
\Tr(\Ad^{-1}_{g_{12}}(X)\Ad_{g_{23}}(Y)_{++})+\Tr(\Ad^{-1}_{g_{12}}(X)_+\Ad_{g_{23}}(Y)).
\end{multline*}
Thus the statement follows is implied by
\begin{multline*} \Tr(A B_{++})+\Tr(A_+B)=\Tr(AB)-\Tr(AB_-)+\Tr(A_+B_-)\\
  =\Tr(AB)+\Tr(A_-B_-)=\Tr(AB).
\end{multline*}

\end{proof}

For given $\calF,\calG\in \MF_{B\times B}(\calXr_2,W)$ it is tempting  to define
\begin{equation}\label{eq: red convolution prelim}
\calF\SymbolPrint{\bar{\star}}\calG:=\bar{\pi}_{13*}(\CE_{\frn^{(2)}}( \bar{\pi}_{12}^*(\calF)\otimes\bar{\pi}_{23}^*(\calG))))^{T^{(2)}},
\end{equation}
where $\frn^{(2)}$, $T^{(2)}$ are the subalgebras of the middle factor in $\frb^3$ acting on $\calXr_3$. The only issue with this definition is that
the maps $\bar{\pi}_{12}$ and $\bar{\pi}_{23}$ are not $B^3$ equivariant. Thus we have to define the $B^3$-equivariant structure on
the pull back $\bar{\pi}_{12}^*(\calF)\otimes\bar{\pi}_{23}^*(\calG)$ and we do it below.

\subsection{Reduced vs. non-reduced convolution}



Let $\bar{\pi}_{con}:\calX_3\to \calXr_3$ be the map defined by
$$\SymbolPrint{\bar{\pi}_{con}}(X,g_1,Y_1,g_2,Y_2,g_3,Y_3)=(\Ad_{g_1}^{-1}(X)_+,g_1^{-1}g_2,g_2^{-1}g_3,Y_3).$$

\begin{proposition} \label{prop: Knorrer factor} Suppose $\calF,\calG\in \MF_{B^2}(\calXr_2,\Wr)$ then there is a sequence of  row transformations that identifies
$\pi_{12}^*(\Phi(\calF))\otimes_B\pi_{23}^*(\Phi(\calG))$ with the $B^3$-equivariant matrix factorization $\calH=(M,D,\partial_l,\partial_m,\partial_r)$,
\begin{equation}\label{eq: cor diffs}
\partial_l,\partial_m,\partial_r:
\Lambda^\bullet \frn \otimes M_{\star}\to \Lambda^{<\bullet}\otimes M_{\ge \star},
\end{equation}
$$\calH^\sharp=(M,D)=\bar{\pi}^*_{con}( \bar{\pi}_{12}^*(\calF))\otimes \bar{\pi}^*_{con}(\bar{\pi}_{23}^*(\calG))\otimes  [Y_2 -\Ad_{g_{23}}(Y_3)_{++},0]\otimes\pi_{13}^*(CC)$$
\end{proposition}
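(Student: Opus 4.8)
The plan is to unfold both Kn\"orrer images via the preceding proposition, pull their external tensor product back to $\calX_3$, and reorganise the resulting Koszul and correction data by elementary row transformations. Work on $\calX_3$ with coordinates $(X,g_1,Y_1,g_2,Y_2,g_3,Y_3)$, write $g_{ij}=g_i^{-1}g_j$ and $A=\Ad_{g_1}^{-1}(X)$. By the previous proposition $\Phi(\calF)^\sharp=\bar\pi^*(\calF)^\sharp\otimes CC$, and likewise for $\calG$, so
$$\pi_{12}^*(\Phi(\calF))^\sharp\otimes\pi_{23}^*(\Phi(\calG))^\sharp=(\bar\pi\circ\pi_{12})^*(\calF)^\sharp\otimes\pi_{12}^*(CC)\otimes(\bar\pi\circ\pi_{23})^*(\calG)^\sharp\otimes\pi_{23}^*(CC).$$
First I would compute the four compositions of projections. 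One checks $\bar\pi\circ\pi_{13}=\bar\pi_{13}\circ\bar\pi_{con}$ on the nose; $\bar\pi\circ\pi_{12}$ and $\bar\pi_{12}\circ\bar\pi_{con}$ agree except that the last coordinate $Y_2$ is replaced by $\Ad_{g_{23}}(Y_3)_{++}$; and $\bar\pi\circ\pi_{23}$ and $\bar\pi_{23}\circ\bar\pi_{con}$ agree except that $\Ad_{g_2}^{-1}(X)_+=(\Ad_{g_{12}}^{-1}(A))_+$ is replaced by $(\Ad_{g_{12}}^{-1}(A_+))_+$, the difference $(\Ad_{g_{12}}^{-1}(A_{--}))_+$ lying in the ideal generated by the entries of $A_{--}$. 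Since $\calF$ and $\calG$ are $B^2$-equivariant their differentials depend polynomially on these coordinates, so a Taylor expansion shows that $(\bar\pi\circ\pi_{12})^*(\calF)$ differs from $\bar\pi^*_{con}(\bar\pi_{12}^*(\calF))$ by a term in the ideal generated by the entries of $Y_2-\Ad_{g_{23}}(Y_3)_{++}$, and $(\bar\pi\circ\pi_{23})^*(\calG)$ differs from $\bar\pi^*_{con}(\bar\pi_{23}^*(\calG))$ by a term in the ideal generated by the entries of $A_{--}$.

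Next I would introduce the bookkeeping Koszul factor $[Y_2-\Ad_{g_{23}}(Y_3)_{++},0]$ and show that, after a sequence of the elementary row transformations of subsection~\ref{ssec: str eq Kosz} (and adjustments of the right-entries, permitted up to homotopy by Lemma~\ref{lem: ext uniq}), the four-fold tensor product becomes $\bar\pi^*_{con}(\bar\pi_{12}^*(\calF))\otimes\bar\pi^*_{con}(\bar\pi_{23}^*(\calG))\otimes[Y_2-\Ad_{g_{23}}(Y_3)_{++},0]\otimes\pi_{13}^*(CC)$. The module ranks already agree, since $\pi_{12}^*(CC)\otimes\pi_{23}^*(CC)$ and $[Y_2-\Ad_{g_{23}}(Y_3)_{++},0]\otimes\pi_{13}^*(CC)$ each have $2^{2\dim\frn}$ Koszul generators. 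The mechanism is twofold: the discrepancy in $\calF$ lies in the ideal generated by the entries of $Y_2-\Ad_{g_{23}}(Y_3)_{++}$, each of which acts by the null-homotopy $\partial_\theta$ on the new Koszul factor, so $(\bar\pi\circ\pi_{12})^*(\calF)$ may be traded for $\bar\pi^*_{con}(\bar\pi_{12}^*(\calF))$; likewise the discrepancy in $\calG$, living in the ideal of the entries of $A_{--}$, is absorbed by the null-homotopies on $\pi_{13}^*(CC)$; and the two pulled-back correction complexes $\pi_{12}^*(CC),\pi_{23}^*(CC)$ are converted into $[Y_2-\Ad_{g_{23}}(Y_3)_{++},0]\otimes\pi_{13}^*(CC)$ by row transformations, the identities $\Tr(AB_{++})+\Tr(A_+B)=\Tr(AB)$ from the proof of the proposition just above being exactly what is needed to see that the potentials redistribute correctly (the total potential is $\pi_{13}^*(W)$ on both sides, since $\pi_{12}^*(W)+\pi_{23}^*(W)=\pi_{13}^*(W)$ and $\bar\pi_{12}^*(\Wr)+\bar\pi_{23}^*(\Wr)=\bar\pi_{13}^*(\Wr)$ pulled back along $\bar\pi_{con}$). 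I expect this middle step, with its bookkeeping of $\Ad_g$, the truncations $(\cdot)_\pm$, $(\cdot)_{++}$, and the shuffling of potential among the four factors, to be the main obstacle; the uniqueness lemmas make it unnecessary to track the right-entries in any finer detail.

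Finally I would upgrade to the $B^3$-equivariant statement. Every map, Koszul factor and row transformation above is $B^3$-equivariant, and the sequences entering the construction — the entries of $A_{--}$, of $Y_2-\Ad_{g_{23}}(Y_3)_{++}$, and the data inside the two $\Phi$'s — are regular, so the hypotheses of Lemmas~\ref{lem: ext MF+eq} and~\ref{lem: uniq MF+eq} are met; the equivariant uniqueness lemma then promotes the identification to $\MF_{B^3}$ and produces the correction differentials $\partial_l,\partial_m,\partial_r$. Their shape~(\ref{eq: cor diffs}) — strictly lowering the Chevalley--Eilenberg degree and not lowering the internal filtration — is stable under pullback, tensor product and row transformations, and it holds for $\Phi(\calF)$ by the preceding proposition, hence it passes to $\calH$.
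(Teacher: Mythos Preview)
Your outline is essentially the same strategy as the paper, but the operational details are blurred at exactly the point where they matter. Two concrete issues:

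\medskip

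\textbf{Where the Koszul factor comes from.} You speak of ``introducing the bookkeeping Koszul factor'' $[Y_2-\Ad_{g_{23}}(Y_3)_{++},0]$ and then absorbing the $\calF$-discrepancy into it, and the $\calG$-discrepancy into $\pi_{13}^*(CC)$. But neither of these factors exists in the starting product: you only have $\pi_{12}^*(CC)\otimes\pi_{23}^*(CC)$. The paper does not introduce anything; it shows that $\pi_{23}^*(CC)=[\tilde X_{2,--},\,Y_2-\Ad_{g_{23}}(Y_3)_{++}]$ \emph{becomes} the bookkeeping factor, and $\pi_{12}^*(CC)$ \emph{becomes} $\pi_{13}^*(CC)$, in the course of the eliminations. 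Concretely the paper runs two steps. First, use the second column of $\pi_{23}^*(CC)$ (namely $Y_2-\Ad_{g_{23}}(Y_3)_{++}$) with second-type row transformations to eliminate $Y_2$ from both $(\bar\pi\circ\pi_{12})^*(\calF)$ and $\pi_{12}^*(CC)$; this turns the former into $\bar\pi_{con}^*(\bar\pi_{12}^*(\calF))$ and the latter into $CC_{123}=[\tilde X_{1,--},\,Y_1-\Ad_{g_{12}}(\Ad_{g_{23}}(Y_3)_{++})_{++}]$, while the first column of $\pi_{23}^*(CC)$ is forced to vanish because the total potential $\pi_{13}^*(W)$ has no $Y_2$-dependence. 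Second, use the first column of $CC_{123}$ (namely $\tilde X_{1,--}$) with first-type row transformations to eliminate $\tilde X_{1,--}$ from $(\bar\pi\circ\pi_{23})^*(\calG)$; the second column of $CC_{123}$ is then forced (again by the potential) to become $Y_1-\Ad_{g_{13}}(Y_3)_{++}$, i.e.\ $CC_{123}$ becomes $\pi_{13}^*(CC)$. Your rank count is correct, but the mechanism is this specific two-pass elimination, not an abstract appeal to null-homotopies on factors that are only targets, not yet present.

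\medskip

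\textbf{Admissibility of second-type transformations.} Second-type row transformations $\exp(\phi\theta_k)$ carry the admissibility condition~(\ref{eq: adm row tr}) from subsection~\ref{ssec: row ops} (not subsection~\ref{ssec: str eq Kosz}, which treats only the strongly equivariant case). The paper checks this explicitly: when eliminating $Y_2$, the factors being modified come from $\pi_{12}^*(\Phi(\calF))$, so their differentials commute with those from $\pi_{23}^*(\Phi(\calG))$, and admissibility holds. Your proposal does not mention this, and without it the second-type step is not justified. The first-type transformations in the second pass need no such check, as the paper also notes.

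\medskip

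With these two points supplied, your argument coincides with the paper's.
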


This proposition allows us to construct $B^3$-equivariant structure on $\bar{\pi}_{12}^*(\calF)\otimes\bar{\pi}_{23}^*(\calG)$.
First, we introduce few auxiliary spaces and map: $\SymbolPrint{\widehat{\calX}_3}:=\frg\times \frn\times G^3\times \frn$ and $\hat{\pi}:\calX_3\to\widehat{\calX}_3$ is given by
$$\SymbolPrint{\hat{\pi}}(X,g_1,Y_2,g_2,Y_2,g_3,Y_3)=(X,Y_1,g_1,g_2,g_3,Y_3).$$
We can embed $\widehat{\calX}_3$ into $\calX_3$ by
$$ \SymbolPrint{\hat{j}}:\widehat{\calX}_3\to \calX_3,\quad\hat{ j}(X,Y_1,g_1,g_2,g_{3},Y_3)\mapsto (X,g_{1},Y_1,g_2,\Ad_{g_{23}}(Y_3)_{++},g_{3},Y_3).$$
The map $\hat{j}$ the section of $\hat{\pi}$.

On the other hand, the matrix factorization $\calH^\sharp$ has Koszul complex $ [Y_2 -\Ad_{g_{23}}(Y_3)_{++},0]$ as one of the factors.
In other words, $\calH^\sharp=(M,D''+D')$, where $D'=\sum_{ij} (Y_2 -\Ad_{g_{23}}(Y_3)_{++})_{ij}\frac{\partial}{\partial\theta_{ij}}$.
Since the differential has the property (\ref{eq: cor diffs}) we contract $\calH$ along the differential $D'$ to obtain a homotopy  equivalence of
the matrix factorizations over $\CC[\widehat{\calX}_3]$
$$\calH\sim \widehat{\calH}=(\tilde{M},\tilde{D},\tilde{\partial}_l,\tilde{\partial}_m,\tilde{\partial}_r)\in \MF_{B^2}(\widehat{\calX}_3,\hat{j}^*\circ\pi_{13}^*(W))$$
where $\tilde{D}=\hat{j}^*(D'')$, $\hat{\partial}_l=\hat{j}(\partial_l)$, $\hat{\partial}_m=\hat{j}^*(\partial_m)$, $\hat{\partial}_r=\hat{j}^*(\partial_r)$.

Next, we introduce an analog of the Knorrer functor:
\[
  \SymbolPrint{\Phi^{(13)}}: \MF_{B^2}(\calXr_3,\bar{\pi}_{13}^*(\Wr))\to \MF_{B^2}(\widehat{\calX}_3,\hat{j}^*\circ\pi_{13}^*(W)).
\]
For that we introduce the space $$\SymbolPrint{\widetilde{\calX}_3}=\frb\times \frn \times G^3\times \frn$$ and fix a $B^3$-equivariant embedding
$\SymbolPrint{\tilde{j^x}}:\widetilde{\calX}_3\to \widehat{\calX}_3$, by identifying $\widetilde{\calX}_3$ with the subvariety of $\widehat{\calX}_3$ defined by the ideal
$\tilde{I}_{--}$ generated by the entries of $\Ad_{g_1}^{-1}(X)_{--}$.

The projection $\SymbolPrint{\tilde{\pi}_y}: \widetilde{\calX}_3\to \calXr_3$ defined by $$\tilde{\pi}_y(X,Y_1,g_1,g_2,g_3,Y_3)=(X,g_1^{-1}g_2,g_2^{-1}g_3,Y_3).$$
is $B^2$-equivariant. Thus since $\tilde{I}_{--}$ is $B^3$-equivariant we can define the functor:
$$ \Phi^{(13)}: \MF_{B^3}(\calXr_3,\Wr)\to  \MF_{B^3}(\widehat{\calX}_3,\hat{j}^*\circ\pi^*_{13}(W)),\quad \Phi(\calF)=\tilde{j}^x_*\circ\tilde{\pi}_y^*(\calF).$$

The following statement is analogous to the proposition~\ref{prop: Knorrer} and omit its proof:

\begin{proposition} The functor $\Phi^{(13)}$ provides an embedding of categories and
$$\Phi^{(13)}(\calF)^\sharp=\tilde{\pi}_{con}^*(\calF)\otimes [(\tilde{X}_{1,--}) (Y_1-\Ad_{g_{13}}(Y_3)_{++})],$$
where $\tilde{\pi}_{con}(X,Y_1,g_1,g_2,g_3,Y_3)=(\Ad_{g_1}^{-1}(X)_+,g_1^{-1}g_2,g_2^{-1}g_3,Y_3).$
\end{proposition}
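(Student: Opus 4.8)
\emph{Overall plan.} The argument is a direct transcription of the proof of Proposition~\ref{prop: Knorrer}, under the dictionary $\calX_2\rightsquigarrow\widehat{\calX}_3$, $\calXr_2\rightsquigarrow\calXr_3$, $\widetilde{\calX}_2\rightsquigarrow\widetilde{\calX}_3$, the potential $W\rightsquigarrow\hat{j}^*\circ\pi_{13}^*(W)$, the reduced potential $\Wr\rightsquigarrow\bar{\pi}_{13}^*(\Wr)$, and the Knorrer pair of variables $\tilde{X}_{1,--},\ Y_1-\Ad_{g_{12}}(Y_2)_{++}$ replaced by $\tilde{X}_{1,--}:=\Ad_{g_1}^{-1}(X)_{--}$ and $Y_1-\Ad_{g_{13}}(Y_3)_{++}$, with $g_{13}:=g_1^{-1}g_3$. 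First I would record the analog of the product decomposition \eqref{eq:prod X}: the map $\widehat{\calX}_3\to\widetilde{\calX}_3\times\CC^{N}$, $N=\dim\frn$, sending $(X,Y_1,g_1,g_2,g_3,Y_3)$ to $\bigl((\Ad_{g_1}^{-1}(X)_+,Y_1,g_1,g_2,g_3,Y_3),\,\Ad_{g_1}^{-1}(X)_{--}\bigr)$ is an isomorphism; hence the entries of $\tilde{X}_{1,--}$ form a regular sequence in $\CC[\widehat{\calX}_3]$ and $\tilde{j}^x$ is a section of the projection onto the first factor. Since the $B^3$-action on the ambient $\frg$-factor is conjugation conjugated by $g_1$, which preserves the splitting $\frg=\frb\oplus\frn_{-}$, the ideal $\tilde{I}_{--}$ is $B^3$-invariant and the equivariant push-forward of subsection~\ref{ssec: eq push f} applies to $\tilde{j}^x$.

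\emph{Potential identity.} Writing $\tilde{X}_1:=\Ad_{g_1}^{-1}(X)$ and using $\Ad_{g_3}^{-1}=\Ad_{g_{13}}^{-1}\circ\Ad_{g_1}^{-1}$ one gets $\hat{j}^*\circ\pi_{13}^*(W)=\Tr\bigl(\tilde{X}_1(Y_1-\Ad_{g_{13}}(Y_3))\bigr)$. Splitting $\tilde{X}_1=\tilde{X}_{1,+}+\tilde{X}_{1,--}$ and using that a trace of a product of incompatibly triangular matrices vanishes (concretely: $\Tr(\tilde{X}_{1,+}Y_1)=0$ since $Y_1$ is strictly upper-triangular, and $\Tr(\tilde{X}_{1,--}\Ad_{g_{13}}(Y_3))=\Tr(\tilde{X}_{1,--}\Ad_{g_{13}}(Y_3)_{++})$), this gives
\[
\hat{j}^*\circ\pi_{13}^*(W)=\tilde{\pi}_{con}^*\bar{\pi}_{13}^*(\Wr)+\Tr\bigl(\tilde{X}_{1,--}\,(Y_1-\Ad_{g_{13}}(Y_3)_{++})\bigr),
\]
where, exactly as in the proof of Proposition~\ref{prop: Knorrer}, $\tilde{\pi}_{con}^*\bar{\pi}_{13}^*(\Wr)$ is the $\tilde{X}_{1,--}$-free summand $\Tr\bigl(\tilde{X}_{1,+}(Y_1-\Ad_{g_{13}}(Y_3))\bigr)$ (this is also what fixes the sign convention on $\bar{\pi}_{13}^*(\Wr)$), and the remaining summand is a nondegenerate quadratic pairing of the regular sequence $\tilde{X}_{1,--}$ with the strictly-upper-triangular variables $Y_1-\Ad_{g_{13}}(Y_3)_{++}$. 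In particular $\hat{j}^*\circ\pi_{13}^*(W)\equiv\tilde{\pi}_y^*\bar{\pi}_{13}^*(\Wr)\pmod{\tilde{I}_{--}}$, so $\tilde{j}^x_*\circ\tilde{\pi}_y^*$ really does land in $\MF_{B^3}(\widehat{\calX}_3,\hat{j}^*\circ\pi_{13}^*(W))$ and $\Phi^{(13)}$ is well defined.

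\emph{The formula and the embedding.} Since $\tilde{j}^x$ is a section of the projection $\widehat{\calX}_3\to\widetilde{\calX}_3$ above, and that projection composed with $\tilde{\pi}_y$ equals $\tilde{\pi}_{con}$, remark~\ref{rem: section} lets me take the lift of $\tilde{\pi}_y^*(\calF)$ required in the first step of $\tilde{j}^x_*$ to be precisely $\tilde{\pi}_{con}^*(\calF)$. In the second step lemma~\ref{lem: ext MF+eq} completes $\tilde{\pi}_{con}^*(\calF)\otimes[(\tilde{X}_{1,--}),0]$ to a matrix factorization of $\hat{j}^*\circ\pi_{13}^*(W)$; by the potential identity one may take the completing differential to have components $Y_1-\Ad_{g_{13}}(Y_3)_{++}$, and by lemma~\ref{lem: uniq MF+eq} any completion is isomorphic to this one, whence $\Phi^{(13)}(\calF)^\sharp=\tilde{\pi}_{con}^*(\calF)\otimes[(\tilde{X}_{1,--})\,(Y_1-\Ad_{g_{13}}(Y_3)_{++})]$. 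Faithfulness (the ``embedding'' claim) follows as for $\Phi$ in Proposition~\ref{prop: Knorrer}: restriction modulo the $B^3$-invariant ideal $\tilde{I}_{--}$ gives a functor $\mathrm{Rest}\colon\MF_{B^3}(\widehat{\calX}_3,\hat{j}^*\circ\pi_{13}^*(W))\to\MF_{B^3}(\widetilde{\calX}_3,\tilde{\pi}_y^*\bar{\pi}_{13}^*(\Wr))$, the map $\tilde{\pi}_y\colon\widetilde{\calX}_3\to\calXr_3$ is a trivial bundle with fiber $G\times\frn$ along the coordinates $g_1$ and $Y_1$ it forgets, and $\Psi^{(13)}:=\tilde{\pi}_{y*}\circ\mathrm{Rest}$ satisfies $\Psi^{(13)}\circ\Phi^{(13)}\cong\mathrm{id}$.

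\emph{Main obstacle.} The only genuinely non-formal point is the potential identity of the second paragraph: one has to juggle the three group variables $g_1,g_2,g_3$, the maps $\pi_{13},\hat{j},\bar{\pi}_{13},\tilde{\pi}_{con},\tilde{\pi}_y$ and the triangular projections $(-)_{+},(-)_{++},(-)_{--}$ at once, and check both that the $\tilde{X}_{1,--}$-free part is the claimed pullback of $\Wr$ and that the $\tilde{X}_{1,--}$-part is exactly the advertised Koszul pairing; everything else is a mechanical rewriting of the corresponding steps in the proof of Proposition~\ref{prop: Knorrer}.
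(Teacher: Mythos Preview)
Your proposal is correct and is exactly the argument the paper has in mind: the paper explicitly omits the proof, saying it is analogous to Proposition~\ref{prop: Knorrer}, and you have carried out precisely that analogous argument with the correct dictionary of spaces, maps, potentials, and Koszul variables. The only cosmetic point is that your justification of $B^3$-invariance of $\tilde{I}_{--}$ (``preserves the splitting $\frg=\frb\oplus\frn_-$'') is slightly imprecise---conjugation by $b_1\in B$ does not preserve that splitting, but it does preserve $\frb$, which is all you need to conclude that the ideal generated by $(\Ad_{g_1}^{-1}(X))_{--}$ is invariant; this matches the explicit check in the proof of Proposition~\ref{prop: Knorrer}.
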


Let us define a matrix factorization $\overline{\calH}\in \MF_{B^2}(\calXr_3,\Wr)$ in two steps. First, let us observe that there is
a well-defined functor $Rest: \MF_{B^2}(\widehat{\calX}_3,\hat{j}^*\circ\pi_{13}^*(W))\to \MF(\widetilde{\calX}_3,\tilde{\pi}_y^*(\Wr))$
which is the functor of taking quotient by the $B^3$-equivariant  ideal $\tilde{I}_{--}$. Second as a second step we apply
push forward along the projection map:
$$ \overline{\calH}:=\tilde{\pi}_{y*}\circ Rest(\widehat{\calH}).$$

The main result of this subsection is the following
\begin{proposition}
$$\widehat{\calH}=\Phi^{(13)}(\overline{\calH}).$$
\end{proposition}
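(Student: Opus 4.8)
The plan is to prove the identity in parallel with the second half of the proof of Proposition~\ref{prop: Knorrer}. Recall that $\overline{\calH}=\tilde\pi_{y*}\circ Rest(\widehat{\calH})$ and that $\Phi^{(13)}=\tilde{j}^x_*\circ\tilde\pi_y^*$. First I would record that the functor $\Psi^{(13)}:=\tilde\pi_{y*}\circ Rest$ (quotient by the $B^3$-invariant ideal $\tilde{I}_{--}=(\Ad_{g_1}^{-1}(X)_{--})$, followed by push-forward along $\tilde\pi_y$) is left inverse to $\Phi^{(13)}$, by exactly the argument proving $\Psi\circ\Phi=1$ in Proposition~\ref{prop: Knorrer}. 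So it will be enough to identify the underlying complex $\widehat{\calH}^\sharp$ with that of $\Phi^{(13)}(\overline{\calH})$ and then appeal to the uniqueness of $B^2$-equivariant extensions, Lemma~\ref{lem: uniq MF+eq}.

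Next I would compute $\widehat{\calH}^\sharp$ explicitly. By Proposition~\ref{prop: Knorrer factor}, $\pi_{12}^*(\Phi(\calF))\otimes_B\pi_{23}^*(\Phi(\calG))$ is identified, by a sequence of row transformations, with $\calH$, where
\[
\calH^\sharp=\bar\pi_{con}^*(\bar\pi_{12}^*(\calF))\otimes\bar\pi_{con}^*(\bar\pi_{23}^*(\calG))\otimes[Y_2-\Ad_{g_{23}}(Y_3)_{++},0]\otimes\pi_{13}^*(CC),
\]
and $\widehat{\calH}$ is the contraction of $\calH$ along the Koszul factor $[Y_2-\Ad_{g_{23}}(Y_3)_{++},0]$. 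Since that factor is built on a regular sequence and $\hat j\colon\widehat{\calX}_3\hookrightarrow\calX_3$ is the associated section, the contraction amounts to restricting the other three tensor factors along $\hat j$. A direct inspection of the defining formulas gives $\bar\pi_{con}\circ\hat j=\tilde\pi_{con}$, while $\pi_{13}\circ\hat j$ forgets the second group element and vector; hence $\hat j^*\circ\pi_{13}^*(CC)=[(\tilde{X}_{1,--})\,(Y_1-\Ad_{g_{13}}(Y_3)_{++})]$ and
\[
\widehat{\calH}^\sharp=\tilde\pi_{con}^*\bigl(\bar\pi_{12}^*(\calF)\otimes\bar\pi_{23}^*(\calG)\bigr)^\sharp\otimes\bigl[(\tilde{X}_{1,--})\,(Y_1-\Ad_{g_{13}}(Y_3)_{++})\bigr].
\]
This has precisely the shape $\tilde\pi_{con}^*(-)\otimes[(\tilde{X}_{1,--})(Y_1-\Ad_{g_{13}}(Y_3)_{++})]$ of the underlying complex of $\Phi^{(13)}(-)$ recorded in the preceding proposition. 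Running $Rest$ and $\tilde\pi_{y*}$ on this expression exactly as in the proof of Proposition~\ref{prop: Knorrer} — using $\tilde\pi_{con}|_{\widetilde{\calX}_3}=\tilde\pi_y$ and that $[0,(Y_1-\Ad_{g_{13}}(Y_3)_{++})]$ is the Koszul resolution of a fiber coordinate minus a function pulled back from $\calXr_3$ — identifies $\overline{\calH}^\sharp$ with $(\bar\pi_{12}^*(\calF)\otimes\bar\pi_{23}^*(\calG))^\sharp$, so that $\Phi^{(13)}(\overline{\calH})^\sharp=\widehat{\calH}^\sharp$.

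Finally I would upgrade this to an isomorphism of equivariant objects. Both $\widehat{\calH}$ and $\Phi^{(13)}(\overline{\calH})$ are $B^2$-equivariant matrix factorizations over the potential $\hat j^*\circ\pi_{13}^*(W)$ whose positive differential $d^+$ is the Koszul differential $\tilde{X}_{1,--}\wedge(\cdot)$ on the complex just computed. The entries of $\tilde{X}_{1,--}=\Ad_{g_1}^{-1}(X)_{--}$ form a regular sequence on $\widehat{\calX}_3$ (the analogue of the decomposition (\ref{eq:prod X})), so $\Hom^{<0}_{d^+}$ of this complex is homotopic to $0$ and the hypotheses of Lemmas~\ref{lem: ext MF+eq} and~\ref{lem: uniq MF+eq} hold. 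By Lemma~\ref{lem: uniq MF+eq} there is $\Psi=\mathrm{Id}+(\text{lower-order terms})$ conjugating the total differential of one to that of the other, so $\widehat{\calH}\cong\Phi^{(13)}(\overline{\calH})$, which is the asserted identity (equivalently, $\widehat{\calH}$ lies in the essential image of $\Phi^{(13)}$, and then $\overline{\calH}=\Psi^{(13)}(\widehat{\calH})$ by the first paragraph).

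The step I expect to be the main obstacle is the equivariant matching in the last paragraph: $\widehat{\calH}$ carries correction differentials $\partial_l,\partial_m,\partial_r$ produced on the non-equivariant Knorrer side — where $\bar\pi_{12},\bar\pi_{23}$ fail to be $B^3$-equivariant and the correction terms (\ref{eq: cor diffs}) are forced — whereas those of $\Phi^{(13)}(\overline{\calH})$ come from the equivariant push-forward of Subsection~\ref{ssec: eq push f}. Lemma~\ref{lem: uniq MF+eq} renders these equivalent, but checking its vanishing hypotheses for $\widehat{\calH}^\sharp$ requires the same bookkeeping of the $\frn^{(1)}$- and $\frn^{(3)}$-actions on the Koszul generators $\theta_{kl}$ that underlies the description of the Knorrer differential $D^{kn}$ earlier in Section~\ref{sec: Knor}.
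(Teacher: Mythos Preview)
Your proposal is correct and follows essentially the same route as the paper: compute $\widehat{\calH}^\sharp$ from Proposition~\ref{prop: Knorrer factor} (after contracting the Koszul factor $[Y_2-\Ad_{g_{23}}(Y_3)_{++},0]$), identify $\overline{\calH}^\sharp$ with $\bar\pi_{12}^*(\calF)\otimes\bar\pi_{23}^*(\calG)$, and then invoke the uniqueness of the equivariant extension (Lemma~\ref{lem: uniq MF+eq}) to conclude. Your write-up is more explicit than the paper's terse proof --- in particular you spell out the restriction along $\hat j$ and the regularity of $\tilde{X}_{1,--}$ needed for the uniqueness lemma --- but the architecture is the same; your opening detour through $\Psi^{(13)}$ as a left inverse is not actually needed for the argument (it is only used afterward to read off $\overline{\calH}$, which is just its definition).
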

\begin{proof}
Indeed, proposition~\ref{prop: Knorrer factor} implies that
$$\widehat{\calH}^\sharp=\tilde{\pi}^*_{con}( \bar{\pi}_{12}^*(\calF)\otimes \bar{\pi}_{23}^*(\calG))\otimes [(\tilde{X}_{1,--}) (Y_1-\Ad_{g_{13}}(Y_3)_{++})].$$
Hence $\overline{\calH}=\bar{\pi}_{12}^*(\calF)\otimes \bar{\pi}_{23}^*(\calG)$ and both
$\widehat{\calH}$ and $\Phi^{(13)}(\overline{\calH})$ provide an $B^2$-equivariant extension of $\tilde{\pi}_y^*(\overline{\calH})$ from $\widetilde{\calX}_3$ to
$\widehat{\calX}_3$. Hence the result follows from the uniqueness of the extension.
\end{proof}

The proof of the previous proposition explains that $\overline{\calH}$ is $B^3$-equivariant matrix factorization such that
$\overline{\calH}^\sharp=\bar{\pi}_{12}^*(\calF)\otimes\bar{\pi}_{23}^*(\calG).$ Thus we equipped the pull back
$\bar{\pi}_{12}^*(\calF)\otimes\bar{\pi}_{23}^*(\calG)$ with $B^3$-equivariant structure. We use notation
$$\SymbolPrint{(\bar{\pi}_{12}\otimes_B\bar{\pi}_{23})^*}(\calF\boxtimes\calG)\in \MF_{B^3}(\calX_3,\bar{\pi}_{13}^*(\Wr)).$$

We define binary operation on $\MF_{B^2}(\calXr_2,\Wr)$ by
$$\calF\bar{\star}\calG:=\bar{\pi}_{13 *}\CE_{\frn^{(2)}}\left((\bar{\pi}_{12}\otimes_B\bar{\pi}_{23})^*(\calF\boxtimes\calG)\right)^{T^{(2)}}.$$

The associativity of this binary operation follows from
\begin{corollary} For any $\calF,\calG\in\MF_{B^2}(\calXr_2,\Wr)$ we have
$$\Phi(\calF)\star\Phi(\calG)=\Phi(\calF\bar{\star}\calG).$$
\end{corollary}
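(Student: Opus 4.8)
The plan is to unwind both convolutions through the Knorrer-type functors already constructed, so that the asserted identity reduces to Proposition~\ref{prop: Knorrer factor} together with the proposition $\widehat{\calH}=\Phi^{(13)}(\overline{\calH})$ and one compatibility of the $\frn^{(2)}$-averaging with the push-forwards defining $\Phi$ and $\Phi^{(13)}$. First I would start from the definition of $\star$:
\[
\Phi(\calF)\star\Phi(\calG)=\pi_{13*}\Bigl(\CE_{\frn^{(2)}}\bigl(\pi_{12}^*(\Phi(\calF))\otimes_B\pi_{23}^*(\Phi(\calG))\bigr)^{T^{(2)}}\Bigr).
\]
By Proposition~\ref{prop: Knorrer factor} the tensor product inside is identified, through a finite sequence of invertible row transformations and hence a homotopy equivalence of $B^3$-equivariant matrix factorizations, with $\calH$. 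Contracting $\calH$ along the acyclic Koszul differential $D'$ attached to the factor $[Y_2-\Ad_{g_{23}}(Y_3)_{++},0]$ — a contraction which is $B^{(2)}$-equivariant because the variables $Y_2$ and the odd variables $\theta_{ij}$ being eliminated carry an honest $B^{(2)}$-action — replaces $\calH$ by the homotopy equivalent $\widehat{\calH}\in\MF_{B^2}(\widehat{\calX}_3,\hat{j}^*\circ\pi_{13}^*(W))$, and the proposition $\widehat{\calH}=\Phi^{(13)}(\overline{\calH})$ gives $\widehat{\calH}=\Phi^{(13)}(\overline{\calH})$ with $\overline{\calH}=(\bar{\pi}_{12}\otimes_B\bar{\pi}_{23})^*(\calF\boxtimes\calG)$. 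Since $\CE_{\frn^{(2)}}(-)^{T^{(2)}}$ and push-forward send homotopy equivalences to homotopy equivalences, we obtain
\[
\Phi(\calF)\star\Phi(\calG)=\pi_{13*}\Bigl(\CE_{\frn^{(2)}}\bigl(\Phi^{(13)}(\overline{\calH})\bigr)^{T^{(2)}}\Bigr).
\]

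The remaining step is to pull $\Phi^{(13)}$ out of $\pi_{13*}\circ\CE_{\frn^{(2)}}(-)^{T^{(2)}}$, i.e.\ to check that the square
\[
\pi_{13*}\circ\CE_{\frn^{(2)}}(-)^{T^{(2)}}\circ\Phi^{(13)}\;\simeq\;\Phi\circ\bar{\pi}_{13*}\circ\CE_{\frn^{(2)}}(-)^{T^{(2)}}
\]
commutes. Writing $\Phi^{(13)}=\tilde{j}^x_*\circ\tilde{\pi}_y^*$ and $\Phi=j^x_*\circ\pi_y^*$, this comes down to two facts. First, $\tilde{\pi}_y\colon\widetilde{\calX}_3\to\calXr_3$ and $\pi_y\colon\widetilde{\calX}_2\to\calXr_2$ fit into a commutative diagram with $\bar{\pi}_{13}$ and the induced map of auxiliary spaces, all of whose arrows are $B^{(2)}$-equivariant, so pullback commutes with $\bar{\pi}_{13*}$ and with $\CE_{\frn^{(2)}}(-)^{T^{(2)}}$. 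Second — and this is what makes the Knorrer correction invisible to the middle Chevalley--Eilenberg complex — the ideals $\tilde{I}_{--}$ and $I_{--}$ are generated by the entries of $\Ad_{g_1}^{-1}(X)_{--}$, which are $B^{(2)}$-invariant, and the extra odd generators $\theta_{ij}$ produced by $j^x_*$ (resp.\ $\tilde{j}^x_*$) together with the Knorrer correction differential $D^{kn}$ involve only the outer Borel factors $B^{(1)}$ and $B^{(3)}$; hence taking $\CE_{\frn^{(2)}}$-invariants commutes with the extension procedure of Lemma~\ref{lem: ext MF+eq} that defines the push-forward, and since $\tilde{j}^x$ is a section of a projection (Remark~\ref{rem: section}) the lift can be chosen $B^{(2)}$-equivariantly, so $\tilde{j}^x_*$ corresponds to $j^x_*$ under $\CE_{\frn^{(2)}}(-)^{T^{(2)}}$. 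Combining these gives the square, and with it
\[
\Phi(\calF)\star\Phi(\calG)=\Phi\Bigl(\bar{\pi}_{13*}\CE_{\frn^{(2)}}\bigl((\bar{\pi}_{12}\otimes_B\bar{\pi}_{23})^*(\calF\boxtimes\calG)\bigr)^{T^{(2)}}\Bigr)=\Phi(\calF\bar{\star}\calG),
\]
the last equality being the definition of $\bar{\star}$.

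I expect the only real obstacle to be bookkeeping rather than mathematics: one must carefully track the chain of auxiliary spaces $\widehat{\calX}_3$, $\widetilde{\calX}_3$, $\widetilde{\calX}_2$ and the maps among them, and verify once and for all that every correction differential introduced by $\Phi$ and $\Phi^{(13)}$ is supported in the outer Borel directions and in the $\tilde{I}_{--}$-Koszul directions, so that the middle averaging $\CE_{\frn^{(2)}}(-)^{T^{(2)}}$ genuinely commutes with $\Phi^{(13)}$ and intertwines $\pi_{13*}$ with $\bar{\pi}_{13*}$. Once this is in place no further computation is required, since all the substantive identifications have already been carried out in Proposition~\ref{prop: Knorrer factor} and in the proposition $\widehat{\calH}=\Phi^{(13)}(\overline{\calH})$.
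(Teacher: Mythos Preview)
Your proposal is correct and follows essentially the same route as the paper: reduce via Proposition~\ref{prop: Knorrer factor} and the identification $\widehat{\calH}=\Phi^{(13)}(\overline{\calH})$ to the compatibility $\hat{\pi}_{13*}\circ\Phi^{(13)}\simeq\Phi\circ\bar{\pi}_{13*}$, then verify that compatibility. The only difference is in the justification of this last square: you argue it by checking that the Knorrer correction data involve only the outer Borel factors, so $\CE_{\frn^{(2)}}(-)^{T^{(2)}}$ passes through $\Phi^{(13)}$; the paper instead writes the commutative diagram of spaces $\widehat{\calX}_3\leftarrow\widetilde{\calX}_3\to\calXr_3$ over $\calX_2\leftarrow\widetilde{\calX}_2\to\calXr_2$ and derives $j_*^x\circ\pi_y^*\circ\bar{\pi}_{13*}=\hat{\pi}_{13*}\circ\tilde{j}^x_*\circ\tilde{\pi}_y^*$ by base change, leaving the $\CE_{\frn^{(2)}}$ compatibility implicit.
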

\begin{proof}
There is a natural projection map $\hat{\pi}_{13}: \widehat{\calX}_3\to \calX_2$, $$\hat{\pi}_{13}(X,Y_1,g_1,g_2,g_3,Y_3)=(X,Y_1,g_1,g_3,Y_3).$$
Since we have homotopy $\hat{\calH}\sim \calH$, we conclude that
$$\Phi(\calF)\star\Phi(\calG)=\pi_{13*}(\calH)=\hat{\pi}_{13*}(\widehat{\calH})=\hat{\pi}_{13*}(\Phi^{(13)}(\overline{\calH})).$$
Thus to prove the statement we need to show that $\hat{\pi}_{13*}\circ \Phi^{(13)}=\Phi\circ\bar{\pi}_{13*}$ or equivalently
$\hat{\pi}_{13*}\circ \tilde{j}^x_*\circ \tilde{\pi}^*_y=j_*^x\circ\tilde{\pi}^*_y\circ\bar{\pi}_{13*}.$
The later identity holds because the map $\hat{\pi}_{13}\times\bar{\pi}_{13}: \widehat{\calX}_3\times\calXr_3\to \calX_2\times \widetilde{\calX}_2$ sends
the kernel of $\Phi^{(13)}$, which is $\tilde{j}^x\times \tilde{\pi}_y(\widetilde{\calX}_3)\subset  \widehat{\calX}_3\times\calXr_3$ to
the kernel of the map $\Phi$, which is $j^x\times \tilde{\pi}_y(\widetilde{\calX}_2)\subset  \calX_2\times\widetilde{\calX}_2$. More formally, we can proceed with the sequence of base isomorphisms as follows. We have the following diagram of maps:
\[
\begin{tikzcd}
\widehat{\calX_3}\arrow[d,"\hat{\pi}_{13}"]&\widetilde{\calX}_3\arrow[r,"\tilde{\pi}_y"]\arrow[d,dashed,"\tilde{\pi}_{13}"]\arrow[l,"\tilde{j}^x"]
&\overline{\calX}_3\arrow[d,"\bar{\pi}_{13}"]\\
\calX_2&\widetilde{\calX}_2\arrow[r,"\tilde{\pi}_y"]\arrow[l,"j^x"]&\overline{\calX}_2
\end{tikzcd}
\]
where the dashed arrow map $\tilde{\pi}_{13}$ is a unique smooth map such that the diagram commutes.
Then we can use change of base isomorphism to show:
\[
j_*^x\circ\tilde{\pi}_y^*\circ \bar{\pi}_{13*}=j_*^x\circ \tilde{\pi}_{13*}\circ\tilde{\pi}^*_y=\hat{\pi}_{13*}\circ\tilde{j}^x_*\circ\tilde{\pi}_y^*
\]

\end{proof}

Before explaining the proof of the proposition~\ref{prop: Knorrer factor} let us discuss of the elementary transformations in the context of our extension lemma~\ref{lem: ext MF+eq}.

\subsection{Row operations for generalized equivariant Koszul matrix factorizations}\label{ssec: row ops}
We use notations of the lemma~\ref{lem: ext MF+eq}
here and we also assume that the complex $(\calF_\bullet,d^+)$ is the Koszul complex tensored with matrix factorization $(M,D,\partial)$:
$$\calF_\bullet=\tilde{M} \otimes \Lambda^\bullet V, \quad d^+=\sum_{j=1}^n f_i\theta_i,$$
where $\theta_i$ form a basis of $V$ and $f_1,\dots,f_n$ form a regular sequence in $\CC[\calZ]$.
We call such matrix factorizations generalized equivariant Koszul matrix factorizations.

In these notations the corrective differentials that are computed by the iterative procedure from lemma~\ref{lem: ext MF+eq} are of the form
$$ d^-=\sum_{\vec{i}} d^-_{\vec{i}}\frac{\partial}{\partial \theta_{\vec{i}}}, \quad \partial^-=\sum_{\vec{i}}\partial^-_{\vec{i}} \frac{\partial}{\partial \theta_{\vec{i}}},$$
where $\theta_{\vec{i}}=\theta_{i_1}\dots \theta_{i_k}$ and $d^-_{\vec{i}}\in Hom_{\CC[\calZ]}(\tilde{M},\tilde{M})$,
$\partial_{\vec{i}}^-\in Hom(\Lambda^\bullet \frn,\Lambda^{<\bullet}\frn)\otimes Hom_{\CC[\calZ]}(\tilde{M},\tilde{M})$.

Let us use notation $D_{tot}:=d^++D+\partial+d^-+\partial^-+d_{ce}$.
There are three types of (generalized) row transformations for the complex $(\calF_\bullet,d^++D+d^-,\partial+\partial^-)$. The first type is given by automorphism
 $\exp(\phi\frac{\partial}{\partial\theta_k})$, $\phi\in \Hom_{\CC[\calZ]}(\calF,\calF)$ is an odd automorphism and the affect of this automorphism on the differential is:
 \begin{multline}
  \exp(-\phi\frac{\partial}{\partial \theta_k})\circ D_{tot}\circ \exp(\phi\frac{\partial}{\partial \theta_k})= \left(D+d_-+d_++[\phi,D]_+\frac{\partial}{\partial \theta_k}+f_k\phi\right)\\
 +d_{ce}+\partial+\partial^-+[\phi,d_{ce}+\partial+\partial_-]_+\frac{\partial}{\partial\theta_k}.
\end{multline}
That is the result of the row transformation of the first type is the equivariant matrix factorization
$$ (\tilde{M},D+d_-+d_++[\phi,D]_+\frac{\partial}{\partial \theta_k}+f_k\phi,\partial+\partial^-+[\phi,d_{ce}+\partial+\partial_-]_+\frac{\partial}{\partial\theta_k}).$$

The second type elementary row transformation is given by conjugation by the automorphism $\exp(\phi\theta)$. The result of conjugation of $D_{tot}$ by this automorphims is the following expression
$$ \exp(-\phi\theta_k)\circ D_{tot}\circ\exp(\phi\theta_k)=D_{tot}+D'+\partial'.$$
\begin{equation}\label{eq: sec row 1}
D'=[\phi,D]_+\theta_k+\sum_{k\in \vec{i} }d^-_{\vec{i}}\phi\frac{\partial}{\partial\widehat{\theta}_{\vec{i}}}+\theta_k\sum_{\vec{i}}[\phi,d^-_{\vec{i}}]\frac{\partial}{\partial\theta_{\vec{i}}},
\end{equation}
\begin{equation}\label{eq: sec row 2}
\partial'=\sum_{k\in \vec{i} }\partial^-_{\vec{i}}\phi\frac{\partial}{\partial\widehat{\theta}_{\vec{i}}}+\phi d_{ce}^k+
\theta_k(\sum_i[\phi,d_{ce}^i]\frac{\partial}{\partial\theta_i}+[\phi,\partial]_+ +\sum_{\vec{i}}[\phi,\partial^-_{\vec{i}}]\frac{\partial}{\partial\theta_{\vec{i}}}),
\end{equation}
where $\partial\widehat{\theta}_{\vec{i}}=\partial \theta_{i_1}\dots \widehat{\partial\theta_k}\dots\partial_{i_m}$, $\vec{i}=(i_1,\dots,i_m)$ and
$d_{ce}=\sum_i d_{ce}^i\frac{\partial}{\partial \theta_i}.$

The computation above indicates that if last summands in (\ref{eq: sec row 1}) and (\ref{eq: sec row 2}) vanish
\begin{equation}\label{eq: adm row tr}
\sum_{\vec{i}}[\phi,d^-_{\vec{i}}]\frac{\partial}{\partial\theta_{\vec{i}}}=0,\quad \sum_i[\phi,d_{ce}^i]\frac{\partial}{\partial\theta_i}+[\phi,\partial]_+ +\sum_{\vec{i}}[\phi,\partial^-_{\vec{i}}]\frac{\partial}{\partial\theta_{\vec{i}}}=0
\end{equation}
 then result of the row transformation is the equivariant matrix factorization:
 $$(\tilde{M},d^++D+d^-+[\phi,D]_+\theta_k+\sum_{k\in \vec{i} }d^-_{\vec{i}}\phi\frac{\partial}{\partial\widehat{\theta}_{\vec{i}}},\partial+\partial^-+\sum_{k\in \vec{i} }\partial^-_{\vec{i}}\phi\frac{\partial}{\partial\widehat{\theta}_{\vec{i}}}+\phi d_{ce}^k)$$

Finally the third type of row operation is given by automorphism $\exp(c\theta_i\frac{\partial}{\partial\theta_j})$ where $c\in \Hom_{\CC[\calZ]}(\calF,\calF)$ is an even morphism. This type of row transformation
was discussed in the subsection~\ref{ssec: str eq Kosz} for strongly equivariant matrix factorizations.
 We will use these type row operations only for strongly equivariant matrix factorization and omit a discussion
about complications that arise for weakly  equivariant generalized Koszul complexes.

The key feature of the row transformations discussed here is that differentials that encode the equivariant structure of the matrix factorization do not
contribute to the change of the non-equivariant shadow of the matrix factorization. Other words, if $T$ is one the transformations discussed above then
$$ T(\calF)^\sharp=T(\calF^\sharp).$$

\subsection{Proof of proposition~\ref{prop: Knorrer factor}}


Let us expand the definition of convolution to obtain the following expression for $\pi_{12}^*(\Phi(\calF)^\sharp)\otimes\pi_{23}^*(\Phi(\calG)^\sharp)$:
\begin{multline}\label{eq: long MF}
\pi_{12}^*(\bar{\pi}^*(\calF^\sharp)\otimes CC)\otimes\pi_{23}^*(\bar{\pi}^*(\calG^\sharp)\otimes CC)\\
=(\pi_{12}\circ\bar{\pi})^*(\calF^\sharp)\otimes (\pi_{23}\circ\bar{\pi})^*(\calG^\sharp)\otimes \pi_{12}^*(CC)\otimes\pi^*_{23}(CC).
\end{multline}
The last factor is the Koszul complex $\begin{bmatrix} (\tilde{X}^2_{--} )& (Y_{2}-\Ad_{g_{23}}(Y_3)_{++})\end{bmatrix}$ hence we can use
row transformations of second type to eliminate variables $Y_2$  from the matrix factorization (\ref{eq: long MF}).
More precisely, use the row transformations of the second type to eliminate $Y_2$ from
the first factor $(\pi_{12}\circ \bar{\pi})^*(\calF)$  and from the second to last factors $\pi_{12}^*(CC)$.

 We can apply the second type of row transformation because
 $(\pi_{12}\circ \bar{\pi})^*(\calF^\sharp)$ and $\pi_{12}^*(CC)$ are the factors of $\pi^*_{12}(\Phi(\calF)^\sharp)$, thus differentials in these matrix factorizations
 commute with the differentials from the factor $\pi^*_{23}(\Phi(\calG)^\sharp)$ and hence the admissibility condition (\ref{eq: adm row tr}) is satisfied.
 On the other hand,  the second factor $(\pi_{23}\circ\bar{\pi})^*(\calG^\sharp)$ has no dependence on $Y_2$.

The result of elimination of $Y_2$ from the factor before the last  is $$CC_{123}=[\tilde{X}_{1,--}, Y_1-\Ad_{g_{12}}(\Ad_{g_{23}}(Y_3)_{++})_{++}]$$ and the first two factors are
\begin{gather*}
\pi^*_{12}(\bar{\pi}^*(\calF))|_{Y_2=\Ad_{g_{23}}(Y_3)_{++}}=\calF|_{Y=\Ad_{g_{23}}(Y_3)_{++},X_+=\tilde{X}_{1,+}}=\bar{\pi}^*_{con}(\bar{\pi}_{12}^*(\calF))\\
\pi^*_{23}(\bar{\pi}^*(\calG))|_{Y_2=\Ad_{g_{23}}(Y_3)_{++}}=\calG|_{Y=Y_3,X_+=\Ad_{g_{12}}^{-1}(\tilde{X}_{1})_+},
\end{gather*}
where $(X_+,g_{12},g_{23},Y)$ are coordinates on $\calXr_3$.
After the elimination the only term that has dependence on $Y_2$ is the Koszul complex  $\begin{bmatrix} G& (Y_{2}-\Ad_{g_{23}}(Y_3)_{++})\end{bmatrix}$ and since $\pi_{13}^*(W)$ does not depend
on $Y_2$ and the correction correction differentials $\partial$, which could depend on $Y_2$ does not contribute the square of the total differential, the element $G$ must vanish.

As the next step we use the complex $CC_{123}$ to eliminate dependence on $\tilde{X}_{1,--}$ from the other terms in the tensor product (\ref{eq: long MF}). For that
we use the first type of row transformation and we do not need to worry about the admissibility of our eliminations.
The term $\bar{\pi}^*_{con}(\bar{\pi}_{12}^*(\calF))$ has no dependence on $\tilde{X}_{1,--}$ so it is not affected by this elimination. On the other hand the second
term becomes:
$$ \calG|_{Y=Y_3,X_+=\Ad_{g_{12}}^{-1}(\tilde{X}_{1})_+, \tilde{X}_{1,--}=0}= \calG|_{Y=Y_3,X_+=\Ad_{g_{12}}^{-1}(\tilde{X}_{1,+})_+}=\bar{\pi}^*(\bar{\pi}_{23}^*(\calG)).$$

After this round of elimination the only dependence on $\tilde{X}_{1,--}$ in (\ref{eq: long MF}) comes from the Koszul complex $[\tilde{X}_{1,--},G']$ hence $G'$ must be the derivative of $\pi_{13}^*(W)$ by
$\tilde{X}_{1,--}$ that is $G'=Y_1-\Ad^{-1}_{g_{13}}(Y_3).$ That is  after the eliminations $Y_2,\tilde{X}_{1,--}$ we obtain
the complex
\begin{equation}\label{eq: last step red}
 \bar{\pi}^*\left(\bar{\pi}_{12}^*(\calF)\otimes\bar{\pi}_{23}^*(\calG)\right)\otimes [Y_2 -\Ad_{g_{23}}(Y_3)_{++},0]\otimes \pi_{13}^*(CC)
\end{equation}
as we claimed in the proposition.



\def\frp{\mathfrak{p}}
\section{Induction functors}\label{sec: inc}

The main goal of this section is to construct induction functors:
$$ \SymbolPrint{\ind_k}: \MF_{B_k^2}(\calX_2^\circ(G_k),W)\times \MF_{B_{n-k}^2}(\calX^\circ_2(G_{n-k}),W)\to \MF_{B_n^2}(\calX_2^\circ(G_n),W),$$
$$ \SymbolPrint{\indb_k}: \MF_{B_k^2}(\calXr_2(G_k),\Wr)\times \MF_{B_{n-k}^2}(\calXr_2(G_{n-k}),\Wr)\to \MF_{B_n^2}(\calXr_2 (G_n),\Wr).$$
 and to show that these functors are intertwined by the Knorrer functors.
\begin{proposition}\label{prop: ind phi} We have
$$ \ind_k\circ (\Phi_k\times \Phi_{n-k} ) =\Phi_n\circ \ind_k.$$
\end{proposition}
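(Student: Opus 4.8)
The plan is to mimic the proof of the corollary $\Phi(\calF)\star\Phi(\calG)=\Phi(\calF\bar{\star}\calG)$: reduce the identity to a base-change relation among the explicit pull-back and push-forward functors out of which both sides are assembled, and then promote the resulting equality of non-equivariant shadows to an equivalence of equivariant matrix factorizations via the uniqueness lemmas. (On the right-hand side of the proposition $\ind_k$ denotes the reduced functor $\indb_k$.) Recall that $\Phi_n=j^x_*\circ\pi_y^*$, where $\pi_y\colon\widetilde{\calX}_2(G_n)\to\calXr_2(G_n)$ and $j^x\colon\widetilde{\calX}_2(G_n)\hookrightarrow\calX_2(G_n)$ is the embedding of the $B^2$-invariant subvariety cut out by the regular sequence of entries of $\tilde{X}_{1,--}$ (Proposition~\ref{prop: Knorrer}). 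The induction functors $\ind_k$ and $\indb_k$ are, by construction, of the same pull--push--Koszul shape: a pull-back along a projection that forgets the off-block-diagonal data, a twist by a Koszul matrix factorization encoding the off-diagonal components of the potential (a regular sequence, because $W$, resp.\ $\Wr$, is nondegenerate quadratic in those directions), and a push-forward along a regular embedding of the block-parabolic locus into $\calX_2^\circ(G_n)$, resp.\ $\calXr_2(G_n)$, in the sense of Subsection~\ref{ssec: eq push f}.

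First I would record the relevant diagram of auxiliary spaces, interpolating between the $G_k\times G_{n-k}$ level and the $G_n$ level on the $\calXr$-side and on the $\calX$-side, with the Knorrer spaces $\widetilde{\calX}$ inserted. All downward and horizontal arrows in this diagram are either smooth projections (the $\pi_y$'s and the off-diagonal forgetful maps) or regular closed embeddings (the $j^x$'s and the parabolic inclusions), and --- this is the combinatorial heart of the argument --- the two ideals in play, the Knorrer ideal generated by the entries of $\tilde{X}_{1,--}$ and the off-block-diagonal ideal of the induction, are complementary: their sum is generated by the concatenated regular sequence, and cutting out by them in either order produces the same subvariety. This transversality is the same bookkeeping with the $\Tr(AB)=\Tr(AB_{++})+\Tr(A_+B)-\dots$ identities that appears in Proposition~\ref{prop: Knorrer} and Proposition~\ref{prop: Knorrer factor}; concretely, whether one first applies $\Phi_k\times\Phi_{n-k}$ and then inducts, or first inducts on $\calXr$ and then applies $\Phi_n$, one ends up pulling back the same factor $\calF_1\boxtimes\calF_2$ of the Levi matrix factorization and tensoring with the same correction Koszul complex.

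Next I would invoke base change. The equivariant push-forward set up in Subsections~\ref{ssec: noneq push f}--\ref{ssec: eq push f} commutes with smooth pull-back whenever the maps form a fiber square of affine varieties; this is exactly the ``change of base isomorphism'' used in the proof of the corollary above to establish $\hat\pi_{13*}\circ\Phi^{(13)}=\Phi\circ\bar\pi_{13*}$. Applying it to the diagram of the previous paragraph yields the equality of non-equivariant shadows $\bigl(\ind_k\circ(\Phi_k\times\Phi_{n-k})(\calF_1,\calF_2)\bigr)^\sharp=\bigl(\Phi_n\circ\indb_k(\calF_1,\calF_2)\bigr)^\sharp$, together with a matching identification of the underlying generalized Koszul data (the positive differential $d^+$ on a fixed regular sequence). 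By the observation of Subsection~\ref{ssec: row ops} that the weakly-equivariant correction differentials do not enter the $\sharp$-shadow, this diagram chase is in fact carried out at the level of honest complexes of modules.

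Finally I would promote this to an equivalence in $\MF_{B^2}$. Both $\ind_k\circ(\Phi_k\times\Phi_{n-k})(\calF_1,\calF_2)$ and $\Phi_n\circ\indb_k(\calF_1,\calF_2)$ are generalized equivariant Koszul matrix factorizations with the same positive part and, by the previous step, the same $\sharp$-part; by Lemma~\ref{lem: uniq MF+} and its equivariant refinement Lemma~\ref{lem: uniq MF+eq} their negative corrections $d^-$ and equivariance corrections $\partial^-$ are then determined up to a canonical automorphism $\Psi=\mathrm{Id}+(\text{lower order})$, giving the desired canonical homotopy equivalence. The main obstacle, and the only real content, is the middle step: checking that the combined Knorrer-plus-induction ideal is generated by a regular sequence (so that all the extension lemmas of Section~\ref{sec: eq push} apply and the push-forwards are well defined in either order) and that the fiber-square hypotheses genuinely hold for the particular maps produced in the construction of $\ind_k$ and $\indb_k$; once that transversality is in place the equivariant upgrade via uniqueness is automatic.
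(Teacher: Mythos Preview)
Your proposal is correct and follows essentially the same strategy as the paper: unfold both sides into compositions of pull-backs and push-forwards and match them via base change over appropriate fiber squares. The paper's argument is slightly more streamlined than yours, however. Rather than first establishing equality of non-equivariant shadows and then invoking Lemmas~\ref{lem: uniq MF+} and~\ref{lem: uniq MF+eq} to upgrade to an equivariant equivalence, the paper works directly at the level of the equivariant functors throughout: it writes the left side as $i_{k*}\circ p_k^*\circ (j^x\times j^x)_*\circ (\pi_y\times\pi_y)^*$ and the right side as $j^x_*\circ \pi_y^*\circ\bar{i}_{k*}\circ \bar{p}_k^*$, sets up two explicit fiber squares (both with apex $\frb_n\times P_k\times\frn_n^2$), applies base change once on each side, and then simply observes that the resulting compositions $(i_k\circ j^x)_*\circ(\pi_y\times\pi_y\circ p_k)^*$ and $(j^x\circ i_k)_*\circ(\bar p_k\circ\pi_y)^*$ agree because the underlying maps coincide. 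Your extra uniqueness step is not wrong, but it is redundant once one accepts that the equivariant push-forward of Subsection~\ref{ssec: eq push f} already satisfies base change as a functor; the paper's version avoids the detour through $(-)^\sharp$ and the Koszul bookkeeping you sketch in your first paragraph.
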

We also show that the induction functor is the convolution algebra homomorphism.
\begin{proposition}\label{prop: ind homo} For any
  \[\calF_1,\calG_1\in \MF_{B_k^2}(\calX^\circ_2(G_k),W), \quad \calF_2,\calG_2\in \MF_{B_{n-k}^2}(\calX^\circ_2(G_{n-k}),W).\]
  We have
$$ \ind_k(\calF_1,\calF_2)\star\ind_{k}(\calG_1,\calG_2)=\ind_k(\calF_1\star\calG_1,\calG_1\star\calG_2).$$
\end{proposition}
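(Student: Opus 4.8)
The plan is to realize both sides of the identity as a single push-forward from a common ``parabolic triple space'' and to identify the two resulting matrix factorizations through a chain of base-change isomorphisms, in the spirit of the proof of associativity of $\star$ and of the corollary $\Phi(\calF)\star\Phi(\calG)=\Phi(\calF\bar{\star}\calG)$ above. First I would unwind the definitions: by the construction of $\star$ on $\calX_2^\circ(G_n)$ through $\calX_3^\circ(G_n)$ and the projections $\pi_{ij}^\circ$, the left-hand side is the push-forward along $\pi_{13}^\circ$ of $\CE_{\frn^{(2)}}\bigl((\pi_{12}^\circ)^*(\ind_k(\calF_1,\calF_2))\otimes(\pi_{23}^\circ)^*(\ind_k(\calG_1,\calG_2))\bigr)^{T^{(2)}}$. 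Substituting the definition of $\ind_k$ --- pull $\calF_1\boxtimes\calF_2$ (resp.\ $\calG_1\boxtimes\calG_2$) back along the projection from the parabolic correspondence space, tensor with the induction Koszul factor, and push forward to $\calX_2^\circ(G_n)$ --- naturally leads to a ``parabolic'' triple space $\widetilde{\calX}_3$ which maps to $\calX_3^\circ(G_n)$ (with a regular Koszul factor) and also carries compatible projections to $\calX_3^\circ(G_k)$ and $\calX_3^\circ(G_{n-k})$ obtained by restriction to the two block-diagonal Levi factors.

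Next I would check that the two squares of affine varieties in play are Cartesian --- the one relating $(\pi_{12}^\circ,\pi_{23}^\circ)$ on $\calX_3^\circ(G_n)$ to their parabolic lifts, and the one relating the induction projection on $\calX_3^\circ(G_n)$ to the product of the induction projections on $\calX_3^\circ(G_k)$ and $\calX_3^\circ(G_{n-k})$ --- and that the defining ideals of the closed embeddings occurring are generated by regular sequences, which is precisely the hypothesis under which the push-forward of subsection~\ref{ssec: eq push f} is defined and satisfies base change (visible from the explicit Koszul presentation, as in the proof of the corollary $\Phi(\calF)\star\Phi(\calG)=\Phi(\calF\bar{\star}\calG)$). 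Combined with the obvious compatibility of $\CE_{\frn^{(2)}}(-)^{T^{(2)}}$ with pull-back along $T^{(2)}$- and $\frn^{(2)}$-equivariant maps, this lets me commute the Chevalley--Eilenberg reduction past the push-forwards. Iterating these moves on both sides, each side becomes the push-forward along $\widetilde{\calX}_3\to\calX_2^\circ(G_n)$ of $\CE_{\frn^{(2)}}\bigl((\text{pull-back of }\calF_1\star\calG_1)\boxtimes(\text{pull-back of }\calF_2\star\calG_2)\otimes(\text{Koszul factors})\bigr)^{T^{(2)}}$, where the inner convolutions on the two Levi factors have been recognized using the definition of $\star$ for $G_k$ and $G_{n-k}$; by the uniqueness of the push-forward extension (Lemma~\ref{lem: uniq MF+eq}) the two objects of $\MF_{B_n^2}(\calX_2^\circ(G_n),W)$ thus obtained agree, which is the claim.

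The hard part will be bookkeeping the weak $B^2$-equivariant structures --- the correction differentials $\partial_l,\partial_m,\partial_r$ --- through all of this. As in Proposition~\ref{prop: Knorrer factor}, the induction projections are not $B_n$-equivariant on the nose, so on each side the $B_n^2$-structure has to be built with the extension Lemmas~\ref{lem: ext MF+eq} and~\ref{lem: uniq MF+eq}; I would check that the Borel subgroups are compatibly nested, $B_k\times B_{n-k}\subset B_n$ under the block embedding, so that the $T^{(2)}$-invariants and the $\frn^{(2)}$-Chevalley--Eilenberg reductions computed for $G_n$ restrict correctly to those for $G_k$ and $G_{n-k}$. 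The main nuisance is the number of auxiliary variables in play --- the induction Koszul complex, the two convolution Koszul complexes, and, should one route the proof through the Knorrer picture on $\calX_2$ rather than the slice $\calX_2^\circ$, the elimination complexes of Proposition~\ref{prop: Knorrer factor} --- so that the row operations of subsection~\ref{ssec: row ops} must be arranged to eliminate these variables in a compatible order on both sides. Once the spaces and group actions are correctly organized, each individual step is a base-change or uniqueness statement already established in sections~\ref{sec: eq push}--\ref{sec:convolution}.

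Finally, I expect one can shorten the argument considerably by first applying Proposition~\ref{prop: ind phi} together with the corollary $\Phi(\calF)\star\Phi(\calG)=\Phi(\calF\bar{\star}\calG)$ to reduce the claim to the analogous identity for $\indb_k$ and $\bar{\star}$ on the reduced spaces $\calXr_2$, where the convolution carries one fewer Koszul factor and the induction functor has a simpler description; then only that reduced identity needs the base-change bookkeeping sketched above.
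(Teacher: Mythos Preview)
Your overall architecture---realize both sides over a common parabolic triple space and match them via base change plus the uniqueness Lemma~\ref{lem: uniq MF+eq}---is close in spirit to what the paper does, but you have glossed over the one genuinely new ingredient. The convolution on the left-hand side lives on $\calX_2^\circ(G_n)$, so the middle reduction is $\CE_{\frn_n^{(2)}}(-)^{T_n^{(2)}}$; on the right-hand side the two inner convolutions are on $\calX_2^\circ(G_k)$ and $\calX_2^\circ(G_{n-k})$, so one is taking $\CE_{\frn_k^{(2)}\oplus\frn_{n-k}^{(2)}}(-)^{T_k^{(2)}\times T_{n-k}^{(2)}}$. Your sentence ``the Borel subgroups are compatibly nested \dots\ so that the \dots\ Chevalley--Eilenberg reductions computed for $G_n$ restrict correctly to those for $G_k$ and $G_{n-k}$'' is precisely where the work hides: $\frn_n\neq\frn_k\oplus\frn_{n-k}$, the discrepancy being the abelian ideal $\Lie(I'_k)$ of strictly upper-triangular matrices supported in the off-diagonal $(k,n{-}k)$ block. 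No Cartesian-square base change removes this; it is a mismatch of the groups one is quotienting by, not of the spaces.

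The paper's proof is more hands-on than yours. It first writes the left-hand side explicitly as a big Koszul matrix factorization on $\calX_3^\circ(G_n)$, then performs several rounds of row operations (subsection~\ref{ssec: row ops}) to eliminate the variables $\Pi_{--}^k(X)$ and $\Pi_{--}^k(g_{23})$, contracting down to a matrix factorization $C_3$ on an auxiliary space $\calZ_{con}(G_n)$ whose only $\frn_n^{(2)}$-nontrivial factor is a single copy of $P_k$. Only at this point does the comparison of CE reductions become tractable: one applies the Hochschild--Serre spectral sequence for $0\to\Lie(I'_k)\to\frn_n\to\frn_k\oplus\frn_{n-k}\to 0$, reducing $\CE_{\frn_n^{(2)}}$ to $\CE_{\frn_k^{(2)}\oplus\frn_{n-k}^{(2)}}\circ\CE_{\Lie(I'_k)}$, and then invokes a separate proposition computing $\CE_{\Lie(I'_k)}(\CC[P_k])=\CC[G_k\times G_{n-k}]$ concentrated in degree zero (proved by an explicit Poincar\'e-lemma argument with the vector fields coming from the $I'_k$-action). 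After that the left-hand side is visibly the induction of the Levi convolution. Your outline needs this Lie-algebra cohomology input somewhere; it is not subsumed by the base-change and uniqueness lemmas of sections~\ref{sec: eq push}--\ref{sec:convolution}, and routing through the reduced spaces as in your last paragraph does not avoid it either.
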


Finally, we define the generators of the braid group on two strands and using the induction functor explain how we construct the generators for the braid group on
arbitrary number of strands.

\subsection{Construction of the induction functor}
Let us set notations for the parabolic subgroups of $G_n$. We denote by $P_k\subset G_n$  the subgroup with Lie algebra generated by
$E_{ij}$, $i\le j$ and $E_{i+1,i}$, $i\ne k$. We denote the corresponding Lie algebra as $\SymbolPrint{\frp_k}:=\Lie(P_k)$.
There is a natural homomorphism $\SymbolPrint{i_k}: P_k\to G_n$.

We define $\calX^\circ_2(P_k)\subset\calX^\circ_2(G_n)$ as a locus of $(X,g_{12},Y_1,Y_2)$ such that $g_{12}\in P_k$ and $X\in \frp_k$.
Let $i_k$ be the natural  map $\calX^\circ_2(P_k)\to \calX_2^\circ(G_n)$. The image of $i_k$ is defined by the equations
$\SymbolPrint{\Pi_{--}^k}(g_{12})=0$, $\Pi_{--}^k(X)=0$  where $\Pi^k_{--}$ is the projection from the space of $n\times n$ matrices to its $ij$ entries $i\in [k+1,n]$, $j\in [1,k]$.

The ideal generated by $\Pi_{--}^k(g_{12})$, $\Pi_{--}^k(X)$  is $B_n^2$-invariant and these elements form a regular sequence. Hence there is a well-defined functor
$$ i_{k*}:\MF_{B^2}(\calX^\circ_2(P_k),W)\to \MF_{B^2}(\calX_2^\circ(G_n),W)$$
induced by the embedding.

Let us denote by $\SymbolPrint{\Pi_{++}^k}:\frg\to \frg$ the linear projection from the space of $n\times n$ matrices to its  $ij$ entries $i\in [1,k]$, $j\in [k+1,n]$. In particular
the image of $(\mathrm{Id}-\Pi_{++}^k)(\frp_k)$ is naturally identified with $\frg_k\oplus\frg_{n-k}$, we denote this map by $p_k:\frp_k\to \frg_k\oplus \frg_{n-k}$.
Similarly, we have a map $\SymbolPrint{p_k}:P_k\to G_k\times G_{n-k}$.

By applying $p_k$ to all factors of the space $\calX_2^\circ(P_k)$ we obtain the map
$p_k:\calX^\circ_2(P_k)\to \calX^\circ_2(G_k)\times\calX^\circ_2(G_{n-k})$.
Using homomorphism $p_k\colon B_n\to B_{k,n-k}$ we can extend $\frn_k^2\times \frn_{n-k}^2$-equivariant structure of $\calX_2(G_k)\times \calX_2(G_{n-k})$ to the
$\frn_n^2$ equivariant structure and the map $p_k$ is $\frn_n^2$-equivariant in this setting.
Moreover, since $T_n=T_k\times T_{n-k}\times \mu$ where $\mu$ is an finite subgroup of the group of roots of unity, we have the following functor:
$$ p_k^*: \MF_{B_k^2}(\calX^\circ_2(G_k),W)\times \MF_{B_{n-k}^2}(\calX^\circ_2(G_{n-k},)W)\to \MF_{B_n^2}(\calX^\circ_2(P_n),W).$$


The induction functor is defined as composition of maps:
$$  \ind_k:=i_{k*}\circ p_k^*$$

\subsection{Computational aspects of push-forward}\label{ssec: comp ins}
In our general construction of the push forward $i_{k*}(\calF)$, $\calF=(M,D,d_l,d_r)$  we make a choice of the extension $\tilde{D}$ of the differential $D$
from sub-variety $\calX^\circ_2(P_k)$ to $\calX^\circ_2(G_n)$. However in our case we have a canonical choice of
the extension because we could use the projection map $\Lie(G_n)\to \Lie(P_{k})$ to construct such an extension and use the remark~\ref{rem: section}, the details are provided below.
Morally, we would like to interpret $i_{k}$ as a section of some map and it is possible if we enlarge our space slightly.

In this subsection we use notation $\adj(A)$ for the adjoint of matrix $A$.
Let us introduce space $\SymbolPrint{\calX^\circ_2(G_n)^\flat}:=\frg_n\times \Lie(G_n) \times \frn_n^{2}$. There is
a natural embedding $$j_n: \calX_2^\circ(G_n)\to \calX_2^\circ(G_n)^\flat$$ induced by the embedding $G_n\to \mathfrak{gl}_n$. If we define
\[W^\flat(X,g,Y_1,Y_2)=
\Tr(X( Y_1\det- gY_2\adj(g))),\quad \det=\det(g)\] then we have
$j^*_n(W\cdot\det)=W^\flat$ and the map \[j^*_n: \MF_{B^2_n}(\calX_2^\circ(G_n)^\flat,W^\flat)\to \MF_{B^2_n}(\calX_2^\circ(G_n)^\flat,W'),\]
where \(W'=W\cdot\det\), is well defined.

Similarly, we define  $\calX^\circ_2(P_k)^\flat$ and $\calXr^\circ_2(P_k)^\flat$.
  The natural  map $$\SymbolPrint{j^k_n}: \calX^\circ_2(P_k)\to \calX^\circ_2(P_k)^\flat$$
induces the pull back map $$j^{k*}_{n}: \MF_{B^2}(\calX^\circ_2(P_k)^\flat,W^\flat)\to \MF_{B^2}(\calX^\circ_2(P_k),W').$$

Since \(W'\) and \(W\) differ by the non-vanishing factor \(\det\), there is an embedding of categories \(\det: \MF_{B^2}(\calX^{\circ}_2,W)\rightarrow\MF_{B^2}(\calX^{\circ}_2,W')\):
\[\det: [M_0\xrightarrow{D_0} M_1\xrightarrow{D_1} M_1]\mapsto [M_0\xrightarrow{D_0\cdot \det} M_1\xrightarrow{D_1}M_0]. \]
We the inverse functor we denote by \(\det^{-1}\) and we use the same notation for the functors between the categories related to the parabolic versions of
the categories.

There is no natural map between $\calX^\circ_2(G_n)$ and $\calX^\circ_2(P_k)$ but there is a natural linear projection $\SymbolPrint{\rho_k}: \calX^\circ_2(G_n)^\flat\to
\calX_2^\circ(P_k)^\flat$ and
there is a pull back map $$\rho_k^*: \MF(\calX_2^\circ(P_k)^\flat,W^\flat)\to \MF(\calX_2^\circ(G_n)^\flat,\rho_k^*(W^\flat)).$$

There is also a natural inclusion map $i_k: \calX^\circ_2(P_k)^\flat\to \calX^\circ_2(G_n)^\flat$ which is a section of the projection
$\rho_k$: $\rho_k\circ i_{k}=id$.
Hence there is $\calF'\in \MF_{B^2_n}(\calX^\circ_2(P_k)^\flat,W^\flat)$ such that $j^{k*}_n(\calF')=\det(\calF)$
and we define \[\tilde{\calF}=(\tilde{M},\tilde{D},\tilde{\partial}_l,\tilde{\partial}):=\mathrm{det}^{-1}\circ j^{*}_n\circ \rho_k^*(\calF')\in \MF(\calX^\circ_2(G_n),j_n^{*}\circ{\rho}_{k}^*(W^\flat)\cdot\mathrm{det}^{-1}).\]
By construction we have $\tilde{\calF}|_{\calX^\circ_2(P_k)}=\calF$. Thus the matrix factorization  $\tilde{\calF}$ provides the first step of the construction of the push forward
$i_{k*}(\calF)$ from section~\ref{ssec: eq push f}. By completing the second step of the construction from the section~\ref{ssec: eq push f} we obtain the matrix factorization:
$$ i_{k*}(\calF)=(\tilde{M},d_K+\tilde{D}+d^-,\partial_l,\partial_r),$$
where $d_K: \tilde{M}\otimes \Lambda^{\bullet} V\to \tilde{M}\otimes \Lambda^{\bullet-1} V$ is the Koszul differential for the
defining ideal of $\calX_2^\circ(P_k)\subset \calX_2^\circ(G_n)$ and $d^-=\sum_{i<j} d^-_{ij}$,
$d^-_{ij}:\tilde{M}\otimes\Lambda^i V\to \tilde{M}\otimes \Lambda^{j} V$ is the complementary differential: $(d_k+\tilde{D}+d^-)^2=\Wr$

Note that because of our choice of extension $\tilde{\calF}$ the differentials $d^-_{ij}$ vanish unless $j=i+1$, we give more details below.
Define a linear operator operator $\Pi^k_{+}:\frg_n\to \mathfrak{p}_k$ which is a natural projection. Respectively, we have $\Pi_{--}^k(A)=A-\Pi_+^{k}(A)$.
The difference $\delta_kW:=W-j_n^*\circ\rho_n^{k*}(W^\flat)\cdot \det^{-1}$ could be written as sum of two terms:
$$\delta_kW_1:=\Tr(\Pi_{--}^k(X)\cdot \Pi_{++}^k(Y_1-\Ad_{g_{12}}(Y_2))),$$
$$\delta_kW_2:=\Tr(\Pi_+^k(X)\cdot
\Pi^k_{-}(\Ad_{g_{12}}(Y_2)-\rho_k(g_{12})(Y_2)\adj(\rho_k(g_{12}))\det(g_{12})^{-1}  ),$$
where we use convention $\mathrm{Id}=\Pi^k_{-}+\Pi^k_{++}$.

Since $\delta_k W_1\in (\Pi^k_{--}(X))$ and $\delta_kW_2\in (\Pi^k_{--}(g_{12}))$ we have
$$ i_{k*}(\calF)^\sharp=\tilde{\calF}\otimes [\Pi_{--}^k(X), \Pi_{++}^k(Y_1-\Ad_{g_{12}}(Y_2))]\otimes  \mathrm{K}^{\delta_k W_2} (\Pi_{--}^{k}(g_{12})).$$
That is the differentials \(d_{ij}^-\)   are the negative differentials of the Koszul matrix factorization and hence have above mentioned property.

\subsection{Induction functor in reduced case}

Let us
define $\calXr_2(P_k):=\frb\times P_k\times \frn$ and $\calXr_2(P_k)^\flat:=\frb\times\mathrm{Lie}(P_k)\times\frn$.
We also use notation $\Wr$ for the restriction of the potential $\Wr$ on the subspace $\calXr_2(P_{k})$.

There is a natural embedding $\bar{i}_k: \calXr_2(P_k)\to \calXr_2(G_n)$.
The map of the rings $\bar{i}_{k}: \CC[\calXr_2(G_n)]\to \CC[\calXr_2(P_k)]$ is the projection map with the kernel $(\Pi^k_{--}(g))$ and it is easy to see that these generators
form a regular sequence. Thus we have a well-defined functor:
$$ \bar{i}_{k*}: \MF_{B^2}(\calXr_2(P_k),\Wr)\to \MF_{B^2}(\calXr_2(G_n),\Wr).$$
We also have the projection map $\bar{p}_k: \calXr_2(P_k)\to \calXr_2(G_k)\times\calXr_2(G_{n-k})$.

The map $\bar{p}_k$ is $B_{k}^2\times B_{n-k}^2$-equivariant and we extend $B_{k}^2\times B_{n-k}^2$-equivariant structure to
$B_n^2$-equivariant structure to obtain  an induced map $$\bar{p}^*_k:
\MF_{B_k^2}(\calXr_2(G_k),\Wr)\times\MF_{B_{n-k}^2}(\calXr_2(G_{n-k}),\Wr) \to \MF_{B_n^2}(\calXr_2(P_k),\Wr).$$  Thus we define
$$\indb_k:=\bar{i}_{k*}\circ \bar{p}_k^*.$$

Now let us discuss an analogs of the map $\rho_k$ for reduced spaces and the corresponding properties of the push forward $\bar{i}_{k*}$.
Consider a space $\calXr_2^{\flat}(G_n):=\frb\times \mathfrak{gl}_n\times \frn$. There is
a natural embedding $\bar{j}_n: \calXr_2(G_n)\to \calXr_2(G_n)^\flat$ induced by the embedding $G_n\to \mathfrak{g}_n$. If we define $\Wr^\flat(X,g,Y)=\Tr(X( gY\adj(g)))$ then we have
$\bar{j}^*_n(\Wr')=\Wr^\flat$, \(\Wr'=\Wr\cdot\det(g)\) and map
$$\bar{j}^*_n: \MF_{B^2_n}(\calXr_2(G_n)^\flat,\Wr^\flat)\to
\MF_{B^2_n}(\calXr_2(G_n),\Wr')$$ is well-defined.

Similarly we define  $\calXr_2(P_{k})$ and $\calXr_2(P_{k})^\flat$.
  The natural open embedding $\bar{j}^k_n: \calXr_2(P_{k})\to \calXr_2(P_{k})^\flat$
  induces the pull back map $$\bar{j}^{k*}_n: \MF_{B^2}(\calXr_2(P_k),\Wr')\to \MF_{B^2}(\calXr_2(P_{k})^\flat,\Wr^\flat).$$
  
 There is a natural linear projection $$\bar{\rho}_k: \calXr_2(G_n)^\flat\to \calXr_2(
P_{k})^\flat$$ and
 a pull back map $$\bar{\rho}_k^*: \MF(\calXr_2(P_{k})^\flat,\Wr^\flat)\to \MF(\calXr(G_n)^\flat,\rho_{k}^*(\Wr^\flat)).$$


 We can choose $\calF'\in \MF_{B^2}(\calX_2(G_n)^\flat,\Wr)$ such that $\bar{j}^{k*}_n(\calF')=\det(\calF)$
 and we define
 \[\tilde{\calF}:=\mathrm{det}^{-1}\circ\bar{j}^{k*}_n\circ \bar{\rho}_{k}^*(\calF')\in \MF(\calX_2(G_n),\bar{j}_n^{k*}\circ\bar{\rho}_{k}^*(\Wr^\flat)\cdot\mathrm{det}^{-1}).\]
Let $\delta_k\Wr=\Wr-\bar{j}_n^{k*}\circ\bar{\rho}_k^*(\Wr^\flat)/\det$, then analogously to the non-reduced case we have:
%
$$ \bar{i}_{k*}(\calF)^\sharp=\tilde{\calF}\otimes \mathrm{K}^{\delta_k\Wr} (\Pi_{--}^{k}(g_{12})).$$

\subsection{Proof of proposition~\ref{prop: ind phi}}
Note that the statement of the proposition refers to the Knorrer functor
$$\Phi_n:\quad \MF_{B_n^2}(\calXr_2(G_n),\Wr)\to \MF_{B_n^2}(\calX^\circ_2(G_n),W),$$
which is constructed the in same way as our usual functor, the only adjustment being
the replacement of the
space $\widetilde{\calX}_2$ by the space $\widetilde{\calX}_2^\circ$ and the appropriate adjustment of the maps.

The statement of the proposition is equivalent to the isomorphism between two functors relating the following categories:
$$\MF_{B_k^2}(\calXr(G_k),\Wr)\times \MF_{B_{n-k}^2}(\calXr_2(G_{n-k}),\Wr)\longrightarrow \MF_{B_n^2}(\calX^\circ_2(G_n),W).$$
The first functor is $ i_{k*}\circ p_k^*\circ (j^x\times j^x)_*\circ (\pi_y\times\pi_y)^*$ and the second is $j^x_*\circ \pi_y^*\circ\bar{i}_{k*}\circ \bar{p}_k^*$.

We begin with the first functor. Using maps $p_k:\calX^\circ_2(P_k)\to \calX^\circ_2(G_k)\times \calX^\circ_2(G_{n-k})$ and
$j^x\times j^x: \widetilde{\calX}_2^\circ(G_k)\times \widetilde{\calX}_2^\circ(G_{n-k})\to \calX^\circ_2(G_k)\times \calX^\circ_2(G_{n-k})$ we can form
 a product of $\calX^\circ_2(P_k)$ and $ \widetilde{\calX}_2^\circ(G_k)\times \widetilde{\calX}_2^\circ(G_{n-k})$ over $\calX^\circ_2(G_k)\times \calX^\circ_2(G_{n-k})$ which is
 equal $\frb_n\times P_k\times \frn_n^2$. Thus we obtain a commuting diagram
 $$ \begin{tikzcd}
 \frb_n\times P_k\times \frn_n^2 
 \arrow[dotted]{r}{j^x} 
 \arrow[dotted]{d}{p_k}   & \calX^\circ_2(P_k)\arrow{d}{p_k}\\
\widetilde{\calX}_2^\circ(G_k)\times \widetilde{\calX}_2^\circ(G_{n-k})\arrow{r}{j^x\times j^x} &                                  \calX^\circ_2(G_k)\times \calX^\circ_2(G_{n-k})
 \end{tikzcd}$$
where the horizontal dotted arrow map is a natural embedding $j^x: \frb_n\times P_k\times \frn_n^2\to \frg_n\times P_k\times \frn_n^2$ while the vertical one is
$$p_k(X,g,\vec{Y})=(p_k(X)',p_k(g)',p_k(Y_1)',p_k(Y_2)')\times(p_k(X)'',p_k(g)'',p_k(Y_1)'',p_k(Y_2)''),$$
where \(\vec{Y}=(Y_1,Y_2).\)

The maps in the diagram are either projections or regular embeddings thus we have  the base change formula:
$ p_k^*\circ (j^x\times j^x)_*=j^x_*\circ p_k^*.$
Thus we have $$ i_{k*}\circ p_k^*\circ (j^x\times j^x)_*\circ (\pi_y\times\pi_y)^*=(i_k\circ j^x)_*\circ (\pi_y\times \pi_y\circ p_k)^*.$$

Analysis of the second functor is similar. As in the previous case we have the the commuting diagram:

$$ \begin{tikzcd}
 \frb_n\times P_k\times \frn_n^2 \arrow[dotted]{r}{i_k}\arrow[dotted]{d}{\pi_y}   & \widetilde{\calX}^\circ_2(G_n)\arrow{d}{\pi_y}\\
\calXr_2^\circ(P_k)\arrow{r}{\bar{i}_k} &                                  \calXr_2(G_n)
 \end{tikzcd}
 $$
where the dotted arrow maps are: $i_k: b_n\times P_k\times \frn_n^2\to \widetilde{\calX}_2^\circ(G_n)$ is induced my $i_k:P_k\to G_n$;
$\pi_y(X,g,Y_1,Y_2)=(X,g,Y_2)$.
In this case  we have base change equation: $\pi_y^*\circ \bar{i}_{k*}=i_{k*}\circ \pi_y^*$ and
$$j^x_*\circ \pi_y^*\circ\bar{i}_{k*}\circ \bar{p}_k^*=(j^x\circ i_k)_*\circ (\bar{p}_k\circ \pi_y)^*.$$

Finally note that the map $i_k\circ j^x$ is equal the map to $j^x\circ i_k$ and the map $\pi_y\times \pi_y\circ p_k$ is equal to the map
$\bar{p}_k\circ \pi_y$.

\subsection{Proof of proposition~\ref{prop: ind homo}}
We have to show that
$$ i_{k*}\circ p_k^*(\calF)\star i_{k*}\circ p_{k}^*(\calG)= i_{k*}\circ p_{k}^*(\calF\star\calG),$$
where $\calF=\calF_1\boxtimes\calF_2$, $\calG=\calG_1\boxtimes\calG_2\in \MF_{B_k^2}(\calX^\circ_2(G_k),W)\times\MF_{B_{n-k}^2}(\calX_2^\circ(G_{n-k}),W).$
The LHS of the later formula is the result of the application of the functor $\CE_{\frn^{(2)}_n}$ combined with extraction of  $T^{(2)}$ invariant part to the matrix factorization
$C$ such that:
\begin{multline}\label{eq: long mf prod}
C^\sharp=\tilde{\calF}\otimes [\Pi_{--}^k(X), \Pi_{++}^k(Y_1-\Ad_{g_{12}}(Y_2))]\otimes  \mathrm{K}^{\pi_{12}^*(\delta_k W_2)} (\Pi_{--}^{k}(g_{12}))\otimes\\
\tilde{\calG}\otimes [\Pi_{--}^k(\Ad_{g_{12}}^{-1}(X)), \Pi_{++}^k(Y_2-\Ad_{g_{23}}(Y_3))]\otimes  \mathrm{K}^{\pi_{23}^{\circ*}(\delta_k W_2)} (\Pi_{--}^{k}(g_{23})).
\end{multline}
%
with  $\tilde{\calF}= j_n^{k*}\circ \rho_{k}^*(\calF')$ and $\tilde{\calG}=j_n^{k*}\circ\rho_{k}^*(\calG')$ in conventions of subsection~\ref{ssec: comp ins}:
$j_n^{k*}(\calF')=p^*_{k}(\calF)$, $j_{n}^{k*}(\calG')=p^*_{k}(\calG)$.

Let us fix coordinates $X,g_{23},g_{13},Y_1,Y_2,Y_3$ on $\calX^\circ_3(G_n)$.
We use  the factor $[\Pi_{--}^k(X), \Pi_{++}^k(Y_1-\Ad_{g_{12}}(Y_2))]$ to exclude the variables $\Pi_{--}^k(X)$ from the other factors.
Since $\Pi_{--}^k(X)$  is $B_n^2$-equivariant, the exclusion process uses only the row transformations of the first
kind and we do not need to worry about the admissibility conditions.
The elimination process changes the factor in  (\ref{eq: long mf prod}) in the following manner:
$$\tilde{\calF}\to\tilde{\calF}_1:= \tilde{\calF}|_{\Pi_{--}^k(X)=0},\quad
     \tilde{\calG}\to\tilde{\calF}_1:= \tilde{\calF}|_{\Pi_{--}^k(\Ad_{g_{12}}^{-1}(X))=0}$$
$$  [\Pi_{--}^k(X), \Pi_{++}^k(Y_1-\Ad_{g_{12}}(Y_2))]\to  [\Pi_{--}^k(X), R_1],$$
$$ [\Pi_{--}^k(\Ad_{g_{12}}^{-1}(X)), \Pi_{++}^k(Y_2-\Ad_{g_{23}}(Y_3))]\to [0,\Pi_{++}^k(Y_2-\Ad_{g_{23}}(Y_3))],$$
$$ \mathrm{K}^{\pi_{12}^{\circ*}(\delta_k W_2)} (\Pi_{--}^{k}(g_{12}))
\to  \mathrm{K}^{\pi_{12}^*(\delta_k W_2)} (\Pi_{--}^{k}(g_{12}))|_{\Pi_{--}^k(X)=0},$$
$$  \mathrm{K}^{\pi_{12}^{\circ*}(\delta_k W_2)} (\Pi_{--}^{k}(g_{23}))\to  \mathrm{K}^{\pi_{23}^{\circ*}(\delta_k W_2)} (\Pi_{--}^{k}(g_{23}))|_{\Pi_{--}^k(X)=0}$$
We denote the resulting equivariant matrix factorization as $C_1$. The non-equivariant limit $C_1^\sharp$ is the product of the above listed terms. Moreover, since the only term in
$C_1^\sharp$ that depends on $\Pi_{--}^k(X)$ is the Koszul matrix factorization $[\Pi_{--}^k(X),R_1]$, we conclude that $R_1$ is the derivative of the potential $\pi_{13}^{\circ*}(W)$ by
variables $\Pi_{--}^k(X)$ which equals $\Pi_{++}^k(Y_1-\Ad_{g_{13}}(Y_3))$.

Next let us observe that using the row transformations of the first kind we can show that the following equivalence of the matrix factorization
 \begin{multline*}
 \mathrm{K}^{\pi_{12}^{\circ,*}(\delta_k W_2)} (\Pi_{--}^{k}(g_{12}))\otimes \mathrm{K}^{\pi_{23}^{\circ*}(\delta_k W_2)} (\Pi_{--}^{k}(g_{23}))|_{\Pi_{--}^k(X)=0}\\
 = \mathrm{K}^{\pi_{13}^{\circ*}(\delta_k W_2)} (\Pi_{--}^{k}(g_{13}))\otimes \mathrm{K}^{0} (\Pi_{--}^{k}(g_{23}))|_{\Pi_{--}^k(X)=0}
 \end{multline*}

 Finally, we use the Koszul complex $\mathrm{K}^{0} (\Pi_{--}^{k}(g_{23}))|_{\Pi_{--}^k(X)=0}$ to exclude the variables $\Pi_{--}^k(g_{23})$ from the product; for that
 we use the first type row transformations so we do not need to worry about the admissibility. The final result of the sequence of row transformations is the
 equivariant matrix factorizations $C_2$ such that its non-equivariant limit $C_2^\sharp$ is the product the following factors:
 $$\tilde{\calF}_2:= \tilde{\calF}|_{\Pi_{--}^k(X)=0,\Pi_{--}^k(g_{23})=0},\quad
     \tilde{\calG}_2:= \tilde{\calF}|_{\Pi_{--}^k(\Ad_{g_{12}}^{-1}(X))=0, \Pi_{--}^k(g_{23})=0}$$
 $$[\Pi^k_{--}(X),Y_1-\Ad_{g_{13}}(Y_3)],\quad     [0,\Pi_{++}^k(Y_2-\Ad_{g_{23}}(Y_3))],$$
$$ \mathrm{K}^{\pi_{13}^{\circ*}(\delta_k W_2)} (\Pi_{--}^{k}(g_{13}))\otimes \mathrm{K}^{0} (\Pi_{--}^{k}(g_{23}))|_{\Pi_{--}^k(X)=0}.$$
Since the potential $\pi_{13}^{\circ*}(\delta_k W_2)$ does not depend on $\Pi_{--}^{k}(X)$ the last product is actually just
 $$ \mathrm{K}^{\pi_{13}^{\circ*}(\delta_k W_2)} (\Pi_{--}^{k}(g_{13}))\otimes \mathrm{K}^{0} (\Pi_{--}^{k}(g_{23})).$$

 Let us introduce auxiliary space $\SymbolPrint{\calZ_{con}(G_n)}:=\frg_n\times P_k\times G_n\times \frn_n\times (\frn_k\times \frn_{n-k})\times \frn_n$
 and $\SymbolPrint{\calZ_{con}(P_k)}:=\frp_n\times P_k\times P_k\times \frn_n\times (\frn_k\times \frn_{n-k})\times \frn_n$. The variety $\calZ_{con}(G_n)$ is a subspace of
 $\calX^\circ_3$ defined by equations $\Pi_{++}^k(Y_2-\Ad_{g_{23}}(Y_3))=0$, $\Pi_{--}^k(g_{23})=0$, thus we can define the map $\pi_{13}^\circ:\calZ_{con}(G_n)\to
 \calX_2^\circ(G_n)$ by the restriction.

   We can contract  differentials in complexes $ [0,\Pi_{++}^k(Y_2-\Ad_{g_{23}}(Y_3))]$,
 $\Pi_{--}^{k}(g_{23})$ to obtain homotopy equivalence of matrix factorizations over $\pi_{13}^{\circ,*}(\CC[\calX^\circ_2(G_n)])$:
 $$ C_2\sim C_3,$$
 where $C_3\in \MF_{B_n^2}(\calZ_{con}(G_n),\pi_{13}^\circ(W))$. Moreover, since we only used admissible row transformations the matrix factorization $C_3$ has a
 structure of quasi-Koszul matrix factorization:
 $$ C_3=(\tilde{M}\otimes \Lambda^\bullet V, d^++D+d^-,\partial+\partial^-),$$
 where $V$ is space dual to span of equations $\Pi_{--}^k(X),\Pi_{--}^k(g_{13})$, $d^+$ is the corresponding Koszul differential
$d^+:\tilde{M}\otimes\Lambda^\bullet V\to \tilde{M}\otimes \Lambda^{\bullet-1} V,$ and $d^-: \tilde{M}\otimes\Lambda^\bullet V\to \tilde{M}\otimes \Lambda^{\bullet+1} V$
the completing differentials and \(\partial^-=\sum_{i>j} \partial_{ij}^-\), \[\partial_{ij}: \Lambda^i (\frn_n^2)\otimes \tilde{M}\otimes \Lambda^\bullet V\to
\Lambda^j (\frn_n^2)\otimes \tilde{M}\otimes \Lambda^{\ge\bullet} V, \quad (\tilde{M},D,\partial)\in \Mod_{per}^{B_n^3}(\calZ(G_n)).\]

The locus inside $\calZ_{con}(G_n)$ defined by the equations $\Pi_{--}^k(X)=0,\Pi_{--}^k(g_{13})=0$ is isomorphic to $\calZ_{con}(P_n)$, by construction
$$(\tilde{M},D,\partial)|_{\Pi_{--}^k(X)=0,\Pi_{--}^k(g_{13})=0}=p_k^*(\pi_{12}^{\circ*}(\calF)\otimes\pi_{23}^{\circ*}(\calG)),$$
where $p_k: \calZ_{con}(P_k)\to \calX^\circ_3(G_k)\times\calX^\circ_3(G_{n-k})$ is the natural projection.
Hence we can use uniqueness lemma~\ref{lem: uniq MF+eq} to imply that we have an isomorphism of $B_n^3$ equivariant matrix factorizations
$$ C_3=i_*(p_k^*(\pi_{12}^{\circ*}(\calF)\otimes\pi_{23}^{\circ*}(\calG)),$$
where $i$ is map $\calZ_{con}(P_k)\to \calZ_{con}(G_n)$.

The Lie algebra $\frn^{(2)}$ does not act on variables $\Pi^k_{--}(X)$ and $\Pi_{--}^k(g_{13})$ hence we have
$$ \CE_{\frn^{(2)}}(C_3)=i_*(\CE_{\frn^{(2)}}(p_k^*(\pi_{12}^{\circ*}(\calF)\otimes \pi_{23}^{\circ*}(\calG)))).$$

Let us denote by $I_k$ the kernel of the projection homomorphism $P_k\to B_k\times B_{n-k}$ and let $I'_k$ be its unipotent part. Hence we have the short exact sequence of
Lie algebras:
$$0\to \Lie{(I'_k)}\to \frn_n \to \frn_k\oplus \frn_{n-k}\to 0$$
and  we can use Hochschild-Serre spectral sequence to compute functor $\CE_{\frn_n^{(2)}}$:
$$ \CE_{\frn_{k}^{(2)}\oplus \frn_{n-k}^{(2)}}(\CE_{\Lie(I'_{k})}(C_3))\Rightarrow \CE_{\frn_n^{(2)}}(C_1). $$

Since the Lie algebra $\frn^{(2)}$ only acts on on the second copy of $P_k$ in the product
$\calZ_{con}(P_n)=\frp_n\times P_k\times P_k\times \frn_n\times (\frn_k\times \frn_{n-k})\times \frn_n$
and the differentials in $p_k^*(\pi_{12}^{\circ*}(\calF)\otimes \pi_{23}^{\circ*}(\calG))$ are also $I'_k$ invariant,
the computation of $\CE_{\Lie(I'_k)}(p_k^*(\pi_{12}^{\circ*}(\calF)\otimes \pi_{23}^{\circ*}(\calG)))$ reduces to the computation
of
$$\textup{H}_{\Lie}^*(\Lie(I'_k),\CC[\calZ_{con}(P_k)])=\CC[\frn_n\times P_k\times \frn_n\times (\frn_k\times \frn_{n-k})\times \frn_n]\otimes
\textup{H}_{\Lie}^*(\Lie(I'_k),\CC[P_k]).$$

The last computation is equivalent to the following

\begin{proposition}
$$\CE_{\Lie(I'_k)}(\CC[P_k])=\CC[G_k\times G_{n-k}],$$
where $G_k\times G_{n-k}$ is the subgroup of $P_k$ defined by the equation $\Pi_{++}^k(g)=0$.
\end{proposition}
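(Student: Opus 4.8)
The plan is to reduce everything to the algebraic Poincar\'e lemma for a polynomial ring. First I would recall the group theory: $P_k$ is the standard parabolic with Levi subgroup $L:=G_k\times G_{n-k}$ (the block--diagonal subgroup, which is exactly $\{g\in P_k\ :\ \Pi^k_{++}(g)=0\}$) and unipotent radical $I'_k$, whose Lie algebra is $\Lie(I'_k)=\Pi^k_{++}(\frg)$; this Lie algebra is abelian, so $I'_k$ is an abelian unipotent group, $I'_k\cong\CC^m$ as a variety with $m=k(n-k)$, and $\CC[I'_k]\cong\CC[x_1,\dots,x_m]$. Since $L$ normalizes $I'_k$, meets it trivially, and $I'_k\cdot L=P_k$, the multiplication map $I'_k\times L\to P_k$, $(u,l)\mapsto ul$, is an isomorphism of varieties, hence $\CC[P_k]\cong\CC[I'_k]\otimes_\CC\CC[L]$.

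Next I would pin down the $\Lie(I'_k)$-action. The action of $\frn^{(2)}$ on the second copy of $P_k$ inside $\calZ_{con}(P_k)$ is the infinitesimal left-translation action read off from the $B^3$-action on $\calXr_3$; restricted to $\Lie(I'_k)\subseteq\frn^{(2)}$ it is the derivative of left translation by the subgroup $I'_k$. In the coordinatization $P_k\cong I'_k\times L$ above one has $u'\cdot(ul)=(u'u)l$, so left translation by $I'_k$ only moves the $I'_k$-factor, and there it is left translation of $I'_k$ on itself; since $I'_k$ is abelian this is simply the translation action $\xi_i\mapsto\partial_i:=\partial/\partial x_i$ on $\CC[x_1,\dots,x_m]$, while $\CC[L]$ is untouched. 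Therefore the Chevalley--Eilenberg complex factors as
$$\CE_{\Lie(I'_k)}\bigl(\CC[P_k]\bigr)\ \cong\ \CE_{\Lie(I'_k)}\bigl(\CC[I'_k]\bigr)\otimes_\CC\CC[L],$$
and $\CC[L]=\CC[G_k\times G_{n-k}]$ retains its natural $\frn_k\oplus\frn_{n-k}$-module structure, which is all that the subsequent Hochschild--Serre step uses.

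It then remains to identify $\CE_{\Lie(I'_k)}(\CC[I'_k])$. With the identifications above this is $\bigl(\Lambda^\bullet\Lie(I'_k)^\ast\otimes\CC[x_1,\dots,x_m],\,d\bigr)$ with $d=\sum_i\xi_i^\ast\wedge\partial_i$, which under $\xi_i^\ast\leftrightarrow dx_i$ is precisely the algebraic de Rham complex of affine space $\mathbb{A}^m$. By the algebraic Poincar\'e lemma its cohomology is $\CC$ in degree $0$ and vanishes in higher degrees, so $\CE_{\Lie(I'_k)}(\CC[I'_k])$ is a resolution of $\CC$ and $\CE_{\Lie(I'_k)}(\CC[P_k])\simeq\CC[L]=\CC[G_k\times G_{n-k}]$, as asserted.

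The only genuinely delicate step is the middle one: checking that the surviving $\frn^{(2)}$-action on that copy of $P_k$ is the honest (un-twisted) translation action by $I'_k$ rather than a conjugation-twisted variant, and choosing the variety splitting $P_k\cong I'_k\times L$ with a left/right convention compatible with it. Here it is crucial that $I'_k$ is abelian, which makes the left and right regular representations of $\Lie(I'_k)$ on $\CC[I'_k]$ coincide, so the left/right ambiguity is harmless; the remaining input, the Poincar\'e lemma for $\CC[x_1,\dots,x_m]$, is classical.
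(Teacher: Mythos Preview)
Your proof is correct and is essentially a cleaner packaging of the paper's own argument. The paper proceeds by explicit coordinates: it writes the vector fields $\partial_{ij}$ corresponding to the generators $E_{ij}\in\Lie(I'_k)$ in terms of the matrix-entry coordinates $E^\vee_{kl}$ on $P_k$, observes that they are related to the naive partials $\partial/\partial E^\vee_{il}$ by an invertible matrix (namely the lower Levi block of $g$), and then concludes from the surjectivity of those partials---which is the Poincar\'e lemma spelled out by hand. Your version hides that matrix manipulation inside the Levi decomposition $P_k\cong I'_k\times L$: the invertible change of operator basis in the paper is exactly what trivialises the $I'_k$-translation in the coordinates adapted to the splitting. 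One small point: the paper identifies the relevant action as \emph{right} multiplication of $I_k$ on $P_k$, not left; as you already note, this only affects which of the two splittings $g=ul$ or $g=lu$ makes the action untwisted on the $I'_k$-factor, and since $I'_k$ is abelian the de~Rham computation on $\CC[I'_k]$ is insensitive to the side anyway. Your approach is more conceptual and more portable; the paper's has the minor virtue of being explicit enough that one sees directly which coordinate functions are being killed.
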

\begin{proof}
The group $I_k$ acts on $P_k$ by right multiplication. The Lie algebra $\Lie(I'_k)$ is generated by the elements $E_{ij}$, $i\in [1,n]$, $j\in [k+1,n]$ and direct computation
shows that it is a commutative algebra.

Let us denote by $E^\vee_{kl}$ the generators of the algebra $\CC[P_k]$:
$E^\vee_{kl}(X)=X_{kl}.$ Let us denote by $\partial_{ij}$ the differential operator corresponding to the action $E_{ij}\in \Lie(I_k')$.
The action of these generators of $\Lie(I'_k)$ is given by:
$$ \partial_{ij}=\sum_{k+1\le l \le n} E_{jl}^\vee\frac{\partial}{\partial E_{il}^\vee}.$$

Let us observe that vector of differential operators $\partial_{i\bullet}$ is related to the vector of differential operators $\frac{\partial}{\partial E_{i\bullet}^\vee}$, $\bullet\in [k+1,n]$ by the matrix
$M=(E^\vee_{lm})_{l,m\in [k+1,n]}$ and this matrix is invertible. Thus the condition $\partial_{ij}(f)=0$ for all $i,j$ is equivalent to the condition
$\frac{\partial f}{\partial E_{ij}^\vee}=0$ for all $i,j\in [k+1,n]$.

Since the operator $\frac{\partial}{\partial E_{ij}^\vee}$ is surjective on $\CC[P_k]$ the statement of the proposition follows.
\end{proof}

The proposition implies that $\CE_{\Lie(I'_k)}(\CC[\calZ_{con}(P_k)])=\CC[\calZ_{con}(G_k\times G_n)]$ where $\calZ_{con}(G_k\times G_n):=\frp_n\times P_k\times G_k\times G_{n-k}\times \frn_n\times (\frn_k\times \frn_{n-k})\times \frn_n$.
Thus the proposition implies that
$$ \CE_{\frn^{(2)}}(C_3)=i'_*(\CE_{\frn_k\oplus \frn_{n-k}}((p'_k)^*(\pi_{12}^{\circ*}(\calF)\otimes\pi_{23}^{\circ*}(\calG)))),$$
where $i': \calZ_{con}(G_k\times G_{n-k})\to \calZ_{con}(G_n)$ and $p'_k:\calZ_{con}(G_k\times G_{n-k})\to \calX^{\circ}_3(G_k)\times \calX^{\circ}_3(G_{n-k}).$



Hence after extracting the torus invariant part of the last expression we obtain the formula from the statement of the proposition:
 $$ \pi_{13*}^{\circ}(\CE_{\frn^{(2)}}(C_3))^{T^{(2)}}=i^*_k\circ p_k^*( \calF\bar{\star}\calG).$$

\subsection{Transitivity of induction}
In this section we prove the following
\begin{proposition}\label{prop: ind trans} The following functors:
$$\MFs_k\times \MFs_{m-k}\times \MFs_{n-m}\to \MFs_{n},$$
$\MFs_r:=\MF_{G_r\times B_r^2}(\calX_2(G_r),W)$
 are isomorphic
$$ \ind_k\circ \mathrm{Id}\times \ind_{m-k}=\ind_m\circ \ind_k\times \mathrm{Id}.$$
\end{proposition}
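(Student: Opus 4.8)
The plan is to reduce the transitivity of induction to two already-established ingredients: the explicit description of $\ind_k = i_{k*}\circ p_k^*$ as a composition of a pushforward along a regular embedding and a pullback along a projection (subsection~\ref{ssec: comp ins}), together with standard base-change identities for these two operations in the setting of matrix factorizations. The key observation is that both composite functors are built out of the nested parabolics
\[
P_{k,m}\subset P_m\subset G_n,\qquad P_{k,m}:=P_k\cap P_m,
\]
where $P_{k,m}$ is the parabolic whose Levi quotient is $G_k\times G_{m-k}\times G_{n-m}$. One should first record the obvious factorizations of the relevant maps: the inclusion $\calX_2^\circ(P_{k,m})\to\calX_2^\circ(G_n)$ factors through $\calX_2^\circ(P_m)$ in one way and through $\calX_2^\circ(P_k)$ (viewed inside the $G_k$-factor) in the other; likewise the Levi projection $P_{k,m}\to G_k\times G_{m-k}\times G_{n-m}$ factors through $P_m\to G_k\times G_{m-k}\times G_{n-m}$ (induce on the first $G_m$-block) and through $G_k\times P_{m-k,\,n}\to G_k\times G_{m-k}\times G_{n-m}$. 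Each of these factorizations is a strict commuting square of projections and regular embeddings, all of the ideals involved being $B_n^2$-invariant and cut out by regular sequences, exactly as in the proof of Proposition~\ref{prop: ind phi}.

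First I would assemble the two commuting diagrams of spaces, one for each side of the claimed equality, with common corner $\calX_2^\circ(P_{k,m})$ (or its $\flat$-thickening, to make the pushforwards canonical as in subsection~\ref{ssec: comp ins}). Second, I would invoke the base-change compatibility used repeatedly in the paper — pushforward along a regular embedding commutes with pullback along a transverse projection, i.e. for a fiber square with one pair of opposite sides projections and the other pair regular embeddings one has $\rho^*\circ i_* \cong i'_*\circ \rho'^*$ on the categories of equivariant matrix factorizations; this is precisely the type of identity exploited in the proofs of Propositions~\ref{prop: ind phi} and~\ref{prop: ind homo}, and it rests on the uniqueness part of Lemma~\ref{lem: uniq MF+eq} (the two extensions of a complex over the common corner to the big space agree up to the canonical isomorphism). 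Third, applying this to each square rewrites both $\ind_k\circ(\mathrm{Id}\times\ind_{m-k})$ and $\ind_m\circ(\ind_k\times\mathrm{Id})$ as the single functor $\iota_*\circ \lambda^*$, where $\iota:\calX_2^\circ(P_{k,m})\to\calX_2^\circ(G_n)$ is the inclusion and $\lambda:\calX_2^\circ(P_{k,m})\to \calX_2^\circ(G_k)\times\calX_2^\circ(G_{m-k})\times\calX_2^\circ(G_{n-m})$ is the Levi projection; since $\iota$ and $\lambda$ do not depend on the order of decomposition, the two sides coincide.

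I expect the main obstacle to be purely bookkeeping: checking that the equivariant structures (the $T^n$-weight data and the correction differentials $\partial^-$) match up under the base-change isomorphisms, rather than just the underlying non-equivariant shadows. Concretely, one must verify that when $\frn_n^2$-equivariant structures are transported through the parabolic $p_k$ and then through $p_{m-k}$ one obtains the same data as transporting through $p_m$ and then $p_k$; this is where the Hochschild–Serre type argument from the proof of Proposition~\ref{prop: ind homo} — reducing the Chevalley–Eilenberg computation over $\Lie(I'_{k,m})$ to a polynomial ring via the surjectivity of the relevant differential operators — needs to be run once more, now for the three-step nilpotent radical filtration associated to $\Lie(I'_k)\subset \Lie(I'_{k,m})$. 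Since all the building blocks (canonical $\flat$-extensions, regularity of the defining ideals, the uniqueness lemma, and the base-change compatibility) have already been set up, the argument is a routine but somewhat lengthy diagram chase, and I would present it by exhibiting the two diagrams, citing the base-change identity and Lemma~\ref{lem: uniq MF+eq}, and concluding that both composites equal $\iota_*\circ\lambda^*$.
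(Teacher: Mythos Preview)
Your approach is essentially the paper's: both sides are identified with the single functor $\ind_{k,m}:=i_{k,m,*}\circ p_{k,m}^*$ through the double parabolic $P_{k,m}$, via a base-change square swapping $p_k^*$ past $(\mathrm{Id}\times i_{m-k})_*$ (and symmetrically on the other side), followed by the composition identities $i_k\circ i_{m-k}=i_{k,m}$ and $p_{m-k}\circ p_k=p_{k,m}$.

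The one place you overshoot is the anticipated ``main obstacle'': there is no separate Hochschild--Serre step here. The functors $i_{k*}$ and $p_k^*$ were already constructed in Section~3.7 as functors between the \emph{equivariant} categories $\MF_{B^2}$, and the base-change identity you invoke (which the paper uses in exactly this form) holds at that level --- the correction differentials $\partial^-$ are produced by Lemma~\ref{lem: ext MF+eq} and matched by Lemma~\ref{lem: uniq MF+eq} automatically once the underlying Koszul data agree. The Hochschild--Serre argument appears in the proof of Proposition~\ref{prop: ind homo} because convolution introduces a genuine $\CE_{\frn^{(2)}}$ to compute; transitivity of induction involves no Chevalley--Eilenberg complex at all, so the proof is the short diagram chase you outline in your second and third paragraphs, without the extra bookkeeping of the fourth.
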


Before we start the proof let us fix some notations. We denote by $\SymbolPrint{P_{k,m}}$ a subgroup of $G_n$ with the Lie algebra generated by
$E_{ij}$, $i\le j$ and $E_{i,i+1}$, $i\ne k,m$. Given an element $g\in P_{k,m}$ we denote by $g'$ its $k\times k$ block, by $g''$ its $(m-k)\times (m-k)$ block and
by $g'''$ its $(n-m)\times (n-m)$ block.
 We also use $\SymbolPrint{\frp_{k,m}}$ for $\Lie(P_{k,m})$

 We have natural homomorphisms:
$\SymbolPrint{i_{k,m}}: P_{k,m}\to G_n$ and $p_{k,m}: P_{k,m}\to G_k\times G_{m-k}\times G_{n-m}$.
Respectively, we define $\calX_2^\circ(P_{k,m}):=\frp_{k,m}\times P_{k,m}\times \frn_n^2$ and using previously defined maps we construct
maps $\SymbolPrint{p_{k,m}}: \calX_2^\circ(P_{k,m})\to \calX_2^\circ(G_k)\times \calX_2^\circ(G_{m-k})\times \calX_2^\circ(G_{n-m})$ and
$i_{k,m}:  \calX_2^\circ(P_{k,m})\to \calX_2^\circ(G_n)$.
Using these maps we can define the functor
 $$\SymbolPrint{\ind_{k,m}}: \MFs^\circ_{k}\times \MFs^\circ_{m-k}\times \MFs^\circ_{n-m}\to \MFs^\circ_{n},$$
$$
\ind_{k,m}:=i_{k,m,*}\circ p_{k,m}^*,$$
$\MFs^\circ_r:=\MF_{B^2_r}(\calX^\circ_2(G_r),W).$

Also there are also natural maps $p_k: P_{k,m}\to G_k\times P_{m-k}$, $p_m: P_{k,m}\to P_k\times G_{n-m}$ such that $p_{m-k}\circ p_k=p_{k,m}$ and $p_k\circ p_m=p_{k,m}$.

\begin{proof}[Proof of proposition~\ref{prop: ind trans}]
We  will show that
 $$\ind_k\circ \mathrm{Id}\times \ind_{m-k}=\ind_{k,m}=\ind_m\circ \ind_k\times \mathrm{Id}$$
Let us indicate how we imply the first equality, the second equality has a similar proof. Let us notice that LHS is the composition of several functors:
$i_{k*}\circ p_k^*\circ(\mathrm{Id}\times i_{m-k})_*\circ (\mathrm{Id}\times p_{m-k})^*$.  We have  the
following commuting diagram:
$$ \begin{tikzcd}
 \frp_{k,m}\times P_{k,m}\times \frn_n^2 \arrow[dotted]{r}{i_{m-k}}\arrow[dotted]{d}{p_k}   & \calX^\circ_2(P_k)\arrow{d}{p_k}\\
\calX_2^\circ(G_k)\times \calX_2^\circ(P_{m-k})\arrow{r}{1\times i_{m-k}} &                                  \calX^\circ_2(G_k)\times \calX^\circ_2(G_{n-k})
 \end{tikzcd}$$
where the dotted arrow maps are: $p_k(X,g,Y_1,Y_2)=(p_k(X),p_k(g),Y_1,Y_2)$ where $p_k:P_{k,m}\to G_k\times P_{m-k}$ is the natural homomorphism and
$i_{m-k}=i_{m-k}\times i_{m-k}\times \mathrm{Id} \times \mathrm{Id}$.

Finally let us observe that $p_{m-k}\circ p_k= p_{k,m}$ and $i_k\circ i_{m-k}=i_{k,m}$. Now we use the base change to show:
\begin{multline*}
\ind_k\circ \mathrm{Id}\times \ind_{m-k}= i_{k*}\circ p_k^*\circ (\mathrm{Id}\times i_{m-k})_*\circ (\mathrm{Id}\times p_{m-k})^*\\= i_{k*}\circ i_{m-k*}\circ p_k^*\circ p_{m-k}^*=
(i_{k}\circ i_{m-k})_*\circ (p_{m-k}\circ p_k)^*
=\ind_{k,m}.\end{multline*}
\end{proof}

Since the functor $\Phi_n$ provides an embedding of categories we get an immediate corollary:

\begin{corollary} The following functors from 
\[\MF_{B_k^2}(\calXr_2(G_k),\Wr)\times \MF_{B_{m-k}^2}(\calXr_2(G_{m-k}),\Wr)\times \MF_{B_{n-m}^2}(\calXr_2(G_{n-m}),\Wr)\] to \(\MF_{B_n^2}(\calXr_2(G_n),\Wr)\)
 are isomorphic
$$ \indb_k\circ \mathrm{Id}\times \indb_{m-k}=\indb_m\circ \indb_k\times \mathrm{Id}.$$
\end{corollary}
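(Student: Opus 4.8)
The plan is to transport the statement along the Knörrer functors $\Phi$ and reduce it to the transitivity of the non-reduced induction functors, Proposition~\ref{prop: ind trans}. Recall that by the proof of Proposition~\ref{prop: Knorrer} each Knörrer functor $\Phi_r\colon\MF_{B_r^2}(\calXr_2(G_r),\Wr)\to\MF_{B_r^2}(\calX_2^\circ(G_r),W)$ has a left inverse $\Psi_r$ with $\Psi_r\circ\Phi_r=\mathrm{Id}$; hence two functors landing in $\MF_{B_r^2}(\calXr_2(G_r),\Wr)$ which become isomorphic after composition with $\Phi_r$ are themselves isomorphic. It therefore suffices to prove the desired identity after composing with $\Phi_n$.

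First I would record the three-factor form of Proposition~\ref{prop: ind phi}. Applying that proposition to the splitting $(G_k,G_{n-k})$ and then, in the second slot, to the splitting $(G_{m-k},G_{n-m})$ of $G_{n-k}$, together with the interchange law for products of functors, gives
\[
\Phi_n\circ\indb_k\circ(\mathrm{Id}\times\indb_{m-k})=\ind_k\circ(\mathrm{Id}\times\ind_{m-k})\circ(\Phi_k\times\Phi_{m-k}\times\Phi_{n-m}).
\]
Symmetrically, applying Proposition~\ref{prop: ind phi} first to $(G_m,G_{n-m})$ and then to the splitting $(G_k,G_{m-k})$ of $G_m$ gives
\[
\Phi_n\circ\indb_m\circ(\indb_k\times\mathrm{Id})=\ind_m\circ(\ind_k\times\mathrm{Id})\circ(\Phi_k\times\Phi_{m-k}\times\Phi_{n-m}).
\]

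Now Proposition~\ref{prop: ind trans} asserts $\ind_k\circ(\mathrm{Id}\times\ind_{m-k})=\ind_m\circ(\ind_k\times\mathrm{Id})$, so the right-hand sides of the two displays coincide; consequently $\Phi_n\circ\indb_k\circ(\mathrm{Id}\times\indb_{m-k})$ and $\Phi_n\circ\indb_m\circ(\indb_k\times\mathrm{Id})$ are isomorphic. Composing on the left with $\Psi_n$ and using $\Psi_n\circ\Phi_n=\mathrm{Id}$ then yields $\indb_k\circ(\mathrm{Id}\times\indb_{m-k})=\indb_m\circ(\indb_k\times\mathrm{Id})$, which is the assertion.

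There is no essential obstacle: the argument is purely formal once Propositions~\ref{prop: ind phi} and~\ref{prop: ind trans} are in hand. The only points demanding a little care are bookkeeping — tracking that the intermediate rank is $n-k$ on one side and $m$ on the other, and that in each invocation of Proposition~\ref{prop: ind phi} the symbol ``$n$'' of that proposition is specialized correctly — and the standard caveat that all of these identities (including $\Psi\circ\Phi=1$) hold up to the canonical natural isomorphisms coming from functoriality of the relevant pull-backs and push-forwards, so that cancelling $\Phi_n$ at the end is legitimate.
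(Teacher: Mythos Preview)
Your argument is correct and is exactly the approach the paper takes: the paper simply remarks that since $\Phi_n$ is an embedding of categories, the corollary follows immediately from Proposition~\ref{prop: ind trans} together with Proposition~\ref{prop: ind phi}. You have just spelled out the bookkeeping that the paper leaves implicit.
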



\section{Inclusion functor and the generators for the braid group}\label{sec: inc fun gens}

This section collects various facts about the matrix factorization convolution algebra, mostly property that do not require intense computations but these properties will be used later.
In particular, the relation (\ref{eq: commuting}) implies that matrix factorizations corresponding commuting elementary braids commute inside the convolution algebra.

\subsection{Unit in convolution algebra}\label{sec: unit} In this section we describe the unit in the convolution algebra.
 Let us denote by $I_{--}\subset\CC[\calXr_2]$ the ideal generated by matrix coefficients of $(g)_{--}$ where $(X,g,Y)$ are coordinates on $\calXr_2$.
Since the generators of $I_{--}$ form a regular sequence
we can define $\bcalC_{\parallel}:=\mathrm{K}^{\Wr}(I_{--}).$ A direct computation shows that
$$\SymbolPrint{\calC_{\parallel}}:=\Phi_n(\bcalC_{\parallel})=\mathrm{K}^{W_1}(X_{--})\otimes \mathrm{K}^{W_2}(g_{--})\in \MF_{B_n^2}(\calX^{\circ}_2,W),$$
where $W_1=\Tr(X_{--}(Y_1-\Ad_g(Y_2)_{++})$, $W_2=\Tr(X(\Ad_g(Y_2)_{-})$.

\begin{proposition}\label{prop: unit} For any $\calF\in \MF_{B^2_n}(\calX^{\circ},W)$ we have
$$ \calC_{\parallel}\star \calF=\calF=\calF\star\calC_{\parallel}.$$
\end{proposition}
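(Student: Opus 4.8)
The plan is to reduce the statement to a direct computation on the reduced space $\calXr_2$, exploiting the fact that $\Phi_n$ is a monoidal embedding (Proposition~\ref{prop: Knorrer} together with its corollary $\Phi(\calF)\star\Phi(\calG)=\Phi(\calF\bar\star\calG)$). Since $\calC_\parallel=\Phi_n(\bcalC_\parallel)$ by definition and every object of $\MF_{B_n^2}(\calX_2^\circ,W)$ is of the form $\Phi_n(\calG)$ up to the equivalence of categories, it suffices to prove $\bcalC_\parallel\bar\star\calG\simeq\calG\simeq\calG\bar\star\bcalC_\parallel$ in $\MF_{B^2}(\calXr_2,\Wr)$. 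Actually, to avoid invoking essential surjectivity I would instead prove the identity directly in the non-reduced category using the explicit Koszul presentation $\calC_\parallel=\mathrm{K}^{W_1}(X_{--})\otimes\mathrm{K}^{W_2}(g_{--})$, and the formula for the convolution (\ref{eq: convolution}).

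First I would unwind $\calC_\parallel\star\calF$ using the $G$-slice description of subsection~\ref{ssec: Gslice}: on $\calX_3^\circ$ with coordinates $(X,g_{12},g_{23},Y_1,Y_2,Y_3)$ the convolution is $\pi^\circ_{13*}(\CE_{\frn^{(2)}}(\pi^{\circ,*}_{12}(\calC_\parallel)\otimes\pi^{\circ,*}_{23}(\calF)))^{T^{(2)}}$. Pulling back $\calC_\parallel$ along $\pi^\circ_{12}$ produces the Koszul factor $\mathrm{K}^{W_1'}((g_{12})_{--})\otimes\mathrm{K}^{W_2'}(X_{--})$, where $(X,g_{12},Y_1,Y_2)$ are the first-slot coordinates; here $g_{12}$ is the "free" $G$-variable corresponding to the middle group $B^{(2)}$. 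The key point: the entries $(g_{12})_{--}$ form a regular sequence, and setting them to zero on the locus cut out by this Koszul factor forces $g_{12}$ into the Borel $B$. Taking the Chevalley--Eilenberg complex $\CE_{\frn^{(2)}}$ and extracting $T^{(2)}$-invariants then integrates out the entire $B^{(2)}$-direction: the Koszul complex $\mathrm{K}(g_{12,--})$ together with $\CE_{\frn^{(2)}}$ and the $T^{(2)}$-invariants computes the derived $B$-invariants of $\CC[B]$, which is $\CC$ concentrated in degree zero (this is the standard fact that $R\Gamma(B,\calO_B)=\CC$, i.e. the "$\CE$ of the regular representation is the trivial module" computation, already used in the proof of Proposition~\ref{prop: ind homo} via the $I'_k$-cohomology lemma). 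After this collapse, $g_{12}=1$, the remaining Koszul factor $\mathrm{K}^{W_2'}(X_{--})$ becomes $\mathrm{K}^0(X_{--})$ which contracts away against the correction differentials, and the pushforward $\pi^\circ_{13*}$ identifies the result with $\calF$ itself on $\calX_2^\circ$.

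The main obstacle is bookkeeping the equivariant correction differentials through these contractions: one must check that the row operations used to eliminate $X_{--}$ and to collapse the $g_{12}$-direction are admissible in the sense of subsection~\ref{ssec: row ops} (condition (\ref{eq: adm row tr})), so that the non-equivariant contractions lift to honest homotopy equivalences of $B^2$-equivariant matrix factorizations, and that the resulting object is the correct twist (no spurious $\mathbf{q}^k\mathbf{t}^m$ shift appears — the degrees of $X_{--}$, $g_{--}$ and the potential terms $W_1,W_2$ must be tallied against $\dgTsc D=t$, $\dgTsc W=t^2$). Concretely, the $\CE_{\frn^{(2)}}$-plus-$T^{(2)}$-invariants step should be handled exactly as in the proof of Proposition~\ref{prop: ind homo}, via a Hochschild--Serre/regular-sequence argument showing $\CE_{\frn^{(2)}}(\mathrm{K}(g_{12,--})\otimes(-))^{T^{(2)}}$ restricts the middle $G$-variable to the identity. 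The right-unit identity $\calF\star\calC_\parallel=\calF$ is entirely symmetric, using $\pi^\circ_{23}$ in place of $\pi^\circ_{12}$, and I would simply remark that the argument is the mirror of the left-unit case rather than repeat it.
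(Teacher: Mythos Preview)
Your overall strategy---contract the Koszul factor on $(g_{12})_{--}$ to force $g_{12}\in B$, then invoke $\CE_{\frn^{(2)}}(\CC[B_n])^{T^{(2)}}=\CC$---is the paper's strategy, and your caution about not reducing via $\Phi_n$ is warranted since the paper only proves $\Phi_n$ is an embedding. But your handling of the \emph{second} Koszul factor of $\calC_\parallel$ and of the middle variable $Y_2$ has a genuine gap.

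The claim that after $g_{12}=1$ the factor ``$\mathrm{K}^{W_2'}(X_{--})$ becomes $\mathrm{K}^0(X_{--})$ which contracts away'' is wrong on both counts. The potential carried by that factor is $W_1=\Tr\bigl(X_{--}(Y_1-\Ad_{g_{12}}(Y_2))_{++}\bigr)$, which at $g_{12}=1$ specialises to $\Tr\bigl(X_{--}(Y_1-Y_2)\bigr)\neq 0$; and the entries $X_{--}$ are honest coordinates on $\calX_2^\circ=\frg\times G\times\frn^2$ that survive under $\pi_{13}^\circ$, so a Koszul complex on them is not contractible. More to the point, you never dispose of $Y_2$: the fibre of $\pi_{13}^\circ$ is parametrised by $(g_{12},Y_2)$, and your argument only handles $g_{12}$. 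In the paper's proof (written out for the right unit $\calF\star\calC_\parallel$, the left being declared analogous) the two Koszul factors are used in sequence \emph{before} the CE step: first row transformations of the first type plus contraction along $[(g_{23})_{--},\ast]$ restrict to $g_{23}\in B$; then the \emph{other} factor, whose relevant column is $Y_2-\Ad_{g_{23}}(Y_3)$, is used via further row transformations to eliminate $Y_2$ from $\pi_{12}^{\circ\ast}(\calF)$ and then contracted, landing on the slice $Y_2=\Ad_{g_{23}}(Y_3)$. Only after both fibre variables are gone is one left with $\tilde\pi_{12}^{\circ\ast}(\calF)$ on $\calX_2^\circ\times B_n$ with $B^{(2)}$-invariant differentials, and the CE step then factors as $\CC[\calX_2^\circ]\otimes\CE_{\frn^{(2)}}(\CC[B_n])^{T^{(2)}}=\CC[\calX_2^\circ]$ by Proposition~\ref{prop: Lie cohs}. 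Applying CE before eliminating $Y_2$, as you propose, does not reduce to that simple computation.
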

\begin{proof}
Let us prove the second equation, first is analogous.
Let us denote by $C$ the $B_n^3$-equivariant matrix factorization $\pi_{12}^{\circ*}(\calF)
\otimes \pi_{23}^{\circ *}(\calC_{\parallel})$. Let us fix the  coordinates $(X,g_{13},g_{23}, Y_1,Y_2,Y_3)$ on $\calX^{\circ}_3$. Since
$$C^\sharp=\mathrm{K}^{\pi_{23}^{\circ*}(W_1)}((g_{23})_{--})\otimes [\Ad_{g_{12}}(X)_{--},Y_2-\Ad_{g_{23}}(Y_3)]\otimes \pi_{12}^{\circ*}(\calF)$$
we can use the differential of the Koszul complex to eliminate variables $(g_{23})_{--}$. We use the row transformation of the first type so we do not need to worry about
admissibility.

 As the result of elimination we obtain matrix factorization $C_1$ which is the tensor product of two factors $C'_1\otimes C''_1$  where $C''_1=\pi_{12}^{\circ *}(\calF)|_{(g_{23})_{--}=0}$ and
 $C'_1$ has the following equivariant limit
 $$ (C'_1)^{\sharp}=[(g_{23})_{--},R_1]\otimes [Y_2-\Ad_{(g_{23})_+}(Y_3),R_2].$$
 Now the us notice that $\pi_{13}^{\circ *}(W)$ does not depends on $g_{23}$ and the only term in $C_1^\sharp$ that contains dependence on $(g_{23})_{--}$ is
 $[(g_{23})_{--},R_1]$ hence $R_1=0$.

 Let us introduce $\calX_{3}(B_n)=\frg_n\times G_n\times B_n\times \frn_n^3$.
 The variety $\calX_3(B_n)$ is isomorphic to the subvariety of $\calX_3$ defined by the equations $(g_{23})_{--}=0$.
 Now we can contract the complex $C_1$ along the differentials of the Koszul complex $[(g_{23})_{--},0]$ to show the homotopy equivalence of
 the $B_n^3$-equivariant  matrix factorizations over the ring $\pi_{13}^{\circ*}(\CC[\calX_2^{\circ}])$:
 $$C_1\sim C_2= [Y_2+\Ad_{g_{23}}(Y_3),R_2]\otimes \pi_{12}^{\circ*}(\calF)\in \MF_{B_n^3}(\calX_3(B_n),\pi_{13}^{\circ*}(W)),$$
 where the maps $\pi_{ij}^{\circ}$ are the restriction of the corresponding maps from $\calX_3$.

 As the next step we use the first factor in the matrix factorization $C_2$ to eliminate $Y_2$ from the rest of the factors. Since $Y_2+\Ad_{g_{23}}(Y_3)$ is $B_n^3$ equivariant we again only
 use the first type of row transformations. Similarly to the previous step, after elimination we obtain the matrix factorization
 $$ C_3=[Y_2+\Ad_{g_{23}}(Y_3),0]\otimes  \pi_{12}^{\circ*}(\calF)|_{Y_2=\Ad_{g_{23}}(Y_3)}.$$

 Now we can contract the complex along the differentials of the Koszul complex $[Y_2+\Ad_{g_{23}}(Y_3),0]$ to obtain the homotopy equivalence of
 of
 the$B_n^3$-equivariant  matrix factorizations over the ring $\pi_{13}^{\circ*}(\CC[\calX_2^{\circ}])$:
 \begin{equation*}
  C_3\sim C_4=\tilde{\pi}_{12}^{\circ^*}(\calF)\in \MF_{B_n^3}(\calX_2\times B_n,W).
  \end{equation*}
 where $\tilde{\pi}_{12}^{\circ}:\calX_2\times B_n\to \calX_2$ is given by $(X,g,Y_1,Y_2,b)\mapsto (X,Y_1,\Ad_{b}(Y_2))$.

 Thus we see that the differentials of $\tilde{\pi}_{12}^{\circ*}(\calF)$ are $B_n^{(2)}$ invariant and the computation of
 $\CE_{\frn_n^{(2)}}(C_4)$ reduces to the computation of
 $$\CE_{\frn_n^{(2)}}(\CC[\calX_2\times B_n])=\CC[\calX_2]\otimes \CE_{\frn_n^{(2)}}(\CC[B_n]).$$
 
Since \(Q_{k\dots n}=B_n\) for \(k=n\), the proposition~\ref{prop: Lie cohs} implies
 that $\CE_{\frn_n^{(2)}}(\CC[B_n])=\CC[T_n]$. Thus after taking $T_n^{(2)}$-invariant part of
 $\CE_{\frn_n^{(2)}}(C_4)$ we obtain the matrix factorization $\calF$.
\end{proof}



\begin{corollary} The matrix factorization $\bcalC_{\parallel}$ is the unit in the convolution algebra $$(\MF_{B_n^2}(\calXr_2,\Wr),\bar{\star}).$$
\end{corollary}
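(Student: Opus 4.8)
The plan is to deduce the corollary directly from Proposition~\ref{prop: unit} using the fact that the Knorrer functor $\Phi_n$ is a monoidal embedding. Recall that we have already established two facts: first, $\Phi_n(\bcalC_\parallel)=\calC_\parallel$ by direct computation (this is the definition of $\calC_\parallel$ in Section~\ref{sec: unit}); second, by the corollary following Proposition~\ref{prop: Knorrer factor}, $\Phi_n$ intertwines the two convolutions, i.e.\ $\Phi_n(\calF\,\bar\star\,\calG)=\Phi_n(\calF)\star\Phi_n(\calG)$ for all $\calF,\calG\in\MF_{B_n^2}(\calXr_2,\Wr)$.

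First I would fix an arbitrary $\calF\in\MF_{B_n^2}(\calXr_2,\Wr)$ and apply $\Phi_n$ to the object $\bcalC_\parallel\,\bar\star\,\calF$. Using the intertwining property and $\Phi_n(\bcalC_\parallel)=\calC_\parallel$, we get
\[
\Phi_n(\bcalC_\parallel\,\bar\star\,\calF)=\Phi_n(\bcalC_\parallel)\star\Phi_n(\calF)=\calC_\parallel\star\Phi_n(\calF).
\]
By Proposition~\ref{prop: unit} applied to the object $\Phi_n(\calF)\in\MF_{B_n^2}(\calX_2^\circ,W)$, the right-hand side is $\Phi_n(\calF)$. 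Hence $\Phi_n(\bcalC_\parallel\,\bar\star\,\calF)\cong\Phi_n(\calF)$. Since $\Phi_n$ is an embedding of categories (Proposition~\ref{prop: Knorrer}), it is faithful, and in fact it reflects isomorphisms: if $\Psi:=\Psi\circ\Phi=\mathrm{id}$ is its left inverse as constructed in the proof of Proposition~\ref{prop: Knorrer}, then applying $\Psi$ to the isomorphism $\Phi_n(\bcalC_\parallel\,\bar\star\,\calF)\cong\Phi_n(\calF)$ yields $\bcalC_\parallel\,\bar\star\,\calF\cong\calF$. The same argument with the roles reversed, using the second equality $\calF\star\calC_\parallel=\calF$ from Proposition~\ref{prop: unit}, gives $\calF\,\bar\star\,\bcalC_\parallel\cong\calF$.

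The only subtlety, and what I expect to be the main point requiring care, is making precise that $\Phi_n$ \emph{reflects} isomorphisms and not merely that it is faithful; for a general embedding this need not hold, but here it is immediate because $\Phi_n$ admits the explicit one-sided inverse $\Psi=\pi_{y*}\circ Rest$ with $\Psi\circ\Phi_n=\mathrm{id}$, so any isomorphism in the image pulls back to an isomorphism upstairs. One should also double-check that the unit axioms (compatibility with the associativity constraint) are automatic, but since associativity of $\bar\star$ was already proved via the corollary to Proposition~\ref{prop: Knorrer factor} and the whole structure is transported along the monoidal functor $\Phi_n$, there is nothing further to verify. Thus the corollary follows formally, with no new computation needed beyond invoking Proposition~\ref{prop: unit} and the monoidality of $\Phi_n$.
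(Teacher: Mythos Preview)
Your argument is correct and is exactly the reasoning the paper leaves implicit: the corollary is stated without proof immediately after Proposition~\ref{prop: unit}, the intended deduction being precisely that $\Phi_n$ is a monoidal embedding with $\Phi_n(\bcalC_\parallel)=\calC_\parallel$. One small typo: the clause ``$\Psi:=\Psi\circ\Phi=\mathrm{id}$'' is garbled; you mean ``$\Psi$ with $\Psi\circ\Phi_n=\mathrm{id}$''.
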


Let us also provide an alternative proof of the proposition. The alternative argument is more geometric and uses  change of base. Let us first notice that
$$ \SymbolPrint{\bcalC_{\parallel}}=i_{*}^{\parallel}(\calO),$$
where $\SymbolPrint{i^{\parallel}}: \frg_n\times B\times \frn_n\to \calX_2$ is the embedding
$(X,b,Y)\to (X,b,Y,\Ad_b(Y))$ and $\calO\in \MF_{B_n^3}(\frb\times B\times \frn_n^2,0)$ is the trivial matrix
factorization.
  Then we following diagram of maps:
  $$ \begin{tikzcd}
 \calX_2^{\circ}\times B \arrow[dotted]{r}{i^{\parallel}}\arrow[dotted]{d}{\pi_{12}^\circ\times\pi_{23}^\circ}   & \calX_3\arrow{d}{\pi_{12}^\circ\times\pi_{23}^\circ}\arrow{r}{\pi_{13}^{\circ}}&\calX^\circ_2\\
\calX_2^\circ\times \frg_n\times B\times \frn_n\arrow{r}{1\times i^{\parallel}} &                                  \calX^\circ_2\times\calX^\circ_2&
 \end{tikzcd}$$
 where the dotted maps are: $i^{\parallel}$ is the  inclusion  \[ i^{\parallel}(X,g,Y_1,Y_2)=(X,gb^{-1},b,Y_1,Y_2,\Ad_b(Y_2))\] and
 the restriction of the  map \(\pi_{12}^{\circ}\)
to the subspace $\calX_3(B)$: \[\pi_{12}^\circ\times \pi_{23}^\circ(X,g,Y_1,Y_2,b)=(X,g,Y_1,Y_2,X,b,Y_2).\]

Now let use base change property to simplify \( \calF\star\calC_\parallel\):
\begin{multline*}
\CE_{\frn_n^{(2)}}(\pi_{13*}^{\circ}((\pi_{12}^{\circ}\times\pi_{23})^*(\calF\boxtimes \calC_\parallel)))^{T^{(2)}}=
\CE_{\frn_n^{(2)}}(\pi_{13*}^{\circ}((\pi_{12}^{\circ}\times\pi_{23})^*(i_*^\parallel(\calF\boxtimes \calO))))^{T^{(2)}}\\=
\CE_{\frn_n^{(2)}}((\pi_{13}^{\circ}\circ i^\parallel)_*(\pi_{12}^{\circ}\times\pi_{23}^\circ)^*(\calF\boxtimes \calO))^{T^{(2)}}=\CE_{\frn^{(2)}}(\tilde{\pi}_{12}^{\circ*}(\calF))^{T^{(2)}}
\end{multline*}
The last step of this proof the same as in the previous proof.

\subsection{Inclusion functor}
In the proof of proposition~\ref{prop: ind trans} we constructed the functor $\ind_{k,m}$ and we can use this functor to define the functor:
$$\SymbolPrint{\Ind_{k\dots m}}: \MF_{ B_{m-k+1}^2}(\calX_2(G_{m-k+1}),W)\to \MF_{ B_n^2}(\calX_2(G_n),W),$$
by $\SymbolPrint{\Ind_{k\dots m}}(\calF):=\ind_{k,m}(\calC_{\parallel}\boxtimes\calF\boxtimes\calC_{\parallel})$.
Using Knorrer functor we also define the reduced version of the inclusion functor:
$$\SymbolPrint{\Indb_{k\dots m}}: \MF_{B_{m-k+1}^2}(\calXr_2(G_{m-k+1}),\Wr)\to \MF_{B_n^2}(\calXr_2(G_n),\Wr),$$

The results of the previous section  the properties of the unit immediately imply the following corollaries

\begin{corollary}\label{cor: ind} The functors $\Indb_{k\dots m}$ and $\Ind_{k\dots m}$ are homomorphism of the convolution algebras.
\end{corollary}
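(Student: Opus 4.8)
Recall that $\Ind_{k\dots m}(\calF)=\ind_{k,m}(\calC_{\parallel}\boxtimes\calF\boxtimes\calC_{\parallel})$ and that, by Proposition~\ref{prop: ind trans}, the three‑fold induction is a composite of two‑fold ones, $\ind_{k,m}=\ind_{k}\circ(\mathrm{Id}\times\ind_{m-k})$. The plan is to deduce the statement formally from the two‑fold homomorphism property (Proposition~\ref{prop: ind homo}), transitivity (Proposition~\ref{prop: ind trans}), and the unit property of $\calC_{\parallel}$ (Proposition~\ref{prop: unit}); the reduced assertion for $\Indb_{k\dots m}$ will then be transported through the Knorrer embedding $\Phi_{n}$. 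First I would upgrade Proposition~\ref{prop: ind homo} to $\ind_{k,m}$: writing $\ind_{k,m}(\calF_1\boxtimes\calF_2\boxtimes\calF_3)=\ind_k\bigl(\calF_1\boxtimes\ind_{m-k}(\calF_2\boxtimes\calF_3)\bigr)$ and applying Proposition~\ref{prop: ind homo} (for the relevant ranks) twice — once for the outer $\ind_k$ and once for $\ind_{m-k}$ in the second tensor slot — one obtains
\begin{equation}\label{eq: ind3 homo}
\ind_{k,m}(\calF_1\boxtimes\calF_2\boxtimes\calF_3)\star\ind_{k,m}(\calG_1\boxtimes\calG_2\boxtimes\calG_3)
=\ind_{k,m}\bigl((\calF_1\star\calG_1)\boxtimes(\calF_2\star\calG_2)\boxtimes(\calF_3\star\calG_3)\bigr),
\end{equation}
using that the external products and the pullbacks $p_k^{*},p_{m-k}^{*}$ interact exactly as recorded in the proof of Proposition~\ref{prop: ind trans}, so that the two instances of Proposition~\ref{prop: ind homo} compose correctly.

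\textbf{Conclusion for $\Ind_{k\dots m}$.} Substituting $\calF_1=\calF_3=\calG_1=\calG_3=\calC_{\parallel}$, $\calF_2=\calF$, $\calG_2=\calG$ in \eqref{eq: ind3 homo} and using $\calC_{\parallel}\star\calC_{\parallel}=\calC_{\parallel}$ from Proposition~\ref{prop: unit},
\[
\Ind_{k\dots m}(\calF)\star\Ind_{k\dots m}(\calG)=\ind_{k,m}\bigl(\calC_{\parallel}\boxtimes(\calF\star\calG)\boxtimes\calC_{\parallel}\bigr)=\Ind_{k\dots m}(\calF\star\calG).
\]
It remains to check unitality, $\Ind_{k\dots m}(\calC_{\parallel})=\calC_{\parallel}$. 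Here I would unwind $\ind_{k,m}=i_{k,m,*}\circ p_{k,m}^{*}$ on $\ind_{k,m}(\calC_{\parallel}\boxtimes\calC_{\parallel}\boxtimes\calC_{\parallel})$: the pullback contributes the Koszul factors on the strictly lower parts of the three diagonal blocks of $X$ and of $g$, while $i_{k,m,*}$ contributes — exactly as in the formula for $i_{k*}(\calF)^{\sharp}$ in Subsection~\ref{ssec: comp ins} — the Koszul factors on the off‑diagonal blocks $\Pi^{\bullet}_{--}(X)$, $\Pi^{\bullet}_{--}(g)$; together these assemble $\mathrm{K}^{W_1}(X_{--})\otimes\mathrm{K}^{W_2}(g_{--})=\calC_{\parallel}$, the $b$‑entries being pinned down up to homotopy by regularity of $X_{--},g_{--}$ and Lemma~\ref{lem: uniq MF+eq}.

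\textbf{Reduced case.} By Proposition~\ref{prop: ind phi} — iterated through the transitivity of $\ind$ and $\indb$ exactly as in the first step — together with $\Phi_k(\bcalC_{\parallel})=\calC_{\parallel}$, one gets $\Phi_{n}\circ\Indb_{k\dots m}=\Ind_{k\dots m}\circ\Phi_{m-k+1}$. Since the Knorrer functor also intertwines the two convolutions, $\Phi_{n}(\calF\,\bar{\star}\,\calG)=\Phi_{n}(\calF)\star\Phi_{n}(\calG)$, applying $\Phi_{n}$ to $\Indb_{k\dots m}(\calF)\,\bar{\star}\,\Indb_{k\dots m}(\calG)$ and invoking the previous step yields $\Phi_{n}\bigl(\Indb_{k\dots m}(\calF)\,\bar{\star}\,\Indb_{k\dots m}(\calG)\bigr)=\Phi_{n}\bigl(\Indb_{k\dots m}(\calF\,\bar{\star}\,\calG)\bigr)$; as $\Phi_{n}$ is an embedding of categories, the identity descends. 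Unitality is the same Koszul‑ideal bookkeeping as above, now carried out directly on $\calXr_2$ where the relevant ideal is literally $I_{--}$.

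\textbf{Main obstacle.} Nothing here is deep — the corollary is essentially a formal consequence of Propositions~\ref{prop: ind homo}, \ref{prop: ind trans}, \ref{prop: unit}, \ref{prop: ind phi}. The only genuine work is the bookkeeping in the first step (matching the associativity datum of Proposition~\ref{prop: ind trans} with the convolution so that the two applications of Proposition~\ref{prop: ind homo} can be composed) and the explicit but routine verification, in the unitality checks, that the Koszul factors produced by $i_{k,m,*}$ and its reduced analogue glue to the unit complex — i.e. the block decomposition of the strictly lower‑triangular part of an element of $G_n$ together with the additivity of the potential under that decomposition.
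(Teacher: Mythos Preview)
Your proposal is correct and is precisely the unpacking the paper has in mind when it says the corollary follows ``immediately'' from the results of the previous section and the properties of the unit: you are combining Proposition~\ref{prop: ind homo} (compounded via Proposition~\ref{prop: ind trans}) with Proposition~\ref{prop: unit}, and transporting to the reduced case through the Knorrer embedding $\Phi_n$ and Proposition~\ref{prop: ind phi}. The paper gives no further details beyond that one-line justification, so there is nothing to compare.
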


\begin{corollary}\label{cor: commute non red}For any $k,m,k',m'$ such that $k<m<k'<m'$ and
 \[\calF\in \MF_{B_{m-k+1}^2}(\calX_2(G_{m-k+1}),W),\]
\[\calG\in \MF_{ B_{m'-k'+1}^2}(\calX_2(G_{m'-k'+1}),W)\] we have
$$ \Ind_{k\dots m} (\calF)\star\Ind_{k'\dots m'} (\calG)=\Ind_{k'\dots m'} (\calG)\star\Ind_{k\dots m} (\calF).$$
\end{corollary}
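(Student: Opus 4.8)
The plan is to exhibit both $\Ind_{k\dots m}(\calF)$ and $\Ind_{k'\dots m'}(\calG)$ as values of one common multi-block induction functor on tuples that occupy \emph{disjoint} tensor slots, and then to exploit that the convolution on a product of the algebras is computed slotwise while $\calC_{\parallel}$ is a two-sided unit. Since $k<m<k'<m'$, the strand sets $\{k,\dots,m\}$ and $\{k',\dots,m'\}$ are disjoint, so they lie in different diagonal blocks of the parabolic $P\subset G_n$ whose consecutive blocks are indexed by the consecutive intervals $[1,k-1]$, $[k,m]$, $[m+1,k'-1]$, $[k',m']$, $[m'+1,n]$ (discard any empty interval). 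By repeatedly applying the transitivity Proposition~\ref{prop: ind trans} one obtains a well-defined five-block induction functor $\ind_{\underline{\ell}}$ into $\MF_{B_n^2}(\calX_2(G_n),W)$, and by repeatedly applying Proposition~\ref{prop: ind homo} this $\ind_{\underline{\ell}}$ is a homomorphism of convolution algebras, the source carrying the componentwise $\star$-product.

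First I would establish that induction takes units to units: $\ind_k(\calC_{\parallel}\boxtimes\calC_{\parallel})\simeq\calC_{\parallel}$ for $\calC_{\parallel}$ of any ranks $k$, $n-k$. From the explicit form of the unit in section~\ref{sec: unit}, $\calC_{\parallel}=\mathrm{K}^{W_1}(X_{--})\otimes\mathrm{K}^{W_2}(g_{--})$, and from the push-forward formula in section~\ref{ssec: comp ins}, a direct check shows that the non-equivariant shadow of $i_{k*}(p_k^*(\calC_{\parallel}\boxtimes\calC_{\parallel}))$ is the Koszul matrix factorization $\mathrm{K}^{W}$ of the regular sequence given by \emph{all} strictly-lower-triangular entries of $X$ and of $g$: the within-block entries come from $p_k^*(\calC_{\parallel}\boxtimes\calC_{\parallel})$, and the below-block entries $\Pi^k_{--}(X)$, $\Pi^k_{--}(g_{12})$ from the Koszul factor produced by $i_{k*}$, with total potential $W=W_1+W_2$. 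This is exactly the data presenting $\calC_{\parallel}$ of rank $n$, so the two are homotopy equivalent by the uniqueness statement of section~\ref{ssec: str eq Kosz} (in the equivariant form of Lemma~\ref{lem: uniq MF+eq}); equivalently this follows from the identity $\bcalC_{\parallel}=i^{\parallel}_*(\calO)$ by base change. Iterating, $\calC_{\parallel}$ of any rank is $\ind$ of a box-product of lower-rank copies of $\calC_{\parallel}$, so combining this with Proposition~\ref{prop: ind trans} yields
$$\Ind_{k\dots m}(\calF)=\ind_{\underline{\ell}}(\calC_{\parallel}\boxtimes\calF\boxtimes\calC_{\parallel}\boxtimes\calC_{\parallel}\boxtimes\calC_{\parallel}),\qquad \Ind_{k'\dots m'}(\calG)=\ind_{\underline{\ell}}(\calC_{\parallel}\boxtimes\calC_{\parallel}\boxtimes\calC_{\parallel}\boxtimes\calG\boxtimes\calC_{\parallel}),$$
with $\calF$ in the slot for the block $[k,m]$ and $\calG$ in the slot for $[k',m']$.

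To conclude, I would apply the homomorphism property of $\ind_{\underline{\ell}}$. In the source the $\star$-product is computed slotwise, and $\calC_{\parallel}$ is a two-sided unit by Proposition~\ref{prop: unit}, so both
$$(\calC_{\parallel}\boxtimes\calF\boxtimes\calC_{\parallel}\boxtimes\calC_{\parallel}\boxtimes\calC_{\parallel})\star(\calC_{\parallel}\boxtimes\calC_{\parallel}\boxtimes\calC_{\parallel}\boxtimes\calG\boxtimes\calC_{\parallel})$$
and the product in the opposite order equal $\calC_{\parallel}\boxtimes\calF\boxtimes\calC_{\parallel}\boxtimes\calG\boxtimes\calC_{\parallel}$. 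Applying $\ind_{\underline{\ell}}$ and using that it is a convolution homomorphism gives
$$\Ind_{k\dots m}(\calF)\star\Ind_{k'\dots m'}(\calG)=\ind_{\underline{\ell}}(\calC_{\parallel}\boxtimes\calF\boxtimes\calC_{\parallel}\boxtimes\calG\boxtimes\calC_{\parallel})=\Ind_{k'\dots m'}(\calG)\star\Ind_{k\dots m}(\calF),$$
which is the assertion.

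The main obstacle is the unit-preservation lemma $\ind_k(\calC_{\parallel}\boxtimes\calC_{\parallel})\simeq\calC_{\parallel}$ and, with it, the bookkeeping showing that both $\Ind_{k\dots m}$ and $\Ind_{k'\dots m'}$ factor through the single functor $\ind_{\underline{\ell}}$; after that everything is formal. A minor point to verify is that the three-block transitivity of Proposition~\ref{prop: ind trans} and the two-block homomorphism property of Proposition~\ref{prop: ind homo} indeed assemble into the corresponding statements for $\ind_{\underline{\ell}}$, which is a straightforward induction on the number of blocks.
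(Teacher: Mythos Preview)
Your proposal is correct and follows essentially the same approach the paper intends: the paper states that Corollaries~\ref{cor: ind}, \ref{cor: commute non red}, and \ref{cor: commute} follow immediately from the transitivity of induction (Proposition~\ref{prop: ind trans}), the homomorphism property (Proposition~\ref{prop: ind homo}), and the unit property (Proposition~\ref{prop: unit}), without spelling out the argument. Your write-up is precisely the unpacking of that sentence --- building the common multi-block $\ind_{\underline{\ell}}$, verifying $\ind_k(\calC_\parallel\boxtimes\calC_\parallel)\simeq\calC_\parallel$, and then reducing to slotwise commutation with the unit --- so there is nothing materially different to compare.
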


\begin{corollary}\label{cor: commute}For any $k,m,k',m'$ such that $k<m<k'<m'$ and
 $$\calF\in \MF_{ B_{m-k+1}^2}(\calXr_2(G_{m-k+1}),\Wr),\quad
\calG\in \MF_{ B_{m'-k'+1}^2}(\calXr_2(G_{m'-k'+1}),\Wr)$$ we have
$$ \Indb_{k\dots m} (\calF)\bar{\star}\Indb_{k'\dots m'} (\calG)=\Indb_{k'\dots m'} (\calG)\bar{\star}\Indb_{k\dots m} (\calF).$$
\end{corollary}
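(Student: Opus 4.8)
The plan is to prove Corollary~\ref{cor: commute} by first establishing its non-reduced version, Corollary~\ref{cor: commute non red}, and then transporting it through the Knorrer embedding $\Phi_n$. For the non-reduced statement the idea is to rewrite both composites $\Ind_{k\dots m}(\calF)\star\Ind_{k'\dots m'}(\calG)$ and $\Ind_{k'\dots m'}(\calG)\star\Ind_{k\dots m}(\calF)$ in one and the same normal form produced by a single ``refined'' induction functor. Concretely, since $k<m<k'<m'$, cut $\{1,\dots,n\}$ into the five consecutive intervals $[1,k-1]$, $[k,m]$, $[m+1,k'-1]$, $[k',m']$, $[m'+1,n]$ (the outer ones possibly empty) and let $\ind_\natural\colon\prod_{i=1}^{5}\MFs^{\circ}_{n_i}\to\MFs^{\circ}_{n}$ be the corresponding induction, obtained by iterating $\ind_k$ and identified with an iterate of the $\ind_{k,m}$'s via Proposition~\ref{prop: ind trans}.

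First I would record that, by Proposition~\ref{prop: ind trans} applied repeatedly, together with the fact that the $\Ind$-functors are homomorphisms of convolution algebras (Corollary~\ref{cor: ind}) and hence carry $\calC_{\parallel}$ to $\calC_{\parallel}$, one has
\[
\Ind_{k\dots m}(\calF)=\ind_\natural(\calC_{\parallel}\boxtimes\calF\boxtimes\calC_{\parallel}\boxtimes\calC_{\parallel}\boxtimes\calC_{\parallel}),\qquad \Ind_{k'\dots m'}(\calG)=\ind_\natural(\calC_{\parallel}\boxtimes\calC_{\parallel}\boxtimes\calC_{\parallel}\boxtimes\calG\boxtimes\calC_{\parallel}),
\]
where in each slot $\calC_{\parallel}$ denotes the unit of the relevant convolution algebra. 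Iterating Proposition~\ref{prop: ind homo} shows that $\ind_\natural$ is a homomorphism of convolution algebras slotwise, so $\Ind_{k\dots m}(\calF)\star\Ind_{k'\dots m'}(\calG)$ equals $\ind_\natural$ applied to the slotwise $\star$-products; using the unit relations of Proposition~\ref{prop: unit} (namely $\calC_{\parallel}\star\calC_{\parallel}=\calC_{\parallel}$ and $\calF\star\calC_{\parallel}=\calC_{\parallel}\star\calF=\calF$, likewise for $\calG$) this collapses to
\[
\Ind_{k\dots m}(\calF)\star\Ind_{k'\dots m'}(\calG)=\ind_\natural(\calC_{\parallel}\boxtimes\calF\boxtimes\calC_{\parallel}\boxtimes\calG\boxtimes\calC_{\parallel}),
\]
and the very same computation with the two factors interchanged yields the identical right-hand side, proving Corollary~\ref{cor: commute non red}. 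To descend to $\calXr_2$ I would then apply the embedding $\Phi_n$ of Proposition~\ref{prop: Knorrer}: by Proposition~\ref{prop: ind phi} (iterated) together with $\Phi(\bcalC_{\parallel})=\calC_{\parallel}$ one gets $\Phi_n\circ\Indb_{k\dots m}=\Ind_{k\dots m}\circ\Phi_{m-k+1}$, and by the corollary $\Phi(\calA)\star\Phi(\calB)=\Phi(\calA\bar{\star}\calB)$ from the reduced-convolution subsection, applying $\Phi_n$ to $\Indb_{k\dots m}(\calF)\bar{\star}\Indb_{k'\dots m'}(\calG)$ and to the reversed product produces equal objects of $\MF_{B_n^2}(\calXr_2,\Wr)$ pulled into $\MF_{B_n^2}(\calX_2,W)$; since $\Phi_n$ is a faithful embedding this forces the two $\bar{\star}$-products to be isomorphic, which is the claim.

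I expect the only genuine work to lie in the bookkeeping of the first two steps: spelling out the iteration of Propositions~\ref{prop: ind trans} and~\ref{prop: ind homo} to an arbitrary ordered partition (rather than the two- and three-block cases in which they are stated), handling the possibly empty outer blocks cleanly, and checking that inducing a unit from a smaller block yields the unit of the larger block -- this last point is exactly where Corollary~\ref{cor: ind} enters. None of this is conceptually difficult, but it is where the care is needed; once the refined induction functor together with its homomorphism and unit properties is set up, the commutation is immediate, since $\calF$ and $\calG$ sit in disjoint slots and each commutes with the interspersed units.
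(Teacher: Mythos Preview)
Your proposal is correct and is precisely the argument the paper has in mind: the paper states Corollaries~\ref{cor: commute non red} and~\ref{cor: commute} without proof because they are meant to be immediate from Proposition~\ref{prop: ind trans} (transitivity, iterated to a five-block refinement), Proposition~\ref{prop: ind homo} (iterated to make the refined induction a convolution homomorphism), Proposition~\ref{prop: unit} (slotwise unit cancellation), and then the Knorrer embedding $\Phi_n$ together with Proposition~\ref{prop: ind phi} and the corollary $\Phi(\calA)\star\Phi(\calB)=\Phi(\calA\,\bar{\star}\,\calB)$ to descend to the reduced side. Your identification of the one place requiring care---that inducing the unit from a sub-block yields the unit of the larger block (equivalently, $\calC_\parallel^{(i)}=\calC_\parallel$)---matches exactly what the paper flags in \S\ref{ssec:gens} as ``elementary to check''.
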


\subsection{Generators of the braid group}\label{ssec:gens}

When $n=2$ the potential is $\Wr(X,g,Y)=\Tr(X\Ad_g(Y))$ 
where
$$ g=\begin{bmatrix} g_{11}&g_{12}\\ g_{21}& g_{22}\end{bmatrix},\quad X=\begin{bmatrix} x_{11}&x_{12}\\ 0& x_{22}\end{bmatrix},\quad Y=\begin{bmatrix} 0& y_{12}\\0&0\end{bmatrix}$$
A direct computation shows that it factors:
$$\Wr(X,g,Y)=y_{12}(g_{11}(x_{11}-x_{22})+g_{21}x_{12})g_{21}/\Delta,\quad \Delta=\det(g).$$

If \(n=2\) then the unit matrix factorization from section~\ref{sec: unit} is:
\[ \crX_{\parallel}=\Phi(\begin{bmatrix}g_{21}\Delta^{-1}& y_{12}(g_{11}(x_{11}-x_{22})+g_{21}x_{12})\end{bmatrix}),\]

Let also introduce the matrix factorizations $\crX_{+}$,  $\crX_{\bullet}$:
 \[\SymbolPrint{\crX_{\bullet}}=\Phi(\begin{bmatrix}y_{12}\Delta^{-1}& g_{21}(g_{11}(X_{11}-X_{22})+g_{21}X_{12})\end{bmatrix}),\]
\begin{equation}\label{eq: C+}
\SymbolPrint{\crX_{+}}=\Phi(\begin{bmatrix}g_{21}y_{12}\Delta^{-1}& (g_{11}(x_{11}-x_{22})+g_{21}x_{12})\end{bmatrix}).
\end{equation}
The complex $\crX_+$, as we will see later, correspond to the positive crossing of two strands in the braid.
To define the complex corresponding to the negative crossing we  need to shift the weights of $B^2$ action on $\crX_+$.

Let $\chi_1,\chi_2$ be the generators of the group of characters of $B$:
$$\SymbolPrint{\chi_1}\left(\begin{bmatrix}a&b\\0&c\end{bmatrix}\right)=a,\quad \SymbolPrint{\chi_2}\left(\begin{bmatrix}a&b\\0&c\end{bmatrix}\right)=c.$$
Bellow we use convention that $\calF\brwsh{\chi'}{\chi''}\in \MF_{B^2}(\calX_2(G_2),W)$  is obtained
from the complex $\calF$ by twisting the action of the first factor in $B^2$ by the character
$\chi'$ and the action of the second copy is twisted by the character $\chi''$. With these conventions in mind we have the
following formula for the complex of negative crossing.
\begin{equation}\label{eq: C-}
 \SymbolPrint{\crX_-}:=\crX_+\brwsh{-\chi_1}{\chi_2}.
\end{equation}

We postpone the description of the $T_{sc}$-equivariant structure of matrix factorizations from above till the
section~\ref{sec: two str}.
In the section~\ref{sec: two str} we show that the convolution of  $\crX_{+}$  and $\crX_-$  is homotopic to \(\crX_\parallel\).

To define the generators of $\Brgr_n$ for general $n$ we use our induction functor:
$$\SymbolPrint{\crX_{\pm}^{(i)}}:=\Ind_{ii+1}(\crX_{\pm}),\quad \crX_{\parallel}^{(i)}:=\Ind_{ii+1}(\crX_\parallel).$$

 It is elementary to check that $\crX_{\parallel}^{(i)}$ is independent of $i$ and is a unit in the convolution algebra (see next section for the proof).
The main result of the first part of the paper is the following theorem

\begin{theorem}\label{thm: braids} The complexes $\crX_{\pm}^{(i)}$ satisfy the braid relations
\begin{gather}
\crX_{+}^{(i)}\star\crX_{-}^{(i)}\sim\crX_\parallel,\label{eq: inverse}\\
\crX_{+}^{(i)}\star \crX_{+}^{(i+1)} \star \crX_{+}^{(i)}\sim \crX_{+}^{(i+1)}\star \crX_{+}^{(i)}\star \crX_{+}^{(i+1)},\label{eq: cubic}\\
\crX_{+}^{(i)}\star \crX_{+}^{(j)}\sim\crX_{+}^{(j)}\star \crX_{+}^{(i)},\quad |i-j|>1.\label{eq: commuting}
\end{gather}
where \(\sim\) is the homotopy equivalence.
\end{theorem}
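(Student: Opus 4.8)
The plan is to verify the three braid relations one at a time, reducing everything to computations on the reduced space $\calXr_2$ via the Knorrer functor $\Phi$, which is an embedding of convolution algebras (Proposition~\ref{prop: Knorrer} and its corollaries). Since $\crX_\pm^{(i)} = \Ind_{ii+1}(\crX_\pm)$ and $\Ind_{k\dots m}$ is a homomorphism of convolution algebras (Corollary~\ref{cor: ind}), the commutation relation \eqref{eq: commuting} is immediate: for $|i-j|>1$ the intervals $\{i,i+1\}$ and $\{j,j+1\}$ are disjoint, so after possibly relabeling we are in the situation of Corollary~\ref{cor: commute non red} (or Corollary~\ref{cor: commute} on the reduced side), which gives $\crX_+^{(i)}\star\crX_+^{(j)}\sim\crX_+^{(j)}\star\crX_+^{(i)}$ with no further work. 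So the content is entirely in \eqref{eq: inverse} and \eqref{eq: cubic}.

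For the invertibility relation \eqref{eq: inverse}, because $\Ind_{ii+1}$ is a homomorphism it suffices to prove $\crX_+\star\crX_-\sim\crX_\parallel$ in $\MF_{B_2^2}(\calXr_2(G_2),\Wr)$, i.e.\ the $n=2$ case. Here everything is explicit: using the factorization $\Wr(X,g,Y)=y_{12}(g_{11}(x_{11}-x_{22})+g_{21}x_{12})g_{21}/\Delta$ from Section~\ref{ssec:gens}, the factorizations $\crX_\pm$ are rank-one Koszul matrix factorizations $\mathrm{K}^{\Wr}(a,b)$ for explicit linear/quadratic functions. First I would compute the convolution $\crX_+\star\crX_-$ on $\calXr_3(G_2)$ by forming $\bar\pi_{12}^*(\crX_+)\otimes\bar\pi_{23}^*(\crX_-)$, then applying the Chevalley--Eilenberg functor $\CE_{\frn^{(2)}}$ for the middle copy of $\frn_2\cong\CC$ and extracting the $T^{(2)}$-invariant part, then pushing forward along $\bar\pi_{13}$. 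Since $\frn_2$ is one-dimensional the $\CE$ complex is short, and the $g_{21}$-entries combined with the extra Koszul generators should let me eliminate the $g_{12}$-variable of the middle factor by the row operations of Section~\ref{ssec: row ops}; the character shift $\brwsh{-\chi_1}{\chi_2}$ in the definition of $\crX_-$ is exactly what makes the surviving $B^2$-equivariant structure match that of $\crX_\parallel$. After contracting the acyclic pieces one is left with a rank-one Koszul factorization on $\calXr_2(G_2)$ which, up to the allowed homotopy-equivalence of Koszul factorizations (changing the $b_i$ with $\sum a_ib_i$ fixed), is $\crX_\parallel$. The only subtlety is bookkeeping of the $T_{sc}$-grading, which is postponed to Section~\ref{sec: two str} and I would simply cite that.

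The cubic relation \eqref{eq: cubic} is the real obstacle. Again by the homomorphism property of $\Ind_{k\dots m}$, it reduces to the $n=3$ statement $\crX_+^{(1)}\star\crX_+^{(2)}\star\crX_+^{(1)}\sim\crX_+^{(2)}\star\crX_+^{(1)}\star\crX_+^{(2)}$ in $\MF_{B_3^2}(\calXr_2(G_3),\Wr)$, where now $\crX_+^{(i)}=\Indb_{ii+1}(\crX_+)$. The strategy is to compute both triple convolutions explicitly as generalized equivariant Koszul matrix factorizations on a suitable fiber product of copies of $\calXr_3$, using the description of $\Indb_{ii+1}$ in terms of $\bar i_{k*}$ and $\bar p_k^*$ from Section~\ref{sec: inc}, the explicit $n=2$ generator $\crX_+$ from \eqref{eq: C+}, and repeated applications of the Knorrer reduction and the row-operation lemmas. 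Concretely I would: (i) unfold each side into a tensor product over $\calXr_4(G_3)$ (three $G$-factors) of pullbacks of $\crX_+$ together with the correction Koszul complexes coming from the three inclusions $P_{i}\hookrightarrow G_3$; (ii) use first-type row operations (which do not affect the non-equivariant shadow, Section~\ref{ssec: row ops}) to eliminate the lower-triangular $X$- and $g$-entries forced to vanish; (iii) apply the $\CE$-functors for the two middle copies of $\frn_3$ and take torus invariants; and (iv) identify the two resulting matrix factorizations by exhibiting an explicit isomorphism of their positive (Koszul) parts, then invoking Lemma~\ref{lem: uniq MF+eq} to conclude that the full equivariant matrix factorizations agree up to homotopy. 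The main difficulty is that both sides, before simplification, are large non-strongly-equivariant complexes with nontrivial correction differentials $\partial,\partial^-$, so matching them requires carefully tracking the $\frn^2$-equivariant data through all the Knorrer reductions and $\CE$-contractions; this is precisely why the paper devotes Sections~\ref{sec: two str}, \ref{sec: three str 2}, \ref{sec: three str 3} to it. I expect to organize the $n=3$ computation so that as much as possible is done on the reduced space $\calXr_2(G_3)$ where the generators are simplest, reserving the full space $\calX_2$ only for the final uniqueness argument, and to exploit the symmetry $i\leftrightarrow i+1$ of the setup to cut the work roughly in half.
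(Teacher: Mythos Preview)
Your overall architecture---reduce \eqref{eq: commuting} to Corollary~\ref{cor: commute}, reduce \eqref{eq: inverse} and \eqref{eq: cubic} via the homomorphism property of $\Ind_{k\dots m}$ to $n=2$ and $n=3$, and then compute explicitly on the reduced space---matches the paper exactly, and your description of the $n=2$ inverse computation is essentially what is carried out in Section~\ref{sec: two str}.

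Where your plan diverges from the paper, and where there is a genuine gap, is in step (iv) for the cubic relation. You expect both triple products to be \emph{generalized Koszul} matrix factorizations, so that matching the positive parts is a matter of comparing Koszul data and then invoking Lemma~\ref{lem: uniq MF+eq}. In fact, after the $\CE$-contractions the positive part of $\bcalC_+^{(1)}\bar\star\bcalC_+^{(2)}\bar\star\bcalC_+^{(1)}$ is \emph{not} a Koszul complex: three of the four Koszul generators cut out an ideal $\tilde I_{121}$ which is only of codimension~$2$ (the generators are linearly dependent over the Borel locus), so what survives is a Hilbert--Burch resolution $R^2\to R^3\to R$ tensored with one honest Koszul two-term factor. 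Recognizing this and invoking Hilbert--Burch is the key technical step (Section~\ref{sec: three str 3}), and it is not visible from the row-operation strategy you outline. The paper also economizes by computing the double product first (Section~\ref{sec: three str 2}) as an explicit Koszul $\mathrm{K}^{\Wr}(I_{12})$, and only then convolving with the third crossing; doing the triple product in one pass on $\calXr_4(G_3)$, as you propose, makes the Hilbert--Burch structure harder to see.

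Finally, for the identification of the two sides: the positive parts $C_{121}$ and $C_{212}=\mathrm{D}(C_{121})$ are \emph{different} resolutions of the same module $R/I_{121}=R/I_{212}$ (this equality of ideals, Proposition~\ref{prop: ideals aux}, is the geometric heart of the argument and plays the role of your ``symmetry $i\leftrightarrow i+1$''). Since they are not literally the same complex, Lemma~\ref{lem: uniq MF+eq} does not apply directly; instead one constructs a homotopy equivalence between the resolutions and lifts it to the matrix factorizations via Lemma~\ref{lem: ext mor}, as in Corollary~\ref{cor: cubic}.
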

\begin{proof}
The relation (\ref{eq: inverse}) is proven for two strand braids in lemma~\ref{thm: two strands}. Hence by corollary~\ref{cor: ind} the relation (\ref{eq: inverse}) holds in general.
The equation (\ref{eq: cubic}) is proven for three strand braids in corollary~\ref{cor: cubic}. Finally, the commuting relation (\ref{eq: commuting}) follows from
the corollary~\ref{cor: commute}.
\end{proof}

For a given element $\beta=\sigma_{i_1}^{\epsilon_1}\cdots\sigma_{i_\ell}^{\epsilon_\ell}$ we introduce the following notations:
$$ \calC_\beta=\calC_{\epsilon_1}^{(i_1)}\star\dots \calC_{\epsilon_\ell}^{(i_\ell)},\quad \bcalC_\beta=\bcalC_{\epsilon_1}^{(i_1)}\bar{\star}\dots \bar{\star}\bcalC_{\epsilon_\ell}^{(i_\ell)}.$$

\section{General properties of convolution algebra}\label{sec: aux sec}

\subsection{Auxiliary Lemma} In the proof of the braid relations and Markov moves we will use some generalization of the previous corollary which we explain below.
The statement that we prove in this section allows us to use smaller intermediate space for computation of the convolution.

Let us introduce notation $\SymbolPrint{Q_{k\dots m}}\subset G_n$ for subgroup with Lie algebra $\SymbolPrint{\frq_{k\dots m}}$ spanned by $E_{ij}$, $i\le j$, $i,j\in [1,n]$ and by $E_{ij}$, $i>j$,
$i,j\in [k,m]$. Respectively, we denote by $\SymbolPrint{\Pi_+^{\overline{k\dots m}}}$ the projection map  $\Lie(G_n)\to \frq_{k\dots m}$ and $\SymbolPrint{\Pi_{--}^{\overline{k\dots m}}}$ is defined by
$\mathrm{Id}=\Pi_+^{\overline{k\dots m}}+\Pi_{--}^{\overline{k\dots m}}$. Similarly we define $\SymbolPrint{\Pi_{++}^{\overline{k\dots m}}}(X)=\Pi_{--}^{\overline{k\dots m}}(X^t)^t$, where $t$ stands for transpose.  Let us also denote by $\SymbolPrint{\Pi^{\overline{k\dots m}}}$ the projection $\frg_n\to \frg_{m-k+1}$ that extracts the block of the matrix elements
with entries $ij$, $i,j\in [k,m]$. We use the same notation for the maps of the corresponding groups.

Let us introduce the space
$\SymbolPrint{\calX^\circ_3(G_n,G_{k\dots m})}=\frg_n\times G_n\times G_{k-m+1}\times \frn_n\times \frn_{k-m+1}\times \frn_n$. Let us also fix an embedding
$\SymbolPrint{i_{k\dots m}}$ of this space inside $\calX_3$:

\[ i_{k\dots m}(X,g_{12},g_{23},\vec{Y})=(X,g_{12},i_{k\dots m}(g_{23}),Y_1,i_{k\dots m}(Y_2)+\Pi_{++}^{\overline{k\dots m}}(\Ad_{g_{23}}(Y_3)),Y_3),\]
where \(\vec{Y}=(Y_1,Y_2,Y_3)\), \(i_{k\dots m}:G_{k-m+1}\to G_n\) and \(i_{k\dots m}: \frn_{k-m+1}\to \frn_n\) are the natural embeddings.

Respectively we define maps \[\pi_{12}^{\circ}:\calX^\circ_3(G_n,G_{k\dots m})\to \calX_2(G_{n}),\]
\[\pi_{23}^{\circ}:\calX^\circ_3(G_n,G_{k\dots m})\to \calX_2(G_{m-k+1}),\] \[\pi_{13}^{\circ}:\calX^\circ_3(G_n,G_{k\dots m})\to \calX_2(G_n),\]
\begin{equation}\label{eq: pi12}
\pi_{12}^{\circ}(X,g_{12},g_{23},Y_1,Y_2,Y_3)=(X,g_{12},Y_1,i_{k\dots m}(Y_2)+\Pi_{++}^{\overline{k\dots m}}(\Ad_{g_{23}}(Y_3)).
\end{equation}
\begin{equation}\label{eq: pi23}
\pi_{23}^{\circ}(X,g_{12},g_{23},Y_1,Y_2,Y_3)=(\Pi^{\overline{k\dots m}}( \Ad_{g_{12}}^{-1}(X)),g_{23},Y_2,\Pi^{\overline{k\dots m}}(Y_3)),
\end{equation}
$$\pi_{13}^{\circ}(X,g_{12},g_{23},Y_1,Y_2,Y_3)=(X,g_{12}g_{23},Y_1,Y_3).$$
These maps are the restrictions of the corresponding maps from $\calX_3$ on the embedded subvariety $\calX^\circ_3(G_n,G_{k\dots m})$.

Similarly, we define $\calX^\circ_3(G_{k\dots m},G_n)=\frg_n\times G_{k\dots m}\times G_n\times \frn_n\times \frn_{k-m+1}\times \frn_n$. Let us also fix an embedding $i_{k\dots m}$ of this space inside $\calX_3$:
$$ i_{k\dots m}(X,g_{12},g_{23},\vec{Y})=(X,i_{k\dots m}(g_{12}),g_{23},Y_1,i_{k\dots m}(Y_2)+\Pi_{++}^{\overline{k\dots m}}(\Ad^{-1}_{g_{12}}(Y_1)),Y_3),$$
\(\vec{Y}=(Y_1,Y_2,Y_3)\).
The maps $\pi_{ij}^{\circ}$ are defined analogously to the previous case: by the restriction from ambient space.

The spaces $\calX^\circ_3(G_n,G_{k\dots m})$ and $\SymbolPrint{\calX^\circ_3(G_{k\dots m},G_n)}$ have unique $B_n^3$ equivariant structure that makes the maps $\pi_{ij}^\circ$ $B_n^3$-equivariant.

\begin{lemma}\label{prop: small flag}  For any \[\calF\in \MF_{B_{m-k+1}^2}(\calX_2(G_{m-k+1}),W),\calG\in \MF_{B_n^2}(\calX_2(G_n),W)\] we have
$$ \calG\star\Ind_{k\dots m}(\calF)=\CE_{\frn_{m-k+1}^{(2)}}\pi_{13*}^\circ(\pi_{12}^{\circ*}(\calG)\otimes \pi_{23}^{\circ*}(\calF))^{T^{(2)}_{m-k+1}},$$
$$ \Ind_{k\dots m}(\calF)\star \calG=\CE_{\frn_{m-k+1}^{(2)}}\pi_{13*}^\circ(\pi_{12}^{\circ*}(\calF)\otimes \pi_{23}^{\circ*}(\calG))^{T^{(2)}_{m-k+1}},$$
\end{lemma}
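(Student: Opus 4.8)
The statement asserts that the convolution $\calG\star\Ind_{k\dots m}(\calF)$ can be computed on the smaller intermediate space $\calX^\circ_3(G_n,G_{k\dots m})$ instead of the full $\calX_3$, and symmetrically for $\Ind_{k\dots m}(\calF)\star\calG$. The strategy is to unwind the definition $\Ind_{k\dots m}(\calF)=\ind_{k,m}(\calC_\parallel\boxtimes\calF\boxtimes\calC_\parallel)$, and to reduce to a situation where Lemma~\ref{lem: crit}, the contraction of Koszul factors, and a base-change identity between push-forwards do the work — exactly as in the proofs of Proposition~\ref{prop: unit} and Proposition~\ref{prop: ind homo}. I would treat only the second identity (the first being symmetric, with $\bar\pi$-type maps replaced on the other side).

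First I would write out $\calF\star\calG$, for $\calF$ in the smaller group sitting inside $G_n$ via $\ind_{k,m}$, as the $\CE_{\frn_n^{(2)}}(\cdots)^{T_n^{(2)}}$ of $\pi_{12}^{\circ*}(\Ind_{k\dots m}(\calF))\otimes\pi_{23}^{\circ*}(\calG)$ on $\calX_3^\circ(G_n)$. By the computational description of the induction push-forward in subsection~\ref{ssec: comp ins}, $\Ind_{k\dots m}(\calF)^\sharp$ is $\widetilde{\calF}$ tensored with the Koszul factors $[\Pi_{--}(X),\ast]$, $\mathrm{K}(\Pi_{--}(g_{12}))$, and the $\calC_\parallel$-factors on both sides (which impose $g_{--}=0$ in the first and third arguments). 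Pulling back along $\pi_{12}^\circ$ and tensoring with $\pi_{23}^{\circ*}(\calG)$, I would use row transformations of the first kind (admissible, so no constraint~(\ref{eq: adm row tr}) to check) against each Koszul factor to eliminate, in turn: the matrix entries $\Pi_{--}^k$ (and the $Q$-complement entries $\Pi^{\overline{k\dots m}}_{--}$) of $X$, of $g_{12}$, and the off-block components of $Y_1,Y_2$. Exactly as in the proof of Proposition~\ref{prop: ind homo}, setting these variables to their $\pi_{13}$-pushed-forward values forces the partner entries of the Koszul differentials to be the appropriate derivatives of $\pi_{13}^{\circ*}(W)$, and contracting along the now-acyclic Koszul pieces gives a homotopy equivalence of $B_n^3$-equivariant factorizations over $\pi_{13}^{\circ*}(\CC[\calX_2])$ living on the locus which — by comparing defining equations — is precisely $\calX^\circ_3(G_n,G_{k\dots m})$ with the stated embedding $i_{k\dots m}$ and the maps~(\ref{eq: pi12}),~(\ref{eq: pi23}). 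Since $\frn_n^{(2)}$ does not act on the eliminated variables, $\CE_{\frn_n^{(2)}}$ commutes with the contraction, and the Hochschild–Serre argument of Proposition~\ref{prop: ind homo} — together with Proposition~\ref{prop: Lie cohs} computing $\CE_{\Lie(I'_k)}(\CC[P_k])=\CC[G_k\times G_{n-k}]$ — collapses the $\CE_{\frn_n^{(2)}}$ down to $\CE_{\frn_{m-k+1}^{(2)}}$ applied to the restriction of the factorization to $\calX^\circ_3(G_n,G_{k\dots m})$. Taking the $T^{(2)}_{m-k+1}$-invariant part yields the right-hand side.

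I would organize the bookkeeping so that the $B_n^3$-equivariant correction differentials $\partial^\pm$ are carried along formally (they satisfy~(\ref{eq: cor diffs}), so contracting the Koszul factors keeps them in the $\Mod_{per}$ sense), invoking Lemmas~\ref{lem: ext MF+eq} and~\ref{lem: uniq MF+eq} for existence and uniqueness of the extended factorization at each stage; this is the mechanism that legitimizes "contract and restrict" without ever writing the corrections explicitly. The second identity $\Ind_{k\dots m}(\calF)\star\calG$ is proven the same way, now eliminating variables coming from the $g_{23}$ side and using $\calX^\circ_3(G_{k\dots m},G_n)$ with its embedding.

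**Main obstacle.** The delicate point is tracking which variables get eliminated in which order and verifying that, after all eliminations, the surviving locus matches $\calX^\circ_3(G_n,G_{k\dots m})$ on the nose — in particular checking that the composite constraint on $Y_2$ produced by the $\calC_\parallel$ factor plus the Knörrer-type Koszul factor is exactly $Y_2 = i_{k\dots m}(\cdot)+\Pi_{++}^{\overline{k\dots m}}(\Ad_{g_{23}}(Y_3))$ as in~(\ref{eq: pi12}), and that the residual potential agrees with $\pi_{13}^{\circ*}(W)$ restricted to this locus. This is a purely computational check of the same flavour as, but somewhat more intricate than, the one in Proposition~\ref{prop: ind homo}, and it is where essentially all the real content lies; everything else is a mechanical reprise of the base-change and Chevalley–Eilenberg collapse arguments already established.
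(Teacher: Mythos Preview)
Your proposal follows a \emph{different route} from the paper, and while the endgame coincides, the reduction step is organized very differently.

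The paper does not unwind $\Ind_{k\dots m}(\calF)=\ind_{k,m}(\calC_\parallel\boxtimes\calF\boxtimes\calC_\parallel)$ at the level of Koszul factors. Instead, \emph{before} starting the proof it establishes an alternative description of the induction functor itself: introducing the subgroup $Q_{k\dots m}\subset G_n$ and the space $\calX^\bullet_2(Q_{k\dots m})=\frg_n\times Q_{k\dots m}\times\frn_{m-k+1}\times\frn_n$ with its embedding $j_{k\dots m}$ into $\calX_2(G_n)$ and projection $\Pi^{\overline{k\dots m}}$ to $\calX_2^\circ(G_{m-k+1})$, a single base-change diagram gives $\Ind_{k\dots m}=j_{k\dots m,*}\circ\Pi^{\overline{k\dots m},*}$. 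With this reformulation, the lemma reduces to one further base-change square (pushing $j_{k\dots m}$ through the $\pi_{ij}^\circ$) followed by Hochschild--Serre for the short exact sequence $0\to\Lie(J'_{k\dots m})\to\frn_n\to\frn_{m-k+1}\to 0$ and Proposition~\ref{prop: Lie cohs}, which computes $\CE_{\Lie(J'_{k\dots m})}(\CC[Q_{k\dots m}])=\CC[G_{m-k+1}]\otimes\CC[J/J']$. No row transformations or explicit variable eliminations enter.

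Your elimination-based approach should work, but two points need correction. First, after contracting the Koszul factors coming from $\ind_{k,m}$ and the two $\calC_\parallel$'s, the middle group variable does \emph{not} land in $G_{m-k+1}$ directly: the $\calC_\parallel$ factors only force the outer blocks of $g_{12}$ to be upper-triangular, so the surviving locus is $\calX^\circ_3(G_n,Q_{k\dots m})$, not $\calX^\circ_3(G_n,G_{k\dots m})$. The passage from $Q_{k\dots m}$ to $G_{m-k+1}$ is precisely what the Hochschild--Serre step accomplishes. Second, the statement you attribute to Proposition~\ref{prop: Lie cohs} (involving $I'_k$ and $P_k$) is actually the unnumbered proposition inside the proof of Proposition~\ref{prop: ind homo}; Proposition~\ref{prop: Lie cohs} itself is the $J'_{k\dots m}/Q_{k\dots m}$ version, and it is the latter that is needed here. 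What your approach buys is uniformity of method with Propositions~\ref{prop: unit} and~\ref{prop: ind homo}; what the paper's $Q_{k\dots m}$ reformulation buys is that the ``main obstacle'' you identify (matching the surviving locus and potential) is absorbed into a clean base-change identity, with no explicit tracking of row operations or correction differentials required.
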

The key observation in the proof is an alternative description of the induction functor $\Ind_{k\dots m}$. The space $\SymbolPrint{\calX^\bullet_2(Q_{k\dots m})}:=
\frg_n\times Q_{k\dots m}\times \frn_{m-k+1}\times \frn_n$ can be mapped into
$\calX_2(G_n)$ with the map:
\[ \SymbolPrint{j_{k\dots m}}(X,g,Y_1,Y_2)=(X,j_{k\dots m}(g),i_{k\dots m}(Y_1)+\Pi_{++}^{\overline{k\dots m}}(\Ad_g(Y_2)),Y_2)\]
where \(j_{k\dots m}: Q_{k\dots m}\to G_n\) is the natural embedding.

There is also a natural projection map $\Pi^{\overline{k\dots m}}:\calX^\bullet_2(Q_{k\dots m})\to \calX_2^\circ(G_{m-k+1})$ defined by:
$$ \Pi^{\overline{k\dots m}}(X,g,Y_1,Y_2)=(\Pi^{\overline{k\dots m}}(X),\Pi^{\overline{k\dots m}}(g),\Pi^{\overline{k\dots m}}(Y_1),\Pi^{\overline{k\dots m}}(Y_2)).$$
Both of these maps are $B_n^3$-equivariant. In particular, we have a commuting diagram:
  \[ \begin{tikzcd}
 \calX_2^{\bullet}(Q_{k\dots m}) \arrow[dotted]{r}{i'}\arrow[dotted]{d}{\Pi'_1\times\Pi^{\overline{k\dots m}}\times\Pi'_3}   & \calX^\circ_2(P_{k,m})\arrow{d}{p_{k,m}}\arrow{r}{i_{k,m}}&\calX''_{2,n}\\
 \calX'_{2,k}\times \calX''_{2,m-k+1}\times \calX'_{2,n-m} \arrow{r}{i^{\parallel}\times 1\times i^{\parallel}}\arrow{d}{\pi_2} &
 \calX''_{2,k}\times\calX''_{2,m-k+1}\times \calX''_{2,n-m}&\\
\calX^\circ_2(G_{m-k+1})&&
\end{tikzcd}\]
where \(\calX'_{2,k}:=\frg_k\times B_k\times \frb_k\times \frb_k=\calX^{\circ}_2(B_k)\), \(\calX''_{2,k}:=\frg_k\times G_k\times \frb_k\times\frb_k=\calX^{\circ}_2(G_k)\)
and the dotted arrow maps are:
\[i'(X,g,Y_1,Y_2)=(\Pi^{\overline{k\dots m}}_+(X),j_{k\dots m}(g),i_{k\dots m}(Y_1)+
  \Pi^{\overline{k\dots m}}_{++}(\Ad_g(Y_2)),Y_2),\]
\[\Pi'_i(X,g,Y_1,Y_2)=(p_{k,m}^{(i)}(X),p_{k,m}^{(i)}(g),p_{k,m}^{(i))}(\Ad_g(Y_2)),p_{k,m}^{(i)}(Y_2)),\]
here \(p_{k,m}^{(1)}\), \(p_{k,m}^{(3)}\) are the compositions of \(p_{k,m}\) with the projections on \(k\times k\) and \((n-m)\times(n-m)\) blocks.

Using the base change in our previous diagram we obtain
\begin{multline*}
\Ind_{k\dots m}=i_{k,m,*}\circ p_{k,m}^*\circ (i^{\parallel}\times 1\times i^{\parallel})\circ \pi_2^*=i_{k,m,*}\circ i'_*\circ (\Pi'_1\times\Pi^{\overline{k\dots m}}\times\Pi'_3)^*\circ\pi_2^*\\=
j_{k\dots m,*}\circ \Pi^{\overline{k\dots m},*}.
\end{multline*}

\begin{proof}[Proof of proposition~\ref{prop: small flag}]
We prove the first formula the second is analogous.
We have the following commuting diagram of the maps:
  $$ \begin{tikzcd}
 \calX_3^{\circ}(G_n,Q_{k\dots m}) \arrow[dotted]{r}{i_{k\dots m}}\arrow[dotted]{d}{\pi_{12}^\circ\times\pi_{23}^\circ}   & \calX^\circ_3\arrow{d}{\pi_{12}^\circ\times\pi_{23}^\circ}\arrow{r}{\pi_{13}^{\circ}}&\calX^\circ_2\\
\calX_2^\circ\times \calX_2^\bullet(Q_{k\dots m}) \arrow{r}{1\times j_{k\dots m}}\arrow{d}{1\times\Pi^{\overline{k\dots m}}} &                                  \calX^\circ_2\times\calX^\circ_2&\\
\calX^\circ_2(G_n)\times\calX^\circ_2(G_{m-k+1})&&
 \end{tikzcd}$$
where the dotted arrow map $\pi_{12}^\circ\times\pi_{23}^\circ$ is given by
$$ \pi_{12}^{\circ}(X,g_{12},g_{23},Y_1,Y_2,Y_3)=(X,g_{12},Y_1,i_{k\dots m}(Y_2)+\Pi_{++}^{\overline{k\dots m}}(\Ad_{g_{23}}(Y_3)),$$
$$\pi_{23}^{\circ}(X,g_{12},g_{23},Y_1,Y_2,Y_3)=(\Ad_{g_{12}}^{-1}(X),g_{23},Y_2,\Pi_{-}^{\overline{k\dots m}}(Y_3)).$$

Now we can use the base change formula to simplify \( \calG\star\Ind_{k\dots m}(\calF)\) :
\begin{multline*}
\CE_{\frn_n^{(2)}}(\pi^\circ_{13*}\circ (\pi^\circ_{12}\times \pi_{23}^\circ)^*\circ (\mathrm{Id}\times j_{k\dots m})_*\circ (\mathrm{Id}\times\Pi^{\overline{k\dots m}})^*(\calG\boxtimes \calF)))^{T^{(2)}_n}=\\
 \CE_{\frn_n^{(2)}}(\pi^\circ_{13*} \circ  (i_{k\dots m})_*\circ  (\pi^\circ_{12}\times \pi_{23}^\circ)^* \circ(\mathrm{Id}\times\Pi^{\overline{k\dots m}})^*(\calG\boxtimes \calF)))^{T^{(2)}_n}=\\
  \CE_{\frn_n^{(2)}}(\pi^\circ_{13*}\circ (\pi^\circ_{12}\times \pi_{23}^\circ)^*(\calG\boxtimes \calF)))^{T^{(2)}_n},
\end{multline*}
where in the last equation we use $ \pi^\circ_{13}\circ i_{k\dots m}=\pi_{13}^\circ$ and
 $\mathrm{Id}\times\Pi^{\overline{k\dots m}}\circ \pi^\circ_{12}\times \pi_{23}^\circ=\pi^\circ_{12}\times \pi_{23}^\circ$ where the maps in the product
$\pi^\circ_{12}\times \pi_{23}^\circ: \calX_3^\circ(G_n,Q_{k\dots m})\to \calX^\circ_2(G_n)\times\calX^\circ_2(G_{m-k+1})$ are given by the formulas (\ref{eq: pi12}), (\ref{eq: pi23}).

Let us denote by $C$ the matrix factorization $\pi^\circ_{13*}\circ (\pi^\circ_{12}\times \pi_{23}^\circ)^*(\calG\boxtimes \calF)$
For last step  we  use Hochschild-Serre spectral sequence to compute functor $\CE_{\frn_n^{(2)}}$:
$$ \CE_{\frn_{m-k+1}^{(2)}}(\CE_{\Lie(J'_{k\dots m})}(C))\Rightarrow \CE_{\frn_n^{(2)}}(C) $$
where $J_{k\dots m}$ is the kernel of the projection homomorphism $Q_{k\dots m}\to G_{m-k+1}$ and $J'_{k\dots m}$ is its unipotent part.  Let us abbreviate the quotient $J_{k\dots m}/J'_{k\dots m}$ by
$J/J'$.
We have
\begin{equation}\label{eq: CE}
 \CE_{\Lie(J'_{k\dots m})}(C)^{J/J'}=\pi^\circ_{13*}(\CE_{\Lie(J'_{k\dots m})}(\pi^\circ_{12}\times \pi_{23}^\circ)^*(\calG\boxtimes \calF)).
\end{equation}
On the other hand the formulas for the maps $\pi^\circ_{12}$ and $\pi^\circ_{23}$ imply that the differentials of the pull back $(\pi^\circ_{12}\times \pi_{23}^\circ)^*(\calG\boxtimes \calF)$
are invariant with respect to the action of $J'_{k\dots m}$. Hence to complete the computation of (\ref{eq: CE}) we need to  compute
\[ \mathrm{H}_{\Lie}^*(J'_{k\dots m},\CC[\calX^\circ_3(G_n,Q_{k\dots m})])^{J/J'}\] which is a tensor product of \(\CC[\frb\times G_n\times \frn_{m-k+1}\times \frn_n]\) and \(\mathrm{H}_{\Lie}^*(J'_{k\dots m},\CC[Q_{k\dots m}])^{J/J'},\)
and in proposition below we show that  $\CE_{J'_{k\dots m}}(\CC[Q_{k\dots m}])^{J'/J}=\CC[G_{m-k+1}].$
Hence the statement follows since $T^{(2)}_n=(J/J')\times T_{m-k+1}$ where $T_{m-k+1}$ is the maximal torus of $G_{m-k+1}\subset G_n$.

\end{proof}

\begin{proposition}\label{prop: Lie cohs} For any $k,m$, $1\le k<m\le n$ we have
$$\CE_{\Lie(J'_{k\dots m})}(\CC[Q_{k\dots m}])=\CC[G_{m-k+1}]\otimes \CC[J/J'].$$
\end{proposition}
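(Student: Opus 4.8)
The plan is to recognise $Q_{k\dots m}$ as a standard parabolic subgroup of $G_n$ and to identify $J'_{k\dots m}$ with its unipotent radical. Write $Q:=Q_{k\dots m}$. By definition $\Lie(Q)=\frq_{k\dots m}$ is spanned by the $E_{ij}$ with $i\le j$ together with the $E_{ij}$ with $i>j$ and $i,j\in[k,m]$, so $Q$ is exactly the parabolic attached to the block decomposition $\{1\},\dots,\{k-1\},[k,m],\{m+1\},\dots,\{n\}$, with Levi $L=T_{k-1}\times G_{m-k+1}\times T_{n-m}$ (the block--diagonal subgroup) and unipotent radical $U_Q$. The homomorphism $Q\to G_{m-k+1}$ extracting the central block is well defined because elements of $Q$ have no entries below that block; its kernel is $J:=J_{k\dots m}=\{g\in Q:\Pi^{\overline{k\dots m}}(g)=\mathrm{Id}\}$, and a direct inspection of supports shows that $J'=U_Q$ and $J/J'=T_{k-1}\times T_{n-m}$. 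Since $L=G_{m-k+1}\times(T_{k-1}\times T_{n-m})$ as an algebraic group, $\CC[L]=\CC[G_{m-k+1}]\otimes\CC[J/J']$, and the proposition becomes the assertion that the complex $\CE_{\Lie(U_Q)}(\CC[Q])$ has cohomology concentrated in degree $0$, equal to $\CC[L]=\CC[Q/U_Q]$, where $U_Q$ (hence $\Lie(U_Q)$) acts on $\CC[Q]$ by right translation, exactly as in the analogous statement for $P_k$ proved above.

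Next I would use the Levi decomposition $Q=U_Q\rtimes L$ to split off the $L$--direction. Every $q\in Q$ is uniquely $q=ul$ with $u\in U_Q$, $l\in L$, and right translation by $u'\in U_Q$ sends $(u,l)$ to $(u\,\Ad_{l}(u'),\,l)$ because $U_Q$ is normal in $Q$; hence $\CC[Q]=\CC[U_Q]\otimes\CC[L]$ as a module over $\Lie(U_Q)$, the algebra acting only on the first tensor factor, by the right--regular representation of $U_Q$ twisted fibrewise over $L$ by the automorphism $\Ad_{l}$. Twisting the module structure on $\CC[U_Q]$ by an automorphism of $\Lie(U_Q)$ yields an isomorphic Chevalley--Eilenberg complex (apply the automorphism to the exterior algebra factor), and since $\Ad\colon L\to\mathrm{Aut}(\Lie(U_Q))$ is a morphism of varieties this isomorphism is $\CC[L]$--linear in the family, so
\[
\mathrm{H}_{\Lie}^{*}\!\bigl(\Lie(U_Q),\CC[Q]\bigr)\;=\;\mathrm{H}_{\Lie}^{*}\!\bigl(\Lie(U_Q),\CC[U_Q]\bigr)\otimes_{\CC}\CC[L].
\]
It therefore remains to show that the right--regular representation of a unipotent group is acyclic: $\mathrm{H}_{\Lie}^{*}(\Lie(U_Q),\CC[U_Q])=\CC$ in degree $0$ and $0$ otherwise.

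For this last point I would induct on $\dim U_Q$ along the descending central series. Choosing a central one--parameter subgroup $Z\cong\mathbb{G}_a\subset U_Q$ and a splitting of $U_Q\to U'':=U_Q/Z$, centrality gives $\CC[U_Q]=\CC[Z]\otimes\CC[U'']$ with $\Lie(Z)$ acting only on $\CC[Z]\cong\CC[t]$ by $d/dt$; as $d/dt$ is surjective on $\CC[t]$ with kernel the constants, $\mathrm{H}_{\Lie}^{*}(\Lie(Z),\CC[U_Q])=\CC[U'']$ in degree $0$ and vanishes above, with $\Lie(U'')$ acting by its own right--regular representation. The Hochschild--Serre spectral sequence of $0\to\Lie(Z)\to\Lie(U_Q)\to\Lie(U'')\to0$ then degenerates at $E_2$ to $\mathrm{H}_{\Lie}^{*}(\Lie(U''),\CC[U''])$, and the inductive hypothesis (base case $U_Q=\{e\}$) closes the induction. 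Assembling the three steps gives $\CE_{\Lie(J'_{k\dots m})}(\CC[Q_{k\dots m}])=\CC\otimes\CC[L]=\CC[G_{m-k+1}]\otimes\CC[J/J']$, as claimed. The only genuinely technical ingredient is this acyclicity of the regular representation --- the algebraic shadow of the contractibility of a unipotent group --- and the descending--central--series induction reduces it to the elementary surjectivity of $d/dt$ on polynomials; everything else is bookkeeping with the parabolic's Levi decomposition.
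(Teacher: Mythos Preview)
Your proof is correct and is, if anything, cleaner than the paper's. Both arguments ultimately rest on the same two ingredients --- the Hochschild--Serre spectral sequence and the surjectivity of $\partial/\partial t$ on $\CC[t]$ --- but they are organised differently. The paper proves a slightly more general statement (with $Q_{k'\dots m'}$ in place of $Q_{k\dots m}$) by induction on $m-k$: at each step it peels off one layer of $J'_{k\dots m}$ via a short exact sequence with abelian quotient $\CC^{m-k+1}$, invokes Hochschild--Serre, and then does an explicit Poincar\'e--lemma computation with differential operators on that layer, carrying the remaining $Q_{k'\dots m'}$ along throughout. You instead separate the Levi factor once and for all via the decomposition $Q=U_Q\rtimes L$, reducing the whole question to the acyclicity of the right regular representation of the unipotent radical, which you then prove by a clean central--series induction. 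Your approach is more modular and makes transparent that the result is just the standard fact that $\mathrm{H}^*_{\Lie}(\Lie U,\CC[U])=\CC$ for any unipotent $U$, tensored with $\CC[L]$.

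One small simplification you could make: rather than writing $q=ul$ and then untwisting the fibrewise $\Ad_l$--action on $\CC[U_Q]$, write $q=lu$ instead. In those coordinates right translation by $u'\in U_Q$ is simply $(l,u)\mapsto(l,uu')$, so $\CC[Q]=\CC[L]\otimes\CC[U_Q]$ with $\Lie(U_Q)$ acting by the untwisted right regular representation on the second factor and trivially on the first; the $\Ad_l$--family argument becomes unnecessary.
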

\begin{proof}
We show slightly more general statement
$$\CE_{\Lie(J'_{k\dots m})}(\CC[Q_{k'\dots m'}])=
\CC[Q_{k-k'+1\dots m-k'+1}\subset G_{m-k+1}]\otimes \CC[(J/J')],$$
where $k\ge k'$, $m'\le m$.

We use induction by $m-k$ and the Hochschild-Serre Spectral sequence (\cite{Wei}, sect 7.5). Since we have short exact sequence of groups
$$ 1\to J'_{k+1,\dots, m}\longrightarrow J'_{k\dots n}\longrightarrow \CC^{m-k+1}\to 1,$$
the spectra sequence tells us that
$$\CE_{\CC^{m-k+1}}(\CE_{\Lie\left(J'_{k+1\dots m}\right)}(\CC[Q_{\k'\dots m'}]))\Rightarrow \CE_{J_{k\dots m}}(\CC[Q_{k'\dots m'}]).$$
By induction we have
$\CE_{\Lie(J'_{k+1\dots m})}(\CC[Q_{k'\dots m'}])=\CC[Q_{k-k'+2\dots m-k'+1}\subset G_{m-k+2}]$.
After relabeling matrix indices our statement is equivalent to the computation of the  following cohomology
$\mathrm{H}^*_{\Lie}(\CC^{m-k+1},\CC[Q_{k-k'+2\dots m-k'+1}])$. The ring
of polynomial functions on $Q_{k-k'+2\dots m-k'+1}$ is generated by the
elements $E_{kl}^\vee$    
dual to the matrix units $E_{kl}$ of $E$. The generators
$\partial_i$ of the commutative algebra $\CC^{m-k+1}$ act on the ring generators by the formula:
$$\partial_i E^\vee_{kl}=\delta_{k1} E_{il}^\vee.$$
We can apply invertible linear transformation to our system of differential operators to obtain new system of generators:
$$\tilde{\partial}_k=\sum_{i=2}^{m-k+2} (-1)^{i+k}\Delta_{ik}\partial_k,$$
where $\Delta_{ik}\in \CC[Q_{k-k'+2\dots m-k'+1}]$ is the determinant of $(m-k)\times (m-k)$ obtained from $E$ by removing columns and rows containing $11$ and $ik$ entries.
Then  we have:
$$\tilde{\partial}_i E^\vee_{kl}=\delta_{k1}\delta_{il} E_{11}^\vee.$$
and by Poincare lemma we obtain
$$\mathrm{H}^*_{\Lie}(\CC^{m-k+1},\CC[Q_{k-k'+2\dots m-k'+1}])=\CC[Q_{k-k'+2\dots m-k'+1}]/(E_{12}^\vee,\dots,E_{1,m-k+2}^\vee)$$ and the
statement follows.
\end{proof}

\subsection{Reduced case version of the auxiliary lemma }


Let us introduce \[\SymbolPrint{\calXr^{\circ}_2(G_{k\dots m})}=\frb\times G_{k\dots m} \times \frn^2,\]
\[\SymbolPrint{\calXr_3(G_n,G_{k\dots m})}:=\frb_n\times G_n\times G_{m-k+1}\times \frn_n.\] Let us also define an embedding
$\SymbolPrint{\bar{i}_{k\dots m}}:\calXr_3(G_n,G_{k\dots m})\to \calXr_3$ by
$$ \bar{i}_{k\dots m}(X,g_{12},g_{23},Y)=(X,g_{12},i_{k\dots m}(g_{23}),Y).$$  Using the last embedding we define the maps $\SymbolPrint{\bar{\pi}_{ij}}$ by restricting from the ambient $\calX_3$:
$$ \bar{\pi}_{12},\bar{\pi}_{13}: \calXr_3(G_n,G_{k\dots m})\to \calXr_2(G_n),\quad \bar{\pi}_{23}: \calXr_3(G_n,G_{k\dots m})\to \calXr_2(G_{k\dots m}).$$

Similarly, we define $\calXr_3(G_{k\dots m},G_n)$ together with the embedding $\bar{i}_{k\dots m}: \calXr_3(G_{k\dots m}, G_n)\to \calX_3$:
$$ \bar{i}_{k\dots m}(X,g_{12},g_{23},Y)=(X,i_{k\dots m}(g_{12}),g_{23},Y).$$
The same way as before we define the corresponding maps $\bar{\pi}_{ij}$.

The spaces $\calXr_3(G_{k\dots m},G_n)$ and $\calXr_3(G_n,G_{k\dots m})$ have natural $B_n\times B_{m-k+1}\times B_n$-equivariant structure. Moreover, the maps $\bar{\pi}_{ij}$ are $B_n^2$-equivariant but not $B_{m-k+1}$-equivariant.
However, we can introduce the $B_n\times B_{m-k+1}\times B_n$-equivariant structure on the pull-back $(\bar{\pi}_{12}\times \bar{\pi}_{23})(\calF\otimes\calG)$ where $\calF,\calG$ are equivariant matrix factorizations
from the appropriate spaces. More precisely, repeating the argument of the proposition~\ref{prop: Knorrer factor} we obtain

\begin{proposition}\label{prop: small prod 1}  Let
  \[\calF\in \MF_{B_n^2}(\calXr_2(G_n),\Wr), \quad\calG\in \MF_{B_{m-k+1}^2}(\calXr_2(G_{m-k+1}),\Wr)\] then there is a unique matrix factorization
$$(\bar{\pi}_{12}\otimes_{B_{m-k+1}}\bar{\pi}_{23})^*(\calF\boxtimes \calG)\in \MF_{B_n\times B_{m-k+1}\times B_n}(\calXr_3(G_n,G_{m\dots k}),\bar{\pi}_{13}^*(\Wr)),$$
such that its $B_n^2$-equivariant specialization  has properties:
$$(\bar{\pi}_{12}\otimes_{B_{m-k+1}}\bar{\pi}_{23})^*(\calF\boxtimes \calG)^\sharp=(\bar{\pi}_{12}\times\bar{\pi}_{23})^*(\calF\boxtimes \calG),$$
and
$$\CE_{\frn_{m-k+1}}(\pi_{13*}((\bar{\pi}_{12}\otimes_{B_{m-k+1}}\bar{\pi}_{23})^*(\calF\boxtimes \calG)))^{T_{m-k+1}^{(2)}}=\calF\bar{\star}\Indb_{k\dots m}(\calG).$$
\end{proposition}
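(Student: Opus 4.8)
The plan is to transport the argument of proposition~\ref{prop: Knorrer factor}, together with the monoidality of the Knorrer functor $\Phi$ established right after it, from the full flag space $\calXr_3$ to the partial flag space $\calXr_3(G_n,G_{k\dots m})$, replacing the plain convolution by the induction functor $\Indb_{k\dots m}$ and using the non-reduced auxiliary lemma~\ref{prop: small flag} in place of the definition of convolution. First I would record the intertwining relation $\Ind_{k\dots m}\circ \Phi_{m-k+1}=\Phi_n\circ \Indb_{k\dots m}$; this follows from proposition~\ref{prop: ind phi} (and the transitivity proposition~\ref{prop: ind trans}) together with the identity $\Phi(\bcalC_{\parallel})=\calC_{\parallel}$, since $\Ind_{k\dots m}(-)=\ind_{k,m}(\calC_{\parallel}\boxtimes-\boxtimes\calC_{\parallel})$. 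Combining this with lemma~\ref{prop: small flag}, applied with its left slot occupied by $\Phi_n(\calF)$ and its induced slot by $\Phi_{m-k+1}(\calG)$, gives
\begin{multline*}
\Phi_n(\calF)\star \Ind_{k\dots m}(\Phi_{m-k+1}(\calG))=\Phi_n\bigl(\calF\bar{\star}\Indb_{k\dots m}(\calG)\bigr)\\
=\CE_{\frn_{m-k+1}^{(2)}}\pi_{13*}^{\circ}\bigl(\pi_{12}^{\circ*}(\Phi_n(\calF))\otimes \pi_{23}^{\circ*}(\Phi_{m-k+1}(\calG))\bigr)^{T^{(2)}_{m-k+1}},
\end{multline*}
the right-hand side being computed on $\calX^{\circ}_3(G_n,G_{k\dots m})$.

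Next I would expand the Knorrer factorizations $\Phi_n(\calF)=\bar{\pi}^*(\calF)\otimes CC$ and $\Phi_{m-k+1}(\calG)=\bar{\pi}^*(\calG)\otimes CC$ inside this tensor product, exactly as in the proof of proposition~\ref{prop: Knorrer factor}. Lifting the product decomposition $\calX_2=\calXr_2\times \CC^{2N}$ of (\ref{eq:prod X}) to the three-fold space, the entries of the Koszul factors $\pi_{12}^{\circ*}(CC)$ and $\pi_{23}^{\circ*}(CC)$ serve as coordinates along the auxiliary linear directions, and together with the defining equations of $\calXr_3(G_n,G_{k\dots m})$ inside its $\flat$-type ambient space they form a regular sequence. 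Hence the object is a generalized equivariant Koszul matrix factorization over the base ring $\pi_{13}^{\circ*}(\CC[\calX_2(G_n)])$ in the sense of subsection~\ref{ssec: row ops}, whose positive part is a genuine Koszul resolution of a quotient ring.

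Then I would run the same sequence of admissible row transformations of the first and second types as in proposition~\ref{prop: Knorrer factor}, using the factors $\pi_{12}^{\circ*}(CC)$ and $\pi_{23}^{\circ*}(CC)$ to eliminate the auxiliary $Y$- and $X$-type variables from the remaining factors. Each such transformation is admissible because at every step the differentials employed come from one of the factors $\pi_{12}^{\circ*}(\Phi_n(\calF))$ or $\pi_{23}^{\circ*}(\Phi_{m-k+1}(\calG))$ and therefore supercommute with everything else, so condition (\ref{eq: adm row tr}) is met and the non-equivariant shadow $\sharp$ is unaffected. After the eliminations the surviving factor is exactly the Knorrer correction complex for $\calXr_3(G_n,G_{k\dots m})$ inside its $\flat$-type ambient space, and the whole object becomes, over the base ring, homotopy equivalent to the tensor product of $(\bar{\pi}_{12}\times\bar{\pi}_{23})^*(\calF\boxtimes\calG)$ with that correction complex, now carrying non-trivial correction differentials. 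Since the defining generators form a regular sequence, $\Hom_{d^+}^{<0}$ of the positive part vanishes, so lemma~\ref{lem: uniq MF+eq} shows this $B_n\times B_{m-k+1}\times B_n$-equivariant extension is unique up to isomorphism; I would define $(\bar{\pi}_{12}\otimes_{B_{m-k+1}}\bar{\pi}_{23})^*(\calF\boxtimes\calG)$ to be it, which yields the first asserted identity by construction.

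Finally, applying $\CE_{\frn_{m-k+1}}$, taking $T^{(2)}_{m-k+1}$-invariants, and pushing forward along $\pi_{13}$ in the homotopy equivalence above identifies the left-hand side of the second asserted identity with $\Phi_n(\calF)\star\Ind_{k\dots m}(\Phi_{m-k+1}(\calG))=\Phi_n\bigl(\calF\bar{\star}\Indb_{k\dots m}(\calG)\bigr)$; since $\Phi_n$ is an embedding of categories, this recovers $\calF\bar{\star}\Indb_{k\dots m}(\calG)$. I expect the main obstacle to be the bookkeeping of the row transformations on the partial flag three-fold space $\calX^{\circ}_3(G_n,G_{k\dots m})$: one must verify that the eliminated coordinates and the surviving Knorrer correction factor match up precisely and that admissibility holds at every step. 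This is essentially a transcription of the computation already carried out for proposition~\ref{prop: Knorrer factor}, specialized from full flags to the block-parabolic $Q_{k\dots m}$, so no genuinely new ideas should be required beyond careful indexing.
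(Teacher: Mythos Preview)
Your proposal is correct and follows essentially the same approach as the paper: the paper explicitly says the result is obtained by ``repeating the argument of the proposition~\ref{prop: Knorrer factor}'', and you have spelled out precisely how to do this transcription to the partial flag space $\calXr_3(G_n,G_{k\dots m})$, routing through lemma~\ref{prop: small flag} and the intertwining $\Ind_{k\dots m}\circ\Phi_{m-k+1}=\Phi_n\circ\Indb_{k\dots m}$ exactly as the paper does in the corollary following proposition~\ref{prop: Knorrer factor}.
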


\begin{lemma}\label{prop: small prod 2}  Let \[\calF\in \MF_{B_{m-k+1}^2}(\calXr_2(G_{m-k+1}),\Wr),\quad\calG\in \MF_{B_{m-k+1}^2}(\calXr_2(G_n),\Wr)\] then there is a unique matrix factorization
$$(\bar{\pi}_{12}\otimes_{B_{m-k+1}}\bar{\pi}_{23})^*(\calF\boxtimes \calG)\in \MF_{B_n\times B_{m-k+1}\times B_n}(\calXr_3(G_{m\dots k},G_n),\bar{\pi}_{13}^*(\Wr)),$$
such that its $B_n^2$ limit has properties:
$$(\bar{\pi}_{12}\otimes_{B_{m-k+1}}\bar{\pi}_{23})^*(\calF\boxtimes \calG)^\sharp=(\bar{\pi}_{12}\times\bar{\pi}_{23})^*(\calF\boxtimes \calG),$$
and
$$\CE_{\frn_{m-k+1}}(\pi_{13*}((\bar{\pi}_{12}\otimes_{B_{m-k+1}}\bar{\pi}_{23})^*(\calF\boxtimes \calG)))^{T_{m-k+1}^{(2)}}=\Indb_{k\dots m}(\calF)\bar{\star}\calG.$$
\end{lemma}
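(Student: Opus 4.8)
The plan is to prove Lemma~\ref{prop: small prod 2} as the mirror image of Lemma~\ref{prop: small prod 1}, with the two tensor factors interchanged; equivalently, to Knorrer-reduce the second formula of Proposition~\ref{prop: small flag}. First I would recall, from the proof of Proposition~\ref{prop: small flag}, the alternative description of the inclusion functor $\Ind_{k\dots m}=j_{k\dots m,*}\circ\Pi^{\overline{k\dots m},*}$, and its reduced counterpart $\Indb_{k\dots m}=\bar{j}_{k\dots m,*}\circ\bar\Pi^{\overline{k\dots m},*}$, obtained by applying the Knorrer functor $\Phi$; here $\bar{j}_{k\dots m}$ embeds the reduced analogue of $\calX^\bullet_2(Q_{k\dots m})$ into $\calXr_2(G_n)$ and $\bar\Pi^{\overline{k\dots m}}$ projects it onto $\calXr^\circ_2(G_{m-k+1})$, both maps being $B_n\times B_{m-k+1}\times B_n$-equivariant.

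Second, I would construct the claimed $B_n\times B_{m-k+1}\times B_n$-equivariant matrix factorization $(\bar\pi_{12}\otimes_{B_{m-k+1}}\bar\pi_{23})^*(\calF\boxtimes\calG)$ on $\calXr_3(G_{k\dots m},G_n)$. Since $\bar\pi_{12}$ and $\bar\pi_{23}$ are only $B_n^2$-equivariant, this is precisely the issue resolved for Proposition~\ref{prop: Knorrer factor}: one passes to the non-reduced side using the Knorrer functors $\Phi_n$ and $\Phi_{m-k+1}$, uses that $\Phi$ intertwines the convolution products and (via Proposition~\ref{prop: ind phi}) the induction functors, and then transports the resulting $B^3$-equivariant structure back by the uniqueness of equivariant extensions, Lemma~\ref{lem: uniq MF+eq}. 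This simultaneously produces the identity $(\bar\pi_{12}\otimes_{B_{m-k+1}}\bar\pi_{23})^*(\calF\boxtimes\calG)^\sharp=(\bar\pi_{12}\times\bar\pi_{23})^*(\calF\boxtimes\calG)$ asserted in the statement.

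Third, I would run the base-change computation. Using the alternative description from the first step together with the commuting square in which one leg is the regular closed embedding $\bar{j}_{k\dots m}$ and the other is the projection $\bar\Pi^{\overline{k\dots m}}$ (the reduced versions of the squares used in the proof of Proposition~\ref{prop: small flag}), the change-of-base isomorphism rewrites $\Indb_{k\dots m}(\calF)\bar\star\calG$ as $\CE_{\frn_n^{(2)}}\bigl(\bar\pi_{13*}((\bar\pi_{12}\otimes_{B_{m-k+1}}\bar\pi_{23})^*(\calF\boxtimes\calG))\bigr)^{T_n^{(2)}}$ with $\bar\pi_{ij}$ the maps on $\calXr_3(G_{k\dots m},G_n)$. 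A Hochschild--Serre spectral sequence for $1\to J'_{k\dots m}\to Q_{k\dots m}\to G_{m-k+1}\to 1$ then reduces $\CE_{\frn_n^{(2)}}$ to $\CE_{\frn_{m-k+1}^{(2)}}\circ\CE_{\Lie(J'_{k\dots m})}$, and Proposition~\ref{prop: Lie cohs} evaluates $\CE_{\Lie(J'_{k\dots m})}(\CC[Q_{k\dots m}])=\CC[G_{m-k+1}]\otimes\CC[J/J']$. Taking $T^{(2)}_{m-k+1}$-invariants and using $T_n^{(2)}=(J/J')\times T^{(2)}_{m-k+1}$ yields the formula in the statement.

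The main obstacle I expect is the second step: assembling the $B_n\times B_{m-k+1}\times B_n$-equivariant structure in this mirror configuration (where $\calF$ now lives on the small group $G_{m-k+1}$) and verifying that its non-equivariant shadow is the naive pull-back. This requires repeating, with the roles of the factors swapped, the row-transformation bookkeeping of Proposition~\ref{prop: Knorrer factor} — eliminating the auxiliary variables one at a time and checking the admissibility condition for the row operations at each stage. The remaining steps are a direct transcription of the arguments already given for Propositions~\ref{prop: small flag} and \ref{prop: small prod 1}.
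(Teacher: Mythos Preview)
Your proposal is correct and matches the paper's approach: the paper gives no separate proof for this lemma, stating it immediately after Proposition~\ref{prop: small prod 1} under the rubric ``repeating the argument of the proposition~\ref{prop: Knorrer factor} we obtain,'' which is exactly the mirror-image strategy you outline. Your plan is in fact more explicit than the paper, correctly identifying that the convolution identity also requires the base-change and Hochschild--Serre steps from the proof of Lemma~\ref{prop: small flag}, not just the Kn\"orrer-factor bookkeeping.
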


\subsection{Reduced convolution with rank $1$ matrix factorizations} \label{ssec: conv rk 1}
The most important case of these theorems is the case $m=k+1$, that is when one of the matrix factorizations in
$\bar{\star}$ product is lifted from the space $\calXr_2(G_2)$, we call such matrix factorization rank $1$ matrix factorization. In this case the pull back functor $(\bar{\pi}_{12}\times_{B_{m-k+1}}\bar{\pi}_{23})^*$ could be made very explicit.
We do not provide the details of the computation, we essentially just follow the method of proposition~\ref{prop: Knorrer factor}.

Let us fix notations. Let $\calF=(M_1,D_1,\partial'_l,\partial'_r)\in \MF_{B_n\times B_2}(\calXr_2(G_n),\Wr)$, $\calG=(M_2,D_2,\partial''_l,\partial''_r)\in \MF_{B_2\times B_n}(\calXr_2(G_2),\Wr)$, $B_2$ in both cases
is the image of the embedding $i_{k,k+1}: B_2\to B_n$.  Since the Lie algebra $\frn_2$ is one-dimensional  $\frn_2=\langle e \rangle$ the corresponding $\frn_2$-equivariant structure is given by the
maps the even maps $$\partial'_r\in Hom_{\CC[\calXr_2(G_n)]}(M_1, M_1), \partial''_l\in Hom_{\CC[\calXr_2(G_2)]}(M_2, M_2).$$ Respectively
\begin{equation}\label{eq: pi12Bpi23 1}
 (\bar{\pi}_{12}\otimes_{B_{m-k+1}}\bar{\pi}_{23})^*(\calF\boxtimes \calG)=(\bar{\pi}_{12}^*(M_1)\otimes\bar{\pi}_{23}^*(M_2),D_1+D_2;\partial'_l,\partial'_r+\partial''_l+\partial_{rl},\partial'_r),
 \end{equation}
where the differential $\partial_{rl}$ is given by the formula
\begin{equation}\label{eq: pi12Bpi23 2}
\partial_{rl}=\bar{\pi}_{12}^*\left(\frac{\partial D_1}{\partial Y_{k,k+1}}\right)(\tilde{Y}^2_{k+1,k+1}-\tilde{Y}^2_{k,k})+ \bar{\pi}_{23}^*\left(\frac{\partial D_2}{\partial X_{kk}}-\frac{\partial D_2}{\partial X_{k+1,k+1}}\right) \tilde{X}^2_{k+1,k},
\end{equation}
where $\tilde{X}^2=\Ad_{g_{12}}^{-1}(X)$ and $\tilde{Y}^2=\Ad_{g_{12}}(Y)$.

In the other case  \[\calF=(M_1,D_1,\partial'_l,\partial'_r)\in \MF_{B_n\times B_2}(\calXr_2(G_2),\Wr),\]
\[ \calG=(M_2,D_2,\partial''_l,\partial''_r)\in \MF_{B_2\times B_n}(\calXr_2(G_n),\Wr)\] and the expressions for
$(\bar{\pi}_{12}\times_{B_{m-k+1}}\bar{\pi}_{23})^*(\calF\boxtimes \calG)$ is given by the same formulas (\ref{eq: pi12Bpi23 1}), (\ref{eq: pi12Bpi23 2}).



\begin{remark} As one can see, if we work with $B_2$-equivariant matrix factorizations on $Z$ with $B_2$-invariant potential then all the technicalities that are discussed in the first few sections are
unnecessary since $\MFs_{B_2}^{str}(Z,F)=\MF_{B_2}(Z,F)$.
\end{remark}

\subsection{Involution} Let $w_0\in G_n$ be the longest element of the Coxeter group $S_n$: $(w_0)_{i,j}=\delta_{i+j,n+1}$.   The involution $\SymbolPrint{\mathrm{D}}: G\to G$ defined by
$\mathrm{D}(g)=w_0 (g^{-1})^t w_0$ preserves $B$ and the Lie version of involution $\mathrm{D}(X)=-w_0 X^t w_0$, \(X\in \frg=\textup{Lie}(G)\) preserves $\frb\subset \frg$. Moreover, since $Tr(\mathrm{D}(X))=-Tr(X)$ the
have automorphism of category \[\mathrm{D}:\quad \MF_{B^2}(\calX_2(G),W)\to \MF_{B^2}(\calX_2(G),W),\]
\[ \mathrm{D}:\quad \MF_{B^2}(\calXr_2(G),\Wr)\to \MF_{B^2}(\calXr_2(G),\Wr).\]
The following proposition is almost tautological

\begin{proposition} Automorphism $\mathrm{D}$ respects convolution product:
$$\mathrm{D}(\calF)\star \mathrm{D}(\calG)=\mathrm{D}(\calF\star\calG),\quad \mathrm{D}(\calF)\bar{\star} \mathrm{D}(\calG)=\mathrm{D}(\calF\bar{\star}\calG),$$
and
$$ \mathrm{D}(\calC^{(i)}_{\pm})=\calC^{(n-i)}_{\pm},\quad \mathrm{D}(\bcalC^{(i)}_{\pm})=\bcalC^{(n-i)}_{\pm}.$$
\end{proposition}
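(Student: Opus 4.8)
The plan is to show that the involution $\mathrm{D}$ extends coordinate-wise to \emph{all} the auxiliary spaces entering the definition of $\star$ and $\bar\star$, that it commutes with every structure map, pullback, equivariant pushforward and Chevalley--Eilenberg functor used there, and finally that on the rank-two generators it acts as the identity; the statement for the higher generators then drops out of an index-reversal bookkeeping.

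\textbf{Step 1: elementary properties of $\mathrm{D}$.} First I would record the facts that make everything functorial. Since $g\mapsto w_0(g^{-1})^tw_0$ is a group automorphism of $G$ and transposition interchanges upper- and lower-triangular entries while conjugation by $w_0$ interchanges them back, $\mathrm{D}$ is an involutive automorphism of $G$ preserving $B$, $T$ and $U$, and the Lie version preserves each of $\frn$, $\frh$ and the opposite nilradical. Hence $\mathrm{D}$ commutes with the projections $X\mapsto X_+,X_{++},X_-,X_{--}$; it satisfies $\mathrm{D}(\Ad_gY)=\Ad_{\mathrm{D}(g)}(\mathrm{D}(Y))$, $\mathrm{D}(g_1g_2)=\mathrm{D}(g_1)\mathrm{D}(g_2)$, $\det\mathrm{D}(g)=\det(g)^{-1}$; and, acting on the matrices $X,Y$ by $-w_0(\cdot)^tw_0$, it preserves the $T_{sc}$-grading and (as already noted in the text) the potentials $W$, $\Wr$ and their $\flat$-versions. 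Defining $\mathrm{D}$ on $\calX_\ell$, $\calXr_\ell$ and on the $\circ,\bullet,\flat$ and parabolic variants by applying the group and Lie involutions to each factor, it becomes a $\mathrm{D}$-twisted $G\times B^\ell$-equivariant automorphism covering the potential in each case, which gives the category automorphisms used in the statement.

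\textbf{Step 2: compatibility with the convolution.} Next I would check $\bar\pi_{ij}\circ\mathrm{D}=\mathrm{D}\circ\bar\pi_{ij}$ for the three projections $\calXr_3\to\calXr_2$ (for $\bar\pi_{13}$ this is multiplicativity of $\mathrm{D}$; for $\bar\pi_{12},\bar\pi_{23}$ it follows from $\mathrm{D}\circ\Ad_g=\Ad_{\mathrm{D}(g)}\circ\mathrm{D}$ together with the commutation of $\mathrm{D}$ with $(\cdot)_+$ and $(\cdot)_{++}$), and the same for the maps $\bar\pi_{con}$, $\hat\pi$, $\hat j$, $\pi_y$, $j^x$, $\tilde\pi_{con}$ of Section~\ref{sec: Knor} that produce the $B^3$-equivariant structure on $\bar\pi_{12}^*(\calF)\otimes\bar\pi_{23}^*(\calG)$, as well as for their non-reduced counterparts $\pi_{ij}$, $\pi_{ij}^\circ$. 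Pullback along $\mathrm{D}$-equivariant maps commutes with $\mathrm{D}$ on the nose, and the equivariant pushforward of Section~\ref{ssec: eq push f} along $\mathrm{D}$-equivariant closed embeddings commutes with $\mathrm{D}$ because the Koszul-completion procedure of Lemmas~\ref{lem: ext MF+eq} and~\ref{lem: uniq MF+eq} can be carried out $\mathrm{D}$-equivariantly ($\mathrm{D}$ preserves the defining ideals and potentials, and an extension of $\mathrm{D}_*$(object) is again an extension, unique up to canonical isomorphism). Since $\mathrm{D}$ acts by the \emph{same} involution on each of the three $B$-factors, it preserves the middle $\frn^{(2)}$ and $T^{(2)}$ and commutes with $\CE_{\frn^{(2)}}(-)^{T^{(2)}}$. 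Running $\mathrm{D}$ through formula (\ref{eq: convolution}) and through the reduced convolution now gives $\mathrm{D}(\calF\star\calG)=\mathrm{D}(\calF)\star\mathrm{D}(\calG)$ and $\mathrm{D}(\calF\bar\star\calG)=\mathrm{D}(\calF)\bar\star\mathrm{D}(\calG)$; the same bookkeeping shows $\Phi\circ\mathrm{D}=\mathrm{D}\circ\Phi$ (a useful consistency check, though the two convolution identities are proved directly).

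\textbf{Step 3: the generators, and the main obstacle.} Conjugation by $w_0$ reverses the order $1,\dots,n$, hence sends $E_{j+1,j}\mapsto-E_{n-j,\,n-j+1}$, so it carries the subgroup $Q_{j\dots j+1}$, the embedding $j_{j\dots j+1}$ and the projection $\Pi^{\overline{j\dots j+1}}$ used to build $\Ind_{jj+1}$ to the analogous data at $n-j$, while inside the rank-two block at positions $\{j,j+1\}$ the transported involution is exactly the rank-two involution $\mathrm{D}_2$; thus $\mathrm{D}\circ\Ind_{jj+1}=\Ind_{\,n-j\,n-j+1}\circ\mathrm{D}_2$ and likewise for $\Indb$. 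It then remains to compute $\mathrm{D}_2$ on the rank-two generators. For $n=2$, $w_0=\bigl(\begin{smallmatrix}0&1\\1&0\end{smallmatrix}\bigr)$ and a direct substitution gives $\mathrm{D}(g)_{11}=g_{11}/\Delta$, $\mathrm{D}(g)_{21}=-g_{21}/\Delta$, $\det\mathrm{D}(g)=\Delta^{-1}$, $\mathrm{D}(X)_{11}-\mathrm{D}(X)_{22}=x_{11}-x_{22}$, $\mathrm{D}(X)_{12}=-x_{12}$, $\mathrm{D}(Y)_{12}=-y_{12}$; plugging these into the Koszul matrix in (\ref{eq: C+}) and rescaling the odd generator by the unit $\Delta$ (a row operation from Section~\ref{ssec: row ops}) shows $\mathrm{D}_2$ fixes $\calC_+$, and similarly $\calC_\parallel$ and $\calC_\bullet$. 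For $\calC_-=\calC_+\brwsh{-\chi_1}{\chi_2}$ one uses in addition that $\mathrm{D}^*\chi_1=-\chi_2$, $\mathrm{D}^*\chi_2=-\chi_1$ and that a twist by $\brwsh{\chi_1+\chi_2}{-\chi_1-\chi_2}$ is trivial (multiplication by the unit $\det g$), which yields $\mathrm{D}_2(\calC_-)=\calC_+\brwsh{\chi_2}{-\chi_1}=\calC_-$. Combining, $\mathrm{D}(\calC^{(j)}_\pm)=\Ind_{\,n-j\,n-j+1}(\mathrm{D}_2(\calC_\pm))=\calC^{(n-j)}_\pm$ and likewise in the reduced category. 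The one genuinely delicate point is Step~2's claim that $\mathrm{D}$ intertwines the equivariant pushforwards and the $B^3$-equivariant structures built by the extension lemmas --- i.e. that the non-canonical choices there can be made $\mathrm{D}$-compatibly --- and, relatedly in Step~3, tracking the character twists through $\Ind$ and through the definition of $\calC_-$ carefully enough that no stray $\det$-twist survives.
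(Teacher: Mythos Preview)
Your approach is correct and follows essentially the same route as the paper: define $\mathrm D$ on objects and morphisms by conjugation/pullback, observe that the potential is preserved, and then use that $\mathrm D$ intertwines the parabolic maps $i_{k,m},p_{k,m}$ (equivalently your $\Ind_{j,j+1}$) with their index-reversed counterparts. The paper's proof is considerably terser---it does not spell out the convolution compatibility beyond ``conjugation gives an automorphism,'' and it does not explicitly verify that $\mathrm D_2$ fixes the rank-two generators $\calC_\pm$---so your Steps~2 and~3 fill in genuine content that the paper leaves to the reader; in particular your character-twist computation for $\calC_-$ (using $\mathrm D^*\chi_1=-\chi_2$, $\mathrm D^*\chi_2=-\chi_1$ and the triviality of the $\langle\det,\det^{-1}\rangle$-twist via the unit $\det g$) is exactly the kind of check the paper suppresses.
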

\begin{proof}
  Let us show that \(\mathrm{D}\) induces an automorphism of the category of matrix factorizations. We define the action on the objects
  as
  \[\calF=(M,\tilde{D},\partial_l,\partial_r)\mapsto (M,\mathrm{D}\circ \tilde{D}\circ \mathrm{ D},
    \mathrm{D}\circ \partial_l\circ \mathrm{D},\mathrm{D}\circ\partial_r\circ \mathrm{D}).\]
  Since \(W\circ \mathrm{D}=W\) the last formula sends objects of \(\MF_{B^2}(\calX_2(G),W)\) to the objects of \(\MF_{B^2}(\calX_2(G),W\circ\mathrm{D})=
  \MF_{B^2}(\calX_2(G),W)\). The action on the space of morphisms  is also defined by conjugation with \(\mathrm{D}\):
  \[ Hom(\calF,\calF') \owns \Psi\mapsto \mathrm{D}\circ \Psi\circ \mathrm{D}.\]
  Thus is constructed an automorphism of the category.

  The induction functors \(\Ind_{k,m},\ind_{k,m}\) are defined with pull-backs and push-forwards along the maps \(p_{k,m},i_{k,m}\)
  Hence the rest of the proposition follows from the observation that \(\mathrm{D}\circ i_{k,m}= i_{k',m'}\circ \mathrm{D}\), \(
  \mathrm{D}\circ p_{k,m}= p_{k',m'}\circ \mathrm{D}\),
  \(k'=n+1-k,m'=n+1-m\).
\end{proof}

\section{Two strands}\label{sec: two str}

In this section we study the properties of the matrix factorizations that generate the braid group on two strands. In particular we show that $\calC_+$ is the inverse to $\calC_-$.

\subsection{General method of the proof}
It is easier to discuss the reduced convolution and we  focus on this operation.
From construction of the reduced convolution $\bar{\star}$ we see that the key step of computation of  $\calG\bar{\star}\calF$ is the "computation" of
$\CE_{\frn^{2}}(\bar{\pi}_{12}^*\otimes_B \bar{\pi}_{23}^*)(\calG\boxtimes\calF)$. The latter complex is  a matrix factorization with non-trivial potential,
we can not compute its cohomology in conventional sense, since the differential does not square to $0$. However, we can get quite close to that by contracting
the part of the complex with non-trivial action of $\frn^{(2)}$. We give details below.

To simplify notations we consider problem of simplifying of the complex $\CE_{\frn}(\calF)^T$ where $\calF\in \MF_B(G\times Z, F)$, $F\in \CC[G\times Z]^B$ and $B$ acts on $G$ by the left multiplication
and acts trivially on $Z$.
Let $R=\CC[G]$ and $R\langle \chi\rangle$ is the ring with $B$-equivariant structure twisted by the character $\chi\in \Hom(T,\CC^*)$:
$b\cdot 1=\chi(b) 1$ for $b\in B$ and $1\in R\langle\chi\rangle$ is the generator of $R$-module
$R\langle \chi\rangle$.   We assume  that the matrix factorization that we study is the sum of free $R$-modules $\calF=(M,D)$
$M=\oplus_{i=1}^N M_i$ and $M_i=\CC[Z]\otimes R\langle \chi^i\rangle$. It is well known that the CE homology of such free modules has a geometric interpretation,
we were unable to pinpoint a proof of the interpretation and provide an outline of a  proof for it

\begin{proposition}\label{prop: homs G/B} For any \(\chi\in \Hom(T,\CC^*)\)
  $$ \textup{H}^*_{\Lie}(\frn,R\langle -\chi\rangle)^T=\textup{H}^*(G/B,L_\chi),$$
  where LHS is the coherent sheaf cohomology.
\end{proposition}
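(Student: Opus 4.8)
\textbf{Proof plan for Proposition \ref{prop: homs G/B}.}

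The plan is to identify the Chevalley--Eilenberg complex computing $\textup{H}^*_{\Lie}(\frn, R\langle -\chi\rangle)^T$ with the \v{C}ech (or Koszul-type) complex computing the coherent cohomology of the line bundle $L_\chi$ on $G/B$. First I would recall the standard Bott-type setup: the variety $G/B$ is covered by the translates $w\cdot \mathcal{U}$ of the big cell $\mathcal{U} = B w_0 B/B \cong \mathbb{A}^{\dim\frn}$, and $R = \CC[G]$ is the ring of functions on the total space of the principal $B$-bundle $G \to G/B$. The $T$-invariants $R\langle -\chi\rangle^T$ of sections over the preimage of an open set recover sections of $L_\chi = G\times^B \CC_{-\chi}$ over that open set. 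So the content of the proposition is that the functor ``$\frn$-cohomology of the $B$-equivariant module, then $T$-invariants'' computes the derived functor of global sections of the associated sheaf.

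The key steps, in order: (1) Observe that $R = \CC[G]$ is an injective (or at least $H^{>0}_{\Lie}(\frn,-)$-acyclic after twisting) $\frn$-module when restricted appropriately — more precisely, $\CC[G] = \CC[G/N]\otimes \CC[N]$ as an $\frn$-module via the left $N$-action only on the second factor (where $N = \exp\frn$), so $\textup{H}^*_{\Lie}(\frn, \CC[G]) = \CC[G/N]$ concentrated in degree $0$, by the Poincar\'e lemma / the fact that $\CE_\frn$ resolves the trivial module and $\CC[N]$ is $\frn$-free. This is exactly the mechanism already used in the paper in Proposition \ref{prop: Lie cohs} and Proposition \ref{prop: homs G/B}'s sibling computations. (2) Deduce that for the $B$-equivariant free module $R\langle -\chi\rangle$ the complex $\CE_\frn(R\langle-\chi\rangle)^T$ has cohomology $(\CC[G/N]\langle -\chi\rangle)^T$ in degree $0$; but this is not yet $\textup{H}^*(G/B, L_\chi)$ since $G/N \to G/B$ is an affine $T$-bundle and taking $T$-invariants of $\CC[G/N]$ in weight $\chi$ only recovers $H^0$. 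The resolution is to instead present $\textup{H}^*(G/B, L_\chi)$ via the \v{C}ech complex for the Bruhat cover and match it term-by-term with a filtration of $\CE_\frn$. (3) The cleanest route: use that $\textup{H}^*(G/B, L_\chi) = \textup{H}^*_{\Lie}(\frn, \CC[G]\otimes\CC_{-\chi})^T$ is literally a theorem of Bott (via Lie algebra cohomology of $\frn$ with coefficients in $\CC[G]$), which follows from the Leray spectral sequence for $G/N\to G/B$ together with $R\pi_* \mathcal{O}_{G/N} = \bigoplus_\lambda L_\lambda$ — no, more directly, from the identification of the relative de Rham / Lie algebra cohomology along the fibers of $G\to G/B$: the sheaf-level complex $\underline{\CE}_\frn\otimes_{\mathcal{O}_{G/B}}(\pi_*\mathcal{O}_G)^{N}$ is a resolution of $L_\chi$ by acyclic sheaves, so its hypercohomology is both $\textup{H}^*(G/B,L_\chi)$ and (taking global sections, which is exact on this resolution) the cohomology of $\CE_\frn(R\langle-\chi\rangle)^T$.

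The main obstacle I expect is step (2)/(3): being careful that the passage from the $\frn$-cohomology on the total space $G$ down to $G/B$ is handled by a genuine \emph{sheafified} Chevalley--Eilenberg complex that one proves is a complex of $\Gamma$-acyclic sheaves, rather than a naive invariants computation — it is precisely the ``outline of a proof'' the authors allude to. I would spell this out by: building the $\mathcal{O}_{G/B}$-module complex $\mathcal{V}_p = (\pi_*\mathcal{O}_G)^N \otimes_\CC \Lambda^p\frn^* \otimes \CC_{-\chi}$ with the CE differential, checking it is exact except in degree $0$ where the cohomology is $L_\chi$ (this is the fiberwise statement from step (1), sheafified, using that $\pi\colon G\to G/B$ is $B$-locally trivial so fiberwise $= \CC[N]$), checking each $\mathcal{V}_p$ is a sum of line bundles hence has no higher cohomology only after a further dévissage — actually they need not be acyclic, so one uses instead the hypercohomology spectral sequence $E_1^{p,q} = H^q(G/B,\mathcal{V}_p) \Rightarrow H^{p+q}(G/B, L_\chi)$ and separately the fact that $\Gamma(G/B,\mathcal{V}_p) = (R\otimes\Lambda^p\frn^*\otimes\CC_{-\chi})^T$ computes, via its own differential, the left-hand side, invoking that $H^{>0}$ of each $\mathcal{V}_p$ can be absorbed — the honest argument replaces $\mathcal{V}_p$ by a \v{C}ech resolution and runs a double complex. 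Granting the earlier lemmas of the paper (especially the acyclicity-type computations behind Propositions \ref{prop: Lie cohs} and \ref{prop: unit}), this bookkeeping is the only real work, and it is standard Bott-theorem material.
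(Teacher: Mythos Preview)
Your step (1) contains a genuine error: the global factorization $\CC[G]\cong \CC[G/N]\otimes\CC[N]$ as $\frn$-modules is false. The $N$-torsor $G\to G/N$ is not trivial (for $G=SL_2$, a section would give polynomials $b(a,c),d(a,c)$ on $\mathbb{A}^2\setminus\{0\}$ with $ad-bc=1$; by Hartogs these extend to $\mathbb{A}^2$ and evaluating at the origin gives $0=1$). More to the point, if your claim held then $H^{>0}_{\Lie}(\frn,R)$ would vanish, and the proposition would assert $H^{>0}(G/B,L_\chi)=0$ for all $\chi$, which is false (take $\chi$ corresponding to $\mathcal O(-2)$ on $\mathbb P^1$). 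So your diagnosis in (2) --- that the problem is merely that $T$-invariants of $\CC[G/N]$ capture only $H^0$ --- is misdirected: the premise you are reasoning from is already wrong, not just incomplete.

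That said, the ``honest argument'' you sketch at the end of (3) is correct and is exactly what the paper does. The paper takes the Bruhat open cover $G=\bigcup_w B_-wB$ of $G$; on each piece $G_w$ one \emph{does} have the factorization $\CC[G_w]=\CC[B_-]\otimes\CC[U]$ with $\frn$ acting only on the second factor, so each \v Cech term is $\frn$-acyclic with $H^0_{\Lie}(\frn,\CC[G_w])=\CC[B_-w]$. The double complex (\v Cech in one direction, Chevalley--Eilenberg in the other) then collapses to the \v Cech complex of the induced cover $G/U=\bigcup_w B_-w$, and taking $T$-invariants with twist $-\chi$ gives the \v Cech complex computing $H^*(G/B,L_\chi)$. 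Your sheafified CE complex on $G/B$ followed by a \v Cech resolution is the same mechanism viewed from $G/B$ rather than from $G$; once you replace the incorrect global claim in (1) by the local one you already state in your final paragraph (``$\pi$ is $B$-locally trivial so fiberwise $=\CC[N]$''), the argument goes through and coincides with the paper's.
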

\begin{proof}
  Let \(S_n\) be the symmetric group  and \(B_-\subset G=GL_n\) the subgroup of lower triangular matrices.
  The Bruhat decomposition \(G=B S_n B\) implies   \(G=B_- S_n B\) because \(B_-=w_0 B w_0\) with \(w_0\in S_n\) being the longest element of \(S_n\).
  Each subset \(G_w=B_- w B\subset G\) is an open \(B\)-invariant subset of \(G\) and let us denote by \(R_w\) the ring of regular functions on \(G_w\).
  Respectively, we denote by \(G_{S}:=\cap_{w\in S} G_{w}\) where \(S\subset S_n\) is a finite subset and we use notation \(R_S\) for the ring of regular
  functions on \(G_S\). In particular, we have \(R=R_\emptyset\).
  
  There are natural inclusion maps \(R_S\to R_{S'}\) for \(S\subset S'\) and these inclusions maps with appropriate sings form a Cech  resolution \(\check{C}_\bullet(R)\) of
  \(R\) by free \(\frn\)-modules \(\check{C}_i(R)=\sum_{|S|=i} R_S\). On other hand let us observe that the decomposition above turns in a  Cech cover \(\check{C}_\bullet(G/U)\) of
  \(G/U\), \(G/U=\cup_{w\in S_n} B_-w\), we use same notation for the resolution of the sheaf of regular functions on \(G/U\) induced by the cover.
  
  Next we observe that \(R_w=\CC[B_-]\otimes \CC[U]\), \(\mathrm{Lie}(U)=\frn\) and \(\frn\) does
  not act on the first factor. Since \(H^*_{Lie}(\frn,\CC[U])=\CC\) we see that \(H^*_{Lie}(\frn,R)=\check{C}_*(G/U)\). The last isomorphism is \(T\)-equivariant hence the proposition
  follows.

\end{proof}

LHS of the last equation is the total space cohomology of the line bundle $L_\chi$. The fiber of $L_\chi$ at $V^\bullet=\{V^1\subset \dots\subset V^n \}\in G/B$ is equal to
$\bigotimes_i (V^{i+1}/V^i)^{d_i}$ where $\chi=\sum d_i \alpha_i$. The most important case of this theorem is the case $n=2$:
$$\textup{H}^*_{\Lie}(\frn, R\langle -k\alpha \rangle)=\textup{H}^*(\mathbb{P}^1,\calO(k)).$$

The other way to present previous proposition is to say that we use homotopy equivalence $\CE_{\frn}(R\langle\chi\rangle)^T\sim \oplus_i H^i(G/B,L_\chi)$ where the RHS is complex with zero differential.
The homotopy could be constructed explicitly by combining Chech resolution technique with the Gauss elimination method from below.

\begin{proposition} Let $\calF=(M,D)\in \MFs_B(G\times Z,F)$ as above then there is a homotopy equivalence
$$ \CE_{\frn}(\calF)^T\sim (\oplus_{i=1}^N H^*(G/B, L_{\rho_i}), \bar{D})\in \MFs(Z,F),$$
where $\bar{D}$ is the differential induced by the differential $D$.
\end{proposition}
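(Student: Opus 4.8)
The statement is a module-theoretic upgrade of Proposition~\ref{prop: homs G/B}: we want to replace the Chevalley--Eilenberg complex $\CE_{\frn}(\calF)^T$ of a matrix factorization $\calF = (M,D)$, whose underlying module is a direct sum of twisted free $R$-modules $M_i = \CC[Z]\otimes R\langle\chi^i\rangle$, by a ``minimal model'' in which the $\frn$-cohomology of each $M_i$ has been taken and the differential $D$ is carried along. The plan is to forget momentarily about $D$ and work with the bigraded object $\CE_{\frn}(M)^T$, which (by Proposition~\ref{prop: homs G/B} applied termwise) is quasi-isomorphic, as a complex of $\CC[Z]$-modules with the extra $\ZZ$-grading coming from the CE degree, to $\bigoplus_i \textup{H}^*(G/B, L_{\chi^i})$ placed with zero internal differential. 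The homotopy realizing this is constructed explicitly by combining the \v{C}ech resolution used in the proof of Proposition~\ref{prop: homs G/B} with Gaussian elimination (the ``Gauss elimination lemma'' for contracting acyclic subcomplexes), exactly as indicated in the paragraph preceding the statement.

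\textbf{Key steps, in order.} First I would fix, for each summand $R\langle\chi^i\rangle$, an explicit strong deformation retract
\[
\CE_{\frn}(R\langle\chi^i\rangle)^T \;\rightleftarrows\; \textup{H}^*(G/B, L_{\chi^i})
\]
with homotopy $h_i$; this is the content of the first displayed proposition in this subsection and follows from the \v{C}ech-and-elimination argument. Taking direct sums over $i$ gives a strong deformation retract between $\CE_{\frn}(M)^T$ and $N := \bigoplus_i \textup{H}^*(G/B, L_{\chi^i})$ with homotopy $h = \bigoplus_i h_i$. Second, I would invoke the homological perturbation lemma: the total differential on $\CE_{\frn}(\calF)^T$ is $d_{ce} + D$, where $D$ (the matrix-factorization differential, which is $T$-equivariant since $\calF \in \MFs_B$) is a perturbation of $d_{ce}$ that, together with the homotopy $h$, satisfies the local-nilpotence/convergence hypotheses needed to transfer. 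The perturbation lemma then produces a transferred differential $\bar D$ on $N$ of the form $\bar D = p\,D(1 - hD)^{-1}\,\iota$, where $p,\iota$ are the projection and inclusion of the retract, together with a new homotopy equivalence
\[
\CE_{\frn}(\calF)^T \;\sim\; (N, \bar D).
\]
Since $D^2 = F\cdot\mathrm{Id}$ and $h$ commutes with multiplication by $\CC[Z]^{\frn}$-elements (in particular with $F$), one checks $\bar D^2 = F\cdot\mathrm{Id}$, so $(N,\bar D) \in \MFs(Z,F)$ as required. The only remaining point is to identify $\bar D$, up to homotopy, with ``the differential induced by $D$'': this is immediate because $p\,D\,\iota$ is the leading term of $\bar D$ and the correction terms $p\,D\,h\,D\,(\cdots)\,\iota$ raise CE-degree by at least one, hence on passing to $\frn$-cohomology only the leading term survives as a genuine induced map, and the higher terms are the unavoidable ``$A_\infty$-style'' corrections that make $\bar D$ square to $F$ exactly rather than merely up to homotopy.

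\textbf{Main obstacle.} The genuinely delicate step is verifying convergence of the geometric series $(1 - hD)^{-1} = \sum_{k\ge 0}(hD)^k$ defining the transferred differential. Unlike the finite-rank Koszul situations handled by Lemma~\ref{lem: ext MF} and its variants, here $G/B$ is not affine, the \v{C}ech complex that computes $\textup{H}^*(G/B,L_\chi)$ has several terms, and $D$ need not be filtered-nilpotent in an obvious way. The resolution is to note that on each \v{C}ech term $R_S$ the CE-homotopy $h_i$ strictly lowers \v{C}ech degree \emph{or} lowers the $U$-Koszul degree along the $\CE_{\frn}(\CC[U])$ factor, while $D$ preserves \v{C}ech degree; thus on the doubly-filtered complex $hD$ is locally nilpotent on each bidegree, and the series is finite when evaluated on any element. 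Once this is in place the rest is bookkeeping: $T$-equivariance of all maps guarantees we may pass to $T$-invariants throughout, and naturality of the perturbation lemma gives that the construction is well-defined up to homotopy and compatible with morphisms of matrix factorizations, which is what is needed for the applications to the convolution product.
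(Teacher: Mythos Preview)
Your approach is correct and essentially equivalent to the paper's. The paper proceeds by two rounds of the Gauss elimination lemma (stated and proved immediately after the proposition): first it \v{C}ech-resolves $M$ using the Bruhat cover, notes that $d_{ce}$ is acyclic on each \v{C}ech term (since the $U$-action on each Bruhat cell is free, so $\textup{H}^*_{\Lie}(\frn,R_S)=\CC$ in the relevant sense), and contracts $d_{ce}$ while carrying $D$ along to obtain $(H^0_{\Lie}(\frn,\check{C}_\bullet(M)),D')$; then it contracts the \v{C}ech differential to pass to $\oplus_i \textup{H}^*(G/B,L_{\rho_i})$. You instead build a single strong deformation retract for $(\CE_{\frn}(M)^T, d_{ce})$ first and then transfer $D$ in one shot via the homological perturbation lemma.

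Since the Gauss elimination lemma is precisely the one-step version of the perturbation lemma (contract along an isomorphism, collect the correction $-d_{WV}\phi^{-1}d_{VW}$), the two arguments are repackagings of the same computation. The paper's organization has the mild practical advantage that the convergence issue you flag never surfaces: each elimination is a single contraction along an honest isomorphism, and there are only finitely many of them because the CE degree is bounded by $\dim\frn$ and the \v{C}ech degree by the size of the Bruhat cover. Your filtration argument showing $(hD)^k=0$ for large $k$ is correct and amounts to the same boundedness observation; so the ``main obstacle'' you identify is genuine in the abstract HPL framework but is already resolved by the finiteness built into the paper's setup.
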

\begin{proof}
  As in the proof of proposition~\ref{prop: homs G/B} we can construct the Cech resolution \(\check{C}_\bullet(M)\) of \(M\). Since the Chevalley-Eilenberg differential \(d_{CE}\)
  is acyclic on term of the \(\check{C}_\bullet(M)\) we apply Gauss elimination to construct homotopy
  \(\CE_{\frn}(\calF)\sim (H^0_{Lie}(\frn,\check{C}_\bullet(M),D')\) with \(D'\) induced by \(D\).
  As it is explained in proposition~\ref{prop: homs G/B} \(H^*(H^0(\frn,\check{C}_\bullet(M)),d_{\check{C}})=H^*(G/B,M)=\oplus_i H^*(B/G,L_{\rho_i})\)
  hence we again can apply the Gauss elimination and obtain the homotopy from the statement of the proposition.
  \end{proof}

  The Gauss elimination lemma is the standard tool of homological algebra, one can find more discussion (and applications) of this lemma in \cite{BN} in the case of complexes
  of vector spaces. Our proof below is very close to the argument of \cite{BN}.

\begin{lemma} Suppose \(M= W\oplus V\), \(V=V^0\oplus V^1\), \(W=W^0\oplus W^1\) and \[D^{i,i+1}=d_{VV}^{i,i+1}+d_{VW}^{i,i+1}+d_{Wv}^{i,i+1}+d_{WW}^{i,i+1},\]
  \(d_{AB}^{i,i+1}\in Hom(B^{i+1},A^i)\) form the matrix factorization \((M,D)\in \textup{MF}(Z,F)\), \(D=D^{01}+D^{10}\).
  Let us also assume \(d^{10}_{VV}=\phi\) is an isomorphism. 

 Then there is a homotopy equivalence in the category of matrix factorizations
 \[ (M,D)\sim (W,D_W),\quad D_W=D_W^{10}+D_W^{01},\]
 \[D_W^{10}=d_{WW}^{10}-d_{WV}^{10}\circ\phi^{-1}\circ d_{VW}^{10},\quad D_W^{01}=d_{WW}^{01}.\]
\end{lemma}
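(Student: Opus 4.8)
The plan is to prove the Gauss elimination lemma by exhibiting explicit chain maps in both directions between $(M,D)$ and $(W,D_W)$ and showing the two compositions are the identity and chain-homotopic to the identity respectively. First I would record the matrix block form of $D$ in the decomposition $M = V \oplus W$. Writing $D$ as a $2\times 2$ block matrix with $(V,V)$-entry $\phi + d^{01}_{VV}$, $(V,W)$-entry $d_{VW}$, $(W,V)$-entry $d_{WV}$, and $(W,W)$-entry $d_{WW}$ (suppressing the $\Zt$-grading superscripts for the moment, but keeping track of them in the actual write-up), the condition $D^2 = F\cdot\mathrm{Id}$ becomes a system of four equations. The key ones are the $(V,V)$ and $(W,W)$ block equations, which after substituting $\phi^{-1}$ will force $D_W^2 = F\cdot\mathrm{Id}$; I would verify this by a direct computation using $\phi \circ d^{01}_{VV} + d^{01}_{VV}\circ\phi = 0$ (which comes from the $\Zt$-grading constraint that $\phi$ has degree one and $d^{01}_{VV}$ the opposite degree, so actually one should be careful: both $d^{10}_{VV}$ and $d^{01}_{VV}$ appear and the lemma as stated only names $\phi = d^{10}_{VV}$, so I would treat $d^{01}_{VV}$ as part of the eliminated data as well, or note it is absorbed).

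Next I would write down the two maps. Define $\iota\colon W \to M$ by $\iota(w) = w - \phi^{-1}\circ d_{VW}(w)$ (landing in $W \oplus V$), and $\pi\colon M \to W$ by $\pi(v+w) = w - d_{WV}\circ\phi^{-1}(v)$; these are the standard "change of basis" maps that triangularize $\phi$ out of the complex. I would check: (i) $\pi \circ \iota = \mathrm{Id}_W$ by direct expansion; (ii) $\iota$ is a chain map, i.e. $\iota \circ D_W = D \circ \iota$, using the block relations; (iii) $\pi$ is a chain map similarly; (iv) $\iota \circ \pi$ is chain-homotopic to $\mathrm{Id}_M$ via the homotopy $h\colon M \to M$ that is $\phi^{-1}$ on the $V$-summand (appropriately signed between the two $\Zt$-degrees) and $0$ on $W$, verifying $\mathrm{Id}_M - \iota\circ\pi = D\circ h + h\circ D$. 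Each of these is a routine but slightly fiddly block computation; the signs and the placement of $\phi^{-1}$ between $V^0$ and $V^1$ are where care is needed.

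The main obstacle, I expect, is bookkeeping rather than conceptual: correctly handling the $\Zt$-grading so that $\phi = d^{10}_{VV}$ and its partner $d^{01}_{VV}$ (which is an isomorphism in the other parity direction only if one assumes it, which the lemma does not — so one must check that the homotopy and the formula for $D_W$ only use $\phi = d^{10}_{VV}$ and that $d^{01}_{VV}$ gets correctly incorporated or shown irrelevant), and making sure the cited reference \cite{BN} convention for Gauss elimination matches the sign conventions used here. I would also remark that the same argument works verbatim in all the enriched categories ($\Mod_{per}$, $\MFs_\frh$, $\MFs_{B^2}$) since $\iota$, $\pi$, and $h$ are built only from $\phi^{-1}$ and the structure maps, which commute with whatever extra equivariant differentials are present as long as $\phi$ does — and in the applications $\phi$ is a change of the Koszul/Cech variables, hence manifestly compatible. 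So the proof reduces to: "this is the standard Gauss elimination of \cite{BN}, and the maps $\iota,\pi,h$ above provide the homotopy equivalence; the verification of $D_W^2 = F$ and of the four chain-map/homotopy identities is a direct block computation, which we leave to the reader" — with perhaps one of the identities spelled out as a model.
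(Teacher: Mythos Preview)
Your proposal is correct and follows essentially the same approach as the paper: explicit maps $\iota,\pi$ built from $\phi^{-1}$ and the off-diagonal blocks, together with the homotopy $h=\phi^{-1}$ on $V$ and $0$ on $W$, followed by direct verification of the chain-map and homotopy identities. The paper is slightly more explicit about the $\Zt$-grading (setting $\psi^{11}_{VW}=0$ and $\rho^{00}_{WV}=0$, so $\phi^{-1}$ is only applied where it makes sense), which resolves exactly the bookkeeping worry you flagged about $d^{01}_{VV}$ --- it simply never enters the formulas.
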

\begin{proof}
  Let us that \((W,D_W)\in \textup{MF}(Z,F)\). First, let us first compute \(D^2_W|_{W^0}\):
  \[d_{WW}^{01}\circ (d_{WW}^{10}-d_{WV}^{10}\circ\phi^{-1}\circ d_{VW}^{10})=d_{WW}^{01}\circ d_{WW}^{10}+d^{01}_{WV}\circ d^{10}_{VV}\circ \phi^{-1}\circ d_{VW}^{10}=F,\]
  where we used \(d_{WW}^{01}\circ d_{WV}^{10}+d_{WV}^{01}\circ d_{VV}^{10}=0\) for the first equality. The check that \(D^2_W|_{W^1}=F\) is similar.

  Next we define morphisms \(i:W\rightarrow M\) and \(\pi:M\rightarrow W\):
  \[ i=
    \begin{bmatrix}
      1\\
      \psi
    \end{bmatrix},\quad
   \pi= \begin{bmatrix}
      1&\rho
    \end{bmatrix},
  \]
  here \(\psi=\psi_{VW}^{00}+\psi_{VW}^{11},\rho=\rho_{WV}^{00}+\rho_{WV}^{11}\)  and \(\psi_{VW}^{00}=-\phi^{-1}\circ d_{VW}^{10}\), \(\psi_{VW}^{11}=0\),
  \(\rho_{WV}^{00}=0\), \(\rho_{WV}^{11}=-d_{WV}^{10}\circ \phi^{-1}\).

  It is immediate to see that \( D\circ i=i\circ D_W\) and \(D_W\circ \pi=\pi\circ D\). Also since \(\psi\circ \pi=0\) and \(\pi\circ\psi=0\) we get \(\pi\circ i=1\).
  On the other hand we can extend by \(0\) the map \(\phi^{-1}\) to obtain \(\phi'\in \oplus_i\Hom(V^{i},V^{i+1})\) and
  \[\chi=
    \begin{bmatrix}
    \phi'&0\\0&0  
    \end{bmatrix}.
  \]
  Since \(\phi^{-1}\circ d_{VV}^{10}=1\) and \(d_{VV}^{10}\circ \phi^{-1}=1\) we see that \(i\circ \pi-1=\chi\circ D+D\circ \chi\)
\end{proof}

Thus to obtain the reduction $\bar{D}$ from the lemma we apply previous lemma to our complex $\CE_{\frn}(\calF)^T$. To keep exposition clear let us explain one would apply
the elimination lemma to construct homotopy $(\oplus_{i=0}^N C_i, d_\bullet)\sim (H^\bullet(C,d),0)$: the case of matrix factorizations from proposition~\ref{prop: homs G/B} differs from this model case
by feature that on every step of our construction we need to compute correction differentials.

Indeed, in our model case we can construct a non-canonical splitting
$C_0=\textup{Im} d_1\oplus H^0(C,d)$, $C_1=\Ker d_1\oplus C_1/\Ker d_1$. The differential $d_1$ provides an isomorphism between $\textup{Im} d_1$ and $C_1/\Ker d_1$ and we can apply elimination lemma to
obtain homotopy equivalent complex $(C',d)\oplus (\textup{H}^0(C,d),0)$: $C_i'=C_i$, $i>1$, $C'_1=\Ker d_1$. Now we can apply the case method to the complex $(C',d)$ and so on.


To simplify notations, we assume for the rest of the section that \(G=G_2=SL_2\), \(B=B_2\subset G_2\), \(\frb=\textup{Lie}(B_2)\). 
Respectively, character group of \(B_2\) under this assumption  is \(\ZZ\), \(1\in\ZZ\) corresponds to the character \(\chi_1\) from the section~\ref{ssec:gens}

\subsection{Equivariant structure for the generators}
In this subsection we discuss the equivariant structure of the generators of the braid group. We also introduce the matrix factorization
$\calC_\bullet$ that corresponds to the blob or fat point in the MOY calculus \cite{MOY}. We use this connection later to relate our homology theory of
links to the HOMFLY-PT polynomial.

We use the following parameterization of the variety $\calX^\circ_2(G)=\frg\times G\times \frn^2$:
\begin{align*}
\xX &= \xmtr{ \SymbolPrint{\xz} & \SymbolPrint{\xo} \\ \SymbolPrint{\xmo} & -\xz },&
\xYi & = \xmtr{0 & \SymbolPrint{\yi} \\ 0 & 0 }\quad i=1,2,&
\xg &=\xmtr{ \SymbolPrint{\exaoo} & \SymbolPrint{\exaot} \\ \SymbolPrint{\exato} & \SymbolPrint{\exatt} }.
\end{align*}
The tori $T^{(1)}$, $T^{(2)}$ of the Borel groups in the product $B^2$ are $\CC^*$ and the weights of their action are
in the table:
\[
\begin{tabu}{| c ||r |r |r |r |r | r | r | r | r | r|}
\hline
\text{variable} &
\exaoo &\exaot & \exato &\exatt & \xmo & \xz & \xo &\txz & \yo & \yt
\\
\hline
T^{(1)} &
1 & 1 & -1 & -1 & -2 & 0 & 2 & 1 & 2 & 0
\\
\hline
T^{(2)} &
-1 & 1 & -1 & 1 & 0 & 0 & 0 & -1 & 0 & 2
\\
\hline
\end{tabu}.
\]

The generator of the nilpotent part of $\Lie(B)$, $\frn=\langle \delta\rangle$ acts on the coordinates by
\begin{align*}
\dlto\exaoo &= \exato,&\dlto\exaot & = \exatt,&\dlto\xz & = \xmo,&\dlto\xo & = -2\xz,
\\
\dltt\exaot & = -\exaoo,&\dltt\exatt&=-\exato, 
\end{align*}
In the table above and in the proofs in this section we use following functions of the coordinates:
\begin{align*}
\SymbolPrint{\txz} & = 2\xz \exaoo + \xo\exato, & \SymbolPrint{\ttxz} & = 2 \xz \exaot + \xo \exatt,
\end{align*}
so that
\begin{align*}
\dlto\txz & = 2\xmo\exaoo, & \dlto \ttxz &= 2\xmo \exaot, & \dltt\ttxz & = -\txz.
\end{align*}%
%
%

The matrix factorization was discussed previously but in this section we need an explicit description of this matrix factorization as a strongly
$B^2$-equivariant matrix factorization. We use  Koszul matrix factorization conventions of the subsection~\ref{ssec: str eq Kosz}, so
the matrix factorization of the identity braid has the form
\begin{equation*}
\crXpr =
\begin{bmatrix}
\xmo & \yo - \yt\exaoo^2 & \theta_1
\\
\yt\txz
& \exato & \theta_2
\end{bmatrix},\quad
\dlto\theta_1 = -2\yt\exaoo\theta_2,
\end{equation*}
with $\dgTsc\theta_1=(2,1)$ and $\dgTsc\theta_2=(0,-1)$.
The blob matrix factorization has the form
\begin{equation*}
\crXbl=
\begin{bmatrix}
\xmo & \yo - \yt\exaoo^2 & \theta_1
\\
\exato \txz
& \yt & \theta_2'
\end{bmatrix},\quad
\dlto\theta_1 = -2\exato\exaoo\theta_2'
\end{equation*}
or equivalently
\begin{equation*}
\crXbl=\begin{bmatrix}
\xmo & \yo & \theta'_1
\\
-\exaoo^2\xmo+
\exato \txz
& \yt & \theta_2'
\end{bmatrix},\quad \theta'_1=\theta_1 +\exatt^2\theta_2',\quad
\dlto\theta'_1 = 0
\end{equation*}

The matrix factorization of the positive intersection is 
\begin{equation*}
\crXp= (\sha\shq\sht)^{-1}
\begin{bmatrix}
\xmo & \yo - \yt\exaoo^2 & \theta_1
\\
\txz & \exato\yt & \theta_2
\end{bmatrix}
,\quad
\dlto\theta_1 = -2\exaoo\theta_2,
\end{equation*}
while
\begin{equation*}
\crXn = \sha\,\crXp\brwsh{-1}{-1}.
\end{equation*}


\subsection{Properties of the basic matrix factorizations}
In this subsection we collect various facts about the matrix factorizations from the previous subsection. In particular we explain how one can interpret
the positive intersection matrix factorization as a cone of the identity matrix factorization and the blob matrix factorization.

\begin{proposition} We have the following homotopy
\[
\crXpr\sim \crXpr\brwsh{1}{-1}.
\]
\end{proposition}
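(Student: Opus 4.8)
The claim is that the identity matrix factorization $\crXpr$ is homotopy equivalent to its twist $\crXpr\brwsh{1}{-1}$ by the character pair $(\chi_1,-\chi_1)$. The plan is to exploit the explicit Koszul presentation of $\crXpr$ given just above together with the freedom to perform row operations on Koszul matrices (subsection~\ref{ssec: str eq Kosz}). Recall
\[
\crXpr =
\begin{bmatrix}
\xmo & \yo - \yt\exaoo^2 & \theta_1
\\
\yt\txz & \exato & \theta_2
\end{bmatrix},\qquad
\dlto\theta_1 = -2\yt\exaoo\theta_2 .
\]
The key structural observation is that the second row has left entry $b$-entry $\exato$, which is (up to the nonvanishing factor coming from the $G$-slice / $SL_2$ normalization) one of the coordinates on $G$; in particular the ideal it generates, together with the $\frn$-action, is exactly what computes the relevant line-bundle cohomology on $G/B=\mathbb{P}^1$. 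The twist $\brwsh{1}{-1}$ shifts the $B^{(1)}$-weight by $\chi_1$ and the $B^{(2)}$-weight by $-\chi_1$, i.e. multiplies the generator $\theta_i$ (and the corresponding rows) by characters so that, after tracking the weight table, the rescaled Koszul data becomes literally isomorphic to the original data after a change of basis on $G$.

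Concretely, first I would rescale the odd generators: replace $\theta_1\mapsto \exaoo^{-1}\theta_1$, $\theta_2\mapsto\theta_2$ — but since $\exaoo$ is not invertible on all of $\calX_2^\circ$ this must be done via an honest row operation rather than a naive rescaling. So instead the plan is: (i) use a row operation of the third type (the $\exp(c\,\theta_i\partial_{\theta_j})$ moves, legitimate here since $\crXpr$ is strongly $B^2$-equivariant) to clear the coupling term in $\dlto\theta_1$, bringing the matrix factorization to a form where the two rows decouple at the level of the $\frn$-action; (ii) recognize the first row $[\xmo,\ \yo-\yt\exaoo^2,\ \theta_1]$ as the Koszul factor that is insensitive to the twist after relabelling (its $b$-entry is a polynomial in the $y$'s and $\exaoo$, whose weights conspire with the twist), and the second row $[\yt\txz,\ \exato,\ \theta_2]$ as the factor carrying the $G/B$-direction; (iii) apply the change of variables on $G$ given by left/right multiplication by the appropriate torus element — this is precisely the automorphism of $\calX_2^\circ$ realizing the weight shift $\brwsh{1}{-1}$ — and check that it carries the Koszul matrix of $\crXpr$ to that of $\crXpr\brwsh{1}{-1}$ up to row operations of the first and second type. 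Then Lemma~\ref{lem: uniq MF+eq} (uniqueness of the extension of the positive differential) guarantees the resulting two strongly $B^2$-equivariant matrix factorizations are homotopy equivalent.

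The cleanest route, which I would actually pursue, is to package (ii)–(iii) using the cohomological interpretation: by Proposition~\ref{prop: homs G/B}, after contracting the Koszul direction coming from the regular sequence entries, $\CE_{\frn^{(2)}}$ applied to either side produces the cohomology of a line bundle $L_\chi$ on $\mathbb{P}^1$, and the two twists $\crXpr$ and $\crXpr\brwsh{1}{-1}$ land on line bundles differing by $\calO_{\mathbb{P}^1}(k)$ with the \emph{same} $k$ once one accounts for both the $B^{(1)}$ and $B^{(2)}$ shifts (the $+\chi_1$ on one side is compensated by the $-\chi_1$ on the other through the relation $\yt\txz = \yt(2\xz\exaoo+\xo\exato)$ and the identity $\dltt\ttxz=-\txz$). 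So the net twist on the relevant $\mathbb{P}^1$ is trivial, and the two complexes have identical reductions; lifting the identity back through the contraction (again via Lemma~\ref{lem: uniq MF+eq}) yields the homotopy equivalence.

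\textbf{Main obstacle.} The genuine difficulty is bookkeeping: one must verify that the combined weight shift $\brwsh{1}{-1}$, after passing through the explicit formulas $\txz=2\xz\exaoo+\xo\exato$ and the nontrivial $\frn$-action $\dlto\theta_1=-2\yt\exaoo\theta_2$, really does act trivially on the homotopy class — i.e. that the $+\chi_1$ and $-\chi_1$ contributions cancel rather than add. This requires carefully choosing which row operation implements the change of variables on $G$ (it is the one induced by conjugating $g$ by $\mathrm{diag}(\exaoo,\exaoo^{-1})$-type elements, made global by the $SL_2$ slice), and checking the admissibility condition \eqref{eq: adm row tr} for the second-type row transformation used to absorb the $\dlto\theta_1$ coupling. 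Everything else is routine manipulation of $2\times 3$ Koszul matrices.
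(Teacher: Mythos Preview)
Your proposal misses the simple direct argument and the indirect routes you sketch have real gaps.

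The paper's proof is a one-line construction: multiplication by $a_{22}$ gives a $T^2$-equivariant chain map $\crXpr\to\crXpr\brwsh{1}{-1}$ (check the weight table: $a_{22}$ has $(T^{(1)},T^{(2)})$-weight $(-1,1)$), multiplication by $a_{11}$ gives the reverse map, and the composite $a_{11}a_{22}$ differs from $1$ by $a_{12}a_{21}$ via the $SL_2$ determinant relation. Since $a_{21}$ is literally one of the Koszul differentials in $\crXpr$ (the $b$-entry of the $\theta_2$ row), the term $a_{12}a_{21}$ is null-homotopic with homotopy $a_{12}\,\partial_{\theta_2}$. That is the whole proof.

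Your ``cohomological route'' does not prove what is claimed. Applying $\CE_{\frn^{(2)}}(\,\cdot\,)^{T^{(2)}}$ is a functor out of $\MF_{B^2}$, so agreement after applying it does not give a homotopy equivalence in $\MF_{B^2}$; you would still need to lift the isomorphism back, and nothing in Lemma~\ref{lem: uniq MF+eq} does this for you (that lemma compares two extensions of the \emph{same} positive complex, not two a priori different objects). Worse, the twist $\brwsh{1}{-1}$ shifts both $B^{(1)}$ and $B^{(2)}$; after quotienting by $B^{(2)}$ you are left with a residual $B^{(1)}$-twist, so the ``same line bundle on $\mathbb{P}^1$'' claim is not even correct as stated.

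Your ``change of variables on $G$'' route is either ill-defined or circular. Conjugating by $\mathrm{diag}(a_{11},a_{11}^{-1})$ makes no sense because $a_{11}$ is not invertible on $\calX_2^\circ$ (you acknowledge this yourself for the rescaling $\theta_1\mapsto a_{11}^{-1}\theta_1$, but the same problem recurs). And if instead you act by an honest torus element of $B^{(1)}\times B^{(2)}$, that is by definition what the twist $\brwsh{1}{-1}$ is, so you would be proving $\crXpr\brwsh{1}{-1}\sim\crXpr\brwsh{1}{-1}$.

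The row operations you propose in step (i) are fine but beside the point: you do not need to decouple anything. Just write down the maps $a_{22}$, $a_{11}$ and the homotopy $a_{12}$ directly on the full $4$-dimensional Koszul module and check the determinant relation.
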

\begin{proof} Let \(R=\CC[\calXr_2]\) then we have
 the following diagram


\begin{equation*}
\begin{tikzcd}[column sep=3cm, row sep=1.75cm]
&
\rgR
\arrow[xshift=-0.35ex]{r}{\yt\txz}
\arrow[xshift=-0.35ex]{d}{\exatt}
\arrow[dashed, bend left=40]{r}{\exaot}
&
\rgR
\arrow[xshift=-0.35ex]{l}{\exato}
\arrow[xshift=-0.35ex]{d}{\exatt}
&
\\
\vphantom{1mm}
&
\rgR
\arrow[phantom,pos=0.02]{l}{\Bigl(}
\arrow[xshift=-0.35ex]{r}{\yt\txz}
\arrow[xshift=-0.35ex]{u}{\exaoo}
\arrow[bend right=40,dashed,']{r}{\exaot}
&
\rgR
\arrow[phantom,pos=0.22]{r}{\Bigr)\brwsh{1}{-1}}
\arrow [xshift=-0.35ex]{l}{\exato}
\arrow[xshift=-0.35ex]{u}{\exaoo}
&
\vphantom{1mm}
\end{tikzcd}
\end{equation*}

Here dashed arrows denote homotopy morphisms and we used the relation
$\exaoo\exatt - \exaot\exato = 1$.
\end{proof}
\begin{proposition}\label{prop: cones} We have following homotopies:
\begin{align*}
\crXp & \sim(\sha\shq\sht)^{-1}\Bigl(\crXpr \xrightarrow{\;\xchp\;}\crXbl\brwsh{-1}{-1} \Bigr), &
\crXn & \sim
\sha(\shq\sht)^{-1}
\Bigl(\crXbl\brwsh{-1}{-1} \xrightarrow{\;\xchm\;}(\shq\sht)^2\,\crXpr\brwsh{-1}{1} \Bigr),
\end{align*}
where


\begin{equation*}
\begin{tikzcd}[column sep=3cm, row sep=1.75cm]
\crXpr
\arrow{d}{\xchp}
& \sht^{-1}\rgRmomo
\arrow[xshift=-0.35ex]{r}{-\exato}
\arrow{d}{1}
&
\rgR
\arrow[xshift=-0.35ex]{l}{-\yt\txz}
\arrow{d}{-\txz}
\\
\crXbl\arrow[phantom,pos=0.22]{r}{\brwsh{-1}{-1}}
\arrow{d}{\xchm}
& \rgR \arrow[xshift=-0.35ex]{r}{\exato\txz}
\arrow{d}{-\txz}
&
\shq^2\sht\rgRmomo
\arrow[xshift=-0.35ex]{l}{\yt}
\arrow{d}{1}
\\
(\shq\sht)^2\,\crXpr \arrow[phantom,pos=0.22]{r}{\brwsh{-1}{1}}
& \shq^2\sht\,\rgR \arrow[xshift=-0.35ex]{r}{-\exato}
&
\shq^2\sht^2\,\rgRmomo
\arrow[xshift=-0.35ex]{l}{-\yt\txz}
\end{tikzcd}
\end{equation*}

\end{proposition}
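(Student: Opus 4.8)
The statement asserts two things: first, that $\crXp$ is homotopy equivalent to the (shifted) mapping cone of a morphism $\xchp\colon\crXpr\to\crXbl\brwsh{-1}{-1}$; second, that $\crXn$ is homotopy equivalent to the (shifted) mapping cone of a morphism $\xchm\colon\crXbl\brwsh{-1}{-1}\to(\shq\sht)^2\,\crXpr\brwsh{-1}{1}$. The diagram at the end exhibits explicit chains of $R$-module maps between the two-term Koszul matrix factorizations, where $R=\CC[\calXr_2]$ and $\rgRmomo=\rgRshv{-1}{-1}$ denotes $R$ with the indicated twist of the $B^2$-equivariant structure. My approach is: (1) spell out the explicit $2\times 2$ Koszul presentations of $\crXpr$, $\crXbl$, and $\crXp$ recorded in the previous subsection, after applying the reduction functor $\Phi$ where needed so all three live in $\MF_{B^2}(\calXr_2,\Wr)$ with the \emph{same} underlying free module $\Lambda^\bullet\langle\theta_1,\theta_2\rangle\otimes R$; (2) identify the mapping cone of $\xchp$ concretely as a four-term complex and observe that its total differential, after the prescribed row operations, is exactly the differential of $\crXp$ up to the overall shift $(\sha\shq\sht)^{-1}$; (3) verify the analogous statement for $\crXn$, using $\crXn=\sha\,\crXp\brwsh{-1}{-1}$ from equation~\rx{eq: C-} together with the cone description of $\crXp$.

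First I would write out, for $\calF=\crXpr$ and $\calF=\crXbl$, the Koszul matrices as given:
\begin{equation*}
\crXpr=\begin{bmatrix}\xmo & \yo-\yt\exaoo^2 & \theta_1\\ \yt\txz & \exato & \theta_2\end{bmatrix},\qquad
\crXbl=\begin{bmatrix}\xmo & \yo & \theta_1'\\ -\exaoo^2\xmo+\exato\txz & \yt & \theta_2'\end{bmatrix}.
\end{equation*}
The morphism $\xchp$ is the degree-zero map whose only nonzero components (in the $\theta$-basis) are specified by the vertical arrows of the diagram: identity on the $\rgR$ in homological degree matching $\theta_1$, and multiplication by $-\txz$ on the component matching $\theta_2$. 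The key computation is to check that $\xchp$ commutes with the two matrix-factorization differentials \emph{and} with the corrective $\frn^2$-equivariant differentials (the actions $\dlto\theta_1=-2\yt\exaoo\theta_2$ etc.); this is where Lemma~\ref{lem: ext uniq} and the discussion of row operations in subsection~\ref{ssec: row ops} do the work, since any two completions of a given positive differential to an equivariant matrix factorization differ by an automorphism $\Psi=1+\Psi_{<0}$. Having $\xchp$ as a genuine closed morphism, its cone $\mathrm{Cone}(\xchp)$ is a four-term object whose underlying module is $\crXpr\oplus\crXbl\brwsh{-1}{-1}$; I would then perform the row transformations indicated by the dashed/solid arrows of the big diagram to bring its total differential into the Koszul form of $\crXp$, tracking the $\Tsc$-weights so the claimed shift $(\sha\shq\sht)^{-1}$ comes out correctly (recall $\dgTsc\theta_1=(2,1)$, $\dgTsc\theta_2=(0,-1)$ and the extra $\sha$ bookkeeping from the $\wedge^\bullet\calB$ grading).

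For the second homotopy, I would use $\crXn=\sha\,\crXp\brwsh{-1}{-1}$ from \rx{eq: C-}. Substituting the cone description of $\crXp$ just obtained and untwisting the characters gives $\crXn\sim\sha(\shq\sht)^{-1}\bigl(\crXbl\brwsh{-1}{-1}\xrightarrow{\xchm}(\shq\sht)^2\crXpr\brwsh{-1}{1}\bigr)$ provided the twist of the first proposition, $\crXpr\sim\crXpr\brwsh{1}{-1}$, is applied to re-identify $\crXpr\brwsh{-1}{-1}$ with $\crXbl$-type objects; this is precisely why that proposition was proved first, and its use is essential here. The map $\xchm$ is read off from the lower half of the big diagram (components $-\txz$ and $1$ in the $\theta$-basis), and one checks directly that the cone of $\xchm$, after row operations, reproduces the twisted $\crXp$. \emph{The main obstacle} I anticipate is not conceptual but bookkeeping: verifying that the proposed morphisms $\xchp,\xchm$ are compatible with the \emph{weakly} equivariant ($\frn^2$-corrective) part of the differentials, and that the subsequent row operations stay within the admissible class~\rx{eq: adm row tr} so that the non-equivariant shadow and the equivariant structure are simultaneously brought to Koszul form — i.e. ensuring $T(\calF)^\sharp=T(\calF^\sharp)$ at every step. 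Once that compatibility is in hand, the identification of the two cones with $\crXp$ and $\crXn$ is a finite sequence of the row transformations described in subsection~\ref{ssec: row ops}, and the weight shifts are determined by the tables above.
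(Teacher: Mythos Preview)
The paper offers no proof of this proposition; the diagram itself is the content, and the verification is left implicit. Your plan for the first homotopy is exactly right: in the cone of $\xchp$ the component $1\colon \sht^{-1}\rgRmomo\to\rgR$ is invertible, and a single Gauss elimination (the lemma following Proposition~\ref{prop: homs G/B}) contracts the rank-two cone to the rank-one Koszul row $[\txz,\exato\yt]$, which is precisely the second row of $\crXp$; the first Koszul row $[\xmo,\yo-\yt\exaoo^2]$ is common to all three objects, so the identification is complete after tensoring.

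Your argument for the second homotopy, however, does not work as written. Twisting the cone description of $\crXp$ by $\brwsh{-1}{-1}$ produces
\[
\crXn\;\sim\;(\shq\sht)^{-1}\,\mathrm{Cone}\bigl(\crXpr\brwsh{-1}{-1}\longrightarrow\crXbl\brwsh{-2}{-2}\bigr),
\]
which is still a cone \emph{from} a twist of $\crXpr$ \emph{to} a twist of $\crXbl$. The homotopy $\crXpr\sim\crXpr\brwsh{1}{-1}$ only adjusts twists; it cannot reverse the direction of the arrow, and no rotation of the distinguished triangle expresses the \emph{same} object as a cone in the opposite direction. So the ``substitution'' step is a genuine gap. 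The remedy is the one you mention only in passing: prove the second homotopy \emph{directly}, exactly as you did the first. In $\mathrm{Cone}(\xchm)$ the component $1\colon \shq^2\sht\rgRmomo\to\shq^2\sht^2\rgRmomo$ is invertible, and one Gauss elimination yields the rank-one Koszul row matching $\crXp$ with the appropriate twist. The previous proposition $\crXpr\sim\crXpr\brwsh{1}{-1}$ is not needed here.

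Finally, your stated ``main obstacle'' is not one: since $\frn_2$ is one-dimensional, the category of weakly $B_2$-equivariant matrix factorizations coincides with the strongly equivariant one (this is the Remark at the end of subsection~\ref{ssec: conv rk 1}), so there are no corrective differentials to track and the admissibility conditions of subsection~\ref{ssec: row ops} are automatic.
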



\subsection{Inversion of the elementary positive braid}

Main result of this section is the following

\begin{theorem}\label{thm: two strands} Inside convolution algebra \((\MF_{G\times B^2}(G_2),\star)\) we have
\[
\crXp \star \crXn \sim \crXpr
\]
\end{theorem}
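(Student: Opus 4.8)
The plan is to reduce the statement about the non-reduced convolution $\star$ to a computation with the reduced convolution $\bar\star$ via the Knorrer functor $\Phi$, and then to carry out that reduced computation by the Gauss-elimination/Chevalley-Eilenberg method described in Section~\ref{sec: two str}. Since $\Phi$ is an embedding of monoidal categories (Proposition~\ref{prop: Knorrer} and its corollary on $\star$ vs $\bar\star$), it suffices to prove $\bcalC_+\bar\star\bcalC_-\sim\bcalC_\parallel$ in $\MF_{B^2}(\calXr_2(G_2),\Wr)$, where $\bcalC_\pm$ are the rank-$1$ models whose explicit Koszul presentations are listed in the subsection on equivariant structure of the generators. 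First I would write out $\bar{\pi}_{12}^*(\crXp)\otimes_{B}\bar{\pi}_{23}^*(\crXn)$ on $\calXr_3(G_2)=\frb\times G_2^2\times\frn$ using the explicit formulas \rx{eq: pi12Bpi23 1}--\rx{eq: pi12Bpi23 2} for the $B^3$-equivariant pullback of a product of rank-$1$ factorizations; this produces a $B^3$-equivariant Koszul-type matrix factorization with an explicit total differential and explicit correction differentials $\partial_{rl}$.

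The second step is to apply the functor $\CE_{\frn^{(2)}}(-)^{T^{(2)}}$ and simplify. Here I would use Proposition~\ref{prop: homs G/B}: for the middle $SL_2$-factor the relevant $\frn$-cohomology of $R\langle -k\alpha\rangle$ is $\textup{H}^*(\mathbb{P}^1,\calO(k))$, which vanishes for $k=-1$, equals $\CC$ for $k=0$, and is one-dimensional in a single degree otherwise. Combined with the two Gauss-elimination lemmas of the section, this lets me eliminate the acyclic part of the Chevalley-Eilenberg complex and contract away the Koszul variables that become invertible, computing the correction differentials as I go. The bookkeeping is organized exactly as in Proposition~\ref{prop: cones}: $\crXp$ is the cone of $\xchp:\crXpr\to\crXbl\brwsh{-1}{-1}$ and $\crXn$ is the cone of $\xchm:\crXbl\brwsh{-1}{-1}\to(\shq\sht)^2\crXpr\brwsh{-1}{1}$ (up to the indicated $\sha,\shq,\sht$ shifts), so $\crXp\star\crXn$ is a total complex of a $2\times2$ array of convolutions of $\crXpr$'s and $\crXbl$'s. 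Using that $\crXpr$ is the unit (Proposition~\ref{prop: unit}), three of the four corners are immediately identified, and the surviving differential, after elimination, must cancel so that the total complex collapses to a single copy of $\crXpr$.

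The last step is to keep track of the $T_{sc}$-weights and the $B^2$-character twists through the whole reduction, using the weight tables and the shift conventions $\sha\shq\sht$ appearing in the definitions of $\crXp$ and $\crXn$; the prefactors are designed precisely so that the $(\shq\sht)$-shifts and the character twists $\brwsh{1}{-1}$ (killed by the homotopy $\crXpr\sim\crXpr\brwsh{1}{-1}$) cancel in pairs. I expect the main obstacle to be the honest computation of the correction differential on the composite cone: one must check that the off-diagonal map between the two ``middle'' entries, built from $\xchp$, $\xchm$ and the extra correction $\partial_{rl}$ from \rx{eq: pi12Bpi23 2}, is (up to homotopy) an isomorphism after passing to $\frn^{(2)}$-cohomology, so that the corresponding $\crXbl$ and its shifted copy cancel in the total complex, leaving exactly $\crXpr$. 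This is the step where the explicit form of $\txz$, the relation $\exaoo\exatt-\exaot\exato=1$, and the vanishing $\textup{H}^*(\mathbb{P}^1,\calO(-1))=0$ all get used simultaneously, and it is where a sign or weight error would most easily hide; everything else is formal given the machinery of Sections~\ref{sec: eq push}--\ref{sec: aux sec}.
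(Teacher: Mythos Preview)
Your first paragraph and the opening of the second describe exactly what the paper does: write $\pi_{12}^{\circ*}(\crXp)\otimes\pi_{23}^{\circ*}(\crXn)$ as an explicit four-row Koszul matrix factorization, eliminate the intermediate variable $y_2$ by row operations, isolate the surviving two-row block, and then run the Chevalley--Eilenberg/Gauss-elimination routine on that block using $\textup{H}^*(\mathbb{P}^1,\mathcal{O}(k))$ to contract the cube and track the induced correction differentials until only $\crXpr$ remains. The paper carries this out in full and never invokes the cone decomposition of Proposition~\ref{prop: cones}.

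The gap is in the second half of your plan, where you switch to organizing the computation via the cones $\crXp\sim[\crXpr\to\crXbl']$ and $\crXn\sim[\crXbl'\to\crXpr'']$ and treat $\crXp\star\crXn$ as a $2\times 2$ bicomplex. One corner of that array is $\crXbl'\bar{\star}\crXbl'$, and this is \emph{not} a single shifted $\crXbl$: once the convolution is computed it is an extension of two shifted copies of $\crXbl$, the categorical version of the Hecke relation $b_s^2=(v+v^{-1})b_s$. So after applying the unit property to the three mixed corners you are not left with one $\crXpr$ and two $\crXbl$'s joined by a single off-diagonal map, as you describe, but with one $\crXpr$ and four $\crXbl$-summands, and collapsing to $\crXpr$ requires exhibiting two separate connecting maps as homotopy equivalences. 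Computing $\crXbl\bar{\star}\crXbl$ and identifying those maps is itself a CE/Gauss-elimination exercise of the same shape and roughly the same length as the paper's direct argument, so the cone route buys no economy here. If instead you follow through on the direct method you outlined first --- eliminate $y_2$, then contract the CE cube and read off the two surviving correction differentials --- you will reproduce the paper's proof verbatim.
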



\subsection{Proof of theorem \ref{thm: two strands}}

\def\yh{ y_3 }

\def\exav#1{a_{#1}}
\def\exaoo{\exav{11}}
\def\exaot{\exav{12}}
\def\exato{\exav{21}}
\def\exatt{\exav{22}}
\def\exbv#1{b_{#1}}
\def\exboo{\exbv{11}}
\def\exbot{\exbv{12}}
\def\exbto{\exbv{21}}
\def\exbtt{\exbv{22}}
\def\excv#1{c_{#1}}
\def\excoo{\excv{11}}
\def\excot{\excv{12}}
\def\excto{\excv{21}}
\def\exctt{\excv{22}}

\def\xXtl{ \tilde{\xX} }
\def\dgBs{ \deg_{\mflg^2}}
\def\dltv#1{ \delta_{#1}}
\def\dlto{ \dltv{1} }
\def\dltt{ \dltv{2}}

\def\brwsh#1#2{ \langle #1,#2 \rangle }


In order to compute a convolution of two objects, we introduce the matrices $\gxot=||a_{ij}||$, $\gxth=||b_{ij}||$ and $g_{13} = ||c_{ij}||$ so that
\begin{align}\label{eqn: diffs}
\dlto\exaoo&=\exato, &\dlto\exaot &= \exatt, &\dltt\exaot&=-\exaoo,&\dltt\exatt &= -\exato,
\\
\dltt\exboo&=\exbto, &\dltt\exbot &=\exbtt,&\dlth\exbot&=-\exboo,&\dlth\exbtt&=-\exbto,
\\
\dlto\excoo &= \excto, &\dlto\excot &= \exctt, &\dlth\excot&=-\excoo,&\dlth\exctt &=-\excto.
\end{align}
We also introduce $\xX'=\Adv{\gxot^{-1}}\xX$:
\begin{align*}
\xmo' &=
-2\exaoo \exato \xz - \exato^2\xo + \exaoo^2\xmo
\\
\xz' & = (\exaot\exato + \exaoo\exatt)\xz + \exato\exatt\xo - \exaoo\exaot\xmo, \\
\xo' & = 2\exaot\exatt\xz + \exatt^2 \xo - \exaot^2 \xmo.
\end{align*}
For computation of convolution $\calC_+\star\calC_-$ we need to analyze the pull-back  matrix factorization:
\begin{gather*}
\calC_{+-}:=\pi_{12}^{\circ*}(\crXp) \otimes \pi_{23}^{\circ*}( \crXn) =
\begin{bmatrix}
\xmo & \yo - \yt\exaoo^2 & \theta_1
\\
2\xz \exaoo + \xo\exato & \exato\yt & \theta_2
\\
\xmo' & \yt - \yh\exboo^2 & \theta_3
\\
2\xz' \exboo + \xo'\exbto & \exbto\yh & \theta_4
\end{bmatrix}'
\\
\dlto\theta_1 = -2\exaoo\theta_2,\qquad
\dltt\theta_3 =-2\exboo\theta_4,
\end{gather*}
 we used a shortcut notation $[-]' = [-]\brhsh{0}{-1}{-1}$, and $ [-]\brhsh{0}{-1}{-1}$ stands for the shift of the action of the torus $(\CC^*)^3=T^{(1)}\times T^{(2)}\times T^{(3)}$.
%


Next we get rid of $\yt$, that is, we need to find
a matrix factorization on $\frg\times G\times \frn$ that is homotopy equivalent to $\calC_{+-}$ as
the matrix factorization over $\pi_{13}^{\circ *}(\CC[\calX_2^\circ])$. We use the row transformations together with the contraction of the injective differential to do that. The result of the row transformation is the LHS of the equation below:
\begin{equation*}
\begin{bmatrix}
\xmo & \yo - \yh\exaoo^2\exboo^2 & \theta'_1
\\
2\xz \exaoo + \xo\exato & \exato\exboo^2\yh & \theta'_2
\\
0 & \yt - \yh\exboo^2 & \theta_3
\\
2\xz' \exboo + \xo'\exbto & \exbto\yh & \theta_4
\end{bmatrix}' \sim
\begin{bmatrix}
\xmo & \yo - \yh\exaoo^2\exboo^2 & \theta'_1
\\
2\xz \exaoo + \xo\exato & \exato\exboo^2\yh & \theta'_2
\\
2\xz' \exboo + \xo'\exbto & \exbto\yh & \theta_4
\end{bmatrix}',
\end{equation*}
where
\begin{align*}
\theta'_1 & = \theta_1 +\exaoo^2\theta_3, &
\theta'_2 & = \theta_2 - \exato\theta_3,
\end{align*}
so that
\begin{align*}
\dlto \theta'_1 &= -2\exaoo\theta'_2,
&
\dltt\theta'_1 & = -2\exboo\exaoo^2\theta_4,&
\dltt\theta'_2 =  2\exboo\exato\theta_4.
\end{align*}
After performing two row operations with the help of the relation
\begin{equation*}
2\xz' \exboo + \xo'\exbto =
-\xmo(\exboo\exaoo\exaot+\excoo\exaot)
+\excto(2\xz\exaot + \xo\exatt) + \exboo\exatt(2\xz\exaoo + \xo\exato).
\end{equation*}
we find
\begin{equation*}
\calC_{+-} \sim
\begin{bmatrix}
\xmo & \yo - \yh\excoo^2 & \theta''_1
\\
2\xz \exaoo + \xo\exato & \excto\exboo\yh & \theta''_2
\\
(2\xz\exaot + \xo\exatt)\excto & \exbto\yh & \theta_4
\end{bmatrix}'
=
\begin{bmatrix}
\xmo & \yo - \yh\excoo^2 & \theta''_1
\\
\txz & \excto\exboo\yh & \theta''_2
\\
\ttxz\,\excto & \exbto\yh & \theta_4
\end{bmatrix}'
\end{equation*}
where
\begin{align*}
\theta''_1&=\theta'_1-(\exboo\exaoo\exaot+\excoo\exaot)\theta_4
,&
\theta''_2&=\theta'_2 +\exatt \exboo\theta_4,
\end{align*}
so that
\begin{align}\label{eq: diff CE twoline}
\dlto\theta_1'' & = -2\exaoo\theta_2'' - 2 \exaot\excto\theta_4, &
\dltt\theta_2'' & = \excto \theta_4.
\end{align}

The Borel group $B^{(2)}$ acts trivially on the first line of the last matrix factorization thus to complete our computation of $\calC_+\star\calC_-$ we
  need to analyze the complex composed of the last two lines of the above Koszul complex:

$$
 C=\begin{bmatrix}
\txz & \excto\exboo\yh & \theta''_2
\\
\ttxz\,\excto & \exbto\yh & \theta_4
\end{bmatrix}
$$

More precisely, we need to study the $\CC^*$-invariant part of the complex $\CE_\frn^{(2)}(C)$. First let us notice that the ring $R:=\CC[\calXr_3]$ factors as
$R'\times \bar{\pi}_{13}^*(\CC[\calXr_2])$ where $R'=\CC[G]$ and the second factor is $B^{(2)}$-invariant and thus  $\CE_\frn^{(2)}(C)=\CE_\frn^{(2)}(C')\otimes \bar{\pi}_{13}(\calXr_2)$
where $C'$ is the matrix factorization $C$ consider as the matrix factorization over $R'$.
Let us depict the complex  $\CE_\frn^{(2)}(C')^{T^{(2)}}$ below

$$
\begin{tikzcd}[row sep=scriptsize, column sep=scriptsize]
&\left[ \theta''_2;0\right]\arrow[dl] \arrow[rr,shift left=0.5ex] \arrow[dd, dashed, two heads] & & \left[1;1\right] \arrow[dl] \arrow[dd, dashed,blue]\arrow[ll,shift left=0.5ex,blue] \\
\left[\theta''_2\theta_4;1\right]\arrow[ur, shift left=1ex,red]  \arrow[dd, dashed, red] & & \left[\theta_4;2\right] \arrow[ur,shift right=1ex]\\
& \left[\theta''_2;-2\right] e^* \arrow[dl] \arrow[rr, shift left=0.5ex] &  & \left[1;-1\right] e^* \arrow[dl]\arrow[ll, shift left=0.5ex] \\
\left[\theta''_2\theta_4;-1\right]e^* \arrow[ur,shift right=1ex] \arrow[rr, shift left=0.5ex] & & \left[\theta_4,0\right]e^*\arrow[ur, shift right=1ex, blue]
\arrow[ll, shift left=0.5ex,red]\\
 \arrow[from=2-3, to=4-3, crossing over, dashed, hook]
\arrow[to=4-3, from=1-2, crossing over,dashed,green]
\arrow[from=2-1, to=2-3, shift left=0.5ex,crossing over]
\arrow[to=2-1, from=2-3, shift left=0.5ex,crossing over]
\end{tikzcd}
$$


where $e^*$ is the basis of $\frn^*=\Hom(\frn,\CC)$ and  $[\alpha;k]$ stands for $R[k]\alpha$ which  is the part of the ring of $\CC^*$ weight $k$ . In the picture the dashed arrows are the Chevalley-Eilenberg  differentials for $\cohog{*}{\frn,\bullet}$.

Note that $\cohog{*}{\frb,R\langle 1\rangle}=0$ hence the blue and red dashed arrows in the diagram are isomorphisms and we can contract them but some new morphisms would appear after we perform the contraction.

Indeed, if we contract red  dashed arrow we obtain four correction morphisms. The first correction morphism $d_1$ is obtained by following blue arrows (dashed arrow in reverse direction) starting at $R[0]\theta_4 e^*$ and ending at $R[0]\theta''_2$:
$$d_1=b_{21}y_3\delta^{-1}_2(2x_0a_{11}+x_1a_{21})=b_{21}y_3(2x_0a_{12}+x_1a_{22}).$$
Thus we obtained the differential that goes in the direction opposite of the diagonal green dashed  arrow.
The another corrections differentials have the following origins and destinations:
$$d_2: R[0]\theta_4e^*\to R[2]\theta_4,\quad d_3: R[-2]\theta''_2e^*\to R[0]\theta''_2,\quad d_4: R[-2]\theta''_2e^*\to R[2]\theta_4.$$

Similarly, contraction of the blue dashed arrow results yields four correction morphisms. The first correction morphism $d_5$ is obtained by following red arrows (dashed arrow in reverse direction) starting at at $R[0]\theta_4 e^*$ and ending at $R[0]\theta''_2$:
$$d_5=(2x_0a_{11}+x_1a_{21})\delta_2^{-1}(b_{21}y_3)=b_{11}y_3(2x_0a_{11}+x_1a_{21}).$$
Thus we obtained another differential that goes in the direction opposite of the direction of diagonal dashed green arrow.
The another corrections differentials have the following origins and destinations:
$$d_6: R[0]\theta_4e^*\to R[2]\theta_4,\quad d_7: R[-2]\theta''_2e^*\to R[0]\theta''_2,\quad d_8: R[-2]\theta''_2e^*\to R[2]\theta_4.$$

Finally, let us notice that since $\cohog{0}{\frb,R\langle 2\rangle}=\cohog{1}{\frb,R}=0$,   $\cohog{1}{\frb,R\langle 2\rangle}=\cohog{0}{\frb,R}=\CC[c,x,y]$ we see that Chevalley-Eilenberg differential: $d_{10}:R[2]\theta_4\to R[0]\theta_4 e^*$ is
injective and Chevalley-Eilenberg differential: $d_{11}:R[0]\theta''_2\to R[-2]\theta''_2 e^*$ is surjective. Thus we can perform contraction along these differentials and obtain two term complex consisting of   $\cohog{1}{\frb,R\langle 2\rangle}=\CC[c,x,y]$,
$\cohog{0}{\frb,R}=\CC[c,x,y]$ and differentials between them. Let us point out that the differentials $d_2,d_3,d_4,d_6,d_7,d_9$ do not result in any correction differential after the contraction since none of these differentials ends on the
target of the arrows $d_{10},d_{11}$.

Thus the only differentials in our final complex are the following. The differential that starts at $\cohog{1}{\frb,R\langle 2\rangle}$ and ends at $\cohog{0}{\frb,R}$ is the sum of $d_2$ and $d_5$:
$$d_2+d_5=y_3(2x_0c_{11}+x_1c_{21}).$$
The differential going in the reverse direction is the green arrow differential caused by (\ref{eq: diff CE twoline}) that is it is just a multiplication by $c_{21}$. Thus
we have shown that
$$\CE_{\frn^{(2)}}(\calC_{+-})^{T^{(2)}}\sim \calC_\parallel.$$

\section{Three strands and two crossings}\label{sec: three str 2}
In this section we compute explicitly the matrix factorizations $\calC_1\bar{\star}\calC_2$ and $\calC_2\bar{\star}\calC_1$. Both of these matrix factorizations
turn out to be Koszul matrix factorizations and we start with explanations for the equations defining these Koszul factorizations. Then we proceed with
computations of the convolution.

\subsection{A geometric interpretation for the matrix factorization}
Let us fix notations:
$$X=\begin{bmatrix} \SymbolPrint{x_{11}}& \SymbolPrint{x_{12}}& \SymbolPrint{x_{13}}\\ 0&
  \SymbolPrint{x_{22}}& \SymbolPrint{x_{23}}\\ 0& 0& \SymbolPrint{x_{33}}\end{bmatrix},\quad
a=\begin{bmatrix} a_{11}&a_{12}&\SymbolPrint{a_{13}}\\a_{21}&a_{22}&\SymbolPrint{a_{23}}\\ \SymbolPrint{a_{31}}&\SymbolPrint{a_{32}}&\SymbolPrint{a_{33}}\end{bmatrix}$$
We assume in this section that all group elements have determinant $1$.

\begin{proposition}\label{prop: geom 3 cross MF} If the the  following equations hold
\begin{gather}
\exaho = 0
\label{eq: eq.rl21}
\\
\label{eq: eq.rl22}
 (\xoo - \xtt) \exaoo + \xot\exato = 0
 \\
 \label{eq: eq.rl23}
 \def\aiv#1#2{ (a^{-1})_{#1#2}}
\def\aioo{\aiv{1}{1}}
\def\aiot{\aiv{1}{2}}
\def\aioh{\aiv{1}{3}}
\def\aito{\aiv{2}{1}}
\def\aitt{\aiv{2}{2}}
\def\aith{\aiv{2}{3}}
\def\aiho{\aiv{3}{1}}
\def\aiht{\aiv{3}{2}}
\def\aihh{\aiv{3}{3}}
\aihh(\xoo-\xhh) -\aiht\xth -\aiho\xoh=0.
 \end{gather}
then
$$ \Ad^{-1}_a(X)\in \frb.$$

\end{proposition}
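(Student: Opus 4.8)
Write $A = \Ad^{-1}_a(X) = a^{-1} X a$. We need to show that $A$ is upper triangular, i.e. that its three strictly-lower-triangular entries $A_{21}$, $A_{31}$, $A_{32}$ all vanish. The strategy is to compute each of these three matrix entries as an explicit polynomial in the coordinates, and observe that each is an algebraic consequence of the three hypotheses \eqref{eq: eq.rl21}, \eqref{eq: eq.rl22}, \eqref{eq: eq.rl23}, using the defining relations of $\SL_3$ (so that $a^{-1} = \adj(a)$, and the cofactor identities among the entries of $a$ are available).

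The cleanest bookkeeping is to proceed column by column. First I would use \eqref{eq: eq.rl21}, $a_{31} = 0$; combined with $\det a = 1$ this means the first column of $a$ spans a line in the coordinate plane $\langle e_1, e_2\rangle$, and the two-dimensional span of the first two columns still contains $e_1$ only through a controlled combination — more usefully, $a^{-1}$ then has $(a^{-1})_{13}$ and $(a^{-1})_{23}$ expressible via $2\times 2$ minors not involving the third row of $a$. Next, $X$ being upper triangular means $A = a^{-1}Xa$ has its lower entries built from the lower-triangular part of $a$ and the upper part of $X$; in fact with $a_{31}=0$ one checks directly that $A_{21}$ depends only on the restriction of $X$ to $\langle e_1,e_2\rangle$ and on $a_{11},a_{12},a_{21},a_{22}$ — precisely the $n=2$ situation, where $A_{21}$ is proportional to $(x_{11}-x_{22})a_{11} + x_{12}a_{21}$, which is exactly \eqref{eq: eq.rl22}. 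So $A_{21}=0$. A symmetric computation — now looking at the quotient action on $\langle e_1, e_2, e_3\rangle / \langle e_1, e_2\rangle$ versus the full space, or equivalently expanding $A_{32}$ and $A_{31}$ directly — shows that $A_{32}$ and $A_{31}$ are, modulo the relation $a_{31}=0$ and the $\SL_3$ cofactor relations, scalar multiples of the left-hand side of \eqref{eq: eq.rl23} (which is written in terms of the third row of $a^{-1}$, i.e. the first-row cofactors of $a$). Hence all three lower entries vanish and $A \in \frb$.

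The main obstacle is purely the bookkeeping: verifying that the two entries $A_{31}$ and $A_{32}$ collapse to multiples of the single expression in \eqref{eq: eq.rl23}. A priori $A_{31}=0$ and $A_{32}=0$ look like two independent conditions, but the point is that once $a_{31}=0$, the row vector $(A_{31},A_{32})$ — which is the obstruction to the flag $\langle e_1,e_2\rangle$ being $A$-invariant projected appropriately — is constrained: the last row of $a^{-1}$ is $\bigl(0, 0, (a^{-1})_{33}\bigr)$-supported up to the fact that $(a^{-1})_{3j}$ are cofactors of the first column of $a$, and one computes $A_{3j} = (a^{-1})_{3k} X_{kl} a_{lj}$; plugging $a_{31}=0$ and using that $X$ is upper triangular, the dependence on $j$ factors through a common scalar times $(a^{-1})_{33}(x_{11}-x_{33}) - (a^{-1})_{32}x_{23} - (a^{-1})_{31}x_{13}$ for $j=1$, and an automatically-vanishing term for $j=2$ (since that would require $X$ to have a nonzero $(2,1)$ or $(3,1)$ entry). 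I would carry this out by direct expansion; the $\SL_3$ relations $\sum_k a_{ik}(a^{-1})_{kj} = \delta_{ij}$ are what make the miraculous cancellations go through, so I would keep $\det a = 1$ in play throughout rather than working in $\GL_3$.

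A remark worth including: this proposition is the $n=3$ instance of the general fact that the locus cut out by the analogue of \eqref{eq: eq.rl21}--\eqref{eq: eq.rl23} inside $\frg \times G$ is exactly $\{(X,a) : \Ad_a^{-1}(X) \in \frb\}$, and the equations are nothing but the matrix entries $\Pi^k_{--}(a) = 0$ and $\Pi^k_{--}(\Ad_a^{-1}(X)) = 0$ for the two simple roots, rewritten after clearing denominators using $\adj(a)$; the reason only three scalar equations (rather than $2+2$) appear is the triangularity of $X$, which forces one of the four naive conditions to be automatically satisfied and makes another redundant. This is consistent with the discussion of $\calX^\circ_2(P_k) \subset \calX^\circ_2(G_n)$ and its defining ideal $(\Pi^k_{--}(g_{12}), \Pi^k_{--}(X))$ in Section~\ref{sec: inc}.
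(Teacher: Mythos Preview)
Your overall strategy is right and matches the paper's, but the execution of the $A_{31}$, $A_{32}$ step contains a genuine error, not just bookkeeping. You claim that, modulo $a_{31}=0$, the entry $A_{31}$ is a scalar multiple of \eqref{eq: eq.rl23} while $A_{32}$ vanishes automatically ``since that would require $X$ to have a nonzero $(2,1)$ or $(3,1)$ entry''. Both claims are false: take $X$ diagonal with distinct eigenvalues and $a$ any matrix with $a_{31}=0$ but not upper-triangular; then $A_{32}$ has no reason to vanish. In fact the roles are reversed. A direct computation with $a_{31}=0$ gives
\[
A_{31} \;=\; -\,a_{33}a_{21}\bigl[(x_{11}-x_{22})a_{11}+x_{12}a_{21}\bigr],
\]
a multiple of \eqref{eq: eq.rl22}, not \eqref{eq: eq.rl23}; it is $A_{32}$ that genuinely requires \eqref{eq: eq.rl23}.

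The paper's organization makes this transparent and is worth adopting. Rather than computing $A_{3j}=(a^{-1})_{3k}X_{kl}a_{lj}$ directly, compute first the third row of $a^{-1}(X-x_{11}\mathrm{I})$: the $(3,1)$ entry is automatically zero (upper-triangularity of $X$), the $(3,3)$ entry is minus \eqref{eq: eq.rl23}, and the $(3,2)$ entry is $(a^{-1})_{31}x_{12}+(a^{-1})_{32}(x_{22}-x_{11})$. Under $a_{31}=0$ one has $(a^{-1})_{31}=a_{21}a_{32}$ and $(a^{-1})_{32}=-a_{11}a_{32}$, so this $(3,2)$ entry equals $a_{32}$ times \eqref{eq: eq.rl22}. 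Thus the whole third row of $a^{-1}(X-x_{11}\mathrm{I})$ vanishes, and right-multiplying by $a$ gives $A_{31}=A_{32}=0$. Symmetrically, \eqref{eq: eq.rl21} and \eqref{eq: eq.rl22} say exactly that the first column of $(X-x_{22}\mathrm{I})a$ vanishes, whence $A_{21}=A_{31}=0$ at once. The redundancy you correctly anticipated is there, but it lives in this intermediate row $a^{-1}(X-x_{11}\mathrm{I})_{3\bullet}$, not as a direct collapse of $(A_{31},A_{32})$ onto a single expression.
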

\begin{proof}
The first two equations (\ref{eq: eq.rl21},\ref{eq: eq.rl22}) imply
$((X-x_{22}\mathrm{I}_3)a)_{\bullet 1}=0$ and hence
\begin{equation*}
(a^{-1}(X-x_{22}\mathrm{I}_3)a)_{\bullet 1}=0.
\end{equation*}

The equation $(a^{-1}(X-x_{11}\mathrm{I}_3))_{3\bullet}=0$ is equivalent to the
system consisting of the equation (\ref{eq: eq.rl23}) and  equation:
\begin{equation}\label{eq: eq aux}
 (a^{-1})_{31}x_{12}+(a^{-1})_{32}(x_{22}-x_{11})=0.
\end{equation}
Now let us observe that if $a_{31}=0$ then $(a^{-1})_{31}=a_{12}a_{32}$ and $(a^{-1})_{32}=-a_{11}a_{32}$ thus
the equation (\ref{eq: eq aux}) is equal to $a_{32}$ times (\ref{eq: eq.rl22}) modulo $a_{31}$ and the statement follows.
\end{proof}

Let us denote the LHS of (\ref{eq: eq.rl22}) by $f$ and LHS of (\ref{eq: eq.rl23}) by $g$.

\begin{proposition} The elements $a_{31},f,g$ form  a regular sequence in $\CC[\frb\times G]$.
\end{proposition}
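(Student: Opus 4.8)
The claim is that the three elements
$a_{31}$, $f = (x_{11}-x_{22})a_{11}+x_{12}a_{21}$ and
$g = (a^{-1})_{33}(x_{11}-x_{33}) - (a^{-1})_{32}x_{23} - (a^{-1})_{31}x_{13}$
form a regular sequence in $R=\CC[\frb\times G]$ (with $G=SL_3$).
Since $R$ is a Cohen--Macaulay (indeed a polynomial ring localized at $\det=1$, hence a regular) ring, a sequence of three elements is regular if and only if the quotient $R/(a_{31},f,g)$ has Krull dimension exactly $\dim R - 3$; equivalently, the three hypersurfaces they cut out intersect in the expected codimension. So the plan is to compute the dimension of the common zero locus $Z = V(a_{31},f,g)\subset \frb\times G$ and check it is $\dim(\frb\times G)-3$.

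First I would cut out $V(a_{31})$: this is $\frb\times \{a\in G: a_{31}=0\}$, an irreducible hypersurface (the coordinate $a_{31}$ restricted to $SL_3$ is a nonzero irreducible function, as $SL_3$ is a factorial variety and $a_{31}$ is not a unit), of codimension $1$. Next, on this hypersurface I would analyze $f=0$. The key point is that $f$ depends on $X$ only through $x_{11}-x_{22}$ and $x_{12}$, and linearly, with coefficients $a_{11},a_{21}$ that are not both identically zero on $V(a_{31})$ (indeed on the open locus $a_{11}\neq 0$, $f=0$ is a nontrivial linear equation on the $X$-variables). So $V(a_{31},f)$ has codimension $2$, and moreover it is reduced and its components are governed by where $(a_{11},a_{21})$ vanishes. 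Finally I would show $g$ is a nonzerodivisor on $R/(a_{31},f)$. Again $g$ is linear in the $X$-variables $x_{11}-x_{33},\,x_{23},\,x_{13}$, with coefficients $(a^{-1})_{33},(a^{-1})_{32},(a^{-1})_{31}$; one checks these are not all contained in the ideal $(a_{31},f,$ equations defining a component$)$, so that on each irreducible component of $V(a_{31},f)$ the function $g$ does not vanish identically, forcing $\operatorname{codim} V(a_{31},f,g)=3$.

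Concretely, to make the coefficient bookkeeping clean I would stratify $G\cap\{a_{31}=0\}$ by the (open, dense) locus $U$ where $a_{11}\neq 0$ and $(a^{-1})_{33}\neq 0$, and handle the complementary closed loci separately by a dimension count. On $U$: $f=0$ solves for $x_{12}$ as a function of the remaining coordinates, and then $g=0$ — whose leading term is $(a^{-1})_{33}(x_{11}-x_{33})$ — solves for $x_{11}-x_{33}$; since $x_{12}$ and $x_{11}-x_{33}$ are independent coordinates that do not appear in the single previous equation used to eliminate the other, the three equations are a regular sequence over $U\times\frb$, so $Z\cap(U\times\frb)$ has pure codimension $3$. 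The complement $G\cap\{a_{31}=0\}\setminus U$ is a closed subset of $G\cap\{a_{31}=0\}$ of codimension $\geq 1$; over it, $f$ and $g$ are still (generically) nontrivial linear conditions on $X$, so $Z$ restricted there has codimension $\geq 3$ as well. Combining, $\operatorname{codim}_{R} Z = 3$, and since $R$ is Cohen--Macaulay this is exactly the statement that $a_{31},f,g$ is a regular sequence.

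\textbf{Main obstacle.} The delicate step is controlling what happens on the small strata where the coefficients $a_{11}$ or $(a^{-1})_{33}$ degenerate — i.e.\ making sure that on $V(a_{31})$, neither $f$ nor $g$ becomes identically zero on a component of positive dimension, so that no component of $Z$ jumps up in dimension. One handles this by noting that $a_{11}$ and $(a^{-1})_{33}=a_{11}a_{22}-a_{12}a_{21}$ (using $a_{31}=0$), $a_{21}$, etc., cannot all vanish simultaneously on an irreducible component of $V(a_{31})$ of codimension $1$ (their common zero locus inside $\{a_{31}=0\}$ has higher codimension, again by factoriality of $G$ and an explicit check of the $2\times2$ minor relations), together with the fact that $X$ ranges over the full space $\frb$ so the linear-in-$X$ equations stay nondegenerate fiberwise. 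I expect this stratified dimension count to be the only real content; everything else is formal, using that $\frb\times G$ is Cohen--Macaulay and that codimension of the zero set equals the length of a regular sequence.
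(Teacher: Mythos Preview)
Your approach is correct. Both your argument and the paper's rest on the same linear-algebra observation: $f$ is linear in the $X$-variables with coefficient vector $(a_{11},a_{21})$, $g$ is linear in $X$ with coefficient vector $((a^{-1})_{33},(a^{-1})_{32},(a^{-1})_{31})$, and on $V(a_{31})$ neither coefficient vector can vanish (the first is a nonzero column of $a$, the second a nonzero row of $a^{-1}$).

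The paper packages this more directly, though. Rather than invoking Cohen--Macaulayness and arguing by a generic-chart-plus-boundary stratification, it simply covers $V(a_{31})$ by four affine open charts determined by which of $a_{11},a_{21}$ is nonzero and which of the entries $(a^{-1})_{3j}$ is nonzero (checking via the relation $a^{-1}a=I$ that four charts suffice). On each chart the relevant coefficient is a unit, so after dividing by it, $f$ and $g$ become affine-linear coordinates on the $\frb$-factor, and the sequence $a_{31},f,g$ is transparently regular there. Since regularity is a local property, this finishes the proof. The point is that this completely dissolves what you flag as the ``main obstacle'': there are no degenerate boundary strata left over, because the charts already exhaust $V(a_{31})$. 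Your route reaches the same conclusion but needs the extra case-by-case bookkeeping on the complement of $U$ that you anticipated; the paper's open cover is the cleaner way to organise the same content.

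One small slip to fix: on your chart $U=\{a_{11}\ne 0,\ (a^{-1})_{33}\ne 0\}$, the equation $f=0$ solves for $x_{11}-x_{22}$ (its coefficient is $a_{11}$), not for $x_{12}$ (whose coefficient $a_{21}$ need not be invertible on $U$). This is harmless, since $x_{11}-x_{22}$ and $x_{11}-x_{33}$ are still independent linear coordinates on $\frb$, but the write-up should say so.
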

\begin{proof}
We use the Bruhat cover of the group to show it.
Indeed, if $a_{31}=0$ then either $a_{11}\ne 0$ or $a_{21}\ne 0$. Let's treat the case $a_{11}\ne 0$ first.
then either $(a^{-1})_{32}\ne 0$ and
$a_{31}, f/a_{11}, g/(a^{-1})_{32}$ is regular
or $(a^{-1})_{33}\ne 0$ and $a_{31}, f/a_{11}, g/(a^{-1})_{33}$ is regular.
The second case $a_{21}\ne 0$ is analogous since then either $(a^{-1})_{31}\ne 0$ and
$a_{31}, f/a_{21}, g/(a^{-1})_{31}$ is regular or $(a^{-1})_{33}\ne 0$ and
$a_{31}, f/a_{21}, g/(a^{-1})_{33}$ is regular. Since we show regularity in each affine chart the statement follows.
\end{proof}

Let us point out that the proof of the proposition~\ref{prop: geom 3 cross MF}  implies that the ideal $I'_{12}=(a_{31},f,g)\subset \CC[\frg\times G]$  defines the subvariety
that consists of pairs $(X,g)$ such that
$$ \Ad_g(X)-\textup{diag}(X_{22},X_{33},X_{11})\in \frn.$$
The later matrix equation if $B^2$-invariant hence the ideal $I'_{12}$ is $B^2$-invariant too.

The ideal $I_{12}=(a_{31},f,g)\subset \CC[\calXr_2]$ contains the potential $\Wr$  and because of regularity of the sequence  and of $B^2$-invariance we have, according to the Lemma~\ref{lem: ext MF eq}
the following well-defined matrix factorization
$$\bcalC_{12}=\mathrm{K}^{\Wr}(I_{12})\in \MF_{B^2}(\calXr_2,\Wr).$$

Indeed, the regularity of \((a_{31},f,g)\) implies that the Koszul complex \(\mathrm{K}(a_{31},f,g)\) is homotopic to \(\CC[\calXr_2]/I_{12}\) supported in the zeroth homological degree. Thus the conditions of
the Lemma~\ref{lem: ext MF eq} are satisfied and there is a unique matrix factorization \(\mathrm{K}^{\Wr}(I_{12})\) that extends \(\mathrm{K}(a_{31},f,g)\).

Let us define the following elements of $\CC[\frb\times G]$:
$$ h=(a^{-1})_{32} x_{23}+(a^{-1})_{33}(x_{33}-x_{22}),\quad k=a_{11}(x_{11}-x_{33})+a_{21}x_{12}+a_{31}x_{13}.$$

Proof of the proposition below is parallel to the propositions above and we leave them for reader to check
\begin{proposition}We have
\begin{enumerate}
\item The conditions $(a^{-1})_{31}=0, h=0, k=0$ imply $\Ad^{-1}_{a}(X)\in \frb$.
\item The sequence $(a^{-1})_{31}, h, k$ is regular in $\CC[\frb\times G]$.
\end{enumerate}
\end{proposition}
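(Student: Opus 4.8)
The statement is a mirror image of Proposition~\ref{prop: geom 3 cross MF} and the regularity proposition that follows it, obtained by the symmetry that swaps the roles of the two Borel factors (equivalently, conjugation by $w_0$ as in the Involution subsection, or the obvious relabeling of rows/columns $1\leftrightarrow 3$, $2$ fixed). So the plan is to run the same two arguments again with the new triple $\bigl((a^{-1})_{31},\,h,\,k\bigr)$ in place of $(a_{31},f,g)$.

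\emph{Part (1): the three vanishing conditions force $\Ad_a^{-1}(X)\in\frb$.} First I would rewrite the two equations $h=0$ and $k=0$ as the statement that certain entries of shifted matrices vanish. Concretely, $k = \bigl((X-x_{33}\mathrm{I}_3)a\bigr)$-type expression in the first column, so $k=0$ together with the companion entry gives $\bigl(a^{-1}(X-x_{33}\mathrm{I}_3)a\bigr)_{\bullet 1}$ having only its top component possibly nonzero; and $h=0$ expresses that the bottom row of $a^{-1}(X-x_{22}\mathrm{I}_3)$ is controlled, i.e. $\bigl(a^{-1}(X-x_{22}\mathrm{I}_3)a\bigr)_{3\bullet}$ vanishes except for the $(3,1)$ entry. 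The extra entry that is not literally in the list $(a^{-1})_{31},h,k$ is handled exactly as equation~(\ref{eq: eq aux}) was handled in the proof of Proposition~\ref{prop: geom 3 cross MF}: when $(a^{-1})_{31}=0$ one can express the inverse entries $(a^{-1})_{3j}$ through products of the $a_{ij}$ (cofactors), and the leftover relation becomes a multiple of $k$ (resp. $h$) modulo $(a^{-1})_{31}$. Combining, all strictly-lower-triangular entries of $\Ad_a^{-1}(X)$ vanish, which is the claim.

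\emph{Part (2): regularity of $\bigl((a^{-1})_{31},h,k\bigr)$ in $\CC[\frb\times G]$.} As in the proof of the regularity of $(a_{31},f,g)$, I would use the Bruhat-type affine cover of $G$ (in fact it suffices to stratify by which entries in the relevant column/row of $a$ or $a^{-1}$ are invertible). On the locus $(a^{-1})_{31}=0$, at least one of a short list of entries is nonzero, and in each such chart the remaining two functions $h,k$ (after dividing by the invertible entry) become two of the coordinate functions, hence form a regular sequence together with $(a^{-1})_{31}$. Since regularity can be checked on an open cover and holds on each chart, it holds globally. The only thing to be careful about is choosing the right entries to invert so that $h$ and $k$ really do become coordinates modulo the first generator; this is the same bookkeeping as in the earlier proof with the roles of $1$ and $3$ interchanged.

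\emph{Main obstacle.} There is no serious conceptual difficulty: the whole content is that the $w_0$-symmetry (or the relabeling $1\leftrightarrow 3$) carries the earlier pair of propositions to this one. The one place that needs genuine care — and is "the hard part" only in the sense of being the only non-mechanical step — is reproducing the cofactor identities for the entries of $a^{-1}$ on the stratum $(a^{-1})_{31}=0$ and verifying that the extra relation (the analogue of~(\ref{eq: eq aux})) is indeed proportional to $k$ (or $h$) modulo $(a^{-1})_{31}$; once that is checked, both parts follow verbatim from the arguments already given. For this reason the proof can legitimately be left to the reader, as the paper indicates.
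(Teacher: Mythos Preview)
Your proposal is correct and matches the paper's approach: the paper itself leaves this proof to the reader with the remark that it is ``parallel to the propositions above,'' and you have correctly identified that parallelism as the $1\leftrightarrow 3$ relabeling (the involution $\mathrm{D}$ of the following subsection) applied to the proofs of Proposition~\ref{prop: geom 3 cross MF} and the subsequent regularity proposition. Your identification of the one genuine check---that the analogue of equation~(\ref{eq: eq aux}) becomes a multiple of $h$ (resp.\ $k$) modulo $(a^{-1})_{31}$ via the cofactor formulas---is exactly the content the reader is expected to supply.
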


Let $I_{21}=((a^{-1})_{31},h,k)\subset \CC[\calXr_2]$ the ideal.
Then analogously to the previous case we can define complex
$$ \bcalC_{21}:=\mathrm{K}^{\Wr}(I_{21})\in \MF_{B^2}(\calXr_2,\Wr).$$

The main result of this section is the formula for the matrix factorization for the braid on three strands with two crossings:
\begin{lemma} \label{lem: two cross}We have
$$ \bcalC^{(1)}_+\bar{\star}\bcalC^{(2)}_+=\bcalC_{12},\quad \bcalC^{(2)}_+\bar{\star}\bcalC^{(1)}_-=\bcalC_{21}.$$
\end{lemma}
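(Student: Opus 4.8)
The plan is to compute the two reduced convolutions $\bcalC^{(1)}_+\bar{\star}\bcalC^{(2)}_+$ and $\bcalC^{(2)}_+\bar{\star}\bcalC^{(1)}_-$ directly, using the explicit models for the generators from Section~\ref{sec: two str} together with the "rank $1$" form of the convolution described in subsection~\ref{ssec: conv rk 1}, and to match the result against the Koszul matrix factorizations $\bcalC_{12}=\mathrm{K}^{\Wr}(I_{12})$ and $\bcalC_{21}=\mathrm{K}^{\Wr}(I_{21})$ defined above. Since both $\bcalC^{(1)}_\pm$ and $\bcalC^{(2)}_\pm$ are induced from rank $1$ matrix factorizations on $\calXr_2(G_2)$ via the inclusion functors $\Indb_{1\dots 2}$, $\Indb_{2\dots 3}$, I would first apply lemma~\ref{prop: small prod 1} (respectively lemma~\ref{prop: small prod 2}) to replace the convolution over the full flag space by a convolution over the smaller space $\calXr_3(G_3,G_{2\dots 3})$ (resp.\ $\calXr_3(G_{1\dots 2},G_3)$). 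Concretely this means: pull back the two generators along $\bar\pi_{12}$, $\bar\pi_{23}$, form the tensor product with the prescribed $B_3$-equivariant correction differentials $\partial_{rl}$ of formula~\eqref{eq: pi12Bpi23 2}, then apply $\CE_{\frn_2^{(2)}}(\cdot)^{T_2^{(2)}}$ and push forward along $\bar\pi_{13}$.

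The computational heart is then a Gauss-elimination/contraction argument exactly parallel to the two-strand computation in the proof of theorem~\ref{thm: two strands}. First I would write out the product Koszul complex with rows coming from $\bcalC^{(1)}_+$ and $\bcalC^{(2)}_+$ in the coordinates $(X,g_{12},g_{23},Y)$ on $\calXr_3(G_3,G_{2\dots 3})$, expressed via the matrices $a=g_{12}$, $b=g_{23}$, $c=g_{13}=ab$ set up in subsection~\ref{ssec: conv rk 1}. Using row transformations of the first and second type (subsection~\ref{ssec: row ops})---which, crucially, do not change the non-equivariant shadow---I would eliminate the intermediate variable (the analogue of $y_2$ in the two-strand case) and the entries of $g_{23}$ that the $\CE$ functor makes acyclic, invoking proposition~\ref{prop: Lie cohs} (with $m-k+1=2$, so $Q_{2\dots 3}\to G_2$) to identify $\CE_{\Lie(J'_{2\dots 3})}(\CC[Q_{2\dots 3}])=\CC[G_2]$ and hence collapse the Chevalley--Eilenberg part. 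What survives should be precisely a Koszul matrix factorization on $\calXr_2(G_3)$ whose linear entries are $a_{31}$ together with the two combinations $f=(x_{11}-x_{22})a_{11}+x_{12}a_{21}$ and $g$ of equation~\eqref{eq: eq.rl23}; that they form a regular sequence containing $\Wr$ is already proved, so by the uniqueness part of lemma~\ref{lem: uniq MF+eq} (equivalently, by the homotopy-uniqueness of $\mathrm{K}^F$ established in subsection~\ref{ssec: str eq Kosz}) the resulting object is homotopy equivalent to $\mathrm{K}^{\Wr}(I_{12})=\bcalC_{12}$. The second identity is handled the same way, now using lemma~\ref{prop: small prod 2}, the negative generator $\bcalC^{(1)}_-=\bcalC^{(1)}_+\brwsh{-\chi_1}{\chi_2}$, and the variables $(a^{-1})_{31},h,k$ defining $I_{21}$; the character twist on $\bcalC_-$ should exactly account for the appearance of $a^{-1}$ rather than $a$ in the entries of $I_{21}$.

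The step I expect to be the main obstacle is the bookkeeping of the weakly equivariant correction differentials through the eliminations: one must verify at each row operation that the admissibility conditions~\eqref{eq: adm row tr} hold (so that the $\frn^2$-equivariant structure differentials $\partial_l,\partial_r$ and the Chevalley--Eilenberg pieces genuinely drop out of the non-equivariant shadow), and then separately reconstruct the correct equivariant structure on the final Koszul complex so that it matches the one carried by $\mathrm{K}^{\Wr}(I_{12})$. The appearance of the extra correction term $\partial_{rl}$ from~\eqref{eq: pi12Bpi23 2}, built from the $X$- and $Y$-derivatives of the differentials of the two generators, is the place where a genuine computation is unavoidable; one must check that, after eliminating the intermediate variable, $\partial_{rl}$ either cancels or becomes part of the Koszul differential of the answer. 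I would organize this by first doing everything at the level of the non-equivariant shadows $(\cdot)^\sharp$---where, by the key feature of row transformations noted at the end of subsection~\ref{ssec: row ops}, the manipulations are purely a matter of Gaussian elimination over $\CC[\calXr_3]$---to pin down that $(\bcalC^{(1)}_+\bar\star\bcalC^{(2)}_+)^\sharp$ is the Koszul complex on $(a_{31},f,g)$, and only then invoke the uniqueness lemmas to upgrade to an equivariant homotopy equivalence $\bcalC^{(1)}_+\bar\star\bcalC^{(2)}_+\simeq\bcalC_{12}$ (and likewise $\bcalC^{(2)}_+\bar\star\bcalC^{(1)}_-\simeq\bcalC_{21}$).
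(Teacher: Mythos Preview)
Your plan for the first identity is essentially the paper's: reduce via proposition~\ref{prop: small prod 1} to the small space $\calXr_3(G_3,G_{2\dots 3})$, write the tensor product as a four-row Koszul complex, use row operations to peel off the rows matching relations~\eqref{eq: eq.rl21} and~\eqref{eq: eq.rl22}, compute $\CE_{\frn_2^{(2)}}(\cdot)^{T^{(2)}}$ on the remaining two rows via the same cube diagram and contraction sequence as in the proof of theorem~\ref{thm: two strands}, and invoke lemma~\ref{lem: uniq MF+eq} to identify the result with $\mathrm{K}^{\Wr}(I_{12})$.

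Two points of execution differ. First, there is no analogue of the intermediate variable $y_2$ to eliminate here; on the reduced space the initial row operations instead remove $a_{31}=c_{31}$ from the other rows, and the Chevalley--Eilenberg step is done by the explicit cube contraction rather than by citing proposition~\ref{prop: Lie cohs}. Second, and more substantively, the paper does \emph{not} track the correction differential $\partial_{rl}$ of~\eqref{eq: pi12Bpi23 2} through the eliminations. After reducing to the last two Koszul rows it instead argues that there is a \emph{unique} strongly $B_2$-equivariant structure on this two-row complex and computes $\dltt\theta_2$ directly from the invariance of $\kappa_2\theta_2+\kappa_3\theta_3$. This neatly sidesteps exactly the bookkeeping you identify as the main obstacle; your approach via~\eqref{eq: pi12Bpi23 2} would also work but is heavier.

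For the second identity the paper takes a genuinely different and much shorter route: it does not redo the computation at all, but applies the involution $\mathrm{D}$ of subsection~7.5, which satisfies $\mathrm{D}(\bcalC^{(i)}_\pm)=\bcalC^{(n-i)}_\pm$ and is a homomorphism for $\bar\star$, to the first identity. This immediately yields the second equation and explains why $I_{21}$ involves entries of $a^{-1}$ rather than $a$. Your direct computation would succeed, but the involution argument is the intended shortcut.
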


\subsection{Proof of Lemma~\ref{lem: two cross}}
We present only a proof for the first equation. The second equation is obtained from the first by applying the involution $D$.

Since $\bcalC_+^{(2)}=\Indb_{1,2}(\bcalC_+)$ we can use the proposition~\ref{prop: small prod 2} and the corrections for the $B_2$ equivariant structure from
the subsection~\ref{ssec: conv rk 1} to proceed with our computation of  $\bcalC_+^{(1)}\bar{\star}\bcalC_+^{(2)}$.

First we introduce the matrices
\begin{align*}
\xgot & = \xbmtr{\exaoo & \exaot & \exaoh\\ \exato & \exatt & \exath \\ \exaho & \exaht & \exahh}, &
\xgth &= \xbmtr{ 1& 0 & 0 \\ 0 & \SymbolPrint{\exbtt} & \SymbolPrint{\exbth} \\ 0 & \SymbolPrint{\exbth} & \SymbolPrint{\exbhh}} &
                                                                                                                         \xgoh & = \xbmtr{\SymbolPrint{\excoo} & \SymbolPrint{\excot} & \SymbolPrint{\excoh}\\
  \SymbolPrint{\excto} & \SymbolPrint{\exctt} & \SymbolPrint{\excth} \\ \SymbolPrint{ \excho} & \SymbolPrint{\excht} & \SymbolPrint{\exchh}}.
\end{align*}
They satisfy the relation
\[
\xgoh = \xgot \xgth.
\]
Polynomials of the entries of the matrices  $a$, $c$, $X$ and $Y$ span the space of regular functions on the space $\calX_3(G_3,G_{2,3})$.

By the proposition~\ref{prop: small prod 1} $\bcalC^{(1)}_+\bar{\star}\bcalC^{(2)}_+=
\brpi_{13*}(\CE_{\frn^{(2)}_2}((\bar{\pi}^*_{12}\otimes_{B_2} \bar{\pi}^*_{23})( (\bar{\calC}_+^{(1)})\boxtimes\bcalC_+^{(2)})))$ where
$$\brpi_{12}:\calXr_3(G_3,G_{2,3})\to \calXr_2(G_3),\quad \brpi_{23}: \calXr_3(G_3,G_{2,3})\to \calXr_2(G_2), $$
$$\brpi_{13}:\calXr_3(G_3,G_{2,3})\to \calXr_2(G_3).$$

We introduce the matrices
\def\xtX{ \tilde{\xX}}
\def\tXt{\xtX_2}
\def\tXtp{\tXt^+}
\def\tXtm{\tXt^-}
\def\xtY{ \tilde{\xY} }
\def\tYt{ \xtY_2 }
\def\ty{ \tilde{y} }
\begin{align*}
\tXt & = \Adv{\xigot}\xX = ||\tx_{ij} ||, &
\tYt& =  \Adv{\xgth} \xY = || \ty_{ij} ||.
\end{align*}

The tensor product $(\bar{\pi}_{12}\otimes_{B_2} \bar{\pi}_{23})^* (\bar{\calC}_+^{(1)}\boxtimes\bcalC_+^{(2)})$ in the above tensor product is the Koszul complex:
\[ C_{12}=
\xbmtr{
a_{31}&*&\theta_0\\
(\xoo-\xtt)\exaoo + \xot\exato & * &\theta_1
\\
\exaht & * &\theta_2
\\
(\txtt-\txhh)\exbtt + \txth\exbht & * &\theta_3
}
\]

Since
\[
\xigth =
\xbmtr{ 1 & 0 & 0 \\ 0 & \exbhh & -\exbth \\ 0 & -\exbth & \exbtt},
\]
it turns out that $\exaho = \excho$. Thus we can use the row operations to remove $ \excho$ from the other rows of $C_{12}$. After completing the process we obtain
the matrix factorization $C'_{12}$ with last last three rows equal to the rows of $C_{12}$ with $c_{31}=a_{31}=0$ and the first row  matching with
the row of
$\calC_{12}$ that contains the relation~\eqref{eq: eq.rl21}. The last row is $B^{(2)}_2$ invariant  hence for our computation of $\CE_{\frn^{(2)}_2}(C_{12})$ we only need to analyze the part of the complex $C'_{12}$ that consists of
the last three lines:
\[
C''_{12}=\xbmtr{
(\xoo-\xtt)\exaoo + \xot\exato & * &\theta_1
\\
\exaht & * &\theta_2
\\
(\txtt-\txhh)\exbtt + \txth\exbht & * &\theta_3
}
=
\xbmtr{
(\xoo-\xtt)\excoo + \xot\excto & * &\theta_1
\\
\exbhh \excht - \exbht \exchh & * &\theta_2
\\
(\txtt-\txhh)\exbtt + \txth\exbht & * &\theta_3
}
\]
The first two rows come from the (1,2)-crossing and the second row comes from the (2,3)-crossing.
It turns out that the first row of $C''_{12}$  matches the  row of $\calC_{12}$ that contains the relation~\eqref{eq: eq.rl22} and it is $B_2^{(2)}$-invariant.
So complete our computation of $\CE_{\frn_2^{(2)}}(C_{12})^{T^{(2)}}$ we have to simplify only the remaining two rows:
\[
C=\xbmtr{
\exbhh \excht - \exbht \exchh & * &\theta_2
\\
(\txtt-\txhh)\exbtt + \txth\exbht & * &\theta_3
} =
\xbmtr{
\kappa_2 & * &\theta_2
\\
\kappa_3 & * &\theta_3
}
\]
where
\[
\kappa_2 = \exaht = \exbhh \excht - \exbht \exchh,\qquad
\kappa_3 = (\txtt-\txhh)\exbtt + \txth\exbht.
\]

To write out our Chevalley-Eilenberg complex we need to explain the $B_2$-equivariant structure on $C$ and it is done below.
Denote the elements of Borel group $B^{(2)}_2$  as
\[
\brht = \xbmtr{
1 & 0 & 0
\\
0 & \ybettt & \ybetth
\\
0 & 0 & \ybettt^{-1}
}
\]
Its action is
\[
\xgot\mapsto \xgot\briht,\qquad
\xgth\mapsto \brht\xgth,\qquad
\tXt\mapsto \Adv{\brht} \tXt.
\]
The weights of the action of $diag(0,1,-1)\in \Lie(B_2^{(2)})$  on  the \rhs elements in the Koszul rows of $C$ are -1 and 1 respectively, hence the weights of $\theta_2$ and $\theta_3$ are 1 and -1.

It remains to establish the action of $\ybetth$ denoted by $\dltt$. One way to figure out this action is to use formulas (\ref{eq: pi12Bpi23 1}) and (\ref{eq: pi12Bpi23 2}) to fix the
equivariant structure on $C_{12}$ and then follow up the changes of this equivariant structure as do our reduction from $C_{12}$ to $C$. However, we choose a different method:
we just show that there is a unique strongly $B_2$-equivariant structure on $C$ and compute it explicitly.

The element $\kappa_2$ is $\dltt$-invariant. The variation $\dltt\kappa_3$ can be computed by brute force, but since we are interested in it being proportional to the second element, we use an indirect computation. Present $\tXt$ as a sum of the upper-triangular and strictly lower-triangular parts:
\[
\tXt = \tilde{X}_{2,+} + \tilde{X}_{2,--}.
\]
Note that
\[
\kappa_3\exbht = -(\Adv{\xigth}\tilde{X}_{2,+})_{32}.
\]
while $\dltt\exbht=0$, so
\[
\dltt\kappa_3 = -\dltt(\Adv{\xigth}\tilde{X}_{2,+})_{32}/\exbht.
\]
The matrix
\[
\Adv{\xigth}\tXt = \Adv{\xigth}\Adv{\xigot}\tilde{X}_1 = \Adv{\xigoh}\tilde{X}_1
\]
is Borel-invariant, hence
\[
\dltt\kappa_3 = \dltt(\Adv{\xigth}\tilde{X}_{2,--})_{32}/\exbht.
\]
A direct computation shows that
\[
(\Adv{\xigth}\tilde{X}_{2,--})_{32} = \exbtt^2 (\tilde{X}_{2,--})_{32} = \exbtt^2\tx_{32}.
\]
The formulas for action of \(B^{(2)}\) imply  that $\dltt\tx_{32}=0$, while $\dltt\exbtt=\exbht$, so
\[
\dltt\kappa_3 = 2\tx_{32}\exbtt,
\]
while
\begin{multline*}
\tx_{32} = \exaht\left(
  (\xoo - \xtt)\exaot\exato + \exato\exatt\xot\right.\\
\left.+ (\xhh-\xtt)(\exaoo\exatt-\exaot\exato) + \exaht
(\exato\xoh -\exaoo\xth)\right)
\end{multline*}
Since the Koszul differential $\kappa_2\theta_2 + \kappa_3\theta_3$ must remain Borel-invariant, it follows that
\begin{multline}
\label{eq:varxta}
\dltt\theta_2 = -2\exbtt\left(
(\xoo - \xtt)\exaot\exato + \exato\exatt\xot\right.\\
\left. + (\xhh-\xtt)(\exaoo\exatt-\exaot\exato) + \exaht
(\exato\xoh -\exaoo\xth)\right)\theta_3
\end{multline}
\def\xith{\xi_{23}}

Similarly to the case of two strands we have tensor product decomposition $R=\calXr_3=R'\otimes \bar{\pi}^*_{13}(\CC[\calXr_2])$ where $R'=\CC[G_{2,3}]=\CC[G_2]$ is the factor with non-trivial $B^{(2)}_2$-action.
Thus we can use the equality  $\CE_{\frn^{(2)}_2}(C)^{T^{(2)}}=\CE_{\frn^{(2)}_2}(C')^{T^{(2)}}\otimes \bar{\pi}_{13}^*(\CC[\calX_3])$ were $C'$ is the complex $C$ considered as the matrix factorization of  $R'$-modules.

The complex  of $T^{(2)}$-invariant part of the complex $\CE_{\frn_2^{(2)}}(C')$ has the following shape:

$$
\begin{tikzcd}[row sep=scriptsize, column sep=scriptsize]
&\left[ \theta_2;0\right]\arrow[dl] \arrow[rr,shift left=0.5ex] \arrow[dd, dashed, two heads] & & \left[1;1\right] \arrow[dl] \arrow[dd, dashed,blue]\arrow[ll,shift left=0.5ex,blue] \\
\left[\theta_2\theta_3;1\right]\arrow[ur, shift left=1ex,red]  \arrow[dd, dashed, red] & & \left[\theta_3;2\right] \arrow[ur,shift right=1ex]\\
& \left[\theta_2;-2\right] e^* \arrow[dl] \arrow[rr, shift left=0.5ex] &  & \left[1;-1\right] e^* \arrow[dl]\arrow[ll, shift left=0.5ex] \\
\left[\theta_2\theta_3;-1\right]e^* \arrow[ur,shift right=1ex] \arrow[rr, shift left=0.5ex] & & \left[\theta_3,0\right]e^*\arrow[ur, shift right=1ex, blue]
\arrow[ll, shift left=0.5ex,red]\\
\arrow[to=4-3, from=2-3, crossing over, dashed, hook]\\
\arrow[to=4-3, from=1-2, dashed, crossing over,green]
\arrow[from=2-1, to=2-3, shift left=0.5ex,crossing over]
\arrow[to=2-1, from=2-3, shift left=0.5ex,crossing over]
\end{tikzcd}
$$


The shape of the complex is identical to the shape of the complex from the proof of the theorem~\ref{thm: two strands}. Thus we can perform the same sequence of contractions as in the proof of
the theorem~\ref{thm: two strands}. As we have seen in the proof of theorem~\ref{thm: two strands},
at the end of our contraction process, all vertices of the complex except two connected by the green dashed arrow can be contracted to $0$ and the vertices $R[0]\theta_3 e_3^*$ to $H^1(\mathbb{P}^1,\calO(-2))\otimes \brpi_{13}^*(\CC[\calX_2])$,
$R[0]\theta_2$ to $H^0(\mathbb{P}^1,\calO)\otimes \brpi_{13}^*(\CC[\calX_2]).$

Thus at the end we obtain a two term Koszul complex and as we have seen previously all of the new correction differentials
in the final complex will be running in the direction opposite to the direction of the green arrow. Let's study the differential that induced by the green arrow in our final complex.

 The target of our differential
 is \(\textup{H}^1(\mathbb{P}^1,\calO(-2))\otimes \brpi_{13}^*(\CC[\calX_2])=\textup{H}^1_{\Lie}(\frn,R\langle -2\rangle)\otimes \brpi_{13}^*(\CC[\calX_2]),\) hence we can
  replace the coefficients of the differential by the expressions that are homologous with respect to the differential $\dltt$. Below we take advantage of this observation.
 Indeed,  note that
\[
\dltt\exbtt = \exbht,\qquad \dltt\exbth = \exbhh,
\]
so, first, $\dltt(\exbtt^2) = 2\exbtt\exbht$, hence $\exbtt\exbht$ is exact and, second, $\dltt(\exbtt\exbth) =
\exbht\exbth + \exbtt\exbhh$, hence in view of $\exbtt\exbhh-\exbth\exbht=1$ we find $\exbtt\exbhh\sim \frac{1}{2}$. As a result,
\def\shlf{\tfrac{1}{2}}
\begin{equation}\label{eq:homoac}
\exbtt \xbmtr{\exaoo & \exaot & \exaoh\\ \exato & \exatt & \exath \\ \exaho & \exaht & \exahh} =
\xgoh\exbtt\xigth \sim\xgoh
\xbmtr{1 & 0 & 0\\ 0 & \shlf & * \\ 0  & 0 & *} =
\xbmtr{\excoo & \shlf\excot & * \\
\excto & \shlf\exctt  & *\\
\excho & \shlf\excht & *}
\end{equation}
Note that in the expression~\eqref{eq:varxta} each term has a product of an element from the first column and the second column of the matrix $\xgot$, also the first column of \(g_{13}=c\)
is equal to the first column of \(g_{12}=a\). Thus we can combine the last observation with (\ref{eq:homoac}) to see that 
 the differential induced by the green arrow
is homologous to
\[
-\Bigl(
(\xoo - \xtt)\excot\excto + \excto\exctt\xot + (\xhh-\xtt)(\excoo\exctt-\excot\excto) + \excht
(\excto\xoh -\excoo\xth)\Bigr)
\]

Since this is the third condition of~\eqref{eq: eq.rl23} we have shown that the matrix factorization $\bcalC_+^{(1)}\bar{\star}\bcalC_+^{(2)}$ is actually an element of
$\MF_{B^2}(\calXr_2,\Wr)$ and its positive part in the sense of Lemma~\ref{lem: ext MF eq}  is a Koszul complex for the regular sequence $(c_{31},f,g)$. Hence by the lemma~\ref{lem: uniq MF+eq}
there is an isomorphism between $\bcalC_+^{(1)}\bar{\star}\bcalC_+^{(2)}$ and $\bcalC_{12}$.


\section{Three strands three crossings}\label{sec:  three str 3}

\def\xoo{x_{11}}
\def\xot{x_{12}}
\def\xoh{x_{13}}
\def\xtt{x_{22}}
\def\xth{x_{23}}

\def\yot{y_{12}}
\def\yoh{y_{13}}
\def\yth{y_{23}}

\def\zto{z_{21}}
\def\zho{z_{31}}
\def\zht{z_{32}}

\def\brh{ h }
\def\brht{ \brh_2 }
\def\briht{ \brh^{-1}_2 }
\def\ybet{ \beta }
\def\ybetth{ \ybet_{23} }
\def\ybettt{ \ybet_{22} }
\def\ybethh{ \ybet_{33} }
\def\ybetoo{ \ybet_{11}}
\def\ybetot{ \ybet_{12}}
\def\ybetto{ \ybet_{21}}
\def\ybetooi{ \ybet_{11}^{-1}}

\def\aiv#1#2{ (a^{-1})_{#1#2}}
\def\aioo{\aiv{1}{1}}
\def\aiot{\aiv{1}{2}}
\def\aioh{\aiv{1}{3}}
\def\aito{\aiv{2}{1}}
\def\aitt{\aiv{2}{2}}
\def\aith{\aiv{2}{3}}
\def\aiho{\aiv{3}{1}}
\def\aiht{\aiv{3}{2}}
\def\aihh{\aiv{3}{3}}
\def\civ#1#2{ (c^{-1})_{#1#2}}
\def\cioo{\civ{1}{1}}
\def\ciot{\civ{1}{2}}
\def\cioh{\civ{1}{3}}
\def\cito{\civ{2}{1}}
\def\citt{\civ{2}{2}}
\def\cith{\civ{2}{3}}
\def\ciho{\civ{3}{1}}
\def\ciht{\civ{3}{2}}
\def\cihh{\civ{3}{3}}
\def\biv#1#2{ (b^{-1})_{#1#2} }

\def\tho{ \theta_1 }
\def\thw{ \theta_2 }
\def\thh{ \theta_3 }

\def\zXv#1{ X - x_{#1#1}\xId}
\def\zXuv#1#2{ \bigl( \zXv{#1}\bigr)_{#2}}
\def\zXtv#1{ \zXuv{2}{#1}}

\def\krdlv#1{ \delta_{#1} }

In this section we complete our proof theorem~\ref{thm: braids} by proving the cubic relation for the braid group generators.
We show that two sides of the relations realize the same matrix factorization that has a geometric interpretation: the left hand side
is realized by the extension of the complex of the minimal resolution of the ideal $I_{121}$ and the other side for the ideal $I_{212}$.
First we introduce the ideals and discuss their properties.

\subsection{Involution $D$ and the ideals $I_{121}$ and $I_{212}$}
We use special notations  for some elements of $\CC[\frb\times G]$:
$$f_i:=\Ad_{c^{-1}}(X-x_{22}\xId)_{2i},\quad h_i=(c^{-1}(X-x_{11}\xId))_{3i}.$$
$$f'_i:=\Ad_{c^{-1}}(X-x_{22}\xId)_{i2},\quad h'_i=((X-x_{33}\xId)c)_{i1},$$
$$ k_{ij}=(\Ad_{c^{-1}}(X))_{ij}-\delta_{ij} x_{4-i,4-i}.$$
where $X=|| x_{ij}||$, $g\in G$ provide local coordinates on $\frb\times G$. Let us introduce the ideals
$$ I_{121}=(f_1,f_2,h_2,h_3),\quad I_{212}=(f'_3,f'_2,h'_2,h'_1) \quad I=(k_{31},k_{32},k_{33},k_{21},k_{22},k_{11}).$$

\begin{proposition}\label{prop: ideals aux} We have
\begin{enumerate}
\item $\mathrm{D}(I_{121})=I_{212}$,
\item $\mathrm{D}(I)=I$,
\item $I_{121}=I$.
\end{enumerate}
\end{proposition}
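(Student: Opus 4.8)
The plan is to prove the three statements in a natural order where (1) is almost immediate, (3) is the real content, and (2) follows by combining the two. Let me set up notation: write $c = g_{12}$ with entries $c_{ij}$, and recall the involution $\mathrm{D}(g) = w_0 (g^{-1})^t w_0$ on the group and $\mathrm{D}(X) = -w_0 X^t w_0$ on $\frb$, where $w_0$ is the antidiagonal permutation matrix $(w_0)_{ij} = \delta_{i+j,n+1}$ with $n=3$. The key elementary fact I would establish first is the transformation rule for the generators under $\mathrm{D}$: conjugating $X - x_{22}\xId$ by $c^{-1}$ versus conjugating $\mathrm{D}(X) - (\mathrm{D}X)_{22}\xId$ by $\mathrm{D}(c)^{-1}$ are related by $w_0(\cdot)^t w_0$ up to sign, since $(\mathrm{D}X)_{22} = -x_{22}$ (the middle diagonal entry is fixed under $i \mapsto 4-i$, up to the global sign). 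Concretely, $\mathrm{D}(\Ad_{c^{-1}}(X-x_{22}\xId)) = -w_0 (\Ad_{c^{-1}}(X-x_{22}\xId))^t w_0 = -\Ad_{\mathrm{D}(c)^{-1}}(\mathrm{D}(X) - (\mathrm{D}X)_{22}\xId)$ after a short matrix computation, and similarly for $h_i$. Reading off entries, the reflection $i \mapsto 4-i$ on row/column indices sends $\{f_1, f_2\} \leftrightarrow \{f'_3, f'_2\}$ and $\{h_2, h_3\} \leftrightarrow \{h'_2, h'_1\}$ up to units (signs), which gives $\mathrm{D}(I_{121}) = I_{212}$. This is statement (1).

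For statement (3), $I_{121} = I$, I would argue by double inclusion. The inclusion $I_{121} \subseteq I$ is the easier direction: each of $f_1, f_2, h_2, h_3$ should be expressible as a $\CC[\frb\times G]$-combination of the $k_{ij}$. Indeed $f_i = \Ad_{c^{-1}}(X - x_{22}\xId)_{2i} = (\Ad_{c^{-1}}X)_{2i} - \delta_{2i} x_{22} = k_{2i}$ for $i \in \{1,2\}$ by the very definition of $k_{2i}$ (note $x_{4-2,4-2} = x_{22}$), so $f_1 = k_{21}$, $f_2 = k_{22}$ directly. For $h_i = (c^{-1}(X - x_{11}\xId))_{3i}$ one needs one more step: $(c^{-1}X)_{3i} = (\Ad_{c^{-1}}X \cdot c^{-1})_{3i}$... actually more carefully, $c^{-1}(X - x_{11}\xId) = \Ad_{c^{-1}}(X-x_{11}\xId)\, c^{-1}$, and using $(\Ad_{c^{-1}}X)_{3j} = k_{3j} + \delta_{3j}x_{11}$ (since $x_{4-3,4-3} = x_{11}$) one rewrites $h_i = \sum_j (k_{3j} + \delta_{3j}x_{11} - \delta_{3i}x_{11}\text{-correction})(c^{-1})_{ji}$; the $x_{11}$ terms telescope because $\sum_j \delta_{3j}x_{11}(c^{-1})_{ji} = x_{11}(c^{-1})_{3i}$ cancels against the $x_{11}\xId$ subtraction, leaving $h_i \in (k_{31},k_{32},k_{33}) \subseteq I$. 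So $I_{121} \subseteq I$.

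The reverse inclusion $I \subseteq I_{121}$ is where I expect the main obstacle, since $I$ has six generators and $I_{121}$ only four, so we need the two ``missing'' generators $k_{33}$ and $k_{11}$ (the ones not among $f_1,f_2,h_2,h_3$) to lie in $I_{121}$. The strategy is the one already used in the proof of Proposition~\ref{prop: geom 3 cross MF}: work over the Bruhat cover of $G$, i.e.\ localize at each open chart $G_w = B_- w B$. The point of the vanishing lemma behind $I_{121}$ (mirroring Proposition~\ref{prop: geom 3 cross MF}) is that $f_1 = f_2 = h_2 = h_3 = 0$ forces $\Ad_{c^{-1}}(X) - \mathrm{diag}(x_{22},x_{33},x_{11}) \in \frn$, i.e.\ forces \emph{all six} $k_{ij}$ with $i \ge j$... wait, rather it forces the strictly-lower and the diagonal mismatches to vanish on that locus; one then checks on each Bruhat chart that $k_{33}$ and $k_{11}$ are already in the ideal generated by $f_1,f_2,h_2,h_3$ (together with the regularity established for $(a_{31}, f, g)$-type sequences, which lets us divide by the nonvanishing pivot entries in each chart). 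Concretely: $k_{21} = f_1$ and $k_{22} = f_2$ are free; $k_{31}, k_{32}$ come from $h_2, h_3$ after multiplying by the invertible change-of-basis matrix (the submatrix of $c^{-1}$), exactly as in the passage ``$\tilde\partial_i E_{kl}^\vee = \delta \cdots$'' arguments elsewhere; and then $k_{33} = \mathrm{tr}(\Ad_{c^{-1}}X) - k_{22} - k_{11}$... no — $k_{11} + k_{22} + k_{33} = \mathrm{tr}(X) - (x_{11}+x_{22}+x_{33}) = 0$, so $k_{11} + k_{33} = -k_{22} = -f_2 \in I_{121}$, which reduces the problem to showing just \emph{one} of $k_{11}, k_{33}$ lies in $I_{121}$. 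For that last one I would use the relation $(c^{-1}(X - x_{11}\xId))_{33} = 0$ type computation: $h_3 = k_{33} + (c^{-1})_{31}(\cdots) + (c^{-1})_{32}(\cdots)$ where the correction terms are combinations of $f_1, f_2$ after clearing denominators on each Bruhat chart — again mimicking the ``$a_{31} = 0 \Rightarrow$'' bookkeeping in Proposition~\ref{prop: geom 3 cross MF}. Since inclusions of ideals can be checked on the affine charts of an open cover, this completes $I \subseteq I_{121}$ and hence (3). Finally, (2) is immediate: $\mathrm{D}(I) = \mathrm{D}(I_{121}) = I_{212}$ by (3) and (1), but by the same argument applied to the mirror generators (or directly, since the defining condition ``$\Ad_{c^{-1}}X - \mathrm{diag} \in \frn$'' for $I$ is manifestly $\mathrm{D}$-stable as $\mathrm{D}$ preserves $\frb$ and permutes the diagonal appropriately) one has $I_{212} = I$ as well, so $\mathrm{D}(I) = I$. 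The cleanest writeup is: prove (1) by the transformation rule, prove (3) by Bruhat-chart double inclusion, deduce $I_{212} = \mathrm{D}(I_{121}) = \mathrm{D}(I) = I$ and then (2) follows.
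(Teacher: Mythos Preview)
Your treatment of (1) and (2) is fine, and in (3) the direction $I_{121} \subseteq I$ is correct: indeed $f_1 = k_{21}$, $f_2 = k_{22}$, and the identity $c^{-1}(X - x_{11}\xId) = \Ad_{c^{-1}}(X - x_{11}\xId)\, c^{-1}$ gives $h_i = \sum_j k_{3j}(c^{-1})_{ji} \in (k_{31},k_{32},k_{33})$, since $(\Ad_{c^{-1}}(X) - x_{11}\xId)_{3j} = k_{3j}$.

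The gap is in the other direction, $I \subseteq I_{121}$. You propose to localize over Bruhat charts, invert a $2\times 2$ submatrix of $c^{-1}$ to recover $k_{31}, k_{32}$ from $h_2, h_3$, and then handle $k_{33}$ via a relation of the shape ``$h_3 = k_{33} + (c^{-1})_{31}(\cdots) + (c^{-1})_{32}(\cdots)$ with correction terms in $(f_1,f_2)$''. That relation is not right as written: the actual expansion is $h_i = \sum_j k_{3j}(c^{-1})_{ji}$, so the coefficient of $k_{33}$ in $h_3$ is $(c^{-1})_{33}$, not $1$, and the remaining terms involve $k_{31}, k_{32}$, not $f_1, f_2$. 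More importantly, the whole localization detour is unnecessary. The observation you are missing --- and which the paper uses --- is that $h_1 \equiv 0$ identically on $\frb \times G$, because the first column of $X - x_{11}\xId$ vanishes for upper-triangular $X$. Multiplying the relation above on the right by $c$ then gives, globally,
\[
k_{3j} \;=\; h_1\, c_{1j} + h_2\, c_{2j} + h_3\, c_{3j} \;=\; h_2\, c_{2j} + h_3\, c_{3j} \;\in\; (h_2, h_3) \subseteq I_{121}
\]
for all $j=1,2,3$: no charts, no submatrix inversions. Combined with $k_{21} = f_1$, $k_{22} = f_2$ and your trace relation $k_{11} = -k_{22} - k_{33}$, this finishes $I \subseteq I_{121}$.
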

\begin{proof}
The first two statements are immediate from our definition of the involution $\mathrm{D}$. We now show the last statement. First we show that $I\subset I_{121}$.

Indeed, the conditions $c^{-1}(X-x_{11}\xId)_{31}$ because $X$ is upper triangular. Hence $h_2=0,h_3=0$ implies that the last row of $c^{-1}(X-x_{11}\xId)_{31}$ is zero and
thus the last row of $\Ad_{c^{-1}}(X-x_{11}\xId)$ has vanishing last row. Finally, $f_1=h_2=0$ implies $\Ad_{c^{-1}}(X)_{21}=0$ and $\Ad_{c^{-1}}(X)_{22}=x_{22}$.
Thus $\Ad_{c^{-1}}(X)\in \frb $ and combining with $\Tr(X)=\Tr(\Ad_{c^{-1}}(X))$ we obtain that $\Ad_{c^{-1}}(X)_{ii}=x_{4-i,4-i}$.

Now let us show that $I_{121}\subset I$. The fact that $h_i\in I$ is immediate. Let us denote by $\vec{w},\vec{u},\vec{v}$ the first, second and third rows of $c^{-1}$.
The relation from $I$ imply that  $c^{-1}(X)=\tilde{X} c^{-1}$ where $\tilde{X}$ is upper triangular with elements $x_{33},x_{22},x_{11}$ on the diagonal. Hence
$\vec{v}X=x_{11}\vec{v}$ and these relations are equivalent to $h_2=h_3=0$.
\end{proof}

\subsection{Resolution for $I_{121}$}
In this subsection we construct a free resolution of the module $\CC[\frb\times G]/I_{121}$. Let us fix notations:
\begin{align*}
A&=\Bigl[-\left(\Adv{c^{-1}}\xX\right)_{21},\;\;
  \Bigl(\Adv{c^{-1}}(\xX-\xtt\xId)\Bigr)_{22},\;\;
 c^{-1}(\xX - \xoo\xId)_{32}
\Bigr].
\end{align*}

The direct computation shows that
$$A=-\Bigl[\det B_{23},\;\det B_{13},\;\det B_{12}\Bigr],$$
where
\begin{align*}
B&=
\xbmtr{
(\xoo-\xtt)\excot +\xot\exctt
&
-\excht
\\
-((\xoo-\xtt)\excoo +\xot\excto)
&
\excho
\\
-\civ{2}{i}\zXtv{i3}
&
-\civ{2}{i}\krdlv{i1}
}.
\end{align*}

The Laplace formula for the $3\times 3$ matrix made of $B$ and $A$ implies that the following is a complex:
\begin{equation}\label{eq: reso}
K=[R^2\overset{B}{\longrightarrow} R^3\overset{A}{\longrightarrow} R]
\end{equation}
where $R=\CC[\frb\times G]$.

Let $\tilde{I}_{121}=(f_1,f_2,h_2)$.
Below we show that the complex $K$  is actually a resolution of the $R$-module $R/\tilde{I}_{121}$:
\begin{proposition}\label{prop: reso K} We have:
$$\textup{H}^0(K)=R/\tilde{I}_{121},\quad \textup{H}^{>0}(K)=0.$$
\end{proposition}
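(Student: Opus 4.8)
The plan is to recognize $K$ as a Koszul-type complex attached to a regular sequence. Concretely, the claim is that $\tilde I_{121}=(f_1,f_2,h_2)$ is generated by a regular sequence of length $3$ in the $4$-dimensional-fibered (over $G$) ring $R=\CC[\frb\times G]$, and that $K$ is its minimal free resolution. First I would check regularity of the sequence $(f_1,f_2,h_2)$ chart-by-chart over the Bruhat cover of $G$, exactly as in the two earlier regularity proofs in this section (the ones for $(a_{31},f,g)$ and for $((a^{-1})_{31},h,k)$): on each affine chart one of the three defining equations can be divided by a nonvanishing coordinate, reducing it to a coordinate function, and then one induts. Since regularity is local and we exhibit it on each chart of an open cover, it follows globally. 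Granting this, $\mathrm{H}^{>1}$ of the Koszul complex of $(f_1,f_2,h_2)$ vanishes and $\mathrm{H}^0=R/\tilde I_{121}$.

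The remaining point is to identify the displayed complex $K=[R^2\xrightarrow{B}R^3\xrightarrow{A}R]$ with (the tail of) this Koszul complex. By construction $A=[\,f_1,\;f_2,\;h_2\,]$ (after sign), since $-(\Ad_{c^{-1}}X)_{21}=f_1$, $(\Ad_{c^{-1}}(X-x_{22}\xId))_{22}=f_2$, and $c^{-1}(X-x_{11}\xId)_{32}=h_2$ in the notation of the previous subsection; so $\mathrm{H}^0(K)=R/\tilde I_{121}$ is immediate. For the middle term, the identity $A=-[\det B_{23},\;\det B_{13},\;\det B_{12}]$ together with the Laplace expansion of the $3\times 3$ matrix $\begin{bmatrix}B\\ A\end{bmatrix}$ gives $A\circ B=0$, which is exactly what is asserted to make $K$ a complex. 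To finish I would invoke the Hilbert--Burch theorem (or Buchsbaum--Eisenbud acyclicity): the ideal of maximal ($2\times2$) minors of $B$ is, up to sign and a unit, the ideal generated by the entries of $A$, namely $\tilde I_{121}$; this ideal has the expected codimension $2$... wait — here one must be careful, because $\tilde I_{121}$ is generated by \emph{three} elements forming a regular sequence, so its codimension is $3$, not $2$.

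So the cleaner route is the Buchsbaum--Eisenbud exactness criterion applied directly to $K$: the complex $0\to R^2\xrightarrow{B}R^3\xrightarrow{A}R$ is acyclic if and only if $\operatorname{rank}(A)+\operatorname{rank}(B)=3$ (which holds generically, rank $A=1$, rank $B=2$) and $\operatorname{depth}(I_1(A))\ge 1$, $\operatorname{depth}(I_2(B))\ge 2$. Here $I_1(A)=\tilde I_{121}$ has depth $\ge 3\ge 1$ by the regular-sequence argument above, and $I_2(B)$ contains the three entries of $A$ (via the Laplace identity $A=-[\det B_{23},\det B_{13},\det B_{12}]$), so $\operatorname{depth}(I_2(B))\ge \operatorname{depth}(\tilde I_{121})\ge 2$. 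Therefore $K$ is acyclic, i.e. $\mathrm{H}^{>0}(K)=0$, and $\mathrm{H}^0(K)=\operatorname{coker}(A)=R/\tilde I_{121}$. I would spell out the Laplace/cofactor computation identifying $A$ with the signed $2\times2$ minors of $B$ and the verification $A\circ B=0$, since those are the concrete inputs, but present the depth bookkeeping and the Buchsbaum--Eisenbud citation briefly.

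The main obstacle I anticipate is the bookkeeping in the chart-by-chart regularity check for $(f_1,f_2,h_2)$: unlike the length-$3$ sequences already treated, here the three polynomials are entries of $\Ad_{c^{-1}}X$ and $c^{-1}X$, so one has to be somewhat careful about which coordinate is invertible on which Bruhat cell and to confirm that after dividing out units the remaining relations really do form a regular sequence (e.g. by exhibiting them as a partial coordinate system, or as a regular sequence in a polynomial ring localized appropriately). Everything else — the Laplace identity $A\circ B=0$, the rank count, and the appeal to Buchsbaum--Eisenbud — is formal once regularity is in hand.
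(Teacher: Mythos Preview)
There is a genuine gap: the three generators $(f_1,f_2,h_2)$ of $\tilde I_{121}$ do \emph{not} form a regular sequence, so your opening strategy cannot succeed and the depth estimate you feed into Buchsbaum--Eisenbud is unjustified. The proposition immediately preceding this one in the paper computes the fibers of $\pi_G\colon Z\to G$ (where $Z=V(\tilde I_{121})$) and shows that over any $c\in B$ all three of $f_1,f_2,h_2$ vanish identically on $\frb$, so $\pi_G^{-1}(c)=\frb$. Thus $\operatorname{codim}_{\frb\times G}(Z)=2$, not $3$; the three generators satisfy a nontrivial syzygy, which is precisely what the matrix $B$ records. Your ``wait'' moment was the right instinct---the discrepancy between three generators and the Hilbert--Burch shape $R^2\to R^3\to R$ is real---but you resolved it in the wrong direction.

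The paper's argument is exactly the one you abandoned: it uses the fiber proposition to get $\operatorname{codim}(Z)=2$, hence $\operatorname{depth}(\tilde I_{121})=2$ since $R$ is regular, and then applies Hilbert--Burch (equivalently, the codimension-$2$ case of Buchsbaum--Eisenbud) to the presentation $A=-[\det B_{23},\det B_{13},\det B_{12}]$. Your Buchsbaum--Eisenbud paragraph is salvageable once you replace ``depth $\ge 3$ by regular sequence'' with ``depth $=2$ by the fiber-dimension count''; the rank conditions and the identification $I_2(B)=\tilde I_{121}$ via Laplace are correct as you wrote them. So the fix is not to attempt a chart-by-chart regularity proof---which will fail on the Borel locus---but to prove the codimension-$2$ statement directly, as the paper does.
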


The variety $Z$ defined by the ideal generated by $\tilde{I}_{121}$ is the key geometric ingredient of the proof of theorem~\ref{thm: braids}.
The proposition below describes the fibers of the projection map $\pi_G: Z\to G$:

\begin{proposition} The fibers of the map $\pi_G$ are linear subspaces of $\frb$ and
if $c\notin B$ then $\mathrm{codim}_{\frb}(\pi^{-1}_G(c))=2$, otherwise if $g\in B$, $\pi_G^{-1}(c)=\frb$.
\end{proposition}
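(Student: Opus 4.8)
The plan is to note first that, for a fixed \(c\in G\), each of the three generators \(f_1=(\Ad_{c^{-1}}X)_{21}\), \(f_2=(\Ad_{c^{-1}}(X-x_{22}\xId))_{22}\), \(h_2=(c^{-1}(X-x_{11}\xId))_{32}\) of \(\tilde I_{121}\) is a homogeneous linear function of the matrix entries of \(X\). Hence \(\pi_G^{-1}(c)\) is the common zero locus in \(\frb\) of three linear forms, so it is automatically a linear subspace of \(\frb\); the content of the proposition is therefore exactly the computation of \(\dim_\CC\langle f_1|_c,f_2|_c,h_2|_c\rangle\subseteq\frb^*\), which should be \(0\) for \(c\in B\) and \(2\) for \(c\notin B\).

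If \(c\in B\) this is immediate: \(c^{-1}\) is upper triangular, hence so are \(\Ad_{c^{-1}}(X)=c^{-1}Xc\) and \(c^{-1}(X-x_{11}\xId)\); the subdiagonal entries of these matrices vanish identically, so \(f_1=h_2=0\), and the diagonal of \(c^{-1}Xc\) agrees with that of \(X\), so \(f_2=(c^{-1}Xc)_{22}-x_{22}=0\). Thus \(\pi_G^{-1}(c)=\frb\).

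For \(c\notin B\) I would invoke the Hilbert--Burch structure behind Proposition~\ref{prop: reso K}: the triple \([-f_1,f_2,h_2]=A\) is, up to sign, the vector of maximal minors of the \(3\times 2\) matrix \(B\) of that resolution, and the second column \(v(c)\) of \(B\) depends only on \(c\). A short cofactor computation shows \(v(c)=0\) exactly when \(c\) is upper triangular, so \(v(c)\neq 0\) for \(c\notin B\). Writing the first column of \(B\) as \(u_c(X)\) (linear in \(X\) for fixed \(c\)), one gets \(\pi_G^{-1}(c)=\{X\in\frb:\;u_c(X)\wedge v(c)=0\}=u_c^{-1}(\CC\,v(c))\), the preimage under the linear map \(u_c\colon\frb\to\CC^3\) of the line \(\CC\,v(c)\). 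Since the map \(w\mapsto w\wedge v(c)\) has rank \(2\), this alone forces \(\mathrm{codim}_\frb\pi_G^{-1}(c)\le 2\): at most two of \(f_1,f_2,h_2\) are independent.

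The reverse inequality is the one genuinely computational point. Unwinding the preimage description, \(\mathrm{codim}_\frb\pi_G^{-1}(c)=2\) is equivalent to: either \(\mathrm{rank}\,u_c=3\), or \(\mathrm{rank}\,u_c=2\) and \(v(c)\notin\mathrm{im}\,u_c\). I would verify this for every \(c\notin B\) by running over the finitely many standard affine charts of the Bruhat decomposition of \(G\) and computing \(\mathrm{rank}\,u_c\) together with the position of \(v(c)\) relative to \(\mathrm{im}\,u_c\) in each chart --- the same chart-by-chart bookkeeping already used to establish regularity of \((a_{31},f,g)\) and of \(((a^{-1})_{31},h,k)\) above; the only delicate point is the locus where \(u_c\) drops to rank \(2\), where one checks that there \(v(c)\) falls outside \(\mathrm{im}\,u_c\), keeping the codimension equal to \(2\). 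This rank analysis is the main obstacle --- routine but unavoidable. Granting it, the two cases together prove the proposition.
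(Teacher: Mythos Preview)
Your reformulation is sound and the preliminary steps match the paper: linearity of $f_1,f_2,h_2$ in $X$, the case $c\in B$, and the bound $\mathrm{codim}\le 2$ via the nonvanishing second column $v(c)$ of the Hilbert--Burch matrix. The identity $\pi_G^{-1}(c)=u_c^{-1}(\CC\,v(c))$ is a clean repackaging, and your criterion (rank $u_c=3$, or rank $u_c=2$ with $v(c)\notin\mathrm{im}\,u_c$) is correct; you should also note that rank $u_c\le 1$ must be excluded for $c\notin B$, which is implicit in your plan but not stated.

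Where you diverge from the paper is the execution of the reverse inequality. You defer it to a Bruhat-chart sweep that you do not carry out. The paper does not use Bruhat charts here; it argues directly by casing on which entries of $c$ and $c^{-1}$ vanish. The key observation is that $h_2=(c^{-1})_{31}x_{12}+(c^{-1})_{32}(x_{22}-x_{11})$ involves only $x_{11},x_{22},x_{12}$, whereas $f_1,f_2$ contain $x_{13},x_{23},x_{33}$ with coefficients $(c^{-1})_{2\bullet}c_{31}$ and $(c^{-1})_{2\bullet}c_{32}$; so if $(c_{31},c_{32})\ne(0,0)$ then $f_1$ or $f_2$ cannot be proportional to $h_2$, forcing $\dim\langle f_1,f_2,h_2\rangle\ge 2$. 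This reduces at once to the block-triangular case $c_{31}=c_{32}=0$, where $h_2\equiv 0$ and one checks that $f_1,f_2$ remain independent unless also $c_{21}=0$, i.e.\ $c\in B$. The paper's argument is shorter and more elementary than a full Bruhat-chart verification; your preimage description is conceptually nicer but does not in itself shorten the actual rank computation, which is the whole content of the proposition.
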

\begin{proof}
Since the second column of the matrix $B$ is a vector valued function on $G$ which is non-zero at the generic point of $G$ the linear conditions
imposed by $f_1,f_2,h_2$ are not linearly independent at a generic point of $G$. Hence the $\mathrm{codim}_{\frb}(\pi_G^{-1}(c))\le 2$ for any $c\in G$.

Let us determine the locus of $g$ such that $\mathrm{codim}_{\frb}(\pi_G^{-1}(c))\le 1$. Let us assume that $(c^{-1}_{31},c^{-1}_{32})\ne (0,0)$ and hence  $(c_{31},c_{32})\ne (0,0)$.
Since $h_2$ is a non-zero linear function on $\frb$, the
linear system $f_1=0,f_2=0,h_2=0$ has rank $1$ if $f_1,f_2$ are proportional to  $h_2$. If $c_{31}\ne 0$ then the condition that $f_1$ is proportional to $h_1$ implies that
$c^{-1}_{2i}=0$ for all $i$ hence contradiction to $det(c)=1$. Similarly, if $c_{32}\ne 0$, the condition $f_2$ is proportional to $h_1$  also implies  $c^{-1}_{2i}=0$ for all $i$.
Thus if $\mathrm{codim}_{\frb}(\pi_G^{-1}(c))\le 1$ then $c_{31}=c_{32}=c^{-1}_{31}=c^{-1}_{32}=0$.

The last condition on $c$ implies $h_1=0$ and the nontrivial entries of the linear equations $f_1$ and $f_2$ are the coefficients in front of  $x_{12},x_{11},x_{22}$. These are the  vectors
$(c^{-1}_{21} c_{21}, c_{21}^{-1} c_{11}, c_{22}^{-1}c_{21})$ and $(c^{-1}_{21} c_{22}, c_{21}^{-1} c_{12}, c_{22}^{-1}c_{12})$. If $c^{-1}_{12}\ne 0$ then these vectors are proportional
to $(c_{21},c_{11},*)$ and $(c_{22},c_{12},*)$ and these two vectors can not be proportional because of $det(c)=1$.

Thus we have shown that $\mathrm{codim}_{\frb}(\pi^{-1}_G(c))\le 1$ implies $c\in B$. Finally, it is immediate to see that if $c\in B$ then $f_1,f_2,h_2$ are zero linear functions on $\frb$.
\end{proof}

\begin{proof}[Proof of proposition~\ref{prop: reso K}]
The previous proposition implies that $\mathrm{codim}_{\frb\times G}(Z)=2$. Hence we can apply the Hilbert-Burch theorem \cite{E2} which implies that $K$ is indeed a resolution of
the module $R/\tilde{I}_{121}$.
\end{proof}

In the next subsection we need a slight refinement of the previous proposition. We define a complex
$$ C_{121}=[R\overset{h_2}{\longrightarrow} R]\otimes K.$$

\begin{proposition}We have
$$\textup{H}^0(C_{121})=R/I_{121},\quad \textup{H}^{>0}(C_{121})=0.$$
\end{proposition}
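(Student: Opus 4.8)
The complex $C_{121}$ is by definition the tensor product of the two-term complex $[R\xrightarrow{h_2}R]$ with the resolution $K$ from \eqref{eq: reso}, so it is a length-$3$ complex of free $R$-modules. The plan is to identify its cohomology by a Koszul-type argument, exactly parallel to how $C_{12}$ was treated via regularity in the previous section.

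First I would recall that, by the preceding proposition, $K$ is a resolution of $R/\tilde I_{121}$ where $\tilde I_{121}=(f_1,f_2,h_2)$, and that $I_{121}=\tilde I_{121}+(h_3)$. Since $h_3$ acts on $R/\tilde I_{121}$, the two-term complex $[R\xrightarrow{h_2}R]$ must be replaced in the analysis by the two-term complex $[R\xrightarrow{h_3}R]$ tensored with $K$; I expect this is a typo and the intended statement uses $h_3$. (If it genuinely is $h_2$, then because $h_2\in\tilde I_{121}$ the map $h_2$ is zero on $H^0(K)=R/\tilde I_{121}$, so $H^0(C_{121})=R/\tilde I_{121}$, $H^1(C_{121})=R/\tilde I_{121}$, and the higher cohomology vanishes because $K$ is a resolution; this is not $R/I_{121}$, which confirms the intended element is $h_3$.) Assuming $h_3$: the tensor product $[R\xrightarrow{h_3}R]\otimes K$ has cohomology governed by the long exact sequence of the cone, giving $H^0=R/(\tilde I_{121}+h_3\cdot R)=R/I_{121}$ provided $h_3$ is a nonzerodivisor on $R/\tilde I_{121}$, and $H^{>0}=0$ provided additionally $\mathrm{Tor}^R_{>0}(R/\tilde I_{121}, R/h_3 R)=0$, i.e. $h_3$ is a regular element on $R/\tilde I_{121}$.

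So the crux reduces to: \emph{$h_3$ is a nonzerodivisor modulo $\tilde I_{121}$}, equivalently $\mathrm{codim}_{\frb\times G}(Z')\ge 3$ where $Z'=V(I_{121})$. I would establish this geometrically, refining the fiber analysis of the previous proposition: over the locus $c\notin B$ the fibers of $\pi_G\colon Z\to G$ already have codimension $2$ in $\frb$, and I would check that imposing the additional linear condition $h_3=0$ drops the generic fiber dimension by one more, using that $h_3$ is not in the span of $f_1,f_2,h_2$ at a generic point of $G$ (a determinant computation of the $3\times 3$ coefficient matrix of $f_1,f_2,h_3$, or $f_1,h_2,h_3$, against the entries of $c^{-1}$, nonvanishing on a dense open of $G$). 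Over $c\in B$ one checks $f_1,f_2,h_2,h_3$ all restrict to zero on $\frb$, so that component has codimension $\dim\frb - \dim\frb + \mathrm{codim}_G B = 2+1$... more carefully, the fiber is all of $\frb$ but the base $B\subset G$ has codimension $1$, which is not enough; so I must instead verify directly that $B\times\frb$ is \emph{not} contained in $Z'$, i.e. that $h_3$ does not vanish identically on $B\times\frb$. Since $h_3=(c^{-1}(X-x_{11}\xId))_{33}$ and for $c\in B$ one has $(c^{-1})_{3\bullet}=(0,0,c_{33}^{-1})$, this gives $h_3 = c_{33}^{-1}(x_{33}-x_{11})$, which is not identically zero. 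Hence $Z'\cap(B\times\frb)$ has codimension $\ge 1$ inside $B\times\frb$, so codimension $\ge 2$ overall on that stratum — still potentially an obstacle.

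\textbf{The main obstacle} I anticipate is precisely pinning down the codimension of $Z'$ along the stratum $c\in B$: the naive count only gives $2$, not $3$, so one needs to either (i) show $Z'\cap(B\times\frb)$ actually has codimension $\ge 2$ inside $B\times\frb$ by exhibiting a second independent linear relation among $f_1,f_2,h_2,h_3$ that survives on $B$, or (ii) invoke the Hilbert–Burch / Eagon–Northcott machinery as before: knowing $K$ is a resolution and that $[R\xrightarrow{h_3}R]\otimes K$ has the right Euler characteristic, it suffices to show $C_{121}$ is acyclic, which by the acyclicity criterion (Buchsbaum–Eisenbud) reduces to a rank-plus-depth condition on the ideals of minors of the differentials. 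I would prefer route (ii): verify $\mathrm{depth}\,I(\varphi_i)\ge \mathrm{rank}\,\varphi_i$ for each differential $\varphi_i$ of $C_{121}$, using the explicit matrices $A$, $B$ and the multiplication-by-$h_3$ block, and the Bruhat chart computations already set up. Once acyclicity is known, $H^0(C_{121})=\mathrm{coker}=R/I_{121}$ follows by identifying the cokernel of the top differential with $R/(\tilde I_{121}+h_3R)$, and proposition~\ref{prop: ideals aux}(3) (which gives $I_{121}=I$ and hence that $\tilde I_{121}+h_3R=I_{121}$ is the full ideal $I$) closes the argument.
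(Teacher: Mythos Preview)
Your identification of the typo ($h_2$ should be $h_3$ in the definition of $C_{121}$ and throughout the proof) is correct, and your reduction via the spectral sequence to showing that $h_3$ is a nonzerodivisor on $R/\tilde I_{121}$ is exactly the paper's approach.

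Where you diverge is in the final step, and there you overcomplicate. The paper does not compute $\mathrm{codim}\,V(I_{121})$ and does not invoke Buchsbaum--Eisenbud. Instead, it argues directly from the fiber description already established: $Z=V(\tilde I_{121})$ has at most two irreducible components, $Z_1=\overline{\pi_G^{-1}(G\setminus B)}$ (codimension $2$) and $Z_2=\pi_G^{-1}(B)$ (codimension $3$), and then simply evaluates $h_3$ at one explicit point on each component to see that it does not vanish identically there. Your own computation $h_3|_{c\in B}=c_{33}^{-1}(x_{33}-x_{11})\not\equiv 0$ is precisely the $Z_2$ check; for $Z_1$ the paper takes $c=s:=E_{12}-E_{21}+E_{33}\notin B$ and finds $h_3|_{c=s}=x_{33}-x_{11}$, which is nonzero on the fiber $\pi_G^{-1}(s)=\{f_1=x_{12}=0,\ f_2=x_{11}-x_{22}=0,\ h_2=0\}$. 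Since $R/\tilde I_{121}$ has a length-$2$ free resolution over the regular ring $R$ (Hilbert--Burch), it is Cohen--Macaulay and hence unmixed, so nonvanishing on each irreducible component already guarantees $h_3$ is a nonzerodivisor. Your ``main obstacle'' dissolves once you stop tracking the codimension of $V(I_{121})$ and ask only whether $h_3$ vanishes on a component of $V(\tilde I_{121})$; route~(ii) is unnecessary.
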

\begin{proof}
By the spectral sequence for the tensor product of complexes and the previous proposition it is enough to
show that the complex $R/\tilde{I}_{121}\xrightarrow{h_2} R/\tilde{I}_{121}$ has no homology in non-zero degree.
That is we need to show that $h_2$ is not a zero divisor in $R/\tilde{I}_{121}$.

The variety $Z$ could have irreducible components of codimension at most $3$.
The proposition~\ref{thm: braids} implies that $Z$ has at most two irreducible components. The first component $Z_1$ is the closure
of $\pi_G^{-1}(G\setminus B)$ and the second component $Z_2$ is $\pi^{-1}_G(B)$. The fist component has codimension $2$ and
the second codimension $3$. Thus we need to show that $h_2$ is no vanishing identically on any of the components $Z_i$.

Since $c=1\in B$ and $h_2|_{c=1}=x_{33}-x_{11}$ and $f_1|_{c=1}=f_2|_{c=1}=h_1|_{c=1}=0$, the function $h_2$ does not vanish on $Z_2$.
Respectively $s=E_{12}-E_{21}+E_{33}\notin B$ and $h_2|_{c=s}=x_{33}-x_{11}$, $f_1|_{c=s}=x_{12}$, $f_2|_{c=s}=x_{11}-x_{22}$, $h_1|_{c=s}=0$ we see that
$h_2$ is not vanishing on $\pi_G^{-1}(s)$ and hence $h_2$ does not vanish on $Z_1$ and it is a zero-divisor.
\end{proof}


\subsection{Convolution} Let us denote by $\overline{C}_{121}$  a complex of $R$-modules (here $R=\CC[\calXr_2]$) that is obtained by a pull back of the complex  $C_{121}$ along the
projection map $\calXr_2\to \frb\times G$. Since the projection map is flat,
the results of the previous subsection imply that  the complex $\overline{C}_{121}$ satisfies the conditions of the Lemmas~\ref{lem: ext MF},\ref{lem: ext uniq} with $F=\Wr$. Hence there is
a unique (up to homotopy) matrix factorization $\bcalC^{(121)}\in \MF_{B^2}(\calXr_2,\Wr)$ that extends the complex $\overline{C}_{121}$. The main result of this section is the proof
of the following

\begin{lemma} We have
$$\bcalC^{(1)}\bar{\star}\bcalC^{(2)}\bar{\star}\bcalC^{(1)}=\bcalC^{(121)}$$
\end{lemma}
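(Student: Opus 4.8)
The plan is to reduce the cubic relation to an identity of matrix factorizations by computing the triple convolution $\bcalC^{(1)}\bar{\star}\bcalC^{(2)}\bar{\star}\bcalC^{(1)}$ on the reduced space $\calXr_2$ and recognizing the answer as the canonical extension $\bcalC^{(121)}$ of the resolution $\overline{C}_{121}$. First I would organize the computation using the associativity of $\bar{\star}$ (proved earlier) together with Lemma~\ref{lem: two cross}, which already identifies $\bcalC^{(1)}_+\bar{\star}\bcalC^{(2)}_+$ with the Koszul-type factorization $\bcalC_{12}$. So the real object of study is $\bcalC_{12}\bar{\star}\bcalC^{(1)}_+$. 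Here I would invoke the auxiliary lemma of section~\ref{sec: aux sec} — more precisely the rank $1$ convolution formulas \eqref{eq: pi12Bpi23 1}, \eqref{eq: pi12Bpi23 2} from subsection~\ref{ssec: conv rk 1} — since $\bcalC^{(1)}_+=\Indb_{1,2}(\crX_+)$ is a rank $1$ matrix factorization, so the pull-back functor $(\bar\pi_{12}\otimes_{B_2}\bar\pi_{23})^*$ and the ensuing Chevalley--Eilenberg reduction can be made completely explicit, exactly as in sections~\ref{sec: two str} and~\ref{sec: three str 2}.

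Next I would carry out the explicit elimination. Writing out $\pi_{12}^{\circ*}(\bcalC_{12})\otimes\pi_{23}^{\circ*}(\bcalC^{(1)}_+)$ as a generalized equivariant Koszul matrix factorization on $\calXr_3(G_3,G_{1,2})$, I would use the row operations of subsection~\ref{ssec: row ops} to remove the redundant generators (those $B^{(2)}$-invariant rows that match directly, e.g.\ the analogue of $a_{31}=c_{31}$ and the first crossing relations), reducing to a small two-line complex $C$ over $R'=\CC[G_2]$ tensored with $\bar\pi_{13}^*(\CC[\calXr_2])$. The shape of $\CE_{\frn^{(2)}}(C)^{T^{(2)}}$ should be the same cube diagram appearing in the proofs of Theorem~\ref{thm: two strands} and Lemma~\ref{lem: two cross}, so I can run the identical sequence of contractions (using $\textup H^*_{\Lie}(\frn,R\langle 1\rangle)=0$ and proposition~\ref{prop: homs G/B} to identify the surviving cohomology $\textup H^0(\mathbb P^1,\calO)$ and $\textup H^1(\mathbb P^1,\calO(-2))$), tracking the correction differentials. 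The output is a matrix factorization whose positive part is exactly a complex of free $R$-modules, and I would argue that up to homology (replacing coefficients by $\dltt$-homologous expressions, as in the $\exbtt$ trick of section~\ref{sec: three str 2}) it coincides with $\overline{C}_{121}$; concretely this means checking that the entries of the matrices $A$ and $B$ of \eqref{eq: reso} come out of the elimination, which is a matter of linear algebra identities among minors of the matrix $c=g_{13}$.

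Once $\bcalC^{(1)}\bar\star\bcalC^{(2)}\bar\star\bcalC^{(1)}$ is identified at the level of its underlying complex with $\overline{C}_{121}$, I would finish by uniqueness of the extension: by the refined Proposition in subsection ``Resolution for $I_{121}$'' the complex $C_{121}$ has cohomology only in degree $0$ (equal to $R/I_{121}$), hence $\overline{C}_{121}$ satisfies the hypotheses $\xExtdpv{<0}\sim 0$ and $\Wr\sim 0$ of Lemmas~\ref{lem: ext MF eq} and \ref{lem: uniq MF+eq}, so there is a unique matrix factorization extending it; since both $\bcalC^{(1)}\bar\star\bcalC^{(2)}\bar\star\bcalC^{(1)}$ and $\bcalC^{(121)}$ are such extensions, they are homotopy equivalent. (The equality $\bcalC^{(2)}\bar\star\bcalC^{(1)}\bar\star\bcalC^{(2)}=\bcalC^{(212)}$ then follows by applying the involution $\mathrm D$, using $\mathrm D(\bcalC^{(i)})=\bcalC^{(n-i)}$ and $\mathrm D(I_{121})=I_{212}$ from Proposition~\ref{prop: ideals aux}, and $\bcalC^{(121)}\simeq\bcalC^{(212)}$ follows from $I_{121}=I=I_{212}$-conjugate, i.e.\ part (3) of that proposition together with $\mathrm D$-invariance of $I$.)

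The main obstacle I anticipate is the bookkeeping in the middle step: keeping track of the non-trivial $B_2^{(2)}$-equivariant correction differentials through the sequence of row operations and contractions, and then verifying that the resulting $2\times 3$ block matrix is — after passing to $\dltt$-homology in the relevant cohomological slot — literally the Hilbert--Burch matrix $B$ of \eqref{eq: reso}. This is the same kind of computation as in sections~\ref{sec: two str} and~\ref{sec: three str 2} but one notch longer, and the delicate point is that the correction differentials only agree with the naive guess \emph{up to homotopy}, so the argument must be phrased as ``the positive part is homotopy equivalent, as a complex, to $\overline{C}_{121}$'' rather than as an equality on the nose, and then closed off via Lemma~\ref{lem: uniq MF+eq}.
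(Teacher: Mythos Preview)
Your overall approach matches the paper's exactly: compute $\bcalC_{12}\bar\star\bcalC^{(1)}_+$ via the rank-$1$ convolution formula, simplify the Chevalley--Eilenberg complex, recognize the Hilbert--Burch resolution, and close with the uniqueness Lemma~\ref{lem: uniq MF+eq}. However, your description of the intermediate step is wrong in a way that would derail the execution.

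In this convolution $B^{(2)}_2$ sits in the $(1,2)$-block of $G_3$ (not the $(2,3)$-block as in Lemma~\ref{lem: two cross}), so the first column $a_{\bullet 1}$ of $g_{12}$ carries $T^{(2)}$-weight $-1$; in particular $d_1=a_{31}$ is \emph{not} $B^{(2)}$-invariant, and there is no relation $a_{31}=c_{31}$ here. Only $d_4$ (which matches $h_3$ directly) is invariant. This leaves \emph{three} non-invariant Koszul generators $d_1,d_2,d_3$ of weights $-1,-1,+1$, not two. Consequently the CE diagram is \emph{not} the same $2$-generator cube as before: it is the $2^3$-vertex Koszul cube in $\theta_1,\theta_2,\theta_3$, each vertex carrying its own $\CE_{\frn_2}(R'\langle k\rangle)$. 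After contracting each vertex to $H^*(\mathbb P^1,\calO(-k))$, four vertices vanish (those with $k=\pm1$), but the surviving four assemble into a genuinely three-term complex
\[
R^2\xrightarrow{\;B\;} R^3\xrightarrow{\;A\;} R,
\]
because the middle term receives contributions both from the $2$-dimensional $H^0(\mathbb P^1,\calO(1))$ at $[\theta_1\theta_2;1]$ and from the $1$-dimensional $H^1(\mathbb P^1,\calO(-2))$ at $[\theta_3;-2]$. This is the essential novelty compared with the earlier computations, which always produced a single Koszul row; here the answer is the non-Koszul Hilbert--Burch resolution, and most of the paper's proof is the entry-by-entry verification of $A$ and $B$ via the $\delta_{12}$-homology trick you mention. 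Your endgame (uniqueness of the extension) is exactly right.
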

\begin{proof}
To prove the lemma we use the results of the previous section and compute the convolution
$ \bcalC_{12}\bar{\star}\bcalC^{(1)}_+.$
Since $\bcalC^{(1)}_+=\Indb_{12}(\bcalC_+)$ we can use the proposition~\ref{prop: small prod 1}  and subsection~\ref{ssec: conv rk 1}.
That is for our proof,  we  analyze $T$-invariant part of $\CE_{\frn^{(2)}_2}(\calC)$ the matrix matrix factorization $\calC\in \MF_{B_3\times B_2\times B_3}(\calXr_3(G_3,G_{1,2}),\bar{\pi}_{13}^*(\Wr))$,
$$\calC=(\bar{\pi}_{12}\otimes_{B_2}\bar{\pi}_{23})^*(\bcalC_{12}\boxtimes\bcalC^{(1)}_+).$$

\def\hA{\hat{A}}
\def\adhA{\mathop{\mathrm{adj}} \hA }
\def\vcw{ \vec{w}}

First let us discuss the coordinates on the space $\calX_3(G_3,G_{1,2})$, for that we introduce the following matrices:

\begin{align*}
\xgot & = \xbmtr{\exaoo & \exaot & \exaoh\\ \exato & \exatt & \exath \\ \exaho & \exaht & \exahh}, &
\xgth &= \xbmtr{ \exboo & \exbot & 0 \\ \exbto & \exbtt & 0 \\ 0 & 0 & 1},   
\end{align*}

\begin{align*}
\xgth^{-1}&=  \xbmtr{ \exbtt  & -\exbot & 0 \\ -\exbto & \exboo & 0 \\ 0 & 0 & 1 }, &
\xgoh & = \xbmtr{\excoo & \excot & \excoh\\ \excto & \exctt & \excth \\ \excho & \excht & \exchh}
\end{align*}
and
\begin{align*}
\xX &=
\xbmtr{ \xoo & \xot & \xoh\\ 0 & \xtt & \xth \\ 0 & 0 & \xhh}, &
\xY & = \xbmtr{ 0 & \yot & \yoh \\ 0 & 0 & \yth \\ 0 & 0 & 0}, &
\xtX & = \Adv{\xigot}\xX = ||\tx_{ij} ||.
\end{align*}
The Borel group $B_2^{(2)}$ corresponding to a middle factor in $B_3\times B_2\times B_3$ could be parameterized as follows:
\[
\brht =
\xbmtr{ \ybetoo & \ybetot & 0 \\ 0 & \ybetooi & 0 \\ 0 & 0 & 1}
\]
The weights of the action of  Lie algebra element  $diag(1,-1,0)\in \Lie(B_2^{(2)})$ are
\[
\begin{array}{|c||c|c|c||c|c|c||c|c|c|}
\hline
\text{element} &\exav{i1} & \exav{i2} &\exav {i3} & \aiv{1}{i} & \aiv{2}{i} & \aiv{3}{i}
&\exbv{1i} & \exbv{2i} & \exbv{3i}
\\
\hline
\text{weight} & -1 & 1 & 0 & 1 &-1 & 0 & 1 & -1 & 0
\\
\hline
\end{array}
\]
and the action of $\delta_{12}:=E_{12}\in \Lie(B_2^{(2)})$ is
\def\ydltot{ \delta_{12}}
\def\ydltoti{ \ydltot^{-1}}
\begin{align*}
\ydltot \exav{i2}& = -\exav{i1}, &
\ydltot \aiv{1}{i} & = \aiv{2}{i}, &
\ydltot \exbv{1i} & = \exbv{2i}.
\end{align*}

\def\ydv#1{ d_{#1}}
\def\ydo{ \ydv{1}}
\def\ydt{ \ydv{2}}
\def\ydh{ \ydv{3}}
\def\ydf{ \ydv{4}}
The matrix factorization $\calC$ is generated by four Koszul differentials:
\begin{align*}
\ydo & = \exaho
\\
\ydt &= (\xoo-\xtt)\exaoo + \xot\exato
\\
\ydh &=(\txoo-\txtt)\exboo + \txot\exbto
\\
\ydf &=(\xhh-\xoo) (\exaoo\exatt-\exaot\exato) - \exaht(\exaoo\xth-\exato\xoh)
\\
&=(\xhh-\xoo) \aihh + \xth\aiht + \xho\aiho
\\
&=\aiv{3}{i}(\xX-\xoo\xId)_{i3}.
\end{align*}
The differentials $\ydo$, $\ydt$ and $\ydf$ come from the first two crossings and together they constitute $\bar{\pi}_{12}(\calC_{12})$; $\ydh$ comes from the last crossing, that is
from the matrix factorization $\bar{\pi}_{23}^*(\calC_+)$.

The weights of the differentials with respect to the maximal torus the Borel group $B_2^{(2)}$ are
\[
\begin{array}{|c||c|c|c|c|}
\hline
\text{differential} &\ydo &\ydt &\ydh & \ydf
\\
\hline
\text{weight} & -1 &-1 & 1 & 0
\\
\hline
\end{array}
\]
The differential $\ydf$ is completely Borel-invariant and thus for our analysis of $\CE_{\frn_2^{(2)}}(\calC)$ we need to concentrate on  the first three rows of $\calC$ that compose the matrix factorization
$\calC'$. Moreover,
\[
\ydf = (\xhh-\xoo) \cihh + \xth\ciht + \xho\ciho =
\civ{3}{i}(\xX-\xoo\xId)_{i3}.
\]
thus it matches with $h_3$.

The differentials $\ydo$ and $\ydt$ are invariant  under $\ydltot$.

Let us compute $\ydltot\ydh$. We observe that $\xtX_+ = \xtX -\xtX_{--}$, hence
\begin{equation}
\label{eq:xtthf}
\ydh = - \left(\Adv{\xigth}\xtX_+\right)_{21}/\exbto
=-\left(\Adv{\xigoh}\xX\right)_{21}/\exbto +\left(\Adv{\xigth}\xtX_{--}\right)_{21}/\exbto
\end{equation}
The first term here is $\ydltot$-invariant and $\exbto$ is also $\ydltot$-invariant, hence
\[
\ydltot\ydh = \ydltot\left(\Adv{\xigth}\xtX_{--}\right)_{21}/\exbto
\]
Now
\begin{align*}
\left(\Adv{\xigth}\xtX_{--}\right)_{21} &= \tx_{21}\exboo^2,&
\ydltot\tx_{21}&=0, &
\ydltot\exboo = \exbto,
\end{align*}
hence
\[
\ydltot\ydh = 2\tx_{21}\exboo.
\]
It remains to express $\tx_{21}$ in terms of $\ydo$ and $\ydt$. By definition,
\[
\tx_{21} = \aiv{2}{i} \xX_{ij} \exav{j1} = \aiv{2}{i}\zXtv{ij}\exav{j1} .
\]
Note that

\[
\zXtv{ij}\exav{j1} = \ydo\zXtv{i3} + \ydt \krdlv{i1}
\]
hence
\def\zbetv#1{ \beta_{#1} }
\def\zbeto{ \zbetv{1}}
\def\zbett{ \zbetv{2}}
\def\xcfo{\alpha_1}
\def\xcft{\alpha_2}
\begin{equation*}
\label{eq:txot}
\tx_{21} = \ydo \zbeto + \ydt \zbett,
\end{equation*}
where
\begin{align}
\label{eq.bttw}
\zbeto & = \aiv{2}{i}\zXtv{i3}, &
\zbett & = \aiv{2}{i}\krdlv{i1}
\end{align}
so
\[
\ydltot\ydh = 2\exboo(\zbeto \ydo + \zbett\ydt),
\]

%

The first three differentials $d_1,d_2,d_3$ thus form the following Koszul complex $\calC'_+$ which form a positive part of the
matrix factorization $\calC$.

Below we simplify the complex $\CE_{\frn_2^{(2)}}(\calC'_+)^{T^{(2)}}$ and match the simplified complex with the complex of the minimal resolution
of $I_{121}.$

The complex $\CE_{\frn_2^{(2)}}(\calC'_+)$ has the form
\def\xvrt#1#2{ [#1;#2] }

\[
\begin{tikzcd}[column sep=1.75cm, row sep=1.75cm]
&
\xvrt{\tho\thw\thh}{0}
\\
\xvrt{\tho\thh}{-1}
\ar{ur}{-\ydt}
&
\xvrt{\tho\thw}{1}
\ar{u}{\ydh}
\ar[dashed]{r}{-\xcfo}
\ar[dashed,']{l}{\xcft}
&
\xvrt{\thw\thh}{-1}
\ar[']{ul}{\ydo}
\\
\xvrt{\tho}{0}
\ar{u}{-\ydh}
\ar[near start]{ur}{-\ydt}
\ar[dashed,']{r}{-\xcfo}
&
\xvrt{\thh}{-2}
\ar[from=3-2,to=2-2,near start]{u}{\ydo}
\ar[crossing over, near start]{ul}{\ydo}
\ar[crossing over, near start,']{ur}{\ydt}
\ar[from=3-3,to=2-2,crossing over,near start,"-\ydt"]
&
\xvrt{\thw}{0}
\ar[dashed]{l}{-\xcft}
\ar{u}{\ydh}
\\
& \xvrt{1}{-1}
\ar{ul}{\ydo}
\ar{u}{\ydh}
\ar[']{ur}{\ydt}
\end{tikzcd}
\]

In the picture $[\Theta,k]$ stands for the tensor  product  of the  monomial $\Theta$ of the odd variables from the Koszul complex and of Chevalley-Eilenberg complex for the $R\langle k\rangle:=\CC[\calXr_3(G_3,G_{1,2})]=R'\otimes \bar{\pi}_{13}^*(\CC[\calXr_2])$, $R'=\CC[G_{1,2}]=\CC[G_2]$:
$\CE_{\frn_2^{(2)}}(R'\langle -k \rangle)\otimes\bar{\pi}_{13}^*(\calXr_2)\otimes \Theta.$
We used  the dashed lines additional  Chevalley-Eilenberg differentials and we used a shortcut
$$
\xcfo = 2 \exboo \zbeto, \quad \xcft  = 2 \exboo\zbett.
$$

\def\ghzot{\zeta_{12}}
Now we perform contractions along the differentials inside complexes $[\Theta,k]$. After the contraction the corresponding vertices in the diagram get replaced with
$\textup{H}^*(\mathbb{P}^1,\calO(-k))\otimes \bar{\pi}_{13}^*(\CC[\calX_2]):$

\[
\begin{tikzcd}[column sep=1.75cm, row sep=1.75cm]
&
1
\\
0
\ar{ur}{-\ydt}
&
\exbto\oplus\exbtt
\ar{u}{\ydh}
\ar[dashed]{r}{-\xcfo}
\ar[',dashed]{l}{\xcft}
&
0
\ar[']{ul}{\ydo}
\\
1_1
\ar{u}{-\ydh}
\ar[near start]{ur}{-\ydt}
\ar[',dashed]{r}{-\xcfo}
&
\xvrt{\thh}{-2}
\ghzot
\ar[from=3-3,to=2-2,near start]{u}{\ydo}
\ar[crossing over, near start]{ul}{\ydo}
\ar[crossing over, near start,']{ur}{\ydt}
\ar[from=3-2,to=2-2,crossing over,near start,"-\ydt"]
&
1_2
\ar[dashed]{l}{-\xcft}
\ar{u}{-\ydh}
\\
&
0
\ar{ul}{\ydo}
\ar{u}{\ydh}
\ar[']{ur}{\ydt}
\end{tikzcd}
\]

Thus, we obtained the complex of the form
\[
\begin{tikzcd}
1_1 \oplus 1_2
\ar{r}{B}
&
\exbto\oplus\exbtt\oplus \ghzot
\ar{r}{A}
&
1
\end{tikzcd}
\]
and the rest of the proof is devoted to demonstrating that these matrices are
\begin{multline*}
A=\Bigl[-\left(\Adv{\xigoh}\xX\right)_{21},\;\;
  \Bigl(\Adv{\xigoh}(\xX-\xtt\xId)\Bigr)_{22},\;\;
 \civ{3}{i}(\xX - \xoo\xId)_{i2}
\Bigr]
\\=
-\Bigl[\det B_{23},\;\det B_{13},\;\det B_{12}\Bigr].
\end{multline*}

\begin{align*}
B&=
\xbmtr{
(\xoo-\xtt)\excot +\xot\exctt
&
-\excht
\\
-((\xoo-\xtt)\excoo +\xot\excto)
&
\excho
\\
-\civ{2}{i}\zXtv{i3}
&
-\civ{2}{i}\krdlv{i1}
}.
\end{align*}

We begin with $B$. Its lower row comes from a pair $-\xcfo$, $-\xcft$. Note that both have the same form
\[
\alpha_* = 2\exboo \aiv{2}{i}(\text{something})_i,\qquad \aiv{2}{i} = \exbv{2j}\civ{ji}.
\]
Now we used the fact that
\begin{align*}
\ydltot \exboo &= \exbto,  & \ydltot\exbot & =\exbtt, & \exboo\exbtt - \exbot\exbto = 1,
\end{align*}
from which we conclude that
\begin{align*}
\exboo\exbto\ghzot &\sim 0,&
\exboo\exbtt\ghzot &\sim \shlf \ghzot.
\end{align*}
and as a result
\[
2\exboo\aiv{2}{i}\ghzot\sim \civ{2}{i}\ghzot
\]
which means that we should simply replace $\aiv{2}{i}$ with $\civ{2}{i}$ in the formulas~\eqref{eq.bttw}.

The first two rows of $B$ come from $\ydo$ and $\ydt$ in which we substitute
\[
\exav{i1} = \excv{ij}\biv{j}{1}
\]
and then extract the coefficients at $\exbto$ and at $\exbtt$.

Now we turn to the row-matrix $A$. Its third element $A_3$ is the simplest: it comes from two secondary differentials going from $\ghzot$ to $1$, so it is
\[
A_3 =  \ydo\,\ydltoti(\ydt)-\ydt\, \ydltoti (\ydo).
\]
Both $\ydo$ and $\ydt$ are bilinear combinations of $\ydltot$-invariants and matrix elements $\exav{i1}$. Note that $\ydltoti(\exav{i1}) = \exav{i2}$, hence if $i\neq j$, then
\[
\exav{i1}\, \ydltoti(\exav{j1}) - \exav{j1}\,\ydltoti(\exav{i1}) =- \exav{i1}\exav{j2}+\exav{j1}\exav{i2} =
\pm \aiv{3k},
\]
where $k\neq i,j$. As a result,
\[
A_3 =
 (\xtt-\xoo)\aiht + \xot\aiho
\]
The third row of $\xgot^{-1}$ coincides with the third row of $\xgoh^{-1}$, hence ultimately
\[
A_3 =
(\xtt-\xoo)\ciht + \xot\ciho = \civ{3}{i}(\xX - \xoo\xId)_{i2}
\]
\noindent

The first and second entries of $A$ are both sums of three terms: the first one being $\ydh$ and the last two being secondary differentials
\[
\ydv{i} \ydltoti (\alpha_{i}\exbv{2j}) = \ydv{i}\ydltoti(2\exboo\exbv{2j}\beta_i) ,
\qquad i=1,2.
\]
Let us being with $A_1$ originating from $\exbto$. According to~\eqref{eq:xtthf},
\[
\ydh\exbto =
-\left(\Adv{\xigoh}\xX\right)_{21} +\left(\Adv{\xigth}\xtX_-\right)_{21} =
-\left(\Adv{\xigoh}\xX\right)_{21} + \tx_{21}\exboo^2.
\]
At the same time, $\ydltot\beta_i=0$ and $\ydltot(\exboo^2)=\exboo\exbto$, hence
\[
\ydv{i}\ydltoti(2\exboo\exbv{21}\beta_i) = \exboo^2 \ydv{i}\beta_i,
\]
and
\[
\sum_{i=1,2} \ydv{i}\ydltoti(2\exboo\exbv{21}\beta_i) = \exboo^2 (\ydo\beta_1 + \ydt\beta_2) =
\exboo^2 \tx_{21}
\]
in accordance with~\eqref{eq:txot}. Now
\[
A_1 = \ydh\exbto - \sum_{i=1,2}\ydv{i} \ydltoti (\alpha_{i}\exbv{21}) =
-\left(\Adv{\xigoh}\xX\right)_{21}.
\]

In order to computer $A_2$, we use the formula
\[
\ydh =(\txoo-\txtt)\exboo + \txot\exbto
\]
In order to compute the secondary differentials,  use the relations
\[
\ydltot(\exboo\exbot) = \exboo\exbtt + \exbot\exbto,\qquad \exboo\exbtt-\exbot\exbto = 1,
\]
from which it follows that
\[
2\exboo\exbtt = \ydltot(\exboo\exbot) + 1.
\]
As a result,
\[
\ydv{i}\ydltoti(2\exboo\exbv{22}\beta_i)
=
\ydv{i}\exboo\exbot\beta_i + \ydv{i}\ydltoti(\beta_i).
\]
The first term yields
\[
\sum_{i=1,2}\exboo\exbot\ydv{i}\beta_i = \tx_{21}\exboo\exbot.
\]
As for the second term, from the relation~\eqref{eq:txot} and $\ydltot$-invariance of $\ydo$ and $\ydt$ of we conclude:
\[
\ydo \ydltoti(\zbeto) + \ydt \ydltoti(\zbett) =
\ydltoti \Bigl( (\Adv{\xigot}\xX- \xtt\xId)_{21} \Bigr) =
 (\Adv{\xigot}\xX- \xtt\xId)_{11} = \tx_{11} - \xtt.
\]
Adding all three terms together, we find
\[
A_2 = \ydh\exbtt - \tx_{21}\exboo\exbot - (\tx_{11}-\xtt) =(\Adv{\xigth}\xtX)_{22} - \xtt
=(\Adv{\xigoh}\xX)_{22} - \xtt.
\]

Thus we have shown that the matrix factorization $\calC_{12}\bar{\star}\bcalC_+^{(1)}$ is a $2$-periodic folding of the matrix
factorization that is obtain from $B_3^2$ equivariant complex $\bar{C}_{121}$ by adding the negative differentials and the differentials that
correct equivariant structure. Moreover, since
the complex $\bar{C}_{121}$ satisfies the conditions of the lemma~\ref{lem: ext MF eq} such matrix factorization exists and unique and the statement
of our theorem follows.

\end{proof}

\subsection{End of the proof of the theorem~\ref{thm: braids}}
\begin{corollary} \label{cor: cubic} There is a homotopy realizing relation:
$$ \bcalC^{(1)}_+\bar{\star}\bcalC^{(2)}_+\bar{\star}\bcalC^{(1)}_+\sim\bcalC_+^{(2)}\bar{\star}\bcalC_+^{(1)}\bar{\star}\bcalC^{(2)}.$$
\end{corollary}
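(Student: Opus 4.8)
The goal is the cubic braid relation for the reduced generators. The plan is to use Lemma~\ref{lem: two cross} to identify the two triple products with a common matrix factorization, namely $\bcalC^{(121)}$. For the left hand side, Lemma~\ref{lem: two cross} gives $\bcalC^{(1)}_+\bar{\star}\bcalC^{(2)}_+=\bcalC_{12}$, and the Lemma just proved in this subsection gives $\bcalC_{12}\bar{\star}\bcalC^{(1)}_+=\bcalC^{(121)}$, so $\bcalC^{(1)}_+\bar{\star}\bcalC^{(2)}_+\bar{\star}\bcalC^{(1)}_+=\bcalC^{(121)}$ (using associativity of $\bar{\star}$, which follows from the corollary to Proposition~\ref{prop: ind homo} together with Proposition~\ref{prop: Knorrer}). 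So the only missing piece is the mirror computation $\bcalC^{(2)}_+\bar{\star}\bcalC^{(1)}_+\bar{\star}\bcalC^{(2)}_+=\bcalC^{(212)}$ together with the identity $\bcalC^{(121)}\sim\bcalC^{(212)}$.

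First I would invoke the involution $\mathrm{D}$ from the Involution subsection. Since $\mathrm{D}$ respects the convolution product and sends $\bcalC^{(i)}_{\pm}$ to $\bcalC^{(n-i)}_{\pm}$, applying $\mathrm{D}$ to the identity $\bcalC^{(1)}_+\bar{\star}\bcalC^{(2)}_+\bar{\star}\bcalC^{(1)}_+=\bcalC^{(121)}$ (here $n=3$) yields $\bcalC^{(2)}_+\bar{\star}\bcalC^{(1)}_+\bar{\star}\bcalC^{(2)}_+=\mathrm{D}(\bcalC^{(121)})$. Thus it remains to show $\mathrm{D}(\bcalC^{(121)})\sim\bcalC^{(121)}$, equivalently to produce a homotopy equivalence between $\bcalC^{(121)}$ and $\bcalC^{(212)}$, since by Proposition~\ref{prop: ideals aux}(1) the ideal $I_{212}=\mathrm{D}(I_{121})$ and one checks that $\mathrm{D}(\bcalC^{(121)})$ is the canonical extension of the pullback of the complex $C_{212}$ attached to $I_{212}$, which is what $\bcalC^{(212)}$ denotes.

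The key step is then Proposition~\ref{prop: ideals aux}(3), which asserts $I_{121}=I=I_{212}$ as ideals in $\CC[\frb\times G]$ (the equality $I_{212}=I$ follows from $I_{121}=I$ by applying $\mathrm{D}$ and using part (2)). Given that the two ideals coincide, both $\overline{C}_{121}$ and $\overline{C}_{212}$ are bounded complexes of free $\CC[\calXr_2]$-modules resolving the same quotient ring $\CC[\calXr_2]/I$ (after flat pullback along $\calXr_2\to\frb\times G$), with $\Wr$ lying in $I$; hence both satisfy the hypotheses of Lemmas~\ref{lem: ext MF} and \ref{lem: ext uniq} with $F=\Wr$, and by the uniqueness part (Lemma~\ref{lem: ext uniq}, or its equivariant refinement Lemma~\ref{lem: uniq MF+eq}) any two matrix factorizations extending homotopy-equivalent positive parts are isomorphic. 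Since two resolutions of the same module are homotopy equivalent as complexes, their positive parts are homotopy equivalent, and the extension lemma promotes this to a homotopy equivalence $\bcalC^{(121)}\sim\bcalC^{(212)}$. This gives $\bcalC^{(1)}_+\bar{\star}\bcalC^{(2)}_+\bar{\star}\bcalC^{(1)}_+\sim\bcalC^{(212)}\sim\bcalC^{(2)}_+\bar{\star}\bcalC^{(1)}_+\bar{\star}\bcalC^{(2)}_+$, which is the claim.

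The main obstacle is bookkeeping rather than conceptual: one must verify carefully that $\mathrm{D}$ carries the canonical extension $\bcalC^{(121)}$ to the canonical extension associated with $I_{212}$ (this is where the compatibility of $\mathrm{D}$ with pullbacks, push-forwards, and the equivariant extension procedure of subsection~\ref{ssec: eq push f} is used), and that the equivariant structures match up — the non-equivariant statement is immediate from $I_{121}=I_{212}$, but tracking the $B^2$-equivariant correction differentials through $\mathrm{D}$ requires the same care as in the Involution proposition. Once that is in place, the argument is a formal consequence of Lemma~\ref{lem: two cross}, the cubic-product Lemma of this section, Proposition~\ref{prop: ideals aux}, and the uniqueness of extensions.
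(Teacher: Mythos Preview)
Your overall strategy matches the paper's: identify the left triple product with $\bcalC^{(121)}$ via the preceding Lemma, apply the involution $\mathrm{D}$ to obtain the right triple product as $\mathrm{D}(\bcalC^{(121)})$, and then use Proposition~\ref{prop: ideals aux} (giving $I_{121}=I=I_{212}$) to conclude that the two underlying resolutions $C_{121}$ and $\mathrm{D}(C_{121})$ are homotopy equivalent, whence the matrix factorizations are.

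There is, however, a mis-citation at the crucial lifting step. You invoke Lemma~\ref{lem: ext uniq} (resp.\ Lemma~\ref{lem: uniq MF+eq}) to assert that ``any two matrix factorizations extending homotopy-equivalent positive parts are isomorphic,'' but those lemmas require the two extensions to share the \emph{same} positive part $(C_\bullet,d^+)$; they do not directly compare extensions of two different (merely homotopy-equivalent) complexes. The paper instead proceeds as follows: since $I_{121}=I_{212}$, the resolutions $C_{121}$ and $\mathrm{D}(C_{121})$ admit chain maps $\Psi,\Theta$ (built inductively from the identity on $R$ in degree~$0$) with $\Psi\circ\Theta\sim 1$ and $\Theta\circ\Psi\sim 1$; then Lemma~\ref{lem: ext mor} is applied to lift $\Psi$ and $\Theta$ to morphisms of the equivariant matrix factorizations, and the uniqueness-up-to-homotopy clause of that same lemma ensures the lifted compositions remain homotopic to the identity. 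Replacing your appeal to Lemma~\ref{lem: ext uniq} by this use of Lemma~\ref{lem: ext mor} closes the gap, and the rest of your argument stands.
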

\begin{proof}
  From the previous discussion we see that the complexes $C_{121}$ and $\mathrm{D}(C_{121})$ are minimal resolutions of the modules $R/I_{121}$ and $R/I_{212}$.
  The ideals \(I_{212}\) and \(I_{121}\) are isomorphic hence there are morphism \(\psi,\theta: R^4\rightarrow R^4\) such that
  \(\Psi_{<2}=(1,\psi_1)\) and \(\Theta_{<2}=(1,\theta_1)\) are morphisms between the truncated complexes \([C_{121}]_{<2}=[R\xleftarrow{d} R^4]\),
  \([\mathrm{D}(C_{121})]_{<2}=[R\xleftarrow{\mathrm{D}(d)} R^4]\) and \(\Psi_{<2}\circ \Theta_{<2}=1\), \(\Theta_{<2}\circ \Psi_{<2}=1\).
The morphisms could be extended to some morphism \(\Psi_{<k},\Theta_{<k}\) of \([C_{121}]_{<k}\) and \([C_{212}]_{<k}\).
Moreover, any such extensions satisfy \(\Psi_{<k}\circ \Theta_{<k}\sim 1\), \(\Theta_{<k}\circ \Psi_{<k}\sim 1\). Let us explain
an inductive construction for the first homotopy.

Suppose constructed \(\Psi_{<k+1}=\Psi_{<k}+\psi_k\),  \(\Theta_{<k+1}=\Theta_{<k}+\theta_k\) and homotopy \(\chi_{<k}\) such that \(\Psi_{<k}\circ \Theta_{<k}-1=D\circ \chi_{<k}-
\chi_{<k}\circ D\). Then \(D\circ \Psi_{<k+1}\circ\Theta_{<k+1}-D=-D\circ\chi_{<k}\circ D+D\circ \psi_k\circ \theta_k\) hence \(D\circ\psi_k\circ \theta_k=D\) as
map \([C_{121}]_k\to [C_{212}]_{k+1}\). Thus there is \(\chi_k\) such that \(D\circ\chi_k=\psi_k\circ\theta_k\) and we can set \(\chi_{<k+1}=\chi_{<k}+\chi_k\).

Thus we have a homotopy equivalence \(C_{121}\sim C_{212}\). The matrix factorizations
\(\bcalC_+^{(1)}\bar{\star}\bcalC_+^{(2)}\bar{\star}\bcalC^{(1)}_+\) and
\(\bcalC_+^{(2)}\bar{\star}\bcalC_+^{(1)}\bar{\star}\bcalC^{(2)}_+\)
are lifts of the complexes \(C_{121}\) and \(C_{212}\). Since the complexes \(C_{121},C_{212}\) satisfy the conditions of the lemma~\ref{lem: ext mor} we lift the homotopy
between \(C_{121}\) and \(C_{212}\) to the homotopy in the statement of corollary.
  
\end{proof}

The main conclusion of the computations in the last two section is a construction of the braid group  action on the category of matrix factorizations:
\begin{corollary} The assignment
  \[\sigma_i\mapsto \bcalC^{(i)}_+\bar{\star},\quad \sigma_{i}^{-1}\mapsto \bcalC^{(i)}_-\bar{\star} \]
  extends to the homomorphism \(\Brgr_n\to End(\MF_{B^2}(\calXr_2,\Wr)).\)
\end{corollary}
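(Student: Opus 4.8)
The plan is to obtain the corollary by packaging the braid relations of Theorem~\ref{thm: braids} (already transported to the reduced side in Corollaries~\ref{cor: cubic} and \ref{cor: commute}) together with the monoidal structure of $(\MF_{B^2}(\calXr_2,\Wr),\bar\star)$ into a weak action on the homotopy category. First I would record the formal input. The product $\bar\star$ is associative (this is the content of the corollary $\Phi(\calF)\star\Phi(\calG)=\Phi(\calF\bar\star\calG)$ in Section~\ref{sec: Knor}, combined with associativity of $\star$), and by Proposition~\ref{prop: unit} and its corollary $\bcalC_\parallel$ is a two-sided unit. Since $\bar\star$ is a bifunctor on the homotopy category, for every object $\calA$ the rule $\calF\mapsto\calA\,\bar\star\,\calF$ is a well-defined endofunctor of $\MF_{B^2}(\calXr_2,\Wr)$; it sends homotopy-equivalent objects $\calA\sim\calA'$ to naturally isomorphic endofunctors, it sends $\bcalC_\parallel$ to the identity functor, and associativity supplies natural isomorphisms $(\calA\,\bar\star\,\calB)\bar\star(-)\;\cong\;\calA\,\bar\star\,\bigl(\calB\,\bar\star\,(-)\bigr)$. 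In other words, $\calA\mapsto\calA\,\bar\star\,(-)$ is a monoidal functor from $(\MF_{B^2}(\calXr_2,\Wr),\bar\star)$ to $(\mathrm{End}\,\MF_{B^2}(\calXr_2,\Wr),\circ)$; consequently for any word $w=\sigma_{i_1}^{\epsilon_1}\cdots\sigma_{i_\ell}^{\epsilon_\ell}$ the composite $\bigl(\bcalC_{\epsilon_1}^{(i_1)}\bar\star(-)\bigr)\circ\cdots\circ\bigl(\bcalC_{\epsilon_\ell}^{(i_\ell)}\bar\star(-)\bigr)$ is naturally isomorphic to $\bcalC_{w}\,\bar\star\,(-)$ with $\bcalC_{w}:=\bcalC_{\epsilon_1}^{(i_1)}\bar\star\cdots\bar\star\bcalC_{\epsilon_\ell}^{(i_\ell)}$.

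Next I would collect the relations satisfied by the objects $\bcalC_\pm^{(i)}$ inside $(\MF_{B^2}(\calXr_2,\Wr),\bar\star)$. The cubic relation $\bcalC_+^{(i)}\bar\star\bcalC_+^{(i+1)}\bar\star\bcalC_+^{(i)}\sim\bcalC_+^{(i+1)}\bar\star\bcalC_+^{(i)}\bar\star\bcalC_+^{(i+1)}$ is Corollary~\ref{cor: cubic}; the far-commutativity $\bcalC_+^{(i)}\bar\star\bcalC_+^{(j)}\sim\bcalC_+^{(j)}\bar\star\bcalC_+^{(i)}$ for $|i-j|>1$ is Corollary~\ref{cor: commute}; and $\bcalC_+^{(i)}\bar\star\bcalC_-^{(i)}\sim\bcalC_\parallel$ follows from the two-strand case $\crXp\star\crXn\sim\crXpr$ of Theorem~\ref{thm: two strands} together with Corollary~\ref{cor: ind}, which says the inclusion functor $\Indb_{i,i+1}$ is a homomorphism of convolution algebras (equivalently one may apply $\Phi_n$, a monoidal embedding by Proposition~\ref{prop: Knorrer} and the Knörrer multiplicativity, together with Proposition~\ref{prop: ind phi}, to identify $\bcalC_\pm^{(i)}$ with $\crX_\pm^{(i)}$ and then invoke Theorem~\ref{thm: braids}). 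The matching relation $\bcalC_-^{(i)}\bar\star\bcalC_+^{(i)}\sim\bcalC_\parallel$ is obtained by the computation parallel to the proof of Theorem~\ref{thm: two strands} with the roles of $\crX_+$ and $\crX_-$ exchanged; combined with the previous one it shows that $\bcalC_+^{(i)}\bar\star(-)$ and $\bcalC_-^{(i)}\bar\star(-)$ are mutually quasi-inverse auto-equivalences. Applying the monoidal functor of the first paragraph turns each of these object-level homotopy equivalences into a natural isomorphism of the corresponding composite endofunctors.

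Finally I would assemble the homomorphism using the presentation of $\Brgr_n$ with generators $\sigma_1^{\pm1},\dots,\sigma_{n-1}^{\pm1}$ and defining relations $\sigma_i\sigma_i^{-1}=\sigma_i^{-1}\sigma_i=1$, $\sigma_i\sigma_{i+1}\sigma_i=\sigma_{i+1}\sigma_i\sigma_{i+1}$, and $\sigma_i\sigma_j=\sigma_j\sigma_i$ for $|i-j|>1$. Define a map from the free monoid on $\{\sigma_j^{\pm1}\}$ to $(\mathrm{End}\,\MF_{B^2}(\calXr_2,\Wr),\circ)$ by $\sigma_j^{\epsilon}\mapsto \bcalC_\epsilon^{(j)}\,\bar\star\,(-)$ and extend by composition; by the relations just recorded each defining relation of $\Brgr_n$ holds between the two corresponding composite endofunctors as a natural isomorphism, so the map descends to the claimed homomorphism $\Brgr_n\to\mathrm{End}(\MF_{B^2}(\calXr_2,\Wr))$, sending $\beta=\sigma_{i_1}^{\epsilon_1}\cdots\sigma_{i_\ell}^{\epsilon_\ell}$ to (a functor naturally isomorphic to) $\bcalC_\beta\,\bar\star\,(-)$. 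I expect the only genuinely delicate points — everything computational having been done in Sections~\ref{sec: two str}--\ref{sec:  three str 3} — to be two bookkeeping issues: that homotopy equivalences of matrix factorizations pass to honest natural isomorphisms of the induced endofunctors (which is exactly what working in the homotopy category, where $\bar\star$ is bifunctorial, buys us), and that one must use the two-sided inversion relation, not merely one side, so that the generators are sent to auto-equivalences rather than to endofunctors with only a one-sided inverse.
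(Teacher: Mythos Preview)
Your proposal is correct and matches the paper's approach: the paper gives no explicit proof of this corollary, treating it as an immediate consequence of the braid relations established in Theorem~\ref{thm: braids} (transported to the reduced side via Corollaries~\ref{cor: ind}, \ref{cor: commute}, and \ref{cor: cubic}) together with the monoidal structure of $(\MF_{B^2}(\calXr_2,\Wr),\bar\star)$. Your write-up simply makes explicit the standard bookkeeping the paper leaves implicit, including the point about needing two-sided invertibility, which the paper does not spell out.
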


\section{Sheaves on the Hilbert scheme}\label{sec: link inv}
In this section we explain how for a word $\beta$ of generators of the braid group $ \Brgr_n$ one can construct a two-periodic complex $C_\beta$ of quasi-coherent sheaves on the space $\calX_\ell$ with the property that
its homology is supported on
a version of the Steinberg-like
variety $\St_\ell$ that covers  the nested Hilbert Scheme $\Hilb_{1,n}$ of $n$-points on $\CC^2$.
We show later  that $B^\ell\times G$-invariant part $\cohog{*}{C_\beta}$ is related to the isotopy   invariant of the link $L(\beta)$. First let us remind a definition of the nested Hilbert scheme.

\subsection{Nested Hilbert scheme: definitions and notations}\label{ssec: NHilb defs} Direct definition of the scheme $\Hilb_{1,n}$ is the moduli space of the sequences of the ideals $I_1\subset\dots\subset I_n\subset \CC[x,y]$ such that $\dim \CC[x,y]/I_k=k$.
We are also interested in the subvariety $\Hilb_{1,n}^L$ which consists of the sequence like above with extra condition $\supp(I_{k-1}/I_k)$ is a point on the line $L=\{ y=0\}$.

We can also think of a point on $\Hilb_{1,n}$ as  quotient sequence \[\CC[x,y]/I_n\to\CC[x,y]/I_{n-1}\to\dots\to \CC[x,y]/I_1.\]
Let us use notation $V_k=\CC[x,y]/I_k$.
If we choose a basis $\{v_i\}_{i=1,\dots,n}$ of $V_n$ such that $\langle v_1,\dots,v_k\rangle=V_k$ then the operators of multiplication by $x$ and $y$ become upper triangular in this basis.
Let us denote these operators by $X$, $Y$. Let us also notice that the fact that $V_k$ are obtained as a quotient of $\CC[x,y]$ implies that there is a special vector in $u\in V_n$ which is the image of
$1\in \CC[x,y]$. Thus on the level of points we obtained identification of $\Hilb_{1,n}$ with the quotient of the variety
$$\SymbolPrint{ \mathcal{CV}^{st}}_{\frb\times\frb}=\{(X,Y)\in \frb| [X,Y]=0 \mbox{ and exists } u \mbox{ such that } \CC[X,Y] u=V\}$$
by the group $B$. Respectively, to obtain a model for $\Hilb_{1,n}^L$ one need to impose the condition $Y\in \frn$

The tricky part of the previous description is the fact that we take quotient by the group $B$ which is not a reductive group.
Thus to reveal the scheme theoretic structure of the quotient $\mathcal{CV}^{st}_{\frb\times \frb}/B$ one could consider slightly different model for $\Hilb_{1,n}$ which we describe below.
Let us define $\tilde{C}\subset \Fl\times \gl_n\times \gl_n$ which consists of the triples $(V_{\bullet},X,Y)$ satisfying the following conditions:
$$ X V_i\subset V_i,\quad YV_i\subset V_i, \quad [X,Y]=0.$$

There is a natural action of $GL(n)$ on $\tilde{C}$ and
analogously to the argument in \cite{Nak} one can show that the semi-stable locus $\tilde{C}^{ss}$ of the action is exactly the locus where there is a vector $u$ such that $\CC[X,Y]u=V_n$.
Thus on the set theoretic level we established the isomorphism between $\mathcal{CV}^{st}_{\frb\times\frb}/B$ and $\tilde{C}^{ss}/GL_n$. Lastly, let us notice that the quotient $\tilde{C}^{ss}/GL_n$ has a natural
scheme structure due to GIT theory \cite{N}.

\subsection{Braid complex}\label{ssec: Cb} Now let us recall our construction of the braid group action and relate the construction of the braid group action to the knot invariants. Recall that in the previous sections we discussed
various properties of the particular matrix factorization
\( \calC_\epsilon^i\in \MF_{G\times B^2}(\frg\times \left( G\times \frn\right)^2,W), \):
\[ W(X,g_1,Y_1,g_2,Y_2)=\Tr(\Ad^{-1}_{g_1}(X)Y_1-\Ad^{-1}_{g_2}(X)Y_2).\]

Suppose we are given a braid $\beta=\sigma_{i_1}^{\epsilon_1}\dots \sigma_{i_\ell}^{\epsilon_\ell},$  where $\epsilon_j=\pm 1$. Then we construct the following two-periodic complex of coherent sheaves on
$\calX_\ell:=\frg\times \left( G\times \frn \right)^N$:
$$\calC_{\sigma_{i_1}^{\epsilon_1},\dots,\sigma_{i_\ell}^{\epsilon_\ell}}:=\pi_{12}^*(\calC^{(i_1)}_{\epsilon_1})\otimes\dots\otimes\pi_{n1}^*(\calC^{(i_\ell)}_{\epsilon_\ell})\in \MF_{B^\ell\times G}(\calX_\ell,\pi_{1,n}^*(W)),$$
where $\pi_{i,i+1}:\calX_\ell\to \calX_2$ is the natural projection.





Let us define the ideal $\Icr_\ell\subset\CC[\calX_\ell]$  be generated by the partial derivatives of the functions $W_{i,i+1}:=\pi_{i,i+1}^*(W)$, $i=1,\dots,n-1$. Elements of the ideal $\Icr_\ell$  provide a morphisms of the complex $\calC_{\sigma_{i_1}^{\epsilon_1},\dots,\sigma_{i_\ell}^{\epsilon_\ell}}$  that are homotopic to $0$.
In the next subsections of this section we construct from  $\calC_{\sigma_{i_1}^{\epsilon_1},\dots,\sigma_{i_\ell}^{\epsilon_\ell}}$
a complex of sheaves \(\calS_\beta\), \(\beta=\sigma_{i_1}^{\epsilon_1}\dots\sigma_{i_\ell}^{\epsilon_\ell}\in \Brgr_n\) on the usual Hilbert scheme \(\Hilb_n\) that
depends only on the conjugacy class of \(\beta\). Other words we construct a trace map \(\Brgr_n\to D^{per}_{T_{sc}}(\Hilb_n)\).

\subsection{Geometry of the critical locus} Before we describe the last step of our construction of the knot invariant let us discuss the structure of the scheme $\SymbolPrint{\calX^{crit}_\ell}:=\Spec(R_\ell)$.  First, we describe a particular set of  generators of the ideal $\Icr_\ell$.  Let us notice that we can rewrite
the intermediate potential $W_{i,i+1}$ as follows
\begin{equation}\label{eq: two forms of crit}
\SymbolPrint{W_{i,i+1}}= \Tr(\Ad^{-1}_{g_i}(X)Y_i-\Ad^{-1}_{g_{i+1}}(X)Y_{i+1})
\end{equation}


As we explain in the section~\ref{sec: Knor} the potential $W_{i,i+1}$ could be rewritten as follows:
$$
\Tr(\tilde{X}_{i,+}\Ad_{g_{i,i+1}}(Y_{i+1}))-\Tr(\tilde{X}_{i,--}(Y_{i}-\Ad_{g_{i,i+1}}(Y_{i+1})_{++})).
$$

Here and everywhere below the notations are consistent with the notations of section~\ref{sec: Knor}, that is  $M_+$ ($M_-$) is the upper-triangular (lower-triangular) part of a matrix $M$ and $M_{++}$ ($M_{--}$) is the strictly upper-triangular (lower-triangular) part of $M$ and
$$ g_{i,i+1}=g_{i}^{-1}g_{i+1},\quad \tilde{X}_{i}=\Ad_{g_{i}}^{-1}(X).$$
Taking partial derivatives of the last function with respect to the coordinates $Y_{i+1}$, $\tilde{X}_{i,+}$, $g$, $Y_{i}-\Ad_{g_{i,i+1}}(Y_{i+1})_{++},\tilde{X}_{i,--}$ we obtain equations:
\begin{gather}\label{eq: part ders}
\Ad^{-1}_{g_{i,i+1}}(\tilde{X}_{i})_{--}=0,\\
\Ad_{g_{i,i+1}}(Y_{i+1})_{-}=0,\label{eq: Y neg}\\
 [\Ad_{g_{i,i+1}}(Y_{i+1}),\tilde{X}_{i,+}]=0,\\
 \tilde{X}_{i,--}=0,\\
  Y_{i}=\Ad_{g_{i,i+1}}(Y_{i+1})_{++}.\label{eq: Yi Yi+1}
 \end{gather}
 Let us notice that the last equations the first and  the second lines  actually imply:
 \begin{equation}\label{eq: flag comm}
  [Y_{i},\tilde{X}_{i,+}]=0,
 \end{equation}
 since $[\Ad_{g_{i,i+1}}(Y_{i+1}),\tilde{X}_{i,+}]=[\Ad_{g_{i,i+1}}(Y_{i+1})_{++},\tilde{X}_{i,+}]+[\Ad_{g_{i,i+1}}(Y_{i+1})_{-},\tilde{X}_{i,+}]$.

 On the other hand if we compute the partial derivatives by $X$ of the LHS of (\ref{eq: two forms of crit}) we get
 \begin{equation}\label{eq: AdAd}
 \Ad_{g_i}(Y_i)=\Ad_{g_{i+1}}( Y_{i+1})
 \end{equation}

 Finally let us notice that the (\ref{eq: part ders}) and (\ref{eq: flag comm}) imply that $[Y_{i+1},\tilde{X}_{i+1}]=0$ and hence
 \begin{equation}\label{eq: comm}
 [\Ad_{g_i}Y_i,X]=0.
 \end{equation}
 Let us notice that because of the previous observation (\ref{eq: AdAd}), the last equations (\ref{eq: comm}) are equivalent for different values of $i$.

 Thus we see  that the $G\times B^{\ell}$ equivariant scheme $\calX_\ell^{crit}$ comes equipped with the following maps:
 $$pr_{i}: \calX_\ell^{crit}\to \mathcal{CV}_{\frb\times\frn}, \quad pr_i(g_\bullet,X,Y_\bullet)=(Y_i,\tilde{X}_{i,+}),$$ this map is $B$-equivariant;
 $$pr:\calX_\ell^{crit}\to \mathcal{CV}_{\frg\times\frg}, \quad pr(g_\bullet,X,Y_\bullet)=(\Ad_{g_i}(Y_i),X),$$ this map is $G$-equivariant.
 The varieties $\mathcal{CV}_B$ and $\mathcal{CV}_G$ are the commuting sub varieties of  $\frn\times\frb$ and $\frg\times\frg$ respectively.

 \begin{proposition} The critical locus $\calX^{crit}_\ell$ consists of elements
 $(X,g_1,Y_1,\dots,g_\ell,Y_\ell)$ satisfying
 \begin{gather*}
 \Ad_{g_i}(Y_i)=\Ad_{g_{i+1}}(Y_{i+1})\\
 \Ad_{g_i}^{-1}(X)_{--}=0\\
 [X,\Ad_{g_1}(Y_1)]=0.
 \end{gather*}
 \end{proposition}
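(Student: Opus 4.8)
The statement describes the reduced critical locus $\calX_\ell^{crit} = \Spec(R_\ell)$, where $R_\ell = \CC[\calX_\ell]/\Icr_\ell$ and $\Icr_\ell$ is generated by the partial derivatives of the functions $W_{i,i+1} = \pi_{i,i+1}^*(W)$. The plan is to compute these partial derivatives explicitly in two coordinate systems, as already set up in the excerpt, and to show that the resulting equations cut out exactly the claimed locus. There are two ingredients: first, the computation of the partial derivatives, which the excerpt essentially carries out in equations~(\ref{eq: part ders})--(\ref{eq: comm}); and second, the passage from the full list of partial-derivative equations to the condensed three-equation description, which requires showing that the extra equations follow from the three displayed ones together with the structure of the ambient space $\calX_\ell = \frg\times(G\times\frn)^\ell$ (in particular that $X$ is only defined up to the constraint implicit in $\calX_\ell$, and that each $g_i$ ranges over all of $G$ while each $Y_i$ ranges over $\frn$).

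First I would fix the coordinates $(X,g_1,Y_1,\dots,g_\ell,Y_\ell)$ on $\calX_\ell$ and recall that $W = W_{1,\ell} = \pi_{1,\ell}^*(W)$, so $\Icr_\ell$ is generated by $\partial W_{i,i+1}$ for $i=1,\dots,\ell-1$ where $W_{i,i+1} = \Tr(\Ad_{g_i}^{-1}(X)Y_i - \Ad_{g_{i+1}}^{-1}(X)Y_{i+1})$. I would then differentiate $W_{i,i+1}$ with respect to the blocks of coordinates appearing in the Knorrer-type rewriting
\[
W_{i,i+1} = \Tr(\tilde{X}_{i,+}\Ad_{g_{i,i+1}}(Y_{i+1})) - \Tr(\tilde{X}_{i,--}(Y_i - \Ad_{g_{i,i+1}}(Y_{i+1})_{++})),
\]
obtaining the five families of equations~(\ref{eq: part ders})--(\ref{eq: Yi Yi+1}). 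One notes that the ideal generated by $\partial_{\tilde X_{i,--}} W_{i,i+1}$, namely $\tilde X_{i,--}=0$, combined with $\partial_{g}W_{i,i+1}$ and $\partial_{Y_{i+1}}W_{i,i+1}$, yields $[Y_i,\tilde X_{i,+}]=0$, i.e.~(\ref{eq: flag comm}), and differentiating $W_{i,i+1}$ with respect to $X$ directly yields $\Ad_{g_i}(Y_i)=\Ad_{g_{i+1}}(Y_{i+1})$, equation~(\ref{eq: AdAd}). Then I would argue that $\tilde X_{i,--}=0$ plus $[Y_i,\tilde X_{i,+}]=0$ is equivalent to $\Ad_{g_i}^{-1}(X)_{--}=0$ together with $[X,\Ad_{g_i}(Y_i)]=0$, and that by~(\ref{eq: AdAd}) the latter commutator equation does not depend on $i$; this gives equation~(\ref{eq: comm}). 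Conversely, I would check that the three displayed equations imply all of~(\ref{eq: part ders})--(\ref{eq: Yi Yi+1}) for every $i$: from $\Ad_{g_i}(Y_i) = \Ad_{g_{i+1}}(Y_{i+1})$ and $\Ad_{g_i}^{-1}(X)_{--}=0$ one recovers $\Ad_{g_{i,i+1}}(Y_{i+1})_- = 0$ and $Y_i = \Ad_{g_{i,i+1}}(Y_{i+1})_{++}$ using $Y_i \in \frn$, and from $[X,\Ad_{g_i}(Y_i)]=0$ one recovers the bracket equation; this is the routine bookkeeping that I would not grind through in the write-up but would indicate.

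The main obstacle I anticipate is not the raw differentiation but the careful translation between the "lower-triangular vanishing" conditions $\Ad_{g_i}^{-1}(X)_{--}=0$ and the conditions that appear naturally after the Knorrer rewriting, which involve $g_{i,i+1}$ rather than $g_i$ directly; one must be attentive to which triangular projections are strict and which are not, and to the fact that $Y_i \in \frn$ is strictly upper triangular while $X \in \frg$ is arbitrary (so $\Ad_{g_i}^{-1}(X)$ need not be upper triangular unless the critical equations force it). The cleanest route is probably to argue chart-by-chart over $G$ or, better, to observe that all the equations are $G\times B^\ell$-equivariant and to reduce to a slice; but for the set-theoretic statement of the proposition a direct elementary argument suffices. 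I would close by remarking that this set-theoretic description is what is needed downstream to identify the support of the homology of $\calC_\beta$ with the Steinberg-like variety covering $\Hilb_{1,n}$, via the maps $pr_i$ and $pr$ introduced just above the statement.
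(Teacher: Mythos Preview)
Your proposal is correct and follows essentially the same approach as the paper: the proposition is stated as a summary of the computations carried out in the paragraphs immediately preceding it (equations~(\ref{eq: part ders})--(\ref{eq: comm})), and your plan reproduces exactly that computation, with the added care of noting the converse direction. The only minor point is that your worry about translating between $g_{i,i+1}$- and $g_i$-coordinates dissolves once you observe $\Ad_{g_{i,i+1}}^{-1}(\tilde X_i)=\tilde X_{i+1}$, so equation~(\ref{eq: part ders}) for index $i$ is literally the second displayed condition at index $i+1$; no chart-by-chart or slice argument is needed.
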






\subsection{Stability conditions on critical locus} The locally closed locus $\calX^{crit}_{\ell,st}$ defined by the condition that there is vector $u\in V$ such that
\begin{equation}\label{eq: cyclic}
 \CC[\Ad_{g_i}(Y_i),X]u=\CC^n.
 \end{equation}
From above discussion we see that the image $\pi_i(\calX^{crit}_{\ell,st})$ is inside $\mathcal{CV}_B^{st}$ because $u_i=g_i^{-1}(u)$ is cyclic
$$\CC[Y_i,\tilde{X}_{i,+}]u_i=\CC^n.$$
We denote intersection of $\calX^{crit}_\ell/U^\ell$ with the stable locus by $\calX^{crit}_{\ell,st}/U^\ell$.
Let us denote $j_{st}$ the inclusion of the stable locus.


Let us introduce slightly bigger space $\calX^{crit}_{\ell,fr}$ that is a subvariety of $\calX^{crit}_\ell\times V$ consisting of collections of elements \((X,g_1,Y_1,\dots,g_\ell,Y_\ell,u)\)
 satisfying conditions (\ref{eq: cyclic}). 
The space $\calX^{crit}_{\ell,fr}$ has a natural action of the group $\hat{G}\times B^{\ell}$, $\hat{G}:=G\times \CC^*$, where $\CC^*$-factor acts on the space $V$ by rescaling.
Let us denote by \(\St_\ell\) the quotient \(\hat{G}\backslash \calX^{crit}_{\ell,fr}/B^\ell\).



 \subsection{Stability conditions for the matrix factorizations.}

 We denote by \[\SymbolPrint{V^0}\subset V=\CC^n \] the subset of vectors with a non-zero last coordinate.
 Let us define $\SymbolPrint{\calX_{\ell,fr}}$ as the subvariety  of  $\calX_\ell\times V$ consisting of collections
 \( (X,g_1,Y_1,,\dots,g_\ell,Y_\ell,u)\) such that
\[\CC \langle \Ad_{g_i}^{-1}(X),Y_i\rangle g^{-1}_i(u)=V,\quad g^{-1}(u)\in V^0,\quad i=1,\dots,\ell-1,\]

The map that forgets framing  $\SymbolPrint{\forg }:\calX_{\ell,fr}\to \calX_\ell$  is defined as restriction of the projection \(\calX_\ell\times V\rightarrow \calX_\ell\) on the
open subset \(\calX_{\ell,fr}\subset\calX_\ell\times V\). It
is dominant over the open that we call
the stable locus $\calX_{\ell,st}$. There a unique $G\times B^\ell$-equivariant structure on $\calX_{\ell,fr,i}$ that makes map $\forg$ $G\times B^\ell$-equivariant.

The framed space comes equipped with the natural convolution algebra structure:
$$ \calF\SymbolPrint{\star} \calG:=\pi_{13*}(\CE_{\frn^{(2)}}(\pi_{12}^*(\calF)\otimes\pi_{23}^*(\calG))^{T^{(2)}}),$$
where $\pi_{ij}:\calX_{3,fr}\to \calX_{2,fr}$ are the natural projection maps.
There is a natural pull back map \[\SymbolPrint{\forg}: \MF_{B^2}(\calX_2,W)\to \MF_{B^2}(\calX_{2,fr},W)\] and the key to what follows in the next sections is the Lemma below:

\begin{lemma}\label{lem: for homom}
  The pull-back map $\forg^*$ is the homomorpism of the convolution algebras.
 \end{lemma}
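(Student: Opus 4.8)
The statement asserts that $\forg^*\colon \MF_{B^2}(\calX_2,W)\to\MF_{B^2}(\calX_{2,fr},W)$ intertwines the two convolution products, i.e. $\forg^*(\calF\star\calG)\cong\forg^*(\calF)\star\forg^*(\calG)$ naturally in $\calF,\calG$. The plan is to trace through the definition of $\star$ on both sides and observe that the only data entering the construction --- the three projections $\pi_{ij}$, the pull-backs $\pi_{ij}^*$, the tensor product, the Chevalley--Eilenberg functor $\CE_{\frn^{(2)}}$, extraction of the $T^{(2)}$-invariant part, and the push-forward $\pi_{13*}$ --- are all compatible with the corresponding data on the framed spaces. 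Concretely, I would set up the natural maps $\forg\colon\calX_{3,fr}\to\calX_3$ and note that the square
\[
\begin{tikzcd}
\calX_{3,fr}\arrow{r}{\forg}\arrow{d}{\pi_{ij}} & \calX_3\arrow{d}{\pi_{ij}}\\
\calX_{2,fr}\arrow{r}{\forg} & \calX_2
\end{tikzcd}
\]
commutes for each pair $ij\in\{12,23,13\}$, and moreover is Cartesian: a framing $u$ on the triple in $\calX_3$ that is stable for the $(i,i+1)$ pair restricts to a framing on the appropriate $\calX_2$-data, and conversely. (Here I am using the definition of $\calX_{\ell,fr}$ from the previous subsection, namely the conditions $\CC\langle\Ad_{g_i}^{-1}(X),Y_i\rangle g_i^{-1}(u)=V$ and $g_i^{-1}(u)\in V^0$; since $\calX_{3,fr}$ uses a single framing vector $u$ and the same cyclicity conditions indexed by $i=1,2$, it is precisely the fiber product of $\calX_3$ with $\calX_{2,fr}\times_{\calX_2}\calX_{2,fr}$ built from $\pi_{12}$ and $\pi_{23}$.)

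Given this, the proof proceeds in the following steps. First, pull-back commutes with tensor product: $\forg^*\pi_{12}^*(\calF)\otimes\forg^*\pi_{23}^*(\calG)=\pi_{12}^*\forg^*(\calF)\otimes\pi_{23}^*\forg^*(\calG)$ on $\calX_{3,fr}$, using the commuting squares above. Second, $\forg$ is $B^3$-equivariant (in fact there is a unique $B^\ell\times G$-equivariant structure on $\calX_{\ell,fr}$ making $\forg$ equivariant, as noted in the excerpt), and the Lie algebra $\frn^{(2)}$ acts on $\CC[\calX_{3,fr}]$ compatibly with its action on $\CC[\calX_3]$ via $\forg^*$; hence $\CE_{\frn^{(2)}}$ commutes with $\forg^*$, i.e. $\forg^*\bigl(\CE_{\frn^{(2)}}(\calH)\bigr)=\CE_{\frn^{(2)}}(\forg^*\calH)$, and likewise extraction of the $T^{(2)}$-invariant part commutes with $\forg^*$ since $\forg^*$ is $T^{(2)}$-equivariant. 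Third --- and this is where the Cartesian property is used essentially --- base change along the fiber square for $\pi_{13}$ gives $\forg^*\circ\pi_{13*}=\pi_{13*}\circ\forg^*$ as functors on the relevant module categories; one must check that $\pi_{13}$ is flat (or at least that the relevant higher derived push-forwards vanish / the push-forward of matrix factorizations as constructed in section~\ref{ssec: eq push f} is computed by a lift that is preserved by flat base change), so that the square-change-of-base isomorphism of the kind used repeatedly in section~\ref{sec: inc} applies verbatim. Composing these three compatibilities yields $\forg^*(\calF\star\calG)=\forg^*(\calF)\star\forg^*(\calG)$, and one checks the isomorphism is natural in $\calF,\calG$ and compatible with associativity constraints, which is automatic since every step is functorial.

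The main obstacle I anticipate is the base-change step for $\pi_{13*}$: the push-forward of equivariant matrix factorizations is not a naive sheaf push-forward but the two-step construction of subsection~\ref{ssec: eq push f} (choose a lift $\tilde D$, then complete with correction differentials via Lemma~\ref{lem: ext MF+eq}), so "base change" must be argued at the level of that construction. The clean way to handle this is to observe, exactly as in Remark~\ref{rem: section} and in the proofs of Propositions~\ref{prop: ind phi}, \ref{prop: ind homo}, that $\pi_{13}\colon\calX_{3,fr}\to\calX_{2,fr}$ and $\pi_{13}\colon\calX_3\to\calX_2$ both factor through regular embeddings cut out by $B$-invariant regular sequences followed by flat projections, and that $\forg$ sends the defining ideal of the framed kernel to the defining ideal of the unframed kernel; then the uniqueness part of Lemma~\ref{lem: uniq MF+eq} (respectively \ref{lem: uniq MF+eq} in the equivariant setting) forces $\forg^*$ of the completed factorization to agree with the completed factorization of $\forg^*$ of the data, since both are lifts of the same underlying complex over $\calX_{3,fr}$. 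A minor secondary point to verify is that $\forg$ being an open immersion onto the stable locus composed with a flat map (rather than a closed or affine map) causes no difficulty: restriction to an open subvariety is exact and manifestly commutes with $\CE$, with taking invariants, and with the Koszul-completion procedure, so the only genuinely non-formal input is the flatness/regularity of the $\pi_{13}$ kernels, which is inherited from the unframed case because the cyclicity conditions defining the framed spaces are open and hence do not disturb regularity of the sequences involved.
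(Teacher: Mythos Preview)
Your argument has a genuine gap at the base-change step for $\pi_{13}$. You assert that the square
\[
\begin{tikzcd}
\calX_{3,fr}\arrow{r}{\forg}\arrow{d}{\pi_{13}} & \calX_3\arrow{d}{\pi_{13}}\\
\calX_{2,fr}\arrow{r}{\forg} & \calX_2
\end{tikzcd}
\]
is Cartesian, but it is not. The fiber product $\calX_3\times_{\pi_{13},\calX_2}\calX_{2,fr}$ consists of points $(X,g_1,Y_1,g_2,Y_2,g_3,Y_3,u)$ satisfying the stability condition only at the index $i=1$ (this is the space the paper denotes $\calX_{3,fr,1}$), whereas $\calX_{3,fr}$ by definition imposes stability at both $i=1$ and $i=2$. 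Thus $\calX_{3,fr}$ is a proper open subset of the actual fiber product, and your base-change isomorphism $\forg^*\circ\pi_{13*}\cong\pi_{13*}\circ\forg^*$ lands you on $\calX_{3,fr,1}$, not on $\calX_{3,fr}$ where the framed convolution is defined. Your parenthetical correctly identifies $\calX_{3,fr}$ as the fiber product built from $\pi_{12}$ and $\pi_{23}$, but that is not the square relevant to the push-forward.

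This discrepancy is precisely the content of the paper's proof. The key observation there is that on the critical locus the two stability conditions coincide: the equations cutting out $\calX_3^{crit}$ include $\Ad_{g_1}(Y_1)=\Ad_{g_2}(Y_2)$, so cyclicity at $i=1$ forces cyclicity at $i=2$, and hence $\calX_3^{crit}\times V\cap\calX_{3,fr,1}=\calX_3^{crit}\times V\cap\calX_{3,fr}$. The paper then invokes a separate lemma (stated immediately after the proof) to the effect that restricting a matrix factorization to an open subset whose complement misses the critical locus of the potential does not change its homotopy type. That lemma, together with the critical-locus computation, is the missing ingredient that bridges $\calX_{3,fr,1}$ and $\calX_{3,fr}$; nothing in your formal base-change outline supplies it.
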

 \begin{proof}
   Let us denote by $\calC$ the matrix factorization \[\pi_{12}^*(\calF)\otimes\pi_{23}^*(\calG)\in\MF_{B^2}(\calX_{3,fr},\pi_{13}^*(W)).\] Then what 
   we need to show is that for any Zarisky open set $U\subset \calX_{2,fr}$  the matrix factorizations of
   $\CC[U]$-modules $\forg^*(\CE_{\frn^{(2)}}(\calC)^{T^{(2)}})$ and $\CE_{\frn^{(2)}}(\forg^*(\calC))^{T^{(2)}})$ are homotopic.

   It is more convenient to restate the above homotopic relation in a slightly more geometric terms.
   Let us denote by $\calX_{3,fr,1}$ the subvariety of $\calX_3\times V$ that is defined by the stability condition:
   $$\CC\langle \Ad_{g_1}^{-1}(X), Y_1\rangle g_1^{-1}(u)=V,\quad g_1^{-1}(u)\in V^0$$
   Both spaces $\calX_{3,fr,1}$ and $\calX_{3,fr}$ project onto $\calX_{2,fr}$ and to distinguish these projections we denote the
   projection from the first space by $\pi_{13}$ and from the second by $\pi_{13}^{fr}$. Thus the statement of the lemma is equivalent
   to the homotopy equivalence of the $\CC[U]$ matrix factorizations
\begin{equation}\label{eq: framed homotopy}
     \CE_{\frn^{(2)}}(\calC|_{\pi_{13}^{-1}(U)})^{T^{(2)}}\sim \CE_{\frn^{(2)}}(\calC|_{\pi^{fr,-1}_{13}(U)})^{T^{(2)}}.
\end{equation}
   
   From the previous discussion of the equations for the support of $\calC_{\sigma_{i_1}^{\epsilon_1},\dots,\sigma_{i_l}^{\epsilon_l}}$ we see that
   $$ \calX_3^{crit}\times V \cap\calX_{3,fr,1}=\calX_3^{crit}\times V \cap \calX_{3,fr}. $$
   Finally since $Z=\calX_{3,fr,1}\setminus\calX_{3,fr}$ is the closed subvariety the equation (\ref{eq: framed homotopy}) follows from the general lemma that
   follows the proof: we use $X=\calX_{3,fr,1}$ and $Z'=\calX_3^{crit}\times V$ in the lemma.
 \end{proof}

 Typically the stable locus is a open subset of some more natural bigger space and we are interesting in comparing the matrix factorizations on the big
 space with the ones on the stable locus. The lemma below shows that the category does not change as we pass to the stable locus as long the unstable
 locus does not intersect the critical locus of the potential. The lemma is shown in more general case and we do not refer to a stability a condition. 

 \begin{lemma} Suppose $X$ is a quasi-affine variety and $\calF=(M,D)\in \MFs(X,W)$, $W\in \CC[X]$. The elements of \(\CC[X]\) act on
   \(\MFs(X,W)\) by multiplication. Let us assume that
   the elements of the ideal $I=(f_1,\dots,f_m)\subset \CC[X]$ act by zero homotopies on $\calF$ and $Z'\subset X$ is the
   zero locus of $I$. Let $Z\subset X$ be a subvariety
   defined by $J=(g_1,\dots,g_n)$ such that $Z\cap Z'=\emptyset$. Then $\calF$ is homotopic to $\calF|_{X\setminus Z}$ as
   matrix factorization over $\CC[X]$.
\end{lemma}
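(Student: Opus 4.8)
The content of the statement is that restricting $\calF$ to $X\setminus Z$ loses no information, and I would make this precise by showing that the canonical morphism $\calF\to \mathrm{R}j_{*}j^{*}\calF$, where $j\colon X\setminus Z\hookrightarrow X$ is the open immersion and $\calF|_{X\setminus Z}$ is understood as $\mathrm{R}j_{*}j^{*}\calF$ regarded over $\CC[X]$, is a homotopy equivalence; equivalently, that the local cohomology object $\mathrm{R}\Gamma_{Z}(\calF)$, which fits in a distinguished triangle $\mathrm{R}\Gamma_{Z}(\calF)\to\calF\to \mathrm{R}j_{*}j^{*}\calF$, is contractible. Concretely, $\mathrm{R}\Gamma_{Z}(\calF)$ is computed by the stable Koszul (extended \v Cech) complex
\[
\calG:=\Bigl[\,\textstyle\bigotimes_{i=1}^{n}\bigl(\CC[X]\to\CC[X]_{g_{i}}\bigr)\,\Bigr]\otimes_{\CC[X]}\calF ,
\]
a finite complex of $\CC[X]$-modules which, after folding the \v Cech degree into the $\ZZ_{2}$-grading and equipping it with the total differential $D+d_{\check C}$, is again an object of $\MFs(X,W)$ (the potential $W$ acts $\CC[X]$-linearly and commutes with $d_{\check C}$, so $(D+d_{\check C})^{2}=D^{2}=W$). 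So it suffices to prove $\calG\simeq 0$ in the homotopy category.

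The plan is to exhibit a finite open cover of $X$ on which $\calG$ is visibly contractible. First, after inverting $g_{i}$ the $i$-th tensor factor $\CC[X]\to\CC[X]_{g_{i}}$ becomes $[\CC[X]_{g_{i}}\xrightarrow{=}\CC[X]_{g_{i}}]$, which is contractible, and a tensor product with a contractible factor is contractible; hence $\calG|_{X_{g_{i}}}\simeq 0$ for every $i$. Second, each generator $f_{j}$ of $I$ acts on $\calF$ by a $\CC[X]$-linear null homotopy by hypothesis, and this null homotopy extends $\CC[X]$-linearly to $\calG$, anticommuting with $d_{\check C}$ since it acts only on the $\calF$-factor; thus $f_{j}\cdot\mathrm{id}_{\calG}$ is null homotopic, and inverting $f_{j}$ makes $\mathrm{id}_{\calG|_{X_{f_{j}}}}$ null homotopic, i.e. $\calG|_{X_{f_{j}}}\simeq 0$. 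Finally, a point of $X$ lying in none of the $X_{g_{i}}$ lies in $V(g_{1},\dots,g_{n})=Z$, and a point lying in none of the $X_{f_{j}}$ lies in $V(f_{1},\dots,f_{m})=Z'$; since $Z\cap Z'=\emptyset$, the principal opens $X_{g_{1}},\dots,X_{g_{n}},X_{f_{1}},\dots,X_{f_{m}}$ form a finite open cover of $X$, on each member of which $\calG$ is contractible. Hence $\calG\simeq 0$, which proves the lemma.

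The one delicate point I anticipate is the last implication — passing from ``contractible on each member of a finite open cover'' to ``contractible'' — because null homotopy, unlike acyclicity, is not a priori a local condition. This is the standard Mayer--Vietoris/\v Cech gluing of homotopies for a bounded complex whose terms are flat $\CC[X]$-modules (direct sums of localizations of the finite free modules underlying $\calF$); in the situations where the lemma is applied $X$ is a concrete quasi-affine variety, so one may alternatively pass to an affine neighbourhood, or invoke the Nullstellensatz to write $1=e_{I}+e_{J}$ with $e_{I}\in I$, $e_{J}\in J$ and carry out the gluing explicitly. Everything else — the local cohomology triangle, exactness of localization, and the compatibility of the hypothesis homotopies with $d_{\check C}$ — is routine.
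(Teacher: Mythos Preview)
Your proof is correct and uses essentially the same ingredients as the paper's, but the paper organizes them in a way that sidesteps the ``delicate point'' you flag. Rather than first forming $\mathrm{R}\Gamma_{Z}(\calF)$ and then showing it vanishes by gluing local contractions, the paper takes a single \v Cech resolution $\check{C}_{f,g}(\calF)$ of $\calF$ with respect to the combined cover $\{X_{f_{j}},X_{g_{i}}\}$, so that $\calF\simeq\check{C}_{f,g}(\calF)$ from the outset. Each term $\calF|_{X_{S,T}}$ with $S\neq\emptyset$ is contractible (some $f_{j}$ is inverted, and your argument for $\calG|_{X_{f_{j}}}\simeq 0$ applies verbatim), so one contracts all such terms and is left with exactly the subcomplex indexed by $(\emptyset,T)$, which is $\check{C}_{g}(\calF)=\calF|_{X\setminus Z}$.

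The advantage of the paper's packaging is that the ``gluing'' is absorbed into the single homotopy equivalence $\calF\simeq\check{C}_{f,g}(\calF)$, after which one only performs contractions of direct summands --- no separate Mayer--Vietoris step is needed. Your route is equivalent (indeed, forming the \v Cech resolution of your $\calG$ with respect to the same cover and contracting term by term is precisely how one would justify your step~4), but it introduces an extra layer. Both arguments ultimately rest on the same two facts: the $\{X_{f_{j}},X_{g_{i}}\}$ cover $X$ because $Z\cap Z'=\emptyset$, and inverting any $f_{j}$ kills $\calF$.
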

\begin{proof}
  Let us the convention $X_f$ for the open Zarisky subset of $X$ which is a complement of the zero locus of $f$.
  By the conditions of the lemma the open sets $X_{f_i}$ and $X_{g_i}$ form a Cech cover of $X$ and we denote by the
  $\check{C}_{f,g}(\calF)$ the Cech resolution of $\calF$. The terms of the complex $\check{C}_{f,g}(\calF)$ are the matrix factorizations
  $\calF|_{X_{S,T}}$ where $S\subset \{ 1,\dots, m\}$ and $T\subset \{ 1,\dots, n\}$  and $X_{S,T}$ is the complement to the zero locus
    of $\prod_{s\in S}f_s\prod_{t\in T} g_t$.

    By the conditions of the lemma the matrix factorization $\calF|_{X_{S,T}}$ is contractible if $S\ne \emptyset$. Thus after performing all the
    contractions we  are left with $\check{C}_{g}(\calF)$ which is exactly $\calF|_{X\setminus Z}$ and the lemma follows.
\end{proof}

\subsection{Taking the $B^\ell$ quotient}

Let us define the closure map: $\cl: \calX_{\ell-1}\rightarrow \calX_\ell$ by
$$ \SymbolPrint{\cl}(X,g_1,Y_1,\dots,g_{\ell-1},Y_{\ell-1}):=(X,g_1,Y_1,\dots,g_{\ell-1},Y_{\ell-1},g_1,Y_1).$$
This map is $G\times B^{\ell-1}$-equivariant with $B^{\ell-1}$-action on $\calX_{\ell-1}$ induced by
the group homomorphism $\mathrm{id}_{2,\dots,\ell-1}\times \Delta_{1,\ell}$.

We use the same notation
for the maps of the framed and stable spaces. Let us notice that  the preimage  of the map $\cl$ applied to $\calX_{\ell+1,fr}$ is the cyclicly framed space \(\SymbolPrint{\calX_{\ell,fc}}\) that consists of points  \((X,g_1,Y_1,\dots,g_\ell,Y_\ell,v)\) satisfying 
:
$$ \CC\langle\Ad_{g_i}^{-1}(X),Y_i\rangle g^{-1}(u)=V,\quad  g^{-1}(u)\in V^0, i=1,\dots,\ell.$$
The intersection with the zero-locus of $\mathcal{I}^{crit}_\ell$ we denote by $\calX_{\ell,fc}^{crit}$.

Respectively, we have the pull back map:
$$\cl^*:\MF_{B^{\ell-1}}(\calX_{\ell,fr},W)\to \MF_{B^{\ell-1}}(\calX_{\ell-1,fc},0),$$
and let us denote by $C_{\sigma_{i_1}^{\epsilon_1},\dots,\sigma_\ell^{\epsilon_\ell}}$ the pull-back $\cl^*(\calC_{\sigma_{i_1}^{\epsilon_1},\dots,\sigma_\ell^{\epsilon_\ell}})$.
We use the same nation for $\cl^*(\calC_{\sigma_{i_1}^{\epsilon_1},\dots,\sigma_\ell^{\epsilon_\ell}})\in \MF_{B^{\ell-1}}(\calX_{\ell-1},0)$, usually it is clear from the
context which category of matrix factorizations we work with.



Let us denote by
$\SymbolPrint{\MFcrit_{B^\ell}}(\calX_{\ell,fc},0)\subset  \MF_{B^\ell}(\calX_{\ell,fc},0)$, $\MFcrit_{B^\ell}(\calX_{\ell},0)\subset  \MF_{B^\ell}(\calX_{\ell},0)$
the subcategory consisting  of  matrix factorizations
with the property that the ideal $\calI^{crit}_\ell$ acts by zero homotopy. The element $C_{\sigma_{i_1}^{\epsilon_1},\dots,\sigma_{\ell+1}^{\epsilon_{\ell+1}}}$ is an element of
the subcategory $\MFcrit_{B^\ell}(\calX_{\ell,fc},0)$.

Given an element $C\in \MFcrit_{B^\ell}(\calX_{\ell},0)$ the homology $$\cohog{*}{\CE_{\frn^\ell}(C)}:=\cohog{even}{\CE_{\frn^\ell}(C)}
\oplus\cohog{odd}{\CE_{\frn^\ell}(C)}$$ is
a module over the ring of $\frn^\ell$ invariants $R_\ell^{\frn^\ell}$ where $R_\ell:=\CC[\calX_{\ell}^{crit}]$.  Let us fix a notation for  the affine scheme $\calX^{crit}_{\ell}/U^\ell:=\Spec(R_{\ell}^{\frn^\ell})$.  We use notation $\mathcal{H}_U(\calX_{\ell},C)$ for $\cohog{*}{\CE_{\frn^\ell}(C)}$ considered as
$R_\ell^{\frn^\ell}$-module.

There is an natural map $\forg: \calX^{crit}_{\ell,fc}\rightarrow\calX^{crit}_{\ell}$ and by applying the respective pull-back map we obtain
$$\SymbolPrint{\mathcal{H}_U}(\calX_{\ell,fc},C):=\forg^*(\mathcal{H}_U(\calX_\ell, C))\in \MFs_{T^{\ell}}(\calX_{\ell,fc}/U^{\ell},0).$$

The complex of sheaves $\calH_U(\calX_{\ell,fc},C)$ is $T^\ell\times \hat{G}$-equivariant complex of sheaves on $\calX^{crit}_{\ell,fr}/U^\ell$ and we define
$\SymbolPrint{\calH_B}(\calX_{\ell,fc},C)$ as a complex of sheaves of $T^\ell$-invariant sections of $\calH_U(\calX_{\ell,fc},C)$. Thus $\calH_B(\calX_{\ell,fc},C)$ is a
complex of quasi-coherent sheaves on the quotient $\calX_{\ell,fc}/B^\ell$.
The complex of sheaves $\calH_{B}(\calX_{\ell,fc},C)$ is $\hat{G}$-equivariant hence the complex sheaves $\calH_{B}(\calX_{\ell,fc},C)^{\hat{G}}$ of $\hat{G}$-invariant sections of $\calH_{B}(\calX_{\ell,fc},C)$ is a complex of quasi coherent sheaves on
$\St_\ell$.






\subsection{Conjugacy class invariant} In section~\ref{ssec: Cb} we constructed for every word $\beta=\sigma_{i_1}^{\epsilon_1}\cdot\dots\cdot\sigma_{i_\ell}^{\epsilon_\ell}$
of length $\ell$ of generators of the braid group $\Brgr_n$  complexes $C_{\sigma_{i_1}^{\epsilon_1},\dots,\sigma_{i_\ell}^{\epsilon_\ell}}\in
\MF_{B^\ell\times G}(\calX_{\ell,fc},0)$. The group \(\hat{G}\) acts only on the first and last factors in \(\calX_{\ell,fc}\) there is a \(\hat{G}\)-equivariant projection 
\(pr^{i}:\calX_{\ell,fc}\rightarrow \frg^2\times V\):
\[pr^i(X,g_1,Y_1,\dots,g_\ell,Y_\ell,u)=(X,\Ad_{g_i}(Y_i),u).\]
The image of the map \(pr^1\) restricted to \(\calX^{crit}_{\ell,fc}\) consists of stable triples \((X,Y,u)\), \([X,Y]=0,\CC[X,Y]u=V\). Thus we have a well-defined map
\(pr^1:\St_\ell\to \Hilb_n\).

Since $C_{\sigma_{i_1}^{\epsilon_1},\dots,\sigma_{i_\ell}^{\epsilon_\ell}}\in \MFcrit_{B^\ell\times G}(\calX_{\ell,fc},0)$
we can apply the above constructions
to obtain  a complex of  sheaves on \(\Hilb_n\) \[\SymbolPrint{\calS_{\sigma_{i_1}^{\epsilon_1},\dots,\sigma_{i_\ell}^{\epsilon_\ell}}}:=pr^1_{*}(\calH_{B}(C_{\sigma_{i_1}^{\epsilon_1},\dots,\sigma_{i_\ell}^{\epsilon_\ell}}))^{\hat{G}}.\]

\begin{theorem} \label{thm: conj inv} The  sheaf  $\calS_{\sigma_{i_1}^{\epsilon_1},\dots,\sigma_{i_\ell}^{\epsilon_\ell}}$ depends only on conjugacy class of $\beta=\sigma_{i_1}^{\epsilon_1}\cdot\dots\cdot\sigma_{i_\ell}^{\epsilon_\ell}$:
\begin{equation}\label{eq: 1 in mid}
  \calS_{\sigma_{i_1}^{\epsilon_1},\dots,1,\dots,\sigma_{i_\ell}^{\epsilon_\ell}}
  \sim\calS_{\sigma_{i_1}^{\epsilon_1},\dots,\sigma_{i_\ell}^{\epsilon_\ell}}
\end{equation}
\begin{equation}\label{eq: conj invce}
  \calS_{\sigma_{i_1}^{\epsilon_1},\dots,\sigma_{i_\ell}^{\epsilon_\ell}}=\calS_{\sigma_{i_2}^{\epsilon_2},
    \dots,\sigma_{i_\ell}^{\epsilon_\ell},\sigma_{i_1}^{\epsilon_1}}
\end{equation}
\begin{equation}
\calS_{\sigma_k, \sigma_k^{-1},\sigma_{i_1}^{\epsilon_1}\dots}\sim\calS_{\sigma_{i_1}^{\epsilon_1},\dots},\quad \calS_{\sigma_k, \sigma_{k+1}, \sigma_k,\dots}=\calS_{\sigma_{k+1},\sigma_k,\sigma_{k+1},\dots}
\end{equation}
where \(\sim\) stands for the homotopy equivalence.
\end{theorem}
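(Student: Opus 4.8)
The plan is to deduce Theorem~\ref{thm: conj inv} from the braid relations (Theorem~\ref{thm: braids}), the homomorphism property of the various functors established in Sections~\ref{sec: inc}--\ref{sec: aux sec}, and a careful base-change analysis of the construction $C \mapsto pr^1_*(\calH_B(C))^{\hat G}$. I would organize the proof around the four displayed equations, treating them in the order \eqref{eq: 1 in mid}, \eqref{eq: conj invce}, then the final line.

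\textbf{Step 1: identify the braid complex with a convolution.} First I would record the key compatibility: for a word $\beta = \sigma_{i_1}^{\epsilon_1}\cdots\sigma_{i_\ell}^{\epsilon_\ell}$, the complex $C_{\sigma_{i_1}^{\epsilon_1},\dots,\sigma_{i_\ell}^{\epsilon_\ell}}$ on $\calX_{\ell,fc}$ computes, after applying $\calH_B$ and pushing forward along $pr^1$, the same object as the convolution $\calC_\beta = \calC_{\epsilon_1}^{(i_1)}\star\cdots\star\calC_{\epsilon_\ell}^{(i_\ell)}$ followed by the trace construction. Concretely, I would show the trace map $\calC \mapsto pr^1_*(\calH_B(\cl^*\calC))^{\hat G}$ factors through $\star$: this uses Lemma~\ref{lem: for homom} (that $\forg^*$ is a homomorphism of convolution algebras), the closure map $\cl$ and its interaction with $\Delta_{1,\ell}$, and the Chevalley--Eilenberg/$T^\ell$-invariant formalism of Sections~\ref{sec:convolution} and \ref{sec: link inv}. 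This reduces all four statements to statements about the objects $\calC_\beta \in \MF_{B^2}(\calXr_2(G_n),\Wr)$ (equivalently $\bcalC_\beta$ via $\Phi_n$) and the cyclic-trace functor $\mathrm{Tr}\colon \calC \mapsto pr^1_*(\calH_B(\cl^*\calC))^{\hat G}$.

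\textbf{Step 2: the three easy relations.} Equation \eqref{eq: 1 in mid} follows because $\calC_\parallel^{(i)}$ is the unit in the convolution algebra (Proposition~\ref{prop: unit} and its corollaries), so $\calC_{\dots,1,\dots} = \calC_{\dots}$ already before taking the trace, up to the homotopy equivalence $\crX_\parallel^{(i)}\sim\crX_\parallel$ which is $i$-independent. The last line is immediate from Theorem~\ref{thm: braids}: $\crX_+^{(i)}\star\crX_-^{(i)}\sim\crX_\parallel$ gives the first half after applying $\mathrm{Tr}$, and the cubic relation \eqref{eq: cubic} gives the second half. The substantive point is \eqref{eq: conj invce}, cyclic invariance, which is \emph{not} a consequence of braid relations but of the trace structure itself: I would prove $\mathrm{Tr}(\calF\star\calG) \sim \mathrm{Tr}(\calG\star\calF)$ for all $\calF,\calG$, which then yields \eqref{eq: conj invce} by grouping $\calF = \calC_{\epsilon_1}^{(i_1)}$ and $\calG = \calC_{\epsilon_2}^{(i_2)}\star\cdots\star\calC_{\epsilon_\ell}^{(i_\ell)}$.

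\textbf{Step 3: the main obstacle --- cyclicity of the trace.} The hard part will be establishing $\mathrm{Tr}(\calF\star\calG) \sim \mathrm{Tr}(\calG\star\calF)$. The plan is geometric: both sides are built from the space $\calX_{3,fc}$ with its three projections, and the cyclic symmetry of the trace construction comes from the $\ZZ/3$-rotation of the three $G\times\frn$-factors combined with the cyclic structure of $\CE_{\frn^3}$ used to take the $B^\ell$-quotient. Precisely, on $\calX_{2,fc}$ the two orderings $\calF\star\calG$ and $\calG\star\calF$ are related by the outer automorphism swapping the two strands, which after the closure map $\cl$ (gluing position $\ell$ back to position $1$) and extraction of $\hat G$-invariants becomes an identification rather than a mere isomorphism of complexes; the potential $\Wr$ and the diagonal $\hat G$-action are invariant under this swap, and the critical-locus equations from Section~\ref{sec: link inv} (in particular $\Ad_{g_i}(Y_i) = \Ad_{g_{i+1}}(Y_{i+1})$ and $[X,\Ad_{g_1}(Y_1)]=0$) are manifestly cyclically symmetric, so the supports match. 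I would make this rigorous by writing $\mathrm{Tr}(\calF\star\calG)$ as a single push-forward from $\calX_{2,fc}$ of $\CE_{\frn^{(1)}\oplus\frn^{(2)}}$ applied to $\pi_{12}^*\calF\otimes\pi_{21}^*\calG$ (the closure forces the first and second group factors to be glued), observe the manifest symmetry under $(\calF,\pi_{12})\leftrightarrow(\calG,\pi_{21})$, and then apply a base-change identity (as in the proof of Corollary following Proposition~\ref{prop: Knorrer factor}) to reorganize the Chevalley--Eilenberg complexes. A subsidiary technical check is that all these manipulations stay within $\MFcrit$, i.e. the critical ideal $\calI^{crit}_\ell$ continues to act by null-homotopies, which follows from Lemma~\ref{lem: crit} and the stability lemma of Section~\ref{sec: link inv} since the unstable locus avoids the critical locus. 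Finally I would note that $\calS_{[\beta]} := p_*(\calS_\beta)$ and the independence statements transport to $\Hilb_{1,n}$ and $\Hilb_n$ via the projection $p$, completing the theorem.
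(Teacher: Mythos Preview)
Your Steps 1 and 2 align with the paper's argument. The paper introduces a generalized $\calS_{\beta_1,\dots,\beta_m}$ (where each $\beta_i$ is an arbitrary braid, not just a generator) and proves the merging identity $\calS_{\beta_1,\beta_2,\dots,\beta_m} = \calS_{\beta_1\cdot\beta_2,\dots,\beta_m}$ via the factorization $pr^1 = pr^1 \circ \hat\pi_2$, where $\hat\pi_2\colon \calX_{m,fc}\to\calX_{m-1,fc}$ projects out the second factor. This is exactly your ``trace factors through $\star$'' from Step~1, and together with Theorem~\ref{thm: braids} and Proposition~\ref{prop: unit} it yields \eqref{eq: 1 in mid} and the third displayed line, as you say.

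However, you have inverted the difficulty assessment. In the paper, cyclicity \eqref{eq: conj invce} is the \emph{easiest} part, dispatched in two lines: cycling the word amounts to replacing $pr^1_*$ by $pr^2_*$ in the definition of $\calS$, and the critical-locus equation $\Ad_{g_i}(Y_i) = \Ad_{g_{i+1}}(Y_{i+1})$ forces $pr^1 = pr^2$ on $\calX^{crit}_{\ell,fc}$, hence on the support of $\calH_B$. Your Step~3 does contain this observation (``the critical-locus equations \dots\ are manifestly cyclically symmetric, so the supports match''), but you bury it as a subsidiary technical check; in fact it \emph{is} the entire argument. The ``manifest symmetry'' of $\pi_{12}^*\calF \otimes \pi_{21}^*\calG$ under the swap involution only converts $pr^1_*$ into $pr^2_*$ --- it does not by itself give $\mathrm{Tr}(\calF\star\calG)\sim\mathrm{Tr}(\calG\star\calF)$; you still need $pr^1_* = pr^2_*$ on complexes supported on the critical locus to close the loop. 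So your Step~3 is not wrong, but the $\ZZ/3$-rotation and base-change machinery you propose is unnecessary scaffolding around a one-line geometric fact. Foreground the $pr^i$-independence on $\calX^{crit}_{\ell,fc}$ and the proof of \eqref{eq: conj invce} collapses.
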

\begin{proof}
  The conjugacy invariance (\ref{eq: conj invce}) follows almost immediately from the construction. Indeed, the definition of \(pr^i\) implies that
\[  \calS_{\sigma_{i_2}^{\epsilon_2},
  \dots,\sigma_{i_\ell}^{\epsilon_\ell},\sigma_{i_1}^{\epsilon_1}}=pr^2_*(\calH_{B}(C_{\sigma_{i_1}^{\epsilon_1},\dots,\sigma_{i_\ell}^{\epsilon_\ell}})).\] On the other hand
\(pr^1|_{\calX_{\ell,fc}^{crit}}=pr^2|_{\calX^{crit}_{\ell,fc}}\) and hence the equality.

  To prove the rest of the theorem we introduce slightly more general definition:
  \[\calS_{\beta_1,\dots,\beta_m}:=pr^1_*(\mathcal{H}_B(\calX_{m,fc},\forg^*(\cl^*(\pi_{12}^*(\calC_{\beta_1})\otimes
    \dots\otimes  \pi_{m,m+1}^*(\calC_{\beta_m})))))^{\hat{G}}\]

  Now let us observe that $pr^1=pr^1\circ \hat{\pi}_2$ where $\hat{\pi}_2:\calX_{m,fc}\rightarrow\calX_{m-1,fc}$ is the
  projection along the second factor. Hence we have
  \[ \calS_{\beta_1,\beta_2,\dots,\beta_m}=\calS_{\beta_1\cdot\beta_2,\dots,\beta_m}\]
  It implies the desired equations.


\end{proof}

For an element $\beta\in \Brgr_n$ we denote by $[\beta]$ its conjugacy class.
Thus for any element of the braid group $\beta\in \Brgr_n$ we have a quasi-coherent sheaf $\calS_{[\beta]}$
on $\Hilb_n$.

It is not obvious to us how one could attach sheaf on the flag Hilbert scheme to a conjugacy class in the braid group. There are well-defined maps
\(pr_i:\St_\ell\rightarrow \Hilb_{1,n}^L\) that send \((X,g_1,Y_1,\dots,g_\ell,Y_\ell,u)\) to \((\Ad_{g_i}^{-1}(X),Y_i),g^{-1}(u))\).
Naively one could hope that
 the push-forwards $pr_{i*}(\calH^*_B(\calX_{2,fc},C_{\sigma_{i_1}^{\epsilon_1},\dots,\sigma_{i_\ell}^{\epsilon_\ell}}))$  for some $i$ is a such sheaf
   but it is unclear why it would satisfy the properties from the
theorem.  However, as we will see later, the push-forwards on the flag Hilbert schemes are useful for understanding Markov moves, since there are natural maps between the
flag Hilbert schemes of different ranks. It is also clear from the argument of the previous theorem
that for any $\beta=\sigma_{i_1}^{\epsilon_1}\cdot\dots\cdot\sigma_{i_\ell}^{\epsilon_\ell}$ there is a well-defined complex:
$$\SymbolPrint{\mathcal{S}_\beta}:=pr_{1,*}(\calH^*_B(\calX_{2,fc},C_{\sigma_{i_1}^{\epsilon_1},\dots,\sigma_{i_\ell}^{\epsilon_\ell}}))\in D^{per}_{T_{sc}}(\Hilb_{1,n}^L).$$

\section{Link invariant}\label{sec: link inv 2}
The nested Hilbert scheme $\Hilb_{1,n}$ is a very singular scheme and it has very complicated geometry, the main source of complexity is the commutativity relation between the matrices.
On the other hands, we can define  the free Hilbert scheme $\Hilb^{free}_{1,n}$ by omitting commutativity condition, this turn out to be relatively simple variety which is an iterated fibration of
projective spaces (see proposition~\ref{prop: aff cover}). In this section we explain a construction for $\bbS_\beta \in D^{per}_{\Tsc}(\Hilb^{free}_{1,n})$ for $\beta\in \Brgr_n$ with the property that
the total cohomology of $\bbS_\beta$ is an isotopy  invariant of the closure $L(\beta)$ (see theorem~\ref{thm: main}).

\subsection{Geometry of $\Hilb^{free}_{1,n}$} Let us define by the following open subset of $\frb\times\frn\times V$:
$$\widetilde{\Hilb}^{free}_{1,n}:=\{ (X,Y,v)| \CC\langle X,Y\rangle v=V\}.$$
There is an action of $B\times \CC^*$ on $\widetilde{\Hilb}^{free}_{1,n}$ and we use notation $\Hilb^{free}_{1,n}$ for the quotient.

Define a map $p: \Hilb^{free}_{1,n}\to \Hilb^{free}_{1,n-1}$ as the map induced by the projection map on the vector space of upper-triangular matrices: the map
forgets the first row of the matrix. Let us also introduce the map $\chi: \Hilb^{free}_{1,n}\to \frh_n=\CC^{n}$, $\chi(X,Y,v)=(x_{11},\dots,x_{nn})$.  Given $z\in \frh_n$ we denote by $\Hilb^{free}_{1,n}(z)$ the preimage
$\chi^{-1}(z)$. By restricting the map $p$ to the space of diagonal matrices we get the map $p:\frh_n\to \frh_{n-1}$.

\begin{proposition}\label{prop: aff cover} The fibers of the map $p: \Hilb^{free}_{1,n}(z)\to \Hilb^{free}_{1,n-1}(p(z))$ are projective spaces $\mathbb{P}^{n-1}$.
\end{proposition}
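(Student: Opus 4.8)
The plan is to analyze the defining data of $\widetilde{\Hilb}^{free}_{1,n}(z)$ for fixed diagonal $z=(x_{11},\dots,x_{nn})$ directly, exhibiting the fiber over a point of $\widetilde{\Hilb}^{free}_{1,n-1}(p(z))$ as the projectivization of a rank-$n$ vector space. First I would set up coordinates: a point of $\widetilde{\Hilb}^{free}_{1,n}$ with diagonal $z$ is a triple $(X,Y,v)$ where $X\in\frb$ has fixed diagonal $z$, $Y\in\frn$ is strictly upper triangular, and $v\in V=\CC^n$, subject to the cyclicity condition $\CC\langle X,Y\rangle v = V$. The map $p$ forgets the first row of both $X$ and $Y$ (equivalently, records the data in the bottom $(n-1)\times(n-1)$ block together with the lower $n-1$ coordinates of $v$). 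So the fiber of $p$ over a given point $(\bar X,\bar Y,\bar v)\in \widetilde{\Hilb}^{free}_{1,n-1}(p(z))$ consists of all ways to choose the first row: the entries $x_{12},\dots,x_{1n}$ of $X$, the entries $y_{12},\dots,y_{1n}$ of $Y$, and the first coordinate $v_1$ of $v$ — a total of $2(n-1)+1 = 2n-1$ affine parameters, subject only to the cyclicity constraint on the enlarged triple. The point is that the cyclicity constraint on $(\bar X,\bar Y,\bar v)$ already holds, and I claim adding the first row never destroys cyclicity, so the fiber in $\widetilde{\Hilb}^{free}_{1,n}$ is \emph{all} of $\CC^{2n-1}$.

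The next step is to pass to the $B$-quotient. The stabilizer in $B$ of the bottom block data is exactly the subgroup whose only nontrivial freedom is the torus scaling on the first coordinate line together with the unipotent one-parameter subgroups that add multiples of rows $2,\dots,n$ to row $1$; concretely, the relevant part of $B$ acting on the fiber is $\CC^*\times \CC^{n-1}$ (torus scaling the first basis vector, and the unipotent group $U_1\cong\CC^{n-1}$ generated by $E_{1j}$, $j=2,\dots,n$). One checks that the $(n-1)$-dimensional unipotent part acts freely on the $(2n-1)$-dimensional affine fiber by affine translations that are transverse to an $n$-dimensional complementary subspace — indeed adding a multiple of row $j$ of $X$ to row $1$ shifts $x_{1k}$ by $(\text{const})\cdot \bar x_{jk}$ and simultaneously adjusts the $Y$-entries and $v_1$; after quotienting by $U_1$ one is left with an $n$-dimensional affine space. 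Then the residual $\CC^*$-scaling acts by simultaneous scaling on these $n$ remaining linear coordinates, with the cyclicity/stability condition guaranteeing that not all of them vanish, so the quotient is precisely $\mathbb{P}^{n-1}$. Also one should include the extra $\CC^*$ (the one rescaling $v$) and check it is absorbed; since it acts on $v$ it is already accounted for in the $\Hilb^{free}$ quotient and does not change the projective-space count on the fiber.

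The careful bookkeeping of which linear combination of the $2n-1$ fiber parameters survives the $U_1$-quotient is where I expect the main obstacle: one must verify (i) that $U_1$ acts freely, which uses exactly the cyclicity of the smaller triple $(\bar X,\bar Y,\bar v)$ — if $\bar v$ together with $\bar X,\bar Y$ did not generate $\CC^{n-1}$, some row combination could act trivially — and (ii) that after the quotient the remaining $n$ coordinates transform as a single $\CC^*$-weight vector with the stability locus being the complement of the origin. For (ii), the natural candidate for the surviving $n$-dimensional space is $\mathrm{Hom}(V/\langle e_2,\dots,e_n\rangle, V)\cong \CC^n$ read off from the "new" generator one adjoins, and the stability condition $\CC\langle X,Y\rangle v = V$ forces this vector to be nonzero. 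I would organize the argument so that the fiber is identified with $\mathbb{P}(W)$ for an explicit rank-$n$ bundle $W$ over $\Hilb^{free}_{1,n-1}(p(z))$, which both proves the proposition and, as the text notes, gives the "hands on" affine cover: over each standard affine chart of $\mathbb{P}^{n-1}$ the $B$-action is free, so the chart descends to an affine variety, and iterating over $n$ gives $\Hilb^{free}_{1,n}$ as an iterated $\mathbb{P}^{n-1},\mathbb{P}^{n-2},\dots$-bundle.
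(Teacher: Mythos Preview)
Your outline follows the same inductive path as the paper's: both identify the stabilizer of $(\bar X,\bar Y,\bar v)$ in $B$ as $\CC^*\ltimes U_1$ with $U_1=\langle \exp(tE_{1j}):j\ge 2\rangle$, and reduce the $(2n-1)$-dimensional affine fiber modulo this group --- the paper by constructing explicit normal-form slices $\mathbb{A}_{\vec S_1,\vec S_2}$, you by a direct quotient computation.

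There is, however, one concrete error. The claim ``adding the first row never destroys cyclicity, so the fiber in $\widetilde{\Hilb}^{free}_{1,n}$ is all of $\CC^{2n-1}$'' is false: if $x_{1k}=y_{1k}=0$ for all $k\ge 2$ and $v_1=0$, then $X,Y$ preserve $\mathrm{span}(e_2,\dots,e_n)\ni v$ and cyclicity fails. You in fact contradict yourself two paragraphs later, where you (correctly) invoke cyclicity to force the $n$ residual coordinates not all to vanish. What you must actually prove --- and what replaces the false claim --- is that the non-cyclic locus in $\CC^{2n-1}$ is \emph{exactly one} $U_1$-orbit, namely the orbit of $0$. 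One direction is the remark just made. For the other, if $W:=\CC\langle X,Y\rangle v\subsetneq V$, then $W$ surjects onto $\CC^{n-1}$ (this uses cyclicity of the base), hence $W$ is the graph of some linear functional $\CC^{n-1}\to\CC$; the unique $u\in U_1$ carrying $\{0\}\times\CC^{n-1}$ to $W$ then conjugates $(X,Y,v)$ to a triple with vanishing first-row data. Once this is established, the stable locus in $\CC^{2n-1}/U_1\cong\CC^n$ is $\CC^n\setminus\{0\}$, and the residual $\CC^*$ (which scales all $2n-1$ coordinates with weight $1$, hence descends to the standard scaling on $\CC^n$) gives $\mathbb{P}^{n-1}$. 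This is exactly the paper's inductive step, phrased there as: after using $U_1$ to set the appropriate entries to zero, cyclicity forces at least one surviving entry to be nonzero, and the torus $\exp(tE_{11})$ normalizes it to $1$.
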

\begin{proof}
Below we construct an affine cover of the space $\Hilb^{free}_{1,n}(z)$, these are the affine charts for the space $\Hilb^{free}_{1,n}(z)$. The charts a labeled by the following combinatorial data: $(\vec{S}_1,\vec{S}_2)$
where $\vec{S}_1=(S_1^1,\dots,S_1^{n-1})$, $\vec{S}_2=(S_2^1,\dots, S_2^{n-1})$ where $S_i^j$ are the subsets of integers with the following properties:
$$ S_1^i, S_2^i\subset \{1,\dots,i\},\quad S_i^j\subset S_i^{j+1},\quad |S_1^i|+|S_2^i|=i.$$

For a given $\SymbolPrint{(\vec{S}_1,\vec{S}_2)}$ we define the affine subspace $\SymbolPrint{\mathbb{A}_{\vec{S}_1,\vec{S}_2}}$ inside $\frb\times \frn\times V$ by the following equations:
\begin{gather}
x_{n-i, n+1-k}=0 \mbox{ if } k\in S_1^i,\quad y_{n-i,n+1-k}=0,\mbox{ if } k\in S_2^i, \label{eq: slice 1}\\
x_{n-i,n+1-k}=1 \mbox{ if } k \in S_1^{i+1}\setminus S_1^i\quad y_{n-i,n+1-k}=1 \mbox{ if } k \in S_2^{i+1}\setminus S_2^i, \label{eq: slice 2}
\end{gather}
where \( i=1,\dots, n-1,\) and
\begin{equation}
 v_i=\delta_{in}\label{eq: slice 3}.
\end{equation}

Below we use induction by $n$ to show that
\begin{enumerate}
\item The isotropy group of an element of $\mathbb{A}_{\vec{S}_1,\vec{S}_2}$ inside $B$ is trivial
\item $\widetilde{\Hilb}^{free}_{1,n}(z)=\cup_{(\vec{S}_1,\vec{S}_2)} B\cdot \mathbb{A}_{\vec{S}_1,\vec{S}_2}.$
\end{enumerate}

Indeed, the seed of induction is clear. Let us assume that statement true for $\Hilb^{free}_{1,n-1}(p(z))$. Hence a $B$-orbit $\calO\subset\widetilde{\Hilb}_{1,n}(z)$ intersect  the variety
$p^{-1}(a)$ for some $a\in \mathbb{A}_{\vec{S}_1,\vec{S}_2}$ for some $\vec{S}_1, \vec{S}_2$.   By the induction, the subgroup of $B$ preserving the intersection
$\calO\cap p^{-1}(a)$ has Lie group spanned by the matrix units $E_{1i}$, $i=2,\dots,n$.

 Let us pick an element $u\in \calO\cap p^{-1}(a)$.  We can use group generated by the
$e^{tE_{1i}}$, $i=2,\dots,n$ to translate an element $u$ to an element $w=(X,Y,v)$ that satisfies equation (\ref{eq: slice 1}) and (\ref{eq: slice 3}). The element $w$ is unique up to the
action of the group $\CC^*=exp(E_{11}t)$. Finally, since $w=(X,Y,v)\in \widetilde{\Hilb}^{free}_{1,n}(z)$ we have $\CC\langle X,Y\rangle v=V$ and
hence either $(x_{1,n+1-k})_{k\notin S^n_1}$ or $(y_{1,n+1-k})_{k\notin S^n_2}$ is non zero. If the first vector is non-zero, we set $S_1^{n+1}\setminus S_1^n$ to the index of the non-zero entry.
The case $(y_{1,n+1-k})_{k\notin S^n_2}\ne 0$ is analogous. As a last step we use the group $exp(E_{11}t)$ to scale this non-zero entry to $1$ and thus
obtain a unique element of the orbit $\calO$ that satisfies equations (\ref{eq: slice 1},\ref{eq: slice 2},\ref{eq: slice 3}).
\end{proof}

Let us denote by $\tilde{\calB}_n$ the  vector bundle over $\widetilde{\Hilb}^{free}_{1,n}$  with the fiber over the point $(X,g,Y,v)$ equal to the space $V$.
This vector bundle descends to the vector bundle $\calB_n$ on the quotient $\Hilb^{free}_{1,n}$.

Let us denote by $\chi_i\in \Hom(T,\CC^*)$, $T\subset B$  such that $\chi_i(\exp(\sum_{i=1}^n t_i E_{ii}))=\exp(t_i)$ and we use the same notation for
the induced characters of $\mathbf{B}$ and for the restriction to the subgroup $B$. The trivial line bundle on $\widetilde{\Hilb}_{1,n}^{free}$ twisted by the character $\chi_i$
descends to line bundle on $\Hilb_{1,n}^{free}$ which we denote by $\calO_i(-1)$. The line bundle $\calO_n(-1)$ admits more geometric description
$$ \calO_n(-1)=\calB_n /p^*(\calB_{n-1}),$$
which we use in the proof of the following

\begin{proposition}\label{prop: push forward} Let $p$ be the map $\Hilb^{free}_{1,n}(z)\to\Hilb^{free}_{1,n-1}(p(z))$ then we have
\begin{enumerate}
\item $p_*(\wedge^k\calB_n)=\wedge^k\calB_{n-1}.$
\item $p_*(\calO_n(m)\otimes \wedge^k\calB_n)=0$ if $m\in [-1,-n+2].$
\item $p_*(\calO_n(-n+1)\otimes \wedge^k\calB_n)=\wedge^{k-1}\calB_{n-1}[n]$
\end{enumerate}
\end{proposition}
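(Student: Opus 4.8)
\textbf{Proof proposal for Proposition~\ref{prop: push forward}.}

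The plan is to reduce everything to a single fibrewise computation on $\mathbb{P}^{n-1}$ and then invoke flat base change. By Proposition~\ref{prop: aff cover}, the morphism $p\colon \Hilb^{free}_{1,n}(z)\to \Hilb^{free}_{1,n-1}(p(z))$ is a $\mathbb{P}^{n-1}$-bundle in the Zariski topology (the affine charts $\mathbb{A}_{\vec S_1,\vec S_2}$ trivialise it), and from the proof we can read off which homogeneous coordinate on the fibre corresponds to which of the free entries $x_{1,n+1-k}$, $y_{1,n+1-k}$ of the first row. The key observation is the identification $\calO_n(-1)=\calB_n/p^*(\calB_{n-1})$ already recorded in the text: restricted to a fibre $\mathbb{P}^{n-1}$ of $p$, this line bundle is the tautological $\calO_{\mathbb{P}^{n-1}}(-1)$, since the first row of $(X,Y)$ together with the normalisation from \eqref{eq: slice 1}--\eqref{eq: slice 3} is precisely a choice of homogeneous coordinates on the projectivised $n$-dimensional space of ``new'' row data. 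First I would make this restriction statement precise: on each fibre, $\calB_n$ sits in a short exact sequence $0\to p^*\calB_{n-1}\to \calB_n\to \calO_n(-1)\to 0$, and on the fibre $p^*\calB_{n-1}$ is the trivial bundle $V_{n-1}\otimes\calO_{\mathbb{P}^{n-1}}$.

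Next I would compute $\wedge^k\calB_n$ using that short exact sequence. Since $p^*\calB_{n-1}$ is a subbundle with line-bundle quotient $\calO_n(-1)$, there is a two-step filtration on $\wedge^k\calB_n$ with associated graded $\wedge^k(p^*\calB_{n-1})\;\oplus\;\wedge^{k-1}(p^*\calB_{n-1})\otimes\calO_n(-1)$. Tensoring by $\calO_n(m)$ and applying $p_*$, by the projection formula $p_*\bigl(\calO_n(m)\otimes p^*\mathcal{E}\bigr)=\mathcal{E}\otimes p_*\calO_n(m)$, everything comes down to the three classical facts $p_*\calO_n(0)=\calO$, $p_*\calO_n(m)=0$ for $-n+1<m<0$ (i.e. $m\in[-n+2,-1]$), and $p_*\calO_n(-n+1)=\calO[n]$ (the shift recording that the fibrewise cohomology $\mathrm{H}^{n-1}(\mathbb{P}^{n-1},\calO(-n+1))$ sits in cohomological degree $n-1$, which in our $2$-periodic / shifted conventions is the $[n]$ appearing in item (3)), together with the vanishing of all intermediate $\mathrm{H}^j$. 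Concretely: for item (1), take $m=0$; the filtration gives $p_*\wedge^k\calB_n$ an associated graded $\wedge^k\calB_{n-1}\oplus\bigl(\wedge^{k-1}\calB_{n-1}\otimes p_*\calO_n(-1)\bigr)=\wedge^k\calB_{n-1}$, and the filtration splits because the extension class lives in a group that vanishes after pushforward. For item (2) with $-n+2\le m\le -1$, both $p_*\calO_n(m)$ and $p_*\calO_n(m-1)$ vanish (as $m-1\ge -n+1$ only when $m\ge -n+2$, so one must be slightly careful at the endpoint $m=-n+2$, where $p_*\calO_n(m-1)=p_*\calO_n(-n+1)\ne 0$ — this needs separate attention, see below). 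For item (3), $m=-n+1$: the $\wedge^k(p^*\calB_{n-1})$ summand contributes $\wedge^k\calB_{n-1}\otimes p_*\calO_n(-n+1)=\wedge^k\calB_{n-1}[n]$ while the other summand contributes $\wedge^{k-1}\calB_{n-1}\otimes p_*\calO_n(-n)=0$ — wait, that gives $\wedge^k$ not $\wedge^{k-1}$, so I need to re-examine the direction of the filtration.

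Let me restate the filtration carefully, since this is the point where an error creeps in. The correct short exact sequence should be taken so that the quotient, not the sub, is $p^*\calB_{n-1}$: indeed $\calB_{n-1}$ is a \emph{quotient} of $\calB_n$ (forgetting the first coordinate), giving $0\to \mathcal{L}\to \calB_n\to p^*\calB_{n-1}\to 0$ with $\mathcal{L}=\calO_n(-1)$ the line bundle of the first row. Then $\wedge^k\calB_n$ has associated graded $\wedge^k(p^*\calB_{n-1})\oplus \mathcal{L}\otimes\wedge^{k-1}(p^*\calB_{n-1})$, i.e. the $\mathcal{L}$-twisted piece carries $\wedge^{k-1}$. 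Tensoring with $\calO_n(m)=\mathcal{L}^{\otimes m}$ and pushing forward: the untwisted piece gives $\wedge^k\calB_{n-1}\otimes p_*\calO_n(m)$ and the twisted piece gives $\wedge^{k-1}\calB_{n-1}\otimes p_*\calO_n(m+1)$. Now item (1): $m=0$ gives $\wedge^k\calB_{n-1}\otimes\calO \;\oplus\; \wedge^{k-1}\calB_{n-1}\otimes p_*\calO_n(1)$, and $p_*\calO_n(1)$ is a rank-$n$ bundle, which is \emph{not} zero — so again I must be more careful: the point is that for item (1) we actually want $\calO_n(0)$ untwisted but the relevant vanishing is $\mathrm{H}^{\ge 1}$, and the rank-$n$ piece $p_*\calO_n(1)\otimes\wedge^{k-1}\calB_{n-1}$ genuinely appears unless it cancels via the connecting map of the filtration spectral sequence. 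The honest approach, which I would adopt, is: use the filtration spectral sequence for $Rp_*$ applied to $\calO_n(m)\otimes\wedge^k\calB_n$; its $E_1$ page has exactly the two columns $Rp_*(\calO_n(m)\otimes\wedge^k p^*\calB_{n-1})$ and $Rp_*(\calO_n(m+1)\otimes\wedge^{k-1}p^*\calB_{n-1})$; by Bott vanishing on $\mathbb{P}^{n-1}$ each of these has cohomology concentrated in a single degree for the ranges of $m$ in question, so the spectral sequence degenerates and one reads off the answer directly. For $m=0$ the second column $Rp_*\calO_n(1)$ is concentrated in degree $0$ as a rank-$n$ bundle, the first column in degree $0$ as $\calO$, and the single differential $d_1\colon Rp_*\calO_n(1)\otimes\wedge^{k-1}\to Rp_*\calO_n(0)\otimes\wedge^k$ is (up to the combinatorics) the Euler/contraction map; its cokernel and the kernel combine to give exactly $\wedge^k\calB_{n-1}$. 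I expect the main obstacle to be precisely this bookkeeping with the connecting differential $d_1$ in the filtration spectral sequence for item (1) and at the boundary case $m=-n+2$ of item (2): one has to check that the map $d_1$ is (fibrewise) an isomorphism onto the correct summand, which amounts to recognising it as the tautological contraction $\calO_{\mathbb{P}^{n-1}}(1)\otimes V\to \calO_{\mathbb{P}^{n-1}}\oplus(\text{stuff})$ coming from the Koszul/Euler sequence on $\mathbb{P}^{n-1}$, and keeping the $T_{sc}$-weights straight so that the shift $[n]$ in item (3) is correctly placed. Everything else is flat base change (the construction is fibrewise over $\Hilb^{free}_{1,n-1}$ by Proposition~\ref{prop: aff cover}) plus the standard cohomology of line bundles on projective space.
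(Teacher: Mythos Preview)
Your first approach is exactly the paper's proof, and it works cleanly --- you abandoned it because of an off-by-one error in the cohomology of line bundles on projective space. On $\mathbb{P}^{n-1}$ the canonical bundle is $\calO(-n)$, so by Serre duality $H^{n-1}(\mathbb{P}^{n-1},\calO(-n))\cong\CC$ while $H^{*}(\mathbb{P}^{n-1},\calO(-n+1))=0$ in every degree. You had these two swapped. With the correct short exact sequence
\[
0\to \calO_n(m)\otimes\wedge^k p^*\calB_{n-1}\to \calO_n(m)\otimes\wedge^k\calB_n\to \calO_n(m-1)\otimes\wedge^{k-1}p^*\calB_{n-1}\to 0
\]
(which is exactly what the paper writes down), the long exact sequence for $Rp_*$ gives the three items immediately: for $m=0$ the right-hand term carries $\calO_n(-1)$ and vanishes; for $m\in[-n+2,-1]$ both outer terms carry twists in the acyclic range $[-n+1,-1]$ and vanish; for $m=-n+1$ the \emph{left} term carries $\calO_n(-n+1)$ and vanishes, while the \emph{right} term carries $\calO_n(-n)$ and contributes $\wedge^{k-1}\calB_{n-1}$ in the top fibre degree. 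That is precisely item (3), with the correct exterior power $\wedge^{k-1}$.

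Two consequences of fixing the off-by-one: your ``endpoint concern'' at $m=-n+2$ evaporates (both $\calO_n(-n+2)$ and $\calO_n(-n+1)$ are acyclic on the fibre), and there is no need to reverse the short exact sequence. The paper records explicitly that $\calO_n(-1)=\calB_n/p^*\calB_{n-1}$, so $p^*\calB_{n-1}$ is the sub, not the quotient; your reversed sequence is simply the wrong one, and the filtration spectral sequence with a nontrivial $d_1$ you were bracing for never arises. There is no connecting-map bookkeeping to do: in each of the three cases one of the two outer terms of the long exact sequence is identically zero, so the middle term equals the other.
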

\begin{proof}
Let $U\subset \Hilb_{1,n-1}^{free}$ is an open subset then we can apply the global section functor $\Gamma(p^{-1}(U),-)$ to the short exact sequence of sheaves on :
\begin{multline*}
  0\to \calO_n(m)\otimes\wedge^k p^*(\calB_{n-1})\to \calO_n(m)\otimes\wedge^k\calB_n\\
  \to \calO_n(m)\otimes\calB_n/p^*(\calB_{n-1})\otimes \wedge^{k-1}(p^*(\calB_{n-1}))\to 0.
  \end{multline*}

The fibers of the map $p$ are projective spaces $\mathbb{P}^n$ and the line bundle $\calO_n(-1)$ restricted to a fiber is the standard tautological bundle $\calO(-1)$.
Since $\calB_n/p^*(\calB_{n-1})=\calO_n(-1)$ then by the previous observation and well known formulas for the homology of $\calO(k)$ on the projective space combined with the short exact sequence from above imply
 $$\textup{H}^i(p^{-1}(U),\calO_n(m)\otimes\wedge^k\calB_n)=0,\quad i\ne 0, n $$
 $$ \textup{H}^0(p^{-1}(U),\calO_n(m)\otimes \wedge^k\calB_n)=\begin{cases} &\textup{H}^0(U,\wedge^k\calB_{n-1}),\mbox{ if } m=0\\
                                                                                                                                             & 0,\mbox{ if } m<0.
                                                                                                                    \end{cases}
                                                                                                                    $$

 $$ \textup{H}^n(p^{-1}(U),\calO_n(m)\otimes \wedge^k\calB_n)=\begin{cases} &\textup{H}^0(U,\wedge^{k-1}\calB_{n-1}),\mbox{ if } m=-n\\
                                                                                                                                             & 0,\mbox{ if } m>-n.
                                                                                                                    \end{cases}
                                                                                                                    $$
 Hence the statement of the proposition follows.
\end{proof}

\subsection{Braid complex on $\Hilb^{free}_{1,n}$}
In this subsection we explain how one could construct for every $\beta\in \Brgr_n$ a periodic complex $\mathbb{S}_\beta\in D^{per}_{T_{sc}}(\Hilb^{free}_{1,n})$ such that
the hypercohomology of the complex is an isotopy invariant of the link $L(\beta)$.  First let us explain a construction of the sheaf.

First let us introduce the analogs of the previous constructions for the Kn\"orrer reduced spaces.
We define $\calXr_{2,fr}$ to be a subvariety of $\calXr_2\times V^0$ consisting of $(X,g,Y,u)$ such that
\[\CC\langle X,\Ad_{g}^{-1}(Y)\rangle u=V\]

The space $\calXr_{2,fr}$ is an open subvariety of $\calXr_{2,fr}\times V$ and we can restrict the  Kn\"orrer $\Phi_n\times \mathrm{id}_V$
on this open set to obtain:
$$\Phi_n: \MF_{B^2}(\calXr_{2,fr},\Wr)\to \MF_{B^2}(\calX_{2,fr},W).$$

There is forgetful map $\forg:\calXr_{2,fr}\rightarrow \calXr_2$ and the argument almost identical to the proof of
Lemma~\ref{lem: for homom} implies that $\forg^*$ is the homomorphism of the convolution algebras and
\begin{equation}\label{eq: Phi for}
  \Phi_n\circ \forg^*=\forg^*\circ\Phi_n.
 \end{equation}

It is natural to define $\SymbolPrint{\calXr_1}:=\frb\times\frn$ and
respectively we have the analog of the closure map $j_e:\calXr_1\rightarrow \calXr_2$ by $j_e(X,Y)=(X,e,Y)$.
Similarly, we define $\calXr_{1,fc}\subset \frb\times\frn\times V$ to be $\widetilde{\Hilb}^{free}_{1,n}$ and we have a natural
map $\SymbolPrint{j_e}:\calXr_{1,fc}\rightarrow \calX_{2,fr}$.

For any $\beta\in \Brgr_n$ we define the element of $D^{per}_{T_{sc}}(\Hilb_{1,n}^{free})$:
$$\SymbolPrint{\mathbb{S}_\beta}:=j_e^*(\bar{\calC}_\beta).$$



For a element $\bbS\in D^{per}_{\Tsc}(\Hilb^{free}_{1,n})$ we denote by $\mathbb{H}(\bbS)$ the doubly graded space of the hypercohomology of the complex and let us introduce:
$$\mathbb{H}^k(\beta):=\mathbb{H}(\bbS_\beta\otimes \wedge^k\calB)$$
 The main result of the paper is the following

\begin{theorem}\label{thm: main} The doubly graded space
$$\calH^k(\beta):=\mathbb{H}^{(k+\xwr(\beta)-n-1)/2}(\beta)$$
is the isotopy invariant of $L(\beta)$.
\end{theorem}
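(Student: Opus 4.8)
The statement asserts that the doubly graded hypercohomology space $\calH^k(\beta)$, suitably shifted by the writhe, is an invariant of the link $L(\beta)$. By Markov's theorem, it suffices to show that $\mathbb{H}^k(\beta)$ (as a functor of $\beta$, before the writhe shift) is invariant under (i) conjugation in $\Brgr_n$, and (ii) the stabilization moves $\beta\mapsto \beta\cdot\sigma_n^{\pm 1}\in\Brgr_{n+1}$, with the grading shift absorbing the change in writhe and strand number under stabilization. The first step is to show that $\mathbb{S}_\beta$ itself, and hence each $\mathbb{H}^k(\beta)=\mathbb{H}(\mathbb{S}_\beta\otimes\wedge^k\calB)$, depends only on the conjugacy class of $\beta$. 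This should follow directly from Theorem~\ref{thm: conj inv} together with the compatibility $\mathbb{S}_\beta=j_e^*(\bar\calC_\beta)$ and the fact that $\bcalC_\beta$ realizes the braid group homomorphism of Theorem~\ref{thm: braids}: the closure map $j_e$ (or rather $\cl^*$ composed with $\forg^*$) identifies $\mathbb{S}_\beta$ with the object whose $B^\ell\times\hat G$-invariant sheaf homology computes $\calS_{[\beta]}$; equations~\eqref{eq: 1 in mid}, \eqref{eq: conj invce} of Theorem~\ref{thm: conj inv} then give conjugation invariance. I would also record here the relation $\mathbb{S}_{\beta\cdot\beta'}\simeq$ (convolution of $\bcalC_\beta$ and $\bcalC_{\beta'}$ followed by $j_e^*$), using Lemma~\ref{lem: for homom} and \eqref{eq: Phi for}, so that the one-crossing calculations reduce to $\Brgr_2$.

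\textbf{Stabilization.} The heart of the proof is the first Markov move. Writing $\iota_n\colon\Brgr_n\hookrightarrow\Brgr_{n+1}$ for the inclusion that adds a trivial $(n+1)$-st strand, I must compare $\mathbb{H}^k(\beta)$ for $\beta\in\Brgr_n$ with $\mathbb{H}^{k'}(\iota_n(\beta)\cdot\sigma_n^{\pm1})$ for the appropriate shifted index $k'$. Geometrically, stabilization corresponds to the projection $p\colon\Hilb^{free}_{1,n+1}(z)\to\Hilb^{free}_{1,n}(p(z))$ of Proposition~\ref{prop: aff cover}, whose fibers are $\mathbb{P}^n$. The plan is: (a) identify $\mathbb{S}_{\iota_n(\beta)\cdot\sigma_n^{+1}}$ on $\Hilb^{free}_{1,n+1}$ explicitly in terms of $p^*(\mathbb{S}_\beta)$ twisted by the generators $\crX_\pm$ on the last two strands — here I would use the inclusion functor $\Indb_{k\dots m}$, Corollary~\ref{cor: ind} (the inclusion functors are convolution homomorphisms), and Lemma~\ref{lem: two cross}/the explicit Koszul description of $\crX_+$ from section~\ref{ssec:gens}; (b) push forward along $p$ using the cohomology vanishing of Proposition~\ref{prop: push forward}, parts (1)–(3), tensored against $\wedge^k\calB_{n+1}$ and the relevant line bundles $\calO_{n+1}(m)$. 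The point is that the extra $\mathbb{P}^n$-fiber contributes only in cohomological degrees $0$ and $n$; for a positive stabilization the degree-$0$ part survives and reproduces $\mathbb{H}^k(\beta)$ after the shift, while for the negative stabilization one instead picks up the degree-$n$ contribution and a different line-bundle twist (the twist by $\langle-\chi_1,\chi_2\rangle$ in \eqref{eq: C-}), which accounts for the asymmetry and the writhe correction. Matching the $q,t$-weights through these push-forwards, using the weight tables of section~\ref{sec: two str}, pins down the precise grading shifts $\xwr(\beta)-n$ and $(k-\xwr(\beta)+n-1)/2$.

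\textbf{Main obstacle.} I expect the principal difficulty to be step (a)–(b) of the stabilization argument: showing that the convolution $\bcalC_{\iota_n(\beta)}\bar\star\bcalC_{\sigma_n^{\pm1}}$, restricted via $\cl^*$ to the cyclically framed space and then descended, matches $p^*$ of the lower-rank sheaf twisted appropriately — i.e.\ that the partial-trace over the last strand of the matrix factorization $\crX_\pm$ is computed correctly by the Chevalley–Eilenberg functor $\CE_{\frn^{(n)}}$ and agrees with the $\mathbb{P}^n$ push-forward of Proposition~\ref{prop: push forward}. This requires an honest computation of $\CE_{\frn}$ applied to the rank-one matrix factorizations $\crX_+$, $\crX_-$, $\crX_\parallel$ in the style of the $\Brgr_2$ computation in section~\ref{sec: two str}, but now tracking the free-Hilbert-scheme geometry and the tautological bundle $\calB$ rather than just landing at $\crX_\parallel$. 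The compatibility of the two gradings ($q$- and $t$-degree) under this reduction, and the bookkeeping of the shift functors $\shq^k\sht^l$ and $\langle\chi_i\rangle$, is where the argument is most delicate; everything else (conjugation invariance, associativity, the braid relations) is already in hand from the earlier sections.
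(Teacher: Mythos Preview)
Your plan is essentially the paper's own proof: conjugation invariance via the cyclic symmetry of the framed space $\calX_{\ell,fc}$, then the Markov move by pushing forward along $p\colon\Hilb^{free}_{1,n}\to\Hilb^{free}_{1,n-1}$ and invoking Proposition~\ref{prop: push forward}. Two small corrections. First, the paper does not deduce conjugation invariance of $\mathbb{H}^k(\beta)$ from Theorem~\ref{thm: conj inv} (which concerns $\calS_{[\beta]}$ on $\Hilb_n$); it argues directly that the $\ell$-fold tensor product on $\calX_{\ell,fc}$ has a $\ZZ_\ell$ cyclic symmetry and that $g_{i,i+1}$ identifies $\pi_i^*\calB$ with $\pi_{i+1}^*\calB$. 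Second, in your step (a) you are vague about exactly which line bundles $\calO_n(m)$ arise, and this is the crux: the paper writes $\bcalC_\beta=\indb_{n-1}(1\boxtimes\bcalC_{\beta'})$ as a Koszul-type complex $M\otimes\Lambda^*V_{12}$ with $V_{12}=\langle\theta_3,\dots,\theta_n\rangle$; after restriction along $j_{e,e}$ one finds $\bcalC_\pm|_{g=1}=[x_{11}-x_{22},0]\otimes\chi_n^{-(1\mp1)/2}$, and the odd variables $\theta_3,\dots,\theta_n$ each carry $T_1$-weight $-1$, so $(\Lambda^*V_{12})^B=\Lambda^*\calO_n(-1)^{\oplus n-2}$. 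It is this entire exterior algebra, not a single twist, that feeds into Proposition~\ref{prop: push forward}: the alternating sum over $\Lambda^j\calO_n(-1)^{\oplus n-2}$ tensored with $\calO_n(0)$ (positive case) or $\calO_n(-1)$ (negative case) collapses by parts (1)--(3) to give $\wedge^k\calB_{n-1}$ or $\wedge^{k-1}\calB_{n-1}[n]$ respectively. Your citation of Lemma~\ref{lem: two cross} is a slip; what you need is the structure of $\indb_{n-1}$ from subsection~6.3 and the explicit form of $\crX_\pm$ from section~\ref{sec: two str}.
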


Let us denote by $\calH^*(\bbS_\beta\otimes \wedge^m\calB)=\calH^{odd}(\bbS_\beta\otimes\wedge^k \calB)\oplus\calH^{even}(\bbS_\beta\otimes\wedge^k\calB)$ the sheaf on $\Hilb^{free}_{1,n}$ that is the sheaf of homology of the complex $\bbS_\beta$. Since
the matrix factorization $\bbS'_\beta$ is supported on the critical locus of $\Wr$, the sheaf $\calH^*(\bbS_\beta)$ is supported on subvariety $\Hilb_{1,n}$. Moreover, the following statement implies
the theorem~\ref{thm: spec seq} from the introduction.

\begin{corollary} There is a spectral sequence with $E_2$ term equal
$$ (\textup{H}^*(\Hilb^L_{1,n},\calH^*(\bbS_\beta\otimes\wedge^k\calB)),d)$$
$$
d: \textup{H}^k(\Hilb^L_{1,n},\calH^{odd/even}(\bbS_\beta\otimes\wedge^k\calB))\to
\textup{H}^{k-1}(\Hilb^L_{1,n},\calH^{even/odd}(\bbS_\beta\otimes\wedge^k\calB)),$$
that converges to $\mathbb{H}^k(\beta)$.
\end{corollary}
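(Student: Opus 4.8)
The statement is the second hypercohomology spectral sequence of the two-periodic complex $\bbS_\beta\otimes\wedge^k\calB$ of quasi-coherent sheaves on $\Hilb_{1,n}^{free}$, and I would organize the proof around two preliminary observations. First, $\bbS_\beta=j_e^*(\bcalC_\beta)$ is a \emph{genuine} two-periodic complex, not merely a matrix factorization with a nonzero potential: the restriction of $\Wr$ along $j_e\colon\calXr_1\to\calXr_2$, $(X,Y)\mapsto(X,e,Y)$, equals $\Tr(X\Ad_e(Y))=\Tr(XY)$, which vanishes because $X\in\frb$ is upper-triangular and $Y\in\frn$ is strictly upper-triangular. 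Hence $(\bbS_\beta\otimes\wedge^k\calB)^2=0$, and the cohomology sheaves $\calH^{even/odd}(\bbS_\beta\otimes\wedge^k\calB)$ are well-defined $T_{sc}$-equivariant quasi-coherent sheaves on $\Hilb_{1,n}^{free}$. Second, these cohomology sheaves are supported on the closed subscheme $\Hilb_{1,n}^L$: by Lemma~\ref{lem: crit} the critical ideal $\calI_{crit}$ of $\Wr$ acts by null-homotopies on $\bcalC_\beta$, so $j_e^*(\calI_{crit})$ acts by null-homotopies on $\bbS_\beta$ and annihilates its cohomology sheaves, and computing the $g$-derivatives of $\Wr$ at $g=e$ (exactly as in the critical-locus analysis of section~\ref{sec: link inv}) shows that the matrix entries of $[X,Y]$ lie in $j_e^*(\calI_{crit})$. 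Since the locus $\{[X,Y]=0\}$ inside $\widetilde{\Hilb}_{1,n}^{free}$ descends precisely to $\Hilb_{1,n}^{L}$ (recall $Y\in\frn$), the cohomology sheaves of $\bbS_\beta$, and after tensoring with the locally free $\wedge^k\calB$ those of $\bbS_\beta\otimes\wedge^k\calB$, are modules over $\calO_{\Hilb_{1,n}^L}$; exactness of pushforward along the closed immersion then identifies $\mathrm{H}^p(\Hilb_{1,n}^{free},-)$ with $\mathrm{H}^p(\Hilb_{1,n}^L,-)$ on these sheaves.

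With this in hand I would build the spectral sequence by the usual Čech recipe. Pick a finite affine open cover $\calU$ of $\Hilb_{1,n}^{free}$, for instance the $T_{sc}$-invariant charts $\mathbb{A}_{\vec{S}_1,\vec{S}_2}$ of proposition~\ref{prop: aff cover}, whose finite intersections are again affine because $\Hilb_{1,n}^{free}$ is separated, and which are finite in number, so that quasi-coherent cohomology above $\dim\Hilb_{1,n}^{free}$ vanishes. Form the bicomplex $C^{p,q}=\check{C}^p(\calU,(\bbS_\beta\otimes\wedge^k\calB)^q)$, $p\ge 0$, $q\in\ZZ_2$, with vertical differential the matrix-factorization differential $D$ and horizontal differential the Čech differential, and filter by the Čech degree $p$. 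Because $\Gamma$ of quasi-coherent sheaves over affine opens is exact, taking vertical cohomology gives $E_1^{p,q}=\check{C}^p(\calU,\calH^q(\bbS_\beta\otimes\wedge^k\calB))$ with $d_1$ the Čech differential, which does not couple the two values of $q$; hence $E_2^{p,q}=\mathrm{H}^p(\Hilb_{1,n}^{free},\calH^q(\bbS_\beta\otimes\wedge^k\calB))=\mathrm{H}^p(\Hilb_{1,n}^L,\calH^q(\bbS_\beta\otimes\wedge^k\calB))$ is the claimed $E_2$-term, and the first higher differential coupling the two $\ZZ_2$-components is the map $d$ of the statement. The filtration by Čech degree is bounded and the two-periodic direction is trivially bounded, so the spectral sequence converges, with abutment $\mathbb{H}(\bbS_\beta\otimes\wedge^k\calB)=\mathbb{H}^k(\beta)$; the whole construction is $T_{sc}$-equivariant, which yields the refined $q,t$-graded version used in theorem~\ref{thm: main} (and, specialized back to the cohomology sheaf on $\Hilb_{1,n}^L$, theorem~\ref{thm: spec seq} of the introduction).

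The main obstacle is not the spectral sequence itself, which is standard homological algebra once the potential is seen to vanish, but the two bookkeeping inputs it rests on: the support computation — verifying cleanly that $[X,Y]$ lands in $j_e^*(\calI_{crit})$, so that Lemma~\ref{lem: crit} forces the cohomology sheaves onto $\Hilb_{1,n}^L$ — and the verification that the convolution $\bcalC_\beta$, after the $j_e$-restriction and descent, is represented by a finite-rank two-periodic complex of (quasi-)coherent sheaves on the honest scheme $\Hilb_{1,n}^{free}$, so that the Čech apparatus applies verbatim. Both are implicit in the constructions of the preceding sections, and assembling them carefully is where the real work lies.
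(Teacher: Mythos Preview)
Your proposal is correct and follows essentially the same approach as the paper. The paper does not give a proof of this corollary at all: it simply observes, in the sentence preceding the statement, that the cohomology sheaves $\calH^*(\bbS_\beta)$ are supported on $\Hilb_{1,n}^L$ (because $\bbS_\beta$ is supported on the critical locus of $\Wr$, i.e.\ by Lemma~\ref{lem: crit}), and then states the corollary as the resulting hypercohomology spectral sequence; your argument is exactly the standard unpacking of that sentence via a \v{C}ech resolution.
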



\subsection{Proof of theorem~\ref{thm: main}}
By the construction the $\mathbb{H}^k(\beta)$ does not depends on the presentation of $\beta$ as a product of the elementary braids.
Let us explain why $\mathbb{H}^k(\beta)$ actually only depends only conjugacy class of $\beta$.
First let us notice that because of the equality (\ref{eq: Phi for}) we have a construction for $\mathbb{H}^k(\beta)$ as hyper-cohomology of the complex on
$\calX_{1,fc}$:
$$\mathbb{H}^k(\beta)=\mathbb{H}(\Lambda^k\mathcal{B}\otimes \calC_\beta),$$
here $\calB$ is the vector bundle over $\calX_{1,fc}$ with fiber over $(X,g,Y,v)$ equal $V$.

Thus given a presentation $\beta=\sigma_{i_1}^{\epsilon_1}\cdot\dots\cdot\sigma_{i_\ell}^{\epsilon_\ell}$ we have
$$\mathbb{H}^k(\beta)=\mathbb{H}(\Lambda^k\pi_1^*(\mathcal{B})\otimes \CE_{\frn^\ell}(\pi_{12}^*(\calC_{\sigma_{i_1}^{\epsilon_1}})\otimes
  \dots \pi_{\ell,1}^*(\calC_{\sigma_{i_\ell}^{\epsilon_\ell}}))).$$
  where $\pi_{ij}:\calX_{\ell,fc}\to \calX_{2,fr}$ are the natural projections. The  LHS of the last formula  only depends on
  the cyclic order of the elements $\sigma_{i_k}^{\epsilon_k}$, because the space \(\calX_{\ell,fc}\) has \(\ZZ_\ell\) cyclic symmetry and
  the group element \(g_{i,i+1}\) provides an isomorphism of the  vector bundles \(\pi_i^*(\calB)\) and \(\pi_{i+1}^*(\calB)\) for all \(i\).
  Hence we showed that $\mathbb{H}^k(\beta)$ only depends on the conjugacy
  class of $\beta$.

To complete our proof we need to show that
\begin{equation}\label{eq: Markov}
\mathbb{H}^k(\beta\cdot \sigma_1)=\mathbb{H}^k(\beta'),\quad \mathbb{H}^k(\beta\cdot \sigma_1^{-1})=\mathbb{H}^{k-1}(\beta')
\end{equation}
for $\beta'\in \Brgr_{n-1}$ and $\beta=1\boxtimes\beta'\in \Brgr_n$ is the inclusion of $\beta'$ in the braid with $n$ strings in a such way it tangles only the last $n-1$ strings.

Next we can use proposition~\ref{prop: small prod 1} to conclude:

\begin{multline*}\mathbb{H}^k(\beta\cdot \sigma_1^{\pm 1})=
\mathbb{H}(\forg^*(\CE_\frn(\cl^*(\bar{\pi}_{13*}(\CE_{\frn_n}(\bar{\pi}_{12}\otimes_{B_n}\bar{\pi}_{23})^*(\bar{\calC}_\beta\boxtimes\bar{\calC}_{\sigma_1^{\pm 1}}))^{T_n}))^{T_n}\otimes \Lambda^k\calB)\\
=\mathbb{H}(\forg^*(\CE_\frn(\cl^*(\bar{\pi}_{13*}(\CE_{\frn_2}(\bar{\pi}_{12}\otimes_{B_2}\bar{\pi}_{23})^*(\bar{\calC}_\beta\boxtimes\bar{\calC}_{\sigma^{\pm 1}_1}))^{T_2}))^{T_n}\otimes\Lambda^k\calB)
\end{multline*}

Thus we have shown that
$$ \mathbb{H}^k(\beta\cdot \sigma_{n-1}^{\pm 1})= \mathbb{H}(\forg^*(\CE_{\frn\oplus \frn_2}(j^*_{e}(\bar{\pi}_{12}\otimes_{B_2}\bar{\pi}_{23})^*(\bcalC_{\beta}\boxtimes\bcalC_{\sigma_{1}^{\pm 1}}))^{T_n\times T_2}\otimes \wedge^k\calB_n),$$
where $j_e:\frb\times G_2\times \frn\to \calXr_3(G_n,G_{n,n-1})$ is the embedding that identifies $\frb\times G_2\times \frn$ with the subvariety of $\calXr_3(G_n,G_{n,n-1})$ defined by the
equation $j_{e}(X,g,Y)=(X,g^{-1},g,Y)$.

Let us simplify the complex $(j^*_{e}(\bar{\pi}_{12}\otimes_{B_2}\bar{\pi}_{23})^*(\bcalC_{\beta}\boxtimes\bcalC_{\sigma_{1}^{\pm 1}})\in \MF_{B_n\times B_2}(\frb\times G_2\times\frn,0)$.
Since $\bcalC_\beta=\indb_{n-1}(1\otimes\bcalC_{\beta'})\in \MF_{B^2}(\calXr_2,\Wr)$ it has the following description
$$\bcalC_\beta=(M\otimes\Lambda^* V,D+d^++d^-,\partial_l,\partial_r) \in \MF_{B^2}(\calXr_2,\Wr),$$
where $V=\CC^{n-1}=\langle \theta_2,\dots,\theta_n\rangle$; $(M,D)\in \Mod_{per}(\calXr_2)$ such that $D^2=\Wr $ modulo ideal $I_{n-1}=(g_{12},\dots, g_{1n})\subset \CC[\calXr_2]$;
$d^+=\sum_{i=2}^n g_{1i}\theta_i$ and $d^-=\sum_{\vec{i}} d^-_{\vec{i}}\frac{\partial}{\partial \theta_{\vec{i}}}$; $\partial_l,\partial_r$ define equivariant structure such that
$(M,D,\partial_l,\partial_r)$ modulo the ideal $I_{n-1}$ is an equivariant matrix factorization $p_1^*(\bcalC_{\beta'})$.

As we explained previously in the subsection~\ref{ssec: row ops} conjugation by $\exp(\phi\frac{\partial}{\partial\theta_2})$ of the differential of $(\bar{\pi}_{12}\otimes_{B_2}\bar{\pi}_{23})^*(\bcalC_{\beta}\boxtimes\bcalC_{\sigma_{n-1}^{\pm 1}})$
 results in the row transformation of
 of  the first type. Using these transformations we can obtain a complex $\calC'$ that is homotopic to $(\bar{\pi}_{12}\otimes_{B_2}\bar{\pi}_{23})^*(\bcalC_{\beta}
 \boxtimes\bcalC_{\sigma_{n-1}^{\pm 1}}) $ and the only dependence of it differentials on  $g_{12}$
comes from the term $g_{12}\theta_2$ of $d^+$. Then we can contract $\calC'$ along the differential $g_{12}\theta_2$ to obtain a homotopy equivalence
$$(\bar{\pi}_{12}\otimes_{B_2}\bar{\pi}_{23})^*(\bcalC_{\beta}
 \boxtimes\bcalC_{\sigma_{n-1}^{\pm 1}}) \sim\calC''\sim (\bcalC_{\beta,12}\otimes \bar{\pi}_{23}^*(\bcalC_\pm)),$$
where the last complex is the complex of $\CC[\frb\times B_2\times G_n\times \frn]$-modules and
$\bcalC_{\beta,12}\in \MF_{B\times B_2}(\frb\times B_2\times \frn,0)$ is the matrix factorization defined by the data
$$\bcalC_{\beta,12}=(j_{12}^*(M)\otimes\Lambda^* V_{12},d^+_{12}+j^*_{12}(D+d^++d^-),j^*_{12}(\partial_l),j^*_{12}(\partial_r)),$$
where $j_{12}:\frb\times B_2\times G_n\times\frn\to \frb \times G_2\times G_n\times \frn$ is the embedding; $V_{12}=\langle \theta_3,\dots, \theta_n\rangle$ and
$d^+_{12}=\sum_{i=3}^n g_{1i}\theta_i$.

Next note that since $\CE_{\frn_2}(\CC[B_2])^{T_2}=\CC$ there is a homotopy equivalence
\begin{multline*} \CE_{\frn_2}( j^*_{e}(\bar{\pi}_{12}\otimes_{B_2}\bar{\pi}_{23})^*(\bcalC_{\beta}\boxtimes\bcalC_{\sigma_{n-1}^{\pm 1}}))^{T_2}\sim
  j_{e,e}^*(\bcalC_{12,\beta}\otimes \bar{\pi}^*_{23}(\bcalC_{\pm}))\\=j_{e,e}^*(\bcalC_{\beta,12})\otimes j_{e,e}^*(\bar{\pi}^*_{23}(\bcalC_{\pm})),
  \end{multline*}
where $\SymbolPrint{j_{e,e}}: \frb\times 1\times 1\times \frn\to \frn\times B_2\times G_n\times \frn$.
Below we analyze two terms in the last tensor product.

Let us notice that $\bcalC_{\pm}|_{g=1}=[x_{11}-x_{22},0]^{\pm}$ where where $[x_{11}-x_{22},0]^+=[x_{11}-x_{22},0]$ and $[x_{11}-x_{22},0]\langle -1\rangle$.Thus we obtain
\[ j_{e,e}^*(\bar{\pi}^*_{23}(\bcalC_{+}))=[x_{11}-x_{22},0]^+:=[x_{11}-x_{22},0],\]
\[j_{e,e}^*(\bar{\pi}^*_{23}(\bcalC_{-}))=[x_{11}-x_{22},0]^-:=[x_{11}-x_{22},0]\otimes \chi_n^{-1}.\]

Let us denote by $p$ the natural projection map $\frb_n\times \frn_n\to \frb_{n-1}\times \frn_{n-1}$ that gives the map $p:\widetilde{\Hilb}^{free}_{1,n}\to \widetilde{\Hilb}^{free}_{1,n-1}$ when
restricted to the stable locus.
The key observation about the other term in the last tensor product of the matrix factorization is that it contains a sub complex $p^*(j_{e}^*(\bcalC_{\beta'}))=(M',D',\partial'_l+\partial_r')$,
$\partial'_l+\partial_r':M'\otimes \Lambda^\bullet(\frn_{n-1})\to M'\otimes \Lambda^{<\bullet}(\frn_{n-1})$
$$j_{e,e}^*(\bcalC_{12,\beta})=(M'\otimes \Lambda^\bullet V_{12},D'+d'_-,\partial'_l+\partial'_r+\partial^f)$$
where $d'_-=j_{e,e}^*(j_{12}^*(d^-))$ and $$\partial^f+\partial'_l+\partial'_r: M'\otimes \Lambda^\bullet(\frn_{n})\otimes \Lambda^* V_{12}\to M'\otimes \Lambda^{<\bullet}(\frn_{n})\otimes \Lambda^* V_{12},$$
with $(M'\otimes \Lambda^\bullet (\frn_{n-1}),\partial'_l+\partial'_r)\otimes \Lambda^* V_{12}$ being a sub complex of $( M'\otimes \Lambda^\bullet(\frn_{n})\otimes \Lambda^* V_{12},\partial^f+\partial'_l+\partial'_r)$.
Moreover $\partial^f$ has a non-trivial degree along with respect to the exterior power $\Lambda^* V_{12}$:
$$\partial^f: M'\otimes \Lambda^*(\frn_{n})\otimes \Lambda^\bullet V_{12}\to M'\otimes \Lambda^{*}(\frn_{n})\otimes \Lambda^{<\bullet} V_{12},$$

We complete our argument with an analysis of spectral sequence for the differential $D_{tot}=(D'+\partial'_l+\partial'_r)+d_{ce}+d_c+\partial^f+d'_-$ by first
computing homology with respect to $d_{ce}$ then with respect to $d_c$, we give details below.

Let us summarize the previous discussion, we have shown that
\[\mathbb{H}^k(\beta\cdot\sigma_{1}^{\pm 1})=
\mathbb{H}(
M'\otimes V_{12}\otimes [x_{11}-x_{22},0]^{\pm}\otimes \Lambda^k\calB_n,  D_{tot}),\]
\[\quad D_{tot}=(D'+\partial'_l+\partial'_r)+(d_{ce}+d_c+\partial^f+d'_-).
\]

In the total differential $D_{tot}$ the term $d_c$ is the Cech differential for some Cech cover
of $\widetilde{\Hilb}_{1,n}$.
In particular, if we choose Cech cover of $\widetilde{\Hilb}_{1,n}^{free}$ by the affine varieties $U_{\vec{S}_1,\vec{S}_2}=B\cdot \mathbb{A}_{\vec{S}_1,\vec{S}_2}$ from the proof of proposition~\ref{prop: aff cover}, we
can derive from fact that $B$-action on $U_{\vec{S}_1,\vec{S}_2}$ is free that
$$\textup{H}^k(\CC[U_{\vec{S}_1,\vec{S}_2}],d_{ce})=\begin{cases} &\CC[T\cdot \mathbb{A}_{\vec{S}_1,\vec{S}_2}],\mbox{ if } k=0\\ &0,\mbox{ if } k>0.\end{cases}.$$

Thus we can conclude that \(\mathbb{H}(\beta\cdot\sigma_{1}^{\pm 1})\) is equal to
$$ \mathbb{H}(\Hilb_{1,n}^{free}, (M'\otimes V_{12}\otimes [x_{11}-x_{22},0]^{\pm})^{B}\otimes \Lambda^k\calB_n,
(D'+\partial'_l+\partial'_r)+d_c+\partial^f+d'_-).$$
where superscript $B$ indicate the descend of the complex to the quotient $\widetilde{\Hilb}_{1,n}^{free}/B=\Hilb_{1,n}^{free}$.

Let $\Delta_{1,2}\subset\frh$ be the root hyperplane for  the root $\epsilon_1-\epsilon_{2}$ and $\Hilb^{free}_{1,n}(\Delta_{1,2}):=\chi^{-1}(\Delta_{1,2})$. By contracting the differential
of the complex $[x_{11}-x_{22},0]$ we obtain the last simplification for \(\mathbb{H}^k(\beta\cdot\sigma_{1}^{\pm 1})\):
$$ \mathbb{H}(\Hilb_{1,n}^{free}(\Delta_{1,2}), (M'\otimes\Lambda^* V_{12}\otimes\chi_n^{-1/2\pm 1/2})^{B}\otimes \Lambda^k\calB_n,
(D'+\partial'_l+\partial'_r)+d_c+\partial^f+d'_-).$$


Let us present the torus $T_n$ of $B_n$ as a product $T_1\times T_{n-1}$ where $T_1$ is the one-dimensional torus that preserves $B_{n-1}$ and  $T_{n-1}$ is the torus of $B_{n-1}$.
Hence the torus $T_1$  acts non-trivially on the fibers of $p:\widetilde{\Hilb}_{1,n}^{free}\to \widetilde{\Hilb}_{1,n-1}^{free}$ and $T_{n-1}$ fixes these fibers.

As the final step we push forward the last complex along the morphism $p$.
 For that let us observe that the elements $\theta_3,\dots,\theta_n$ have weight $-1$ with respect to $T_1$ and $0$ with respect to $T_{n-1}$ thus
 $$ (\Lambda^* V_{12})^B=\Lambda^* \calO_n(-1)^{\oplus n-2}.$$

Since $\chi_n^{-1}$ descends to $\calO_n(-1)$ we can use proposition~\ref{prop: push forward} to conclude that for the map $p:\Hilb^{free}_{1,n}(\Delta_{n,n-1})\to \Hilb^{free}_{1,n}$ we have
$$ p_*(\wedge^k\calB_n\otimes\Lambda^*(V_{12})^B)=\wedge^k\calB_{n-1},\quad  p_*(\wedge^k\calB_n\otimes \calO_n(-1)\otimes \Lambda^*(V_{12})^B)=\wedge^{k-1}\calB_{n-1}[n].$$

Since $(M',D',\partial'_l+\partial'_r)=p^*(j_{e}(\bcalC_{\beta'}))$ and both $\partial^f$ and $d'_-$ have non-trivial degree along the exterior power
$\Lambda^* V_{12}$  we can apply the projection formula to conclude
\[ \mathbb{H}^k(\beta\cdot\sigma_{1})=\mathbb{H}(\Hilb^{free}_{1,n-1}, j^*_e(\bcalC_{\beta'})\otimes \Lambda^k\calB_{n-1}),\]\
\[\mathbb{H}^k(\beta\cdot\sigma_{1}^{-1})=\mathbb{H}(\Hilb^{free}_{1,n-1}, j^*_e(\bcalC_{\beta'})\otimes \Lambda^{k-1}\calB_{n-1}[n]).\]
 The last formula  implies the desired equation (\ref{eq: Markov}).

 \section{Connections with other knot invariants}

\subsection{Unknot}
\label{sec:unknot}

Let us evaluate our knot invariant on the unknot \(L(1)\), \(1\in \Brgr_1\).

\begin{proposition} The Poincare polynomial of \(L(1)\) is
  \[\sum_{i,j,k}\mathcal{H}^{i,j,k}q^it^j a^i=\frac{a^{-1}+at}{q^{-1}-q}.\]
\end{proposition}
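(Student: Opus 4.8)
The plan is to unwind all the constructions for the simplest case $n=1$, where everything collapses dramatically. For $n=1$ the group $G_1=\mathrm{GL}_1=\CC^*$ is abelian, $B_1=T_1=\CC^*$, and $\frn_1=0$, so there is no Chevalley--Eilenberg correction and no quotient by a unipotent group to worry about. The braid group $\Brgr_1$ is trivial, so $\beta=1$ and $\bcalC_\beta=\bcalC_\parallel$, the unit in the convolution algebra. The reduced space is $\calXr_2(G_1)=\frb_1\times G_1\times \frn_1 = \CC\times\CC^*\times\{0\}$, with trivial potential $\Wr=\Tr(X\Ad_g(Y))=0$ since $Y=0$. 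Then $\bcalC_\parallel = \mathrm{K}^{\Wr}(I_{--})$ where $I_{--}$ is generated by the strictly lower-triangular entries of $g$ --- which is empty for $n=1$. Hence $\bcalC_\parallel$ is simply the structure sheaf $\calO$ of $\calXr_2(G_1)$ with zero differential.

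First I would identify $\Hilb^{free}_{1,1}$ explicitly: $\widetilde{\Hilb}^{free}_{1,1}=\{(X,Y,v)\in\frb_1\times\frn_1\times V\mid \CC\langle X,Y\rangle v = V\}$. Here $V=\CC$, $\frb_1=\CC$ (a single scalar $x$), $\frn_1=0$, so the cyclicity condition $\CC\langle X,Y\rangle v=V$ just says $v\neq 0$. Quotienting by $B_1=\CC^*$ (acting on $v$ by scaling, trivially on $X$) gives $\Hilb^{free}_{1,1}=\Spec\CC[x]$, a point times an affine line, i.e.\ $\Hilb^{free}_{1,1}\cong \CC=\frh_1$. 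The tautological bundle $\calB$ has fiber $V=\CC$, but it carries a nontrivial $T_{sc}$-weight: vectors of $V$ have $t$-degree, i.e.\ weight $t$, and after the $B$-quotient the line bundle $\calB^\vee$ descends to a line with a character twist. I would track carefully that $\calB$, as a $T_{sc}$-equivariant line on the point, contributes weight $\sha a$ in the $a$-variable bookkeeping and that $x=x_{11}$ has $q$-degree: from the weight table $\dgTsc$ assignments one reads off $\dgTsc x = q^{-2}$ (the coordinate $x_0$, here our $x$, has $T^{(1)}$-weight $0$ but the scaling torus acts by $\lambda^{-2}$ on $X$). So $\CC[x]$ as a graded vector space is $\CC[x]=\bigoplus_{m\ge 0}\CC\cdot x^m$ with $x$ of weight $q^{-2}$, which sums to $\frac{1}{1-q^{-2}}$.

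Next I would assemble $\mathbb{H}^k(1) = \mathbb{H}(\bbS_1\otimes\wedge^k\calB)$ via $\bbS_1 = j_e^*(\bcalC_\parallel)=\calO_{\widetilde{\Hilb}^{free}_{1,1}}$, so $\mathbb{H}^k(1)=\textup{H}^*(\Hilb^{free}_{1,1},\wedge^k\calB)$. Since $\calB$ is a line bundle, $\wedge^0\calB=\calO$ contributes $\CC[x]$ and $\wedge^1\calB=\calB$ contributes $\CC[x]\otimes(\text{the }a\text{-twist})$, while $\wedge^{\ge 2}\calB=0$. Thus $\mathbb{H}^0(1)\cong\CC[x]$ and $\mathbb{H}^1(1)\cong \CC[x]\otimes \sha a\cdot(\text{extra }t\text{-shift})$. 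I would then apply the definition $\calH^{i,j,k}$ via the normalization in Theorem~\ref{thm: intro main} / Theorem~\ref{thm: main}: for the unknot $\xwr(1)=0$ and $n=1$, so $\calH^k(1)=\shq^{-\xwr+n}\cdot\mathbb{H}^{(k-\xwr+n-1)/2}(1)=\shq^{1}\cdot\mathbb{H}^{k/2}(1)$, meaning $k$ ranges over $\{0,2\}$ picking out $\mathbb{H}^0$ and $\mathbb{H}^1$ respectively, with an overall $q$-shift by $1$. Substituting $\sum_{i,j,k}\dim\calH^{i,j,k}q^it^ja^k$ and collecting: the $\mathbb{H}^0$ piece gives $q\cdot\frac{1}{1-q^{-2}}\cdot a^0 = \frac{q}{1-q^{-2}}=\frac{q^{-1}}{q^{-2}-1}\cdot(-1)=\frac{a^{-1}}{q^{-1}-q}$ after rewriting, and the $\mathbb{H}^1$ piece gives the $at$ term $\frac{at}{q^{-1}-q}$, summing to the claimed $\frac{a^{-1}+at}{q^{-1}-q}$.

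\textbf{Main obstacle.} The only genuinely delicate part is getting all the grading shifts exactly right: the $q$-degree of the coordinate $x$, the $t$-degree and $a$-degree carried by $\calB$ after the $B$-quotient and the descent of the $T_{sc}$-equivariant structure, the sign $(-1)^j$ in the Euler characteristic versus the cohomological parity, and the normalization prefactor $\shq^{-\xwr(\beta)+n}$ together with the index shift $(k-\xwr(\beta)+n-1)/2$. Each individual identification is routine, but a single off-by-one in any of them would spoil the final formula; so the careful step is to fix conventions once (say from the weight table in Section~\ref{sec: two str} specialized to $n=1$) and propagate them consistently. I expect no homological-algebra difficulty at all here since all differentials vanish and the space is an affine line.
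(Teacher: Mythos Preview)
Your approach is identical to the paper's: trivialize all constructions at $n=1$ to obtain $\bbS_1=j_e^*(\bcalC_\parallel)=\calO_{\frg_1}$, then read off the graded character of $\CC[x]\otimes\wedge^\bullet\calB$. The paper's proof is four lines and records only that the $T_{sc}$-weight along $\frg_1=\CC$ is $q^2$ (not $q^{-2}$ as you wrote---with that sign the identity $q\cdot\tfrac{1}{1-q^2}=\tfrac{1}{q^{-1}-q}$ is immediate) and that $\calB$ carries weight $t$, leaving the remaining $a$-shift bookkeeping implicit just as you do.
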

\begin{proof}
  We have \(X=\calXr_2(G_1)=\frg_1\times G_1\times 0\). Respectively \(\bar{\mathcal{C}}_1=\mathcal{O}_X\) and since \(\calX_1(G_1)=\frg_1\times 0\), we have
  \(\mathbb{S}_1=j^*_e(\bar{\mathcal{C}}_1)=\mathcal{O}_{\frg_1}\). Since weight of \(T_{sc}\)-action along \(\frg_1=\CC\)  is \(q^2\) and \(\mathcal{B}=\CC\) is
  the trivial vector bundle of weight \(t\), we conclude the formula from the statement.
\end{proof}
\subsection{HOMFLY-PT polynomial specialization}
The space $\calH^k(\beta)$ is  doubly-graded with $q,t$ gradings induced by the action of $T_{sc}$ on $\Hilb^{free}_{1,n}$. We denote by $\calH^{i,j,k}(\beta)$ the subspace of
$\calH^k(\beta)$ of degree $t^iq^j$. Respectively we introduce the super-polynomial:
$$\SymbolPrint{\calP}(L(\beta)):= \sum_{i,j,k} t^iq^j a^k \calH^{i,j,k}(\beta),$$
where $\beta\in \Brgr_n$.
In this section we explain why our super polynomial $\calP(L)$ specializes to the HOMFLY-PT polynomial $P(L)$.

First of all let us notice that
$\calP(L(\beta))$ has  a well defined specialization at $t=-1$. The space \(\mathbb{H}^k(\beta)\) is infinite dimensional doubly-graded space with each graded piece is
finite-dimensional. However if we forget about one of the gradings the last property might stop being true but it does not happen under the specialization \(t=-1\).

Indeed,  only coordinates $Y$ along $\frn$ have non-trivial $t$-weights in coordinate ring of $\widetilde{\Hilb}_{1,n}^{free}$. On the other hand since
$Y$ is strictly upper triangular all these variables have non-trivial positive weights with respect to the maximal torus \(T\subset B\).
The other coordinates on $\widetilde{\Hilb}_{1,n}^{free}$ have nonnegative weights with respect to the maximal torus and positive \(q\)-weight.
Thus only finitely many powers of $Y_{ij}$ could possibly contribute to $\mathbb{H}^k(\beta)$ which is constructed out of \(T\)-isotypical components
of the rings of functions on the open Cech charts of \(\widetilde{\Hilb}_{1,n}^{free}\).

Let us fix  conventions about the HOMFLY-PT polynomial $P(L)$:
$$ aP(L_+)-a^{-1} P(L_-)=(q-q^{-1})P(L_0), \quad P(\mathrm{unknot})=1,$$
where $L_+$ and $L_-$ are positive and negative crossings.

\begin{theorem}\label{thm:HOMFLYPT} For any  $\beta\in \Brgr_n$ we have
$$P(L(\beta))= \calP(L(\beta))|_{t=-1}$$
\end{theorem}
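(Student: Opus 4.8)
The plan is to verify that both sides of the identity satisfy the same skein relation and the same normalization on the unknot, which characterizes the HOMFLY-PT polynomial uniquely. The right-hand side $\calP(L(\beta))|_{t=-1}$ is an isotopy invariant of $L(\beta)$ by Theorem~\ref{thm: main}, so it suffices to check the three defining properties. First I would record the unknot computation: by subsection~\ref{sec:unknot}, $\sum_{i,j,k}\calH^{i,j,k}(1)\,q^it^ja^k = \frac{a^{-1}+at}{q^{-1}-q}$, so at $t=-1$ this gives $\frac{a^{-1}-a}{q^{-1}-q}$. This is not quite $1$, so I would use the standard reduced-versus-unreduced normalization: the invariant $\calP$ here is the unreduced HOMFLY-PT polynomial (value $\frac{a^{-1}-a}{q^{-1}-q}$ on the unknot), and the claim of Theorem~\ref{thm:HOMFLYPT} should be read with that convention; equivalently one divides through by the unknot value. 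I would state this normalization explicitly at the start of the proof.

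The core of the argument is the skein relation. The key input is Proposition~\ref{prop: cones}, which presents the crossing matrix factorizations as mapping cones:
\begin{align*}
\crXp & \sim(\sha\shq\sht)^{-1}\bigl(\crXpr \xrightarrow{\;\xchp\;}\crXbl\brwsh{-1}{-1}\bigr), \\
\crXn & \sim \sha(\shq\sht)^{-1}\bigl(\crXbl\brwsh{-1}{-1} \xrightarrow{\;\xchm\;}(\shq\sht)^2\,\crXpr\brwsh{-1}{1}\bigr).
\end{align*}
These, together with the induction functor $\Ind_{i,i+1}$ (which is a homomorphism of convolution algebras by Corollary~\ref{cor: ind}), realize $\crXp^{(i)}$ and $\crXn^{(i)}$ as cones over the identity generator $\crX_\parallel$ and the "blob" generator $\crX_\bullet^{(i)}$. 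Now I would take a braid word $\beta$ and focus on one crossing $\sigma_i^{\pm 1}$. Convolving on both sides by the rest of the braid word $\calC_{\beta'}$ (a one-sided operation, preserving exact triangles), and then applying the functor $j_e^*$ and the hypercohomology $\mathbb{H}(-\otimes\wedge^k\calB)$, I obtain a long exact sequence in $\mathbb{H}^*$ relating the invariants of $L_+$ (resp. $L_-$), the invariant of $L_0$ (the resolution where the crossing is replaced by the identity braid, i.e.\ the $\crX_\parallel$ term), and the invariant of the "blob resolution" $L_\bullet$ (the $\crX_\bullet^{(i)}$ term). Because everything is $T_{sc}$-equivariant, this is a sequence of bigraded vector spaces with explicit $q,t,a$-degree shifts coming from the shifts $\shq,\sht,\sha$ and the character twists $\brwsh{\cdot}{\cdot}$ in Proposition~\ref{prop: cones}. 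Taking bigraded Euler characteristics — that is, setting $t=-1$ and summing the alternating dimensions — the long exact sequence collapses to a linear relation among $\calP(L_+)|_{t=-1}$, $\calP(L_0)|_{t=-1}$, and $\calP(L_\bullet)|_{t=-1}$, and similarly for $L_-$.

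The final step is to eliminate the unwanted blob term. I would combine the two relations (one for the positive crossing, one for the negative crossing at the same site) so that the common blob contribution $\calP(L_\bullet)|_{t=-1}$ cancels. Tracking the $a$- and $q$-degree shifts from Proposition~\ref{prop: cones} — the positive cone contributes an overall $(\sha\shq\sht)^{-1}$ with the blob at $\brwsh{-1}{-1}$, the negative cone an overall $\sha(\shq\sht)^{-1}$ with the blob again at $\brwsh{-1}{-1}$ and the far identity term at $(\shq\sht)^2\brwsh{-1}{1}$ — and specializing $t=-1$, the relation should assemble exactly into $a\,\calP(L_+)|_{t=-1} - a^{-1}\calP(L_-)|_{t=-1} = (q-q^{-1})\calP(L_0)|_{t=-1}$, which is the HOMFLY-PT skein relation. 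I expect the main obstacle to be the bookkeeping of gradings: one must carefully match the overall $\shq^{-\xwr(\beta)+n}$ and homological normalization built into the definition of $\calH^k(\beta)$ (via $\mathbb{H}^{(k-\xwr(\beta)+n-1)/2}$) against the shifts appearing in the cones, including how the writhe changes under the skein moves ($\xwr(L_+) = \xwr(L_0)+1 = \xwr(L_-)+2$), and to confirm that the signs in the Euler characteristic work out so that the alternating sum of the three-term exact sequence really gives the two-term skein relation rather than an extra spurious term. Once the normalization on the unknot and the skein relation are both established, uniqueness of HOMFLY-PT finishes the proof, and as a corollary one reads off Theorem~\ref{thm:HOMFLY}, since $\calH^{i,j,k}(\beta)$ has the same underlying bigraded pieces (the $(-1)^j$ in the statement of Theorem~\ref{thm:HOMFLY} being exactly the $t=-1$ specialization).
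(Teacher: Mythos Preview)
Your proposal is correct and follows essentially the same route as the paper: use Proposition~\ref{prop: cones} to express $\crX_+$ and $\crX_-$ as cones involving $\crX_\parallel$ and the blob $\crX_\bullet$, pass to Euler characteristics (the paper phrases this as ``passing to K-theory at $t=-1$''), cancel the common blob contribution between the two relations to obtain the HOMFLY-PT skein relation, and then appeal to uniqueness together with the unknot computation of subsection~\ref{sec:unknot}. The only stylistic difference is that for the uniqueness step the paper renormalizes to a Markov trace $\textup{tr}'_n(\beta)=(a/q)^{\xwr(\beta)}A^{-n}\Tr_n(\beta)$, checks it factors through the Hecke algebra, and invokes Jones' uniqueness theorem for the Ocneanu--Jones trace, whereas you invoke the skein-relation characterization of HOMFLY-PT directly; these are equivalent once isotopy invariance (Theorem~\ref{thm: main}) is in hand.
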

\begin{proof}
First, let us notice that the theorem~\ref{thm: main} implies that the map \[\Tr_n: \Brgr_n\to \calP(L(\beta))|_{t=-1}\] is a trace map: $\Tr_n(ab)=\Tr_n(ba)$.
Also, the  construction of the elements $\calC_{\pm}^{(i)}$ and the description of $\calC_+$ and $\calC_-$ as cones in proposition~\ref{prop: cones} implies  that $\Tr_n$ satisfies the skein relations:
$$ a\Tr_n(\tau\sigma_i)-a^{-1}\Tr_n(\tau\sigma_i^{-1})=(q-q^{-1})\Tr_{n}(\tau),$$
$\tau\in \Brgr_{n-1}$.

Let us spell out the details for the skein relations.
Since we imposed condition that all differentials in our matrix factorizations have degree \(t\),  setting \(t=-1\) corresponds to passing to the K-theory of the category matrix factorizations.
We denote by \([\calC]\) the corresponding class of the matrix factorization in the K-theory. Thus the statement in proposition~\ref{prop: cones} implies the linear relations in K-theory:
\[[\calC_+]=-\mathrm{q}^{-1}\left([\calC_\parallel]-[\calC_\bullet\langle-1,-1\rangle]\right),\]
\[[\calC_-]=-\mathrm{q}^{-1}\left(-\mathrm{q}^2[\calC_\parallel]+[\calC_\bullet\langle-1,-1\rangle]\right).\]
Taking into account the shift \(k-\mathrm{wr}(\beta)+n-1\) in the definition of \(\mathcal{H}^k(\beta)\) we the skein relation by canceling the
terms related to class \([\calC_\bullet\langle-1,-1\rangle]\).

Since $L(\tau\sigma)=L(\tau\sigma^{-1})$ the inductive argument shows that the skein relations imply that $\Tr_{n}(1)=A^{n}$ where $A=\Tr_1(1)=(a^{-1}-a)/(q^{-1}-q)$. Indeed, since \(L(\sigma_n^{\pm 1}1_{n+1})=L(1_n)\), \(1_n\in\Brgr_n\) and \(Tr_n(\beta)\) is an isotopy invariant of \(L(\beta)\), we can use the skein relations:
\[a\Tr_{n+1}(\sigma_n)-a^{-1}\Tr_{n+1}(\sigma_n^{-1})=(a-a^{-1})\Tr_n(1)=(q-q^{-1})\Tr_{n+1}(1).\]

Let us defined renormalized trace
\begin{equation}\label{eq: trace OJ}
 \textup{tr}'_n(\beta):=\left(\frac{a}{q}\right)^{\xwr(\beta)}A^{-n}\Tr_n(\beta),
\end{equation}
where \(\xwr(\sigma_{i_1}^{\epsilon_1}\dots \sigma_{i_\ell}^{\epsilon_\ell})=\sum_{i=1}^{\ell} \epsilon_i.\)

The Hecke algebra \(H_n\) is generated by \(g_i\), \(i=1,\dots,n-1\) modulo relations:
\[g_ig_{i+1}g_{i}=g_{i+1}g_ig_{i+1},
  \quad i=1,\dots,n-2,\]
\[g_i-g_i^{-1}=q-q^{-1},\quad i=1,\dots, n-1.\]

There is a natural algebra homomorphism $\pi:\Brgr_n\to H_n$, \(\sigma_i\mapsto g_i\).
The skein relations imply that the map $\textup{tr}'_n$ factors through the projection $\pi$:
$\textup{tr}_n=\textup{tr}_n\circ \pi$ where $\textup{tr}'_n: H_n\to \CC(a,q)$ is the trace on $H_n$. Since the trace $\Tr_n$ satisfies the Markov move $\Tr_n(\tau\sigma_{n-1}^{\pm1 })=\Tr_{n-1}(\tau)$ we see that
the sequence of traces $\textup{tr}_n$ satisfies the same restrictions as Ocneanu-Jones trace (see section 5 of \cite{Jones}):
\begin{enumerate}
\item $\textup{tr}_n(1)=1$
\item $\textup{tr}_n(\alpha\beta)=\textup{tr}(\beta\alpha)$
\item $\textup{tr}_n(\tau g_{n-1})=z\textup{tr}_{n-1}(\tau),$  $g\in H_{n-1}$
\end{enumerate}
where $z=aq^{-1}A^{-1}$.
The factor \(z\) appears in the last formula because
\[\textup{tr}_n(\tau g_{n-1})=\left(\frac{a}{q}\right)^{\mathrm{wr}(\tau g_{n-1})}A^{-n}\Tr_n( \tau g_{n-1})=
\left(\frac{a}{q}\right)^{\xwr(\tau)}A^{-n+1} \Tr_{n-1}(\tau) aq^{-1}A^{-1}.\]

According to the section 5 of \cite{Jones} the trace on $\cup H_n$ with such properties is unique and according to the section 6 of \cite{Jones} the renormalized trace $\Tr_n$ as in equation
\ref{eq: trace OJ} defines the HOMFLY-PT polynomial.
\end{proof}

\subsection{$gl(m|n)$-homology differential}
Recall that the vector bundle $\calB_n^\vee$ over $\widetilde{\Hilb}^{free}_{1,n}$ is  a vector bundle with the fiber $V $ over the point $(X,Y,v)$ where $V=\CC^n$ and $v$ is
the cyclic vector. Define a section $\varphi_{m|n}$ of $\calB_n^\vee$ with the value $X^mY^n v\in V$ at the fiber at the point $(X,Y,v)$. Contraction with this
section
$$i_{\varphi_{m|n}}: \Lambda^\bullet \calB_n\to \Lambda^{\bullet-1}\calB_n.$$
induces the differential  $d_{m|n}: \bbS_\beta\otimes \Lambda^\bullet\calB_n\to \bbS_\beta\otimes \Lambda^{\bullet-1}\calB_n$, which
commutes with the \v{C}ech differential
$d_c$, the Chevalley-Eilenberg differential $d_{ce}$, the equivariant correction differentials $\partial_l+\partial_r$ and the matrix factorization differential $D$. Thus  we can define following homology
$$ \mathbb{H}_{m|n}(\beta)=(\oplus_{i=0}^n \bbS_\beta\otimes \Lambda^\bullet\calB_n, D_{tot}^{m|n}),\quad D_{tot}^{m|n}:=D+\partial_l+\partial_r+d_{ce}+d_{c}+d_{m|n}.$$
The homology $\mathbb{H}_{m|n}(\beta)$ is only doubly graded. In forthcoming paper we prove the following

\begin{theorem}\cite{OR} $\mathbb{H}_{m|n}(\beta)$ is an isotopy invariant of $L(\beta)$.
\end{theorem}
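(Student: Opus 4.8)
The plan is to mimic the proof of Theorem~\ref{thm: main} verbatim, tracking the extra differential $d_{m|n}$ at every step. First I would observe that $\varphi_{m|n}$ is a genuinely $B$-equivariant section of $\calB_n^\vee$: indeed $X$, $Y$ preserve the flag and $v$ is the cyclic vector, so $X^mY^n v$ transforms like a vector of $V$ under $B$, hence $i_{\varphi_{m|n}}$ descends to $\Hilb^{free}_{1,n}$ and commutes with $d_c$, $d_{ce}$, $\partial_l+\partial_r$ and the matrix-factorization differential $D$ (it is $\CC[\widetilde{\Hilb}^{free}_{1,n}]$-linear and $D$ acts on the $\calC_\beta$-factor only). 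The square $i_{\varphi_{m|n}}^2=0$ is automatic for contraction with a single section, so $D^{m|n}_{tot}$ is a well-defined differential and $\mathbb{H}_{m|n}(\beta)$ makes sense as a doubly graded space (the $\wedge^\bullet\calB_n$-grading is broken but the $q,t$-grading survives since $\varphi_{m|n}$ has definite $T_{sc}$-weight $q^{2m}t^{2n+1}$; by the finiteness remark in the proof of Theorem~\ref{thm:HOMFLYPT} each graded piece is finite-dimensional).

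Next I would redo the invariance argument. Independence of $\mathbb{H}_{m|n}(\beta)$ of the word presenting $\beta$ is immediate from the construction, exactly as for $\mathbb{H}^k(\beta)$, because braid relations are proved at the level of the convolution algebra $\MF_{B^2}(\calXr_2,\Wr)$ (Theorem~\ref{thm: braids}) and $d_{m|n}$ lives on the separate $\wedge^\bullet\calB_n$-factor which is untouched by all the homotopies in Sections~\ref{sec: two str}--\ref{sec:  three str 3}. Conjugation invariance follows as in the proof of Theorem~\ref{thm: main}: the cyclic $\ZZ_\ell$-symmetry of $\calX_{\ell,fc}$ identifies the pullbacks $\pi_i^*(\calB_n)$ via the $g_{i,i+1}$, and one checks that under this identification the section $\varphi_{m|n}$ is carried to itself since $X^mY^nv$ is intrinsic to the conjugacy data $(\Ad_{g_i}(Y_i),X,u)$ on the critical locus — this is the one genuinely new compatibility to verify, and it is where I expect to spend real effort.

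Finally, the Markov moves. Here I would run the same chain of simplifications as in Section~\ref{sec: link inv 2}: use Proposition~\ref{prop: small prod 1}, reduce to the two-strand intermediate space, contract the Koszul complex $[x_{11}-x_{22},0]^\pm$, take the $B$-quotient, and push forward along $p\colon\Hilb^{free}_{1,n}\to\Hilb^{free}_{1,n-1}$. The new point is that $\varphi_{m|n}$ must be compatible with the stabilization map. On the locus cut out by $g_{12}$-elimination and $x_{11}=x_{22}$ (equivalently on $\Hilb^{free}_{1,n}(\Delta_{1,2})$ with the last flag step split off), the decomposition $\calB_n = p^*\calB_{n-1}\oplus\calO_n(-1)$ must intertwine $\varphi_{m|n}$ on the $n$-strand side with $\varphi_{m|n}$ on the $(n-1)$-strand side up to a term landing in the $\calO_n(-1)$-summand; since both $\partial^f$ and $d'_-$ already have strictly positive degree along $\Lambda^*V_{12}$, the projection formula (via Proposition~\ref{prop: push forward}) will still apply and kill the $\calO_n(-1)$-contributions, yielding $\mathbb{H}_{m|n}(\beta\cdot\sigma_1^{\pm})=\mathbb{H}_{m|n}(\beta')$ on the nose (no degree shift, because the $\wedge^\bullet\calB_n$-grading no longer carries the homological degree). \textbf{The main obstacle} I anticipate is precisely this last compatibility: verifying that under the chart-by-chart contractions of Section~\ref{sec: link inv 2} the image $X^mY^nv\in V$ behaves correctly when the top flag step is removed — i.e. that $X^mY^nv$ modulo $V_{n-1}$ is exactly the $\calO_n(-1)$-component, so that the part of $\varphi_{m|n}$ surviving the push-forward is the section $\varphi_{m|n}$ for $\Hilb^{free}_{1,n-1}$. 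This is a concrete but delicate linear-algebra check in the affine charts $\mathbb{A}_{\vec S_1,\vec S_2}$ of Proposition~\ref{prop: aff cover}, and everything else is a routine transcription of the $\mathbb{H}^k(\beta)$ argument with $d_{m|n}$ carried along.
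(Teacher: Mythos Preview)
The paper does not contain a proof of this theorem: it is explicitly announced as a result of the sequel \cite{OR} (``In forthcoming paper we prove the following''), and no argument whatsoever is given in the present paper. So there is nothing here to compare your proposal against.

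That said, your outline is the natural strategy and is almost certainly what the authors have in mind: since the paper already asserts that $i_{\varphi_{m|n}}$ commutes with $D$, $d_c$, $d_{ce}$ and $\partial_l+\partial_r$, one simply reruns the proof of Theorem~\ref{thm: main} with the extra differential carried along, and the only new content is the Markov-move compatibility of the section $\varphi_{m|n}$ under the projection $p\colon\Hilb^{free}_{1,n}\to\Hilb^{free}_{1,n-1}$. One small correction to your setup: in the paper's conventions $p$ forgets the \emph{first} row of $X,Y$ and the Markov strand is attached via $\sigma_1$, with $\calO_n(-1)=\calB_n/p^*(\calB_{n-1})$ sitting at the top of the flag rather than the bottom; make sure your chart-by-chart check of how $X^mY^n v$ decomposes is oriented accordingly.
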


The differential $d_{m|n}$ is very much in spirit of the conjectures in the papers \cite{ORS, GGS} and thus we propose the conjecture:

\begin{conjecture} The homology $\mathbb{H}_{m|n}$ categorifies the quantum invariant of type $gl(m|n)$.
\end{conjecture}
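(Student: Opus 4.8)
We read the conjecture as the assertion that $\mathbb{H}_{m|n}$ is a link homology whose (doubly) graded Euler characteristic is the Reshetikhin--Turaev invariant of $L$ for $U_q\mathfrak{gl}(m|n)$ colored by the vector representation. Isotopy invariance of $\mathbb{H}_{m|n}(\beta)$ is the theorem of \cite{OR} quoted above, so the plan is to compute $\chi\bigl(\mathbb{H}_{m|n}(\beta)\bigr)$ and to identify it with that invariant; this makes the problem a decategorification statement, exactly parallel to Theorem~\ref{thm:HOMFLYPT} but with the differential $d_{m|n}$ turned on.

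First I would filter the complex $\bigl(\bigoplus_k \bbS_\beta\otimes\Lambda^k\calB_n,\, D_{tot}^{m|n}\bigr)$ by the $\Lambda^\bullet\calB_n$-degree. Since $D+\partial_l+\partial_r+d_{ce}+d_c$ preserves this degree while $d_{m|n}=i_{\varphi_{m|n}}$ lowers it by one, the resulting spectral sequence has first page $\bigoplus_k \mathbb{H}^k(\beta)$ with induced differential $i_{\varphi_{m|n}}$, and the latter carries the fixed $\Tsc$-weight of the section $\varphi_{m|n}(X,Y,v)=X^mY^nv$. Reading off that weight from the $\Tsc$-weights of $X$, $Y$ and $v\in V$, the section $\varphi_{m|n}$ acquires a definite monomial in $q$ and $t$ that is affine-linear in $(m,n)$ through $n-m$. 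Because $(q,t)$-graded Euler characteristics are unchanged by passing to homology, $\chi_{q,t}\bigl(\mathbb{H}_{m|n}(\beta)\bigr)$ equals the $(q,t)$-Euler characteristic of $\bigl(\bigoplus_k\mathbb{H}^k(\beta),\, i_{\varphi_{m|n}}\bigr)$, and collapsing an $a$-graded complex against a differential of $a$-degree $-1$ all of whose entries have that single $\Tsc$-weight amounts, in $\calP(L(\beta))$, to a substitution in the $a$-variable. Tracking the exponents --- a routine bookkeeping in the $\Tsc$-weights, whose sign and overall normalization are pinned down by the unknot of Section~\ref{sec:unknot} --- this substitution is the $\mathfrak{gl}(m|n)$-specialization $a=q^{m-n}$ in the standard normalization. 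Setting $t=-1$ and invoking Theorem~\ref{thm:HOMFLYPT} then yields $\chi\bigl(\mathbb{H}_{m|n}(\beta)\bigr)=P\bigl(L(\beta)\bigr)$ evaluated at $a=q^{m-n}$.

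It remains to recognize this specialization of the HOMFLY-PT polynomial as the $\mathfrak{gl}(m|n)$ invariant. This is the classical fact that the renormalized Ocneanu--Jones trace on the Hecke algebra $H_n$, which produces $P(L)$ (see the proof of Theorem~\ref{thm:HOMFLYPT} and \cite{Jones}), specializes at $a=q^{N}$ to the $U_q\mathfrak{gl}(N)$ invariant and at $a=q^{m-n}$ to the $U_q\mathfrak{gl}(m|n)$ invariant colored by the vector representation; here one invokes the standard super-version of that statement (as in the literature on the Links--Gould and Kauffman--Saleur invariants and its $\mathfrak{gl}(m|n)$-web formulation). A consistency check that also fixes normalizations is the unknot: by Section~\ref{sec:unknot}, $\calP(L(1))\big|_{t=-1}=\frac{a^{-1}-a}{q^{-1}-q}$, which at $a=q^{m-n}$ becomes the quantum superdimension $[m-n]$ of the vector representation of $\mathfrak{gl}(m|n)$, as it must; this is precisely the number $A$ appearing in the proof of Theorem~\ref{thm:HOMFLYPT} with $a$ specialized.

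The main obstacle is twofold. On the technical side one must establish finiteness and convergence: the Euler characteristic above is defined only once $\mathbb{H}_{m|n}(\beta)$ is finite-dimensional in each $(q,t)$-bidegree and the spectral sequence is known to converge. The paper supplies this after $t=-1$ (only finitely many monomials in the strictly upper-triangular $Y$, hence in $X^mY^nv$, contribute, because of the positivity of $T$- and $\Tsc$-weights on $\widetilde{\Hilb}^{free}_{1,n}$), so what is needed is a uniform weight bound keeping $t$ a formal variable, which should follow from the same positivity. The deeper difficulty is the strongest reading of ``categorifies'': one would like $\mathbb{H}_{m|n}$ to coincide with an \emph{already-existing} $\mathfrak{gl}(m|n)$ link homology --- Khovanov--Rozansky $\mathfrak{gl}(N)$ homology when $n=0$, or the conjectural super-homologies of \cite{ORS,GGS}. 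Establishing such an isomorphism requires bridging the present geometric model (matrix factorizations on $\calX_2(G_n)$, sheaves on $\Hilb_{1,n}$) with the combinatorial foam/Soergel models, for which no method is currently available; it is the same gap that leaves the identification of the triply graded theory of this paper with Khovanov--Rozansky homology open. Thus the realistically attainable content of the conjecture is the Euler-characteristic identity sketched above, and the full homological identification should be regarded as the principal open point.
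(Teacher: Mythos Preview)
The statement is a \emph{conjecture}; the paper offers no proof, so there is nothing to compare your proposal against. You correctly separate the problem into a decategorified Euler-characteristic identity and a homological identification with existing $\mathfrak{gl}(m|n)$ theories, and you are right that the latter is the principal open point.

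The gap is in your decategorified argument. Your key move---``collapsing an $a$-graded complex against a differential of $a$-degree $-1$ with a single $T_{sc}$-weight amounts to a substitution in the $a$-variable''---is correct only in the following precise sense: the Euler characteristic of $\mathbb{H}_{m|n}(\beta)$ lives in the quotient grading that makes both $D$ (of weight $t$) and $d_{m|n}$ (of weight $q^{w_q}t^{w_t}a^{-1}$) homogeneous of equal degree, and in that quotient the superpolynomial specializes via $a=q^{w_q}t^{w_t-1}$. But now you must actually compute $(w_q,w_t)$ from $\varphi_{m|n}=X^mY^nv$. With the paper's $T_{sc}$-action $(\lambda,\mu)\cdot(X,Y)=(\lambda^{-2}X,\lambda^{2}\mu^{2}Y)$ one finds $q$- and $t$-exponents that depend on $m$ and $n$ \emph{separately} (through $2(n-m)$ in $q$ and $2n$ in $t$), plus a contribution from $v$ on which the paper's text and displayed formula disagree, plus the $T_{sc}$-weight already carried by $\calB$ itself (so the $a$-variable is not purely the exterior degree). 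After setting $t=-1$ you obtain $a=\pm q^{2(n-m)+c}$ for some constant $c$; matching this to the HOMFLY specialization $a=q^{m-n}$ requires a specific change of variables between the paper's $q$ and the standard one, and that is not the ``routine bookkeeping'' you defer---it is the entire content of the claim. The unknot check fixes one normalization constant but cannot by itself pin down the $(m,n)$-dependence. Until this computation is carried out and shown to give exactly $a=q^{m-n}$ in a consistent convention, even the weak (Euler-characteristic) form of the conjecture is not established by your outline; what you have is a plausible mechanism, not a proof.
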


\printindex[notationlist]

\end{document}

\begin{bibdiv}

\begin{biblist}
\bib{AK}{article}{
author={Arkhipov, Sergey},
author={Kanstrup, Tina},
title={Braid group actions on matrix factorizations},
eprint={ arXiv:1510.07588}
}

\bib{AS}{article}{
author={Aganagic, Mina},
author={Shakirov, Shamil},
title={Refined Chern-Simons theory and knot homology},
journal={Proceedings of Symposia in Pure Mathematics},
volume={85},
year={2012},
pages={3--31}
}

\bib{BEG}{article}{
author={Berest,Yuri},
author={Etingof, Pavel},
author={Ginzburg, Victor},
title={Finite-dimensional representations of rational Cherednik algebras},
journal={Interbational Mathematical Research Notes},
year={2003},
pages={1053--1088}
}

\bib{BN}{article}{
author={Bar-Nathan, Dror},
title={Khovanov's homology for tangles and cobordisms},
journal={Geom. Topol.},
voulme={9},
year={2005},
pages={1443--1499}
}

\bib{BR}{article}{
author={Berukavnikov, Roman},
author={Riche, Simon},
title={Affine braid group actions on derived categories of Springer resolutions},
journal={Annales Scientifiques de l'\'Ecole Normale Sup\`erieure. Quatri\`eme S\'erie},
issue={4},
year={2012},
volume={45},
pages={535--599}
}

\bib{Ch}{article}{
author={Cherednik, Ivan},
title={Jones polynomials of torus knots via DAHA},
journal={International Mathematical Research Notes},
issue={23},
year={2013},
pages={5366--5425}
}


\bib{CG}{book}{
author={Chriss, Neil},
author={Ginzburg, Victor},
title={Representation theory and complex geometry},
publisher={Birkh\"aser},
year={2010},
address={Boston}
}

\bib{D}{article}{
author={Dyckerhoff, Tobias},
title={Compact generators in categories of matrix factorizations},
journal={Duke Math. J.},
issue={159},
year={2011},
 number={2},
 pages={223--274}
 }

\bib{DGR}{article}{
author={Dunfield, Nathan},
author={Gukov, Sergei},
author={Rasmussen, Jacob},
title={The superpolynomial for knot homologies},
journal={Experiment. Math.},
issue={15},
year={2006},
number={2},
pages={129--159}
}

\bib{DM}{article}{
author={Dyckerhoff, Tobias},
author={Murfet, Daniel},
title={Pushing forward matrix factorizations},
journal={Duke Math. J.},
issue={162},
year={2013},
number={7},
pages={1249--1311}
}

\bib{E}{article}{
author={Eisenbud, David},
title={Homological algebra on a complete intersection, with an application to group representations},
journal={Trans. Amer. Math. Soc.},
issue={260},
year={1980},
number={1},
pages={35--64}
}

\bib{E2}{book}{
author={Eisenbud, David},
title={Commutative algebra. With a view toward algebraic geometry},
series={Graduate Texts in Mathematics, 150},
publisher={Springer-Verlag},
address={New York},
year={1995}
 }

\bib{GGS}{article}{
author={Gorsky, Eugene},
author={Gukov, Sergei},
author={Stosic, Marko},
title={Quadruply-graded colored homology of knot},
eprint={ arXiv:1304.3481}
}

\bib{GN}{article}{
author={Gorsky, Eugene},
author={Negut, Andrei},
title={ Refined knot invariants and Hilbert schemes},
journal={J. Math. Pures Appl.},
volume={9},
issue={104},
year={2015},
numer={3},
pages={403--435},
eprint={arXiv:1304.3328 }
}

\bib{GNR}{article}{
author={Gorksy, Eugene},
author={Negu{\c t}, Andrei},
author={Rasmussen, Jacob},
title={Work in progress}
}

\bib{GOR}{article}{
author={Gorsky, Eugene},
author={Oblomkov, Alexei},
author={Rasmussen, Jacob},
title={On stable Khovanov homology of torus knots},
journal={Journal of Experimental Mathematics},
issue={22},
year={2013},
volume={3},
pages={265--281},
}

\bib{GORS}{article}{
title={Torus knots and the rational DAHA},
author={Gorsky, Eugene}
author={Oblomkov, Alexei},
author={Rasmussen, Jacob},
author={Shende, Vivek},
journal={Duke Mathematical Journal},
issue={14},
year={2014},
volume={163},
pages={ 2709--2794}
}

\bib{GS}{article}{
title={Rational Cherednik algebras and Hilbert schemes. II. Representations and sheaves},
author={Gordon, Iain},
author={Stafford, Tobias},
journal={ Duke Math. J.},
year={2006},
issue={132},
 umner={1},
 pages={73--135}
}

\bib{GuWi}{article}{
title={Gauge Theory, Ramification, And The Geometric Langlands Program},
author={Gukov, Sergei},
author={Witten, Edward},
journal={Current Developments in Mathematics},
volume={2006},
year={2008},
pages={35-180},
eprint={arXiv:hep-th/0612073}
}

\bib{EH}{article}{
author={Elias, Benjamin},
author={Hogancamp,Matt},
title={On the computation of torus link homology},
eprint={ arXiv:1603.00407}
}


\bib{EHo}{article}{
  author={Elias, Benjamin}
  author={Hogancamp, Matt}
  title={Categorical diagonalization}}

\bib{M}{article}{Me}{
author={Mellit, Anton}}

\bib{Jones}{article}{
author={Jones, Vaughan},
title={
Hecke algebra representations of braid groups and link polynomials}
journal={Ann. of Math.},
volume={2},
issue={126},
year={1987},
number={2},
pages={335--388}
}

\bib{Hai}{article}{
author={Haiman, Mark},
title={Combinatorics, symmetric functions, and Hilbert schemes},
journal={Current developments in mathematics},
year={2002},
pages={39--111}
}

\bib{Ho}{article}{
author={Hogancamp, Matt}
}

\bib{KRS}{article}{
author={Kapustin, Anton},
author={Rozansky, Lev},
author={Saulina, Natalia},
title={Three-dimensional topological field theory and symplectic algebraic geometry. I},
journal={Nuclear Physics B},
number={3},
year={2009},
volume={816},
pages={295--355}
}

\bib{KR}{article}{
author={Kapustin, Anton},
author={Rozansky, Lev},
title={ Three-dimensional topological field theory and symplectic algebraic geometry II},
journal={Communications in Number Theory and Physics },
number={3},
year={2010},
volume={4},
pages={463--549}
}

\bib{Kea}{article}{
author={Kapustin, Anton},
author={Setter, Kevin},
author={Vyas, Ketan},
title={Surface operators in four-dimensional topological gauge theory and Langlands duality},
eprint={arXiv:1002.0385}
}

\bib{KW}{article}{
author={Kapustin, Anton},
author={Witten, Edward},
title={Electric-Magnetic Duality And The Geometric Langlands Program},
journal={Communications in Number Theory and Physics},
volume={1},
number={1},
pages={1--236},
year={2007}
}

\bib{KhR}{article}{
title={Matrix factorizations and link homology},
author={Khovanov, Mikhail},
author={Rozansky, Lev},
journal={Fundamenta Mathematicae},
year={2008},
volume={199},
pages={1--91}
}

\bib{KhR1}{article}{
title={Matrix factorizations and link homology II},
author={Khovanov, Mikhail},
author={Rozansky, Lev},
journal={Geometry and Topology},
issue={3},
year={2008},
volume={12},
pages={1387--1425}
}

\bib{KhTh}{article}{
author={Khovanov, Mikhail},
author={Thomas, Richard},
title={Braid cobordisms, triangulated categories, and flag varieties},
journal={Homology, homotopy and applications},
volume={9},
year={2007},
pages={19-94},
number={2},
eprint={arXiv:math/0609335}
}

\bib{Kn}{article}{
author={Kn\"orrer, Horst},
title={Cohen-Macaulay modules on hypersurface singularities. I},
journal={Inventiones Mathematecae},
year={1987},
issue={88},
number={1},
pages={153--164}
}

\bib{N}{article}{
author={Negu{\c t}, Andrei},
title={Moduli of flags of sheaves and their K-theory},
journal={Algebr. Geom.},
issue={2},
year={2015},
number={1},
pages={19--43}
}

\bib{Nak}{book}{
author={Nakajima, Hiraku},
title={ Lectures on Hilbert schemes of points on surfaces},
year={1999},
publisher={American Mathematical Society},
address={ Providence, RI}
}

\bib{OR}{article}{
author={Oblomkov,Alexei},
author={Rozansky,Lev},
title={Knot Homology and Sheaves on the Hilbert scheme of points on the plane II},
journal={Paper in preparation}
}

\bib{ORS}{article}{
author={Oblomkov, Alexei},
author={Rasmussen, Jacob},
author={Shende, Vivek},
title={The Hilbert scheme of a plane curve singularity and the HOMFLY homology of its link},
eprint={arXiv:1201.2115 }
}

\bib{OY}{article}{
author={Oblomkov, Alexei},
author={Yun, Zhiwei},
title={Geometric representations of graded and rational Cherednik algebras},
eprint={arXiv:1407.5685}
}

\bib{R}{article}{
author={Riche, Simon},
title={Geometric braid group action on derived categories of coherent sheaves (with a joint appendix with Roman Bezrukavnikov)},
journal={Representation theory},
year={2008},
volume={12},
pages={131--169}
}

\bib{WW2}{article}{
author={Webster, Benjamin},
author={Williamson, Geordie},
title={The geometry of Markov traces},
journal={Duke Mathematical Journal},
issue={2},
year={2011},
volume={160},
pages={401--419}
}
\bib{Wei}{book}{
author={Weibel, Charles},
title={An introduction to homological algebra},
series={Cambridge Studies in Advanced Mathematics, 38},
publisher={Cambridge University Press},
address={Cambridge},
year={1994}
}
\end{biblist}
\end{bibdiv}

\end{document}
